\documentclass{amsart}

\usepackage{amssymb}
\usepackage{amsthm}
\usepackage[brazil,spanish,english]{babel}
\usepackage{graphicx}

\usepackage{hyperref}
\usepackage[latin1]{inputenc}
\usepackage{epstopdf}
\usepackage{srcltx}
\usepackage[normalem]{ulem}
\usepackage{vruler}
\usepackage{color}

\usepackage[hmargin=30mm,top=30mm,bottom=35mm]{geometry}

\tolerance 3000

\setlength{\parindent}{0pt}
\setlength{\parskip}{.3em}
\marginparwidth 27mm
\marginparsep 4mm

\setlength{\headheight}{2em}

\theoremstyle{plain}
\newtheorem{theorem}{Theorem}
\newtheorem{lemma}[theorem]{Lemma}
\newtheorem{sub-lemma}[theorem]{Sub-lemma}
\newtheorem{corollary}[theorem]{Corollary}

\newtheorem{proposition}[theorem]{Proposition}
\newtheorem*{proposition*}{Proposition}
\newtheorem*{theorem*}{Theorem}

\newtheorem{theoremain}{Theorem}

\newtheorem{propositionmain}[theoremain]{Proposition}
\newtheorem{corollarymain}[theoremain]{Corollary}

\theoremstyle{definition}

\newtheorem*{definition*}{Definition}
\theoremstyle{remark}

\newtheorem*{claim*}{Claim}

\newtheorem{remark}[theorem]{Remark}
\newtheorem*{remark*}{Remark}

\newcommand{\N}{\mathbb{N}}

\newcommand{\R}{\mathbb{R}}
\newcommand{\D}{\mathbb{D}}
\newcommand{\Z}{\mathbb{Z}}

\renewcommand{\S}{\mathbb{S}}
\newcommand{\T}{\mathbb{T}}
\newcommand{\A}{\mathbb{A}}
\newcommand{\Q}{\mathbb{Q}}

\renewcommand{\ge}{\geqslant}
\renewcommand{\le}{\leqslant}
\renewcommand{\geq}{\geqslant}
\renewcommand{\leq}{\leqslant}

\newcommand{\abs}[1]{\left\lvert{#1}\right\rvert}

\begin{document}


\title{ Forcing theory for transverse trajectories of surface homeomorphisms}
\author{P. Le Calvez}
\address{Sorbonne Universit\'es, UPMC Univ Paris 06, Institut de Math\'ematiques de Jussieu-Paris Rive Gauche, 
UMR 7586, CNRS, Univ Paris Diderot, Sorbonne Paris Cit\'e, F-75005, Paris, France}
\email{patrice.le-calvez@imj-prg.fr}
\author{F. A. Tal}
\address{Instituto de Matem\'atica e Estat\'\i stica, Universidade de S\~ao Paulo, Rua do Mat\~ao 1010, Cidade Universit\'aria, 05508-090 S\~ao Paulo, SP, Brazil}
\email{fabiotal@ime.usp.br}
\thanks{F. A. Tal was partially supported by CAPES, FAPESP and CNPq-Brasil}

 \date{}
\begin{abstract}
This paper studies homeomorphisms of surfaces isotopic to the identity by means of purely topological methods and Brouwer theory. The main development is a novel theory of orbit forcing using maximal isotopies and transverse foliations. This allows us to derive new proofs for some known results as well as some new applications, among which we note  the following: we extend Franks and Handel's classification of zero entropy maps of $\S^2$  for non-wandering homeomorphisms; we show that if $f$ is a Hamiltonian homeomorphism of the annulus, then the rotation set of $f$ is either a singleton or it contains zero in the interior, proving a conjecture posed by Boyland; we show that there exist compact convex sets of the plane that are not the rotation set of some torus homeomorphisms, proving a first case of the Franks-Misiurewicz Conjecture; we extend a bounded deviation result relative to the rotation set to the general case of torus homeomorphisms.

 \end{abstract}
\maketitle








\def\eqalign#1{\null\,\vcenter{\openup\jot
\ialign{\strut\hfil$\displaystyle{##}$&
$\displaystyle{{}##}$\hfil \crcr #1\crcr }}\,}

\def\eqalignno#1{\displ@y \tabskip=\@centering
\halign to\displaywidth{\hfil$\@lign\displaystyle{##}$
\tabskip=0pt &$\@lign\displaystyle{{}##}$

\hfil\tabskip=\@centering
$\llap{$\@lign##$}\tabskip=Opt\crcr #1\crcr}}


\section{Introduction}

Let us begin by recalling some facts about Sharkovski's theorem, which can be seen as a typical example of an orbit forcing theory in dynamical systems. In this theorem, an explicit total order $\preceq$ on the set of natural integers is given that satisfies the following: every continuous transformation $f$ on $[0,1]$ that contains a periodic orbit of period $m$ contains a periodic orbit of period $n$ if $n\preceq m$. Much more can be said. If $f$ admits a periodic orbit of period different from a power of $2$,  one can construct a Markov partition and codes orbits with the help of the associated finite subshift. In particular one can prove that the topological entropy of $f$ is positive.  There exists a forcing theory about periodic orbits for surface homeomorphisms related to Nielsen-Thurston classification of surface homeomorphisms, with many interesting dynamical applications (see for example \cite{Bo} or \cite{Mo} for survey articles). In case of homeomorphisms isotopic to the identity, this theory deals with the braid types associated to the periodic orbits.  A more subtle theory (homotopic Brouwer theory) was introduced by M. Handel for surface homeomorphisms and developed by J. Franks and Handel to become a very efficient tool in  two-dimensional dynamics. 

The goal of the article is to give a new orbit forcing theory for surface homeomorphisms that are isotopic to the identity, theory that will be expressed in terms of {\it maximal isotopy}, {\it transverse foliations} and {\it transverse trajectories}. Note first that the class of surface homeomorphisms isotopic to the identity contains the time one maps of time dependent vector fields. Consequently, what is proved in this article can be applied to the dynamical study of a time dependent vector field on a surface, periodic in time.
In what follows, a surface $M$ is orientable and furnished with an orientation. If $f$ is a homeomorphism of $M$ isotopic to the identity, the choice of an isotopy $I=(f_t)_{t\in[0,1]}$ from the identity to $M$ should not be very important, as we are looking at the iterates of $f$. What looks like the {\it trajectory }of a point $z$, that means the path $I(z):t\mapsto f_t(z)$ \textcolor{black}{seems useless}. It appears that this is not the case: there are isotopies that are better than the other ones. This is clear if $f$ is the time one map of a complete time independent vector field $\xi$. The isotopy $(f_t)_{[0,1]}$ defined by the restriction of the flow $(f_t)_{t\in\R}$ is clearly better than any other choice of an isotopy, in the sense that it will be useful while studying the dynamics of $f$. It is easy to see that in this last case, there is no fixed point of $f$ \textcolor{black}{in the complement of the singular set of the vector field}  whose trajectory is contractible relative to \textcolor{black}{this same singular set.} In this situation, the singular points correspond to the fixed points of $I$, which means the points whose trajectory is constant.

 In general, let us say that an isotopy $I=(f_t)_{t\in[0,1]}$, that joins the identity to a homeomorphism $f$, is a maximal isotopy if there is no fixed point of $f$ whose trajectory is contractible relative to the fixed point set of $I$. A very recent result of F. B\'eguin, S. Crovisier and F. Le Roux \cite{BCL} asserts that such an isotopy always exists if $f$ is isotopic to the identity (a slightly weaker result was previously proved by O. Jaulent \cite{J}). A fundamental result \cite{Lec1} asserts that a maximal isotopy always admits a transverse foliation. This is a singular oriented foliation $\mathcal F$ whose singular set coincides with the fixed point set of $I$ and such that every non trivial trajectory is homotopic (relative to the endpoints) to a path that is transverse to the foliation (which means that it locally crosses every leaf from the right to the left). This path $I_{\mathcal F}(z)$, the transverse trajectory, is uniquely defined up to a natural equivalence relation (meaning that the induced path in the space of leaves is unique). In the case where $f$ is the time one map of a complete time independent vector field $\xi$, it is very easy to construct a transverse foliation by taking the integral curves of any vector field $\eta$ that is transverse to $\xi$, and in that case the trajectories $I(z)$ are transverse. In a certain sense, maximal isotopies are isotopies that are \textcolor{black}{ as close as} possible to isotopies induced by flows.

Maximal isotopies and transverse foliations are known to be efficient tools for the dynamical study of surface homeomorphisms (see \cite{D1}, \cite{D2}, \cite{KT2}, \cite{Lec1}, \cite{Lec2}\cite{Ler}, \cite{Mm}, \cite{T} for example). Usually they are used in the following way. Properties of $f$ are transposed ``by duality'' to properties of $\mathcal F$, then one studies the dynamics of the foliation and comes back to $f$. Roughly speaking, the leaves of the foliation are pushed along the dynamics. This property is cleverly used in the articles of P. D\'avalos (\cite{D1}, \cite{D2}). Our original goal was a boundedness displacement result (Theorem \ref{th:recurrent_on_the_lift_intro} of this introduction) which needed a formalization of the ideas of D\'avalos. This was nothing but a forcing theory for transverse trajectories. For every integer $n\geq 1$, let us define  by concatenation the paths $I^n(z)=\prod_{0\leq k<n} I(f^k(z))$ and $I_{\mathcal F}^n(z)=\prod_{0\leq k<n} I_{\mathcal F}(f^k(z))$. The basic question can be formulated as follows: from the knowledge of a finite family $(I_{\mathcal F}^{n_i}(z_i))_{1\leq i\leq p}$ of transverse trajectories, can we deduce the existence of other transverse trajectories $I^n_{\mathcal F}(z)$? The key result (Proposition \ref{pr: fundamental}), which is new and whose proof is very simple, can be stated as follows: if two paths $I_{\mathcal F}^{n_1}(z_1)$ and $I_{\mathcal F}^{n_2}(z_2)$ {\it intersect transversally relative to $\mathcal F$} (the precise definition will be given later in the article) then one can construct two other paths $I_{\mathcal F}^{n_1+n_2}(z_3)$ and $I_{\mathcal F}^{n_1+n_2}(z_4)$ by a natural change of direction at the intersection point. It becomes possible, in many situations to code transverse trajectories with the help of a Bernouilli shift or in other situations to construct transverse trajectories that are multiples of the same  loop.

In order to obtain applications of this forcing theory, we need to relate the information obtained by the knowledge of these new sets of transverse trajectories to other properties of the dynamics.  To do so, one can define the {\it whole trajectory} $I^{\Z}(z)=\prod_{k\in\Z} I(f^k(z))$ and the {\it whole transverse trajectory} $I_{\mathcal F}^{\Z}(z)=\prod_{k\in\Z} I_{\mathcal F}(f^k(z))$ of a point $z$. The properties of the dynamics are recovered by three structural results that, together with Proposition \ref{pr: fundamental}, form the core of the theory. The first of these results is a realization result, Proposition \ref{pr: realization},  showing that in many cases, the existence of finite transverse trajectories that are equivalent to multiples of a given transverse loop implies the existence of a periodic point whose transverse trajectory for one period is equivalent to the  same transverse loop.  The second of these results, Theorem \ref{th:transverse_imply_periodic points}, shows that if there exist two recurrent points $z$ and $z'$ such that $I_{\mathcal F}^{\Z}(z)$ and $I_{\mathcal F}^{\Z}(z')$ intersect transversally relative to $\mathcal F$ (with a self intersection if $z=z'$) the number of periodic points of period $n$ for some iterate of $f$ grows exponentially in $n$. The third result, Theorem \ref{th:transverse_imply_entropy}, shows that if in the previous result we assume that the surface is closed, then the topological entropy of $f$ is strictly positive. This final result presents, to our knowledge, an entirely new mechanism to detect positive entropy, one that bypasses any requirement of smoothness of the map. Consequently, our applications are for general homeomorphisms isotopic to the identity, and include both new entropy theorems for maps of the annulus and generalizations of results known only for $C^1$-diffeomorphisms (sometimes for $C^{1+\varepsilon}$-diffeomorphisms, sometimes for $C^{\infty}$-diffeomorphisms).
There is no doubt that they are many similarities with Franks-Handel methods. Looking more carefully at the links between the two methods should be a project of high interest.


Let us display now more precisely the main applications, beginning with the case of annulus homeomorphisms. Here, 
 $\mathcal{M}(f)$ is the set of invariant Borel probability measures  $\mu$ of $f$, the set $\mathrm{supp}(\mu)$ the support of $\mu$, the rotation number $\mathrm{rot}(\mu)$ the integral  $\int_{\A} \varphi\, d\mu$, where $\varphi:\A\to\R$ is the map lifted by $\pi_1\circ\check f- \pi_1$ (the map $\pi_1:(x,y)\mapsto x$ being the first projection), the segment $\mathrm{rot}(\check f)$ the set of rotation numbers of invariant measures.

\begin{theoremain}\label{th : annulus_intro}
Let $f$ be a homeomorphism of $\A=\T^1\times[0,1]$ that is isotopic to the identity and  $\check f$ a lift to $\R\times[0,1]$. Suppose that $\mathrm{rot}(\check f)$ is a non trivial segment and that $\rho$ is an endpoint of $\mathrm{rot}(\check f)$ that is rational. Define
$${\mathcal M}_{\rho}=\left\{ \mu\in {\mathcal M}(f)\,,\, \mathrm{rot}(\mu)=\rho\right\}, \enskip {X}_{\rho}= 
\overline{\bigcup_{\mu\in {\mathcal M}_{\rho}} \mathrm{supp}(\mu)}.$$
Then every invariant measure supported on ${X}_{\rho}$ belongs to ${\mathcal M}_{\rho}.$
\end{theoremain}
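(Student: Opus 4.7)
The argument is by contradiction. Assume there exists an $f$-invariant Borel probability $\nu$ with $\mathrm{supp}(\nu)\subset X_{\rho}$ and $\rho_{1}:=\mathrm{rot}(\nu)\neq\rho$. Passing to an ergodic component, we may assume $\nu$ is ergodic. Writing $\rho=p/q$ in lowest terms and replacing the pair $(f,\check f)$ by $(f^{q},\check f^{q}\circ T_{-p})$ where $T_{-p}(x,y)=(x-p,y)$, we reduce to the case $\rho=0$, and we may assume $0$ is the left endpoint of $\mathrm{rot}(\check f)$, so $\rho_{1}>0$. Fix a maximal isotopy $I=(f_{t})_{t\in[0,1]}$ from the identity to $f$ and a transverse foliation $\mathcal F$. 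By the Birkhoff ergodic theorem, some recurrent $\nu$-generic point $z$ has rotation $\rho_{1}$; since $\mathrm{supp}(\nu)\subset X_{0}$, we may also choose recurrent points $w_{k}\to z$, each generic for some ergodic $\mu_{k}\in\mathcal M_{0}$, so that $w_{k}$ has rotation $0$.

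The first step is to show that for $k$ large the whole transverse trajectories $I^{\Z}_{\mathcal F}(z)$ and $I^{\Z}_{\mathcal F}(w_{k})$ $\mathcal F$-transversally intersect. Working in the universal cover $\R\times[0,1]$, choose lifts $\check z$ and $\check w_{k}$ close to one another. Then $\pi_{1}(\check f^{n}(\check z))-\pi_{1}(\check z)\sim n\rho_{1}\to+\infty$, while $\pi_{1}(\check f^{n}(\check w_{k}))-\pi_{1}(\check w_{k})=o(n)$, and both points are recurrent in $\A$. The lifted transverse trajectories $\tilde I^{\Z}_{\mathcal F}(\check z)$ and $\tilde I^{\Z}_{\mathcal F}(\check w_{k})$ thus cross the same family of leaves of the lifted foliation $\tilde{\mathcal F}$ in incompatible ways, and from this one extracts a topologically transverse meeting point in the sense required by Proposition \ref{pr: fundamental}.

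The second step applies the forcing machinery. Proposition \ref{pr: fundamental} at the crossing produces new transverse trajectories by concatenating finite subpaths of $I^{\Z}_{\mathcal F}(z)$ and $I^{\Z}_{\mathcal F}(w_{k})$. Using the recurrence of $w_{k}$ together with $\mathrm{rot}(w_{k})=0$, sufficiently long subpaths of $I^{\Z}_{\mathcal F}(w_{k})$ are $\mathcal F$-equivalent to a transverse loop of zero horizontal displacement. Iterating the construction and applying Proposition \ref{pr: realization}, one realizes a periodic orbit of $f$ whose transverse trajectory is any prescribed concatenation of multiples of the $w_{k}$-loop with pieces of the $z$-trajectory.

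The final step is to tune the combinatorial pattern so that the resulting periodic orbit has rotation number strictly less than $0$, contradicting $0=\min\mathrm{rot}(\check f)$. The main obstacle lies precisely here: a single application of Proposition \ref{pr: fundamental} only yields convex combinations of $0$ and $\rho_{1}$, which stay in $[0,\rho_{1}]$. To produce a negative rotation, one needs an iterated crossing argument exploiting the self-intersection structure and recurrence of the $w_{k}$-trajectory to accumulate backward horizontal displacement in the realized loop while controlling the forward drift contributed by the $z$-pieces. Carrying out this combinatorial bookkeeping, in terms of the $\mathcal F$-transverse geometry in the universal cover, is where I would expect the bulk of the technical work to lie.
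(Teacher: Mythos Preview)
Your reduction to $\rho=0$ and the contradiction scheme are the right starting point, and you have correctly located the obstacle: concatenating pieces of a rotation-$0$ trajectory with pieces of a rotation-$\rho_1$ trajectory via Proposition~\ref{pr: fundamental} and Proposition~\ref{pr: realization} can only produce periodic orbits with rotation numbers in $[0,\rho_1]$, never a negative one. The speculative ``iterated crossing'' paragraph does not resolve this, and I do not see how it could: the forcing mechanism respects convexity of displacement, so combining two admissible paths with nonnegative horizontal drift will not yield negative drift. This is a genuine gap, not a bookkeeping issue.

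The paper takes a different route that sidesteps the convexity obstruction entirely. First, Atkinson's Lemma (Proposition~\ref{pr:atkinson}) applied to each ergodic $\mu\in\mathcal M_0$ shows that $\mu$-almost every point lifts to a \emph{recurrent point of $\check f$ in the cover}; hence the $\check f$-recurrent set is dense in $\check\pi^{-1}(X_0)$. You never invoke this, and without it your $w_k$ are only recurrent in $\A$, which is not enough. Second, since your point $z$ has positive rotation, the lifted transverse trajectory $\check I^{\Z}_{\check{\mathcal F}}(\check z)$ crosses infinitely many integer translates of the leaf $\phi_{\check z}$; approximating $\check z$ by a nearby $\check f$-recurrent point $\check z'$ (coming from Atkinson) gives an admissible $\check{\mathcal F}$-recurrent path that crosses at least three translates $T_i(\check\phi)$ of a single leaf. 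This is exactly the hypothesis of Proposition~\ref{pr:technical_non-contractible}, whose conclusion is that for some covering automorphism $T$ and some $q$, \emph{both} $\check f^s\circ T^{-r}$ and $\check f^s\circ T^{r}$ have fixed points for all $r/s\in(0,1/q]$. The symmetry in $\pm r$ comes from a topological argument inside that proposition (a compactness argument on singular sets forces $\check\Gamma$ and $T(\check\Gamma)$ to intersect $\check{\mathcal F}$-transversally, after which one builds admissible paths translating in both the $T$ and $T^{-1}$ directions). This immediately yields periodic orbits with negative rotation number, contradicting that $0$ is the left endpoint of $\mathrm{rot}(\check f)$. The point is that the negative rotation is extracted not from combining two distinct trajectories but from the self-intersection structure of a \emph{single} lift-recurrent trajectory with translates of itself.
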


 We deduce immediately the following positive answer to a question of P. Boyland:

\begin{corollarymain}\label{cr:boylandannulus_intro}
Let $f$ be a homeomorphism of $\A$ that is isotopic to the identity and preserves a probability measure $\mu$ with full support. Let us fix a lift $\check f$. Suppose that $\mathrm{rot}(\check f)$ is a non trivial segment. The rotation number $\mathrm{rot}(\mu)$ cannot be an endpoint of $\mathrm{rot}(\check f)$ if this endpoint is rational.
\end{corollarymain}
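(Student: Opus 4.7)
The corollary is essentially a direct logical consequence of Theorem~A, so my plan is to argue by contradiction and let Theorem~A do all the work. The strategy has only three moves.

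First, I would assume towards a contradiction that $\mathrm{rot}(\mu)=\rho$, where $\rho$ is a rational endpoint of the non-trivial segment $\mathrm{rot}(\check f)$. Under this assumption, $\mu$ lies in the set $\mathcal{M}_{\rho}$ defined in Theorem~A. Since $\mathrm{supp}(\mu)\subseteq X_{\rho}$ by the very definition of $X_{\rho}$ as a union over $\mathcal{M}_{\rho}$, and since $\mu$ has full support (i.e.\ $\mathrm{supp}(\mu)=\A$), I obtain $X_{\rho}=\A$.

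Second, I would apply Theorem~A directly to conclude that every $f$-invariant Borel probability measure on $\A$ belongs to $\mathcal{M}_{\rho}$. In other words, every element of $\mathcal{M}(f)$ has rotation number exactly $\rho$.

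Third, since $\mathrm{rot}(\check f)$ is by definition the set of rotation numbers of elements of $\mathcal{M}(f)$, this forces $\mathrm{rot}(\check f)=\{\rho\}$, contradicting the hypothesis that $\mathrm{rot}(\check f)$ is a non-trivial segment. I do not anticipate any genuine obstacle here: the real content is contained in Theorem~A, and the corollary is simply the observation that a fully supported invariant measure can sense, through its support, all of the invariant dynamics; once Theorem~A is available the argument reduces to unwinding definitions. The only small subtlety worth spelling out carefully is that $\mathrm{supp}(\mu)=\A$ gives the inclusion $X_{\rho}=\A$ (not merely a dense subset), which is immediate from the definition of support as a closed set, combined with the closure operation in the definition of $X_{\rho}$.
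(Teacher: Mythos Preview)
Your proposal is correct and matches the paper's approach: the paper states that the corollary follows immediately from Theorem~A without giving any further details, and your three-step contradiction argument is precisely the immediate deduction the authors have in mind.
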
  

Let us explain what happens for torus homeomorphisms.  Here again
 $\mathcal{M}(f)$ is the set of invariant Borel probability measures $\mu$ of $f$, the set $\mathrm{supp}(\mu)$ the support of $\mu$ and the rotation vector $\mathrm{rot}\mu)$ the integral
 $\int_{\T^2} \varphi\, d\mu$, where $\varphi:\T^2\to\R^2$ is the map lifted by $\check f-\mathrm{Id}$. The set of rotation  vectors  of invariant measures $\mathrm{rot}(\check f)$ is a compact and convex subset of $\R^2$. Nothing is known about the plane subsets that can be written as such a rotation set. The following result gives the first obstruction:
\bigskip

\begin{theoremain}\label{th:impossible_rotation_set_intro}
 Let $f$ be a homeomorphism of $\T^2$ that is isotopic to the identity and $\check f$ a lift of $f$ to  $\R^2$. The frontier of $\mathrm{rot}(\check f)$ does not contain a segment with irrational slope that contains a rational point in its interior.
\end{theoremain}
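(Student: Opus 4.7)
The plan is to argue by contradiction. Suppose $\partial\mathrm{rot}(\check f)$ contains a segment $\sigma$ of irrational slope with a rational interior point $\rho_0=p/q$. Replacing $f$ by $f^q$ and translating the lift by $-p$, I may assume $\rho_0=0$, so that $\mathrm{rot}(\check f)\subset\{v\in\R^2:\langle v,n\rangle\le 0\}$ for some unit vector $n$ of irrational slope, with $\sigma$ on the supporting line $\{\langle v,n\rangle=0\}$ and $0\in\mathrm{int}(\sigma)$. Pick a nonzero $\rho_+\in\sigma\cap\mathrm{rot}(\check f)$; since $\sigma$ has irrational slope, $\rho_+$ has irrational coordinates and the set $\Z^2+\Z\rho_+$ is dense in $\R^2$. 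Take ergodic invariant measures $\mu_0,\mu_+$ realizing rotation vectors $0$ and $\rho_+$. By Atkinson's ergodic lemma a $\mu_0$-generic point $z_0$ admits a lift $\tilde z_0\in\R^2$ that is recurrent under $\check f$, and a $\mu_+$-generic point $z_+$ has a lift $\tilde z_+$ with $\check f^k(\tilde z_+)/k\to\rho_+$.

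Next I invoke B\'eguin-Crovisier-Le Roux to produce a maximal isotopy $I$ of $f$, whose fixed-point set is nonempty since $0\in\mathrm{rot}(\check f)$ yields a fixed point of $\check f$ (standard Brouwer-Franks arguments apply because $0$ lies on a supporting face of the rotation set); Le Calvez's theorem then gives a transverse foliation $\mathcal F$, which I lift to $\tilde I,\tilde{\mathcal F}$ on $\R^2$. The crucial step is to produce a $\tilde{\mathcal F}$-transverse intersection of the whole lifted trajectory $\tilde I_{\tilde{\mathcal F}}^{\Z}(\tilde z_+)$ with some $\Z^2$-translate $\tilde I_{\tilde{\mathcal F}}^{\Z}(\tilde z_++v)$, $v\in\Z^2$, chosen to satisfy $\langle v,n\rangle>0$. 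Because $\rho_+$ has irrational slope, the $z_+$-orbit is equidistributed on $\T^2$, so the $\Z^2$-translates of the $\tilde z_+$-trajectory form a dense family of near-parallel curves in $\R^2$; combined with the recurrence of $\tilde z_0$ and the leaf structure of $\tilde{\mathcal F}$, a topological pigeonhole argument should force a $\tilde{\mathcal F}$-transverse intersection of the required type. The irrationality of the slope of $\sigma$ (equivalently of $n$) is essential here: it makes $\Z^2+\Z\rho_+$ dense in $\R^2$ and allows $v\in\Z^2$ to be chosen with $\langle v,n\rangle$ of arbitrary sign, whereas a rational slope would permit a consistent translation-invariant arrangement of these trajectories avoiding crossings into the wrong half-plane.

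Once such an intersection is secured, apply Proposition \ref{pr: fundamental} to splice the two trajectories at the intersection point, obtaining a new trajectory $I_{\mathcal F}^{n_1+n_2}(z_3)$ of a genuine $f$-orbit whose lift has total $\check f^{n_1+n_2}$-displacement approximately equal to $v+n_2\rho_+$. Iterating this construction and exploiting the density of $\Z^2+\Z\rho_+$ to make the total displacement exactly lie in $\Z^2$, Proposition \ref{pr: realization} produces a genuine periodic orbit of $f^N$ whose lifted displacement is a vector $w\in\Z^2$ close to $v'+k\rho_+$ with $\langle v',n\rangle>0$; its rotation vector $w/N$ then has strictly positive $n$-component, contradicting $\mathrm{rot}(\check f)\subset\{\langle\cdot,n\rangle\le 0\}$. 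The main obstacle is securing the $\tilde{\mathcal F}$-transverse intersection: once this is in hand the forcing and realization results close the argument, but producing the intersection requires a careful topological analysis of the leaf structure of $\tilde{\mathcal F}$ relative to the supports of $\mu_0$ and $\mu_+$, and it is precisely here that the irrational slope hypothesis on $\sigma$ plays its decisive role.
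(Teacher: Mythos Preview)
Your crucial step is not merely unproven; it is false. You want a $\check{\mathcal F}$-transverse intersection of $\check I_{\check{\mathcal F}}^{\Z}(\tilde z_+)$ with some integer translate $\check I_{\check{\mathcal F}}^{\Z}(\tilde z_+)+v$, and you hope to arrange $\langle v,n\rangle>0$. But the paper proves (Lemma \ref{le: no_intersection}) that no such intersection exists for \emph{any} nonzero $v\in\Z^2$. The mechanism is Proposition \ref{pr: rational rotation number}: once the leaves of $\check{\mathcal F}$ are uniformly bounded (Proposition \ref{pr:Bounded_Leaves}, which you do not mention but is essential), a transverse intersection of an admissible path with its $v$-translate forces $v/q\in\mathrm{rot}(\check f)$, hence $\langle v,n\rangle\le 0$; irrationality of the slope then gives strict inequality. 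Combining this with a refined Atkinson-type recurrence (Lemma \ref{le: precise_atkinson}, which yields translates with $\langle\rho_0^\perp,p\rangle$ of \emph{either} prescribed sign) one flips the time-ordering of the intersection and derives the opposite strict inequality, a contradiction. So the trajectory of a point with extremal rotation vector on the segment never intersects its own integer translates $\check{\mathcal F}$-transversally; it is in fact a line (Lemma following \ref{le: no_intersection}).

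The paper's route is therefore quite different from yours. One takes \emph{two} trajectories $\gamma_0,\gamma_1$ with rotation vectors at the two ends $\rho_0,\rho_1$ of the segment, shows each is a line with no transverse self-intersection with translates, then shows that $\gamma_1$ \emph{does} intersect some translate of $\gamma_0$ transversally (this is where the two directions interact). One concatenates to form $\gamma=\gamma_0|_{(-\infty,t_0]}\,\gamma_1|_{[t_1,\infty)}$ and, using the sign flexibility from the refined Atkinson lemma on both sides, produces a transverse intersection of $\gamma$ with a translate $\gamma+(p_1-p_0)$ whose sign $\langle\rho_0^\perp,p_1-p_0\rangle$ is simultaneously forced to be negative (by Proposition \ref{pr: rational rotation number}) and can be made positive (by choice of $p_0,p_1$). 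The contradiction is obtained at the level of rotation vectors of invariant measures, not via Proposition \ref{pr: realization}. Your density/pigeonhole heuristic for a single trajectory cannot replace this two-trajectory argument.
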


It was previously conjectured by Franks and Misiurewicz in \cite{FM} that a line segment $L$  could not be realized as a rotation set of a torus homeomorphism in the following conditions: (i) $L$ has irrational slope and a rational point in its interior, (ii) $L$ has rational slope but no rational points and (iii) $L$ has irrational slope and no rational points. While Theorem \ref{th:impossible_rotation_set_intro}  implies the conjecture for case (i),  A. \'Avila has given a counter-example for case (iii).

The second result is a boundedness result:

\begin{theoremain}\label{th:bounded_deviation_intro}
 Let $f$ be a homeomorphism of $\T^2$ that is isotopic to the identity and $\check f$ a lift of $f$ to  $\R^2$. If $\mathrm{rot}(\check f)$ has a non empty interior, then there exist a constant $L$ such that for every $z\in\R^2$ and every $n\geq 1$, one has $d(\check f^n(z)-z, n\mathrm{rot}(\check f))\leq L$.
\end{theoremain}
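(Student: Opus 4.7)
The strategy is a contradiction argument built on Propositions \ref{pr: fundamental} and \ref{pr: realization}. Since $\mathrm{rot}(\check f)$ is compact, convex, and has nonempty interior, it equals the intersection of its rational supporting half-planes $H_v=\{x\in\R^2 : v\cdot x\le \alpha_v\}$, with $v\in\Q^2\smallsetminus\{0\}$ and $\alpha_v=\max_{r\in\mathrm{rot}(\check f)}v\cdot r$. By a standard support-function approximation it suffices, for each such $v$, to find a constant $L_v$ with $v\cdot(\check f^n(z)-z)\le n\alpha_v+L_v$ for every $z\in\R^2$ and $n\ge 1$. Fix $v$. Because the interior of $\mathrm{rot}(\check f)$ contains a rational vector, Proposition \ref{pr: realization} produces a periodic orbit of $\check f$ realising it, and after replacing $f$ by an iterate and translating the lift by an element of $\Z^2$ I may assume that $\check f$ has a fixed point $z_0\in\R^2$. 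Apply the Béguin--Crovisier--Le Roux theorem to obtain a maximal identity isotopy $I$ whose fixed set contains the projection of $z_0$, take a transverse foliation $\mathcal F$ as in \cite{Lec1}, and lift to $(\check I,\check{\mathcal F})$ on $\R^2$.

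In this setup, up to a uniform additive error, bounding $v\cdot(\check f^n(z)-z)-n\alpha_v$ above is equivalent to bounding the $v$-extent of the lifted whole transverse trajectories $\check I_{\check{\mathcal F}}^{\Z}(\widetilde z)$. Suppose, for contradiction, that no such bound exists. Using compactness of $\T^2$ together with Birkhoff's theorem applied to an ergodic measure whose rotation vector attains $\alpha_v$, I extract a recurrent point $z^*$ whose lifted whole transverse trajectory $\gamma^*$ is unbounded in direction $v$.

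Now compare $\gamma^*$ to its $\Z^2$-translates. Because $v$ is rational and $\gamma^*$ crosses arbitrarily many fundamental domains of the covering along $v$, there exists $T\in\Z^2$ with $v\cdot T>0$ such that $\gamma^*$ and $T\gamma^*$ meet, and an analysis of the asymptotic geometry of $\check{\mathcal F}$ (using that $\gamma^*$ is too spread out to be confined in a strip of leaves) upgrades such a crossing to a point of $\check{\mathcal F}$-transverse intersection. Iterated application of Proposition \ref{pr: fundamental} at this intersection then yields, for every $n\ge 1$, a finite transverse trajectory whose $\R^2$-endpoints are displaced by $nT$ up to bounded error. Projecting to $\T^2$ produces finite transverse trajectories equivalent to large multiples of a fixed transverse loop, and Proposition \ref{pr: realization} promotes these to a periodic orbit of $f$ whose rotation vector is a positive rational multiple of $T$. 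Choosing $T$ so that this rotation vector has $v$-component strictly greater than $\alpha_v$ contradicts the inclusion $\mathrm{rot}(\check f)\subset H_v$.

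The main obstacle is the $\check{\mathcal F}$-transverse intersection between $\gamma^*$ and a translate $T\gamma^*$: $\mathcal F$-transversality in the sense of the paper is strictly stronger than topological crossing of curves, and upgrading a geometric crossing of an unbounded limit trajectory to an $\mathcal F$-transverse one requires a genuine understanding of the global structure of $\check{\mathcal F}$. A secondary delicate point is the bookkeeping of rotation vectors through the concatenation and realisation steps, so that the displacement gained in $\R^2$ translates quantitatively into a rotation-vector excursion strictly beyond the supporting line $\{v\cdot x=\alpha_v\}$; here the nonempty interior hypothesis on $\mathrm{rot}(\check f)$ is used both to anchor the fixed point $z_0$ that makes the maximal isotopy meaningful and to guarantee that for every rational boundary direction $v$ such a translation $T$ is available.
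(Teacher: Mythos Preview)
Your proposal has two genuine gaps, and the overall architecture differs substantially from the paper's.

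\textbf{First gap: the reduction to rational supporting half-planes.} You claim it suffices to produce, for each rational $v$, a constant $L_v$ with $v\cdot(\check f^n(z)-z)\le n\alpha_v+L_v$. But $\mathrm{rot}(\check f)$ may have infinitely many rational supporting directions (e.g.\ when its boundary is strictly convex), and you give no mechanism for the $L_v$ to be uniform in $v$. Without uniformity, the intersection of the thickened half-planes $\{v\cdot x\le n\alpha_v+L_v\}$ need not lie within bounded distance of $n\,\mathrm{rot}(\check f)$. So the very first reduction already fails to yield the theorem as stated.

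\textbf{Second gap: the $\check{\mathcal F}$-transverse intersection.} You correctly identify that upgrading a mere crossing of $\gamma^*$ and $T\gamma^*$ to an $\check{\mathcal F}$-transverse intersection is the heart of the matter, but the sentence ``an analysis of the asymptotic geometry of $\check{\mathcal F}$ \ldots\ upgrades such a crossing'' is not an argument. In the paper this step is handled by first proving that the leaves of $\check{\mathcal F}$ are \emph{uniformly bounded} (Proposition~\ref{pr:Bounded_Leaves}), a nontrivial result that uses the nonempty-interior hypothesis via the transverse homology set. Bounded leaves are what force long transverse paths to genuinely separate leaves on both sides, which is exactly what $\check{\mathcal F}$-transversality requires. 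Your plan never invokes or proves this, and without it the asymptotic geometry is uncontrolled.

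\textbf{Comparison with the paper.} The paper's proof is direct rather than by contradiction and does not slice by supporting directions. After reducing to $(0,0)\in\mathrm{int}\,\mathrm{rot}(\check f)$ and establishing uniformly bounded leaves, it takes two explicit periodic points $z_0,z_1$ with rotation vectors $(1/N,0)$ and $(0,1/N)$ (these exist by Franks), and from their transverse trajectories manufactures a single reference path $\gamma^*$ with the property that \emph{every} transverse path of diameter exceeding a fixed $K^*$ must $\check{\mathcal F}$-transversally intersect some integer translate of $\gamma^*\vert_{[1,2]}$ (Sub-lemma~\ref{sle: bound}). Given an arbitrary admissible path $\gamma$ of order $n$, one splices translates of $\gamma^*$ onto both ends using Corollary~\ref{co: first induction transverse}; the resulting path has a built-in $\check{\mathcal F}$-transverse intersection with one of its own integer translates, and Proposition~\ref{pr: rational rotation number} (not Proposition~\ref{pr: realization}) then forces the relevant integer vector, divided by $n+O(1)$, to lie in $\mathrm{rot}(\check f)$. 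The bound drops out with an explicit constant. This constructive route sidesteps both of your difficulties: no direction-by-direction analysis, and the $\check{\mathcal F}$-transversality is engineered rather than hoped for.
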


Note that by definition of the rotation set one knows that
$$\lim_{n\to+\infty} {1\over n}\left(\max_{z\in\R^2} d(\check f^n(z)-z, n\mathrm{rot}(\check f))\right)=0$$Theorem \ref{th:bounded_deviation_intro}   clarifies the speed of convergence. It was already known for homeomorphisms in the special case of a polygon with rational vertices (see D\'avalos \cite{D2}) and for $\mathcal{C}^{1+\epsilon}$ diffeomorphisms (see Addas-Zanata \cite{AZ}). As already noted in \cite{AZ}, we can deduce an interesting result about {\it maximizing measures}, which means measure $\mu\in\mathcal M(f)$ whose rotation  vector  belongs to the frontier of $\mathrm{rot}(\check f)$.  The rotation number of such a measure belongs to at least one supporting line of $\mathrm{rot}(\check f)$. Such a line admits the equation $\psi(z)=\alpha(\psi)$ where 
$\psi$ is a non trivial linear form on $\R^2$ and
$$\alpha(\psi)=\max_{\mu\in {\mathcal M}(f)} \psi(\mathrm{rot}(\mu))=\max_{\mu\in {\mathcal M}(f)}\int_{\T^2}\psi\circ\varphi \, d\mu.$$ 
Set 
$$ {\mathcal M}_{\psi}=\left\{ \mu\in {\mathcal M}(f)\,,\, \psi(\mathrm{rot}(\mu))=\alpha(\psi)\right\}, \enskip {X}_{\psi}= 
\overline{\bigcup_{\mu\in {\mathcal M}_{\psi}} \mathrm{supp}(\mu)}.$$

The following result, that can be easily deduced from Theorem \ref{th:bounded_deviation} and Atkinson's Lemma in Ergodic Theory (see \cite{A}), tells us that the sets $X_{\psi}$ behave like the Mather sets of the Tonelli Lagrangian systems.

\begin{propositionmain} \label{pr: rotation number set_intro} Let $f$ be a homeomorphism of $\T^2$ that is isotopic to the identity and $\check f$ a lift of $f$ to  $\R^2$. Assume that $\mathrm{rot}(\check f)$ has a non empty interior. Then, every measure $\mu$ supported on ${X}_{\psi}$ belongs to  ${\mathcal M}_{\psi}$. Moreover,  if $z$ lifts a point of $X_{\psi}$, then for every  $n\geq 1$, one has $\vert\psi(\check f^n(z))-\psi(z)-n\beta(\psi)\vert \leq L\Vert \psi\Vert $, where 
$L$ is the constant given by  Theorem \ref{th:bounded_deviation_intro}.
\end{propositionmain}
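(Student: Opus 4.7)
The plan is to prove the \emph{moreover} statement first, as the first assertion of the proposition follows from it by a single use of the Birkhoff ergodic theorem. Throughout, write $\Phi:=\psi\circ\varphi:\T^2\to\R$, so that $S_n\Phi(x)=\psi(\check f^n(\tilde x))-\psi(\tilde x)$ for any lift $\tilde x$ of $x$, and $\int\Phi\,d\mu=\psi(\mathrm{rot}(\mu))$.

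\emph{Universal upper bound.} Theorem~\ref{th:bounded_deviation_intro} provides, for every $z\in\R^2$ and every $n\geq 1$, a vector $v=n\rho$ with $\rho\in\mathrm{rot}(\check f)$ and $|\check f^n(z)-z-v|\leq L$; since $\psi(\rho)\leq\alpha(\psi)$, linearity of $\psi$ yields the universal one-sided estimate
\[
\psi(\check f^n(z))-\psi(z)\;\leq\;n\alpha(\psi)+L\|\psi\|,
\]
valid for every lift $z$ of every point of $\T^2$, with no assumption on $z$.

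\emph{Lower bound via Atkinson, then propagation.} Fix an $f$-ergodic $\mu\in\mathcal M_\psi$. The continuous function $\Phi-\alpha(\psi)$ has zero mean against $\mu$, so Atkinson's lemma (\cite{A}, in its conservative-skew-product form) produces, for $\mu$-a.e.\ $x$, a sequence $N_k\to\infty$ with $S_{N_k}\Phi(x)-N_k\alpha(\psi)\to 0$. For any fixed $n\geq 1$ and $N_k>n$, split $S_{N_k}\Phi(x)=S_n\Phi(x)+S_{N_k-n}\Phi(f^n x)$ and apply the universal upper bound to the tail: $S_{N_k-n}\Phi(f^n x)\leq (N_k-n)\alpha(\psi)+L\|\psi\|$. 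Rearranging and letting $k\to\infty$ gives
\[
S_n\Phi(x)\;\geq\;n\alpha(\psi)-L\|\psi\|,
\]
so $|S_n\Phi(x)-n\alpha(\psi)|\leq L\|\psi\|$ on a $\mu$-full-measure subset of $\mathrm{supp}(\mu)$. Since $x\mapsto S_n\Phi(x)$ is continuous on $\T^2$, the estimate extends by density to the whole of $\mathrm{supp}(\mu)$, for every ergodic $\mu\in\mathcal M_\psi$. Because $\mathcal M_\psi$ is the face of $\mathcal M(f)$ on which the affine functional $\nu\mapsto\psi(\mathrm{rot}(\nu))$ attains its maximum $\alpha(\psi)$, it is closed under ergodic decomposition; hence the estimate holds on $\mathrm{supp}(\mu)$ for every $\mu\in\mathcal M_\psi$, and a further closure together with continuity yields it on $X_\psi$. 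This is the moreover statement. For the first claim, if $\mu\in\mathcal M(f)$ is supported in $X_\psi$, then $\frac1n S_n\Phi\to\alpha(\psi)$ on $\mathrm{supp}(\mu)$, and Birkhoff gives $\psi(\mathrm{rot}(\mu))=\alpha(\psi)$, i.e.\ $\mu\in\mathcal M_\psi$.

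\emph{Expected obstacle.} The only genuine content is the lower bound: Theorem~\ref{th:bounded_deviation_intro} controls the distance of $\check f^n(z)-z$ to the \emph{whole} rotation set, not to its $\psi$-maximal face, so by itself it supplies only one side of the inequality. The trick is to invoke Atkinson's returns so that the one-sided universal bound can be applied ``backwards'' on the long tail $S_{N_k-n}\Phi(f^n x)$. Once this observation is in place the rest is routine, modulo some care with the ergodic decomposition of a general measure in $\mathcal M_\psi$.
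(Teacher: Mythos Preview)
Your argument is correct and follows essentially the same route as the paper's proof: the universal upper bound from Theorem~\ref{th:bounded_deviation_intro}, Atkinson's lemma applied to an ergodic $\mu\in\mathcal M_\psi$ to produce the good times $N_k$, and the splitting trick $S_{N_k}\Phi=S_n\Phi+S_{N_k-n}\Phi\circ f^n$ to convert the one-sided bound into a two-sided one. The paper also reduces to ergodic measures via the observation that $\mathcal M_\psi$ is stable under ergodic decomposition, and then passes to $X_\psi$ by density; your write-up makes the continuity extension to $\overline{\bigcup\mathrm{supp}(\mu)}$ slightly more explicit, but there is no substantive difference.
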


It admits as an immediate corollary the torus version of Boyland's question:

\begin{corollarymain}\label{co: Boyland_intro}
Let $f$ be a homeomorphism of $\T^2$ that is isotopic to the identity, preserving a measure $\mu$ of full support, and $\check f$ a lift of $f$ to  $\R^2$. Assume that $\mathrm{rot}(\check f)$ has a non empty interior. Then $\mathrm{rot}(\mu)$ belongs to the interior of $\mathrm{rot}(\check f)$.
\end{corollarymain}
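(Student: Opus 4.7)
The plan is to argue by contradiction, using the preceding Proposition as essentially a black box. Suppose that $\mathrm{rot}(\mu)$ lies on the boundary of $\mathrm{rot}(\check f)$. Since $\mathrm{rot}(\check f)$ is compact and convex with non-empty interior, there exists a supporting line to $\mathrm{rot}(\check f)$ through $\mathrm{rot}(\mu)$; equivalently, there is a non-trivial linear form $\psi\colon\R^2\to\R$ such that $\psi(\mathrm{rot}(\mu))=\alpha(\psi)$. In the notation of the Proposition this means $\mu\in\mathcal{M}_{\psi}$, hence $\mathrm{supp}(\mu)\subset X_{\psi}$.

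The next step is to exploit the full-support hypothesis on $\mu$. Since $\mathrm{supp}(\mu)=\T^2$, we obtain $X_{\psi}=\T^2$. Now the Proposition tells us that every invariant Borel probability measure of $f$ (being automatically supported on $X_{\psi}=\T^2$) belongs to $\mathcal{M}_{\psi}$. In particular, for every $\nu\in\mathcal{M}(f)$ we have $\psi(\mathrm{rot}(\nu))=\alpha(\psi)$. Since $\mathrm{rot}(\check f)$ is by definition the set of rotation vectors of invariant measures, it would follow that $\mathrm{rot}(\check f)$ is contained in the affine line $\{v\in\R^2:\psi(v)=\alpha(\psi)\}$.

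This contradicts the assumption that $\mathrm{rot}(\check f)$ has non-empty interior in $\R^2$, and thereby completes the argument. There is no technical obstacle: once the Proposition is in hand, the proof is a one-line convexity argument together with the full-support assumption. The only point that deserves care is the appeal to a supporting hyperplane through a boundary point of a planar convex body with non-empty interior, which is a standard fact from convex geometry in $\R^2$ and needs no further discussion.
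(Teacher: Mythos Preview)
Your proof is correct and follows exactly the route the paper intends: the paper merely asserts that the corollary ``follows immediately from the previous proposition'' (Proposition~\ref{pr: rotation number set}), and your argument is precisely the one-line contrapositive deduction from that proposition together with the supporting-line characterisation of a boundary point of a planar convex body.
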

This result was known for $\mathcal{C}^{1+\epsilon}$ diffeomorphisms (see \cite {AZ}).

 The next resut is due to Llibre and MacKay, see \cite{LlM}. Its original proof uses Thurston-Nielsen theory of surface homeomorphisms, more precisely the authors prove that there exists a finite invariant set $X$ such that $f\vert_{\T^2\setminus X}$ is isotopic to a pseudo-Anosov map. We will give here an alternative proof by exhibiting $(n,\varepsilon)$ separated sets constructed with the help of transverse trajectories.

\begin{theoremain}\label{th:Llibre_MacKay_intro}
Let $f$ be a homeomorphism of $\T^2$ that is isotopic to the identity and $\check f$ a lift of $f$ to  $\R^2$. If $\mathrm{rot}(\check f)$ has a non empty interior, then the topological entropy of
$f$ is positive.
\end{theoremain}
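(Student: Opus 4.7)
Fix a maximal isotopy $I=(f_t)_{t\in[0,1]}$ joining the identity to $f$, and pick a transverse foliation $\mathcal{F}$ on $\T^2$ associated to $I$; both exist by the results cited in the introduction. The plan is to produce two periodic points whose whole transverse trajectories intersect transversally relative to $\mathcal{F}$, and then invoke Theorem~\ref{th:transverse_imply_entropy}; since periodic points are recurrent and $\T^2$ is closed, this immediately yields positive topological entropy. Equivalently, iterating Proposition~\ref{pr: fundamental} at the crossing of the two trajectories produces, for each word in $\{1,2\}^n$, a distinct concatenated transverse trajectory of length roughly $n$; projecting back gives the announced $(n,\varepsilon)$-separated sets, which is the route the introduction advertises.

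To produce the two periodic orbits, I would appeal to Franks' realization theorem for torus homeomorphisms: since $\mathrm{rot}(\check f)$ has nonempty interior, every rational point in its interior is the rotation vector of some periodic orbit of $\check f$. Choose $\rho_1,\rho_2\in\mathrm{int}(\mathrm{rot}(\check f))\cap\Q^2$ linearly independent over $\R$, and let $z_1,z_2$ be corresponding periodic points of periods $q_1,q_2$. The transverse trajectory $I_{\mathcal{F}}^{q_i}(z_i)$ closes to a transverse loop $\Gamma_i\subset\T^2$ whose homology class in $H_1(\T^2;\Z)$ equals $q_i\rho_i$, and the lift of the whole transverse trajectory of $z_i$ to $\R^2$ is a bi-infinite transverse path with asymptotic translation vector $q_i\rho_i$. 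Since $q_1\rho_1$ and $q_2\rho_2$ are linearly independent, suitable deck translates of these two lifts must cross somewhere in $\R^2$.

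The delicate point, and in my view the main obstacle, is showing that at some crossing the two paths meet transversally relative to $\mathcal{F}$, and not merely that they intersect as topological curves while staying on the same local side of each other along a shared leaf. The intuition is that if every intersection were tangential then the two bi-infinite transverse paths would have to shadow each other along common leaves of the lifted foliation, which is incompatible with their having linearly independent asymptotic translation vectors. To make this rigorous I would exploit the freedom to choose deck translates of the second lift (giving infinitely many crossings) and argue that only a controlled set of these can be tangential; if necessary I would perturb the choice of $\rho_1,\rho_2$ inside $\mathrm{int}(\mathrm{rot}(\check f))$, invoking Franks' theorem again to obtain the required genericity. Once a single transverse crossing of the whole transverse trajectories of $z_1,z_2$ is secured, Theorem~\ref{th:transverse_imply_entropy} yields $h_{\mathrm{top}}(f)>0$, completing the proof.
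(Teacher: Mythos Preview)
Your overall strategy matches the paper's first route exactly: produce two periodic points (via Franks' realization theorem) whose transverse trajectories intersect $\mathcal{F}$-transversally, then apply Theorem~\ref{th:transverse_imply_entropy}. The paper even remarks that this route works, before giving an alternative direct construction of separated sets.

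However, you correctly identify the main obstacle and then fail to resolve it. Your heuristic that tangential intersections would force the two lifts to ``shadow each other along common leaves'' is not a proof, and your proposed fixes (choosing many deck translates, perturbing $\rho_1,\rho_2$) do not work without further input: nothing prevents \emph{all} intersections from being non-$\mathcal{F}$-transverse if the leaves of $\check{\mathcal{F}}$ are themselves unbounded lines running roughly in the direction of one of your trajectories. The missing ingredient is precisely control on the leaves.

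The paper fills this gap as follows. First it reduces to the case $(0,0)\in\mathrm{int}(\mathrm{rot}(\check f))$ and then proves (Proposition~\ref{pr:Bounded_Leaves}, via the transverse homology set and Proposition~\ref{pr: THS}) that the leaves of $\check{\mathcal{F}}$ are \emph{uniformly bounded}. This is the crucial step you are missing. Once the leaves are bounded, take $z_0,z_1$ periodic with rotation vectors $(1/N,0)$ and $(0,1/N)$; then $\gamma_0=\check I^{\Z}_{\check{\mathcal F}}(z_0)$ lies in a horizontal strip $\R\times(-K',K')$, so every leaf touching $\R\times(-\infty,-K]$ lies in $r(\gamma_0)$ and every leaf touching $\R\times[K,+\infty)$ lies in $l(\gamma_0)$. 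Now Corollary~\ref{co: no_intersection_two_sets} says that a transverse path which does \emph{not} intersect $\gamma_0$ $\check{\mathcal F}$-transversally cannot meet both $r(\gamma_0)$ and $l(\gamma_0)$, hence has $\pi_2$-image bounded on one side. Since $\gamma_1$ grows linearly in the vertical direction, some integer translate of it must intersect $\gamma_0$ $\check{\mathcal F}$-transversally. This is the rigorous version of your intuition, and it hinges entirely on the bounded-leaves result.
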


Our original goal, while writing this article, was to prove the following boundedness displacement result:

\begin{theoremain}\label{th:recurrent_on_the_lift_intro} We suppose that $M$ is a compact orientable surface furnished with a Riemannian structure. We endow the universal covering space $\check M$ with the lifted structure and denote by $d$ the induced distance. Let $f$ be a homeomorphism of $M$ isotopic to the identity and $\check f$ a lift to $\check M$ naturally defined by the isotopy. Assume that there exists an open topological disk $U\subset M$ such that the fixed points set of $\check f$ projects into $U$. Then;

\smallskip
\noindent-\enskip either there exists $K>0$ such that $d(\check f^n(\check z), \check z)\leq  K$, for all $n\geq 0$ and all \textcolor{black}{bi-recurrent} point $\check z$ of $\check f$;

\smallskip
\noindent-\enskip or there exists a nontrivial covering automorphism $T$ and $q>0$ such that, for all $r/s\in (-1/q,1/q)$, the map $\check f^{q}\circ T^{-p}$ has a fixed point. In particular, $f$ has non-contractible periodic points of arbitrarily large prime period.
\end{theoremain}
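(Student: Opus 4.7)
The plan is to argue by dichotomy: I assume the first alternative fails and produce the data witnessing the second. Fix a maximal isotopy $I=(f_t)_{t\in[0,1]}$ joining $\mathrm{id}$ to $f$ (whose existence is guaranteed by \cite{BCL}), together with an associated transverse foliation $\mathcal F$, and lift them to $\check I$ and $\check{\mathcal F}$ on $\check M$. Since the singular set of $\check{\mathcal F}$ coincides with $\mathrm{Fix}(\check f)$, the hypothesis places it inside the disjoint union $\bigsqcup_{S\in\Gamma} S(\check U)$ of translates of a chosen lift $\check U$ of $U$; outside this union, $\check{\mathcal F}$ is a regular oriented foliation of $\check M$.

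Assume that some recurrent point $\check z_0\in\check M$ of $\check f$ has unbounded forward orbit (a standard diagonal argument produces such a point from the negation of the first alternative), and write $z_0=\pi(\check z_0)$, which is recurrent for $f$. Pick $n_k\to\infty$ with $f^{n_k}(z_0)\to z_0$ and lift these returns to get $T_k\in\Gamma$ with $\check f^{n_k}(\check z_0)\to T_k(\check z_0)$. Recurrence of $\check z_0$ in $\check M$ produces an infinite subsequence on which $T_k=\mathrm{id}$, while the unboundedness of the orbit forces $T_k\neq\mathrm{id}$ along another infinite subsequence; fix a non-trivial value $T$ attained infinitely often. Closing up the transverse arc $\check I^{n_k}_{\check{\mathcal F}}(\check z_0)$ corresponding to a ``$T$-return'', whose endpoints both lie in translates of $\check U$, yields a transverse loop $\Gamma_T\subset M$ representing $T\in\pi_1(M)$. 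Pairing a return to $\check z_0$ with a return to $T(\check z_0)$ along the same unbounded orbit (both available by the recurrence of $z_0$ in $M$) produces two deck-translate excursion arcs in $\check M$ that must cross, transversally relative to $\check{\mathcal F}$ since the foliation is regular outside the lifts of $U$; projecting, one obtains a transverse self-intersection of the whole transverse trajectory $I^{\Z}_{\mathcal F}(z_0)$ in $M$.

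Now apply Proposition~\ref{pr: fundamental} iteratively to this transverse self-intersection together with the loop $\Gamma_T$. For every rational $r/s$ in a sufficiently small symmetric interval $(-1/q,1/q)$---where $q$ is dictated by the diameter of $\Gamma_T$ relative to a fundamental domain---the forced concatenations produce a finite transverse trajectory $I^s_{\mathcal F}(z)$ equivalent to a loop representing $T^r\in\pi_1(M)$. Proposition~\ref{pr: realization} then converts each such trajectory into a periodic point $z$ of $f$ whose lift is a fixed point of $\check f^s\circ T^{-r}$. Taking $r=1$ and $s$ prime with $s>q$ delivers the non-contractible periodic orbits of arbitrarily large prime period.

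The hard part is the passage from a single transverse self-intersection to a \emph{full} interval $(-1/q,1/q)$ of realised rationals. This requires iterating Proposition~\ref{pr: fundamental} in a delicate, controlled way so as to simulate all sufficiently small rational slopes around the dominant deck direction $T$---rather than merely a sparse subset---and then checking that the resulting finite transverse trajectories satisfy the hypotheses of Proposition~\ref{pr: realization}. The scheme parallels Franks' realisation of every rational vector in the interior of a torus rotation set by a periodic orbit, adapted here to the universal cover of a general compact surface entirely within the maximal-isotopy/transverse-foliation framework.
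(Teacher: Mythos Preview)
Your proposal has a genuine gap at the crucial step, and it misplaces where the real difficulty lies.

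The problematic sentence is: ``Pairing a return to $\check z_0$ with a return to $T(\check z_0)$\dots produces two deck-translate excursion arcs in $\check M$ that must cross, transversally relative to $\check{\mathcal F}$ since the foliation is regular outside the lifts of $U$.'' This conflates topological intersection with $\mathcal F$-transverse intersection. Two transverse paths can meet a common leaf---even cross each other topologically---without being $\mathcal F$-transverse: what is required is that in the universal cover of the domain the paths diverge on \emph{both} sides in the specific order sense of Section~3.2. Regularity of $\check{\mathcal F}$ outside the lifts of $U$ does not force this; indeed, two translates $S(\check\gamma)$ and $S'(\check\gamma)$ of an admissible path may well be $\check{\mathcal F}$-equivalent on the overlap, or separate in a non-transverse way. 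Nothing in your argument rules this out, and it is exactly the obstacle the paper works to overcome.

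The paper's route is substantially different at this point. First, a uniform compactness argument (Lemma~\ref{lm:crosses_three}) shows that once $d(\check f^n(\check z),\check z)$ is large, the transverse trajectory $\check I_{\check{\mathcal F}}^n(\check z)$ must cross \emph{three} distinct deck translates $T_1(\check\phi),T_2(\check\phi),T_3(\check\phi)$ of a single leaf. The number three is essential: Proposition~\ref{pr:technical_non-contractible} then argues that if no two of the translated loops $T_i^{-1}(\check\Gamma)$ were $\check{\mathcal F}$-transverse, they could be made pairwise disjoint (Proposition~\ref{pr:transverse_on_surface}), and a winding-number argument on the common leaf $\check\phi$ (Sub-lemma~\ref{sle:existence singular points}) would force one compact singular set $\Sigma_{\check\Gamma'_i}$ to contain another, contradicting freeness of the deck action. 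This is how the $\mathcal F$-transverse intersection is actually produced.

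Once $\check\Gamma$ and $T(\check\Gamma)$ are known to intersect $\check{\mathcal F}$-transversally, the passage to the full interval $(-1/q,1/q)$ is comparatively routine: Corollary~\ref{co: induction transverse} makes the concatenations $\prod T^{\pm i}(\check\gamma\vert_{[s-L,t+L]})$ admissible of order $nq$, and Proposition~\ref{pr: realization} realises them by periodic orbits. So the ``hard part'' you identify is not where the work is; the work is in manufacturing the $\mathcal F$-transverse intersection, and your sketch does not do it.
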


Theorem \ref{th:recurrent_on_the_lift_intro} has an interesting consequence for torus homeomorphisms. Say a homeomorphism $f$ of $\T^2$ is {\it Hamiltonian} if it preserves a measure $\mu$ with full support and it has a lift $\check f$ (called the Hamiltonian lift of $f$) such that the rotation vector of $\mu$ is null. 

\begin{corollarymain}\label{co:hamiltonian_bounded_intro}
Let $f$ be a Hamiltonian homeomorphism of $\T^2$ such that all its periodic points are contractible, and such that it fixed point set is contained in a  topological disk. Then there exists $K>0$ such that if $\check f$ is the Hamiltonian lift of $f$, then for every $z$ and every $n\geq 1$, one has $\Vert \check f^n(z)-z\Vert\leq K$.
\end{corollarymain}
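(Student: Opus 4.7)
The plan is to apply Theorem \ref{th:recurrent_on_the_lift_intro} to the Hamiltonian lift $\check f$, rule out the second alternative of the resulting dichotomy using the hypothesis that every periodic point of $f$ is contractible, and then promote the boundedness conclusion from $\check f$-recurrent lifts to arbitrary points of $\R^2$ using the full-support invariant measure $\mu$ via Atkinson's Lemma and continuity.

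I would begin by choosing an isotopy $I$ from the identity to $f$ whose naturally defined lift to $\R^2$ coincides with the Hamiltonian lift $\check f$; since homotopy classes of such isotopies form a $\Z^2$-torsor in bijection with the $\Z^2$-torsor of lifts of $f$, this is always possible. The fixed-point set of $\check f$ projects into the fixed-point set of $f$, which by hypothesis lies in an open topological disk of $\T^2$, so the standing assumption of Theorem \ref{th:recurrent_on_the_lift_intro} is satisfied. That theorem then yields the dichotomy: either (a) there is a constant $K>0$ with $d(\check f^n(\check z),\check z)\leq K$ for every $n\geq 0$ and every $\check f$-recurrent $\check z\in\R^2$, or (b) $f$ has non-contractible periodic points (with respect to $\check f$) of arbitrarily large prime period. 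Since a periodic orbit of $f$ is contractible with respect to $\check f$ precisely when it closes up in $\R^2$ under $\check f$, the standing hypothesis excludes (b), so (a) holds.

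To pass from $\check f$-recurrent lifts to every point of $\R^2$, I would use the fact that $\mu$ is an $f$-invariant probability measure of full support with $\mathrm{rot}(\mu)=0$. Applying Atkinson's Lemma to the displacement cocycle $\varphi:\T^2\to\R^2$ lifted by $\check f-\mathrm{Id}$, together with the ergodic decomposition, one obtains that for $\mu$-a.e.\ $z\in\T^2$ the Birkhoff sums $S_n\varphi(z)=\check f^n(\check z)-\check z$ return to every neighborhood of the origin along a subsequence, so every lift $\check z\in\pi^{-1}(z)$ is $\check f$-recurrent. Since $\mathrm{supp}(\mu)=\T^2$, the collection of such $\check z$ is dense in $\R^2$, and on this dense set alternative (a) furnishes the uniform bound $\Vert\check f^n(\check z)-\check z\Vert\leq K$. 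For arbitrary $z\in\R^2$, approximation by such lifts together with the continuity of $\check f^n$ for each fixed $n$ transfers the bound to $z$.

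The main obstacle is the third step, namely producing a dense set of $\check f$-recurrent lifts. Poincar\'e recurrence on $\T^2$ is by itself inadequate, since an $f$-recurrent point need not possess any $\check f$-recurrent lift in $\R^2$; it is precisely the vanishing of $\mathrm{rot}(\mu)$ that, via a vector-valued version of Atkinson's Lemma applied over the ergodic decomposition of $\mu$, forces $S_n\varphi$ to return arbitrarily close to the origin for $\mu$-a.e.\ $z$, producing the dense set of $\check f$-recurrent lifts on which Theorem \ref{th:recurrent_on_the_lift_intro} supplies the uniform bound that then extends to all of $\R^2$ by continuity.
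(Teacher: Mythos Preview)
Your outline is sound through ruling out the second alternative of Theorem~\ref{th:recurrent_on_the_lift_intro}, but the step you yourself flag as the main obstacle is a genuine gap. Atkinson's Lemma (Proposition~\ref{pr:atkinson}) is a statement about \emph{real-valued} cocycles, and there is no general ``vector-valued Atkinson Lemma'' yielding recurrence of the $\R^2$-valued displacement cocycle from $\mathrm{rot}(\mu)=0$ alone. Two things fail. First, passing to the ergodic decomposition does not help: an ergodic component $\mu'$ of $\mu$ need not satisfy $\mathrm{rot}(\mu')=0$ (only the barycenter of the rotation vectors vanishes), and whenever $\mathrm{rot}(\mu')\neq 0$ almost every $\mu'$-point lifts to a point drifting to infinity, hence not $\check f$-recurrent. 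Second, even for an ergodic $\mu'$ with $\mathrm{rot}(\mu')=0$, applying scalar Atkinson to each coordinate of $\varphi$ only produces, for each coordinate separately, a subsequence along which that coordinate is small; nothing forces the two subsequences to intersect.

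The paper closes this gap by first pinning down the rotation set. It proves a trichotomy for Hamiltonian torus homeomorphisms with fixed-point set in a disk (using Corollary~\ref{co: Boyland}, Theorem~\ref{th:impossible_rotation_set}, Proposition~\ref{pr:rotation_recurrent}, Theorem~\ref{th:recurrent_on_the_lift} and \cite{GKT}): either $\mathrm{rot}(\check f)$ has nonempty interior containing the origin, or it is a nontrivial segment of rational slope with the origin in its interior, or it equals $\{(0,0)\}$. Your hypothesis that all periodic points are contractible rules out the first two branches, each of which forces periodic points of nonzero rotation vector. With $\mathrm{rot}(\check f)=\{(0,0)\}$ in hand, the origin is trivially a vertex of the rotation set, and Proposition~\ref{pr:rotation_recurrent} (from \cite{KT2}) --- the correct substitute for a vector-valued Atkinson Lemma, valid precisely under the vertex hypothesis --- gives that $\mu$-almost every point lifts to a $\check f$-recurrent point. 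Your density-and-continuity argument then applies verbatim. So your strategy is essentially the paper's, but it is missing the intermediate step $\mathrm{rot}(\check f)=\{(0,0)\}$, without which the density of $\check f$-recurrent lifts is not available.
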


The study of non-contractible periodic orbits for Hamiltonian maps of sympletic manifolds has been receiving increased attention (see for instance \cite{GG}). A natural question in the area, posed by V. Ginzburg, is to determine if the existence of non-contractible periodic points is generic for smooth Hamiltonians.  A consequence of Corollary \ref{co:hamiltonian_bounded_intro}  is an affirmative answer for the case of the torus:

{\begin{propositionmain}\label{pr:Ginzburg_intro}
Let $\mathrm{Ham}_{\infty}(\T^2)$ be the set of Hamiltonian $C^{\infty}$ diffeomorphisms of $\T^2$ endowed with the Whitney $C^{\infty}$- topology. There exists a residual subset $\mathcal A$ of $\mathrm{Ham}_{\infty}(\T^2)$ such that every $f$ in $\mathcal A$ has non-contractible periodic points.
\end{propositionmain}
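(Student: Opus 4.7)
The plan is to apply Baire's theorem in $\mathrm{Ham}_\infty(\T^2)$, which is a Baire space in the Whitney $C^\infty$-topology. For each integer $N\ge 1$ let $\mathcal A_N\subset \mathrm{Ham}_\infty(\T^2)$ be the set of $f$ admitting a non-degenerate non-contractible periodic orbit of period at most $N$; this set is open because non-degenerate periodic orbits persist under small $C^1$-perturbations in the same homotopy class. Setting $\mathcal A=\bigcup_N \mathcal A_N$, it suffices to prove that $\mathcal A$ is dense in $\mathrm{Ham}_\infty(\T^2)$, since then $\mathcal A$ is open and dense, hence residual, and every $f\in\mathcal A$ has a non-contractible periodic point.

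To this end I would invoke a Kupka--Smale type genericity result for area-preserving $C^\infty$-diffeomorphisms: there exists a residual $\mathcal R\subset \mathrm{Ham}_\infty(\T^2)$ such that every $f\in\mathcal R$ has only elementary (hence isolated, hence finitely many of each period) periodic orbits. If $f\in\mathcal R$ has a non-contractible periodic orbit then $f\in\mathcal A$; otherwise, the fixed-point set of $f$ is finite, hence contained in an open topological disk, so Corollary \ref{co:hamiltonian_bounded_intro} furnishes $K=K(f)>0$ such that $\|\check f^n(z)-z\|\le K$ for every $z\in\R^2$ and $n\ge 1$. Setting
\begin{equation*}
B_K=\{f\in\mathrm{Ham}_\infty(\T^2):\|\check f^n(z)-z\|\le K\text{ for all }n\ge 1,\,z\in\R^2\},
\end{equation*}
which is closed by continuity of $f\mapsto\check f$ in the $C^0$-topology, the previous observation gives the inclusion $\mathcal R\setminus\mathcal A\subset\bigcup_{K\ge 1}B_K$. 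It is therefore enough to verify that each $B_K$ is nowhere dense; then $\bigcup_K B_K$ is meager, and $\mathcal A$ contains the residual set $\mathcal R\setminus\bigcup_K B_K$.

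The main obstacle is thus to show that $B_K$ has empty interior for every $K$. Given $f_0\in B_K$, I would produce arbitrarily $C^\infty$-small perturbations $g\in\mathrm{Ham}_\infty(\T^2)$ whose lift has some orbit of displacement exceeding $K$. A natural candidate is $g=f_0\circ \phi_{tH}$ for $H(x,y)=\sin(2\pi y)$ and small $t>0$: the time-$t$ flow $\phi_{tH}$ is a near-identity Hamiltonian diffeomorphism that slides horizontal lines at speed $2\pi t\cos(2\pi y)$, so its own lift develops linearly growing displacement away from the two horizontal circles $y=1/4,3/4$. The crux is to prove that this systematic drift is not cancelled out by the iterations of $f_0$: combining Theorem \ref{th:recurrent_on_the_lift_intro} applied to $g$ (after an additional arbitrarily small perturbation into $\mathcal R$, ensuring finitely many fixed points in a disk) with an ergodic averaging argument that exploits the vanishing mean displacement of $f_0$ against the non-trivial drift contributed by $\phi_{tH}$, one forces $\check g$ either to admit a recurrent point of unbounded lift displacement or to possess non-contractible periodic points of arbitrarily large prime period; either alternative contradicts $g\in B_K$ and completes the proof.
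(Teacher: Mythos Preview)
Your Baire-category framework and the reduction to showing that each $B_K$ has empty interior are correct, but the argument stops precisely at the hard step. You yourself flag that ``the crux is to prove that this systematic drift is not cancelled out by the iterations of $f_0$,'' and then invoke an unspecified ``ergodic averaging argument.'' This is not a detail: for a general $f_0\in B_K$ there is no reason the composition $f_0\circ\phi_{tH}$ should have nontrivial rotation set or unbounded orbits. The twist $\phi_{tH}$ has zero mean drift on each ergodic component of Lebesgue measure, and $f_0$ need not preserve the horizontal circles on which $\phi_{tH}$ acts by rotation, so the drifts at $y=0$ and $y=1/2$ can be mixed and cancelled by the dynamics of $f_0$. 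Invoking Theorem~\ref{th:recurrent_on_the_lift_intro} on $g$ does not help: that theorem gives a dichotomy, and you would need an independent reason to land on the ``non-contractible periodic points'' side, which is exactly what you are trying to prove. As written, the proposal is circular at this point.

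The paper does not argue by perturbation at all. It takes a richer generic set than Kupka--Smale (elementary periodic points \emph{and} Moser stability of elliptic points \emph{and} transversality of stable/unstable manifolds) and shows directly that any such $f$ with only contractible periodic points leads to a contradiction. From bounded displacement one builds an invariant bounded open disk $V\subset\R^2$ containing a fundamental domain; Mather's theorem on prime-end rotation numbers (using the transversality hypotheses) together with results of Pixton and of Koropecki--Le Calvez--Nassiri shows $\partial V$ carries no periodic points; projecting $\bigcup_{p\in\Z^2}(\partial V+p)$ to $\T^2$ then produces a totally essential compact invariant set with empty interior and no periodic points, contradicting Koropecki's theorem on aperiodic invariant continua. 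The extra generic hypotheses beyond Kupka--Smale are essential for this chain of implications, and the argument bypasses entirely the question of whether $B_K$ is nowhere dense.
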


Let us explain now the results related to the entropy. For example we can give a short proof of the following improvement of a result due to Handel \cite{H1}.

\begin{theoremain} \label{th: H_intro} Let $f:\S^2\to\S^2$ be an orientation preserving homeomorphism such that the complement of the fixed point set is not an annulus. If $f$ is topologically transitive then the number of periodic points of period $n$ for some iterate of $f$ grows exponentially in $n$.  Moreover, the entropy of $f$ is positive.
\end{theoremain}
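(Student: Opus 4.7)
My plan is to reduce the conclusion to the structural Theorems \ref{th:transverse_imply_periodic_points} and \ref{th:transverse_imply_entropy} by producing, out of transitivity and the non-annulus hypothesis, two recurrent points whose whole transverse trajectories cross $\mathcal F$-transversally.

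\textbf{Setup.} First I would fix a maximal isotopy $I$ from the identity to $f$ using the theorem of B\'eguin--Crovisier--Le Roux (recalling that every orientation preserving homeomorphism of $\S^2$ is isotopic to the identity), and endow the invariant open surface $M := \S^2\setminus\mathrm{Fix}(I)$ with a transverse foliation $\mathcal F$ via \cite{Lec1}. Topological transitivity implies that $\mathrm{Fix}(f)$ has empty interior and that the set of recurrent points is a dense $G_\delta$ of $\S^2$, so recurrent points are dense in $M$.

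\textbf{Reduction.} Since $\S^2$ is closed, once one exhibits recurrent points $z, z' \in M$ (possibly with $z = z'$) such that $I_{\mathcal F}^{\Z}(z)$ and $I_{\mathcal F}^{\Z}(z')$ intersect $\mathcal F$-transversally, Theorem \ref{th:transverse_imply_entropy} immediately gives positive entropy and Theorem \ref{th:transverse_imply_periodic_points} gives the exponential growth of periodic points of some iterate of $f$. The whole argument thus reduces to producing such a crossing.

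\textbf{Production of the crossing.} I would argue by contradiction and suppose that no pair of whole transverse trajectories of recurrent points (including self-pairs) admits an $\mathcal F$-transverse intersection. Picking a transitive point $z_0$ yields a recurrent point whose $\mathcal F$-transverse trajectory $\Gamma := I_{\mathcal F}^{\Z}(z_0)$ has dense orbit in $\S^2$ and no $\mathcal F$-self-intersection. Lifting $\Gamma$ and its deck translates to the universal cover of $M$, the non-crossing hypothesis organises them into a laminar, cyclically ordered family; together with the dynamical density of $\Gamma$ this rigidifies $\mathcal F$ so that every leaf joins the same pair of ends of $M$ and the leaves are cyclically ordered transversally to $\Gamma$. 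This forces $M$ to be homeomorphic to an open annulus. Maximality of $I$ then forces $\mathrm{Fix}(f)=\mathrm{Fix}(I)$, since any extra fixed point of $f$ in the annulus $M$ would have a contractible trajectory relative to $\mathrm{Fix}(I)$. Hence $\S^2\setminus\mathrm{Fix}(f)$ is an annulus, contradicting the hypothesis.

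\textbf{Main obstacle.} The delicate step is the rigidity argument --- translating ``no $\mathcal F$-transverse intersection among recurrent trajectories'' into ``$M$ is an annulus''. I would carry it out by using Proposition \ref{pr: realization} to extract genuine transverse loops from the dense, non-self-crossing trajectory $\Gamma$, and then a planar topology argument on $\S^2$ to rule out more than two ends of $M$. The auxiliary case in which $M$ is a topological disk (a single fixed point of $I$) is handled by Brouwer's plane translation theorem: then $f|_M$ is fixed-point free, every orbit escapes to the unique end of $M$, and this contradicts transitivity of $f$ on $\S^2$.
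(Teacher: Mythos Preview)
Your overall plan---produce a transitive recurrent point, show its whole transverse trajectory must self-intersect $\mathcal F$-transversally, then invoke Theorems \ref{th:transverse_imply_periodic points} and \ref{th:transverse_imply_entropy}---is exactly the paper's strategy. But two steps of your execution break down.

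\textbf{The genuine gap.} Your final deduction ``maximality of $I$ forces $\mathrm{Fix}(f)=\mathrm{Fix}(I)$, since any extra fixed point of $f$ in the annulus $M$ would have a contractible trajectory'' is false. Maximality only excludes fixed points of $f$ whose trajectory is \emph{contractible} in $\mathrm{dom}(I)$; when $\mathrm{dom}(I)$ is an annulus, a fixed point $z\in\mathrm{dom}(I)$ can perfectly well have $I(z)$ homotopic to a nonzero multiple of the core circle, and such a $z$ is not ruled out by maximality. So from ``$\mathrm{dom}(I)$ is an annulus'' you cannot conclude ``$\S^2\setminus\mathrm{Fix}(f)$ is an annulus'', and your contradiction collapses. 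The paper circumvents this entirely by working in the other direction: it first analyses $\mathrm{Fix}(f)$ directly (Brown--Kister gives that $\S^2\setminus\mathrm{Fix}(f)$ is connected, Brouwer rules out a disk, and the non-annulus hypothesis then forces at least three connected components of $\mathrm{Fix}(f)$), and only \emph{then} chooses an isotopy $I'$ fixing one point from each of three distinct components. Any maximal $I\succeq I'$ then has $\mathrm{sing}(I)$ meeting three components of $\mathrm{Fix}(f)$, so $\mathrm{dom}(I)$ is guaranteed not to be an annulus from the outset.

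\textbf{The rigidity step is simpler than you suggest.} You describe lifting to the universal cover, laminar orderings, and an appeal to Proposition \ref{pr: realization}. None of that is needed. Once $\mathrm{dom}(I)$ is known not to be an annulus, the transitive point $z_0$ has $\omega(z_0)=\alpha(z_0)=\S^2$, so $I_{\mathcal F}^{\Z}(z_0)$ is $\mathcal F$-bi-recurrent and (via Lemma \ref{le: continuity}) contains every admissible segment up to equivalence, hence crosses every leaf of $\mathcal F$. Proposition \ref{pr:transverse_on_sphere} then says directly: either $I_{\mathcal F}^{\Z}(z_0)$ has an $\mathcal F$-transverse self-intersection, or the union of leaves it meets---which is all of $\mathrm{dom}(I)$---is an annulus. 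The latter is already excluded, so you get the self-intersection in one line.
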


Another entropy result we obtain is related to the existence and continuous variation of rotation numbers for homeomorphisms of the open annulus. A stronger version of this result for diffeomorphisms was already proved in an unpublished paper of Handel \cite{H2}.
Given a homeomorphism of $\T^1\times\R$ and a lift $\check f$ to $\R^2$, we say that a point $z\in\T^1\times\R$ {\it has a rotation number} $\mathrm{rot}(z)$ if the $\omega$-limit of its orbit is not empty, and if for any compact set $K\subset\T^1\times\R$ and every increasing sequence of integers $n_k$ such that $f^{n_k}(z)\in K$ and any $\check z\in\pi^{-1}(z)$, 
$$\lim_{k\to\infty}\frac{1}{n_k}\left( \pi_1(\check f^{n_k}(\check z)- \pi_1(\check z)\right)=\mathrm{rot}(z),$$
where $\pi$ is the covering projection from $\R^2$ to $\T^1\times\R$ and $\pi_1:\R^2\to \R$ is the projection on the first coordinate. 

\begin{theoremain}\label{th:continuous_rotation_intro}
 Let $f$ be a homeomorphism of the open annulus $\T^1\times \R$ isotopic to the identity, $\check f$ a lift of $f$ to the universal covering and $f_{\mathrm{sphere}}$ be the natural extension of $f$ to the sphere obtained by compactifying each end with a point. If the topological entropy of $f_{\mathrm{sphere}}$ is zero, then each bi-recurrent point (meaning forward and backward recurrent) has a rotation number, 
and the function $z\mapsto \mathrm{rot}(z)$ is continuous on the set of bi-recurrent points.
\end{theoremain}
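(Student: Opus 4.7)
My plan is to extend $f$ to the sphere, use the zero-entropy hypothesis via Theorem \ref{th:transverse_imply_entropy} to forbid transverse intersections among whole transverse trajectories of bi-recurrent points, and then exploit this rigidity both to construct the rotation numbers and to prove their continuity. Explicitly, I would compactify each end of the open annulus $\T^1\times\R$ with a fixed point, producing $f_{\mathrm{sphere}}:\S^2\to\S^2$ with added points $N$, $S$. Using \cite{BCL}, choose a maximal isotopy $I$ of $f_{\mathrm{sphere}}$ whose fixed-point set contains $N$ and $S$, and let $\mathcal F$ be an associated transverse foliation. The restriction of $\mathcal F$ to the open annulus is non-singular and oriented, and lifts to a foliation $\check{\mathcal F}$ on $\R^2$ invariant under the horizontal deck translation $T$. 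For $z$ with lift $\check z$, the endpoint of $\check I^{n}_{\mathcal F}(\check z)$ is exactly $\check f^{n}(\check z)$, so the rotation behaviour of $z$ can be read directly from the horizontal displacement of the transverse trajectory.

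A bi-recurrent point of $f$ is bi-recurrent for $f_{\mathrm{sphere}}$. Since $h_{\mathrm{top}}(f_{\mathrm{sphere}})=0$, Theorem \ref{th:transverse_imply_entropy} forbids any transverse $\mathcal F$-intersection, and any $\mathcal F$-self-intersection, among the whole transverse trajectories $I^{\Z}_{\mathcal F}(z)$ of bi-recurrent points of $f$. Equivalently, in $\R^2$ no integer translate $T^m\check I^{\Z}_{\mathcal F}(\check z')$ crosses $\check I^{\Z}_{\mathcal F}(\check z)$ transversely with respect to $\check{\mathcal F}$, for any bi-recurrent $z, z'$ and any $m\in\Z$. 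This non-crossing property is the key rigidity that drives the rest of the argument.

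To produce the rotation number, fix a bi-recurrent $z$, a lift $\check z$ and a compact set $K\ni z$. Choose times $n_k\to\infty$ with $f^{n_k}(z)\in K$ and $f^{n_k}(z)\to z$, and let $m_k\in\Z$ satisfy $\check f^{n_k}(\check z)-T^{m_k}\check z$ bounded. If $m_k/n_k$ admitted two distinct subsequential limits, one could extract two long segments of $\check I^{\Z}_{\mathcal F}(\check z)$ starting near $\check z$ and closing near integer translates of $\check z$, but with markedly different average horizontal slopes; comparing them with a suitably chosen translate $T^m\check I^{\Z}_{\mathcal F}(\check z)$ would yield a transverse crossing in the leaf space of $\check{\mathcal F}$, projecting to a transverse $\mathcal F$-self-intersection of $I^{\Z}_{\mathcal F}(z)$ and contradicting the rigidity established above. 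So $\mathrm{rot}(z):=\lim m_k/n_k$ is well-defined, and varying $K$ and the lift shows it matches the definition in the statement.

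For continuity, suppose $z_k\to z$ are bi-recurrent with $\mathrm{rot}(z_k)\to\rho\neq\mathrm{rot}(z)$. Taking $n$ so large that $n|\rho-\mathrm{rot}(z)|$ is much bigger than $1$, and then closing up the lifted transverse trajectories of $z$ and $z_k$ near their starting points via their respective bi-recurrences, the winding mismatch forces some integer translate of $I^{\Z}_{\mathcal F}(z_k)$ to cross $I^{\Z}_{\mathcal F}(z)$ transversely relative to $\mathcal F$, again contradicting the rigidity property. The main technical obstacle, shared by the existence and continuity arguments, is the geometric lemma that turns non-crossing into controlled asymptotic horizontal displacement: the absence of $\check{\mathcal F}$-transverse crossings among integer translates of lifted whole transverse trajectories should impose a linear ordering on their asymptotic horizontal behaviour, so that bi-recurrence converts any discrepancy in rotational speed into a genuine transverse crossing. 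Making this precise requires careful topological work with $\check{\mathcal F}$ and its leaf space, and I expect it to account for most of the proof.
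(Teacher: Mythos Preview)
Your core geometric lemma --- that non-crossing among integer translates of lifted whole transverse trajectories controls their asymptotic horizontal displacement --- is not merely hard to prove; it is false for a single foliation. Consider an integrable twist map on the annulus (each horizontal circle invariant with its own rotation number) and take $\mathcal F$ to be the vertical foliation. Every lifted whole transverse trajectory is a horizontal line, so no two integer translates ever intersect $\check{\mathcal F}$-transversally, yet rotation numbers vary from circle to circle. The non-crossing property says nothing about \emph{speed} along the trajectory, which is exactly what the rotation number measures; two long recurrent segments of $\check I^{\Z}_{\mathcal F}(\check z)$ with different average horizontal slopes need not produce any $\check{\mathcal F}$-transverse crossing with a translate. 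Both your existence and your continuity arguments rest on this false lemma. (A smaller slip: a maximal isotopy with $\{N,S\}\subset\mathrm{sing}(I)$ will typically acquire further singular points inside the annulus, so $\mathcal F$ restricted to the annulus need not be non-singular.)

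The paper's proof avoids this by working not with one foliation but with a whole family $(I_{p,q},\mathcal F_{p,q})$, one for each rational $p/q$, associated to the lift $\check f^{q}\circ T^{-p}$ of $f^{q}$. A bi-recurrent point $z$ of $f$ is also bi-recurrent for $f^q$, and since $h(f^q)=0$, Theorem~\ref{th:transverse_imply_entropy} together with Proposition~\ref{pr:transverse_on_sphere} forces the whole $\mathcal F_{p,q}$-transverse trajectory of $z$ to be the natural lift of a simple loop $\Gamma_{p,q}(z)$. If this loop is inessential one reads off $\mathrm{rot}(z)=p/q$; if it is essential with the upper end on its left (resp.\ right), a monotonicity argument in the lifted strip gives $\mathrm{rot}(z)>p/q$ (resp.\ $<p/q$). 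Running this threshold test over all rationals yields a Dedekind cut determining $\mathrm{rot}(z)$; continuity follows because each map $z\mapsto\Gamma_{p,q}(z)$ is locally constant on bi-recurrent points; and finiteness of $\mathrm{rot}(z)$ requires a separate free-disk argument via Proposition~\ref{pr:freedisktoleaf}. The idea you are missing is that deciding ``$\mathrm{rot}(z)>p/q$'' requires comparing the orbit of $z$ against the $p/q$-periodic structure --- encoded here by the singular set of $I_{p,q}$ --- rather than against translates of its own trajectory in a fixed foliation.
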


Let us finish with a last application. J. Franks and M. Handel recently gave a classification result for area preserving diffeomorphisms of $\S^2$ with entropy $0$ (see \cite{FH}). Their proofs are purely topological but the $C^1$ assumption is needed to use a Thurston-Nielsen type classification result relative to the fixed point set (existence of a normal form) and the $C^{\infty}$ assumption to use Yomdin results on arcs whose length growth exponentially by iterates. We will give a new proof of the fundamental decomposition result (Theorem 1.2 of \cite{FH}) which is the main building block in their structure theorem. In fact we will extend their result to the case of homeomorphisms and replace the area preserving assumption by the fact that every point is non wandering.

\begin{theoremain}\label{th: FH_intro} Let $f:\S^2\to\S^2$ be an orientation preserving homeomorphism such that $\Omega(f)=\S^2$ and $h(f)=0$. Then there exists a family of pairwise disjoint invariant open sets $(A_{\alpha})_{\alpha\in\mathcal A}$ whose union is dense such that:

\smallskip
\noindent{\bf i)}\enskip each $A_{\alpha}$ is an open annulus;

\smallskip
\noindent{\bf ii)}\enskip the sets $A_{\alpha}$ are the maximal fixed point free invariant open annuli;

\smallskip
\noindent{\bf iii)}\enskip the $\alpha$-limit \textcolor{black}{set of a point $z\not\in\bigcup_{\alpha\in\mathcal A}A_{\alpha}$ is included in a single connected component of the fixed point set $\mathrm{fix}(f)$ of $f$, and the same holds for the $\omega$-limit set of $z$};

\smallskip
\noindent{\bf iv)}\enskip let $C$ be a connected component of the frontier of $A_{\alpha}$ in $\S^2\setminus\mathrm{fix}(f)$, then the connected components of $\mathrm{fix}(f)$ that contain $\alpha(z)$ and $\omega(z)$ are independent of $z\in C$.

\end{theoremain}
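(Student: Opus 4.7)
The plan is to combine the maximal isotopy / transverse foliation machinery of the paper with the entropy forcing statement (Theorem \ref{th:transverse_imply_entropy}) to extract the decomposition. I would start by fixing a maximal isotopy $I=(f_t)_{t\in[0,1]}$ from the identity to $f$ and a transverse foliation $\mathcal F$, whose singular set $\mathrm{sing}(\mathcal F)=\mathrm{fix}(I)$ is contained in $\mathrm{fix}(f)$. Since $\Omega(f)=\S^2$, Birkhoff's recurrence theorem implies that the set $\mathrm{Rec}(f)$ of recurrent points is dense. Applying Theorem \ref{th:transverse_imply_entropy} in the contrapositive, the assumption $h(f)=0$ rules out transverse intersections of whole transverse trajectories of any pair of recurrent points (including self-intersections). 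Hence for every $z\in\mathrm{Rec}(f)\setminus\mathrm{fix}(I)$, the path $I_{\mathcal F}^{\Z}(z)$ has no transverse self-intersection and is, up to the natural equivalence, a simple oriented path in $\S^2\setminus\mathrm{sing}(\mathcal F)$.

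The second step is to promote this combinatorial ``simple path'' property to a genuine annular confinement. A simple transverse path on the sphere, avoiding the singular set and respected by the dynamics, must have both ends accumulating on components of $\mathrm{sing}(\mathcal F)$ by a Poincar\'e--Bendixson type argument applied to $\mathcal F$: otherwise one would produce either a periodic transverse loop, which via density of recurrent orbits would yield a transverse self-intersection (contradicting zero entropy), or an invariant subsurface with complicated recurrence ruled out by Theorem \ref{th: H_intro}. This lets me enclose the orbit closure of each such recurrent $z$ in a fixed-point-free invariant open annulus $U(z)\subset\S^2\setminus\mathrm{fix}(f)$. I then let $\mathcal A$ index the maximal fixed-point-free invariant open annuli that meet $\mathrm{Rec}(f)\setminus\mathrm{fix}(f)$. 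Two such annuli that met could be merged into a larger annulus (the absence of transverse intersections between their internal trajectories removes the topological obstruction), contradicting maximality; so the $A_\alpha$ are pairwise disjoint. Density of $\bigcup_\alpha A_\alpha$ follows from density of $\mathrm{Rec}(f)$, which gives (i) and (ii).

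For (iii) and (iv), I would analyze points $z$ not lying in any $A_\alpha$. If such a $z$ is not fixed, the orbit $(f^n(z))_{n\in\Z}$ is a limit of recurrent orbits, each confined to some $A_\alpha$; maximality forces these approximating annuli to collapse toward a single connected component $K$ of $\mathrm{fix}(f)$, so $\alpha(z)\cup\omega(z)\subset K$. For (iv), given a connected component $C$ of the frontier of $A_\alpha$ in $\S^2\setminus\mathrm{fix}(f)$, the leaves of $\mathcal F$ crossing $C$ have well-defined asymptotic behaviour toward $\mathrm{sing}(\mathcal F)$; using that transverse trajectories of distinct points of $C$ cannot cross each other, this behaviour is constant along $C$, and translates into the claim that the components of $\mathrm{fix}(f)$ hit by $\alpha(z)$ and $\omega(z)$ do not depend on $z\in C$.

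The main obstacle is the second paragraph: converting the combinatorial ``no transverse self-intersection of a whole trajectory'' into the topological conclusion ``orbit confined to a genuine invariant annulus with tame frontier behaviour.'' On the sphere this requires ruling out wild accumulation phenomena and then coherently gluing the annuli associated with different recurrent orbits; the non-wandering hypothesis supplies the dense family of recurrent trajectories needed to cross-check that neighbouring local annuli match up, and Theorem \ref{th: H_intro} is what eliminates the non-annular transitive pieces that would otherwise appear. A secondary technical point is the treatment of fixed points of $f$ that are not singular for $\mathcal F$, i.e.\ contractible fixed points whose $I$-trajectory is a homotopically trivial loop: such points must be placed either inside some $A_\alpha$ or on its frontier, which uses the forcing theory (Propositions \ref{pr: fundamental} and \ref{pr: realization}) to locate their transverse trajectories within the annular structure already built.
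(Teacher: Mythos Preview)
Your initial setup is correct: fix a maximal isotopy $I$ and foliation $\mathcal F$, and use $h(f)=0$ together with Theorem~\ref{th:transverse_imply_entropy} to forbid $\mathcal F$-transverse (self-)intersections of whole trajectories of recurrent points. But the conclusion you draw from this is garbled. Proposition~\ref{pr:transverse_on_sphere} says that an $\mathcal F$-bi-recurrent transverse path without $\mathcal F$-transverse self-intersection is equivalent to the natural lift of a \emph{simple transverse loop} $\Gamma$, and the union $U_\Gamma$ of leaves meeting $\Gamma$ is an open annulus. You instead say the trajectory is a ``simple oriented path'' whose ends accumulate on singular components, ``otherwise one would produce a periodic transverse loop'' --- this is backwards: producing that loop is exactly what happens and is what one wants. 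The paper's route is to collect these loops into a set $\mathcal G_{I,\mathcal F}$, define $W_\Gamma$ (points whose whole trajectory follows $\Gamma$ and hits a leaf at least twice) and $A_\Gamma=\mathrm{int}(\overline{W_\Gamma})$, and then prove that $A_\Gamma$ is an essential sub-annulus of $U_\Gamma$ by a careful argument in the annular covering associated to $\Gamma$ (this is Proposition~\ref{pr:FH_leafedversion}). That argument is not short and is not replaced by anything in your sketch.

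The issue you call a ``secondary technical point'' --- fixed points of $f$ not singular for $I$ --- is in fact the main obstacle in passing from this local decomposition (relative to one $I$) to the global statement. A single isotopy gives annuli $A_\beta\subset\mathrm{dom}(I)$ that may still contain fixed points of $f$; the paper removes them by an inductive scheme using a dense sequence $(z_i)$ in $\mathrm{fix}(f)$ and a tower of maximal isotopies on shrinking annuli, showing that the nested intersection is again an annulus and is fixed-point free. A separate ingredient is Proposition~\ref{pr:freedisktoleaf} (for any $I$-free disk $D$ and finite $X\subset D$ one can choose $\mathcal F$ with $X$ on a single leaf), which is what shows the local annuli are independent of the foliation and coincide with a purely dynamical object (the ``$I$-free-disk-recurrent'' components). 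Finally, your argument for (iv) at best yields the local version relative to $\mathrm{sing}(I)$; the global version relative to $\mathrm{fix}(f)$ needs more: the paper reduces to a well-chosen isotopy and then, in the annular covering, uses rotation-number considerations together with an intersection property (Lemma~\ref{le: intersection}) to rule out the appearance of extra inessential sub-annuli carrying recurrent dynamics.
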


\bigskip
Let us explain now the plan of the article. In the \textcolor{black}{second} section we will introduce the definitions of many mathematical objects, including precise definitions of rotation vectors and rotation sets.
The \textcolor{black}{third} section will be devoted to the study of transverse paths to a surface foliation. We will introduce the notion of a pair of equivalent paths, of a recurrent transverse path and of $\mathcal{F}$-transverse intersection between two transverse paths. An important result, which will be very useful in the proofs of Theorems  \ref{th: H_intro}  and \ref{th: FH_intro}  is Proposition \ref{pr:transverse_on_sphere} which asserts that a transverse recurrent path to
a singular foliation on $\S^2$ that has no $\mathcal{F}$-transverse self-intersection is equivalent to the natural lift of a transverse simple loop (i.e. an adapted version of Poincar\'e-Bendixson theorem).
We will recall the definition of maximal isotopies, transverse foliations and transverse trajectories in Section 4. We will state the fundamental result about $\mathcal{F}$-transverse intersections of transverse trajectories (Proposition \ref{pr: fundamental}) and its immediate consequences. An important notion that will be introduced is the notion of linearly admissible transverse loop. 
To any periodic orbit is naturally associated such a loop. A realization result (Proposition  \ref{pr: realization}) will give us sufficient conditions for a linearly admissible transverse loop to be associated to a periodic orbit. Section 5 will be devoted to the proofs of Theorem \ref{th:transverse_imply_periodic points} (about exponential growth of periodic orbits) and  Theorem \ref{th:transverse_imply_entropy} (about positiveness of the entropy). We will give the proofs of Theorem   \ref{th:recurrent_on_the_lift_intro}, \ref{th : annulus_intro} and  \ref{th: H_intro} in Section 6 while Section 7 will be almost entirely devoted to the proof of Theorem \ref{th: FH_intro} (we will prove Theorem \ref {th:continuous_rotation_intro} at the end of it).We will begin by stating a ``local version'' relative to a given maximal isotopy (Theorem \ref{th: FH local}).  We will study torus homeomorphisms in Section 8 and will give there the proofs of Theorems \ref{th:impossible_rotation_set_intro},  \ref{th:bounded_deviation_intro} and \ref{th:Llibre_MacKay_intro}.

\bigskip
We would like to thank Fr\'ed\'eric Le Roux for informing us of some important gaps in the original proofs of Theorem \ref{th:transverse_imply_periodic points}, and Theorem \ref{th:transverse_imply_entropy}.  We would also like to thank Andr\'es Koropecki for his useful comments and for discussions regarding Proposition \ref{pr:Ginzburg_intro}, and to Victor Ginzburg for presenting us the question on the genericity of non-contractible periodic points for Hamiltonian diffeomorphisms. Finally, we would like to thank the anonymous referee for the careful work and suggestions which greatly improved our text.

\section{Notations}

We will endow $\R^2$ with its usual scalar product $\langle\enskip\rangle$ and its usual orientation. We will write $\Vert\enskip\Vert$ for the associated norm. For every point $z\in \R^2$ and every set $X\subset \R^2$ we write
$d(z,X)=\inf_{z'\in X} \Vert z-z'\Vert$.  We denote by $\pi_1:(x,y)\mapsto x$ and $\pi_2:(x,y)\mapsto y$ the two projections. If $z=(x,y)$, we write $z^{\perp}=(-y,x)$.

The $r$-dimensional torus $\R^r/\Z^r$ will be denoted $\T^r$, the $2$-dimensional sphere will be denoted $\S^2$. A subset $X$ of a surface $M$ is called an {\it open disk} if it is homeomorphic to $\D=\{z\in\R^2\, ,\, \Vert z\Vert  <1\}$ and a closed disk if it is homeomorphic to $\overline{\D}=\{z\in\R^2\, ,\, \Vert z\Vert  \leq 1\}$. It is called an {\it annulus} if it homeomorphic to $\T^1\times J$, where $J$ is a non trivial interval of $\R$. In case where $J=[0,1]$, $J=(0,1)$, $J=[0,1)$, we will say that $X$ is a {\it closed annulus}, an {\it open annulus}, a {\it semi-closed annulus} respectively.

Given a homeomorphism $f$ of a surface $M$ and a point $z\in M$ we define the $\alpha$-limit set of $z$ by $\bigcap_{n\geq 0}\overline{\bigcup_{k\geq n} f^{-k}(z)}$ and we denote it $\alpha(z)$. We also define the $\omega$-limit set of $z$ by $\bigcap_{n\geq 0}\overline{\bigcup_{k\geq n} f^{k}(z)}$ and we denote it $\omega(z)$.

\subsection{ Paths, lines, loops}

 A {\it path} on a surface $M$ is a continuous map $\gamma:J\to M$ defined on an interval $J\subset\R$. In absence of ambiguity its image will also be called a path and denoted by $\gamma$. We will denote $\gamma^{-1}:-J\to M$ the path defined by  $\gamma^{-1}(t)=\gamma(-t)$.  If $X$ and $Y$ are two disjoint subsets of $M$, we will say that  a path  $\gamma:[a,b]\to M$ {\it joins} $X$ to $Y$ if $\gamma(a)\in X$ and $\gamma(b)\in Y$.  A path $\gamma:J\to M$ is {\it proper} if $J$ is open and the preimage of every compact subset of $M$ is compact. A {\it line} is an injective and proper path $\lambda:J\to M$, it inherits a natural orientation induced by the usual orientation of $\R$. If $M=\R^2$, the complement  of $\lambda$ has two connected components, $R(\lambda)$ which is on the right of $\lambda$ and $L(\lambda)$ which is on its left. More generally, if $M$ is a non connected surface with connected components homeomorphic to $\R^2$,  and if $M'$ is the connected component of $M$ containing $\lambda$, the two connected components of $M'\setminus\lambda$ will similarly be denoted $R(\lambda)$ and $L(\lambda)$.

 \medskip
 
Let us suppose that  $\lambda_0$ and $\lambda_1$ are two disjoint lines of $\R^2$. We will say that they are {\it comparable} if their right components are comparable for the inclusion. Note that $\lambda_0$ and $\lambda_1$ are not comparable if and only if  $\lambda_0$ and $(\lambda_1)^{-1}$ are comparable.

\medskip

Let us consider three lines $\lambda_0$, $\lambda_1$, $\lambda_2$ in $\R^2$. We will say that $\lambda_2$ is {\it above $\lambda_1$ relative to $\lambda_0$}  (and $\lambda_1$ is {\it below $\lambda_2$ relative to $\lambda_0$}) if:

\smallskip
\noindent-\enskip the three lines are pairwise disjoint;

\smallskip
\noindent-\enskip none of the lines separates the two others;

\smallskip

\noindent-\enskip if $\gamma_1$, $\gamma_2$ are  two disjoints paths that join $z_1=\lambda_0(t_1)$, $z_2=\lambda_0(t_2)$ to $z'_1\in\lambda_1$, $z'_2=\lambda_2$ respectively, and that do not meet the three lines but at the ends, then $t_2>t_1$.

\begin{figure}[ht!]
\hfill
\includegraphics [height=48mm]{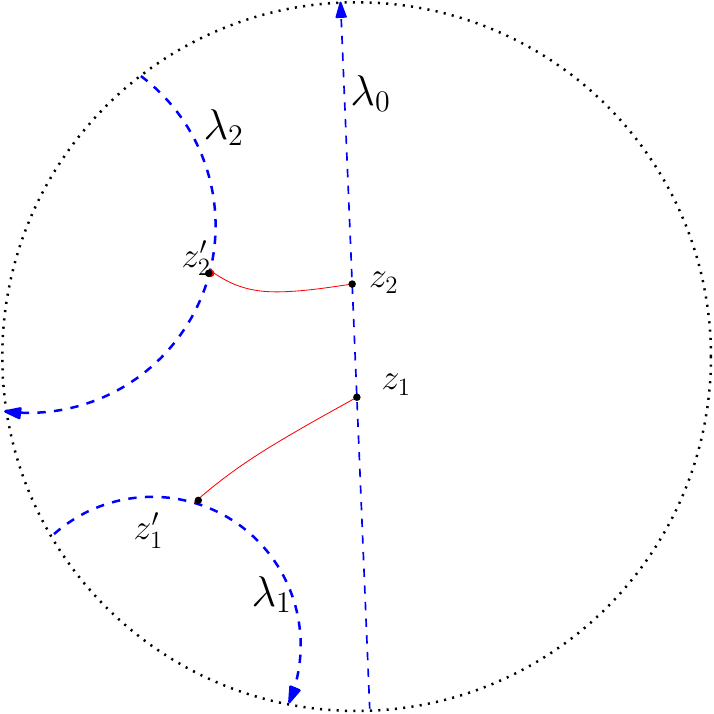}
\hfill{}
\caption{\small Order of lines relative to $\lambda_0$.}
\label{figure_order_lines}
\end{figure}

\smallskip
This notion does not depend on the orientation of $\lambda_1$ and $\lambda_2$ but depends of the orientation of $\lambda_0$ (see Figure \ref{figure_order_lines})\footnote{In all figures in the text, we will represent the plane $\R^2$ as the open disk. The reason being that in many cases we are dealing with the universal covering space of an a hyperbolic surface.}. If $\lambda_0$ is fixed, note that we get in that way an anti-symmetric and  transitive relation on every set of pairwise disjoint lines that are disjoint from $\lambda_0$.

\medskip

A proper path $\gamma$ of $\R^2$  induces a {\it dual function} $\delta$ on its complement, defined up to an additive constant as follows: for every $z$ and $z'$ in $\R^2\setminus\gamma$, the difference $\delta(z')-\delta(z)$ is the algebraic intersection number $\gamma\wedge \gamma'$ where $\gamma'$ is any path from $z$ to $z'$. If $\gamma$ is a line, there is a unique dual function $\delta_{\gamma}$ that is equal to $0$ on $R(\gamma)$ and to $1$ on $L(\gamma)$.

\medskip

Consider a unit vector $\rho \in\R^2,\, \Vert \rho\Vert=1$. 
Say that a proper path $\gamma:\R\to\R^2$ is {\it directed by} $\rho$ if
$$\lim_{t\to\pm\infty}\Vert\gamma(t)\Vert=+\infty, \enskip \lim_{t\to+\infty}\gamma(t)/\Vert\gamma(t)\Vert=\rho, \enskip \lim_{t\to-\infty}\gamma(t)/\Vert\gamma(t)\Vert=-\rho.$$
Observe that if $\gamma$ is directed by $\rho$, then  $\gamma^{-1}$ is directed by  $-\rho$ and that for every $z\in\R^2$, the translated path $\gamma+z: t\mapsto \gamma(t)+z$ is directed by $\rho$. Among the connected components of $\R^2\setminus\gamma$, two of them $R(\gamma)$ and $L(\gamma)$ are uniquely determined by the following: for every $z\in\R^2$,  one has $z-s\rho^{\perp}\in R(\gamma)$ and $z+s\rho^{\perp}\in L(\gamma)$ if $s$ is large enough. In the case where $\gamma$ is a line, the definitions agree with the former ones. Note that two disjoint lines directed by  $\rho$ are comparable.

\medskip

Instead of looking at paths defined on a real interval we can look at paths defined on an abstract interval $J$, which means a one dimensional oriented manifold homeomorphic to a real interval. If $\gamma: J\to M$ and $\gamma': J'\to M$ are two paths, if $J$ has a right end $b$ and $J'$ a left end $a'$  (in the natural sense),  and if $\gamma(b)=\gamma'(a')$, we can concatenate the two paths and define the path $\gamma\gamma'$ defined on the interval $J''=J\sqcup J'/b\sim a'$ coinciding with $\gamma$ on $J$ and $\gamma'$ on $J'$. One can define in a same way the concatenation $\prod_{l\in L} \gamma_{l}$ of paths indexed by a finite or infinite interval of $\Z$.

\medskip
A path $\gamma:\R\to M$ such that $\gamma(t+1)=\gamma(t)$ for every $t\in\R$ lifts a continuous map $\Gamma:\T^1\to M$. We will say that $\Gamma$ is a {\it loop} and $\gamma$ its {\it natural lift}. If $n\geq 1$, we denote $\Gamma^n$ the loop lifted by the path $t\mapsto\gamma(nt)$. Here again, if $M$ is oriented and $\Gamma$ homologous to zero, one can define a dual function $\delta$ defined up to an additive constant on $M\setminus\Gamma$ as follows: for every $z$ and $z'$ in $\R^2\setminus\Gamma$, the difference $\delta(z')-\delta(z)$ is the algebraic intersection number $\Gamma\wedge \gamma'$ where $\gamma'$ is any path from $z$ to $z'$.

\medskip

\bigskip

\subsection{ Rotations vectors}

Let us recall the notion of rotation vector and rotation set for a homeomorphism of a closed manifold, introduced by Schwartzman \cite{Sc} (see also Pollicott \cite{P}). Let $M$ be an oriented closed connected manifold and $I$ an {\it identity isotopy} on $M$, which means an isotopy $(f_t)_{t\in[0,1]}$ such that $f_0$ is the identity. The {\it trajectory} of a point $z\in M$ is the path $I(z): z\mapsto f_t(z)$. If $\omega$ is a closed $1$-form on $M$, one can define the integral $\int_{I(z)}\omega$ on every trajectory $I(z)$. Write $f_1=f$ and denote $\mathcal{M}(f)$ the set of invariant Borel probability measures. For every $\mu\in \mathcal{M}(f)$, the integral $\int_M \left(\int_{I(z)}\omega \right)\, d\mu(z)$ vanishes when $\omega$ is exact. One deduces that $\omega\to \int_M\left( \int_{I(z)}\omega\right) \, d\mu(z)$ defines a natural linear form on the first cohomology group $H^1(M,\R)$, and by duality an element of the first homology group $H_1(M,\R)$, which is called the {\it rotation vector} of $\mu$ and denoted $\mathrm{rot}(\mu)$. The set $\mathcal{M}(f)$, endowed with the weak$^*$ topology, being convex and compact and the map $\mu\mapsto\mathrm{rot}(\mu)$ being affine, one deduces that the set $\mathrm{rot}(I)=\{\mathrm{rot}(\mu)\, ,\, \mu\in\mathcal{M}(f)\}$ is a convex compact subset of $H_1(M,\R)$. If $M$ is a surface of genus greater than $1$ and $I'$ is a different identity isotopy given by $(f'_t)_{t\in[0,1]}$ such that $f'_1=f$, then for all $z\in M$ the trajectories $I(z)$ and $I'(z)$ are homotopic with fixed endpoints. Therefore the  rotation vectors (and the rotation set) are independent  of the isotopy, depending only on $f$. If $M$ is a torus, it depends on a given lift of $f$. Let us clarify this case (see Misiurewicz-Zieman \cite{MZ}).
Let $f$ be a homeomorphism of $\T^2$ that is isotopic to the identity and $\widetilde f$ a lift of $f$ to the universal covering space $\R^2$. The map $\widetilde f-\mathrm{Id}$ is invariant by the integer translations $z\mapsto z+p$, $p\in\Z^2$, and lifts a continuous map $\varphi:\T^2\to\R$. The rotation vector of a Borel probability measure invariant by $f$ is the integral $\int_{\T^2} \varphi\, d\mu$. If $\mu$ is ergodic, then for $\mu$-almost every point $z$, the Birkhoff means converge to $\mathrm{rot}(\mu)$. If $\widetilde z\in\R^2$ is a lift of $z$, one has
$$\lim_{n\to +\infty} {\widetilde f^n(\widetilde z)- \widetilde z\over n}= \lim_{n\to +\infty}{1\over n}\sum_{k=0}^{n-1}\varphi(f^k(z))=\mathrm{rot}(\mu).$$
We will say that $z$ (or $\widetilde z$) has a rotation vector $\mathrm{rot}(\mu)$. The rotation set $\mathrm{rot}(\widetilde f)$ is a non empty compact convex subset of $\R^2$. It is easy to prove that every extremal point of $\mathrm{rot}(\widetilde f)$ is the rotation vector of an ergodic measure. Indeed the set of Borel probability measures of rotation vector $\rho\in \mathrm{rot}(f)$ is convex and compact, moreover its extremal points are extremal in  ${\mathcal M}(f)$ if $\rho$ is extremal in $\mathrm{rot}(f)$.  Observe also that for every $p\in\Z^2$ and every $q\in \Z$, the map $\widetilde f^q+p$ is a lift of $f^q$ and one has $\mathrm{rot}(\widetilde f^q+p)=q\mathrm{rot}(\widetilde f)+p$.

We will also be concerned with annulus homeomorphisms. Let $f$ be a homeomorphism of $\A=\T^1\times[0,1]$ that is isotopic to the identity and $\widetilde f$ a lift of $f$ to the universal covering space $\R\times[0,1]$. The map $\pi_1\circ f-\pi_1$ is invariant by the translation $T: z\mapsto z+(1,0)$ and lifts a continuous map $\varphi:\A\to\R$. The rotation number $\mathrm{rot}(\mu)$ of a Borel probability measure invariant by $f$ is the integral $\int_{\A} \varphi\, d\mu$. If $\mu$ is ergodic, then for $\mu$-almost every point $z$, the Birkhoff means converge to $\mathrm{rot}(\mu)$. If $\widetilde z\in\R\times[0,1]$ is a lift of $z$, one has
$$\lim_{n\to +\infty} {\pi_1\circ \widetilde f^n(\widetilde z)- \pi_1(\widetilde z)\over n}= \lim_{n\to +\infty}{1\over n}\sum_{k=0}^{n-1}\varphi(f^k(z))=\mathrm{rot}(\mu).$$
Here again we will say that $\widetilde z$ (or $z$) has a rotation number $\mathrm{rot}(\mu)$. The rotation set $\mathrm{rot}(\widetilde f)$ is a non empty compact real segment and every endpoint of $\mathrm{rot}(\widetilde f)$ is the rotation number of an ergodic measure. Here again, for every $p\in\Z$ and every $q\in \Z$, the map $\widetilde f^q\circ T^p$ is a lift of $f^q$ and one has $\mathrm{rot}(\widetilde f^q\circ T^p)=q\mathrm{rot}(\widetilde f)+p$.

Note that if $J$ is a real interval, one can also define the rotation number of an invariant probability measure of a homeomorphism of $\T^1\times J$ isotopic to the identity, for a given lift to $\R\times J$, provided the support of the measure is compact.

\section{Transverse paths  to surface foliations}

\subsection{General definitions}\label{subsection:Generaldefinitions}

\bigskip
Let us begin by introducing some notations that will be used throughout the whole text. A {\it singular oriented foliation} on an oriented surface $M$ is an oriented topological foliation $\mathcal F$ defined on an open set of $M$. We will call this set the {\it domain} of $\mathcal F$ and denote it $\mathrm{dom}(\mathcal F)$, its complement will be called the singular set (or set of singularities) and denoted $\mathrm{sing}(\mathcal F)$.  If the singular set is empty, we will say that $\mathcal F$  is {\it non singular}. A subset of $M$ is {\it saturated} if it is the union of singular points and leaves. A {\it trivialization neighborhood} is an open set $W\subset \mathrm{dom}(\mathcal F)$ endowed with a homeomorphism $h:W\to(0,1)^2$ that sends the restricted foliation $\mathcal F\vert_W$ onto the vertical foliation. If $\check M$ is a covering space of $M$ and $\check \pi: \check M\to M$ the covering projection,  $\mathcal F$ can be naturally lifted to a singular foliation $\check{\mathcal F}$ of $\check M$ such that $\mathrm{dom}(\check{\mathcal F})=\check\pi^{-1}(\mathrm{dom}(\mathcal F))$. If $\check N$ is a covering space of $\mathrm{dom}(\mathcal F)$, then the restriction of $\mathcal F$ to $\mathrm{dom}(\mathcal F)$ can also be naturally lifted to a non singular foliation of $\check N$. We will denote $\widetilde{\mathrm{dom}}(\mathcal F)$the universal covering space of $\mathrm{dom}(\mathcal F)$ and  $\widetilde{\mathcal F}$  the foliation lifted from $\mathcal{F}\mid_{\mathrm{dom}(\mathcal F)}$. For every $z\in\mathrm{dom}(\mathcal F)$ we will write $\phi_z$ for the leaf that contains $z$, $\phi^+_z$ for the positive half-leaf and $\phi_z^-$ for the negative one.  One can define the $\alpha$-limit and $\omega$-limit sets of $\phi$ as follows:
 $$\alpha(\phi)=\bigcap_{z\in\phi} \overline{\phi_z^-},\enskip \omega(\phi)=\bigcap_{z\in\phi} \overline{\phi_z^+}.$$
Suppose that a point $z\in\phi$ has a trivialization neighborhood $W$ such that each leaf of $\mathcal F$ contains no more than one leaf of $\mathcal F\vert_W$. In that case every point of $\phi$ satisfies the same property. If furthermore no closed leaf of $\mathcal F$ meets $W$, we will say that $\phi$ is {\it wandering}. Recall the following facts, in the case where $M=\R^2$ and $\mathcal F$ is non singular  (see Haefliger-Reeb \cite{HR}):

\smallskip
\noindent -\enskip  every leaf of $\mathcal F$ is a wandering line;

\smallskip
\noindent -\enskip  the space of leaves $\Sigma$,  furnished with the quotient topology, inherits a structure of  connected and simply connected one-dimensional manifold;

\smallskip
\noindent  -\enskip   $\Sigma$ is Hausdorff if and only if $\mathcal F$ is trivial (which means that it is the image of the vertical foliation by a plane homeomorphism) or equivalently if all the leaves are comparable.

\medskip
A path $\gamma:J\to M$ is {\it positively transverse}\footnote{in the whole text ``transverse'' will mean ``positively transverse''} to $\mathcal F$ if its image does not meet the singular set and if, for every $t_0\in J$, there exists a (continuous) chart $h:W\to (0,1)^2$ at $\gamma(t_0)$ compatible with the orientation and sending the restricted foliation ${\mathcal F}_W$ onto the vertical foliation oriented downward such that the map $\pi_1\circ h\circ \gamma$ is increasing in a neighborhood of $t_0$. Let $\check M$ be a covering space of $M$ and $\check \pi: \check M\to M$ the covering projection.  If  $\gamma:J\to \mathrm{dom}({\mathcal F})$ is  positively transverse to $\mathcal F$, every lift $\check\gamma:J\to \check M$ is transverse to the lifted foliation $\check{\mathcal F}$. Moreover, every lift $\widetilde\gamma:J\to \widetilde{\mathrm{dom}}({\mathcal F}) $ to the universal covering space $\widetilde{\mathrm{dom}}({\mathcal F})$ is transverse to the lifted non singular  foliation $\widetilde {\mathcal F}$.

Suppose first that $M=\R^2$ and that $\mathcal F$ is non singular. Say that two transverse paths $\gamma:J\to \R^2$ and $\gamma':J'\to \R^2$ are {\it equivalent for $\mathcal F$ or $\mathcal F$-equivalent} if they satisfy the three following equivalent conditions:

\smallskip
\noindent -\enskip   there exists an increasing homeomorphism $h:J\to J'$ such that $\phi_{\gamma'(h(t))}=\phi_{\gamma(t)}$, for every $t\in J$; 

\smallskip
\noindent -\enskip  the paths $\gamma$ and $\gamma'$ meet the same leaves;

\smallskip
\noindent -\enskip   the paths $\gamma$ and $\gamma'$ project onto the same path of $\Sigma$. 

\smallskip

Moreover, if $J=[a,b]$ and $J'=[a',b']$ are two segments, these conditions are equivalent to this last one:

\smallskip
\noindent  -\enskip   one has $\phi_{\gamma(a)}=\phi_{\gamma'(a')}$ and $\phi_{\gamma(b)}=\phi_{\gamma'(b')}$. 

\smallskip
 In that case, note that the leaves met by $\gamma$ are the leaves $\phi$ such that $R(\phi_{\gamma(a)})\subset R(\phi)\subset R(\phi_{\gamma(b)}).$ If the context is clear, we just say that the paths are {\it equivalent} and omit the dependence on $\mathcal F$.

If $\gamma:J\to \R^2$ is a transverse path, then for every $a<b$ in $J$, the set $L(\phi_{
 \gamma(a)})\cap R(\phi_{\gamma(b)})$ is a topological plane and $\gamma\vert_{(a,b)}$ a line of this plane. Let us say that $\gamma$ {\it has a leaf on its right} if there exists $a<b$ in $J$ and a leaf $\phi$ in $L(\phi_{
 \gamma(a)})\cap R(\phi_{\gamma(b)})$ that lies in the right of $\gamma\vert_{(a,b)}$. Similarly, one can define the notion of {\it having a leaf on its left }.

\begin{figure}[ht!]
\hfill
\includegraphics [height=48mm]{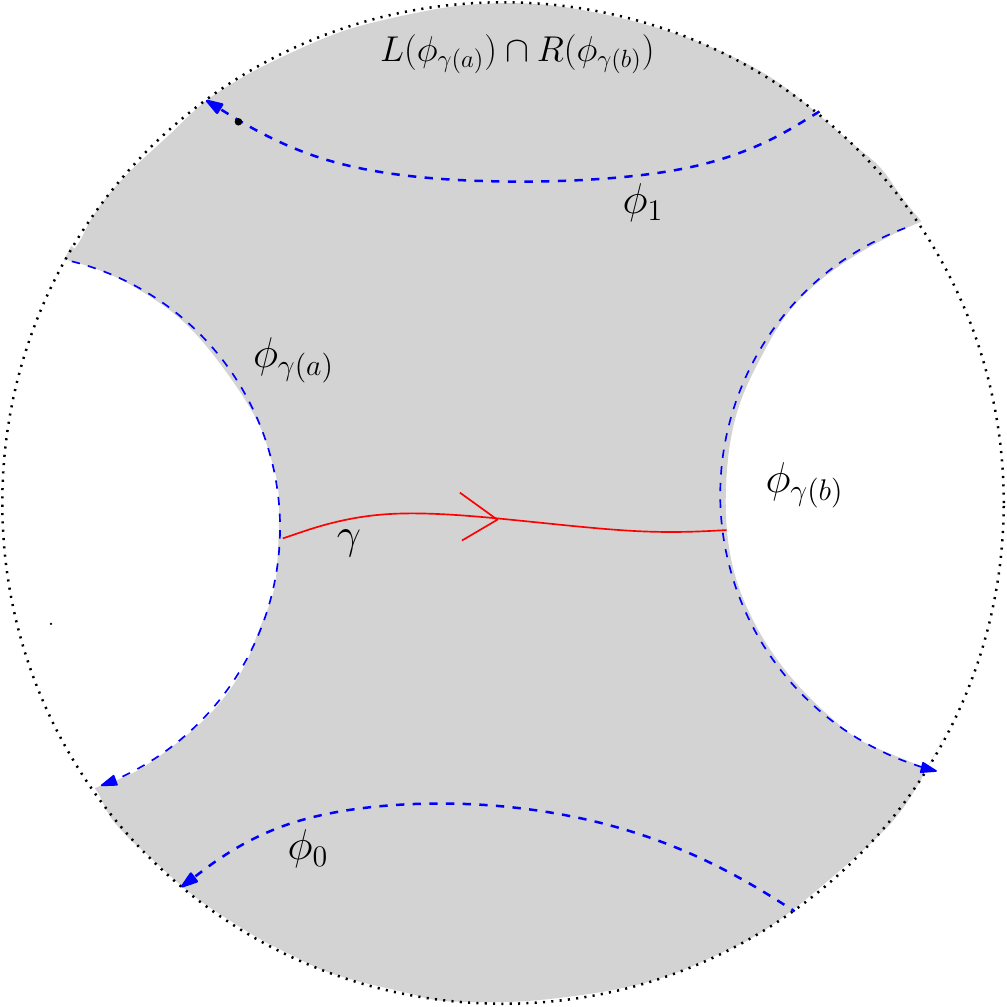}
\hfill{}
\caption{\small $\gamma:[a,b]\to\R^2$ has both a leaf on its right ($\phi_0$) and a leaf on its left ($\phi_1)$. $\gamma\vert_{(a,b)}$ is also a line in $L(\phi_{\gamma(a)})\cap R(\phi_{\gamma(b)})$. }
\label{Figure_Leaf_on_the_right}
\end{figure}

All previous definitions can be naturally extended in case every connected component of $M$ is a plane and $\mathcal F$ is not singular. Let us return to the general case. Two transverse paths $\gamma:J\to \mathrm{dom}(\mathcal F)$ and $\gamma':J'\to \mathrm{dom}(\mathcal F)$ are {\it equivalent for $\mathcal F$ or $\mathcal F$-equivalent} if they can be lifted to the universal covering space $ \widetilde{\mathrm{dom}}(\mathcal F)$ of $ \mathrm{dom}(\mathcal F)$ as paths that are equivalent for the lifted foliation $\widetilde{\mathcal F}$.  This implies that there exists an increasing homeomorphism $h:J\to J'$ such that, for every $t\in J$, one has $\phi_{\gamma'(h(t))}=\phi_{\gamma(t)}$. Nevertheless these two conditions are not equivalent. In Figure \ref{Figure_non_equivalent}, such a homeomorphism can be constructed but the two loops are not equivalent. Nonetheless, one can show that $\gamma$ and $\gamma'$ are equivalent for $\mathcal{F}$ if, and only if, there exists a {\it holonomic homotopy} between $\gamma$ and $\gamma'$, that is, if there exists a continuous transformation $H:J\times[0,1]\to \mathrm{dom}(\mathcal F)$ and an increasing homeomorphism $h:J\to J'$ satisfying:

\smallskip
\noindent -\enskip $H(t,0) = \gamma(t), \, H(t,1)=\gamma'(h(t))$;

\smallskip
\noindent -\enskip for all $t\in J$ and $s_1, s_2\in [0,1]$, $\phi_{H(t, s_1)}=\phi_{H(t, s_2)}$.  

\smallskip

\begin{figure}[ht!]
\hfill
\includegraphics [height=48mm]{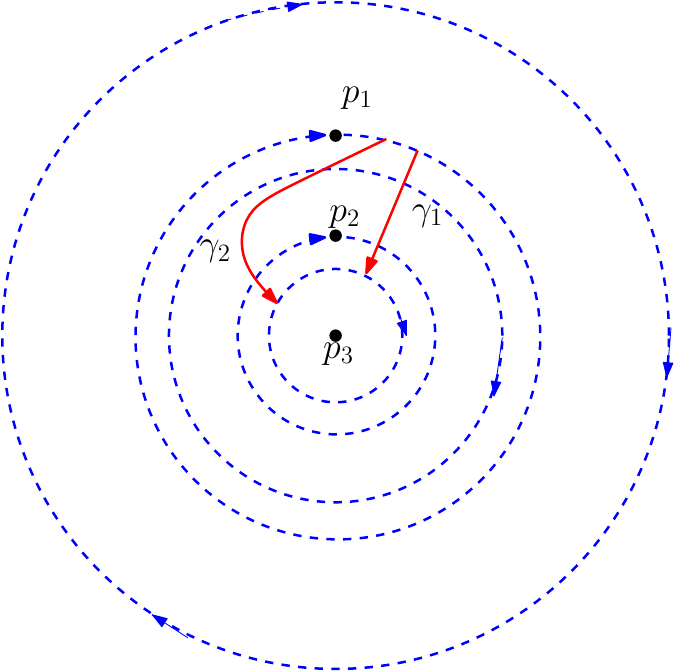}
\hfill{}
\caption{\small The paths $\gamma_1$ and $\gamma_2$ are not equivalent for $\mathcal{F}$, even though they cross the same leafs.}
\label{Figure_non_equivalent}
\end{figure}

 By definition, a transverse path  {\it has a leaf on its right} if it can be lifted to $ \widetilde{\mathrm{dom}}(\mathcal F)$  as a path with a leaf of $\widetilde{\mathcal F}$ on its right (in that case every lift has a leaf on its right)  and {\it has a leaf on its left}  if it can be lifted as a path with a leaf on its left.  Note that if $\gamma$ and $\gamma'$ have no leaf on their right and $\gamma\gamma'$ is well defined, then $\gamma\gamma'$ has no leaf on its right. Note also that if $\gamma$ and $\gamma'$ are $\mathcal{F}$-equivalent, and if $\gamma$ has a leaf on its right, then $\gamma'$ has a leaf on its right. We say that an $\mathcal{F}$-equivalence class has a leaf on its right (on its left) if some representative of the class has a leaf on its right (on its left).

Similarly, a loop $\Gamma:\T^1\to \mathrm{dom}(\mathcal F)$ is called {\it positively transverse} to $\mathcal F$ if it is the case for its natural lift $\gamma:\R\to \mathrm{dom}(\mathcal F)$.  It {\it has a leaf on its right} or {\it its left} if it is the case for $\gamma$. Two transverse loops $\Gamma:\T^1\to \mathrm{dom}(\mathcal F)$ and $\Gamma':\T^1\to \mathrm{dom}(\mathcal F)$ are {\it equivalent}  if there exists two lifts $\widetilde\gamma:\R\to \widetilde{\mathrm{dom}}(\mathcal F)$ and $\widetilde\gamma':\R\to \widetilde{\mathrm{dom}}(\mathcal F)$ of $\Gamma$ and $\Gamma'$ respectively, a covering automorphism $T$ and an orientation preserving homeomorphism $h:\R\to \R$, such that, for every $t\in \R$, one has 
$$ \widetilde\gamma(t+1)=T(\widetilde\gamma(t)), \enskip \widetilde\gamma'(t+1)=T(\widetilde\gamma'(t)),\enskip h(t+1)=h(t)+1, \enskip\phi_{\widetilde\gamma'(h(t))}=\phi_{\widetilde\gamma(t)}.$$ Of course $\Gamma^n$ and $\Gamma'^n$ are equivalent transverse loops, for every $n\geq 1$, if it is the case for $\Gamma$ and $\Gamma'$.  A transverse loop $\Gamma$ will be called {\it prime} if there is no transverse loop $\Gamma'$ and integer $n\geq 2$ such that $\Gamma$ is equivalent to $\Gamma'{}^n$.  

If two transverse loops $\Gamma$ and $\Gamma'$ are equivalent, there exists a holonomic homotopy between them and therefore they are freely homotopic in $\mathrm{dom}(\mathcal F)$, but the converse does not need to hold, as Figure \ref{Figure_Homotopic_notequivalent} shows.

\begin{figure}[ht!]
\hfill
\includegraphics [height=48mm]{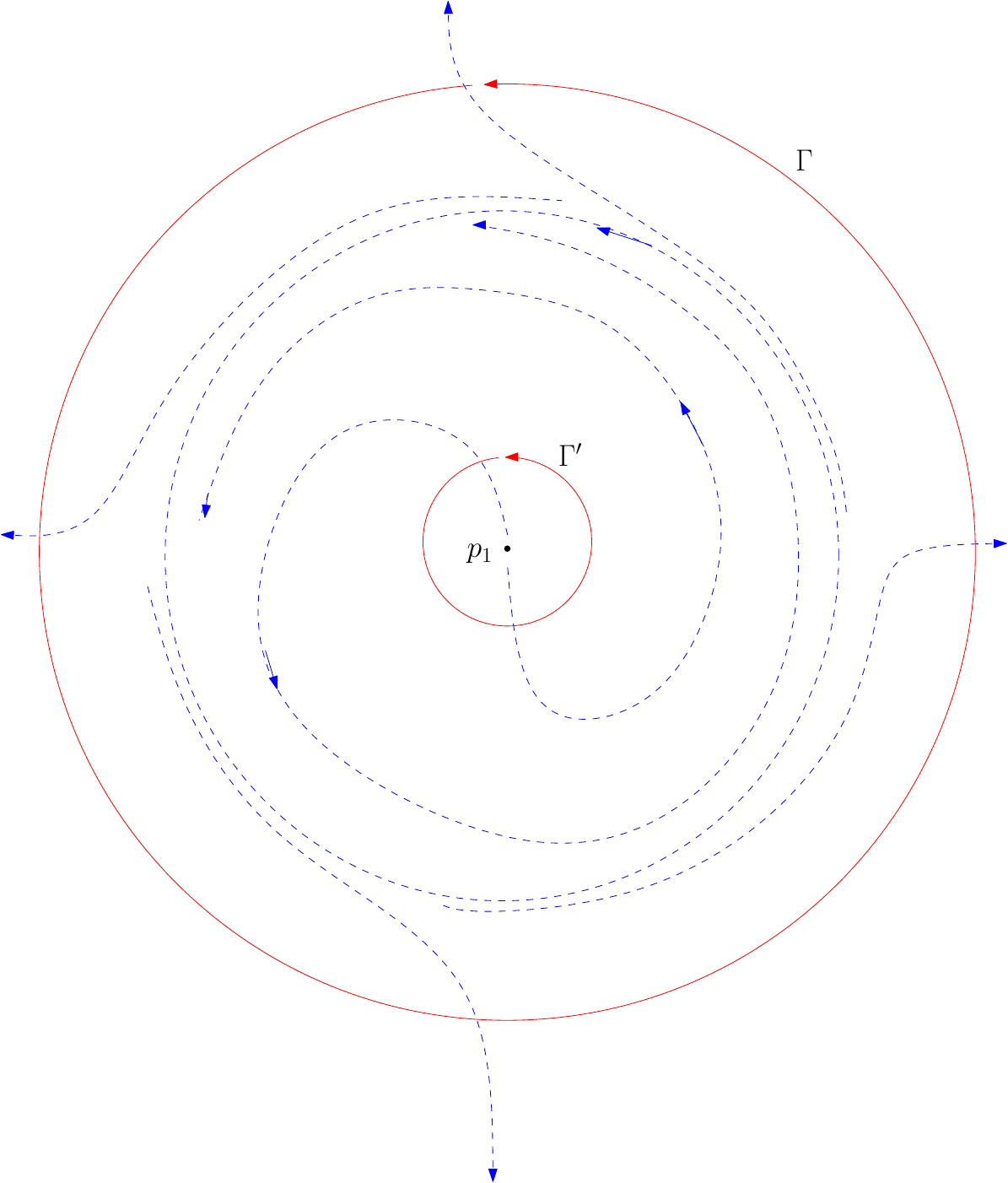}
\hfill{}
\caption{\small The transversal loops $\Gamma$ and $\Gamma'$ are not equivalent for $\mathcal{F}$, even though they are freely homotopic.}
\label{Figure_Homotopic_notequivalent}
\end{figure}

 A transverse path  $\gamma:\R\to M$ will be called $\mathcal F$-positively recurrent if for every segment $J\subset \R$ and every $t\in\R$ there exists a segment
$J'\subset [t,+\infty)$ such that $\gamma\vert_{J'}$ is equivalent to $\gamma\vert_J$.  It will be called $\mathcal F$-negatively recurrent if for every segment $J\subset \R$ and every $t\in\R$ there exists a segment $J'\subset (-\infty, t]$ such that $\gamma\vert_{J'}$ is equivalent to $\gamma\vert_J$. It is $\mathcal F$-bi-recurrent if it is both $\mathcal F$-positively and $\mathcal F$-negatively recurrent. Note  that, if  $\gamma:\R\to M$  and  $\gamma':\R\to M$ are $\mathcal F$-equivalent and if $\gamma$ is $\mathcal{F}$-positively recurrent (or $\mathcal F$-negatively recurrent), then so is $\gamma'$. We say that an $\mathcal{F}$-equivalence class is  positively recurrent (negatively recurrent, bi-recurrent) if some representative of the class is $\mathcal{F}$-positively recurrent (resp. $\mathcal{F}$-negatively recurrent, $\mathcal{F}$-bi-recurrent).

\medskip  We will very often use the following remarks. Suppose that $\Gamma$ is a transverse loop homologous to zero and $\delta$ a dual function. Then $\delta$ decreases along each leaf with  a jump at every intersection point. One deduces that every leaf met by $\Gamma$ is wandering. In particular,  $\Gamma$ does not meet any set $\alpha(\phi)$ or $\omega(\phi)$, which implies that for every leaf $\phi$, there exist $z_-$ and $z_+$ on $\phi$ such that $\Gamma$ does not meet neither $\phi_{z_-}^-$ nor $\phi_{z_+}^+$. Writing $n_+$  and $n_-$ for the value taken by $\delta$ on $\phi_{z_-}^-$  and  $\phi_{z_+}^+$ respectively, one deduces that $n_+-n_-$ is the number of times that $\Gamma$ intersect $\phi$. Note that $n_+-n_-$ is uniformly bounded. Indeed, the fact that every leaf that meets $\Gamma$ is wandering implies that $\T^1$ can be covered by open intervals where $\Gamma$ is injective and does not meet any leaf more than once.  By compactness, $\T^1$ can be covered by finitely many such intervals, which implies that there exists $N$ such that $\Gamma$ meets each leaf at most $N$ times. We have similar results for a multi-loop $\Gamma=\sum_{1\leq i\leq p}\Gamma_i$ homologous to zero.  In case where $M=\R^2$, we have similar results for a proper transverse path with finite valued dual function. In case of an infinite valued dual function, everything is true but the existence of $z_-$, $z_+$, $n_-$, $n_+$ and the finiteness condition about intersection with a given leaf. In particular a transverse line $\lambda$ meets every leaf at most once (because the dual function takes only two values) and one can define the sets $r(\lambda)$ and $l(\lambda)$, union of leaves included in $R(\lambda)$ and $L(\lambda)$ respectively. They do not depend on the choice of $\lambda$ in the equivalence class. Note that if the diameter of the leaves of $\mathcal F$ are uniformly bounded, every path equivalent to $\lambda$ is still a line. We have similar results for directed proper paths. If $\gamma$ is a proper path directed by a unit vector $\rho$, one can define the sets $r(\gamma)$ and $l(\gamma)$, union of leaves included in $R(\gamma)$ and $L(\gamma)$ respectively. They do not depend on the choice of $\gamma$ in the equivalence class. Moreover, if the leaves of $\mathcal F$ are uniformly bounded, every path equivalent to $\gamma$ is still a path directed by $\rho$. 
 
\bigskip

\subsection{$\mathcal{F}$-transverse intersection for non singular plane foliations}

\bigskip
We suppose here that  $M=\R^2$ and that $\mathcal F$ is non singular. 

\medskip Let $\gamma_1:J_1\to \R^2$ and $\gamma_2:J_2\to \R^2$ be two transverse paths. The set
$$X=\{(t_1,t_2)\in J_1\times J_2\enskip\vert \phi_{\gamma_1(t_1)}=\phi_{\gamma_2(t_2)}\},$$ if not empty,  is an interval that projects injectively on $J_1$ and $J_2$ as does its closure. Moreover, for every $(t_1,t_2) \in \overline X\setminus X$, the leaves $\phi_{\gamma_1(t_1)}$ 	and $\phi_{\gamma_2(t_2)}$ are not separated in $\Sigma$. To be more precise, suppose that $J_1$ and $J_2$ are real intervals  and that $\phi_{\gamma_1(t_1)}=\phi_{\gamma_2(t_2)}$. Set $J_1^-=J_1\cap (-\infty, t_1]$ and $J_2^-=J_2\cap (-\infty, t_2]$.  Then either one of the paths $\gamma_1\vert _{J_1^-}$, $\gamma_2\vert _{J_2^-}$ is equivalent to a subpath of the other one, or there exist $a_1<t_1$ and $a_2<t_2$ such that: 

\smallskip
\noindent -\enskip  $\gamma_1\vert _{(a_1,t_1]}$ and $\gamma_2\vert _{(a_2,t_2]}$ are equivalent;

\smallskip
\noindent -\enskip $\phi_{\gamma_1(a_1)}\subset L(\phi_{\gamma_2(a_2)}),\enskip \phi_{\gamma_2(a_2)}\subset L(\phi_{\gamma_1(a_1)})$

\smallskip
\noindent -\enskip $\phi_{\gamma_1(a_1)}$ and $\phi_{\gamma_2(a_2)}$ are not separated in $\Sigma$.

\medskip
Observe that the second property (but not the two other ones) is still satisfied when $a_1$, $a_2$ are replaced by smaller parameters. Note also that $\phi_{\gamma_2(a_2)}$ is either above or below $\phi_{\gamma_1(a_1)}$ relative to $\phi_{\gamma_1(t_1)}$ and that this property remains satisfied when $a_1$, $a_2$ are replaced by smaller parameters  and $t_1$ by any parameter in $(a_1,t_1]$. We have a similar situation on the \textcolor{black}{possible} right end of $X$.

\medskip

Let $\gamma_1:J_1\to \R^2$ and $\gamma_2:J_2\to \R^2$ be two transverse paths such that $\phi_{\gamma_1(t_1)}=\phi_{\gamma_2(t_2)}=\phi$. We will say that $\gamma_1$ and $\gamma_2$ {\it intersect $\mathcal{F}$-transversally and positively at $\phi$} (and $\gamma_2$ and $\gamma_1$ {\it intersect $\mathcal{F}$-transversally and negatively} at $\phi$) if there exist $a_1$, $b_1$ in $J_1$ satisfying $a_1<t_1<b_1$, and $a_2$, $b_2$ in $J_2$ satisfying $a_2<t_2<b_2$, such that:

\smallskip
\noindent -\enskip $\phi_{\gamma_2(a_2)}$ is below $\phi_{\gamma_1(a_1)}$  relative to $\phi$;

\smallskip
\noindent -\enskip $\phi_{\gamma_2(b_2)}$ is above $\phi_{\gamma_1(b_1)}$  relative to $\phi$.

See Figure \ref{figure1}.

\medskip

Note that, if $\gamma_1$ intersects $\mathcal{F}$-transversally $\gamma_2$,  if $\gamma_1'$ is equivalent to $\gamma_1$ and $\gamma_2'$ is equivalent to $\gamma_2$, then $\gamma_1'$ intersects $\mathcal{F}$-transversally $\gamma_2'$, and we say that the equivalence class of $\gamma_1$ intersect transversally the equivalence class of $\gamma_2$.

\smallskip
As none of the leaves $\phi$, $\phi_{\gamma_1(a_1)}$, $\phi_{\gamma_2(a_2)}$ separates the two others, one deduces that
$$\phi_{\gamma_1(a_1)}\subset L(\phi_{\gamma_2(a_2)}),\enskip \phi_{\gamma_2(a_2)}\subset L(\phi_{\gamma_1(a_1)})$$ and similarly that
$$\phi_{\gamma_1(b_1)}\subset R(\phi_{\gamma_2(b_2)}),\enskip \phi_{\gamma_2(b_2)}\subset R(\phi_{\gamma_1(b_1)}).$$

\smallskip

\begin{figure}[ht!]
\hfill
\includegraphics [height=48mm]{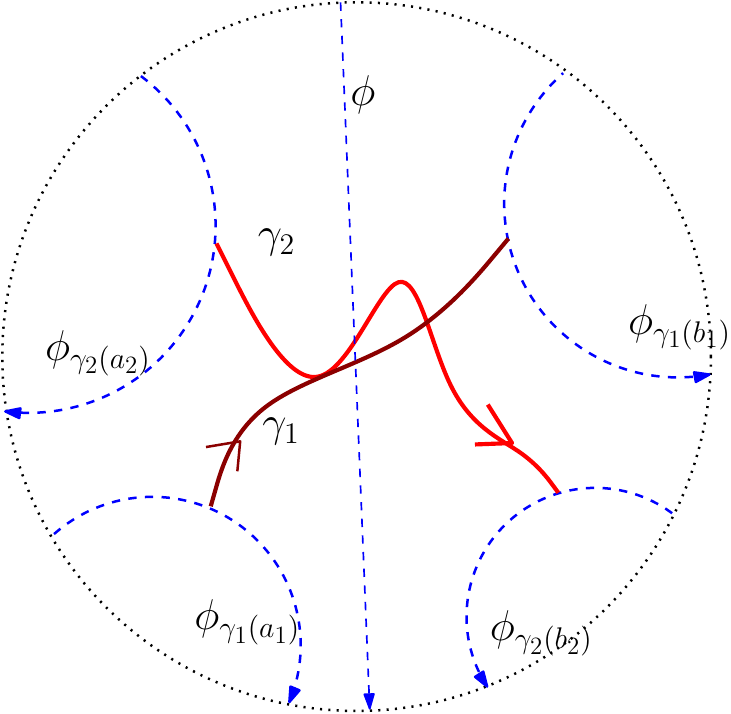}
\hfill{}
\caption{\small $\mathcal{F}$-transverse intersection. The tangency point is also a point of $\mathcal{F}$-transverse intersection.}
\label{figure1}
\end{figure}

As explained above, these properties remain true when $a_1$, $a_2$ are replaced by smaller parameters, $b_1$, $b_2$ by larger parameters and $\phi$ by any other leaf met by $\gamma_1$ and $\gamma_2$. Note that $\gamma_1$ and $\gamma_2$ have at least one intersection point and that one  can find two transverse paths $\gamma'_1$, $\gamma'_2$ equivalent to $\gamma_1$, $\gamma_2$ respectively, such that $\gamma'_1$ and $\gamma'_2$ have a unique intersection point, located on $\phi$, with a topologically transverse intersection. 
Note that, if $\gamma_1$ and $\gamma_2$ are two paths that meet the same leaf $\phi$, then either they intersect $\mathcal{F}$-transversally, or one can find two transverse paths $\gamma'_1$, $\gamma'_2$ equivalent to $\gamma_1$, $\gamma_2$, respectively, with no intersection point.  \bigskip

\subsection{ $\mathcal{F}$-transverse intersection in the general case}

Here again, the notion of $\mathcal{F}$-transverse intersection can be naturally extended in case every connected component of $M$ is a plane and $\mathcal F$ is not singular. Let us return now to the general case of a singular foliation $\mathcal F$ on a surface $M$.  Let $\gamma_1:J_1\to M$ and $\gamma_2:J_2\to M$ be two transverse paths that meet a common leaf $\phi=\phi_{\gamma_1(t_1)}=\phi_{\gamma_2(t_2)}$. We will say that $\gamma_1$ and $\gamma_2$ {\it intersect $\mathcal{F}$-transversally at} $\phi$ if there exist paths $\widetilde \gamma_1:J_1\to \widetilde{\mathrm{dom}}(\mathcal F)$ and $\widetilde \gamma_2:J_2\to  \widetilde{\mathrm{dom}}(\mathcal F)$, lifting $\gamma_1$ and $\gamma_2$, with a common leaf $\widetilde\phi=\phi_{\widetilde \gamma_1(t_1)}=\phi_{\widetilde \gamma_2(t_2)}$ that lifts $\phi$, and intersecting $\widetilde{\mathcal{F}}$-transversally at $\widetilde \phi$. If $\phi$ is closed the choices of $\widetilde \gamma_1$ and $\widetilde \gamma_2$ do not need to be unique, see Figure \ref{Figure_closed_leaf}. 

\begin{figure}[ht!]
\hfill
\includegraphics [height=48mm]{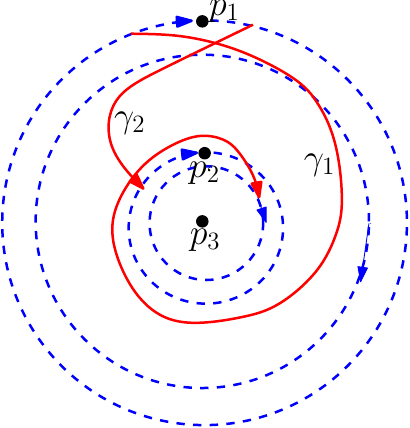}
\hfill{}
\caption{\small Given a lift $\widetilde \gamma_1$ of $\gamma_1$, there are two different lifts of $\gamma_2$  intersecting $\widetilde{\mathcal{F}}$-transversally $\widetilde \gamma_1$.}
\label{Figure_closed_leaf}
\end{figure}

Here again, we can give a sign to the intersection. As explained in the last subsection, there exist $t'_1$ and $t'_2$ such that $\gamma_1(t'_1)=
\gamma_2(t'_2)$ and such that  $\gamma_1$ and $\gamma_2$ intersect $\mathcal{F}$-transversally at $\phi_{\gamma_1(t'_1)}=\phi_{\gamma_2(t'_2)}$. In this case  we will say that  
$\gamma_1$ and $\gamma_2$ intersect $\mathcal{F}$-transversally at $\gamma_1(t'_1)=\gamma_2(t'_2)$.
In the case where $\gamma_1=\gamma_2$ we will talk of an $\mathcal{F}$-transverse self-intersection. A transverse path $\gamma$ has an $\mathcal{F}$-transverse self-intersection if for every lift $\widetilde\gamma$ to the universal covering space of the domain, there exists a non trivial covering automorphism $T$ such that  $\widetilde\gamma$ and $T(\widetilde\gamma)$ have a $\widetilde{\mathcal{F}}$-transverse intersection.  We will often use the following fact. Let $\gamma_1:J_1\to M$ and $\gamma_2:J_2\to M$ be two transverse paths that meet a common leaf $\phi=\phi_{\gamma_1(t_1)}=\phi_{\gamma_2(t_2)}$. If $J'_1$, $J'_2$ are two sub-intervals of $J_1$, $J_2$ that contain $ t_1$, $t_2$ respectively and if $\gamma_1\vert_{J'_1}$ and $\gamma_2\vert_{J'_2}$ intersect $\mathcal{F}$-transversally at $\phi$, then $\gamma_1$ and $\gamma_2$ intersect $\mathcal{F}$-transversally at $\phi$.

\medskip
Similarly, let $\Gamma$ be a loop positively transverse to $\mathcal F$ and $\gamma$ its natural lift. If $\gamma$ intersects $\mathcal{F}$-transversally a transverse path $\gamma'$ at a leaf $\phi$, we will say that $\Gamma$ and $\gamma'$ intersect $\mathcal{F}$-transversally at $\phi$. Moreover if $\gamma'$ is the natural lift of a transverse loop $\Gamma'$ we will say that $\Gamma$ and $\Gamma'$ intersect $\mathcal{F}$-transversally at $\phi$. Here again we can talk of self-intersection.

\medskip
As a conclusion, note that if two transverse paths have an $\mathcal{F}$-transverse intersection, they both have a leaf on their right and a leaf on their left.

\bigskip

\subsection{ Some useful results}

\
In this section, we will state different results that will be useful in the rest of the article. Observe that the finiteness condition for the next proposition is satisfied if every leaf of $\mathcal F$ is wandering, or when $M$ has genus $0$.

\begin{proposition}\label{pr:transverse_on_surface} Let $\mathcal F$ be an oriented singular foliation on a surface and $(\Gamma_i)_{1\leq i\leq m}$ a family of prime transverse loops that are not pairwise equivalent. We suppose that the leaves met by the loops $\Gamma_i$ are never closed and that there exists an integer $N$ such that no loop $\Gamma_i$ meets a leaf more than $N$ times. Then, for every $i\in\{1,\dots, m\}$, there exists a transverse loop $\Gamma'_i$ equivalent to $\Gamma_i$ such that:

\smallskip
\noindent{\bf i)}\enskip \enskip $\Gamma'_i$ and $\Gamma'_j$ do not intersect if
$\Gamma_i$ and $\Gamma_j$ have no $\mathcal{F}$-transverse intersection;

\smallskip
\noindent{\bf ii)}\enskip \enskip   $\Gamma'_i$ is simple if $\Gamma_i$ has no $\mathcal{F}$-transverse self-intersection.
\end{proposition}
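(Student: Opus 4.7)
My approach is to lift everything to the universal cover $\widetilde{\mathrm{dom}}(\mathcal F)$ of the domain, where $\widetilde{\mathcal F}$ is a non-singular foliation whose connected components are planes, and to perform the desired modifications there equivariantly before descending. For each prime loop $\Gamma_i$, I would fix a lift $\widetilde\gamma_i\colon\R\to\widetilde{\mathrm{dom}}(\mathcal F)$ of its natural lift together with the primitive covering automorphism $T_i$ satisfying $\widetilde\gamma_i(t+1)=T_i(\widetilde\gamma_i(t))$; the hypothesis that $\Gamma_i$ is prime and that no leaf it meets is closed ensures that $T_i$ generates the stabilizer of $\widetilde\gamma_i$. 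A $\mathcal F$-transverse intersection between $\Gamma_i$ and $\Gamma_j$ (or a self-intersection of $\Gamma_i$) at a leaf $\phi$ corresponds, after lifting, to a $\widetilde{\mathcal F}$-transverse intersection between $\widetilde\gamma_i$ and $S\widetilde\gamma_j$ for some deck transformation $S$ (with $S\neq T_i^k$ in the self-intersection case), and conversely.

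The first step is a plane surgery lemma: if two transverse paths $\alpha,\beta$ in a plane carrying a non-singular foliation meet a common leaf $\phi$ but do not have an $\mathcal F$-transverse intersection there, then there exist paths $\alpha',\beta'$ in their equivalence classes that coincide with $\alpha,\beta$ outside a small saturated neighborhood of the meeting and are disjoint inside it. This follows from the dichotomy recalled just above the statement: at such a meeting, the leaves $\phi_{\alpha(a)}$, $\phi_{\beta(a)}$ (and similarly $\phi_{\alpha(b)}$, $\phi_{\beta(b)}$) lie on the same side of $\phi$ relative to the order of lines, so one can reroute $\alpha$ in its equivalence class across a leaf $\phi'$ lying in a sufficiently small neighborhood of $\phi$ in the leaf space $\Sigma$ that $\beta$ does not meet $\phi'$. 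Iterating over all common leaves gives the pairwise statement: if $\widetilde\gamma_i$ and $S\widetilde\gamma_j$ have no $\widetilde{\mathcal F}$-transverse intersection, one can find equivalent representatives that are disjoint.

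The main work is to carry out these surgeries equivariantly and simultaneously for all pairs $(i,j)$ and all deck translates. By the bound $N$ on meetings per leaf and the wanderingness of every leaf met by the $\Gamma_i$, a compact fundamental domain for $T_i$ meets only finitely many leaves that are also met by a translate $S\widetilde\gamma_j$, so the ``bad'' meetings (those without $\mathcal F$-transverse intersection) split into finitely many orbits under the deck group. For each such orbit, I would pick an auxiliary leaf $\phi'$ close to the meeting leaf in $\Sigma$ and apply the surgery simultaneously to every member of the orbit; the resulting modified path is again $T_i$-equivariant, hence descends to a transverse loop $\Gamma'_i$ equivalent to $\Gamma_i$. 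Conditions (i) and (ii) then follow from the construction.

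The principal obstacle I expect is coherence of the simultaneous surgeries: a single leaf may be met by several of the $\Gamma_j$, and the perturbations chosen for one pair must not create new intersections with another pair or with a yet-unresolved self-intersection. The absence of closed leaves together with the bound $N$ provide exactly the combinatorial data needed to resolve this, since the (at most $N$) meeting points on any given leaf are totally ordered along the leaf's orientation, allowing the auxiliary leaves $\phi'$ for distinct orbits to be chosen from disjoint nested neighborhoods in $\Sigma$. Organizing this bookkeeping so that all the equivariant surgeries can be realized at once, for every $i$ and every $j$, is where the detailed verification will live.
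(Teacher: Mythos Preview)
Your overall strategy---lift to the universal cover and modify the transverse paths within their equivalence classes to eliminate the unnecessary intersections---is the right one, and it is what the paper does. But your ``plane surgery lemma'' is misformulated: two transverse paths in a plane are $\mathcal F$-equivalent precisely when they cross the same set of leaves, so ``rerouting $\alpha$ in its equivalence class across a leaf $\phi'$ that $\beta$ does not meet'' is impossible; any representative of the class of $\alpha$ still crosses $\phi$. The degree of freedom you actually have is not \emph{which} leaves the path crosses but \emph{where along each common leaf} it crosses. Your coherence mechanism (``auxiliary leaves $\phi'$ \dots\ from disjoint nested neighborhoods in $\Sigma$'') therefore lives in the wrong space; the bookkeeping must happen on each leaf, not in the leaf space. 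You do note that the meeting points on a given leaf are totally ordered along the leaf, which is exactly the relevant fact, but you then draw the wrong conclusion from it.

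The paper organizes the sliding as follows. For parameters $t\neq t'$ on the multi-loop with $\phi_{\Gamma(t)}=\phi_{\Gamma(t')}$, primeness and pairwise non-equivalence force the two lifted lines to separate on both ends; when the corresponding loops have no $\mathcal F$-transverse (self-)intersection, one lifted line lies consistently above the other, yielding a relation $t\prec t'$. Declare $t$ \emph{good} if $t\prec t'$ always implies $\Gamma(t)<\Gamma(t')$ for the natural order along the leaf; once every parameter is good, the desired disjointness and simplicity follow immediately. The integer $o(t)=\#\{t':t\prec t'\}$ is bounded (this is where the hypothesis $N$ is used), and one inducts on $r$: assuming all parameters of order $\leq r$ are good, perform a holonomic homotopy supported on a neighborhood $O$ of the bad parameters of order $r{+}1$ that is disjoint from $\T_{\leq r}$, sliding each such $\Gamma(t)$ backwards along $\phi_{\Gamma(t)}^-$ until it sits below the lowest of the points $\Gamma(\theta_i(t))$ with $t\prec\theta_i(t)$. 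Since each $\theta_i(t)$ has order $\leq r$ and hence lies outside $O$, those points do not move, and neither do the previously good parameters. This induction on $o(t)$ is the organizing device your sketch is missing.
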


\begin{proof}

There is  a natural partial order on  $ \mathrm{dom}({\mathcal F})$ defined as follows: write $z\leq z'$ if $\phi_{z}$ is not closed and $z'\in\phi_{z}^+$. One can suppose, without loss of generality, that the loops $\Gamma_i$ are included in the same connected component $W$ of  $ \mathrm{dom}({\mathcal F})$. One can lift ${\mathcal F}\vert_{W}$ to an oriented foliation $\widetilde {\mathcal F}$ on the universal covering space $\widetilde W$ of $W$. We will parameterize $\Gamma_i$ by a copy $\T^1_i$ of $\T^1$ and consider the $\T^1_i$ as disjoint circles. We will  endow the set $\T_*=\sqcup_{1\leq i\leq m} \T^1_i$ with the natural topology generated by the open sets of the  $\T^1_i$. We get a continuous map $\Gamma:\T_*\to W$ (a multi-loop) by setting $\Gamma(t)=\Gamma_{i(t)}(t)$, where $t\in \T^1_{i(t)}$. Suppose that $t\not=t'$  and $\phi_{\Gamma(t)}=\phi_{\Gamma(t')}$. One can lift the loops $\Gamma_{i(t)}$ and $\Gamma_{i(t')}$ to lines $\widetilde \gamma_{i(t)}:\R\to \widetilde W$ and $\widetilde \gamma_{i(t')}:\R\to \widetilde W$ transverse to $ \widetilde {\mathcal  F}$ such that $\widetilde\phi_{\widetilde\gamma_{i(t)}(\widetilde t)}=\widetilde\phi_{\widetilde\gamma_{i(t')}(\widetilde t')}=\widetilde\phi$, where $\widetilde t$ and $\widetilde t'$ lift $t$ and $t'$ respectively. The fact that the loops are prime and   not equivalent implies that $\widetilde\gamma_{i(t)}\vert_{[\widetilde t,+\infty)}$ and $\widetilde\gamma_{i(t')}\vert_{[\widetilde t',+\infty)}$ are not equivalent and similarly that $\widetilde\gamma_{i(t)}\vert_{(-\infty, \widetilde t]}$ and $\widetilde\gamma_{i(t')}\vert_{(-\infty, \widetilde t']}$ are not equivalent.  So, $\widetilde\phi_{\widetilde\gamma_{i(t')}}(\widetilde t'')$ is above or below $\widetilde\phi_{\widetilde\gamma_{i(t)}}(\widetilde t'')$ relative to $\widetilde\phi$ if $\abs{\widetilde t''}$ is sufficiently large.  Moreover the option does not depend on the choice of the lifts. We will write $t\prec t'$ in the case where  $\widetilde\phi_{\widetilde\gamma_{i(t')}}(\widetilde t'')$ is above $\widetilde\phi_{\widetilde\gamma_{i(t)}}(\widetilde t'')$ and $\widetilde\phi_{\widetilde\gamma_{i(t')}}(-\widetilde t'')$ is above $\widetilde\phi_{\widetilde\gamma_{i(t)}}(-\widetilde t'')$ for $\widetilde t''$ sufficiently large. Observe that one has $t\prec t'$ or $t'\prec t$ in the two following cases:
 
\smallskip
\noindent-\enskip\enskip   $i(t)\not=i(t')$ and $\Gamma_{i(t)}$ and $\Gamma_{i(t')}$ have no $\mathcal{F}$-transverse intersection;

\smallskip
\noindent-\enskip\enskip   $i(t)=i(t')$ and $\Gamma_{i(t)}$ has no $\mathcal{F}$-transverse self-intersection.

\medskip

We will say that $t\in \T_*$ is a {\it good parameter} of $\Gamma$, if for every $t'\in \T_*$, one has
$$t\prec t'\Rightarrow \Gamma(t)<\Gamma(t').$$To get the proposition it is sufficient to construct, for every $i\in\{1,\dots, m\}$, a transverse loop $\Gamma'_i$  equivalent to $\Gamma_i$ such that the induced multi-loop $\Gamma'$ has only good parameters. Let us define the order $o(t)$ of $t\in \T_*$ to be the number of $t'\in \T_*$ such that $t\prec t'$. Note that every parameter of order $0$ is a good parameter. We will construct $\Gamma'$ by induction, supposing that every parameter of order $\leq r$ is good and constructing $\Gamma'$ such that every parameter of order $\leq r+1$ is good.  Note that for every $s$, the set $\T_{\leq s}$ of parameters of order $\leq s$ is closed and the set $\T_{\mathrm {good}}$ of good parameters is open.  The set $\T_{\mathrm {bad}} =\T_{\leq r+1}\setminus \T_{\mathrm {good}} $ is closed and disjoint from $\T_{\leq r}$: it contains only parameters of order $r+1$. Let us fix an open neighborhood $O$ of $\T_{\mathrm {bad}} $ disjoint from $\T_{\leq r}$. By hypothesis, for every $t\in \T_{\mathrm {bad}} $, one can find \textcolor{black}{$r+1$ points $\theta_0(t)$,  \dots, $\theta_{r}(t)$} in $\T_{*} $ such that
$t\prec \theta_{i}(t)$ for every \textcolor{black}{$i\in\{0,\dots, r\}$} and among the $\Gamma(\theta_i(t))$ a smallest one $\Gamma(\theta(t))$  (for the order $\leq$). Each $\theta_i(t)$ belongs to $\T_{\leq r}$ and therefore is disjoint from $O$. Note that each function $\theta_i$ can be chosen continuous in a neighborhood of a point $t$, which implies that $t\mapsto \Gamma(\theta(t))$ is continuous on $\T_{\mathrm {bad}} $. It is possible to make a perturbation of $\Gamma$ supported on $O$ by sliding continuously each point $\Gamma(t)$ on $\phi_{\Gamma(t)}^-$ to obtain a transverse multi-loop $\Gamma'$  such that $\Gamma'(t)<\Gamma(\theta(t))$. Since the perturbation is a holonomic homotopy, $\Gamma'$ must be equivalent to $\Gamma$.

Since $\theta_{i}(t) \in \T_{\leq r}$ for every \textcolor{black}{$i\in\{0,\dots, r\}$ }, we have $\Gamma(\theta_i (t))=\Gamma'( \theta_{i}(t))$ and so $\Gamma'(t)< \Gamma'( \theta(t))$.
\end{proof}

Let us continue with the following adapted version of Poincar\'e-Bendixson  Theorem.

\begin{proposition}\label{pr:transverse_on_sphere}
 Let $\mathcal F$ be an oriented singular foliation on $\S^2$ and $\gamma:\R\to \S^2$  an $\mathcal{F}$-bi-recurrent transverse path. The following properties are equivalent:

\smallskip
\noindent{\bf i)}\enskip \enskip $\gamma$ has no $\mathcal{F}$-transverse self-intersection;

\smallskip
\noindent{\bf ii)}\enskip \enskip  there exists a transverse simple loop $\Gamma'$ such that $\gamma$ is equivalent to the natural lift $\gamma'$ of $\Gamma'$;

\smallskip
\noindent{\bf iii)} \enskip \enskip the set $U=\bigcup_{t\in\R}\phi_{\gamma(t)}$ is an open annulus.
\end{proposition}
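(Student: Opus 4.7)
The plan is to prove (ii)$\Rightarrow$(i) and (ii)$\Rightarrow$(iii) by direct structural arguments, and then to obtain (i)$\Rightarrow$(ii) by constructing a simple transverse loop out of a ``first return'' loop and invoking Proposition \ref{pr:transverse_on_surface}. For (ii)$\Rightarrow$(iii), the set $U$ coincides with the saturation $\bigcup_{\theta\in\T^1}\phi_{\Gamma'(\theta)}$; since $H_1(\S^2)=0$ and all transverse crossings of $\Gamma'$ with any given leaf share the same sign, $\Gamma'$ meets no closed leaf and meets each non-closed leaf at most once. Consequently the leaf space of $\mathcal{F}|_U$ identifies with $\T^1$ via $\theta\mapsto \phi_{\Gamma'(\theta)}$, every leaf of $\mathcal{F}|_U$ is a non-closed arc, and $U$ carries the structure of an oriented $\R$-bundle over $\T^1$, namely an open annulus. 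For (ii)$\Rightarrow$(i), one lifts $\Gamma'$ to the universal cover $\R\times\R$ of $U\cong\T^1\times\R$, where its natural lift becomes a line meeting each vertical leaf exactly once; the integer deck translates are pairwise disjoint and meet pairwise disjoint families of leaves, ruling out any transverse intersection between distinct translates, and hence any $\mathcal{F}$-transverse self-intersection of $\gamma$.

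For the main implication (i)$\Rightarrow$(ii), fix $\phi_0=\phi_{\gamma(0)}$ and use bi-recurrence to find $t_- < 0 < t_+$ with $\gamma(t_\pm)\in\phi_0$. The equivalence class of $\gamma|_{[t_-,t_+]}$, viewed as a transverse path from $\phi_0$ to $\phi_0$, is realized (after a small holonomic deformation near $\phi_0$ reconciling the endpoints) by the one-period restriction of the natural lift of some transverse loop $\Gamma$ based on $\phi_0$. Writing $\Gamma=\Gamma_0^n$ with $\Gamma_0$ prime, the hypothesis (i) forces $\Gamma_0$ to have no $\mathcal{F}$-transverse self-intersection, the $H_1(\S^2)=0$ argument above rules out closed leaves met by $\Gamma_0$, and the finite intersection-multiplicity hypothesis is automatic by compactness. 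Proposition \ref{pr:transverse_on_surface} then yields a simple transverse loop $\Gamma'$ equivalent to $\Gamma_0$. To upgrade the one-period equivalence to a global equivalence of $\gamma$ with the natural lift of $\Gamma'$, enumerate all return times of $\gamma$ to $\phi_0$ as a bi-infinite increasing sequence $(t_k)_{k\in\Z}$; if any segment $\gamma|_{[t_k,t_{k+1}]}$ were inequivalent to $\gamma|_{[t_-,t_+]}$ as a transverse path from $\phi_0$ to $\phi_0$, then, lifting the two return segments to the universal cover of $\mathrm{dom}(\mathcal{F})$, they would disagree in the order of successive lifts of $\phi_0$ they meet, producing an $\widetilde{\mathcal{F}}$-transverse intersection between two covering translates of a lift of $\gamma$ and contradicting (i). All return segments being equivalent, concatenation yields the desired equivalence between $\gamma$ and the natural lift of $\Gamma'$.

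The main obstacle will be the last step: showing rigorously that inequivalent first-return segments of $\gamma$ to $\phi_0$ force an $\mathcal{F}$-transverse self-intersection. This requires careful work in the universal cover of $\mathrm{dom}(\mathcal{F})$, tracking the positions of successive preimages of $\phi_0$ relative to comparable lifts of $\gamma$ and using the order relation among pairwise disjoint lines introduced in Section 2.1 together with the definition of $\mathcal{F}$-transverse intersection. A secondary technical subtlety is the promotion of the transverse segment $\gamma|_{[t_-,t_+]}$ to a genuine transverse loop in the same equivalence class, which requires a small holonomic deformation near $\phi_0$ to reconcile the endpoints without altering the sequence of leaves crossed.
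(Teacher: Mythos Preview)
Your handling of (ii)$\Rightarrow$(iii) and (ii)$\Rightarrow$(i) is fine and close in spirit to the paper's (which does (ii)$\Rightarrow$(iii)$\Rightarrow$(i) via the dual function and triviality of $\widetilde{\mathcal F}$ on each component of the preimage of $U$). The problem is in (i)$\Rightarrow$(ii), where there is a circularity between your use of Proposition~\ref{pr:transverse_on_surface} and your step~5.

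To apply Proposition~\ref{pr:transverse_on_surface} to $\Gamma_0$ you must know that the \emph{loop} $\Gamma_0$ has no $\mathcal F$-transverse self-intersection. That is a statement about the bi-infinite periodic path $\gamma_0$ (the natural lift of $\Gamma_0$), not merely about the finite segment $\gamma_0|_{[0,n]}\sim\gamma|_{[t_-,t_+]}$. From (i) you only know that subpaths of $\gamma$ have no $\mathcal F$-transverse self-intersection; you do not yet know that arbitrarily long pieces of $\gamma_0$ occur inside $\gamma$. That is precisely the content of step~5. (Proposition~\ref{pr:shorttransversalitypforloops} would close the gap when $n\ge 2$, since a self-intersection of $\Gamma_0$ would already appear in $\gamma_0|_{[0,2]}\subset\gamma_0|_{[0,n]}$; but when $n=1$ one period is not enough, and you cannot force $n\ge 2$ without first knowing that consecutive return segments of $\gamma$ to $\phi_0$ agree---again step~5.) So the appeal to Proposition~\ref{pr:transverse_on_surface} presupposes what step~5 is meant to prove.

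Step~5 itself is only a sketch, and the proposed mechanism (``inequivalent return segments disagree in the order of lifts of $\phi_0$, hence force a transverse intersection'') is not yet a proof: two inequivalent transverse paths from $\phi_0$ to $\phi_0$ need not intersect $\mathcal F$-transversally, and no configuration of above/below leaves relative to a common leaf has been produced. The paper sidesteps both issues by \emph{not} invoking Proposition~\ref{pr:transverse_on_surface} at all. It builds the simple loop $\Gamma'$ directly, shrinking $[a,b]$ to an innermost pair $a'\le b'$ with $\phi_{\gamma(a')}=\phi_{\gamma(b')}$ and $t\mapsto\phi_{\gamma(t)}$ injective on $[a',b')$ (a short monotonicity/nesting argument using that leaves met by $\Gamma$ are wandering). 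Then, with the annulus $U=\bigcup_{t\in[a',b']}\phi_{\gamma(t)}$ in hand, it shows by an explicit case analysis---tracking the last exit and first re-entry of $\gamma$ into $U$ on each side of $\Gamma'$, and distinguishing whether these occur in the same or in opposite complementary components---that any excursion of $\gamma$ outside $U$ produces concrete parameters at which $\gamma$ meets itself $\mathcal F$-transversally. That case analysis is the real content of (i)$\Rightarrow$(ii); your outline replaces it with an appeal that does not yet stand on its own.
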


\begin{proof}\enskip  
To prove that {\bf ii)} implies {\bf iii)}, just note that a dual function of $\Gamma'$ takes only two consecutive values, which implies that every leaf of $\mathcal F$ meets $\Gamma'$ at most once.

To prove that {\bf iii)} implies {\bf i)} it is sufficient to note that if $\bigcup_{t\in\R}\phi_{\gamma(t)}$ is an annulus, each connected component of its preimage in the universal covering space of  $ \mathrm{dom}({\mathcal F})$ is an open set, union of leaves, where the lifted foliation $\widetilde{\mathcal F}$ is trivial. This implies that $\gamma$ has no $\mathcal{F}$-transverse self-intersection.

It remains to prove that {\bf i)} implies {\bf ii)}. The path $\gamma$ being $\mathcal{F}$-bi-recurrent,  one can find  $a<b$ such that $\phi_{\gamma(a)}=\phi_{\gamma(b)}$. Replacing $\gamma$ by an equivalent transverse path, one can suppose that $\gamma(a)=\gamma(b)$. Let $\Gamma$ be the loop naturally defined by the closed path $\gamma\vert_{[a,b]}$. As explained previously, every leaf that meets $\Gamma$ is wandering and consequently, if $t$ and $t'$ are sufficiently close, one has $\phi_{\Gamma(t)}\not=\phi_{\Gamma(t')}$. Moreover, because $\Gamma$ is positively transverse to $\mathcal F$, one cannot find an increasing sequence $(a_n)_{n\geq 0}$ and a decreasing sequence $(b_n)_{n\geq 0}$, such that $\phi_{\gamma(a_n)}=\phi_{\gamma(b_n)}$. So, there exist $a\leq a'<b'\leq b$ such that $t\mapsto \phi_{\gamma(t)}$ is injective on $[a',b')$ and satisfies $\phi_{\gamma(a')}=\phi_{\gamma(b')}$. Replacing $\gamma$ by an equivalent transverse path, one can suppose that $\gamma(a')=\gamma(b')$. The set $U=\bigcup_{t\in[a',b']}\phi_{\gamma(t)}$ is an open annulus and the loop $\Gamma'$ naturally defined by the closed path $\gamma\vert_{[a',b']}$  is a simple loop. 

Let us prove now that $\gamma$ is equivalent to the natural lift $\gamma'$ of $\Gamma'$. Being $\mathcal{F}$-bi-recurrent it cannot be equivalent to a strict subpath of $\gamma'$. So it is sufficient to prove that it is included in $U$. We will give a proof by contradiction. We denote the two connected components of the complement of $U$ as $X_1,\, X_2$. Suppose that there exists $t\in\R$ such that $\gamma(t)\not\in U$. The path $\gamma$ being $\mathcal{F}$-bi-recurrent and the sets $X_i$ saturated,  there exists $t'\in\R$ separated from $t$ by $[a',b']$ such that $\gamma(t')$ is in the same component $X_i$ than $\gamma(t)$. More precisely, one can find real numbers
$$t_1<a''\leq a'<b'\leq b''<t_2$$ and an integer $k\geq 1$, uniquely determined such that

\smallskip
\noindent-\enskip $\gamma\vert_{[a'',b'']}$ is equivalent to $\gamma\vert_{[a',b']}^k$;

\smallskip
\noindent-\enskip $\gamma\vert_{(t_1,a'')}$ and $\gamma\vert_{(b'',t_2)}$ are included in $U$ but do not meet $\phi_{\gamma(a')}$;

\smallskip

\noindent-\enskip $\gamma(t_1)$ and $\gamma(t_2)$ do not belong to $U$.

Moreover, if $\gamma(t_2)$ does not belong to the same component $X_i$ than $\gamma(t_1)$, one can find real numbers $t_2\leq t_3<t_4$ uniquely determined such that

\smallskip
\noindent- \enskip$\gamma(t_4)$ belongs to the same component $X_i$ than $\gamma(t_1)$;

\smallskip
\noindent-\enskip$\gamma\vert_{[t_2, t_4)}$ does not meet this component, 

\smallskip
\noindent-\enskip $\gamma\vert_{(t_3, t_4)}$ is included in $U$;

\smallskip
\noindent-\enskip $\gamma(t_3)$ does not belong to $U$.

\medskip
Observe now that if $\gamma(t_1)$ and $\gamma(t_2)$ belong to the same component $X_i$, then $\gamma_{[t_1, b'']}$ and $\gamma_{[a'', t_2]}$ intersect $\mathcal{F}$-transversally at $\phi_{\gamma(a'')}=\phi_{\gamma(b'')}$. Suppose now that $\gamma(t_1)$ and $\gamma(t_2)$ do not belong to the same component $X_i$. Fix $t\in(t_3,t_4)$. There exists $t'\in[a',b']$ such that $\phi_{\gamma(t')}=\phi_{\gamma(t)}$. Observe that  $\gamma\vert_{[t_1, t_2]}$ and $\gamma\vert_{[t_3, t_4]}$ intersect $\mathcal{F}$-transversally at $\phi_{\gamma(t')}=\phi_{\gamma(t)}$. 
\end{proof}

\begin{remark} Note that the proof above tells us that if $\gamma$ is $\mathcal{F}$-positively or $\mathcal{F}$-negatively recurrent, there exists a transverse simple loop $\Gamma'$ such that $\gamma$ is equivalent to a subpath of the natural lift $\gamma'$ of $\Gamma'$. 
\end{remark}

\begin{figure}[ht!]
\hfill
\includegraphics[height=48mm]{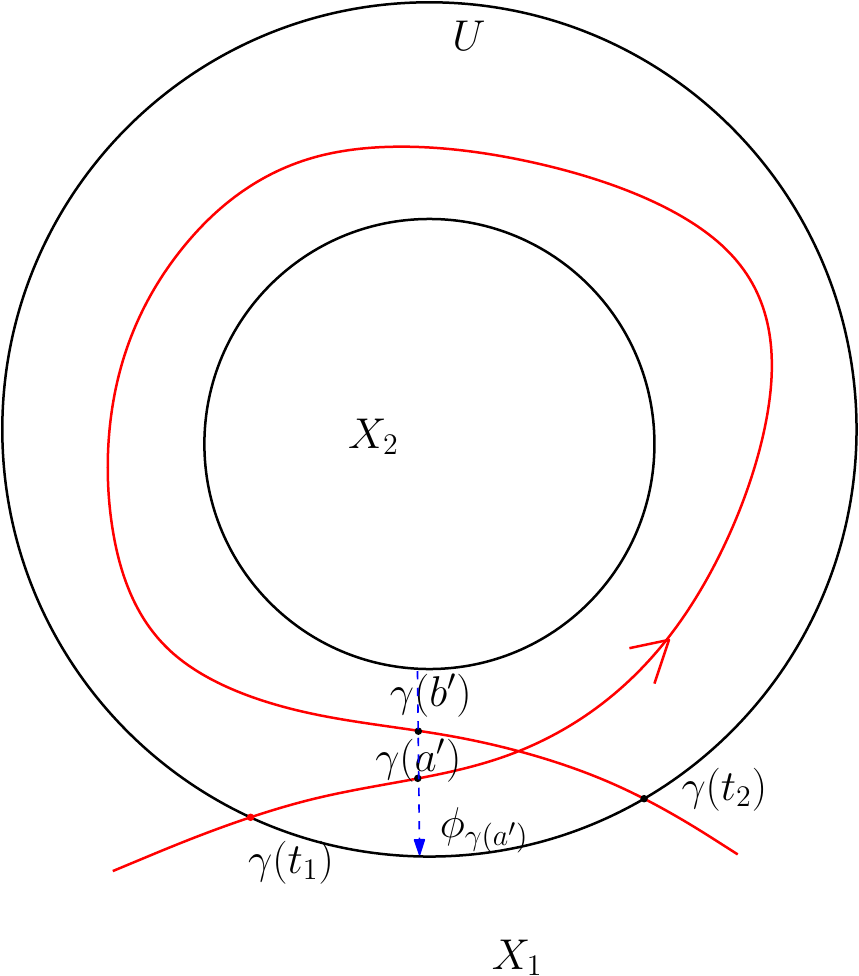}
\hfill
\includegraphics[height=48mm]{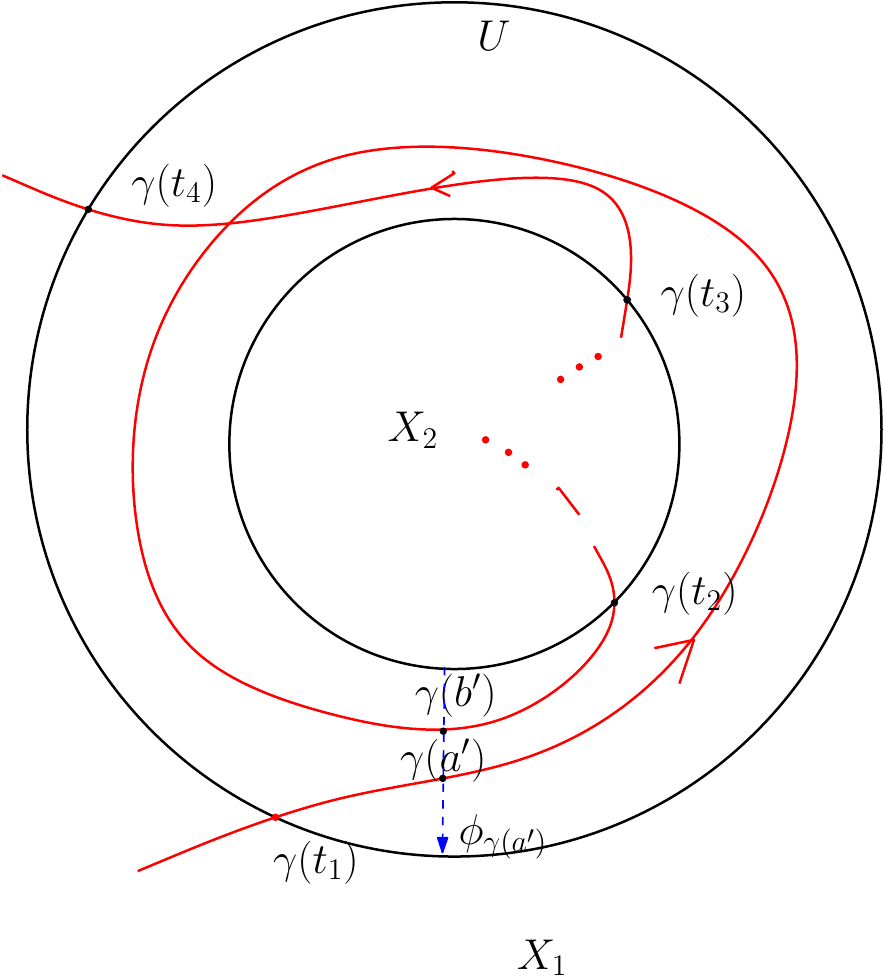}
\hfill{}
\caption{\small Proof of Proposition \ref{pr:transverse_on_sphere}.}
\label{figuretransverseonsphere}
\end{figure}

The next result is a slight modification.

\begin{proposition} \label{pr:transverse_on_plane}
 Let $\mathcal F$ be an oriented singular foliation on $\R^2$ with leaves of uniformly bounded diameter and $\gamma$ be a transverse proper path. The following properties are equivalent:

\smallskip
\noindent{\bf i)}\enskip \enskip $\gamma$ has no $\mathcal{F}$-transverse self-intersection;

\smallskip
\noindent{\bf ii)}\enskip \enskip  $\gamma$ meets every leaf at most once;

\smallskip
\noindent{\bf iii)}\enskip \enskip  $\gamma$ is a line.
\end{proposition}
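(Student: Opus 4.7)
The implications (iii)$\Rightarrow$(ii), (ii)$\Rightarrow$(iii) and (ii)$\Rightarrow$(i) are immediate from the preceding discussion. A transverse line has dual function taking only the values $0$ and $1$, and since the dual function drops by one at each leaf-crossing, each leaf meets the line at most once; this gives (iii)$\Rightarrow$(ii). Conversely, if $\gamma$ meets every leaf at most once, then two distinct parameters cannot map to the same point (they would lie on a common leaf), so $\gamma$ is injective and, combined with properness, is a line; this is (ii)$\Rightarrow$(iii). Finally, any $\mathcal F$-transverse self-intersection of $\gamma$ produces two distinct parameters with $\phi_{\gamma(t_1)}=\phi_{\gamma(t_2)}$, which is ruled out by (ii), giving (ii)$\Rightarrow$(i).

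The main content is (i)$\Rightarrow$(ii). The plan is to lift $\gamma$ to a transverse path $\widetilde\gamma : J \to \widetilde{\mathrm{dom}}(\mathcal F)$. Each connected component of $\widetilde{\mathrm{dom}}(\mathcal F)$ is simply connected, open and non-compact, hence homeomorphic to $\R^2$, and the lifted foliation $\widetilde{\mathcal F}$ is non-singular there. By Haefliger--Reeb, each leaf of $\widetilde{\mathcal F}$ is a wandering line separating its ambient plane into $R(\widetilde\phi)$ and $L(\widetilde\phi)$. Since $\widetilde\gamma$ is positively transverse, it can only cross a leaf from $R(\widetilde\phi)$ to $L(\widetilde\phi)$, never back; therefore $\widetilde\gamma$ meets each leaf of $\widetilde{\mathcal F}$ at most once.

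Now suppose for contradiction that $\gamma$ meets some leaf $\phi$ at parameters $t_1 < t_2$. The lifted leaves $\widetilde\phi_i := \phi_{\widetilde\gamma(t_i)}$ are then distinct by the previous step, and there is a nontrivial deck transformation $T$ with $T(\widetilde\phi_1) = \widetilde\phi_2$. The two lifted paths $\widetilde\gamma$ and $T(\widetilde\gamma)$ both meet $\widetilde\phi_2$, the former at parameter $t_2$ and the latter at parameter $t_1$, at the common point $p = \widetilde\gamma(t_2) = T(\widetilde\gamma)(t_1)$. I would then argue that these two paths intersect $\widetilde{\mathcal F}$-transversally at $\widetilde\phi_2$, which by definition yields an $\mathcal F$-transverse self-intersection of $\gamma$ and contradicts (i).

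To set up the needed configuration, observe that since $\widetilde\gamma$ meets $\widetilde\phi_2$ only at $t_2$, the leaf $\widetilde\phi_1$ lies entirely in $R(\widetilde\phi_2)$; and since the orientation-preserving deck $T$ preserves the $R/L$ decomposition attached to each leaf and sends $\widetilde\phi_1$ to $\widetilde\phi_2$, a symmetric argument places $T(\widetilde\phi_2)$ in $L(\widetilde\phi_2)$. Choosing parameters $a_1 < t_2 < b_1$ and $a_2 < t_1 < b_2$ (for instance $a_1 = t_1$, $b_2 = t_2$, with $a_2$ and $b_1$ chosen close to the respective crossings), one then verifies the required below/above relations using the order on pairwise disjoint lines disjoint from $\widetilde\phi_2$ set up at the beginning of Section~2. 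The main obstacle is this final bookkeeping step, namely the careful verification of the relative positions of $\widetilde\phi_1$, $T(\widetilde\phi_2)$ and the leaves crossed just beyond $p$ by each of the two lines, in the ordering relative to $\widetilde\phi_2$. The bounded diameter hypothesis on the leaves is used to guarantee that properness lifts cleanly and that every path equivalent to a line remains a line, so that the line property obtained in the universal cover transfers back to $\R^2$.
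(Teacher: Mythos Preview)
Your argument for the easy implications is fine, and your observation that $\widetilde\gamma$ meets each leaf of $\widetilde{\mathcal F}$ at most once in the universal cover is correct. The gap is exactly where you identify it, and it is not bookkeeping: the claim that $\widetilde\gamma$ and $T(\widetilde\gamma)$ intersect $\widetilde{\mathcal F}$-transversally at $\widetilde\phi_2$ is false in general and cannot be established by the local considerations you outline. Take $\mathrm{dom}(\mathcal F)$ to be an annulus and $\gamma$ the natural lift of a transverse simple loop $\Gamma$. Then $\gamma$ meets each leaf once per revolution, the lifts $\widetilde\gamma$ and $T(\widetilde\gamma)$ (where $T$ generates the deck group) are parallel lines in the trivially foliated strip $\widetilde U$, and they certainly do not intersect $\widetilde{\mathcal F}$-transversally. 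Of course this $\gamma$ is not proper in $\R^2$, which is precisely the point: your argument never invokes properness or the uniform bound on leaf diameters at the step where they are essential, so it cannot go through as written. Knowing that $\widetilde\phi_1\subset R(\widetilde\phi_2)$ and $T(\widetilde\phi_2)\subset L(\widetilde\phi_2)$ tells you nothing about the \emph{relative order} (above/below) of the leaves of $T(\widetilde\gamma)$ versus those of $\widetilde\gamma$ with respect to $\widetilde\phi_2$; both orders are a priori possible, and in the simple-loop example they are consistently on the same side.

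The paper's proof proceeds quite differently and stays in $\R^2$. Assuming $\phi_{\gamma(a)}=\phi_{\gamma(b)}$, one first extracts (as in Proposition~\ref{pr:transverse_on_sphere}) a subarc giving a transverse simple loop $\Gamma'$ with associated open annulus $U=\bigcup_{t\in[a',b']}\phi_{\gamma(t)}$. Properness of $\gamma$ in $\R^2$ together with boundedness of $U$ (this is where the uniform bound on leaf diameters enters) forces $\gamma$ to exit $U$ on both sides of $[a',b']$, producing parameters $t_1<a''\le a'<b'\le b''<t_2$ with $\gamma(t_1),\gamma(t_2)\notin U$. One then performs a case analysis on whether $\gamma(t_1)$ and $\gamma(t_2)$ land in the same or different complementary components of $\Gamma'$; in each case the exit pattern exhibits an explicit $\mathcal F$-transverse self-intersection of $\gamma$. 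The geometric input you are missing is precisely this escape-from-the-annulus step; without it there is no mechanism to rule out the ``parallel lifts'' scenario.
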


\begin{proof}
\enskip The fact that {\bf ii)} implies {\bf iii)} is obvious, as is the fact that {\bf iii)} implies {\bf i)}. It remains to prove that {\bf i)} implies {\bf ii)}. Let us suppose that  $\phi_{\gamma(a)}=\phi_{\gamma(b)}$, where $a<b$. We will prove that $\gamma$ has a transverse self-intersection. Like in the proof of the previous proposition, replacing $\gamma$ by an equivalent transverse path, one can find $a\leq a'<b'\leq b$ such that $\gamma(a')=\gamma(b')$, such that $U=\bigcup_{t\in[a',b']}\phi_{\gamma(t)}$ is an open annulus and such that the loop $\Gamma'$ naturally defined by the closed path $\gamma\vert_{[a',b']}$  is a simple loop. Write $X_1$ for the unbounded connected component of $\R^2\setminus\Gamma'$ and $X_2$ for the bounded one. The path $\gamma$ being proper, one can find real numbers
$$t_1<a''\leq a'<b'\leq b''<t_2$$ and an integer $k\geq 1$, uniquely determined such that

\smallskip
\noindent-\enskip $\gamma\vert_{[a'',b'']}$ is equivalent to $\gamma\vert_{[a',b']}^k$;

\smallskip
\noindent-\enskip $\gamma\vert_{(t_1,a'')}$ and $\gamma\vert_{(b'',t_2)}$ are included in $U$ but do not meet $\phi_{\gamma(a')}$;

\smallskip

\noindent-\enskip $\gamma(t_1)$ and $\gamma(t_2)$ do not belong to $U$.

As seen in the proof of the previous proposition, if $\gamma(t_1)$ and $\gamma(t_2)$ belong to the same component $X_i$, then $\gamma_{[t_1, b'']}$ and $\gamma_{[a'', t_2]}$ intersect $\mathcal{F}$-transversally at $\phi_{\gamma(a'')}=\phi_{\gamma(b'')}$. If $\gamma(t_1)\in X_1$ and $\gamma(t_2)\in X_2$, using the fact that $\gamma$ is proper, one can find real numbers $t_2\leq t_3<t_4$ uniquely determined such that

\smallskip
\noindent- \enskip$\gamma(t_4)$ belongs to $X_1$;

\smallskip
\noindent-\enskip$\gamma\vert_{[t_2, t_4)}$ does not meet $X_1$, 

\smallskip
\noindent-\enskip $\gamma\vert_{(t_3, t_4)}$ is included in $U$;

\smallskip
\noindent-\enskip $\gamma(t_3)$ belongs to $X_2$.

\medskip
As seen in the proof of the previous proposition, $\gamma\vert_{[t_1, t_2]}$ and $\gamma\vert_{[t_3, t_4]}$ intersect $\mathcal{F}$-transversally. The case where $\gamma(t_1)\in X_2$ and $\gamma(t_2)\in X_1$  can be treated analogously.
\end{proof}

Let us add another result describing paths  with no $\mathcal{F}$-transverse self-intersection:

\begin{proposition}
Let $\mathcal F$ be an oriented singular foliation on $\R^2$,  $\gamma$ a transverse proper path and $\delta$ a dual function of $\gamma$. If $\gamma'$ is a transverse path that does not intersect $\mathcal{F}$-transversally $\gamma$, then $\delta$ takes a constant value on the union of the leaves met by $\gamma'$ but not by $\gamma$.
\end{proposition}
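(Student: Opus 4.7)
I will reduce the statement to a vanishing of algebraic intersection, then exploit the hypothesis by producing an equivalent path disjoint from $\gamma$.

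Fix two leaves $\phi_1,\phi_2$ met by $\gamma'$ but not by $\gamma$, and choose parameters $s_1\leq s_2$ in the domain of $\gamma'$ with $\gamma'(s_i)\in\phi_i$. My plan is to show $\delta(\gamma'(s_1))=\delta(\gamma'(s_2))$; the proposition will then follow since $\delta$ is locally constant on the open set $\R^2\setminus\gamma$ (two nearby points can be joined by a short path whose algebraic intersection number with $\gamma$ vanishes), hence constant on each connected component of $\R^2\setminus\gamma$, and in particular takes a single value on each leaf disjoint from $\gamma$.

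Next, by the very definition of the dual function, the difference $\delta(\gamma'(s_2))-\delta(\gamma'(s_1))$ equals the algebraic intersection number $\gamma\wedge\gamma'|_{[s_1,s_2]}$ in $\R^2$, so everything reduces to showing this number vanishes.

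The key step is to invoke the observation stated just before the proposition: two transverse paths meeting a common leaf and not intersecting $\mathcal{F}$-transversally admit equivalents with no intersection point. I would apply this fact, leaf by leaf, to produce a transverse path $\gamma''$ equivalent to $\gamma'|_{[s_1,s_2]}$ and disjoint from $\gamma$, by sliding $\gamma'|_{[s_1,s_2]}$ holonomically along leaves so as to bypass $\gamma$ wherever they meet. Because equivalent transverse paths are connected by a holonomic homotopy — each point moving only along its own leaf — and the endpoints stay throughout on $\phi_1$ and $\phi_2$, which are disjoint from $\gamma$, the endpoints never cross $\gamma$. Consequently the algebraic intersection number with $\gamma$ is preserved by the deformation, and one obtains $\gamma\wedge\gamma'|_{[s_1,s_2]}=\gamma\wedge\gamma''=0$.

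The main obstacle will be the global construction of $\gamma''$: the cited observation provides a local holonomic deformation near each common leaf, and combining these into a single coherent equivalent path disjoint from $\gamma$ — particularly when the set of common leaves is infinite or has a delicate structure in the leaf space — will require patching the local pushes carefully while maintaining positive transversality to $\mathcal F$.
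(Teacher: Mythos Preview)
Your reduction to the vanishing of $\gamma\wedge\gamma'|_{[s_1,s_2]}$ and the invariance of this number under holonomic homotopy with endpoints sliding along $\phi_1,\phi_2$ are both correct (in fact, since $\R^2$ is simply connected, the number depends only on the endpoints, and $\delta$ is constant on each connected leaf disjoint from $\gamma$, so no homotopy is even needed once $\gamma''$ exists). The difficulty is exactly where you place it: producing $\gamma''$.

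However, the gap is more serious than ``patching the local pushes carefully''. First, the disjointing fact you cite is stated for a \emph{non-singular} plane foliation; here $\mathcal F$ is singular on $\R^2$, so to use it you must lift to the universal cover $\widetilde W$ of the relevant component of $\mathrm{dom}(\mathcal F)$. There, a single lift $\widetilde\gamma'|_{[s_1,s_2]}$ may meet several distinct lifts of $\gamma$, and the cited fact only disjoints one pair of paths at a time (and in principle allows modifying both). Second, and more fundamentally, the local picture gives no guidance on which side of $\gamma$ to push towards: if $\gamma'$ genuinely crossed from one side of a lift $\widetilde\gamma$ to the other, no holonomic slide could remove that crossing, so the very existence of $\gamma''$ already presupposes what you want to prove.

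The paper avoids this circularity by arguing contrapositively and directly exhibiting the $\mathcal F$-transverse intersection. One lifts to $\widetilde W$; there the foliation is non-singular, so every transverse path is injective and hence each lift $\widetilde\gamma$ of the proper path $\gamma$ is a \emph{line}. If $\delta$ takes different values on $\phi_0$ and $\phi_1$, then for some lift $\widetilde\gamma$ the lifted leaves satisfy $\widetilde\phi_0\subset r(\widetilde\gamma)$ and $\widetilde\phi_1\subset l(\widetilde\gamma)$. Since $r(\widetilde\gamma)$ and $l(\widetilde\gamma)$ are closed saturated sets, the lift $\widetilde\gamma'$ contains a subpath going from a leaf of $r(\widetilde\gamma)$ to a leaf of $l(\widetilde\gamma)$ whose interior lies entirely in the strip $\widetilde U$ of leaves met by $\widetilde\gamma$; this subpath intersects $\widetilde\gamma$ $\widetilde{\mathcal F}$-transversally by definition. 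No global disjointing construction is needed.
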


\begin{proof}
\enskip Let us suppose that $\gamma'$ meets two leaves $\phi_0$ and $\phi_1$, disjoint from $\gamma$ and such that $\delta$ does not take the same value on $\phi_0$ and on $\phi_1$.
One can suppose that $\gamma'$ joins $\phi_0$ to $\phi_1$. 
Let $W$ be the connected component of  $ \mathrm{dom}({\mathcal F})$ that contains $\gamma$. Write $\widetilde W$ for the universal covering space of $W$ and $\widetilde{\mathcal F}$ for the lifted foliation. Every lift of $\gamma$ is a line. Fix a lift $\widetilde \gamma'$, it joins a leaf $\widetilde \phi_0$ that lifts $\phi_0$ to a leaf $\widetilde \phi_1$ that lifts $\phi_1$. By hypothesis, there exists a lift $\widetilde\gamma$ of $\gamma$ such that the dual function $\delta_{\widetilde\gamma}$ do not take the same value on $\widetilde \phi_0$ and $\widetilde \phi_1$. One can suppose that $\widetilde \phi_0\subset r(\widetilde\gamma)$ and $\widetilde \phi_1\subset l(\widetilde\gamma)$ for instance  (recall that $r(\widetilde \gamma)$ is the union of leaves included in the connected component of $\widetilde W\setminus\widetilde\gamma$ on the right of $\widetilde \gamma$ and $l(\widetilde \gamma)$ the union of leaves included in the other component). The foliation $\widetilde{\mathcal F}$ being non singular, the sets $r(\widetilde\gamma)$ and $l(\widetilde\gamma)$ are closed. Consequently, there exists a subpath $\widetilde\gamma''$ of $\widetilde\gamma'$ that joins a leaf  of $r(\widetilde\gamma)$ to a leaf of $l(\widetilde\gamma)$ and that is contained but the ends in the open set $\widetilde U$, union of leaves met by $\widetilde \gamma$. Observe now that $\widetilde \gamma$ and $\widetilde \gamma''$ intersect $\widetilde{\mathcal{F}}$-transversally and positively. 
\end{proof}

\bigskip
We deduce immediately

\begin{corollary}\label{co: no_intersection_two_sets}
Let $\mathcal F$ be an oriented singular foliation on $\R^2$, $\gamma$ a transverse path that is either a line or a proper path directed by a unit vector $\rho$ and $\gamma'$ a transverse path. If $\gamma$ and $\gamma'$ do not intersect $\mathcal{F}$-transversally, then $\gamma'$ cannot meet both sets $r(\gamma)$ and $l(\gamma)$. \end{corollary}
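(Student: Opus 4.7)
My plan is to derive this corollary directly from the preceding proposition. Both alternative hypotheses on $\gamma$---being a line or being a proper path directed by $\rho$---make $\gamma$ a transverse proper path, so the proposition applies and produces a dual function $\delta$ of $\gamma$ which is constant on the union of leaves met by $\gamma'$ but not by $\gamma$.

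The crux of the argument will be to show that $\delta$ takes distinct values on $r(\gamma)$ and on $l(\gamma)$. By definition, these are unions of leaves entirely contained in the two designated connected components $R(\gamma)$ and $L(\gamma)$ of $\R^2\setminus\gamma$, and $\delta$ is locally constant on $\R^2\setminus\gamma$, so it takes a single value $\delta_R$ on $R(\gamma)$ and a single value $\delta_L$ on $L(\gamma)$. When $\gamma$ is a line, the normalized dual function $\delta_\gamma$ already satisfies $\delta_R=0$ and $\delta_L=1$ by definition. When $\gamma$ is a proper path directed by $\rho$, I would use the asymptotic characterization of $R(\gamma)$ and $L(\gamma)$: fix any $z\in\R^2$ and choose $s$ large enough that $z-s\rho^{\perp}\in R(\gamma)$ and $z+s\rho^{\perp}\in L(\gamma)$; the straight segment joining these two points has algebraic intersection number $1$ with $\gamma$, because $\gamma(t)/\Vert\gamma(t)\Vert\to\pm\rho$ at the two ends forces the contribution of $\gamma$ outside a sufficiently large compact set to cross the segment exactly once with positive sign. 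Hence $\delta_L-\delta_R=1$ in both cases.

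To conclude, I would argue by contradiction. If $\gamma'$ met both $r(\gamma)$ and $l(\gamma)$, there would exist two leaves met by $\gamma'$, but disjoint from $\gamma$ (since leaves of $r(\gamma)\cup l(\gamma)$ are by definition disjoint from $\gamma$), on which $\delta$ takes the two distinct values $\delta_R$ and $\delta_L$. This directly contradicts the conclusion of the previous proposition, so $\gamma'$ cannot meet both $r(\gamma)$ and $l(\gamma)$.

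The only nontrivial step in this plan is the verification that $\delta_R\neq\delta_L$ when $\gamma$ is a directed proper path rather than a line: the complement of such a $\gamma$ may have more than two components, and the asymptotic crossing count must be handled carefully to rule out the degenerate possibility that $R(\gamma)$ and $L(\gamma)$ receive the same $\delta$-value. Everything else is essentially a translation between $r(\gamma), l(\gamma)$ and the level sets of $\delta$.
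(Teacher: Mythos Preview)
Your approach is correct and is exactly what the paper intends: it states the corollary follows ``immediately'' from the preceding proposition, and your plan---apply that proposition and observe that $\delta$ takes distinct values on $R(\gamma)$ and $L(\gamma)$---is the intended deduction.

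One phrasing issue worth fixing: your justification that $\delta_L-\delta_R=1$ in the directed case is garbled. You write that ``the contribution of $\gamma$ outside a sufficiently large compact set'' crosses the segment once, but the segment is bounded, so for large $|t|$ the path $\gamma(t)$ does not meet it at all. The clean argument is this: for $s$ large enough, every parameter $t$ with $\langle\gamma(t),\rho\rangle=\langle z,\rho\rangle$ also satisfies $|\langle\gamma(t)-z,\rho^{\perp}\rangle|<s$ (these $t$ form a compact set), so the algebraic intersection number $\gamma\wedge\sigma$ equals the signed count of crossings of the real function $t\mapsto\langle\gamma(t),\rho\rangle$ through the value $\langle z,\rho\rangle$. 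Since this function tends to $-\infty$ as $t\to-\infty$ and to $+\infty$ as $t\to+\infty$, that count is $+1$. With this correction your argument goes through.
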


\bigskip
Given a transverse loop $\Gamma$ with a $\mathcal F$-transverse self-intersection and its natural lift $\gamma$, there exists some integer $K$ for which $\gamma\vert_{[0, K]}$ also has an $\mathcal{F}$-transverse self-intersection. Let us continue this section with an estimate of the minimal such $K$ when $\Gamma$ is homologous to zero. 

\begin{proposition}\label{pr:shorttransversalitypforloops}
Let $\mathcal F$ be an oriented singular foliation on $M$ and $\Gamma:\T^1\to M$ a transverse loop homologous to zero in $M$ with an $\mathcal{F}$-transverse self-intersection. If $\gamma:\R\to M$ is the natural lift of $\Gamma$, then $\gamma\vert_{[0,2]}$ has an $\mathcal{F}$-transverse self-intersection.
\end{proposition}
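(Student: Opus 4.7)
Lift $\gamma$ to a path $\widetilde\gamma\colon\R\to\widetilde{\mathrm{dom}}(\mathcal F)$, and let $T_0$ be the deck transformation such that $\widetilde\gamma(t+1)=T_0\widetilde\gamma(t)$. By hypothesis there exist a leaf $\widetilde\phi$ of $\widetilde{\mathcal F}$, a deck transformation $T$, and parameters $s_0,s_0'\in\R$ with $\widetilde\gamma(s_0), T\widetilde\gamma(s_0')\in\widetilde\phi$ such that $\widetilde\gamma$ and $T\widetilde\gamma$ intersect $\widetilde{\mathcal F}$-transversally at $\widetilde\phi$, with some witness $(a_1,b_1,a_2,b_2)$. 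Necessarily $T\notin\langle T_0\rangle$, since otherwise $T\widetilde\gamma$ would be a reparametrization of $\widetilde\gamma$ with the same image and could not cross it transversally.

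\textbf{Step 1: placing the crossings inside $(0,2)$.} For any $j,k\in\Z$, the pair $(\widetilde\gamma,\, T_0^j T T_0^{-k}\widetilde\gamma)$ still intersects $\widetilde{\mathcal F}$-transversally at the common leaf $T_0^j\widetilde\phi$: a direct computation shows $\widetilde\gamma$ crosses $T_0^j\widetilde\phi$ at time $s_0+j$ and $T_0^j T T_0^{-k}\widetilde\gamma$ crosses it at time $s_0'+k$, and translating the original witness parameters by $(j,j,k,k)$ yields a valid witness (since $T_0^j$ preserves $\widetilde{\mathcal F}$ and hence all ``above/below'' relations). Moreover $T_0^j T T_0^{-k}\notin\langle T_0\rangle$. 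The two integer degrees of freedom $j,k$ are independent, so one can arrange $s_0+j, s_0'+k\in(\eta,2-\eta)$ for some fixed $\eta>0$; rename and assume $s_0,s_0'\in(\eta,2-\eta)$.

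\textbf{Step 2: shrinking the witness.} The task reduces to showing that the transverse intersection at $\widetilde\phi$ is witnessed by a tuple $(a_1,b_1,a_2,b_2)$ with $a_i,b_i\in(s_i-\eta,s_i+\eta)$. Since $\Gamma$ is homologous to zero, every leaf met by $\Gamma$ is wandering (paper's preliminaries), so the crossings of $\widetilde\phi$ by $\widetilde\gamma$ and $T\widetilde\gamma$ are isolated in their respective parameters. Working in a trivializing chart of $\widetilde{\mathcal F}$ around each of the two crossing points on $\widetilde\phi$, for every sufficiently small $\epsilon>0$ the four leaves $\phi_{\widetilde\gamma(s_0\pm\epsilon)}$ and $\phi_{T\widetilde\gamma(s_0'\pm\epsilon)}$ are close to $\widetilde\phi$, with each on the appropriate side (right for the $-\epsilon$'s, left for the $+\epsilon$'s). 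The ``above/below'' relation of two such close leaves relative to $\widetilde\phi$ is determined by the positions of $\widetilde\gamma(s_0)$ and $T\widetilde\gamma(s_0')$ along $\widetilde\phi$ (a discrete datum, stable as $\epsilon\to 0$), and this local order matches the sign of the original witness. Hence $(s_0-\epsilon,s_0+\epsilon,s_0'-\epsilon,s_0'+\epsilon)$ is itself a valid witness for any small enough $\epsilon$. Picking $\epsilon<\eta$ gives a witness inside $(0,2)$, which exhibits the $\mathcal F$-transverse self-intersection of $\gamma|_{[0,2]}$.

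\textbf{Main obstacle.} The delicate step is the shrinking in Step 2. The ``above/below'' relation is, a priori, a global topological condition on three pairwise disjoint lines in $\widetilde{\mathrm{dom}}(\mathcal F)$ (with none separating the two others), defined through disjoint connecting paths. Verifying that it persists when the witnessing intervals are contracted to arbitrarily small neighborhoods of the crossing times requires a careful local-to-global analysis near each crossing on $\widetilde\phi$, in which the wandering property of $\widetilde\phi$—precisely the input supplied by the homologous-to-zero hypothesis—is what converts the global above/below comparison into a purely local order along $\widetilde\phi$.
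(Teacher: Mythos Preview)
Your Step 2 contains a genuine gap. The $\mathcal F$-transverse intersection condition is \emph{not} local in the way you assume: it is a statement about the leaves $\phi_{\widetilde\gamma(a_1)}, \phi_{T\widetilde\gamma(a_2)}$ and $\phi_{\widetilde\gamma(b_1)}, \phi_{T\widetilde\gamma(b_2)}$, and has nothing to do with the positions of the crossing points $\widetilde\gamma(s_0), T\widetilde\gamma(s_0')$ along $\widetilde\phi$. The obstruction is that the two lifts may share leaves over a long interval: the set
\[
X=\{(t_1,t_2): \phi_{\widetilde\gamma(t_1)}=\phi_{T\widetilde\gamma(t_2)}\}
\]
is in general a nondegenerate interval containing $(s_0,s_0')$. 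For any $\epsilon$ small enough that $s_0-\epsilon$ lies in the projection of $X$ to the first factor, the leaf $\phi_{\widetilde\gamma(s_0-\epsilon)}$ is also crossed by $T\widetilde\gamma$, hence separates $\widetilde\phi$ from $\phi_{T\widetilde\gamma(s_0'-\epsilon')}$ (or coincides with it), so the ``none of the lines separates the two others'' clause in the definition of above/below fails and your proposed witness is simply not a witness. Valid witnesses must lie \emph{outside} the projections of $X$, whose length is not controlled by any local chart argument or by the wandering property of $\widetilde\phi$. Indeed the paper notes explicitly that the witness property is preserved when $a_1,a_2$ are made \emph{smaller} and $b_1,b_2$ \emph{larger}---the opposite direction from the one you need.

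The paper's proof takes the complementary route: instead of shrinking the witnesses, it bounds the overlap interval. Starting from a witness with $a_1,a_2\in[0,1)$ (your Step~1), it assumes for contradiction that $b_1>a_1+1$, so that a full period $\widetilde\gamma|_{[a_1',a_1'+1]}$ is equivalent to $S(\widetilde\gamma)|_{[a_2',b_2']}$. If $b_2'=a_2'+1$, one argues either that the equivalence extends $T_0$-equivariantly to all of $\R$ (contradicting transversality) or that a commutator fixes a leaf, forcing a closed leaf homologous to zero (impossible). If $b_2'\ne a_2'+1$, a count of how many times $\gamma$ meets a given leaf over one period---finite precisely because $\Gamma$ is homologous to zero---gives a contradiction. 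Thus $b_1\le a_1+1$ and likewise $b_2\le a_2+1$, putting all witnesses into $[0,2)$. This counting argument is where the homologous-to-zero hypothesis is actually used; your invocation of it for ``wandering'' is correct but does not do the work you need.
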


\begin{proof}

 Write $\widetilde{\mathrm{dom}}(\mathcal F)$ for the universal covering space of $\mathrm{dom}(\mathcal F)$. If $\widetilde \gamma:J\to  \widetilde{\mathrm{dom}}(\mathcal F)$ is a path and $T$ a covering automorphism, write $T(\widetilde\gamma):J\to  \widetilde{\mathrm{dom}}(\mathcal F)$ for the path satisfying $T(\widetilde\gamma)(t)=T(\widetilde\gamma(t))$ for every $t\in J$. Choose a lift $\widetilde\gamma$  of $\gamma$ to $\widetilde{\mathrm{dom}}(\mathcal F)$  and write $T$ for the covering automorphism such that $\widetilde\gamma(t+1)=T(\widetilde \gamma)(t)$,  for every $t\in\R$. 
Since  $\gamma$ has an $\mathcal{F}$-transverse self-intersection and is periodic of period $1$, there  exist a covering automorphism $S$ and
$$  a_1<t_1<b_1, \, a_2<t_2<b_2,$$ 
  such that

\smallskip
\noindent -\enskip $\widetilde\gamma\vert_{(a_1,b_1)}$ is equivalent to  $S(\widetilde\gamma)\vert_{(a_2,b_2)}$;

\smallskip
\noindent -\enskip $\widetilde\gamma\vert_{[a_1,b_1]}$ and  $S(\widetilde\gamma)\vert_{[a_2,b_2]}$ have a $\widetilde{\mathcal{F}}$-transverse intersection at $\widetilde \gamma(t_1)=S(\widetilde \gamma)(t_2)$,

 \smallskip
\noindent -\enskip both $a_1, a_2$ belong to $[0, 1)$.

We will show that $b_1\le a_1+1$ and $b_2 \le a_2+1$ , which implies that  $\gamma\vert_{[0, 2]}$ has an $\mathcal{F}$-transverse self-intersection.
Assume for a contradiction that $b_1>a_1+1$ (the case where $b_2> a_2+1$ is treated similarly). Then we can find $a_1', a_2', b_2'$ with 
$$ a_1<a_1'<t_1, \, a_2<a_2'<t_2<b_2'<b_2$$  such that $\widetilde\gamma\vert_{[a_1', a_1'+1]}$ is equivalent to $S(\widetilde\gamma)\vert_{[a_2', b_2']}$. 

 Consider first the case where $b_2'=a_2'+1$. In that situation there exists an increasing homeomorphism
$ h:[a_1', a_1'+1] \to [a_2', a_2'+1], $ such that $h(t_1)=t_2$ and $\phi_{\widetilde\gamma(t)}=\phi_{S(\widetilde\gamma)(h(t))}$. This implies that 
$$T(\phi_{\widetilde\gamma(a'_1)})=\phi_{\widetilde\gamma(a'_1+1)}=\phi_{S(\widetilde\gamma)(a'_2+1)}=STS^{-1}\phi_{\widetilde\gamma(a'_2)}=STS^{-1}\phi_{\widetilde\gamma(a'_1)}.$$  In case $STS^{-1}=T$, 
one can extend $h$  to a homeomorphism of the real line that commutes with the translation $t\mapsto t+1$ such that  $\phi_{\widetilde\gamma(t)}=\phi_{S(\widetilde\gamma)(h(t))}$, for every $t\in\R$. If $K$ is large enough, then $[-K,K]$ contains $[a_1,b_1]$ and $h([-K,K])$ contains $[a_2,b_2]$. This contradicts the fact that $\widetilde\gamma\vert_{[a_1,b_1]}$ and  $S(\widetilde\gamma)\vert_{[a_2,b_2]}$ have a $\widetilde{\mathcal{F}}$-transverse intersection at $\widetilde\gamma(t_1)= S(\widetilde\gamma)(t_2)$. 
 In case $STS^{-1}\not =T$, the leaf $\phi_{\widetilde\gamma(a'_1)}$ is invariant by the commutator $T^{-1}STS^{-1}$ and so projects into a closed leaf of $\mathcal F$ that is homological to zero in $\mathrm{dom}(\mathcal F)$, which means that it bounds a closed surface \textcolor{black}{in this domain.  This closed surface, being a subsurface of $\mathrm{dom}(\mathcal F)$, is naturally foliated by $\mathcal{F}$, a non singular foliation, and one gets a contradiction by Poincar\'e-Hopf formula. One also gets a contradiction since this closed leaf has a non zero intersection number with the loop $\Gamma$.}

Now assume that $b_2'< a_2'+1$. Let $s \in (b_2', a_2'+1)$ and consider $\phi_{\gamma(s)}$. As noted \textcolor{black}{in the last paragraph of Subsection \ref{subsection:Generaldefinitions}}, since $\Gamma$ is homologous to zero, it intersects every given leaf a finite number of times. Let $n$ be the number of times it intersects $\phi_{\gamma(s)}$. It is equal to the number of times $\gamma\vert_{[a_1', a_1'+1)}$ or $\gamma\vert_{[a_2', a_2'+1)}$ intersect $\phi_{\gamma(s)}$. On the other hand, since  $\gamma\vert_{[a_2', b_2')}$ is equivalent to $\gamma\vert_{[a_1', a_1'+1)}$, it must also intersect  $\phi_{\gamma(s)}$ exactly $n$ times, and since $s \in (b_2', a_2'+1)$,  $\gamma\vert_{[a_2', a_2'+1)}$ needs to intersect $\phi_{\gamma(s)}$ at least $n+1$ times, a contradiction (see Figure \ref{Figure_Minimal_transversality}).

Finally, if $b_2'>a_2'+1$, then $\gamma\vert_{[a_2', a_2'+1]}$ is equivalent to $\gamma\vert_{[a_1', b_1']}$ for some $b_1'<a_1'+1$ and the same reasoning as above may be applied.
\end{proof}

\begin{figure}[ht!]
\hfill
\includegraphics [height=48mm]{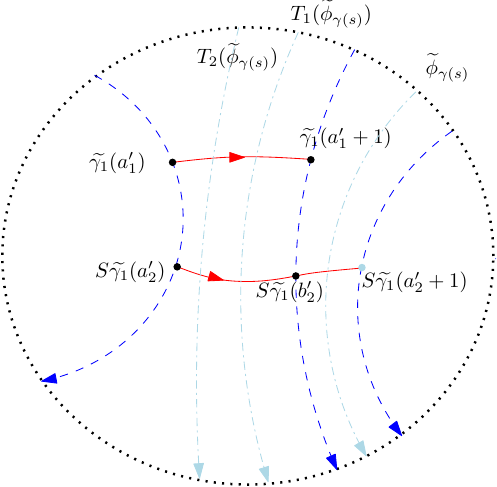}
\hfill{}
\caption{\small Contradiction from Proposition \ref{pr:shorttransversalitypforloops}. $\widetilde{\gamma}\mid_{[a_1', a_1'+1]}$ and $S\widetilde{\gamma}\mid_{[a_2', a_2'+1]}$ must cross the same finite number of lifts of  $\phi_{\gamma(s)}$.}
\label{Figure_Minimal_transversality}
\end{figure}

We will finish this subsection with a result (Proposition \ref{pr: finite_homotopy-classes}) that will be useful later. Let us begin with this simple lemma.

\begin{lemma}\label{le: finite_transverse_intersection}
Let $\mathcal F$ be an oriented singular foliation on $M$ and $\widetilde{\mathcal F}$ the lifted foliation on the universal covering space $\widetilde{\mathrm{dom}}(\mathcal F)$ of $\mathrm{dom}(\mathcal F)$.  Let $\widetilde\gamma$ and $\widetilde\gamma'$ be two lines of $\widetilde{\mathrm{dom}}(\mathcal F)$ transverse to $\widetilde{\mathcal F}$,  invariant by $T$ and $T'$ respectively, where $T$ and $T'$ are non trivial covering automorphisms. Up to a right composition by a power of $T$ and a left composition by a power of $T'$  there are finitely many covering automorphisms $S$ such that $\widetilde\gamma$ and $S(\widetilde\gamma')$ intersect $\widetilde{\mathcal F}$-transversally.

\end{lemma}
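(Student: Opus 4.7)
My plan is to reduce the lemma to the elementary fact that a properly discontinuous group action moves a fixed compact set through another compact set only finitely many times. The only structural input needed is that $T$ and $T'$ are non-trivial deck transformations of the universal cover $\widetilde{\mathrm{dom}}(\mathcal F)$, so they act freely and properly discontinuously.

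First I would set up convenient parametrizations. Since $T$ is fixed-point-free and preserves the orientation of the ambient surface together with the positive transversality of $\widetilde\gamma$ to $\widetilde{\mathcal F}$, the restriction $T\vert_{\widetilde\gamma}$ is an orientation-preserving fixed-point-free homeomorphism of $\widetilde\gamma\cong\R$, hence conjugate to a unit translation. I parametrize $\widetilde\gamma$ so that $\widetilde\gamma(t+1)=T(\widetilde\gamma(t))$, and analogously $\widetilde\gamma'$ with $T'$. Let $K=\widetilde\gamma([0,1])$ and $K'=\widetilde\gamma'([0,1])$, both compact.

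Now suppose $S$ is a covering automorphism such that $\widetilde\gamma$ and $S(\widetilde\gamma')$ have a $\widetilde{\mathcal F}$-transverse intersection; in particular they share at least one point $p=\widetilde\gamma(t_1)=S(\widetilde\gamma'(t_2))$. Choose $a,b\in\Z$ with $t_1-a\in[0,1)$ and $t_2-b\in[0,1)$ and set $S'=T^{-a}\circ S\circ T'^{\,b}$. Using $T'^{\,b}(\widetilde\gamma'(t_2-b))=\widetilde\gamma'(t_2)$, one computes
$$S'(\widetilde\gamma'(t_2-b))=T^{-a}(S(\widetilde\gamma'(t_2)))=T^{-a}(p)=\widetilde\gamma(t_1-a)\in K,$$
while $\widetilde\gamma'(t_2-b)\in K'$. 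Hence $S'(K')\cap K\ne\emptyset$. The point to check here is that left composition by $T^{-a}$ (which preserves $\widetilde\gamma$ and $\widetilde{\mathcal F}$) and right composition by $T'^{\,b}$ (which preserves $\widetilde\gamma'$ setwise, since $\widetilde\gamma'$ is $T'$-invariant) are precisely the operations that do not destroy the transverse intersection property.

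By proper discontinuity of the deck action on the compact set $K\cup K'$, only finitely many covering automorphisms $R$ can satisfy $R(K')\cap K\ne\emptyset$, so there are only finitely many possibilities for $S'$. Since $S=T^a\circ S'\circ T'^{-b}$, the automorphism $S$ belongs to one of finitely many double cosets of the form $\langle T\rangle\cdot S'\cdot\langle T'\rangle$, which is the desired conclusion. I do not anticipate a serious obstacle: the only subtlety is the correct bookkeeping of which compositions preserve the transverse intersection property, together with the elementary but crucial observation that a single common point of $\widetilde\gamma$ and $S(\widetilde\gamma')$ already suffices to localize $S$ into a compact piece via the two fundamental domains.
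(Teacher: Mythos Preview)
Your proof is correct and follows essentially the same argument as the paper's: parametrize so that $T$ and $T'$ act as unit translations, use the compact fundamental domains $K=\widetilde\gamma([0,1])$ and $K'=\widetilde\gamma'([0,1])$, and observe that any $S$ giving a $\widetilde{\mathcal F}$-transverse intersection can be normalized via integer shifts (the paper uses the integer parts $[t]$, $[t']$ where you use $a$, $b$) into one of the finitely many automorphisms $S'$ with $S'(K')\cap K\neq\emptyset$, finiteness coming from proper discontinuity. Your exposition is somewhat more detailed, in particular your remark that the normalizing operations preserve the transverse intersection property is correct but not actually needed for the finiteness conclusion.
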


\begin{proof} Suppose $\widetilde\gamma:\R\to \widetilde{\mathrm{dom}}(\mathcal F)$ and  $\widetilde\gamma':\R\to \widetilde{\mathrm{dom}}(\mathcal F)$ parameterized such that $\widetilde\gamma(t+1)=T(\widetilde\gamma(t))$ and $\widetilde\gamma'(t+1)=T'(\widetilde\gamma(t))$, for every $t\in\R$. The group of covering automorphisms acts freely and properly. So there exists $ L<+\infty$ automorphisms $S$ such that $\widetilde\gamma\vert_{[0,1]}\cap S(\widetilde\gamma'\vert_{[0,1]})\not=\emptyset$. If $\widetilde\gamma$ and $S(\widetilde\gamma')$ intersect $\widetilde{\mathcal F}$-transversally, there exist $t$ and $t'$ such that $\widetilde\gamma(t)=S(\widetilde\gamma')(t')$. Write $[x]$ for the integer part of a real number $x$. One has $\widetilde\gamma(t-[t])=T^{-[t]}ST'{}^{[t']}(\widetilde\gamma)(t'-[t'])$, which implies that $T^{-[s]}ST'{}^{[t]}$ is one of the $L$ previous automorphisms.
\end{proof}

\bigskip
Let $\mathcal F$ be an oriented singular foliation on $M$ and $\widetilde{\mathcal F}$ the lifted foliation on the universal covering space $\widetilde{\mathrm{dom}}(\mathcal F)$ of $\mathrm{dom}(\mathcal F)$. Let $\Gamma$ be a loop on $M$ transverse to  $\mathcal F$ and 
 $\widetilde\gamma$ a lift of $\Gamma$ to  $\widetilde{\mathrm{dom}}(\mathcal F)$. Write $T$ for the covering automorphism such that $\widetilde\gamma(t+1)=T(\widetilde\gamma(t))$ for every $t\in\R$. If $\delta:J\to \widetilde{\mathrm{dom}}(\mathcal F)$ is a transverse path equivalent to a subpath of $\widetilde\gamma$ we define its {\it width} (relative to $\widetilde \gamma$) to be the largest integer $l$ \textcolor{black}{(possibly infinite)} such that $\delta$ meets $l$ translates of a leaf by a power of $T$. More precisely, \textcolor{black}{$\mathrm{width}_{\widetilde \gamma}(\delta)=\infty$} if there exists a leaf $\phi$ such that $\delta$ meets infinitely many translates of $\phi$ by a power of $T$, and  $\mathrm{width}(\delta)=l<+\infty$ if there exists a leaf $\phi$ such that $\delta$ meets every leaf $T^k(\phi)$, $0\leq k<l$, and if $l+1$ does not satisfy this property. By Lemma \ref{le: finite_transverse_intersection}, up to a left composition by a power of $T$ there are finitely many lifts $S(\widetilde\gamma)$ such that $\widetilde\gamma$ and $S(\widetilde\gamma)$ intersect $\widetilde{\mathcal F}$-transversally. This number is clearly independent of the chosen lift  $\widetilde \gamma$, we denote it $\mathrm{self}(\Gamma)$. Saying that $\Gamma$ has a $\mathcal F$-transverse self-intersection
 means that $\mathrm{self}(\Gamma)\not=0$. If $\widetilde\gamma$ and $S(\widetilde\gamma)$ intersect $\widetilde{\mathcal F}$-transversally, one can consider the maximal subpath of  $S(\widetilde\gamma)$ 
 that is equivalent to a subpath of $\widetilde\gamma$. Note that its width (relative to $\widetilde\gamma$) is finite. Looking at all the $S(\widetilde\gamma)$ that intersect  $\widetilde{\mathcal F}$-transversally $\widetilde\gamma$ and taking the supremum, one gets a finite number because the $S(\widetilde\gamma)$ are finite up to a composition by a power of $T$. This number is independent of the choice of $\widetilde\gamma$, we denote it $\mathrm{width}(\Gamma)$.  One gets a total order $\preceq_{\widetilde \gamma}$ on the set of leaves met by a lift $\widetilde\gamma$, where
 $$\widetilde\phi\preceq_{\widetilde \gamma}\widetilde\phi'\enskip\Leftrightarrow \enskip R(\widetilde\phi)\subset R(\widetilde\phi').$$
 
Note that if $\widetilde\gamma$ and $S(\widetilde\gamma)$ intersect transversally and $\widetilde\phi$ is a leaf met by $\widetilde\gamma$, then \textcolor{black}{there exist} at most $\mathrm{width}(\Gamma)$ translates of $\widetilde\phi$ by a power of $T$ that meet $S(\widetilde\gamma)$. Moreover, there exists $k\in\Z$ such that every leaf $\widetilde\phi'$ met by $\widetilde\gamma$ and $S(\widetilde\gamma)$ satisfies \textcolor{black}{$T^{k}(\widetilde\phi)\prec_{\widetilde\gamma} \widetilde\phi'\prec_{\widetilde\gamma}T^{\mathrm{width}(\Gamma)+1+k}(\widetilde\phi)$} and consequently that $\widetilde\phi$ and $T^{\mathrm{width}(\Gamma)+1} (\widetilde\phi)$ are separated by $T^{-k}S(\widetilde\gamma)$.

\begin{proposition}\label{pr: finite_homotopy-classes} Let $\mathcal F$ be an oriented singular foliation on $M$, let $\Gamma$ be a transverse loop with a $\mathcal F$-self-transverse intersection and $\gamma$ its natural lift. Write
 $$M(\Gamma)=\,\mathrm{self}(\Gamma)\mathrm{width}(\Gamma)(\mathrm{width}(\Gamma)+1)+1.$$
 \textcolor{black}{Consider} two points $z$ and $z'$ disjoint from $\Gamma$ and look at the set of homotopy classes, with fixed endpoints, of paths starting at $z$ and ending at $z'$. \textcolor{black}{There are} at most $2M(\Gamma)$ classes which are represented both by a path disjoint from $\Gamma$ and by a (possibly different) transverse path equivalent to a subpath of $\gamma$.
\end{proposition}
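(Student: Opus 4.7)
I would work in the universal cover $\widetilde{\mathrm{dom}}(\mathcal F)$ of $\mathrm{dom}(\mathcal F)$, which is a plane, and bound the number of lifts $\widetilde z'$ of $z'$ that arise as endpoints of lifts of transverse representatives starting at a fixed lift $\widetilde z$ of $z$. Each class in question has a transverse representative $\beta \subset \mathrm{dom}(\mathcal F)$ which lifts uniquely to a path $\widetilde\beta$ in $\widetilde{\mathrm{dom}}(\mathcal F)$ starting at $\widetilde z$, with endpoint $\widetilde z'$; if two such endpoints coincided, the lifts would be homotopic rel.\ endpoints in the plane, hence the transverse representatives would be homotopic in $\mathrm{dom}(\mathcal F)$ (and therefore in $M$), contradicting distinctness of the classes. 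So bounding these endpoints bounds the number of classes.

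Next I would translate the two hypotheses on a class $[\alpha]$. The representative disjoint from $\Gamma$ forces $\widetilde z'$ to lie in the same connected component $\widetilde U_0$ of $\widetilde{\mathrm{dom}}(\mathcal F)\setminus \bigcup \widetilde\gamma'$ (the union being over all lifts of $\Gamma$) as $\widetilde z$. The transverse representative equivalent to a subpath of $\gamma$ produces a lift $\widetilde\gamma_\alpha$ of $\gamma$ such that both $\phi_{\widetilde z}$ and $\phi_{\widetilde z'}$ are leaves met by $\widetilde\gamma_\alpha$ with $\phi_{\widetilde z}$ strictly preceding $\phi_{\widetilde z'}$ in the forward direction. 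Because a transverse closed loop cannot exist in a plane with a non-singular foliation by lines, each lift of $\Gamma$ is actually a transverse line, meeting each leaf at most once and separating $\widetilde{\mathrm{dom}}(\mathcal F)$ into two half-planes; since $\widetilde U_0$ is disjoint from $\widetilde\gamma_\alpha$ it lies in one of them, so $\widetilde z$ and $\widetilde z'$ are on a common side of $\widetilde\gamma_\alpha$.

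The counting then splits by which side of $\widetilde\gamma_\alpha$ contains $\widetilde U_0$, giving the factor $2$; I would then bound the number of classes on one fixed side by $M(\Gamma)$. For a fixed side I would classify the contributing lifts $\widetilde\gamma_\alpha$ into two types: those that directly border $\widetilde U_0$ at the crossing with $\phi_{\widetilde z}$ (at most one such lift, accounting for the ``$+1$'' in $M(\Gamma)$), and those whose crossing with $\phi_{\widetilde z}$ lies past the direct border. For the second type, the forward arc of $\widetilde\gamma_\alpha$ must ``turn back'' toward $\widetilde U_0$, and this is only possible by crossing other lifts of $\Gamma$ at $\mathcal F$-transverse self-intersections of $\Gamma$. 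There are at most $\mathrm{self}(\Gamma)$ such self-intersections up to $\langle T\rangle$, and by the definition of $\mathrm{width}(\Gamma)$ the overlap window produced by such a self-intersection accommodates at most $\mathrm{width}(\Gamma)(\mathrm{width}(\Gamma)+1)$ distinct lifts of $z'$, a triangular-number coming from how lifts of $z'$ can be placed within an overlap of width at most $\mathrm{width}(\Gamma)$. Summing, the second type contributes at most $\mathrm{self}(\Gamma)\,\mathrm{width}(\Gamma)(\mathrm{width}(\Gamma)+1)$ classes per side, and with the ``$+1$'' direct-border contribution we obtain the bound $M(\Gamma)$ per side, hence $2M(\Gamma)$ overall.

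The main obstacle will be the rigorous combinatorial analysis of the ``turn back'' mechanism for the second type: one must show that any forward arc of $\widetilde\gamma_\alpha$ returning to border $\widetilde U_0$ forces a sequence of genuine $\mathcal F$-transverse self-intersections of $\Gamma$ (not mere tangential leaf-sharings), and then count precisely, using $\mathrm{width}(\Gamma)$, how many lifts of $z'$ can be associated to each such self-intersection. Verifying that the two enhancements (the self-intersection count and the width-$k$ windows) combine to $\mathrm{width}(\Gamma)(\mathrm{width}(\Gamma)+1)$ rather than some smaller or larger polynomial in $\mathrm{width}(\Gamma)$ seems to me the crux of the argument.
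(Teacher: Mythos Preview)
Your setup is exactly the paper's: fix a lift $\widetilde z$, let $\widetilde X$ be the set of relevant lifts $\widetilde z'$, observe that $\widetilde X\cup\{\widetilde z\}$ lies in one component $\widetilde W$ of the complement of all lifts of $\gamma$, and split $\widetilde X=\widetilde X_r\cup\widetilde X_l$ according to which side of the lift $\widetilde\gamma_\alpha$ contains $\widetilde W$. So far this is correct and identical to the paper.

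The gap is in your counting on one side. The paper does not classify lifts $\widetilde\gamma_\alpha$ into ``direct border'' and ``turn back'' types; instead it proves two structural lemmas you have not identified. First, any two points of $\widetilde X$ lie on distinct leaves. Second, and crucially, for $\widetilde z'_1,\widetilde z'_2\in\widetilde X_r$ the transverse arcs $\widetilde\delta_{\widetilde z'_1},\widetilde\delta_{\widetilde z'_2}$ are nested (one is a subpath of the other up to equivalence). This total order lets one line up $M(\Gamma)+1$ points $\widetilde z'_0,\dots,\widetilde z'_{M(\Gamma)}$ all lying on a \emph{single} lift $\widetilde\gamma$, and write $\widetilde z'_i=T_i(\widetilde z'_0)$ with $T_i\in\mathrm{stab}(\widetilde W)$. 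The hard step (the paper's Lemma~12) is then to show that $\widetilde\gamma$ and $T_i(\widetilde\gamma)$ intersect $\widetilde{\mathcal F}$-transversally for every $i$; this requires a genuine case analysis and is where your ``turn back forces a transverse self-intersection'' intuition would have to be made rigorous, but you have not isolated the automorphisms $T_i$ that make it work.

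Finally, the factor $\mathrm{width}(\Gamma)(\mathrm{width}(\Gamma)+1)$ is not a triangular number. Once one knows $T_i(\widetilde\gamma)$ meets $\widetilde\gamma$ transversally, one writes $T_i=T^{n_i}S_{l_i}T^{m_i}$ with $l_i\in\{0,\dots,\mathrm{self}(\Gamma)-1\}$; for fixed $l$ the leaf $T^{m_i}(\phi_{\widetilde z'_0})$ is met by both $\widetilde\gamma$ and $S_l^{-1}(\widetilde\gamma)$, giving at most $\mathrm{width}(\Gamma)$ values of $m_i$, and for fixed $(l,m)$ the fact that no translate $T^kS_l(\widetilde\gamma)$ separates the $\phi_{\widetilde z'_i}$ gives at most $\mathrm{width}(\Gamma)+1$ values of $n_i$. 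The product of these two independent bounds is what yields $\mathrm{self}(\Gamma)\,\mathrm{width}(\Gamma)(\mathrm{width}(\Gamma)+1)$, and the ``$+1$'' in $M(\Gamma)$ comes from the pigeonhole contradiction, not from a separate ``direct border'' contribution.
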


 \begin{proof}
 Fix a lift $\widetilde z$ of $z$ in $\widetilde{\mathrm{dom}}(\mathcal F)$ and denote $\widetilde X$ the set of lifts $\widetilde z'$ of $z'$ such that there exists a path from $\widetilde z$ to $\widetilde z'$ disjoint from all lifts of $\gamma$ and such that there exists a transverse path from $\widetilde z$ to $\widetilde z'$ that is $\widetilde{\mathcal F}$-equivalent to a subpath of at least one lift of $\gamma$. The proposition is equivalent to showing that $\widetilde X$ does not contain more than $2M(\Gamma)$ points.

By definition, for every $\widetilde z'\in \widetilde X$, there exists a transverse path $\widetilde\delta_{\widetilde z'}$ from $\widetilde z$ to $\widetilde z'$, unique up to equivalence. Moreover the set $\widetilde X\cup\{\widetilde z\}$ is included in a connected component $\widetilde W$ of the complement of the union of lifts of $\gamma$. There is  no lift $\widetilde\gamma$ of $\gamma$ that separates points of $\widetilde X$: for each lift $\widetilde \gamma$,  the set $\widetilde X$ is included in $R(\widetilde\gamma)$ or in $ L(\widetilde\gamma)$. One can write $\widetilde X=\widetilde X_r\cup \widetilde X_l$, where $\widetilde z'\in \widetilde X_r$ if there exists a lift $\widetilde \gamma$ of $\gamma$ satisfying $\widetilde X\subset R(\widetilde\gamma)$ such that $\widetilde\delta_{\widetilde z'}$ is a subpath of \textcolor{black}{$\widetilde\gamma$} (up to equivalence). One define similarly $\widetilde X_l$ replacing the condition $\widetilde X\subset R(\widetilde\gamma)$  by $\widetilde X\subset L(\widetilde\gamma)$.  We will prove that 
$\widetilde X_r$ and $\widetilde X_l$ do not contain more than $M(\Gamma)$ points. The two situations being similar, we will study the first one.

\begin{lemma}\label{le: distinct leaves} If $\widetilde z'_1$ and $\widetilde z'_2$ are two different points in $\widetilde X$, then $\phi_{{\widetilde z'_1}}\not=\phi_{{\widetilde z'_2}}$.
\end{lemma} 
\begin{proof} \textcolor{black}{See Figure \ref{Figure_MGamma} for the following construction.} Suppose for example that $\widetilde z'_1\in\phi^+_{{\widetilde z'_2}}$ and denote by $S$ the covering automorphism such that $\widetilde z'_1=S(\widetilde z'_2)$. There exists a lift $\widetilde \gamma$ of $\gamma$ such that $\widetilde\delta_{\widetilde z'_1}$ is a subpath of $\widetilde\gamma$ (up to equivalence). Note that there exists $k\in\Z$ such that $\widetilde z'_1\in R(S^k(\widetilde\gamma))$ and $\widetilde z'_2\in L(S^k(\widetilde\gamma))$. The lift $S^k(\widetilde\gamma)$ separates $\widetilde z'_1$ and $\widetilde z'_2$, we have a contradiction.
\end{proof}

\begin{figure}[t!]
\hfill
\includegraphics[height=45mm]{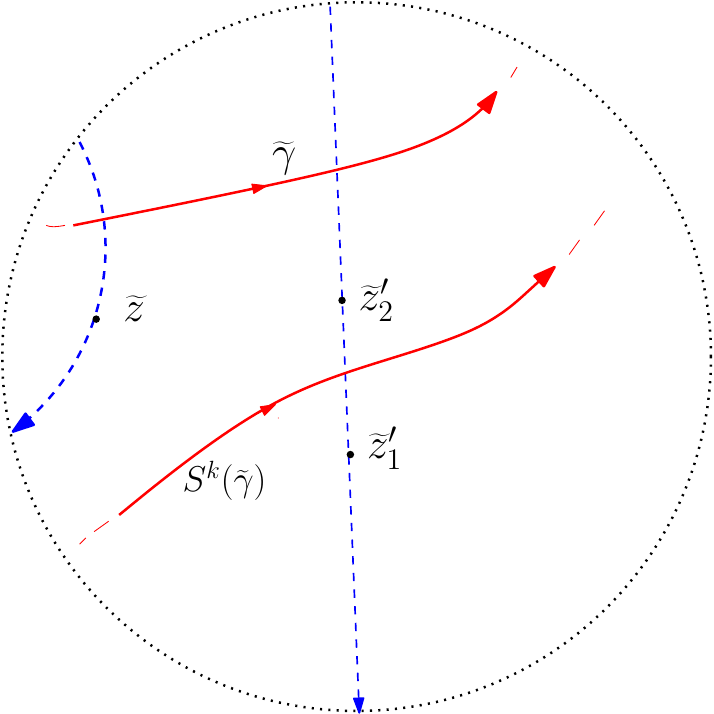}
\hfill
\includegraphics[height=45mm]{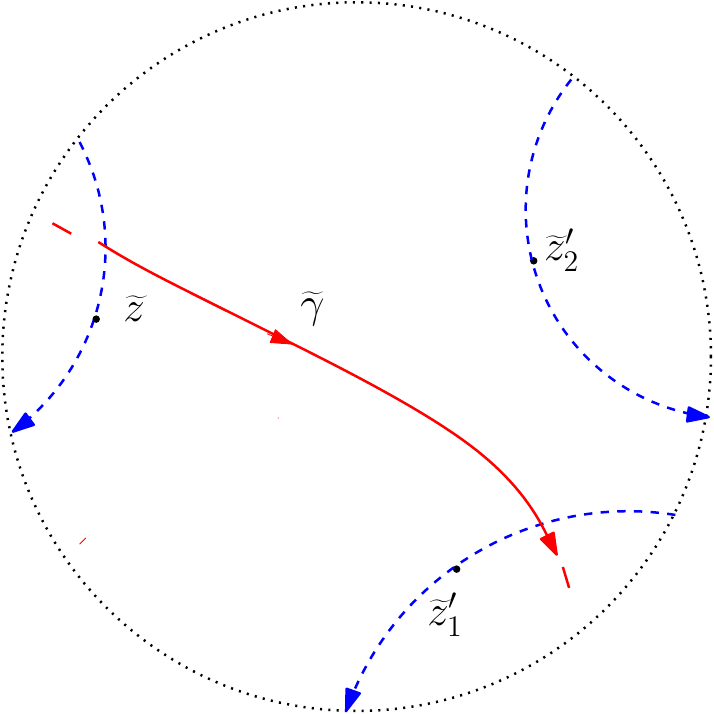}
\hfill{}
\caption{\small Cases described in Lemma \ref{le: distinct leaves} (left) and Lemma  \ref{le: comparable}(right).}
\label{Figure_MGamma}
\end{figure}

\begin{lemma}\label{le: comparable} If $\widetilde z'_1$ and $\widetilde z'_2$ are two different points in $\widetilde X_r$, then up to equivalence, one of the paths $\widetilde\delta_{\widetilde z'_1}$, $\widetilde\delta_{\widetilde z'_2}$ is a subpath of the other one.

\end{lemma} 
\begin{proof} Each path $\widetilde\delta_{\widetilde z'_i}$, $i\in\{1,2\}$, joins $\phi_{\widetilde z}$ to $\phi_{{\widetilde z'_i}}$. We claim that \textcolor{black}{ either $L(\phi_{{\widetilde z'_1}})\subset L(\phi_{{\widetilde z'_2}})$ or $L(\phi_{{\widetilde z'_2}})\subset L(\phi_{{\widetilde z'_1}})$.} If this is not the case, one of the leaves
$\phi_{{\widetilde z'_1}}$, $\phi_{{\widetilde z'_2}}$ is above the other one relative to  $\phi_{\widetilde z}$. Suppose that it is $\phi_{{\widetilde z'_1}}$. By definition of $\widetilde X_r$, there exists a lift $\widetilde \gamma$ of $\gamma$ satisfying $ \widetilde X_r\subset R(\widetilde\gamma)$ such that $\widetilde\delta_{\widetilde z'_1}$ is a subpath of $\widetilde\gamma$ (up to equivalence). Note now that $\phi_{{\widetilde z'_2}}\subset L(\widetilde\gamma)$ which contradicts the fact that $\widetilde z'_2\in R(\widetilde\gamma)$.

Since \textcolor{black}{$L(\phi_{\widetilde z})$ contains both $L(\phi_{{\widetilde z'_1}})$ and $L(\phi_{{\widetilde z'_2}})$, and since, by the previous lemma,  $\phi_{\widetilde z}, \, \phi_{{\widetilde z'_1}}$, and $\phi_{{\widetilde z'_2}}$ are all  distinct,} either $\phi_{{\widetilde z'_1}}$ separates $\phi_{\widetilde z}$ from $\phi_{{\widetilde z'_2}}$, or $\phi_{{\widetilde z'_2}}$ separates $\phi_{\widetilde z}$ from $\phi_{{\widetilde z'_1}}$. In the first case, $\widetilde\delta_{\widetilde z'_1}$ is equivalent to a subpath of $\widetilde\delta_{\widetilde z'_2}$, and in the second case $\widetilde\delta_{\widetilde z'_2}$ is equivalent to a subpath of $\widetilde\delta_{\widetilde z'_1}$.
\end{proof}

 \bigskip
We will \textcolor{black}{suppose that $\widetilde X_r$ has at least $K$ points, and we will show that $K\le M(\Gamma)$, which proves the proposition.} 
Using Lemmas \ref{le: distinct leaves} and \ref{le: comparable}, one can find a family $(\widetilde z'_i)_{0\leq i\leq \textcolor{black}{K-1}}$ of points of $\widetilde X_r$ such that $\widetilde\delta_{\widetilde z'_i}$ is a strict subpath of $\widetilde\delta_{\widetilde z_j}$ (up to equivalence) if $i<j$. One knows that there exists a lift $\widetilde\gamma$ of $\gamma$ such that $\widetilde\delta_{\widetilde z'_{M(\Gamma)}}$ is equivalent to a subpath of $\widetilde\gamma$. One deduces that every leaf \textcolor{black}{$\phi_{\widetilde z'_i}$, $0\leq i\leq \textcolor{black}{K-1}$, is met by $\widetilde \gamma$ and that $\phi_{\widetilde z'_i} \prec_{\widetilde\gamma}\phi_{\widetilde z'_j}$} if $i<j$. Write $\widetilde z'_i=T_i(\widetilde z'_0)$ and note that $T_i$ belongs to $\mathrm{stab}(\widetilde W)$, the stabilizer of $\widetilde W$ in the group of covering automorphisms.

\begin{figure}[ht!]
\hfill
\includegraphics [height=48mm]{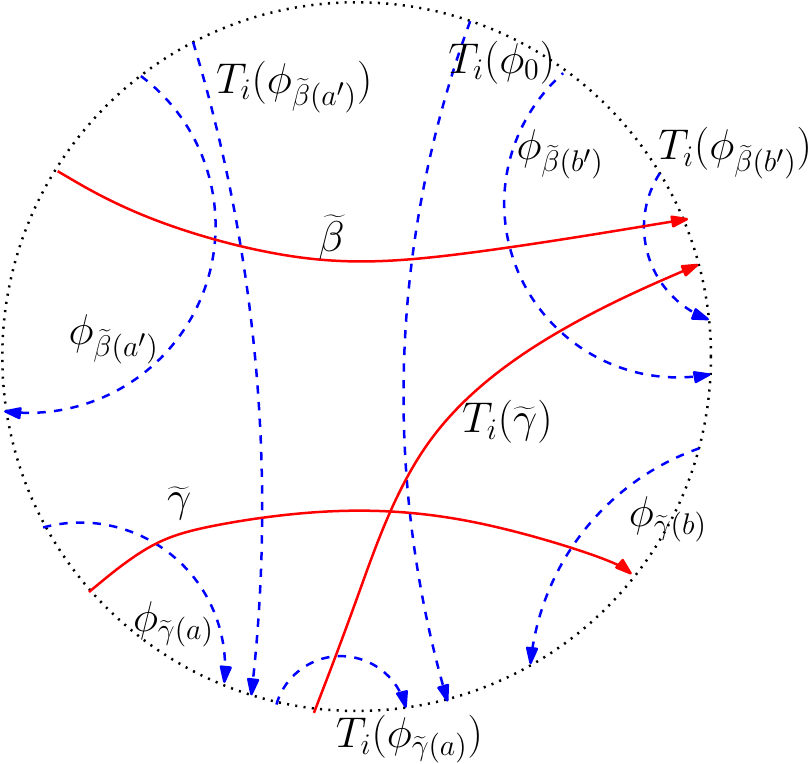}
\hfill{}
\caption{\small Final case of Lemma \ref{le: transverse}.}
\label{Figure_Proposition9_entropy}
\end{figure}

\begin{lemma}\label{le: transverse} The lifts $\widetilde\gamma$ and $T_i(\widetilde\gamma)$ intersect transversally for every $i\in\{1,\dots, \textcolor{black} {K-1}\}$.

\end{lemma} 
\begin{proof} 
\textcolor{black}{ Fix $i\in\{1,\dots, \textcolor{black}{K-1}\}$.} One can find a transverse line $\widetilde\beta$ invariant by $T_i$ passing through \textcolor{black}{$\widetilde z'_0$ and $\widetilde z'_i$}. Write $\widetilde\delta$ for the maximal subpath of  $\widetilde\gamma$ 
 that is equivalent to a subpath of $\widetilde\beta$. If $\widetilde\gamma$ and $\widetilde\beta$ intersect $\widetilde{\mathcal F}$-transversally, then $\widetilde\gamma$ separates \textcolor{black}{$T_i^{-k}(\phi_{\widetilde z'_0})$ and $T_i^{k}(\phi_{\widetilde z'_0})$} if $k$ is large enough. So, it separates \textcolor{black}{$T_i^{-k}(\widetilde z'_0)$ and $T_i^{k}(\widetilde z'_0)$}. This contradicts the fact that $T_i\in\mathrm{stab}(\widetilde W)$. If $\widetilde\delta$ is unbounded or equivalently if $\mathrm{width}_{\beta}(\gamma)=+\infty$, then $\widetilde\delta$ is forward or backward invariant by $T_i$. Look at the first case.  Since $\gamma$ has a $\mathcal F$-transverse self-intersection, there exists  a covering automorphism $S$ such that $\widetilde \gamma$ and $S(\widetilde \gamma)$ intersect $\widetilde{\mathcal F}$-transversally. For every integer $n$,  the lines $T^nS(\widetilde\gamma)$ and $\widetilde \gamma$  intersect $\widetilde{\mathcal F}$-transversally. One deduces that if  $n$ is large enough, then $T^nS(\widetilde\gamma)$ and $\widetilde \delta$  intersect $\widetilde{\mathcal F}$-transversally and consequently  $T^nS(\widetilde\gamma)$ and $\widetilde \beta$  intersect $\widetilde{\mathcal F}$-transversally.  So, if $k$ is large enough, $T^nS(\widetilde\gamma)$ separates \textcolor{black}{$T_i^{-k}(\phi_{\widetilde z'_0})$ and $T_i^{k}(\phi_{\widetilde z'_0})$}. This again contradicts the fact that $T_i\in\mathrm{stab}(\widetilde W)$. The case where $\widetilde\delta$ is backward invariant can be treated analogously. It remains to study the case where $\delta$ is bounded and  $\widetilde\gamma$ and $\widetilde\beta$ do not intersect $\widetilde{\mathcal F}$-transversally.

\textcolor{black}{See Figure \ref{Figure_Proposition9_entropy} for the following construction.} Write $\widetilde\delta'$ for the maximal subpath of  $\widetilde\beta$ 
 that is equivalent to \textcolor{black}{$\widetilde\delta$. There exist} $a<b$ such that $\widetilde\delta=\widetilde\gamma\vert_{(a,b)}$ and $a'<b'$ such that $\widetilde\delta'=\widetilde\beta\vert_{(a',b')}$.  The sets $R(\widetilde\gamma(a))$ and $R(\widetilde\beta(a'))$ are disjoint, as are the sets $L(\widetilde\gamma(b))$ and $L(\widetilde\beta(b'))$. There exists a leaf \textcolor{black}{$\phi_0$} such that $\widetilde\delta$ and $\widetilde\delta'$ meet \textcolor{black}{$\phi_0$ and $T_i(\phi_0)$}. In particular \textcolor{black}{$T_i(\phi_0)$} is met by $\widetilde\gamma\vert_{[a,b]}$ and $T_i(\widetilde\gamma)\vert_{[a,b]}$. By assumption, $\widetilde\gamma$ and $\widetilde \beta$ do not intersect $\widetilde{\mathcal F}$-transversally. There is no loss of generality by supposing that $\phi_{\widetilde\beta(a')}$ is \textcolor{black}{below} $\phi_{\widetilde\gamma(a)}$ and $\phi_{\widetilde\beta(b')}$ \textcolor{black}{below} $\phi_{\widetilde\gamma(b)}$ relative to \textcolor{black}{$\phi_0$ or $T_i(\phi_0)$}. The leaf $T_i(\phi_{\widetilde\beta(a')})$ is above $T_i(\phi_{\widetilde\gamma(a)})= \phi_{T_i(\widetilde\gamma)(a)}$ relative to \textcolor{black}{ $T_i(\phi_0)$}. Moreover\textcolor{black}{, since $T_i$ preserves $\widetilde \beta$, one has that $T_i(\phi_{\widetilde\beta(a')})$ separates $\phi_{\widetilde\beta(a')}$ and $T_i(\phi_0)$, and therefore it is crossed by both $\widetilde\delta$ and $\widetilde\delta'$.} This  implies that there exists a transverse path that joins $\phi_{\widetilde\gamma(a)}$ to $T_i(\phi_{\widetilde\beta(a')})$. Consequently, $\phi_{\widetilde\gamma(a)}$ belongs to $R(T_i(\phi_{\widetilde\beta(a')}))$ and is above $\phi_{T_i(\widetilde\gamma)(a)}$ relative to \textcolor{black}{$T_i(\phi_0)$}. Similarly, there exists a transverse path that joins $\phi_{\widetilde\beta(b')}$ to $\phi_{T_i(\widetilde\gamma)(b)}$.   Consequently, $\phi_{T_i(\widetilde\gamma)(b)}$ belongs to $L(\phi_{\widetilde\beta(b')})$ and is above $\phi_{\widetilde\gamma(b)}$ relative to \textcolor{black}{$T_i(\phi_0)$}. We have proved that the paths $\widetilde\gamma_{[a,b]}$ and $T_i(\widetilde\gamma)_{[a,b]}$ intersect $\widetilde{\mathcal F}$-transversally.\end{proof}


By the definition of $\mathrm{self}(\Gamma)$ and Lemma \ref{le: transverse}, there exists covering automorphisms $(S_l)_{0\le l <\mathrm{self}(\Gamma)}$  such that  $\widetilde\gamma$ and $S_l(\widetilde\gamma)$ intersect $\widetilde{\mathcal F}$-transversally, a family $(l_i)_{1\leq i\leq \textcolor{black}{K-1}}$ in $\{0, \dots, \mathrm{self}(\Gamma)-1\}$ and families of relative integers $(n_i)_{1\leq i\leq \textcolor{black}{K-1}}$, $(m_i)_{1\leq i\leq \textcolor{black}{K-1}}$, such that $T_{i}=T^{n_i}S_{l_i}T^{m_i}$. \textcolor{black}{ Note that, if $1\le i, j \le   \textcolor{black}{K-1}$, and $i\not=j$, then $T_i\not=T_j$, and the function that assigns for each $i$ the triple $(n_i, l_i, m_i)$ is injective.}

Define, for $0\le l <\mathrm{self}(\Gamma)$ the set $ I_l=\{1\le i\le \textcolor{black}{K-1}, \, l_i=l \}$ and fix some $l$ in $\{1, ..., \mathrm{self}(\Gamma)\}$. If $i\in I_l$, each leaf $T^{m_i}(\phi_{\widetilde z'_{0}})= S_{l}^{-1}T^{-n_i}(\phi_{\widetilde z'_{i}})$ is met both by $\widetilde\gamma$ and by $S_{l}^{-1}(\widetilde\gamma)$, and since $\widetilde\gamma$ and $S_{l}^{-1}(\widetilde\gamma)$  also  intersect $\widetilde{\mathcal F}$-transversally, we deduce that  there are at most $\mathrm{width}(\Gamma)$ possible values for $m_{i}$ with $i\in I_l$. Fix such a value $m$ and \textcolor{black}{ consider the set $ I_{l, m}=\{1\le i\le \textcolor{black}{K-1}, \, l_i=l,\, m_i=m \}$. Note that, if $i\in I_{l, m}$, then the leaf $\phi_{\widetilde z'_{i}}= T^{n_i}S_{l}T^{m}(\phi_{\widetilde z'_{0}})$ is met by both $\widetilde \gamma$ and $S^{l}(\widetilde \gamma)$.}  The fact that no line \textcolor{black}{$T^kS\widetilde\gamma$, $k\in\Z$} separates the leaves \textcolor{black}{$\phi_{\widetilde z'_{i}}, i\in I_{l, m}$ implies, as noted just before Proposition \ref{pr: finite_homotopy-classes},} that there at most $\mathrm{width}(\Gamma)+1$ such leaves, that means at most $\mathrm{width}(\Gamma)+1$ possible values of $n_i$ and \textcolor{black}{elements in $I_{l, m}$.} One deduces that 
$$\textcolor{black}{K\leq \mathrm{self}(\Gamma)\mathrm{width}(\Gamma)(\mathrm{width}(\Gamma)+1)=M(\Gamma),}$$
\textcolor{black}{as desired}.
\end{proof}
\subsection{Transverse homology set}

\

For any loop $\Gamma$ on $M$, let us denote $[\Gamma]\in H_1(M,\Z)$ its singular homology class. The {\it transverse homology set} of ${\mathcal F}$ is the smallest set $\mathrm{THS}({\mathcal F})$  of $H_1(M,\Z)$, that is stable by addition and contains all classes of loops positively transverse to $\mathcal F$. The following result will also be useful:

\begin{proposition}\label{pr: THS}
 Let $\mathcal F$ be a singular oriented foliation on $\T^2$ and $\widetilde{\mathcal F}$ its lift to $\R^2$. If one can find finitely many classes $\kappa_i\in \mathrm{THS}({\mathcal F})$, $1\leq i\leq r$, that linearly generate the whole homology of the torus and satisfy 
 $\sum_{1\leq i\leq r}\kappa_i=0$, then the diameters of the leaves of $\widetilde{\mathcal F}$ are uniformly bounded.
\end{proposition}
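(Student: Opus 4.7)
The plan is to build a null-homologous multi-loop $\mathbf{\Gamma}$ from the hypothesis, use its dual function to cap the number of crossings along any leaf of $\widetilde{\mathcal F}$, and then exploit the linear independence of two of the constituent transverse-loop classes to bound the connected components of $\R^2\setminus\widetilde{\mathbf{\Gamma}}$.

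Since $\mathrm{THS}(\mathcal F)$ is, by definition, the sub-monoid of $H_1(\T^2,\Z)$ generated by classes of loops positively transverse to $\mathcal F$, I first decompose $\kappa_i=\sum_j[\Gamma_{i,j}]$ for some finite family of transverse loops $\Gamma_{i,j}$, and set $\mathbf{\Gamma}=\coprod_{i,j}\Gamma_{i,j}$. The hypothesis $\sum_i\kappa_i=0$ then reads as $\mathbf{\Gamma}$ being null-homologous in $\T^2$, while the spanning hypothesis prevents all the classes $[\Gamma_{i,j}]$ from being collinear, so I single out two loops $\Gamma_1,\Gamma_2$ among them with homology classes $v_1,v_2\in\Z^2$ that are linearly independent. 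As recalled in the preliminaries on transverse paths, a null-homologous multi-loop admits an integer dual function $\delta:\T^2\setminus\mathbf{\Gamma}\to\Z$ taking only finitely many values, say in $\{0,\dots,N\}$; lifting gives $\widetilde\delta=\delta\circ\pi$ on $\R^2\setminus\widetilde{\mathbf{\Gamma}}$ with the same range, since $\R^2$ is simply connected so the algebraic intersection numbers used to define $\delta$ lift consistently. Because $\mathbf{\Gamma}$ is positively transverse to $\mathcal F$, along each leaf of $\widetilde{\mathcal F}$ the function $\widetilde\delta$ strictly decreases by $1$ at every crossing of $\widetilde{\mathbf{\Gamma}}$, and so the leaf meets $\widetilde{\mathbf{\Gamma}}$ in at most $N$ points and is contained in the union of at most $N+1$ connected components of $\R^2\setminus\widetilde{\mathbf{\Gamma}}$.

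It then suffices to show that each such component $V$ has bounded diameter, with a uniform bound. Let $U^*=\pi(V)$ be the corresponding component of $\T^2\setminus\mathbf{\Gamma}$. Any loop $c\subset U^*$ is disjoint from every $\Gamma_{i,j}$, so the algebraic intersection $[c]\cdot[\Gamma_{i,j}]$ vanishes for all $(i,j)$; specializing to $\Gamma_1,\Gamma_2$ and using nondegeneracy of the intersection form on $\T^2$ together with the linear independence of $v_1,v_2$ forces $[c]=0$. Hence $U^*$ is \emph{inessential}, meaning $\pi_1(U^*)\to\pi_1(\T^2)$ is trivial, so the $\Z^2$-stabilizer of $V$ is trivial and its $\Z^2$-translates $V+p$ are pairwise disjoint. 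A classical compactness argument yields boundedness: an unbounded sequence $z_n\in V$ with $\|z_n\|\to\infty$ would, after translating by $p_n\in\Z^2$ chosen so that $z_n-p_n$ remains bounded and converges to some $w'\in\R^2$, produce a family of translates $V-p_n$ accumulating at $w'$. If $w'\in\pi^{-1}(U^*)$ then for $n$ large all $V-p_n$ must equal the component containing $w'$, forcing $p_n-p_m$ to stabilize $V$, contradicting triviality of the stabilizer; if $w'\in\pi^{-1}(\mathbf{\Gamma})$, one reaches the same contradiction using the local finiteness of the transverse multi-loop near its own points. Since $\T^2\setminus\mathbf{\Gamma}$ has only finitely many components, the diameters of all components of $\R^2\setminus\widetilde{\mathbf{\Gamma}}$ are bounded by a common $D_0$, and each leaf of $\widetilde{\mathcal F}$ therefore has diameter at most $(N+1)D_0$. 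The hardest step is this last boundedness for $V$, where the case of boundary accumulation on $\partial U^*\subset\mathbf{\Gamma}$ crucially uses local finiteness of $\mathbf{\Gamma}$.
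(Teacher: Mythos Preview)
Your argument is correct and follows the same outline as the paper's: build a null-homologous transverse multi-loop, bound the number of leaf crossings via its dual function, and show that the complementary components lift to uniformly bounded sets in $\R^2$. The paper handles the last step more tersely (it first discards null-homologous summands so that the remaining multi-loop is connected and its complementary regions simply connected, then simply asserts that these lift to uniformly bounded disks), whereas you give an explicit inessentiality-plus-trivial-stabilizer compactness argument that effectively fills in what the paper leaves implicit.
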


\begin{proof} Decomposing each class $\kappa_i$ and taking out all the loops homologous to zero, one can suppose (changing $r$ is necessary) that for every $i\in\{1,\dots,r\}$, there exists a transverse loop $\Gamma_i$ such that $[\Gamma_i]=\kappa_i$. The fact that the $\kappa_i$ linearly generate the whole homology of the torus implies that the multi-loop $\Gamma=\sum_{1\leq i\leq p}\Gamma_i$ is connected (as a set) and that the connected components of its complement are simply connected. Moreover, these components are lifted  in uniformly bounded simply connected domains of $\R^2$, let us say by a constant $K$. The multi-loop $\Gamma$ being homologous to zero induces a dual function $\delta$ on its complement. It has been explained before that $\delta$ decreases on each leaf of $\mathcal F$ and is bounded. Consequently, there exists an integer $N$ such that every leaf meets at most $N$ components. If one lifts it to $\R^2$, one find a path of diameter  bounded by $NK$.
\end{proof}

\bigskip

\section{Maximal isotopies, transverse foliations, admissible paths}

\bigskip

\subsection{Singular isotopies, maximal isotopies}

\

Let us begin by introducing mathematical objects related to isotopies. Let $f$ be a homeomorphism of an oriented surface $M$. An {\it identity isotopy of $f$} is a path joining the identity to $f$ in the space of homeomorphisms of $M$, furnished with the $C^0$ topology (defined by the uniform convergence of maps and their inverse on every compact set).  We will write $\mathcal I$ for the set of identity isotopies of $f$ and will say that $f$ is {\it isotopic to the identity} if this set is not empty.  If $I=(f_t)_{t\in[0,1]}\in\mathcal I$ is such an isotopy, we can define the {\it trajectory} of a point $z\in M$, which is the path $I(z): t\mapsto f_t(z)$. More generally, for every $n\geq 1$ we define $I^n(z)=\prod_{0\leq k<n} I(f^k(z))$ by concatenation. We will also use the following notations  $$I^{\N}
 (z)=\prod_{0\leq k<+\infty} I (f^k(z)),\enskip  I^{-\N}
 (z)=\prod_{-\infty<k<0} I(f^k(z)), \enskip I^{\Z}
 (z)=\prod_{-\infty<k<+\infty} I(f^k(z)).$$
The last path will be called the {\it whole trajectory} of $z$.

One can define the {\it fixed point set}  $\mathrm{fix}(I)=\bigcap_{t\in[0,1]}\mathrm{fix}(f_t)$  of $I$, which is the set of points with trivial trajectory.

A wider class of isotopies is the class of {\it singular isotopies}.  Such an object $I$ is an identity isotopy defined on an open set invariant by $f$, the {\it domain} of $I$, whose complement, the {\it singular set},  is included in the fixed point set of $f$. We will write $\mathrm{dom}(I)$  for the domain and $\mathrm{sing}(I)$  for the singular set. Like in the case of a global isotopy, one can define the trajectory $I(z)$ of a point $z\in \mathrm{dom}(I)$ and the fixed point set, which is included in the domain. Note that any isotopy $I\in\mathcal I$ is itself a singular isotopy with empty singular set and induces by restriction to the complement of the fixed point set a singular isotopy such that $\mathrm{sing}(I)=\mathrm{fix}(I)$.

If $\check M$ is a covering space of $M$ and $\check \pi:\check M\to M$ the covering projection, every identity isotopy $I$ can be lifted to $\check M$ as an identity isotopy $\check I=(\check f_t)_{t\in[0,1]}$. The homeomorphism $\check f=\check f_1$ is {\it the lift of $f$ associated to $I$} or {\it induced by $I$}. Similarly, every singular isotopy can be lifted as a singular isotopy $\check I$ of $\check f$ such that $\mathrm{dom}(\check I)=\check\pi^{-1}(\mathrm{dom}(I))$.

Let us recall now some results due to O. Jaulent \cite{J}. Denote the set of singular isotopies by $\mathcal{I}_{\mathrm{sing}}$. It is not difficult to show that one gets a preorder $\preceq$ on $\mathcal{I}_{\mathrm{sing}}$, writing
 $I\preceq I'$ if: 

\smallskip
\noindent {\bf i)}\enskip\enskip$\mathrm{dom}(I')\subset\mathrm{dom}(I)$; 

\smallskip
\noindent {\bf ii)}\enskip\enskip
for every $z\in \mathrm{dom}(I')$, the trajectories $I'(z)$ and $I(z)$ are homotopic in $\mathrm{dom}(I)$;

 \smallskip

\noindent {\bf iii)}\enskip\enskip for every $z\in \mathrm{dom}(I)\setminus  \mathrm{dom}(I')$, the trajectory $I(z)$ is homotopic to zero in $\mathrm{dom}(I)$.

\smallskip
We will say that $I$ and $I'$ are {\it equivalent} if $I\preceq I'$ and $I'\preceq I$. We will say that $I$ is {\it maximal} if there is no singular isotopy $I'$ such that $I\preceq I'$ and $\mathrm{dom}(I')\not=\mathrm{dom}(I)$. If $I$ is a singular isotopy and if there is no point $z\in\mathrm{fix}(f)\cap\mathrm{dom}(I)$ such that $I(z)$ is homotopic to zero in $\mathrm{dom}(I)$, then $I$ is maximal by {\bf iii)}. The converse is true. If there is a point $z\in\mathrm{fix}(f)\cap\mathrm{dom}(I)$ such that $I(z)$ is homotopic to zero in $\mathrm{dom}(I)$, one can find an isotopy $I'$ on $\mathrm{dom}(I)$ that is homotopic to $I$ (as a path in the space of homeomorphisms of $M$ and relative to the ends) and that fixes $z$. Taking the restriction of $I'$ on $\mathrm{dom}(I)\setminus\{z\}$, one finds a singular isotopy strictly larger than $I$. The main result of \cite{J} is the fact that every singular isotopy is smaller than a maximal singular isotopy. In fact, the result is more precise and can be stated for {\it hereditary singular isotopies} (we will explain later the interest of looking at this class of singular isotopies). By definition, such an isotopy $I$ satisfies the following condition: for every open set $U$ containing $\mathrm{dom}(I)$, there exists $I'\in \mathcal{I}_{\mathrm{sing}}$ such that $I'\preceq I$ and $\mathrm{dom}(I')=U$.  Writing $\mathcal{I}_{\mathrm{her}}$ for the set of  hereditary singular isotopies, we have the following result due to Jaulent \cite{J}:

\begin{theorem}\label{th: maximal}For every $I\in \mathcal{I}_{\mathrm{her}}$ there exists  $I'\in \mathcal{I}_{\mathrm{her}}$, maximal in $\mathcal{I}_{\mathrm{her}}$, satisfying $I\preceq I'$. Such an isotopy $I'$ is maximal in $ \mathcal{I}_{\mathrm{sing}}$ and so there is no point $z\in\mathrm{fix}(f)\cap\mathrm{dom}(I')$ such that $I'(z)$ is homotopic to zero in $\mathrm{dom}(I')$
\end{theorem}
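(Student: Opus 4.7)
The plan is to prove existence of the maximal hereditary singular isotopy by a Zorn's lemma argument on the poset of hereditary singular isotopies dominating $I$, and then to upgrade maximality in $\mathcal{I}_{\mathrm{her}}$ to maximality in $\mathcal{I}_{\mathrm{sing}}$ using the hereditary condition. Throughout, one should keep in mind that $I \preceq I'$ means the domain is \emph{shrinking} (the singular set is growing), so maximality corresponds to collapsing as many contractible fixed-point trajectories as possible.

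First I would set up Zorn. Let $\mathcal{C} = \{ J \in \mathcal{I}_{\mathrm{her}} : I \preceq J\}$, which is non-empty since $I \in \mathcal{C}$, and is preordered by $\preceq$. Given a totally ordered chain $(I_\alpha)_{\alpha \in A}$ in $\mathcal{C}$, the family of domains $(\mathrm{dom}(I_\alpha))_{\alpha}$ is totally ordered by reverse inclusion. Set $D = \bigcap_{\alpha} \mathrm{dom}(I_\alpha)$; this is an $f$-invariant open subset of $M$ whose complement lies in $\mathrm{fix}(f)$. The upper bound I want to build is a singular isotopy $I^\ast$ on $D$ such that $I_\alpha \preceq I^\ast$ for every $\alpha$, and which moreover is hereditary.

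The core construction is the following. For each $z \in D$ I want to assign a trajectory $I^\ast(z)$ from $z$ to $f(z)$ that is homotopic to $I_\alpha(z)$ inside $\mathrm{dom}(I_\alpha)$ for every $\alpha$. This can be done by lifting to the universal cover $\widetilde{D}$ of $D$: choose a lift $\widetilde{z}$ of $z$; for each $\alpha$, lift $I_\alpha(z)$ to the universal cover of $\mathrm{dom}(I_\alpha)$ starting at the corresponding point, and push forward to $\widetilde{D}$ via the natural map $\widetilde{D} \to \widetilde{\mathrm{dom}(I_\alpha)}$. The compatibility condition (ii) in the definition of $\preceq$ ensures that the endpoints of these lifts give a single coherent point $\widetilde{f^\ast}(\widetilde{z})$ in $\widetilde{D}$, and one checks this defines a lift $\check f$ of $f\vert_D$ to $\widetilde D$. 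The trajectory $I^\ast(z)$ is then obtained by projecting the straight-line path from $\widetilde z$ to $\check f(\widetilde z)$ along any isotopy between $\mathrm{id}$ and $\check f$ in the covering, and one verifies that the properties (i)-(iii) for $I_\alpha \preceq I^\ast$ hold. Checking that $I^\ast$ is hereditary uses that each $I_\alpha$ is hereditary: for any open $U \supset D$, for $\alpha$ large enough $\mathrm{dom}(I_\alpha) \subset U$ is automatic (well, one works the other way: one extends $I_\alpha$ down to any $U \supset \mathrm{dom}(I_\alpha)$, and then glues compatible choices as $\alpha$ varies — a transfinite-style construction).

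Zorn then furnishes a maximal element $I' \in \mathcal{C} \subset \mathcal{I}_{\mathrm{her}}$. To show $I'$ is maximal in $\mathcal{I}_{\mathrm{sing}}$, suppose for contradiction that some $I'' \in \mathcal{I}_{\mathrm{sing}}$ satisfies $I' \preceq I''$ with $\mathrm{dom}(I'') \subsetneq \mathrm{dom}(I')$. By the remark before the theorem, there exists $z \in \mathrm{fix}(f) \cap \mathrm{dom}(I')$ with $I'(z)$ contractible in $\mathrm{dom}(I')$, and one can then replace $I'$ by a homotopic singular isotopy $I'''$ fixing $z$ on $\mathrm{dom}(I') \setminus \{z\}$. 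The remaining — and genuinely delicate — point is that $I'''$ can be arranged to be \emph{hereditary}: because $I'$ was hereditary, for every $U \supset \mathrm{dom}(I')$ one can extend $I'$ to $U$, and the local modification near $z$ can be performed compatibly with such extensions, giving $I'' \in \mathcal{I}_{\mathrm{her}}$ strictly above $I'$, contradicting maximality in $\mathcal{C}$. The last sentence of the theorem is then immediate from the pre-theorem equivalence: if $I'$ is maximal in $\mathcal{I}_{\mathrm{sing}}$, no fixed point of $f$ in $\mathrm{dom}(I')$ can have a contractible trajectory.

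The main obstacle is the chain step, specifically verifying that $I^\ast$ inherits the hereditary property and that the limit trajectory is well-defined up to homotopy: one must control how homotopy classes of trajectories behave under intersection of a possibly transfinite decreasing family of open sets, and this uses the hereditary condition in an essential way, together with a careful lift to universal covers. The modification step to produce a strictly larger hereditary isotopy from a non-hereditary strictly larger singular isotopy is also a non-trivial local surgery near the contractible fixed point, but this is standard once the chain step is secured.
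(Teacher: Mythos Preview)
The paper does not give its own proof of this theorem: it is stated as a result of Jaulent \cite{J} and simply cited. So there is no in-paper argument to compare against; your sketch is an attempt to reconstruct Jaulent's argument, and the Zorn's lemma strategy you describe is indeed the one he uses.

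That said, your chain step has a real gap. A singular isotopy is not a collection of homotopy classes of trajectories; it is an honest identity isotopy $(f_t)_{t\in[0,1]}$ of homeomorphisms on the domain. What you build on $D=\bigcap_\alpha \mathrm{dom}(I_\alpha)$ is, at best, a coherent lift $\check f$ of $f\vert_D$ to $\widetilde D$ together with a preferred homotopy class of path from each $z$ to $f(z)$. The sentence ``projecting the straight-line path from $\widetilde z$ to $\check f(\widetilde z)$ along any isotopy between $\mathrm{id}$ and $\check f$ in the covering'' begs the question: the existence of such an isotopy on $\widetilde D$ (equivalently, of an isotopy on $D$ realizing the prescribed homotopy classes) is exactly what must be proved, and it is the main technical content of Jaulent's paper. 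This is not a formality: one must produce a continuous family of homeomorphisms, not a pointwise assignment of arcs, and the limit domain $D$ can be quite wild.

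Your second step, upgrading maximality in $\mathcal{I}_{\mathrm{her}}$ to maximality in $\mathcal{I}_{\mathrm{sing}}$, is essentially correct in outline. The point you flag as ``genuinely delicate'' (that removing a single contractible fixed point from a hereditary isotopy yields a hereditary isotopy) is in fact the easy direction: the modification is local near $z$, and any extension of $I'$ to a larger open set $U\supset\mathrm{dom}(I')$ can be modified by the same local surgery to witness heredity of $I'''$. The hard part remains the chain step above.
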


Note that if $\check M$ is a covering space of $M$ and $\check \pi:\check M\to M$ the covering projection, then for every singular isotopies $I$, $I'$  satisfying $I\preceq I'$, the respective lifts $\check I$, $\check I'$ satisfy $\check I\preceq \check I'$. Note also that a singular isotopy $I$ is maximal if and only if its lift $\check I$ is maximal.

Let us explain the reason why hereditary singular isotopies are important. It is related to the following problem. If $I$ is a 
singular isotopy, \textcolor{black}{does} there exists a global  isotopy $I'\in\mathcal I$ such that \textcolor{black}{$\mathrm{fix}(I')= \mathrm{sing}(I)\cup\mathrm{fix}(I)$} and $I'\vert _{M\setminus \mathrm{fix}(I')}$ equivalent to $I$ ?
Such an isotopy $I'$ always exists in the case where $\mathrm{fix}(f)$ is totally disconnected. Indeed, in that case, $I$ naturally extends to an isotopy on $M$ that fixes the ends of $\mathrm{dom}(I)$ corresponding to points of $M$. The problem is much more difficult in the case where $\mathrm{fix}(f)$ is not totally disconnected. The fact that $I$ is a hereditary singular isotopy is necessary because the restriction of a global isotopy to the complement of its fixed point set is obviously hereditary. It appears that this condition is sufficient. This
is the purpose of a recent work by B\'eguin-Crovisier-Le Roux \cite{BCL}. Following \cite{BCL}, Jaulent's theorem about existence of maximal isotopies can be stated in the following  much more natural form: for every $I\in \mathcal I$, there exists $I'\in \mathcal I$ such that:

\smallskip
\noindent {\bf i)}\enskip\enskip$\mathrm{fix}(I)\subset\mathrm{fix}(I')$;

\smallskip
\noindent {\bf ii)} \enskip\enskip  $I'$ is homotopic to $I$ relative to $\mathrm{fix}(I)$;

\smallskip
\noindent {\bf iii)}\enskip\enskip  there is no point $z\in\mathrm{fix}(f)\setminus\mathrm{fix}(I')$ whose trajectory $I'(z)$ is homotopic to zero in $M\setminus\mathrm{fix}(I')$.

The last condition can be stated in the following equivalent form:

\smallskip
\noindent -\enskip\enskip  if $\widetilde I'=(\widetilde f'_t)_{t\in[0,1]}$ is the identity isotopy that lifts $I'\vert_{M\setminus\mathrm{fix}(I')}$ to the universal covering space of $M\setminus\mathrm{fix}(I')$, then $\widetilde f'_1$ is fixed point free.

The typical example of an isotopy $I\in{\mathcal I}$ verifying {\bf iii)} is the restricted family $I=(f_t)_{t\in[0,1]}$ of a topological flow $(f_t)_{t\in\R}$ on $M$. Indeed, one can lift the flow $(f_t\vert_{M\setminus\mathrm{fix}(I)})_{t\in\R}$ as a flow $(\widetilde f_t)_{t\in\R}$ on the universal covering space of $M\setminus\mathrm{fix}(I)$. This flow has no fixed point and consequently no periodic point. So $\widetilde f_1$ is fixed point free, which exactly means that the condition {\bf iii)} is fulfilled. In particular, the restriction of $f$ to $M\setminus\mathrm{fix}(I)$ is a hereditary maximal isotopy. To construct a maximal singular isotopy that is not  hereditary, let us consider the flow $(f_t)_{t\in\R}$ on $\R^2$ defined as follows in polar coordinates
$$f_t(r,\theta)=( r, \theta + 2\pi t h(r))$$ 
where 
$$\textcolor{black}{h(r) = \begin{cases} r(1-r), &\mbox{if } r\in[0,1], \\ 
r^{-1}(1-r^{-1}), & \mbox{if } r\in[1,+\infty). \end{cases} }$$

and set $f=f_1$. The fixed point set of the flow is the union of the origin and the unit circle $\S^1$ and so, the restriction of the isotopy $(f_t)_{t\in[0,1]}$ to the complement of this fixed point set is a maximal hereditary singular isotopy. The isotopy $I'=(f'_t)_{t\in[0,1]}$, whose domain is the complement of $\{(0,0)\}\cup\S^1$, that coincides with $(f_t)_{t\in[0,1]}$ on the set of points such that $0<r<1$ and defined on the set of points such that $r>1$ by:

$$f'_t(r,\theta) = \begin{cases} ( r, \theta +4\pi tr), &\mbox{if } t\in[0,1/2], \\ 
( r, \theta +4\pi (t-1/2)h(r)), & \mbox{if } t\in[1/2,1]. \end{cases} $$

is not a hereditary singular isotopy. There is no singular isotopy $I''$ of $f$ whose domain is the complement of $\{(0,0), (1,0)\}$ such that $I''\preceq I'$ \textcolor{black}{ for the following reason. If $\widetilde I''$ is the lift of $I''$ to the universal covering of $\mathrm{dom}(I'')$, and if $\ z$ is a lift of $(0, -1)$, and $\widetilde z_n$ are lifts of $(0, -1-1/n)$ such that $\widetilde z_n$ converges to $\widetilde z$, then the trajectory of $\widetilde I''(\widetilde z)$ would be a closed loop, but the endpoints of the trajectories of $\widetilde I''(\widetilde z_n)$ do not converge to $\widetilde z$, since the trajectories of $I'(z_n)$ and $I''(z_n)$ are homotopic in $\mathrm{dom}(I)$.}

\medskip Since the proof of \cite{BCL} is not published yet, we will use the formalism of singular isotopies in the article.

\

\bigskip

\subsection{Transverse foliations}

\

Let $f$ be an orientation preserving plane homeomorphism. By definition, a {\it Brouwer line} of $f$ is a topological line $\lambda$ such that $f(\overline{L(\lambda)})\subset L(\lambda)$ (or equivalently a line $\lambda$ such that $f(\lambda)\subset L(\lambda)$ and $f^{-1}(\lambda)\subset R(\lambda))$. The classical Brouwer Plane Translation Theorem asserts that $\R^2$ can be covered by Brouwer lines in case $f$ is fixed point free (see \cite{Br}). Let us recall now the equivariant foliated version of this theorem (see \cite{Lec2}).  Suppose that $f$ is a homeomorphism  isotopic to the identity on an oriented surface $M$. Let $I$ be a maximal singular isotopy and write  $\widetilde I=(\widetilde f_t)_{t\in[0,1]}$ for the lifted identity defined on the universal covering space $\widetilde{\mathrm{dom}}(I)$ of $\mathrm{dom}(I)$. Recall that $\widetilde f=\widetilde f_1$ is fixed point free. Suppose first that $\mathrm{dom}(I)$ is connected. In that case, $\widetilde{\mathrm{dom}}(I)$ is a plane and we have \cite{Lec2}:

\begin{theorem}\label{th: BrEquiv}There exists a non singular topological oriented foliation $\widetilde {\mathcal F}$ on $\widetilde{\mathrm{dom}}(I)$, invariant by the covering automorphisms, whose leaves are Brouwer lines of $\widetilde f$.  
\end{theorem}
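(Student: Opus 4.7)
The plan is to carry out Le Calvez's equivariant foliated Brouwer construction at the level of the universal cover $\widetilde{\mathrm{dom}}(I)$.

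First I would verify the setup. Under the hypothesis that $\mathrm{dom}(I)$ is connected and not compact (indeed it carries a fixed point free time-one map after lifting), $\widetilde{\mathrm{dom}}(I)$ is simply connected, oriented, and non-compact, hence homeomorphic to $\R^2$. The crucial reduction is that $\widetilde f=\widetilde f_1$ is fixed-point free on $\widetilde{\mathrm{dom}}(I)$: if $\widetilde z$ were fixed by $\widetilde f$, then its projection $z\in\mathrm{dom}(I)$ would be a fixed point of $f$ whose trajectory $I(z)$ lifts to a \emph{loop} at $\widetilde z$, and so $I(z)$ is contractible in $\mathrm{dom}(I)$, contradicting the maximality of $I$. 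Write $G$ for the group of covering automorphisms; it acts freely and properly discontinuously on $\widetilde{\mathrm{dom}}(I)$ and commutes with $\widetilde f$.

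Next I would produce a $G$-equivariant, locally finite \emph{free brick decomposition} of $\widetilde{\mathrm{dom}}(I)$: a covering by closed topological disks (bricks) meeting pairwise along arcs of their boundary, with the property that each brick $b$ is \emph{free}, i.e.\ $\widetilde f(b)\cap b=\emptyset$. Free neighborhoods exist at every point because $\widetilde f(\widetilde z)\neq \widetilde z$ everywhere, and a $G$-equivariant decomposition is obtained by first constructing a decomposition downstairs on $\mathrm{dom}(I)$ adapted to the dynamics, then lifting. On the bricks I would define the preorder
\[
b\preceq b' \iff \text{there is a chain } b=b_0,b_1,\dots,b_n=b' \text{ with } \widetilde f(b_i)\cap b_{i+1}\neq\emptyset.
\]
The heart of the proof is to show $\preceq$ is a genuine partial order, i.e.\ there is no nontrivial cycle. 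A cycle would, via a Brouwer-type argument (Franks' generalization of the plane translation theorem applied to the relevant iterate of $\widetilde f$), produce a periodic point of $\widetilde f$; projecting gives a periodic point $z$ of $f$ whose trajectory $I^n(z)$ lifts to a closed loop, hence $I^n(z)$ is contractible in $\mathrm{dom}(I)$, violating the maximality of $I$ after a standard lift-to-the-complement-of-the-fixed-set argument.

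Finally, from the order I would build the foliation $\widetilde{\mathcal{F}}$. Working inside each brick, one inductively chooses arcs — combinatorially transverse to the Brouwer order — that patch across common boundary arcs of adjacent bricks, orienting them so that bricks strictly greater in the order lie on the left. The resulting $1$-dimensional object is a non-singular oriented foliation because free bricks guarantee local trivializability, and each leaf $\widetilde\lambda$ is a Brouwer line of $\widetilde f$: if $b$ is a brick touching $\widetilde\lambda$, every brick meeting $\widetilde f(b)$ is strictly $\succ b$, hence lies in $L(\widetilde\lambda)$, so $\widetilde f(\widetilde\lambda)\subset L(\widetilde\lambda)$. The $G$-equivariance is automatic because the brick decomposition, the order, and the arc choices are all $G$-equivariant by construction. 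The main obstacle is the antisymmetry of $\preceq$: it packages together the full strength of Franks' Brouwer-type lemma and the maximality of $I$ (applied at every iterate $I^n$), and is really where the topological hypothesis does its work.
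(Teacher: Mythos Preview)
Your overall framework is the right one and matches the construction from \cite{Lec2} that the paper recalls (in the proof of Proposition~\ref{pr:freedisktoleaf}): an equivariant free brick decomposition, a partial order on bricks via Franks' free disk chain lemma, and then a foliation built from this order. Two points, however, deserve correction.

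First, your ``no cycle'' argument takes an unnecessary detour. You already verified at the outset that $\widetilde f$ is fixed point free on $\widetilde{\mathrm{dom}}(I)\cong\R^2$; Franks' lemma then says \emph{directly} that there is no periodic free disk chain, hence the preorder is antisymmetric. There is no need to project down, invoke periodic points of $f$, and re-appeal to maximality of $I$ at higher iterates. (Incidentally, by Brouwer's theorem a fixed point free orientation preserving plane homeomorphism has no periodic points at all, so ``periodic point of $\widetilde f$'' is already a contradiction before any projection.)

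Second, and more seriously, you misidentify the main obstacle. The antisymmetry of $\preceq$ is the \emph{easy} part once $\widetilde f$ is fixed point free. The genuinely hard step, as the paper emphasizes when it recalls the construction, is passing from the ordered brick decomposition to an honest $G$-equivariant foliation by Brouwer lines. Concretely, one first needs a \emph{minimal} free brick decomposition with trivalent vertices; the order then orients the skeleton so that there are no sinks or sources. The fundamental result (Proposition~3.2 of \cite{Lec2}) is that one can cover $\Sigma(\widetilde{\mathcal D})$ by a $G$-invariant family of Brouwer lines, no two of which intersect transversally; this is elementary when $G$ is abelian and substantially harder otherwise. Only after this does one complete to a lamination, foliate each brick from source to sink, and desingularize the vertices. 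Your sentence ``one inductively chooses arcs\dots that patch across common boundary arcs'' glosses over exactly this step, and your Brouwer-line verification (``every brick meeting $\widetilde f(b)$ is strictly $\succ b$, hence lies in $L(\widetilde\lambda)$'') presupposes that the leaf $\widetilde\lambda$ already separates the plane into an upper and a lower order-ideal, which is precisely what the non-transversality construction is needed to guarantee.
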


Consequently, for every point $\widetilde z\in \widetilde{\mathrm{dom}}(I)$, one has
$$\widetilde f(\widetilde z)\in L(\phi_{\widetilde z}), \enskip\widetilde z\in R(\phi_{\widetilde f(\widetilde z)}).$$ This implies that there exists a path $\widetilde\gamma$ positively transverse to $\widetilde{\mathcal F}$ that joins $\widetilde z$ to $\widetilde f(\widetilde z)$.  As noted in section 3.1, this path is uniquely defined up to $\widetilde{\mathcal{F}}$-equivalence, provided the endpoints remain the same. The leaves of the lifted foliation $\widetilde {\mathcal F}$ met by $\widetilde \gamma$ are the leaves $\phi$ such that $R(\phi_{\widetilde z})\subset R(\phi)\subset R(\phi_{\widetilde f(\widetilde z)})$. In particular,  every leaf met by  $\widetilde \gamma$ is met by $\widetilde I(\widetilde z)$.
Of course, $\widetilde{\mathcal F}$ lifts a singular foliation ${\mathcal F}$ such that $\mathrm{dom}(\mathcal F)=\mathrm{dom}(I)$. We immediately get the following result, still true in case $\mathrm{dom}(I)$ is not connected:

\begin{corollary}\label{co: BrEquiv}There exists a singular topological oriented foliation $ {\mathcal F}$ satisfying $\mathrm{dom}({\mathcal F})=\mathrm{dom}(I)$ such that for every $z\in\mathrm{dom}(I)$ the trajectory $I(z)$  is homotopic, relative to the endpoints, to a path $\gamma$ positively transverse to ${\mathcal F}$ and this path is uniquely defined up to equivalence. 
\end{corollary}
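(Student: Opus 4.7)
The plan is to reduce Corollary \ref{co: BrEquiv} to Theorem \ref{th: BrEquiv} by applying the latter component by component. First I would observe that although $\mathrm{dom}(I)$ may be disconnected, each connected component is invariant by the whole isotopy: given $z$ in a component $U$, the continuous path $t\mapsto f_t(z)$ starts at $z$ and stays in $\mathrm{dom}(I)$, so by connectedness it stays in $U$; hence $f_t(U)=U$ for every $t\in[0,1]$ and the restriction $I|_U$ is an identity isotopy on $U$ joining the identity to $f|_U$.

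The next step is to check that the restriction is still maximal as a singular isotopy on $U$. A hypothetical fixed point $z\in\mathrm{fix}(f)\cap U$ whose trajectory $I(z)$ were null-homotopic in $U$ would also be null-homotopic in $\mathrm{dom}(I)$, contradicting the maximality of $I$. Thus Theorem \ref{th: BrEquiv} applies to each component $U$: lifting $I|_U$ to the universal cover $\widetilde U$ (which is a plane, since $U$ is a connected surface), one obtains a non singular oriented foliation $\widetilde{\mathcal F}_U$ on $\widetilde U$, invariant under the covering automorphisms, whose leaves are Brouwer lines of the associated lift $\widetilde f|_{\widetilde U}$. This descends to a non singular oriented foliation $\mathcal F_U$ on $U$. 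I then define $\mathcal F$ on $M$ to be the foliation whose restriction to each connected component $U$ of $\mathrm{dom}(I)$ is $\mathcal F_U$; by construction $\mathrm{dom}(\mathcal F)=\mathrm{dom}(I)$.

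For the transverse trajectory, fix $z\in\mathrm{dom}(I)$, let $U$ be its component, and lift the trajectory $I(z)$ to a path from some $\widetilde z$ to $\widetilde f(\widetilde z)$ in $\widetilde U$. Since the leaves of $\widetilde{\mathcal F}_U$ are Brouwer lines for $\widetilde f$, one has $\widetilde f(\widetilde z)\in L(\phi_{\widetilde z})$, so as recorded just after Theorem \ref{th: BrEquiv} there is a path transverse to $\widetilde{\mathcal F}_U$ joining $\widetilde z$ to $\widetilde f(\widetilde z)$, unique up to $\widetilde{\mathcal F}_U$-equivalence. Since $\widetilde U$ is simply connected, this transverse path and the lift of $I(z)$ have the same endpoints and are therefore homotopic relative to the endpoints. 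Projecting back to $U$ yields a path $\gamma$ positively transverse to $\mathcal F$ that is homotopic to $I(z)$ relative to the endpoints in $U$ (hence in $M$). Uniqueness up to $\mathcal F$-equivalence follows from the uniqueness upstairs together with the fact that two transverse paths in $U$ homotopic (relative to the endpoints) admit homotopic lifts in $\widetilde U$ with matching endpoints.

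There is no serious obstacle in this argument; every step is either a direct invocation of Theorem \ref{th: BrEquiv} or a routine verification. The only point deserving a moment of care is that the restriction of a maximal singular isotopy to a single connected component of its domain is still maximal, and this is immediate since any null-homotopy of a trajectory within $U$ is a fortiori a null-homotopy within $\mathrm{dom}(I)$.
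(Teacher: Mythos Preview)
Your proof is correct and follows exactly the approach the paper intends: the paper states Theorem \ref{th: BrEquiv} only for connected $\mathrm{dom}(I)$, derives the existence and uniqueness of the transverse path in the paragraph immediately after, and then simply asserts that the corollary follows ``still true in case $\mathrm{dom}(I)$ is not connected'' without further detail. You have supplied precisely the component-by-component argument the paper leaves implicit, including the (routine but worth noting) check that maximality is inherited by each component.
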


We will say that a foliation $\mathcal F$ satisfying the conclusion of Corollary \ref{co: BrEquiv} is {\it transverse to} $I$.  Observe that if $\check M$ is a covering space of $M$ and $\check \pi:\check M\to M$ the covering projection, a foliation $\mathcal F$ transverse to a maximal singular isotopy $I$ lifts to a foliation $\check{\mathcal F}$ transverse to the lifted isotopy $\check I$.


We will write $I_{\mathcal{F}} (z)$ for the class of paths that are positively transverse to ${\mathcal F}$, that  join $z$ to $f(z)$ and that homotopic in $\mathrm{dom}(I)$ to $I(z)$, relative to the endpoints. We will also use the notation $I_{\mathcal{F}} (z)$ for every path in this class and called it the {\it transverse trajectory} of $z$. 
Similarly, for every $n\geq1$, one can define $I_{\mathcal{F}}^n
 (z)=\prod_{0\leq k<n} I_{\mathcal{F}} (f^k(z))$, that is either a transverse path passing through the points $z$, $f(z)$, \dots, $f^n(z)$, or a set of such paths. Similarly we will define  $$I_{\mathcal{F}}^{\N}
 (z)=\prod_{0\leq k<+\infty} I_{\mathcal{F}} (f^k(z)),\enskip  I_{\mathcal{F}}^{-\N}
 (z)=\prod_{-\infty<k<0} I_{\mathcal{F}} (f^k(z)), \enskip I_{\mathcal{F}}^{\Z}
 (z)=\prod_{-\infty<k<+\infty} I_{\mathcal{F}} (f^k(z)).$$
The last object will be called the {\it whole transverse trajectory} of $z$.

If $z$ is a periodic point of period $q$, there exists a transverse loop $\Gamma$ whose natural lift $\gamma$ satisfies $\gamma\vert_{[0,1]}= I_{\mathcal{F}}^q
 (z)$. By definition a transverse loop is {\it associate} to $z$ if it is $\mathcal{F}$-equivalent to $\Gamma$  (the definition was given in subsection 3.1). Of course this does not depend on the choices of the $I_{\mathcal{F}} (f^k(z)), 0\leq k<q$. 
 
 Let us state two results that will be useful \textcolor{black}{in Subsection 5.2}.

\begin{lemma}\label{le: continuity}
Fix $z \in{\mathrm{dom}(I)}$,  $n\geq 1$, and parameterize $I_{\mathcal{F}}^n
 (z)$ by $[0,1]$. For every $0<a<b<1$, there exists a neighborhood $V$ of $z$ such that, for every $z'\in V$, the path $I_{\mathcal{F}}^n(z)\vert_{[a,b]}$ is equivalent to a subpath of $I_{\mathcal{F}}^n
 (z')$. Moreover, there exists a neighborhood $W$ of $z$ such that, for every $z'$ and $z''$ in $W$, the path $I_{\mathcal{F}}^n(z')$ is equivalent to a subpath of $I_{\mathcal{F}}^{n+2} (f^{-1}(z''))$

\end{lemma}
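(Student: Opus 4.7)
The plan is to exploit uniform continuity of the isotopy $I$ on the compact time interval $[0,n]$ together with the local product structure of $\mathcal{F}$ in trivialization charts, in which $\mathcal{F}$-equivalence of transverse paths reduces to matching the horizontal coordinates of their endpoints.

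For the first assertion, I would parametrize $I_{\mathcal{F}}^n(z)$ by $[0,1]$ and fix parameters $0<a'<a<b<b'<1$. I would then cover the compact image $I_{\mathcal{F}}^n(z)([a',b'])$ by finitely many trivialization neighborhoods $U_1,\dots,U_m$ of $\mathcal{F}$ with associated charts $h_i:U_i\to(0,1)^2$ sending $\mathcal{F}\vert_{U_i}$ to the vertical foliation oriented downward, and pick $a'=t_0<t_1<\dots<t_m=b'$ with $I_{\mathcal{F}}^n(z)([t_{i-1},t_i])\subset U_i$. Uniform continuity of $(t,z)\mapsto f_t(z)$ yields a neighborhood $V$ of $z$ such that, for every $z'\in V$, the trajectory $I^n(z')$ is uniformly close to $I^n(z)$, so that any representative of $I_{\mathcal{F}}^n(z')$ can be chosen to traverse each $U_i$ in the same order, with entry and exit horizontal coordinates $\pi_1\circ h_i$ arbitrarily close to those of $I_{\mathcal{F}}^n(z)$. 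Because of the buffers $[a',a]$ and $[b,b']$, the range of horizontal coordinates swept inside each $U_i$ by $I_{\mathcal{F}}^n(z')$ then contains the range swept by $I_{\mathcal{F}}^n(z)\vert_{[a,b]}\cap U_i$. Gluing the local equivalences in the universal cover $\widetilde{\mathrm{dom}}(\mathcal{F})$ produces parameters $a''<b''$ such that $I_{\mathcal{F}}^n(z')\vert_{[a'',b'']}$ is $\mathcal{F}$-equivalent to $I_{\mathcal{F}}^n(z)\vert_{[a,b]}$.

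For the second assertion I would apply the same argument with one extra iterate on each side. Fix a trivialization chart $h_0:U_0\to(0,1)^2$ centered at $z$ with $h_0(z)=(1/2,1/2)$, and a similar chart around $f^n(z)$. I would then choose a neighborhood $W\subset U_0$ of $z$ small enough so that, for every $z''\in W$, the terminal portion of $I_{\mathcal{F}}(f^{-1}(z''))$ enters $U_0$ at a point whose first coordinate in $h_0$ is smaller than $\pi_1(h_0(z'))$ for every $z'\in W$, and symmetrically the initial portion of $I_{\mathcal{F}}(f^n(z''))$ exits the chart around $f^n(z)$ past the leaf $\phi_{f^n(z')}$ for every $z'\in W$. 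This is possible by continuity of $f^{-1}$ and $f$ together with the continuity of $z\mapsto\pi_1\circ h_0(z)$. For such $z',z''\in W$, the leaf $\phi_{z'}$ is then crossed by the first step $I_{\mathcal{F}}(f^{-1}(z''))$ strictly before its terminal point $z''$, and $\phi_{f^n(z')}$ is crossed by the last step $I_{\mathcal{F}}(f^n(z''))$ strictly after its initial point $f^n(z'')$. Combined with the first assertion applied to the middle portion, this shows that $I_{\mathcal{F}}^n(z')$ is $\mathcal{F}$-equivalent to a subpath of $I_{\mathcal{F}}^{n+2}(f^{-1}(z''))$.

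The main obstacle is conceptual bookkeeping: a small perturbation of $z$ can produce a transverse trajectory $I_{\mathcal{F}}^n(z')$ whose concrete image differs noticeably from that of $I_{\mathcal{F}}^n(z)$, so closeness must be encoded at the level of the leaf space rather than pointwise. Working in $\widetilde{\mathrm{dom}}(\mathcal{F})$, where $\mathcal{F}$-equivalence coincides with passing through the same leaves, and locally inside the trivialization charts where this reduces to matching horizontal ranges, sidesteps the difficulty and makes the gluing step essentially automatic.
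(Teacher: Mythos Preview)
Your instinct to pass to the universal cover $\widetilde{\mathrm{dom}}(I)$ is exactly right, but the detour through a finite cover by trivialization charts $U_1,\dots,U_m$ introduces a genuine gap. You cover the image of the \emph{transverse} trajectory $I_{\mathcal F}^n(z)\vert_{[a',b']}$ by the $U_i$, and then invoke uniform continuity of $(t,z)\mapsto f_t(z)$ to conclude that a representative of $I_{\mathcal F}^n(z')$ can be chosen to traverse the same $U_i$. But uniform continuity of the isotopy only controls the \emph{actual} trajectory $I^n(z')$, which is merely homotopic to $I_{\mathcal F}^n(z')$; its image may be far from the $U_i$, and conversely there is no reason a representative of $I_{\mathcal F}^n(z')$ should enter any of the $U_i$. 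You explicitly flag this distinction in your last paragraph, yet the argument you sketch still leans on the pointwise closeness you said you must avoid.

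The paper sidesteps the whole issue by never looking at local charts. In $\widetilde{\mathrm{dom}}(I)$ every leaf is a Brouwer line, so the transverse path from $\widetilde z'$ to $\widetilde f^n(\widetilde z')$ is determined up to equivalence by its endpoints alone: it crosses precisely the leaves $\psi$ with $R(\phi_{\widetilde z'})\subset R(\psi)\subset R(\phi_{\widetilde f^n(\widetilde z')})$. Writing $\phi$ and $\phi'$ for the leaves through $\widetilde I_{\widetilde{\mathcal F}}^n(\widetilde z)(a)$ and $\widetilde I_{\widetilde{\mathcal F}}^n(\widetilde z)(b)$, one has the strict nesting $\overline{R(\phi_{\widetilde z})}\subset R(\phi)$ and $\overline{R(\phi')}\subset R(\phi_{\widetilde f^n(\widetilde z)})$, so $R(\phi)$ and $L(\phi')$ are open sets containing $\widetilde z$ and $\widetilde f^n(\widetilde z)$. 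Continuity of $f^n$ (no isotopy needed) gives a disk $V$ with $\widetilde V\subset R(\phi)$ and $\widetilde f^n(\widetilde V)\subset L(\phi')$; then $\widetilde I_{\widetilde{\mathcal F}}^n(\widetilde z')$ must cross both $\phi$ and $\phi'$, and the piece in between is exactly $I_{\mathcal F}^n(z)\vert_{[a,b]}$ up to equivalence. No chart bookkeeping, no gluing.

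For the second assertion the paper again works globally: it picks intermediate leaves $\phi$ strictly between $\phi_{\widetilde f^{-1}(\widetilde z)}$ and $\phi_{\widetilde z}$, and $\phi'$ strictly between $\phi_{\widetilde f^n(\widetilde z)}$ and $\phi_{\widetilde f^{n+1}(\widetilde z)}$, then shrinks $W$ so that $\widetilde f^{-1}(\widetilde W)\subset R(\phi)$, $\widetilde W\subset L(\phi)$, $\widetilde f^n(\widetilde W)\subset R(\phi')$, $\widetilde f^{n+1}(\widetilde W)\subset L(\phi')$. Your chart argument around $z$ and $f^n(z)$ is aiming at the same inequalities, but ``combined with the first assertion applied to the middle portion'' does not close: the first assertion compares $I_{\mathcal F}^n(z)\vert_{[a,b]}$ with $I_{\mathcal F}^n(z')$, not $I_{\mathcal F}^n(z')$ with $I_{\mathcal F}^n(z'')$. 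What you actually need is again the endpoint characterization of transverse paths in the cover, which makes the intermediate-leaf bracketing automatic and the appeal to the first assertion unnecessary.
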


\begin{proof} Keep the notations introduced above. Fix a lift $\widetilde z \in \widetilde{\mathrm{dom}(I)}$ of $z$ and denote by $\phi$ and $\phi'$ the leaves of $\widetilde {\mathcal F}$ containing $\widetilde I_{\widetilde{\mathcal{F}}}^n(\widetilde z)(a)$ and $\widetilde I_{\widetilde{\mathcal{F}}}^n(\widetilde z)(b)$ respectively. 
One has
$$\overline {R(\phi_{\widetilde z })}\subset R(\phi)\subset \overline {R(\phi)}\subset R(\phi')\subset \overline {R(\phi')}\subset R(\phi_{\widetilde f^n(\widetilde z) }).$$ If $V\subset\mathrm{dom}(I)$ is a topological disk, small neighborhood of $z$, the lift $\widetilde V$ that contains $\widetilde z$ satisfies
$$\widetilde V \subset R(\phi), \,\widetilde f^n(\widetilde V) \subset L(\phi').$$
 Consequently, for every $z'\in V$, the path
 $I_{\mathcal{F}}^n(z)\vert_{[a,b]}$ is equivalent to a subpath of $I_{\mathcal{F}}^n
 (z')$.
 
Let us prove the second assertion.
 One can find a leaf $\phi$ of the lifted foliation such that
$$\overline {R(\phi_{\widetilde f^{-1}( \widetilde z )})}\subset R(\phi)\subset \overline {R(\phi)}\subset R(\phi_{\widetilde z })$$
and a leaf $\phi'$  such that
$$\overline {R(\phi_{\widetilde f^n( \widetilde z )})}\subset R(\phi')\subset \overline {R(\phi')}\subset R(\phi_{\widetilde f^{n+1}(\widetilde z )}).$$
If $W\subset\mathrm{dom}(I)$ is a topological disk, small neighborhood of $z$, the lift $\widetilde W$ that contains $\widetilde z$ satisfies
$$\widetilde f^{-1}(\widetilde W) \subset R(\phi), \,\widetilde W\subset L(\phi), \,\widetilde f^n(\widetilde W) \subset R(\phi'), \widetilde f^{n+1}(\widetilde W)\subset L(\phi').$$
Consequently, for every $z'$ and $z''$ in $W$, the path $I^n_{\mathcal F}(z')$ is equivalent to a subpath of $I_{\mathcal F}^{n+2}(f^{-1}(z''))$. \end{proof}

Say that $z\in M$ is {\it positively recurrent} if $z\in\omega(z)$, which means that there is a subsequence of the sequence $(f^n(z))_{n\geq 0}$ that converges to $z$. Say that $z\in M$ is {\it negatively recurrent} if $z\in\alpha(z)$, which means that there is a subsequence of the sequence $(f^{-n}(z))_{n\geq 0}$ that converges to $z$.  Say that  $z\in M$ is {\it bi-recurrent} if it is positively and negatively recurrent. An immediate consequence of the previous lemma is the fact  that if $z\in\mathrm{dom}(I)$ is positively recurrent, negatively recurrent or bi-recurrent, then $I_{\mathcal{F}}^{\Z}
 (z)$ is $\mathcal F$-positively recurrent, $\mathcal F$-negatively recurrent or $\mathcal F$-bi-recurrent respectively.

\begin{figure}[ht!]
\includegraphics [height=48mm]{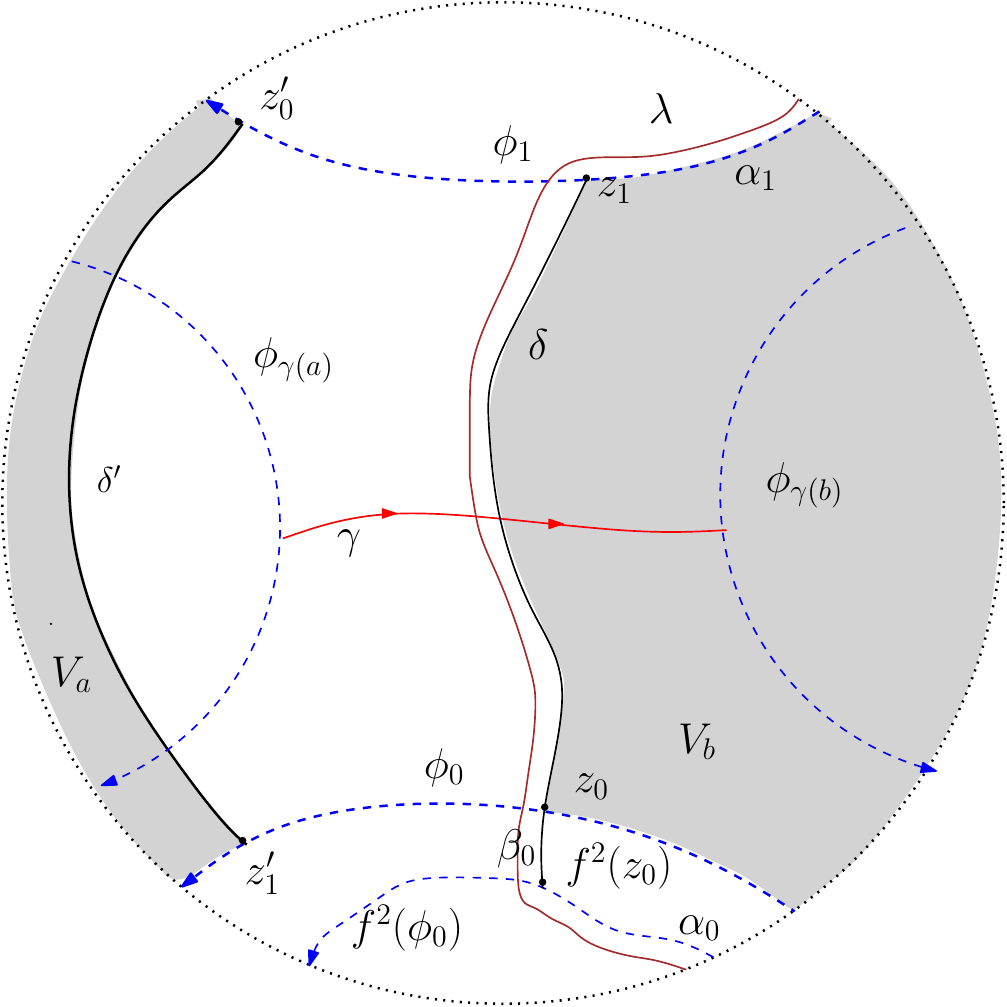}
\caption{\small Lemma \ref{le: neighborhood of infinity}, construction of $V_a$ and $V_b$.}
\label{figurelemma22}
\end{figure}

\begin{lemma}\label{le: neighborhood of infinity}
Suppose that $\gamma:[a,b]\to \mathrm{dom}(I)$ is a transverse path that has a leaf on its right and a leaf on its left. Then, there exists a compact set $K\subset  \mathrm{dom}(I)$ such that \textcolor{black}{for every $n>0$ and } for every transverse trajectory $I^n_{\mathcal F}(z)$ that contains a subpath equivalent to $\gamma$, there exists $k\in\{0,\dots, n-1\}$ such that $f^k(z)$ belongs to $K$. \end{lemma}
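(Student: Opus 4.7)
The plan is to lift the setting to the universal cover of $\mathrm{dom}(I)$ and use the leaves on the right and on the left as obstructions that trap both the parameter position and the geometric position of a junction point $f^j(z)$ inside every subpath equivalent to $\gamma$.

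First I would fix a lift $\widetilde\gamma\colon[a,b]\to\widetilde{\mathrm{dom}(I)}$ and extract real numbers $a<a_R<b_R<b$, $a<a_L<b_L<b$ together with leaves $\widetilde\phi_R, \widetilde\phi_L$ of $\widetilde{\mathcal F}$, with $\widetilde\phi_R$ on the right of $\widetilde\gamma|_{(a_R,b_R)}$ and $\widetilde\phi_L$ on the left of $\widetilde\gamma|_{(a_L,b_L)}$; both leaves are disjoint from $\widetilde\gamma$ and satisfy $R(\phi_{\widetilde\gamma(a)})\subsetneq R(\widetilde\phi_R)\subsetneq R(\phi_{\widetilde\gamma(b)})$ and $R(\phi_{\widetilde\gamma(a)})\subsetneq R(\widetilde\phi_L)\subsetneq R(\phi_{\widetilde\gamma(b)})$. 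If $I^n_{\mathcal F}(z)$ contains a subpath $\mathcal F$-equivalent to $\gamma$, then up to replacing $\widetilde\gamma$ by a suitable deck translate, there is a lift $\widetilde z$ of $z$ such that $\widetilde I^n_{\widetilde{\mathcal F}}(\widetilde z)$ contains a subpath $\widetilde\delta$ which is $\widetilde{\mathcal F}$-equivalent to $\widetilde\gamma$.

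The key step is then to show that $\widetilde\delta$ cannot be contained in a single elementary piece $\widetilde I_{\widetilde{\mathcal F}}(\widetilde f^k(\widetilde z))$ of $\widetilde I^n_{\widetilde{\mathcal F}}(\widetilde z)$. By the discussion following Theorem \ref{th: BrEquiv}, such a piece meets precisely the leaves $\phi$ with $R(\phi_{\widetilde f^k(\widetilde z)})\subset R(\phi)\subset R(\phi_{\widetilde f^{k+1}(\widetilde z)})$, so any of its subpaths meets every leaf strictly between its two endpoint leaves in the $R$-ordering. If $\widetilde\delta$ were such a subpath, the $\widetilde{\mathcal F}$-equivalence with $\widetilde\gamma$ would therefore force $\widetilde\gamma$ to meet the leaf $\widetilde\phi_R$, contradicting its construction. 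Consequently the parameter subinterval of $\widetilde\delta$ inside $[0,n]$ contains an integer in its interior, so a junction point $\widetilde f^j(\widetilde z)$ lies in the interior of $\widetilde\delta$ for some $j\in\{1,\dots,n-1\}$.

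Finally, I would locate $\widetilde f^j(\widetilde z)$ in a uniform compact region. Because $\widetilde\delta$ is $\widetilde{\mathcal F}$-equivalent to $\widetilde\gamma$, the leaves $\widetilde\phi_R$ and $\widetilde\phi_L$ must also lie on the right and on the left of $\widetilde\delta$ respectively; combined with the fact that the endpoints of $\widetilde\delta$ lie on $\phi_{\widetilde\gamma(a)}$ and $\phi_{\widetilde\gamma(b)}$, this should confine every such $\widetilde\delta$ inside one compact topological disk $\widetilde K\subset\widetilde{\mathrm{dom}(I)}$ built from $\widetilde\gamma$, suitable sub-arcs of $\widetilde\phi_R$ and $\widetilde\phi_L$, and sub-arcs of the two endpoint leaves. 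Setting $K=\pi(\widetilde K)$, where $\pi\colon\widetilde{\mathrm{dom}(I)}\to\mathrm{dom}(I)$ is the covering projection, then gives a compact subset of $\mathrm{dom}(I)$ with $f^j(z)\in K$. The hard part will be this final step: carefully producing the uniform compact disk $\widetilde K$ out of the two obstruction leaves and verifying that every admissible $\widetilde\delta$ is indeed enclosed in it, for which the key topological inputs will be Jordan separation in the plane $\widetilde{\mathrm{dom}(I)}$ and the non-crossing of distinct leaves of $\widetilde{\mathcal F}$.
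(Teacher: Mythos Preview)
There is a genuine gap. Your attempt is purely foliation-theoretic, but the lemma is dynamical, and the missing input is precisely that every leaf of $\widetilde{\mathcal F}$ is a Brouwer line for $\widetilde f$.

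First, your setup is self-contradictory: you ask that $\widetilde\phi_R$ be disjoint from $\widetilde\gamma$ and yet satisfy $R(\phi_{\widetilde\gamma(a)})\subsetneq R(\widetilde\phi_R)\subsetneq R(\phi_{\widetilde\gamma(b)})$. But a transverse path in the plane meets \emph{exactly} the leaves $\phi$ with $R(\phi_{\widetilde\gamma(a)})\subset R(\phi)\subset R(\phi_{\widetilde\gamma(b)})$, so these two requirements are incompatible. In particular your Step~3 collapses: nothing forbids $\widetilde\delta$ from sitting inside a single elementary piece $\widetilde I_{\widetilde{\mathcal F}}(\widetilde f^k(\widetilde z))$, and no junction point need lie on $\widetilde\delta$.

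The fatal issue, however, is Step~5. Being $\widetilde{\mathcal F}$-equivalent to $\widetilde\gamma$ only means $\widetilde\delta$ crosses the same leaves; it says nothing about \emph{where on each leaf} it sits. The four leaves $\phi_{\widetilde\gamma(a)}$, $\phi_{\widetilde\gamma(b)}$, $\widetilde\phi_R$, $\widetilde\phi_L$ are pairwise disjoint proper lines in the plane and therefore enclose no compact disk; the component $W$ of the complement of $\widetilde\phi_R\cup\widetilde\phi_L$ containing $\widetilde\gamma$ is unbounded. Equivalent transverse paths can slide arbitrarily far along leaves inside $W$, so there is no uniform compact $\widetilde K$ containing all such $\widetilde\delta$.

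The paper's proof uses the Brouwer property $\widetilde f(\overline{L(\phi)})\subset L(\phi)$ twice. First, it shows that the \emph{orbit} $\{\widetilde f^k(\widetilde z)\}_{0\le k\le n}$ itself (not just the transverse trajectory) stays in $W$. Second, it slices $W$ by two cross-cuts $\delta,\delta'$ into regions $V_a\supset\overline{R(\phi_{\widetilde\gamma(a)})}$, $V_b\supset\overline{L(\phi_{\widetilde\gamma(b)})}$, and a compact middle piece $K=\overline{W\setminus(V_a\cup V_b)}$; extending $\delta$ to a line $\lambda$ using half-leaves and their $\widetilde f^2$-images, the Brouwer property yields $\widetilde f(V_a)\cap V_b=\emptyset$, so the orbit cannot jump over $K$ in one step. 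It is this dynamical trapping, not any foliation-only enclosure, that produces the compact set.
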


\begin{proof}Lifting our path to the universal covering space of the domain, it is sufficient to prove the result in the case where $\mathrm{dom}(I)$ is a plane.

\textcolor{black}{ Figure \ref{figurelemma22} illustrates the following construction.} Suppose that $\phi_0$ is on the right of $\gamma$ and $\phi_1$ on its left and write $W$ for the connected component of the complement of $\phi_0\cup\phi_1$ that contains $\gamma$. Since  $\phi_0$ and $\phi_1$ are Brouwer lines, every orbit that goes from $\overline{R(\phi_{\gamma(a)})}$ to $\overline {L(\phi_{\gamma(b)})}$ is contained in $W$. Let $\delta$ be a simple path that joins a point $z_0$ of $\phi_0$ to a point $z_1$ of $\phi_1$, that is contained in $W$ but the endpoints and that does not meet neither $\overline{R(\phi_{\gamma(a)})}$ nor $\overline {L(\phi_{\gamma(b)})}$. Write $V_b$ for the connected component of $W\setminus\delta$ that contains $\overline {L(\phi_{\gamma(b)})}$. We will extend $\delta$ as a line $\alpha_0\beta_0\delta \beta_1\alpha_1$ as follows. 
If $W$ is contained in $R(\phi_0)$, set $\alpha_0=f^{2}(\phi_{z_0}^-) $ and choose for $\beta_0$ a simple path that joins $f^{2} (z_0)$ to $z_0$ and is contained in $R(f^{2}(\phi_0))\cap L(\phi_0)$ but the endpoints. If $W$ is contained in $L(\phi_0)$,  set $\alpha_0=(\phi_{z_0}^{+})^{-1}$ and $\beta_0=\{z_0\}$.
Similarly, if  $W$ is contained in $R(\phi_1)$, choose for $\beta_1$ a simple path that joins $z_1$ to $f^{2} (z_1)$  and is contained in $L(\phi_1)\cap R(f^{2}(\phi_1))$ but the endpoints and set $\alpha_1=f^{2}(\phi_{z_1}^{+})$. Otherwise, if $W$ is contained in $L(\phi_1)$,  set  $\beta=\{z_1\}$ and $\alpha_1=(\phi_{z_1}^{-})^{-1}$.
Note that $\lambda=\alpha_0\beta_0\delta \beta_1\alpha_1$ is a line.

The image  of  $\beta_0\delta\beta_1$ by $f^{-1}$ is compact and the images of $\alpha_0$ and $\alpha_1$ by $f^{-1}$ are disjoint from $W$.  So, one can find a simple path $\delta'$ that joins a point $z'_0$ of $\phi_0$ to a point $z'_1$ of $\phi_1$, that is contained in $W$ but the endpoints, that does not meet $\overline{V_b}$ and such that the connected component $V_a$ of $W\setminus\delta$ that does not contain $V_b$ (and that  meets $\overline{R(\phi_{\gamma(a)})}$), does not intersect  $f^{-1}(\lambda)$.  This implies that $f(V_a)$ and $V_b$ are separated by $\lambda$ and satisfy $f(V_a)\cap V_b=\emptyset$. So, every orbit that goes from $\overline{R(\phi_{\gamma(a)})}$ to $\overline {L(\phi_{\gamma(b)})}$ has to meet both sets $V_a$ and $V_b$ but is not included in the union of these sets. It must meet the compact set  $K=\overline{W\setminus (V_a\cup V_b)}$.
\end{proof}

\bigskip

\subsection{Admissible paths}

\

Until the end of the whole section, we suppose given a homeomorphism  $f$ isotopic to the identity on an oriented surface $M$ and a maximal singular isotopy $I$. We write  $\widetilde I=(\widetilde f_t)_{t\in[0,1]}$ for the lifted identity defined on the universal covering space $\widetilde{\mathrm{dom}}(I)$ of $\mathrm{dom}(I)$ and set $\widetilde f=\widetilde f_1$ for the lift of $f\vert_{\mathrm{dom}(I)}$ induced by the isotopy. We suppose that $\mathcal F$ is a foliation transverse to $I$ and write $\widetilde{\mathcal F}$ for the lifted foliation on $\widetilde{\mathrm{dom}}(I)$.

We will say that a path $\gamma:[a,b]\to {\mathrm{dom}}(I)$, positively transverse to $\mathcal F$, is {\it admissible of order $n$} if it is equivalent to a path $I_{\mathcal{F}}^n
 (z)$, $z\in\mathrm{dom}(I)$, in the sense defined in subsection 3.1. It means that if $\widetilde \gamma:[a,b]\to \widetilde {\mathrm{dom}}(I)$ is a lift of $\gamma$,  there exists a point $\widetilde z\in\widetilde{\mathrm{dom}}(I)$ such that $\widetilde z\in\phi_{\widetilde \gamma (a)}$ and $\widetilde f^n(\widetilde z)\in\phi_{\widetilde \gamma (b)}$, or equivalently, that 
 $$\widetilde f^n(\phi_{\widetilde \gamma (a)})\cap \phi_{\widetilde \gamma(b)}\not=\emptyset.$$
 
 We will say that $\gamma$ is {\it admissible of order $\leq n$} if it is a subpath of an admissible path of order $n$.
If $\widetilde \gamma:[a,b]\to \widetilde {\mathrm{dom}}(I)$ is a lift of $\gamma$, this means that 
$$\widetilde f^n(\overline{R(\phi_{\widetilde \gamma(a)})})\cap \overline{L(\phi_{\widetilde \gamma(b)})}\not=\emptyset.$$

More generally, we will say that a transverse path $\gamma:J\to{\mathrm{dom}}(I)$ defined on an interval is {\it admissible} if for every segment $[a,b]\subset J$, there exists $n\geq 1$ such that $\gamma\vert_{[a,b]}$ is admissible of order $\leq n$. If $\widetilde \gamma:J\to \widetilde {\mathrm{dom}}(I)$ is a lift of $\gamma$, this means that for every $a<b$ in $J$, there exists $n\geq 1$ such that $$\widetilde f_1^n(\overline{R(\phi_{\widetilde \gamma(a)})})\cap \overline{L(\phi_{\widetilde \gamma(b)})}\not=\emptyset.$$

Similarly, we will say that a transverse loop $\Gamma$ is admissible if its natural lift is admissible. If the context is clear, we will say that a path is of order $n$ (order $\leq n$) if it is admissible of order $n$ (resp. admissible of order $\leq n$).

\medskip
Let us finish this subsection with a useful result which says that except in some particular trivial cases, there is no difference between being of order $\leq n$ and being of order $n$ (and so of being of order $\leq n$ and being of order $m$ for every $m\geq n$).

\begin{proposition}\label{pr: order plane}
  Let $\gamma: [a,b]\to   {\mathrm{dom}}(I)$ be  a transverse path of order $\leq n$  but  not of order $n$, then  $\gamma$ has no leaf on its right and no leaf on its left.
\end{proposition}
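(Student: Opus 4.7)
Lift $\gamma$ to a transverse path $\widetilde{\gamma}\colon[a,b]\to\widetilde{\mathrm{dom}}(I)$; without loss of generality the relevant connected component of $\mathrm{dom}(I)$ lifts to a plane, as per Theorem~\ref{th: BrEquiv}. Setting $\alpha=\phi_{\widetilde{\gamma}(a)}$ and $\beta=\phi_{\widetilde{\gamma}(b)}$, the hypotheses translate into
\begin{equation*}
\widetilde{f}^n\bigl(\overline{R(\alpha)}\bigr)\cap\overline{L(\beta)}\neq\emptyset
\quad\text{and}\quad
\widetilde{f}^n(\alpha)\cap\beta=\emptyset.
\end{equation*}
Theorem~\ref{th: BrEquiv} guarantees that the leaves $\alpha$ and $\beta$ are Brouwer lines for $\widetilde{f}$, so that $\widetilde{f}^n(\alpha)\subset L(\alpha)$ and $\widetilde{f}^{-n}(\beta)\subset R(\beta)$.

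The first step I would carry out pins down the geometric configuration: $\widetilde{f}^n(\alpha)\subset L(\beta)$, and dually $\widetilde{f}^{-n}(\beta)\subset R(\alpha)$. The line $\widetilde{f}^n(\alpha)$ lies in $L(\alpha)$ and is disjoint from $\beta$, so it lies either in $L(\beta)$ or in the open strip $L(\alpha)\cap R(\beta)$. In the latter case, $\widetilde{f}^n$ being orientation-preserving gives $\widetilde{f}^n(\overline{R(\alpha)})=\overline{R(\widetilde{f}^n(\alpha))}$, and the three pairwise disjoint lines $\beta$, $\widetilde{f}^n(\alpha)$, $\alpha$ are arranged from left to right, so this closed half-plane is disjoint from $\overline{L(\beta)}$, contradicting order $\leq n$. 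Thus $\widetilde{f}^n(\alpha)\subset L(\beta)$: intuitively, $\widetilde{f}^n$ sends the boundary leaf $\alpha$ strictly past $\beta$.

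The second step, which is the crux of the argument, proves by contradiction that $\gamma$ has no leaf on its right (the claim ``no leaf on the left'' being handled symmetrically by applying the same reasoning to $\widetilde{f}^{-1}$ and to the reversed path $\widetilde{\gamma}^{-1}$). Suppose that there exist $a\leq a'<b'\leq b$ and a leaf $\widetilde{\phi}$ of $\widetilde{\mathcal{F}}$ lying in the strip $L(\alpha')\cap R(\beta')$ on the right of $\widetilde{\gamma}\vert_{(a',b')}$, where $\alpha'=\phi_{\widetilde{\gamma}(a')}$ and $\beta'=\phi_{\widetilde{\gamma}(b')}$. I would then exploit the characterization recalled in Section~3.1: the leaves of $\widetilde{\mathcal{F}}$ met by a transverse path from $\alpha'$ to $\beta'$ are exactly those $\phi$ with $R(\alpha')\subset R(\phi)\subset R(\beta')$. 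The main obstacle is to show that $\widetilde{\phi}$ falls into this nested range, $R(\alpha')\subset R(\widetilde{\phi})\subset R(\beta')$; once this is done, $\widetilde{\gamma}\vert_{[a',b']}$ must meet $\widetilde{\phi}$, contradicting the assumption that $\widetilde{\phi}$ sits on the right of $\widetilde{\gamma}\vert_{(a',b')}$. The nesting is immediate when $\widetilde{\phi}$ is comparable both with $\alpha'$ and with $\beta'$, which is the generic situation for disjoint oriented leaves of the non-singular planar foliation $\widetilde{\mathcal{F}}$; the residual degenerate configurations, in which $\widetilde{\phi}$ is non-separated from $\alpha'$ or from $\beta'$ in the leaf space, are excluded by combining the Brouwer line property $\widetilde{f}^n(\widetilde{\phi})\subset L(\widetilde{\phi})$ with the geometric constraint $\widetilde{f}^n(\alpha)\subset L(\beta)$ from the first step, yielding a point of $\alpha\cap\widetilde{f}^{-n}(\beta)$ and hence the desired contradiction with $\widetilde{f}^n(\alpha)\cap\beta=\emptyset$.
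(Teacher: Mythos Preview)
Your first step is close to the paper's: from $\widetilde f^n(\alpha)\cap\beta=\emptyset$ together with order $\leq n$, the paper deduces directly that $\widetilde f^n(\overline{L(\alpha)})\subset L(\beta)$ (equivalently $\widetilde f^n(\alpha)\subset L(\beta)$). Your dichotomy ``either $\widetilde f^n(\alpha)\subset L(\beta)$ or $\widetilde f^n(\alpha)$ lies in the open strip'' is fine, but the assertion that in the strip case the three lines are ``arranged from left to right'' is not automatic: a priori $\widetilde f^n(\alpha)$ could sit inside the strip with \emph{both} $\alpha$ and $\beta$ in $R(\widetilde f^n(\alpha))$, so that $\overline{R(\widetilde f^n(\alpha))}\cap\overline{L(\beta)}\neq\emptyset$ and no contradiction arises. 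To rule this out you must also use the Brouwer property of $\beta$ (if $\overline{L(\widetilde f^n(\alpha))}\subset R(\beta)$, then $\widetilde f^n(\overline{L(\beta)})\subset \widetilde f^n(\overline{L(\alpha)})\subset R(\beta)$, contradicting $\widetilde f^n(\overline{L(\beta)})\subset L(\beta)$). This is a fixable detail.

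The real problem is your second step. Your plan is to show the nesting $R(\alpha')\subset R(\widetilde\phi)\subset R(\beta')$, but by the very characterization you quote this nesting is \emph{equivalent} to $\widetilde\phi$ being crossed by $\widetilde\gamma\vert_{[a',b']}$, which a leaf on the right is not. So what your ``comparable'' case actually proves is the purely foliation-theoretic fact that a leaf on the right of $\widetilde\gamma\vert_{(a',b')}$ is \emph{never} comparable with both $\alpha'$ and $\beta'$; this uses no dynamics and leaves you nothing. The configuration you call ``residual degenerate'' is therefore the \emph{only} case, and this is precisely where the dynamical input (Brouwer property plus step one) must enter. Your one-sentence treatment --- ``yielding a point of $\alpha\cap\widetilde f^{-n}(\beta)$'' --- is not a proof; it is unclear how the stated ingredients produce such a point, and that conclusion is not what is needed.

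The paper's argument bypasses the comparable/non-comparable split entirely. Since $\alpha\cup\widetilde\gamma\cup\beta$ is connected and disjoint from $\widetilde\phi$, it lies in one side of $\widetilde\phi$; the other side, say $L(\widetilde\phi)$, then sits inside the closed strip $\overline{L(\alpha)}\cap\overline{R(\beta)}$. Now the Brouwer property gives $\widetilde f^n(L(\widetilde\phi))\subset L(\widetilde\phi)\subset\overline{R(\beta)}$, while step one gives $\widetilde f^n(L(\widetilde\phi))\subset\widetilde f^n(\overline{L(\alpha)})\subset L(\beta)$; hence $\widetilde f^n(L(\widetilde\phi))=\emptyset$, absurd. (If instead $R(\widetilde\phi)$ lies in the strip, run the same argument with $\widetilde f^{-n}$ and the dual inclusion $\widetilde f^{-n}(\overline{R(\beta)})\subset R(\alpha)$.) This empty-set contradiction is the missing mechanism in your step two.
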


\begin{proof}
 \enskip  Lifting the path to the universal covering space of the domain, it is sufficient to prove the result in case where $ {\mathrm{dom}}(I)$ is a plane. By hypothesis, one has:
   $$f^n(\phi_{ \gamma (a)})\cap \phi_{\gamma(b)}=\emptyset, \enskip f^n(\overline{R(\phi_{\gamma(a)})})\cap \overline{L(\phi_{\gamma(b)})}\not=\emptyset.$$ This implies that $f^n(\overline{L(\phi_{\gamma(a)})})\subset L(\phi_{\gamma(b)})$ and $f^{-n}(\overline{R(\phi_{\gamma(b)})})\subset R(\phi_{\gamma(a)})$. Suppose that there exists a leaf $\phi$ in $\overline{L(\phi_{ \gamma(a)})})\cap \overline{R(\phi_{\gamma(b)})}$ that does not meet $\gamma$.  Recall that $\phi$ is a Brouwer line. One of the sets $R(\phi)$ or $L(\phi)$ is included in $\overline{L(\phi_{ \gamma(a)})})\cap \overline{R(\phi_{ \gamma(b)})}$. It cannot be $R(\phi)$, because $f^{-n}(R(\phi))$ would be contained both in $R(\phi)$ and in $R(\phi_{\gamma(a)})$; it cannot be $L(\phi)$, because $f^{n}(L(\phi))$ would be contained both in $L(\phi)$ and in $L(\phi_{\gamma(b)})$. We have a contradiction. \end{proof}

\bigskip

\subsection{The fundamental proposition}

\

The next proposition is a new result about maximal isotopies and transverse foliations. It gives us an operation that permits to construct admissible paths from a pair of admissible paths and its proof is very simple. Nevertheless, this fundamental result will have many interesting consequences.

\begin{proposition}\label{pr: fundamental}
 Suppose that  $\gamma_1: [a_1,b_1]\to M$ and $\gamma_2: [a_2,b_2]\to M$ are transverse paths that intersect $\mathcal{F}$-transversally at $\gamma_1(t_1)=\gamma_2(t_2)$. If $\gamma_1$ is admissible of order $n_1$ and $\gamma_2$ is admissible of order $n_2$, then $\gamma_1\vert_{[a_1,t_1]}\gamma_2\vert_{[t_2,b_2]}$ and $\gamma_2\vert_{[a_2,t_2]}\gamma_1\vert_{[t_1,b_1]}$ are admissible of order $n_1+n_2$. Furthermore, either one of these paths is admissible of order $\min(n_1, n_2)$ or both paths are admissible of order $\max(n_1, n_2)$.
\end{proposition}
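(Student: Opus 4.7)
The plan is to work in the universal cover $\widetilde{\mathrm{dom}}(I)$ and choose lifts $\tilde\gamma_1,\tilde\gamma_2$ of $\gamma_1,\gamma_2$ that realize the $\widetilde{\mathcal F}$-transverse intersection at a common leaf $\tilde\phi$ through $\tilde\gamma_1(t_1)=\tilde\gamma_2(t_2)$. From the analysis of Section 3.2, the endpoint leaves sit in the ``crossing'' configuration: $\phi_{\tilde\gamma_1(a_1)},\phi_{\tilde\gamma_2(a_2)}\subset R(\tilde\phi)$ with each contained in the left of the other, and symmetrically $\phi_{\tilde\gamma_1(b_1)},\phi_{\tilde\gamma_2(b_2)}\subset L(\tilde\phi)$ with each contained in the right of the other. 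Admissibility at order $n_i$ provides $\tilde z_i\in\phi_{\tilde\gamma_i(a_i)}$ with $\tilde f^{n_i}(\tilde z_i)\in\phi_{\tilde\gamma_i(b_i)}$.

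To prove that $\sigma_1\tau_2 := \gamma_1\vert_{[a_1,t_1]}\gamma_2\vert_{[t_2,b_2]}$ is admissible of order $n_1+n_2$, I need $\tilde f^{n_1+n_2}(\phi_{\tilde\gamma_1(a_1)})\cap\phi_{\tilde\gamma_2(b_2)}\neq\emptyset$. Since $\tilde f^{n_1+n_2}(\phi_{\tilde\gamma_1(a_1)})$ is a connected proper line, by a Jordan-type connectedness argument it is enough to exhibit points of $\phi_{\tilde\gamma_1(a_1)}$ whose images under $\tilde f^{n_1+n_2}$ lie in $R(\phi_{\tilde\gamma_2(b_2)})$ and in $L(\phi_{\tilde\gamma_2(b_2)})$ respectively. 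The point $\tilde z_1$ handles the first: $\tilde f^{n_1}(\tilde z_1)\in\phi_{\tilde\gamma_1(b_1)}$, and iterating the Brouwer-line property of $\phi_{\tilde\gamma_1(b_1)}$ yields $\tilde f^{n_1+n_2}(\tilde z_1)\in\overline{L(\phi_{\tilde\gamma_1(b_1)})}$, which the $b$-end crossing configuration places on the $R$-side of $\phi_{\tilde\gamma_2(b_2)}$. For the second I would trace the preimage line $\tilde f^{-(n_1+n_2)}(\phi_{\tilde\gamma_2(b_2)})$, which contains $\tilde f^{-n_1}(\tilde z_2)$ since $\tilde f^{n_2}(\tilde z_2)\in\phi_{\tilde\gamma_2(b_2)}$; combining Brouwer-line monotonicity under backward iteration with the $a$-end crossing configuration forces this preimage line to cross $\phi_{\tilde\gamma_1(a_1)}$, producing a point on $\phi_{\tilde\gamma_1(a_1)}$ whose $(n_1+n_2)$-iterate lies on $\phi_{\tilde\gamma_2(b_2)}$. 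The proof for $\sigma_2\tau_1$ is obtained by interchanging the indices.

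For the $\min/\max$ refinement I would analyze the iterates at which the orbits of $\tilde z_i$ cross the intersection leaf $\tilde\phi$: let $k_i$ be the unique index in $\{1,\ldots,n_i\}$ with $\tilde f^{k_i-1}(\tilde z_i)\in\overline{R(\tilde\phi)}$ and $\tilde f^{k_i}(\tilde z_i)\in L(\tilde\phi)$. The combinatorics of the pairs $(k_1,n_1-k_1)$ and $(k_2,n_2-k_2)$ should produce the dichotomy: either the crossing times permit a ``shortcut'' across the intersection, so that one of $\sigma_1\tau_2,\sigma_2\tau_1$ is admissible at the lower order $\min(n_1,n_2)$, or no such shortcut exists, in which case the connectedness construction refines to show that both paths are admissible at order $\max(n_1,n_2)$ rather than only at $n_1+n_2$.

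The main obstacle is the preimage step in the second paragraph: tracking $\tilde f^{-(n_1+n_2)}(\phi_{\tilde\gamma_2(b_2)})$ and verifying that it meets $\phi_{\tilde\gamma_1(a_1)}$ demands a careful simultaneous use of both crossing configurations (at the $a$- and $b$-ends) together with the Brouwer-line property iterated in both time directions. Identifying the correct auxiliary points and ensuring that the connectedness argument lands on the appropriate side of $\phi_{\tilde\gamma_2(b_2)}$ is the most delicate part of the proof.
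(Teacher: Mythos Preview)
Your proposal has a genuine gap in the second step of the main admissibility argument. You correctly verify that $\tilde f^{n_1+n_2}(\tilde z_1)\in\overline{L(\phi_{\tilde\gamma_1(b_1)})}\subset R(\phi_{\tilde\gamma_2(b_2)})$, so you have one point of $\tilde f^{n_1+n_2}(\phi_{\tilde\gamma_1(a_1)})$ on the $R$-side. But for the other side you only produce a point $\tilde f^{-n_1}(\tilde z_2)$ that lies on the preimage line $\tilde f^{-(n_1+n_2)}(\phi_{\tilde\gamma_2(b_2)})$ and in $L(\phi_{\tilde\gamma_1(a_1)})$. That is \emph{not} a point on $\phi_{\tilde\gamma_1(a_1)}$, and it does not force the preimage line to cross $\phi_{\tilde\gamma_1(a_1)}$: knowing that line $A$ has a point on one side of line $B$, and that line $B$ has a point on one side of line $A$, does not imply $A\cap B\neq\emptyset$ (two disjoint parallel lines already show this). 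Finding a point of $\phi_{\tilde\gamma_1(a_1)}$ whose $(n_1+n_2)$-iterate lies in $\overline{L(\phi_{\tilde\gamma_2(b_2)})}$ is essentially the statement you are trying to prove, so this step is circular as written.

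The paper avoids this entirely by working with admissibility of order $\leq n$ rather than exact order $n$, using Proposition~19 to upgrade at the end (all four paths have a leaf on each side because of the transverse intersection). The key objects are the connected sets
\[
X_i=\tilde f^{n_i}(\overline{R(\phi_{\tilde\gamma_i(a_i)})})\cup\overline{L(\phi_{\tilde\gamma_i(b_i)})},\qquad
Y_i=\tilde f^{-n_i}(\overline{L(\phi_{\tilde\gamma_i(b_i)})})\cup\overline{R(\phi_{\tilde\gamma_i(a_i)})},
\]
connected precisely by the admissibility of $\gamma_i$. The crossing configuration gives $\tilde f^{k_1}(\overline{R(\phi_{\tilde\gamma_1(a_1)})})\cap\tilde f^{k_2}(\overline{R(\phi_{\tilde\gamma_2(a_2)})})=\emptyset$ and the analogous $b$-end disjointness. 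One then argues by cases: if $\gamma_1\vert_{[a_1,t_1]}\gamma_2\vert_{[t_2,b_2]}$ is not admissible of order $n_1$, then $X_1$ is disjoint from $\overline{L(\phi_{\tilde\gamma_2(b_2)})}$ and hence separates $\overline{R(\phi_{\tilde\gamma_2(a_2)})}$ from $\overline{L(\phi_{\tilde\gamma_2(b_2)})}$; the connected sets $X_2$ and $Y_2$, which contain both, must therefore meet $X_1$, and unpacking these intersections gives both the $n_1+n_2$ claim and the other half of the $\min/\max$ dichotomy at once. Your ``crossing time'' analysis for the refinement is unnecessary: the dichotomy falls out of this same case split.
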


\begin{proof}
 \enskip By lifting to the universal covering space of the domain, it is sufficient to prove the result in the case where $M$ is a plane and $\mathcal F$ is non singular. 

\medskip

By Proposition \ref{pr: order plane}, each path 
$\gamma_1$, $\gamma_2$, $\gamma_1\vert_{[a_1,t_1]}\gamma_2\vert_{[t_2,b_2]}$ and $\gamma_2\vert_{[a_2,t_2]}\gamma_1\vert_{[t_1,b_1]}$, having a leaf on its right or on its left,
will be admissible of order $m$ if it is admissible of order $\leq m$. Note first that  for every integers $k_1$, $k_2$ in $\Z$, one has 
$$f^{k_1}(\overline{R(\phi_{ \gamma_1(a_1)})})\cap\ f^{k_2}(\overline{R(\phi_{ \gamma_2(a_2)})})=
f^{k_1}(\overline{L(\phi_{ \gamma_1(b_1)})})\cap f^{k_2}(\overline{L(\phi_{ \gamma_2(b_2)})})=\emptyset.$$ For every $i\in\{1,2\}$ define the sets
$$X_i= f^{n_i}(\overline{R(\phi_{ \gamma_i(a_i)})})\cup \overline{L(\phi_{\gamma_i(b_i)})}, \enskip Y_i= f^{-n_i}(\overline{L(\phi_{ \gamma_i(b_i)})})\cup \overline{R(\phi_{\gamma_i(a_i)})},$$
which are connected according to the admissibility hypothesis.

If $\gamma_1\vert_{[a_1,t_1]}\gamma_2\vert_{[t_2,b_2]}$ is not admissible of order $n_1$, then $X_1\cap \overline{L(\phi_{\gamma_2(b_2)})}=\emptyset$ and so  $X_1$ separates $\overline{R(\phi_{\gamma_2(a_2)})}$ and $\overline{L(\phi_{\gamma_2(b_2)})}$. This implies that none of the sets $X_1\cap X_2$ and $X_1\cap Y_2$ is empty. The first property implies that $f^{n_2}(\overline{R(\phi_{ \gamma_i(a_2)})})\cap \overline{L(\phi_{\gamma_1(b_1)})}\not=\emptyset$, which means that $\gamma_2\vert_{[a_2,t_2]}\gamma_1\vert_{[t_1,b_1]}$ is admissible of order $n_2$.  The second one implies that $ f^{-n_2}(\overline{L(\phi_{ \gamma_2(b_2)})})\cap f^{n_1}(\overline{R(\phi_{ \gamma_1(a_1)})})\not=\emptyset$, which means that $\gamma_1\vert_{[a_1,t_1]}\gamma_2\vert_{[t_2,b_2]}$ is admissible of order $n_1+n_2$.

If $\gamma_1\vert_{[a_1,t_1]}\gamma_2\vert_{[t_2,b_2]}$ is not admissible of order $n_2$, then $Y_2\cap \overline{R(\phi_{\gamma_1(b_1)})}=\emptyset$ and so  $Y_2$ separates $\overline{R(\phi_{\gamma_1(a_1)})}$ and $\overline{L(\phi_{\gamma_1(b_1)})}$. This implies that none of the sets $Y_2\cap Y_1$ and $Y_2\cap X_1$ is empty. The first property implies that  $\gamma_2\vert_{[a_2,t_2]}\gamma_1\vert_{[t_1,b_1]}$ is admissible of order $n_1$.  The second one implies that $\gamma_1\vert_{[a_1,t_1]}\gamma_2\vert_{[t_2,b_2]}$ is admissible of order $n_1+n_2$.

In conclusion, $\gamma_1\vert_{[a_1,t_1]}\gamma_2\vert_{[t_2,b_2]}$ is admissible of order $n_1+n_2$ . Moreover, if it is not admissible of order $\min(n_1,n_2)$ then $\gamma_2\vert_{[a_2,t_2]}\gamma_1\vert_{[t_1,b_1]}$ is admissible of order $\max(n_1,n_2)$. The paths $\gamma_1$ and $\gamma_2$ playing the same role, we get the proposition.\end{proof}

\begin{figure}[ht!]
\hfill
\includegraphics[height=48mm]{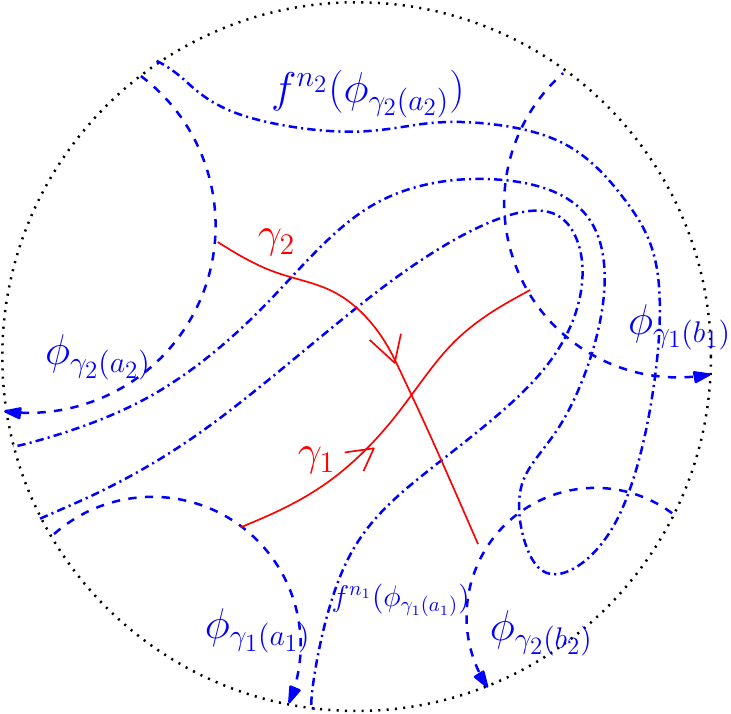}
\hfill
\includegraphics[height=48mm]{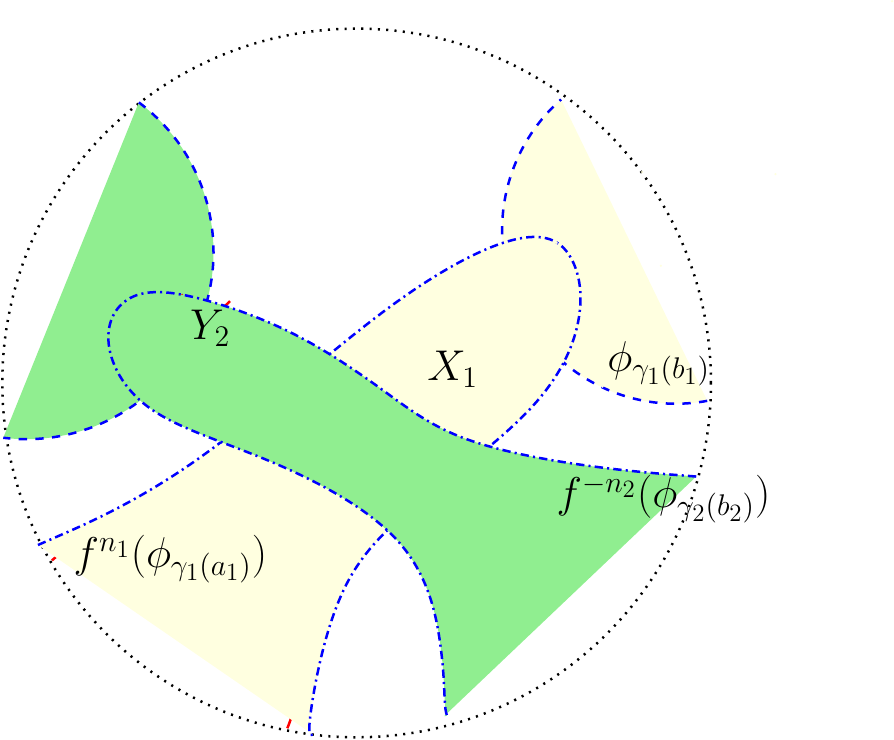}
\hfill{}
\caption{\small Fundamental lemma, the case where $\gamma_1\vert_{[a_1,t_1]}\gamma_2\vert_{[t_2,b_2]}$ is not admissible of order $n_1$.}
\label{figure2}
\end{figure}

\bigskip
One deduces immediately the following:

\begin{corollary} \label{co: first induction transverse}
 Let  $\gamma_i: [a_i,b_i]\to M$, $1\leq i\leq r$, be a family of $r\geq 2$ transverse paths.  We suppose that for every $i\in\{1,\dots,r\}$ there exist $s_i\in[a_i,b_i]$ and $t_i\in[a_i,b_i]$, such that:

\smallskip
\noindent {\bf i)}\enskip   $\gamma_i\vert_{[s_i,b_i]}$ and $\gamma_{i+1}\vert_{[a_{i+1},t_{i+1}]}$ intersect $\mathcal{F}$-transversally at $\gamma_i(t_i)=\gamma_{i+1}(s_{i+1})$ if $i<r$;

\smallskip
\noindent {\bf ii)} \enskip  one has $s_1=a_1<t_1<b_1$, $a_r<s_r<t_r=b_r$ and  $a_i<s_i<t_i<b_i$ if $1<i<r$;

\smallskip
\noindent {\bf iii)} \enskip    $\gamma_i$ is admissible of order $n_i$.

\medskip Then  $\prod_{1\leq i\leq r} \gamma_i\vert_{[s_i,t_i]}$ is admissible of order $\sum_{1\leq i\leq r} n_i$.
\end{corollary}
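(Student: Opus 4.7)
The approach is a straightforward induction on $r \ge 2$, where Proposition \ref{pr: fundamental} is used repeatedly to merge consecutive pairs of admissible paths.

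For the base case $r = 2$, the hypotheses give $s_1 = a_1$ and $t_2 = b_2$, so condition (i) reduces to a $\mathcal{F}$-transverse intersection between the full paths $\gamma_1$ and $\gamma_2$ at $\gamma_1(t_1) = \gamma_2(s_2)$. Proposition \ref{pr: fundamental} applies directly and yields that $\gamma_1\vert_{[a_1,t_1]}\gamma_2\vert_{[s_2,b_2]} = \gamma_1\vert_{[s_1,t_1]}\gamma_2\vert_{[s_2,t_2]}$ is admissible of order $n_1 + n_2$.

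For the inductive step $r \to r+1$ (so $r+1 \ge 3$), first apply Proposition \ref{pr: fundamental} to $\gamma_1$ and $\gamma_2$: their $\mathcal{F}$-transverse intersection at $\gamma_1(t_1) = \gamma_2(s_2)$ provides that the concatenation $\delta := \gamma_1\vert_{[a_1,t_1]}\gamma_2\vert_{[s_2,b_2]}$ is admissible of order $n_1 + n_2$. Parameterize $\delta$ on an interval $[\tilde a_1, \tilde b_1]$ so that the point $\gamma_2(t_2)$ corresponds to some $\tilde t_1$; since $t_2 < b_2$ (as $r+1 \ge 3$), one has $\tilde a_1 < \tilde t_1 < \tilde b_1$. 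Then consider the shorter family of $r$ paths $\tilde\gamma_1 = \delta, \tilde\gamma_2 = \gamma_3, \ldots, \tilde\gamma_r = \gamma_{r+1}$, with $\tilde s_1 = \tilde a_1$, and $\tilde s_i = s_{i+1}$, $\tilde t_i = t_{i+1}$ for $i \ge 2$, and orders $\tilde n_1 = n_1 + n_2$, $\tilde n_i = n_{i+1}$ for $i \ge 2$. Conditions (ii) and (iii) for this new family follow immediately from the original hypotheses. Applying the induction hypothesis yields that $\delta\vert_{[\tilde s_1, \tilde t_1]}\,\gamma_3\vert_{[s_3,t_3]} \cdots \gamma_{r+1}\vert_{[s_{r+1},t_{r+1}]}$ is admissible of order $\sum_{i=1}^{r+1} n_i$. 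Unwinding, $\delta\vert_{[\tilde s_1, \tilde t_1]} = \gamma_1\vert_{[s_1,t_1]}\gamma_2\vert_{[s_2,t_2]}$, so the concatenation matches $\prod_{1\le i \le r+1} \gamma_i\vert_{[s_i, t_i]}$ exactly.

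The main point to verify carefully, and the only real obstacle, is the first new transverse-intersection hypothesis: that $\delta\vert_{[\tilde s_1, \tilde b_1]} = \delta$ and $\gamma_3\vert_{[a_3, t_3]}$ intersect $\mathcal{F}$-transversally at $\gamma_2(t_2) = \gamma_3(s_3)$. Here I rely on the observation recorded at the end of subsection 3.3, stating that if two paths intersect $\mathcal{F}$-transversally at a leaf via subintervals containing the intersection point, then the larger paths also do; this applies with $\gamma_2\vert_{[s_2,b_2]}$ as a subpath of $\delta$ containing $\gamma_2(t_2)$ in its interior, whose transverse intersection with $\gamma_3\vert_{[a_3,t_3]}$ is among the original hypotheses. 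The remaining transverse intersections of the new family are literally the originals (for indices $\ge 3$), and so the induction closes.
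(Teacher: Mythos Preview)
Your proof is correct and follows essentially the same approach as the paper. Both arguments induct using Proposition~\ref{pr: fundamental} to merge two paths at a time and invoke the subpath observation from the end of subsection~3.3 to propagate the $\mathcal{F}$-transverse intersection to the concatenated path; the only cosmetic difference is that the paper builds the partial product $\bigl(\prod_{1\le i<q}\gamma_i\vert_{[s_i,t_i]}\bigr)\gamma_q\vert_{[s_q,b_q]}$ forward one step at a time (an induction on $q$ within fixed $r$), whereas you merge $\gamma_1,\gamma_2$ first and invoke the induction hypothesis on the shorter family of $r$ paths.
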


\begin{proof}
 \enskip Here again, it is sufficient to prove the result when $M=\R^2$ and $\mathcal F$ is not singular. One must prove by induction on $q\in\{2,\dots, r\}$ that 
$$\left(\prod_{1\leq i<q} \gamma_i\vert_{[s_i,t_i]}\right)\gamma_q\vert_{[s_q,b_q]}$$ is admissible of order $\sum_{1\leq i\leq q} n_i$. The result for $q=2$ is nothing but Proposition \ref{pr: fundamental}. Suppose that it is true for $q<r$ and let us prove it for $q+1$. The paths 
$$\left(\prod_{1\leq i<q} \gamma_i\vert_{[s_i,t_i]}\right)\gamma_q\vert_{[s_q,b_q]}$$ 
and $\gamma_{q+1}$ intersect $\mathcal{F}$-transversally at $\gamma_q(t_q)=\gamma_{q+1}(s_{q+1})$ because this is the case for the subpaths $\gamma_q\vert_{[s_q,b_q]}$ and $\gamma_{q+1}\vert_{[a_{q+1},t_{q+1}]}$. One deduces that $$\left(\prod_{1\leq i\leq q} \gamma_i\vert_{[s_i,t_i]}\right)\gamma_{q+1}\vert_{[s_{q+1},b_{q+1}]}$$ is admissible of order $\sum_{1\leq i\leq q+1} n_i$.
\end{proof}

\bigskip
The following result is more subtle. The $\mathcal{F}$-transverse intersection property is stated on the paths $\gamma_i$ and not on subpaths but the signs of intersection are the same.

\bigskip
\begin{corollary}\label{co: induction transverse}
 Let  $\gamma_i: [a_i,b_i]\to M$, $1\leq i\leq r$, be a family of $r\geq 2$ transverse paths. We suppose that for every $i\in\{1,\dots,r\}$ there  exist $s_i\in[a_i,b_i]$ and $t_i\in[a_i,b_i]$, such that:

\smallskip
\noindent {\bf i)}\enskip   $\gamma_i$ and $\gamma_{i+1}$ intersect $\mathcal{F}$-transversally and positively at $\gamma_i(t_i)=\gamma_{i+1}(s_{i+1})$ if $i<r$;

\smallskip
\noindent {\bf ii)} \enskip  one has $s_1=a_1<t_1<b_1$, $a_r<s_r<t_r=b_r$ and  $a_i<s_i<t_i<b_i$ if $1<i<r$;

\smallskip
\noindent {\bf iii)} \enskip    $\gamma_i$ is admissible of order $n_i$.

\medskip Then $\prod_{1\leq i\leq r} \gamma_i\vert_{[s_i,t_i]}$ is admissible of order $\sum_{1\leq i\leq r} n_i$.
\end{corollary}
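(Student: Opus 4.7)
My plan is to reduce Corollary~\ref{co: induction transverse} to Corollary~\ref{co: first induction transverse} by verifying that, under the present hypotheses, the restricted sub-paths $\gamma_i|_{[s_i,b_i]}$ and $\gamma_{i+1}|_{[a_{i+1},t_{i+1}]}$ still intersect $\mathcal F$-transversally at $\gamma_i(t_i)=\gamma_{i+1}(s_{i+1})$.  The essential difference between the two statements is precisely that here the transverse intersection is assumed only on the \emph{full} paths $\gamma_i,\gamma_{i+1}$ (and is positive), while Corollary~\ref{co: first induction transverse} requires it on the specified sub-paths; so one must check that the transverse intersection persists upon suitable restriction of the parameter intervals.

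To establish this restriction, I would lift to $\widetilde{\mathrm{dom}}(\mathcal F)$, where $\widetilde{\mathcal F}$ is non-singular and $\mathcal F$-transverse intersection at a leaf $\phi_i=\phi_{\gamma_i(t_i)}=\phi_{\gamma_{i+1}(s_{i+1})}$ is exactly the combinatorial ``below/above'' ordering of leaves.  The hypothesis produces witness parameters $a_i^0<t_i<b_i^0$ in $[a_i,b_i]$ and $a_{i+1}^0<s_{i+1}<b_{i+1}^0$ in $[a_{i+1},b_{i+1}]$ realizing these orderings relative to $\phi_i$.  The key observation is that such witnesses can always be taken arbitrarily close to $t_i$ and $s_{i+1}$.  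This is the local nature of $\mathcal F$-transverse intersections: after replacing $\gamma_i,\gamma_{i+1}$ by equivalent transverse paths having a unique topologically transverse intersection at $\phi_i$ (as noted immediately after the definition), the intersection is confined to an arbitrarily small parameter window about the crossing, and pulling back through the equivalence homeomorphisms transfers this localization to the original paths.

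Now the inequalities from condition~(ii), namely $s_i<t_i<b_i$ and $a_{i+1}<s_{i+1}<t_{i+1}$ (with the boundary cases $s_1=a_1$ and $t_r=b_r$ not causing obstruction since those are exactly the ends not being shortened), allow me to choose $a_i^0\in(s_i,t_i)$, $b_i^0\in(t_i,b_i]$, $a_{i+1}^0\in[a_{i+1},s_{i+1})$, $b_{i+1}^0\in(s_{i+1},t_{i+1})$.  These witnesses show that $\gamma_i|_{[s_i,b_i]}$ and $\gamma_{i+1}|_{[a_{i+1},t_{i+1}]}$ intersect $\mathcal F$-transversally at $\gamma_i(t_i)=\gamma_{i+1}(s_{i+1})$.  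Consequently the hypotheses of Corollary~\ref{co: first induction transverse} are all satisfied and it delivers the admissibility of $\prod_{1\le i\le r}\gamma_i|_{[s_i,t_i]}$ of order $\sum_{i=1}^r n_i$.

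The main obstacle is the localization claim, i.e.\ justifying that the witnesses of the $\mathcal F$-transverse intersection, originally possibly at parameters far from $t_i,s_{i+1}$, can be re-witnessed inside arbitrarily small neighborhoods of these parameters.  The positivity assumption plays a clean role here: it guarantees that the sign of the intersection (and hence the direction of the ``below/above'' inequalities) does not flip as the parameter windows shrink, so the restriction produces a transverse intersection of the same positive type, and the passage through Corollary~\ref{co: first induction transverse} goes through without having to track which of the two concatenations in Proposition~\ref{pr: fundamental} is the relevant one.
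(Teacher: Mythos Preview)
Your approach has a genuine gap: the localization claim is false.  An $\mathcal F$-transverse intersection is \emph{not} a local property at the crossing leaf.  The witness parameters $a_i^0,b_i^0,a_{i+1}^0,b_{i+1}^0$ are exactly the places where the two paths \emph{separate} in the leaf space, and these can be arbitrarily far from $t_i,s_{i+1}$.  The paper states this explicitly just after the definition in Section~3.2: the below/above properties ``remain true when $a_1,a_2$ are replaced by smaller parameters, $b_1,b_2$ by larger parameters'' --- the witnesses can be pushed \emph{outward}, not inward.  The fact in Section~3.3 that transversality on sub-intervals implies transversality on the full intervals goes in exactly the opposite direction from what you need.  Your appeal to the existence of equivalent representatives with a unique topological intersection conflates the topological crossing point with the leaf-space separation witnesses; the former can be localized, the latter cannot.

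Concretely: take two transverse paths $\gamma_i,\gamma_{i+1}$ that meet the same leaves on a long interval $[c,d]$ (with $c<s_i<t_i<d$ and $c<s_{i+1}<t_{i+1}<d$), separating only outside this interval.  They intersect $\mathcal F$-transversally at $\phi_{\gamma_i(t_i)}$, but the restrictions $\gamma_i|_{[s_i,b_i]}$ and $\gamma_{i+1}|_{[a_{i+1},t_{i+1}]}$ do \emph{not}: on the right, any $b_{i+1}^0\in(s_{i+1},t_{i+1}]$ lands on a leaf also crossed by $\gamma_i|_{[s_i,b_i]}$, so $\phi_{\gamma_{i+1}(b_{i+1}^0)}$ and $\phi_{\gamma_i(b_i^0)}$ are comparable and the ``above'' condition fails.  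Thus the reduction to Corollary~\ref{co: first induction transverse} does not go through.

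The paper's proof proceeds differently: it re-runs the induction of Corollary~\ref{co: first induction transverse} directly, showing at step $q$ that the partial product $\bigl(\prod_{i<q}\gamma_i|_{[s_i,t_i]}\bigr)\gamma_q|_{[s_q,b_q]}$ and $\gamma_{q+1}$ intersect $\mathcal F$-transversally positively at $\gamma_q(t_q)$.  The right witness $\phi_{\gamma_{q+1}(b_{q+1})}$ above $\phi_{\gamma_q(b_q)}$ comes for free from hypothesis~(i).  For the left witness one must show $\phi_{\gamma_{q+1}(a_{q+1})}$ is below $\phi_{\gamma_1(a_1)}$ relative to $\phi_{\gamma_q(t_q)}$; this is where positivity is essential.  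Each positive intersection gives $\phi_{\gamma_{i+1}(a_{i+1})}$ below $\phi_{\gamma_i(a_i)}$ relative to $\phi_{\gamma_i(t_i)}$, and since all these leaves lie in $R(\phi_{\gamma_q(t_q)})$ one propagates the ordering by transitivity to obtain the required inequality.  The left witness is thus $\phi_{\gamma_1(a_1)}$, the very first leaf of the concatenated path --- not anything close to $t_q$.
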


\begin{proof}
 \enskip Here again, it is sufficient to prove the result when $M=\R^2$ and $\mathcal F$ is not singular. Here again, one must prove by induction on $q\in\{2,\dots, r\}$ that 
$$\left(\prod_{1\leq i<q} \gamma_i\vert_{[s_i,t_i]}\right)\gamma_q\vert_{[s_q,b_q]}$$  is admissible of order $\sum_{1\leq i\leq q} n_i$ and here again, the case $q=2$ is nothing but  Proposition \ref{pr: fundamental}. Supposing that it is true for $q<r$,  one must prove that 
$$\left(\prod_{1\leq i<q} \gamma_i\vert_{[s_i,t_i]}\right)\gamma_q\vert_{[s_q,b_q]}$$ 
and $\gamma_{q+1}$ intersect $\mathcal{F}$-transversally and positively at $\gamma_q(t_q)=\gamma_{q+1}(s_{q+1})$. By hypothesis, one knows that $\phi_{\gamma_{q+1}(b_{q+1})}$ is above $\phi_{\gamma_q(b_q)}$ relative to $\phi_{\gamma_q(t_q)}$. It remains to prove that $\phi_{\gamma_{q+1}(a_{q+1})}$ is below $\phi_{\gamma_1(a_1)}$ relative to $\phi_{\gamma_q(t_q)}$. For every $i\in\{1,\dots, q-1\}$, the leaves $\phi_{\gamma_i(a_i)}$ and $\phi_{\gamma_{i+1}(a_{i+1})}$ belong to  $R(\phi_{\gamma_i(t_i)})$ and $\phi_{\gamma_{i+1}(a_{i+1})}$ is below $\phi_{\gamma_i(a_i)}$ relative to $\phi_{\gamma_i(t_i)}$. So, each $\phi_{\gamma_i(a_i)}$ belongs to  $R(\phi_{\gamma_q(t_q)})$ and $\phi_{\gamma_{i+1}(a_{i+1})}$ is below $\phi_{\gamma_i(a_i)}$ relative to $\phi_{\gamma_q(t_q)}$. One deduces that $\phi_{\gamma_{q+1}(a_{q+1})}$ is below $\phi_{\gamma_1(a_1)}$ relative to $\phi_{\gamma_q(t_q)}$. \end{proof}

\bigskip
Let us finish by explaining the interest of this result in the case where an admissible transverse path has an $\mathcal{F}$-transverse self-intersection.

\bigskip
\begin{proposition} \label{pr: self-intersection} Suppose that $\gamma:[a,b]\to M$ is a transverse path admissible of order $n$ and that $\gamma$ intersects itself $\mathcal{F}$-transversally at $\gamma(s)=\gamma(t)$ where $s<t$. Then 
$\gamma\vert_{[a,s]}\gamma\vert_{[t,b]}$
is admissible of order $n$  and
$\gamma\vert_{[a,s]}\left(\gamma\vert_{[s,t]}\right)^{q}\gamma\vert_{[t,b]}$
is admissible of order $qn$ for every $q\geq 1$.
\end{proposition}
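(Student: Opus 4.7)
The plan is to reduce the proposition to the fundamental Proposition~\ref{pr: fundamental} and its Corollary~\ref{co: induction transverse} by lifting $\gamma$ to the universal cover. Concretely, fix a lift $\widetilde\gamma:[a,b]\to\widetilde{\mathrm{dom}}(I)$ of $\gamma$ and let $T$ be the unique non-trivial covering automorphism such that $T\widetilde\gamma(s)=\widetilde\gamma(t)$. By the very definition of $\mathcal{F}$-transverse self-intersection, the two lifts $\widetilde\gamma$ and $T\widetilde\gamma$ intersect $\widetilde{\mathcal{F}}$-transversally at the common point $\widetilde\gamma(t)=T\widetilde\gamma(s)$; moreover, since admissibility passes to lifts and is preserved by covering automorphisms, each translate $T^{i}\widetilde\gamma$ is admissible of order $n$ for the lifted isotopy $\widetilde I$.

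For the ``iterated loop'' statement, my plan is to apply Corollary~\ref{co: induction transverse} to the chain $(T^{i}\widetilde\gamma)_{0\leq i\leq q-1}$. By $T$-equivariance every consecutive pair $T^{i}\widetilde\gamma$, $T^{i+1}\widetilde\gamma$ inherits a $\widetilde{\mathcal F}$-transverse intersection, with the same sign, at $T^{i}\widetilde\gamma(t)=T^{i+1}\widetilde\gamma(s)$. Taking intersection parameters $t_i=t$ and $s_{i+1}=s$ throughout, Corollary~\ref{co: induction transverse} produces a concatenation
\[
\widetilde\gamma|_{[a,t]}\cdot T\widetilde\gamma|_{[s,t]}\cdot T^{2}\widetilde\gamma|_{[s,t]}\cdots T^{q-1}\widetilde\gamma|_{[s,b]}
\]
admissible of order $qn$, projecting in $M$ to $\gamma|_{[a,s]}\bigl(\gamma|_{[s,t]}\bigr)^{q}\gamma|_{[t,b]}$, which proves the second assertion; the case $q=1$ is exactly the hypothesis. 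For the ``cut-out'' statement I would apply Proposition~\ref{pr: fundamental} directly to the pair $\widetilde\gamma,T\widetilde\gamma$: both admissible of order $n$, meeting $\widetilde{\mathcal F}$-transversally at $\widetilde\gamma(t)=T\widetilde\gamma(s)$, so the two concatenations
\[
\widetilde\gamma|_{[a,t]}\,(T\widetilde\gamma)|_{[s,b]}\qquad\text{and}\qquad (T\widetilde\gamma)|_{[a,s]}\,\widetilde\gamma|_{[t,b]}
\]
are both admissible of order $2n$, with at least one of them admissible of order $n=\min(n,n)$. These project respectively to the ``long'' path $\gamma|_{[a,s]}(\gamma|_{[s,t]})^{2}\gamma|_{[t,b]}$ and the ``short'' path $\gamma|_{[a,s]}\gamma|_{[t,b]}$.

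The main obstacle is to single out the short path in the conclusion of Proposition~\ref{pr: fundamental}, which only guarantees that one of the two concatenations has order~$n$. To close this, I would exploit the fact that the short path inherits both a leaf on its right and a leaf on its left from the transverse self-intersection of $\widetilde\gamma$ and $T\widetilde\gamma$; by Proposition~\ref{pr: order plane} it then suffices to prove admissibility of order $\leq n$. The latter should follow from a direct examination of the lifted picture: a point in $\widetilde f^{n}(\phi_{\widetilde\gamma(a)})\cap\phi_{\widetilde\gamma(b)}$, which exists by hypothesis on $\gamma$, already lies in $\widetilde f^{n}(\overline{R(\phi_{\widetilde\gamma(a)})})\cap\overline{L(T^{-1}\phi_{\widetilde\gamma(b)})}$ once one unpacks the position of $T^{-1}\phi_{\widetilde\gamma(b)}$ relative to $\phi_{\widetilde\gamma(b)}$ forced by the sign of the transverse self-intersection at $\phi_{\widetilde\gamma(s)}$. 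This is the delicate step, because it requires careful bookkeeping of the ``above/below'' order of three non-separated leaves $\phi_{\widetilde\gamma(s)}$, $\phi_{\widetilde\gamma(b)}$, $T^{-1}\phi_{\widetilde\gamma(b)}$ and turning it into an honest inclusion of $L$-halves.
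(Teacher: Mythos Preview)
Your treatment of the iterated loop via Corollary~\ref{co: induction transverse} is fine and matches the paper. The problem is the ``cut-out'' part. The final step you flag as delicate is not merely delicate: it is false as stated. Recall (see the discussion right after the definition of $\mathcal F$-transverse intersection) that a positive transverse intersection of $\widetilde\gamma$ and $T\widetilde\gamma$ at $\phi_{\widetilde\gamma(t)}=\phi_{T\widetilde\gamma(s)}$ forces
\[
\phi_{\widetilde\gamma(b)}\subset R\bigl(\phi_{T\widetilde\gamma(b)}\bigr)
\quad\text{and}\quad
\phi_{T\widetilde\gamma(b)}\subset R\bigl(\phi_{\widetilde\gamma(b)}\bigr),
\]
and symmetrically on the $a$-side with $L$ in place of $R$. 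Thus $\phi_{\widetilde\gamma(b)}$ and $T^{-1}\phi_{\widetilde\gamma(b)}$ are \emph{not} comparable: each lies in the $R$-side of the other. Your desired inclusion $\phi_{\widetilde\gamma(b)}\subset\overline{L\bigl(T^{-1}\phi_{\widetilde\gamma(b)}\bigr)}$ therefore cannot hold (and the same obstruction appears if you try to work on the $a$-side instead). So the witness point $z\in\widetilde f^{\,n}(\phi_{\widetilde\gamma(a)})\cap\phi_{\widetilde\gamma(b)}$ does \emph{not} lie in $\overline{L(T^{-1}\phi_{\widetilde\gamma(b)})}$, and your proposed route to order $\leq n$ for the short path collapses. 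No amount of bookkeeping of the above/below relation will produce that inclusion, because non-comparability is precisely what the transverse intersection encodes.

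The paper resolves the dichotomy coming from Proposition~\ref{pr: fundamental} in the opposite way: rather than proving directly that the short path has order $n$, it shows that the alternative---$\gamma|_{[a,s]}(\gamma|_{[s,t]})^{q}\gamma|_{[t,b]}$ admissible of order $n$ for \emph{all} $q\geq 1$---is impossible. This requires a genuinely new argument: one passes to the annular cover $\widehat{\mathrm{dom}}(I)=\widetilde{\mathrm{dom}}(I)/T$, observes that the leaves $\phi_{\widehat\gamma(a')}$, $\phi_{\widehat\gamma(b')}$ (the first leaves outside the $T$-invariant strip $\widetilde U$) have both ends accumulating on the same end $S$ of the annulus, and then constructs a separating line $\alpha''$ to show $\widetilde f^{\,n}(\phi_{\widetilde\gamma(a')})\cap T^{q-1}(\phi_{\widetilde\gamma(b')})=\emptyset$ for all large $q$. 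That dynamical/topological obstruction at infinity is the missing idea in your proposal.
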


\begin{proof}

\textcolor{black}{ See Figure \ref{figure3} illustrating the construction below.} Applying Corollary \ref{co: induction transverse} to the family
$$\gamma_i=\gamma,  \,s_i=s \enskip\mathrm{if}\enskip 1<i\leq q \, , \,t_i=t \enskip\mathrm{if}\enskip \enskip 1\leq i<q ,$$ one knows that
$$\gamma\vert_{[a,t]}\left(\gamma\vert_{[s,t]}\right)^{q-2}\gamma\vert_{[s,b]}=\gamma\vert_{[a,s]}\left(\gamma\vert_{[s,t]}\right)^{q}\gamma\vert_{[t,b]}$$
is admissible of order $qn$ for every $q\geq 2$. Moreover the induction argument and the last sentence of Proposition \ref{pr: fundamental} tell us either that $\gamma\vert_{[a,s]}\gamma\vert_{[t,b]}$ is admissible of order $n$, or that $\gamma\vert_{[a,s]}\left(\gamma\vert_{[s,t]}\right)^{q}\gamma\vert_{[t,b]}$
is admissible of order $n$ for every $q\geq 1$. To get the proposition, one must prove that the last case is impossible.

We do not lose any generality by supposing that  $\mathrm{dom}(I)$ is connected.  Fix a lift $\widetilde \gamma$ of $\gamma$ and denote $T$ the covering automorphism such that $\widetilde \gamma(t)=T(\widetilde \gamma(s))$.   The quotient space $\widehat{\mathrm{dom}}({I})=\widetilde{\mathrm{dom}}({I})/T$ is an annulus and one gets an identity isotopy $\widehat I=(\widehat f_t)_{\in[0,1]}$ on  $\widehat{\mathrm{dom}}(I)$ by projection, as a homeomorphism $\widehat f=\widehat f_1$ and a transverse foliation $\widehat{\mathcal F}$. The path $\widetilde\gamma$ projects onto a transverse path $\widehat \gamma$. The path $\widetilde \gamma'=\prod_{k\in \Z} T^k(\widetilde \gamma\vert_{[s,t]})$ is a line that lifts a loop $\widehat \Gamma'$ of $\widehat{\mathrm{dom}}(I)$ transverse to $\widehat {\mathcal F}$. The union of leaves that meet $\widetilde\gamma'$ is a plane $\widetilde U$ that lifts an annulus $\widehat U$ of $\widehat{\mathrm{dom}}(I)$. The fact that $\gamma$ intersects itself $\mathcal{F}$-transversally at $\gamma(t)=\gamma(s)$ means that $\widetilde \gamma$ and $T(\widetilde \gamma)$ intersect $\widetilde{\mathcal{F}}$-transversally at $\widetilde \gamma(t)=T(\widetilde \gamma(s))$. One deduces the following:

\smallskip
\noindent-\enskip the paths $\widetilde\gamma_{[a,s]}$ and $\widetilde\gamma_{[t,b]}$ are not contained in $\widetilde U$;

\smallskip
\noindent-\enskip if $a'\in[a,s)$ is the largest value such that $\widetilde\gamma(a')\not\in \widetilde U$ and $b'\in(t,b]$  the smallest value such that $\widetilde\gamma(b')\not\in \widetilde U$, then $ \widetilde\gamma(a')$ and $\widetilde\gamma(b')$ are in the same connected component of \textcolor{black}{$\widetilde{\mathrm{dom}}(I)\setminus \widetilde \gamma'$.}

The fact that $\gamma\vert_{[a,s]}\left(\gamma\vert_{[s,t]}\right)^{q}\gamma\vert_{[t,b]}$
is admissible of order $n$ implies that $$\widetilde\gamma\vert_{[a,s]}\prod_{0\leq k<q} T^k(\widetilde \gamma\vert_{[s,t]})\,T^{q-1}(\widetilde\gamma\vert_{[t,b]})$$ is admissible of order $n$  or equivalently that 
$\widetilde f^{n}(\phi_{\widetilde \gamma(a')})\cap T^{q-1}(\phi_{\widetilde \gamma(b')})\not=\emptyset$. So one must prove that this cannot happen if $q$ is large enough.

There is no loss of generality by supposing that the leaves $\phi_{\widetilde \gamma(a')}$ and $\phi_{\widetilde \gamma(b')}$ are on the right of $\widetilde \gamma'$.  The projected leaves $\phi_{\widehat \gamma(a')}$ and $\phi_{\widehat \gamma(b')}$ are lines contained in the boundary of $\widehat U$. One can compactify the annulus $\widehat{\mathrm{dom}}(I)$ with a point $S$ at the end on the right of $\widehat \Gamma'$ and  a point $N$ at the end on the left of $\widehat \Gamma'$. We know that the $\alpha$-limit and $\omega$-limit sets of $\phi_{\widehat \gamma(a')}$ and $\phi_{\widehat \gamma(b')}$ are reduced to $S$.  Let us fix $\widetilde z_0\in  \phi_{\widetilde\gamma(a')}$.  
 The sets  $T^k(\widetilde f^n (\overline{R( \phi_{\widetilde\gamma(a')})}))$, $k\in\Z$, are pairwise disjoint and one can choose a simple path $\alpha$ joining $T(\widetilde f^n( \widetilde z_0))$ to $\widetilde f^n( \widetilde z_0)$ and disjoint from $\widetilde f^n(\overline{R(\phi_{\widetilde\gamma(a')})})$ and $T(\widetilde f^n(\overline{R(\phi_{\widetilde\gamma(a')})}))$ but at its ends. One can extend $\alpha$ in  $\overline {L(\phi_{\widetilde\gamma(a')})}\cap T(\overline{L(\phi_{\widetilde\gamma(a')})})$ to a simple path $\alpha'$ joining  $T(\widetilde z_0)$ to $\widetilde z_0$ and disjoint from   $\overline{R(\phi_{\widetilde\gamma(a')})}$ and $T(\overline{R( \phi_{\widetilde\gamma(a')})})$ but at its ends. The path $\alpha''=T(\phi_{\widetilde z_0}^-)\,\alpha'\, \phi_{\widetilde z_0}^+$ is a line and $L(\alpha'')$ contains $T(\widetilde f^n(\phi_{\widetilde z_0}^-))$ and $\widetilde f^n( \phi_{\widetilde z_0}^+)$. Let us choose a real parameterization $t\mapsto \phi_{\widetilde\gamma(b')}(t)$ of $ \phi_{\widetilde\gamma(b')}$. The fact that the $\alpha$-limit and $\omega$-limit sets of $\phi_{\widehat \gamma(b')}$ are reduced to $S$ implies that there exists $K>0$ such that  for every $q\geq 0$, $\alpha'$ does not meet neither $T^q(\phi_{\widetilde\gamma(b')}\vert_{(-\infty, -K]})$ nor $T^q( \phi_{\widetilde\gamma(b')}\vert_{[K, +\infty)})$. One deduces that there exists $q_0$ such that for every $q\geq q_0$, $\alpha'$ does not meet  $T^q(\phi_{\widetilde\gamma(b')})$.  This implies that if $q\geq q_0$, then $T^q(\phi_{\widetilde\gamma(b')})$ does not meet $\alpha''$ and so  is included in $R(\alpha'')$. In particular it cannot intersect neither $T(\widetilde f^n(\phi_{\widetilde z_0}^-))$ nor $\widetilde f^n( \phi_{\widetilde z_0}^+)$. Consequently,  this implies that $\widetilde f^n(\phi_{\widetilde\gamma(a')})$ does not meet $T^q(\widetilde \phi_{\widetilde\gamma(b')})$, if $q\geq q_0$.
\end{proof}

\begin{figure}[ht!]
\hfill
\includegraphics[height=50mm]{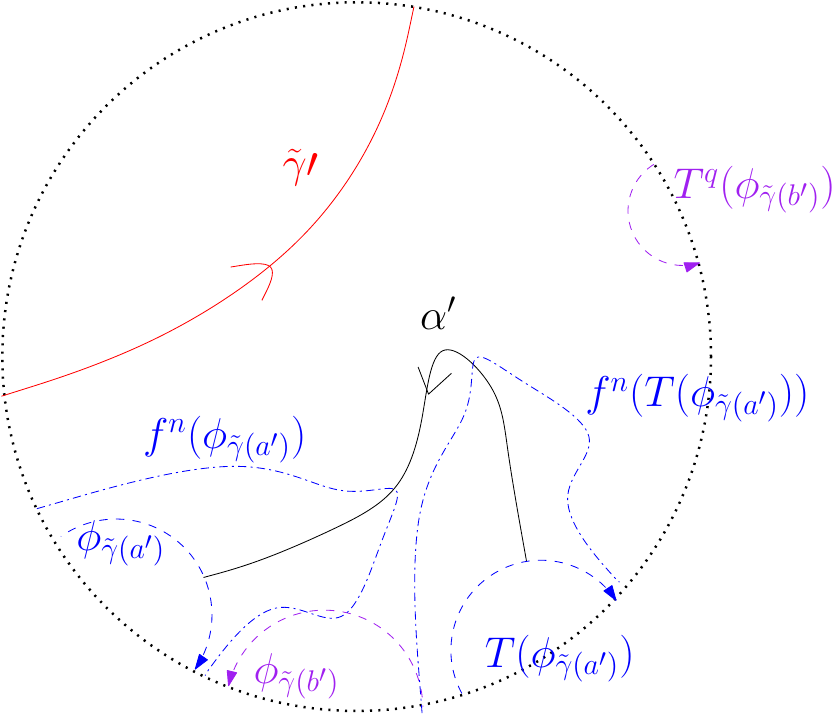}
\hfill{}
\caption{\small Proof of Proposition \ref{pr: self-intersection}.}
\label{figure3}
\end{figure}

\begin{corollary}   \label{co: no intersection} 
Let $\gamma:[a,b]\to M$ be a transverse path admissible of order $n$. Then, there exists a transverse path of order $n$, $\gamma':[a,b]\to M$ such that $\gamma'$ has no $\mathcal{F}$-transverse self-intersections, and $\phi_{\gamma'(a)}=\phi_{\gamma(a)},\, \phi_{\gamma'(b)}=\phi_{\gamma(b)}$.
\end{corollary}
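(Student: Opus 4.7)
The strategy is to iterate Proposition \ref{pr: self-intersection} to remove $\mathcal{F}$-transverse self-intersections one at a time, using a Zorn-type maximality argument to handle a possibly infinite process.

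Let $\mathcal{A}$ be the set of $\mathcal{F}$-equivalence classes of transverse paths $\eta:[a_\eta,b_\eta]\to M$ which are admissible of order $n$ and satisfy $\phi_{\eta(a_\eta)} = \phi_{\gamma(a)}$ and $\phi_{\eta(b_\eta)} = \phi_{\gamma(b)}$. The class $[\gamma]$ belongs to $\mathcal{A}$. Define a preorder $\preceq$ on $\mathcal{A}$ by declaring $[\eta'] \preceq [\eta]$ if $\eta'$ is $\mathcal{F}$-equivalent to a path obtained from a representative of $\eta$ by a (possibly transfinite) sequence of shortcuts $\eta\mapsto \eta\vert_{[a_\eta,s]}\eta\vert_{[t,b_\eta]}$ performed at pairs $(s,t)$ with $s<t$ and $\eta(s)=\eta(t)$ a point of $\mathcal{F}$-transverse self-intersection. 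Proposition \ref{pr: self-intersection} guarantees that each such shortcut remains admissible of order $n$ with the same initial and terminal leaves, so the operation preserves membership in $\mathcal{A}$.

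I would now invoke Zorn's Lemma on $(\mathcal{A}, \preceq)$. A minimal element $[\gamma']$ can have no $\mathcal{F}$-transverse self-intersection: otherwise, Proposition \ref{pr: self-intersection} applied to a self-intersection $\gamma'(s)=\gamma'(t)$ produces a path strictly below $[\gamma']$ in $\preceq$, contradicting minimality. Strictness holds because the excised middle piece $\gamma'\vert_{[s,t]}$ lifts, in the universal cover $\widetilde{\mathrm{dom}}(I)$, to a path from $\widetilde{\gamma'}(s)$ to $T(\widetilde{\gamma'}(s))$ for a nontrivial covering automorphism $T$ (the one witnessing the $\widetilde{\mathcal F}$-transverse intersection of $\widetilde{\gamma'}$ with $T(\widetilde{\gamma'})$), and hence cannot be $\mathcal{F}$-equivalent to the trivial loop.

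The main obstacle is the chain condition: given a totally ordered descending chain in $(\mathcal{A}, \preceq)$, produce a common lower bound. My plan for this is to work in the universal cover $\widetilde{\mathrm{dom}}(I)$. Fix a lift $\widetilde\gamma$ of $\gamma$ and, for each $[\eta]\preceq[\gamma]$, choose a compatible lift $\widetilde\eta$ obtained by successively lifting the shortcut steps starting from $\widetilde\gamma(a)$; each such $\widetilde\eta$ is a transverse path joining a point of $\phi_{\widetilde\gamma(a)}$ to a point on some translate of $\phi_{\widetilde\gamma(b)}$, and its set of crossed leaves is contained in the set of leaves crossed by $\widetilde\gamma$ and its relevant translates (which, because $\widetilde\gamma$ has compact domain, is a compact family in the space of leaves). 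Along a chain, one extracts a limiting transverse path by taking an intersection of these decreasing families of crossed leaves; since the order-$n$ admissibility condition $\widetilde f^n(\phi_{\widetilde\gamma(a)})\cap\phi_{\widetilde\gamma(b)}\neq\emptyset$ depends only on the endpoint leaves, which are preserved under shortcuts, the limit lies in $\mathcal{A}$ and provides the required lower bound. The delicate point is to ensure that the inverse-limit construction yields an actual transverse path (and not a degenerate object), which is where the compactness of the ambient family of crossed leaves, combined with the preservation of both endpoint leaves $\phi_{\gamma(a)}\neq\phi_{\gamma(b)}$, becomes essential.
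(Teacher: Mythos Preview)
Your strategy of iterating Proposition~\ref{pr: self-intersection} is the same as the paper's, but your mechanism for guaranteeing termination has a genuine gap at the chain condition, and the paper's approach is both simpler and complete.

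The paper's argument proceeds in two steps. First, it shows that any transverse path is $\mathcal{F}$-equivalent to one with only \emph{finitely many} self-intersection points (not necessarily $\mathcal{F}$-transverse); this is a local perturbation argument using trivialization neighborhoods. Second, among all admissible paths of order $n$ with the prescribed initial and terminal leaves, it picks one minimizing the number of self-intersections. If that minimizer had an $\mathcal{F}$-transverse self-intersection, Proposition~\ref{pr: self-intersection} would produce a path in the same class with strictly fewer self-intersections, a contradiction. The finiteness reduction makes the minimality argument trivial: no Zorn, no limits.

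Your Zorn argument, by contrast, requires producing a lower bound for an arbitrary descending chain, and the sketch you give does not do this. After each shortcut the lifted path ends on a \emph{different} translate of $\phi_{\widetilde\gamma(b)}$, so the collections of crossed leaves are not nested along the chain in any usable way; there is no ``decreasing family'' to intersect. Moreover, the space of leaves of $\widetilde{\mathcal F}$ is a non-Hausdorff one-manifold, and the compactness you invoke for ``the ambient family of crossed leaves'' is neither stated precisely nor obviously true. Even granting some compactness, you would still need to show that the limiting object is an honest transverse path and is $\preceq$ every member of the chain; you flag this as ``delicate'' but do not resolve it. The paper sidesteps all of this by establishing finiteness first, which is the missing idea in your proposal.
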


\begin{proof}
Note first that there exists  a transverse path $\gamma':[a,b]\to M$ equivalent to $\gamma$ with finitely many self-intersections (not necessarily $\mathcal{F}$-transverse). Indeed, choose for every $z$ on $\gamma$, a trivialization neighborhood $W_z$. Divide the interval in $n$ intervals $J_i=[a_i,b_i]$ of equal length and set  $\gamma_i=\gamma_{J_i}$, so that $\gamma=\prod_{1\leq i\leq n}\gamma_i$. If $n$ is large enough, then for every $i$, the union of $\gamma_i$ and all paths $\gamma_j$ that meet $\gamma_i$ is contained in a set $W_{z}$. Let us begin by perturbing each $\gamma_i$  to find an equivalent path $\gamma'_i$, such that $\gamma'_i(b_i)=\gamma'_{i+1}(a_i)$, if $i<n$, and such that the $\gamma'_i(b_i)$ are all distinct. One can also suppose that that for every $i$, the union of $\gamma'_i$ and all  $\gamma'_j$ that meet $\gamma'_i$ is contained in a set $W_{z}$. Suppose that for every $i<i_0$ and every $j\not=i$, the paths $\gamma'_i$ and $\gamma'_j$ have finitely many points of intersection. One can perturb in an equivalent way each $\gamma'_j$ on $(a_j,b_j)$, $j>i_0$,  such that it intersects $\gamma_{i_0}$ finitely many often, without changing the intersection points  with $\gamma_i$ if $i<i_0$ and such that condition concerning the trivialization neighborhoods is still satisfied. One knows that for every $i\leq i_0$ and every $j\not=i$, the new paths $\gamma'_i$ and $\gamma'_j$ have finitely  many points of intersection.

Let $\mathcal{G}$ be the collection of all transverse paths that are admissible of order $n$ whose initial leaf is $\phi_{\gamma(a)}$ and whose final leaf is $\phi_{\gamma(b)}$. Let   $\gamma':[a, b]\to M$ be a path in $\mathcal{G}$ that is minimal  with regards to the number of self-intersections. Then $\gamma'$ has no $\mathcal{F}$-transverse self-intersections. Indeed, if $\gamma'$ had an $\mathcal{F}$-transverse self-intersection at $\gamma'(t)=\gamma'(s)$ where $s<t$, by the Proposition \ref{pr: self-intersection} the path $\gamma'\mid_{[a, s]}\gamma'\mid_{[t, b]}$ would also be also contained in $\mathcal{G}$ and it would have a strictly smaller number of self-intersections.
\end{proof}

\bigskip

\subsection{Realizability of transverse loops}

\medskip
Let $\Gamma$ be a  transverse loop associated to a \textcolor{black}{ periodic point $z$} of period $q$. Recall that it means that $\Gamma$ is equivalent to a transverse loop $\Gamma'$ whose natural lift $\gamma'$ is equivalent to the whole transverse trajectory of $z$. In particular, if $\gamma$ is the natural lift of $\Gamma$, there exists $t\in(-1,0]$ such that  $\phi_{\gamma(t)}=\phi_z$ and such that for every $n\geq 1$, $\gamma_{[t,t+n]}$ is equivalent to
 \textcolor{black}{$I_{\mathcal{F}} ^{nq}(z)$}. So, the loop satisfies the following:
$$(P_q): \enskip\enskip\mathrm{for \enskip every} \enskip n\geq 1,\gamma\vert_{[0,n-1]} \enskip\mathrm {is} \enskip\mathrm{admissible\enskip of \enskip order} \enskip \leq nq.$$ 
The following question is natural:

\medskip

{\it Let $\Gamma$ be a  transverse loop that satisfies $(P_q)$. Is $\Gamma$ associated to a periodic orbit of period $q$?} 

\bigskip

We will see that in many situations, it is the case. In such situations, $f$ will have infinitely many periodic orbits. More precisely, for every rational number $r/s\in(0,1/q]$ written in an irreducible way, the loop $\Gamma^r$ will be associated to a periodic orbit of period $s$. In fact the weaker following property will be sufficient:

\smallskip
\noindent $(Q_q):$ there exist two sequences $(r_k)_{k\geq 0}$ and $(s_k)_{k\geq 0}$ of natural integers satisfying
$$\lim_{k\to+\infty}r_k=\lim_{k\to+\infty}s_k=+\infty,\enskip \limsup_{k\to+\infty}{r_k/s_k}\geq 1/q$$   
such that $\gamma\vert_{[0, r_k]}$ is admissible of order $\leq s_k$. 

We will say that a transverse loop $\Gamma$ is {\it linearly admissible of order $q$} if it satisfies $(Q_q)$ (note that every equivalent loop will satisfy the same condition).

Let us define now the {\it natural covering associated} to $\Gamma$ (or to its natural lift $\gamma$) and introduce some useful notations.  Fix a lift $\widetilde \gamma$ of $\gamma$ and denote $T$ the covering automorphism such that $\widetilde \gamma(t+1)=T(\widetilde \gamma(t))$ for every $t\in\R$. The path $\widetilde \gamma$ is a line and the union of leaves that it crosses is a topological plane $\widetilde U$. 
Moreover it projects onto the natural lift of a loop $\widehat\Gamma$ in the quotient space $\widehat{\mathrm{dom}}(I)=\widetilde{\mathrm{dom}}(I)/T$. One gets an identity isotopy $\widehat I=(\widehat f_t)_{\in[0,1]}$ on $\widehat{\mathrm{dom}}(I)$ by projection, as a homeomorphism $\widehat f=\widehat f_1$ and a transverse foliation $\widehat{\mathcal F}$. The loop $\widehat\Gamma$ is transverse to $\widehat{\mathcal F}$ and the union of leaves that  it crosses is a topological annulus $\widehat U$. 

Before stating the realization result, let us recall the following lemma (for example, see \cite{Lec2}, Theorem 9.1,  for a proof that uses maximal isotopies and transverse foliations). A loop in an annulus will be called {\it essential} if it is not homotopic to zero.

\bigskip
\begin{lemma} \label{le: intersection}
Let $J$ be a real interval, $f$ a homeomorphism of $\T^1\times J$ isotopic to the identity and $\widetilde f$ a lift of $f$ to $\R\times J$. We suppose that:
 
 \medskip
\noindent-\enskip every essential simple loop $\Gamma\subset \T^1\times J$ meets its image by $f$;

\medskip
\noindent{-}\enskip there exist two probability measures $\mu_1$ and $\mu_2$ with compact support, invariant by $f$, such that their rotation numbers (for $\widetilde f$) satisfy $\mathrm{rot}(\mu_1)<\mathrm{rot}(\mu_2)$.

\medskip
Then, for every $r/s\in[\mathrm{rot}(\mu_1),\mathrm{rot}(\mu_2)]$ written in an irreducible way, there exists a point $z\in\R\times J$ such that $\widetilde f^s(z)=z+(r,0).$

\end{lemma}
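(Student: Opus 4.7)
Proposed proof. I argue by contradiction: suppose $\widetilde f^s(z)\neq z+(r,0)$ for every $z\in\R\times J$, and set $g:=f^s$, $\widetilde g:=\widetilde f^s\circ T^{-r}$, where $T(x,y):=(x+1,y)$. Then $\widetilde g$ is a lift of $g$ with no fixed point. The measures $\mu_i$ have $\widetilde g$-rotation numbers $s\rho_i-r$; since $r/s\in[\rho_1,\rho_2]$ is irreducible and endpoints of $\mathrm{rot}(\widetilde f)$ are rotation numbers of ergodic measures, replacing $\mu_1,\mu_2$ by ergodic measures if necessary I may assume $s\rho_1-r<0<s\rho_2-r$.

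Apply Theorem \ref{th: maximal} to $g$ to obtain a maximal hereditary singular isotopy $I'$ lifting to an isotopy from $\mathrm{id}$ to $\widetilde g$ in the universal cover, and Corollary \ref{co: BrEquiv} to pick a foliation $\mathcal F$ transverse to $I'$. Lifted to $\R\times J$, the foliation $\widetilde{\mathcal F}$ is nonsingular (since $\widetilde g$ is fixed-point-free) and $T$-invariant, with leaves that are Brouwer lines of $\widetilde g$. Using Birkhoff's theorem together with Lemma \ref{le: continuity}, I pick $\mu_i$-generic bi-recurrent points $z_1,z_2$ whose whole transverse trajectories $I_{\mathcal F}^{\Z}(z_i)$ lift to transverse paths in $\R\times J$ with asymptotic horizontal speeds $s\rho_1-r<0$ and $s\rho_2-r>0$, respectively.

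The crux is to extract from these two opposing drifts an essential transverse simple loop $\Gamma$ of $\mathcal F$ in $\T^1\times J$. The annular version of Proposition \ref{pr:transverse_on_sphere} (via the remark immediately following it, applied after removing any $\mathcal F$-transverse self-intersections of the bi-recurrent trajectory $I_{\mathcal F}^{\Z}(z_2)$ by means of Corollary \ref{co: no intersection}) produces a simple transverse loop equivalent to a sub-trajectory; the uniformly positive drift guarantees that the loop is essential. Being simple and transverse to $\mathcal F$, the loop $\Gamma$ meets each leaf at most once, and since $I'$ pushes every point across its leaf, $\Gamma\cap g(\Gamma)=\emptyset$. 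Hence $\Gamma$ and $f^s(\Gamma)$ are disjoint essential simple loops bounding an essential sub-annulus of $\T^1\times J$.

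To conclude, I apply the hypothesis that every essential simple loop meets its $f$-image to the chain $\Gamma,f(\Gamma),\ldots,f^{s-1}(\Gamma)$ of essential simple loops. Lifting $\Gamma$ to a line $\widetilde\Gamma\subset\R\times J$ invariant by $T$, the positive drift forces $\widetilde f^s(\widetilde\Gamma)=T^r(\widetilde\Gamma)$. The requirement that consecutive iterates intersect forces the horizontal displacements of $\widetilde f^k(\widetilde\Gamma)$ to progress in a controlled way around the strip bounded by $\widetilde\Gamma$ and $T^r(\widetilde\Gamma)$, which conflicts with $\widetilde\Gamma$ and $T^r(\widetilde\Gamma)$ remaining disjoint under the rigid periodicity of the cover; this yields the sought contradiction. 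The main obstacle is the extraction step: carefully eliminating $\mathcal F$-transverse self-intersections in a bi-recurrent transverse trajectory while preserving its essentiality (i.e.\ the positive drift at infinity) is exactly the kind of operation that motivates the forcing machinery developed throughout the rest of the paper.
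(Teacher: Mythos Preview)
The paper does not prove this lemma: it is quoted from \cite{Lec2}, Theorem~9.1, so there is no in-text argument to compare against. Your attempt, however, contains genuine errors.

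The central claim that an essential simple loop $\Gamma$ positively transverse to $\mathcal F$ must satisfy $\Gamma\cap g(\Gamma)=\emptyset$ is false. The leaves of $\widetilde{\mathcal F}$ are Brouwer lines for $\widetilde g$, but a curve \emph{transverse} to Brouwer lines is not itself a Brouwer line. Concretely, take $\T^1\times\R$ with the vertical foliation oriented downward and $g(x,y)=(x+\tfrac12,y)$ lifted by $\widetilde g(x,y)=(x+\tfrac12,y)$; this foliation is transverse to the obvious isotopy, the loop $\Gamma=\T^1\times\{0\}$ is positively transverse, simple and essential, yet $g(\Gamma)=\Gamma$. Your justification ``$I'$ pushes every point across its leaf'' only tells you that $\widetilde g(z)\in L(\phi_z)$ for the leaf through $z$, which says nothing about the position of $\widetilde g(z)$ relative to $\widetilde\Gamma$.

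Even granting disjointness, the final paragraph does not produce a contradiction. Since $\Gamma$ is essential, its lift $\widetilde\Gamma\subset\R\times J$ is $T$-invariant \emph{as a set}, so $T^r(\widetilde\Gamma)=\widetilde\Gamma$; the assertion ``the positive drift forces $\widetilde f^s(\widetilde\Gamma)=T^r(\widetilde\Gamma)$'' would then read $\widetilde f^s(\widetilde\Gamma)=\widetilde\Gamma$, directly contradicting the disjointness you just claimed. More importantly, the intersection hypothesis concerns $f$, while your free loop is free for $g=f^s$; having $\Gamma\cap f^s(\Gamma)=\emptyset$ together with $f^k(\Gamma)\cap f^{k+1}(\Gamma)\neq\emptyset$ for $0\le k<s$ is perfectly consistent (think of a loop drifting slowly across an annulus), so no contradiction follows. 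The extraction of $\Gamma$ is also problematic: Corollary~\ref{co: no intersection} applies to compact admissible segments and does not preserve recurrence or drift, and the ``annular version'' of Proposition~\ref{pr:transverse_on_sphere} you invoke is never stated. The actual proof in \cite{Lec2} proceeds quite differently.
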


Let us state now the principal result of this subsection.

\bigskip
\begin{proposition} \label{pr: realization}
 Let $\Gamma$ be a linearly admissible transverse loop of order $q$ that satisfies one of the three following conditions. Then for every rational number $r/s\in(0,1/q]$ written in an irreducible way,  $\Gamma^r$ is associated to a periodic orbit of period $s$. 

\medskip
\noindent{\bf i)}\enskip The loop $\Gamma$ has a leaf on its left and a leaf on its right,  and the annulus $\widehat U$
 does not contain a simple loop homotopic to $\widehat\Gamma$ disjoint from its image by $\widehat f$.

\medskip
\noindent{\bf ii)}\enskip There exists both an admissible transverse path that intersects $\Gamma$ $\mathcal{F}$-transversally and positively, and an admissible transverse path that intersects $\Gamma$ $\mathcal{F}$-transversally and negatively.

\medskip
\noindent{\bf iii)}\enskip The loop $\Gamma$ has an $\mathcal{F}$-transverse self-intersection.
\end{proposition}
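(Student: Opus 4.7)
The plan is to apply Lemma \ref{le: intersection} on a suitable subannulus of the natural annular covering $\widehat{\mathrm{dom}}(I) = \widetilde{\mathrm{dom}}(I)/T$ associated to $\Gamma$, where $\widehat f$ is isotopic to the identity, $\widehat{\mathcal F}$ is the projected transverse foliation, and $\widehat\Gamma$ is an essential transverse loop. I would aim to produce two $\widehat f$-invariant Borel probability measures with compact support, $\mu_-$ and $\mu_+$, with $\mathrm{rot}(\mu_-)\le 0$ and $\mathrm{rot}(\mu_+)\ge 1/q$, and to verify that every essential simple loop in the working annulus meets its image under $\widehat f$. Lemma \ref{le: intersection} would then produce, for each irreducible $r/s\in(0,1/q]$, a point $\widehat z$ whose lift satisfies $\widetilde f^s(\widetilde z)=T^r(\widetilde z)$; projected back to $\mathrm{dom}(I)$, this yields a periodic point of period exactly $s$ (by irreducibility of $r/s$), and its transverse trajectory is by construction $\mathcal F$-equivalent to the natural lift of $\Gamma^r$.

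To build $\mu_+$ I would exploit linear admissibility directly: for each $k$, pick $z_k\in\mathrm{dom}(I)$ so that $I_{\mathcal F}^{s_k}(z_k)$ contains a subpath equivalent to $\gamma|_{[0,r_k]}$. Projected to the annular cover, the orbit segment of $\widehat z_k$ winds around $\widehat\Gamma$ at least $r_k$ times in $s_k$ iterates. Passing to a weak-$*$ accumulation point of the Birkhoff averages $\frac{1}{s_k}\sum_{j=0}^{s_k-1}\delta_{\widehat f^j(\widehat z_k)}$ yields $\mu_+$, and the rotation calculation gives $\mathrm{rot}(\mu_+)\ge\limsup r_k/s_k\ge 1/q$, provided mass does not escape in the limit.

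The three cases (i), (ii), (iii) enter precisely at the step of securing compactness of the support, producing $\mu_-$, and verifying that every essential simple loop in the working annulus meets its image. In case (i), the assumption that $\Gamma$ has a leaf on both its right and left allows an application of Lemma \ref{le: neighborhood of infinity} to confine the orbits used to define $\mu_+$ to a compact part of $\overline{\widehat U}$; the ends of $\widehat U$ correspond either to points of $\mathrm{sing}(I)$ or to components of $\mathrm{fix}(f)\cap\mathrm{dom}(I)$, providing an invariant measure $\mu_-$ of rotation $0$; and the hypothesis on $\widehat U$ is exactly the meeting-of-loops condition needed. In case (ii), the two admissible paths crossing $\Gamma$ transversally with opposite signs, combined via Proposition \ref{pr: fundamental} and Corollary \ref{co: induction transverse}, yield both admissible paths going across $\widehat\Gamma$ ``backwards'' (giving $\mu_-$ with $\mathrm{rot}(\mu_-)\le 0$) and the confinement/meeting-of-loops condition. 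In case (iii), the $\mathcal F$-transverse self-intersection of $\Gamma$ together with Proposition \ref{pr: self-intersection} plays the same role as the oppositely-oriented admissible path of case (ii): Proposition \ref{pr: self-intersection} allows to build admissible paths traversing $\widehat\Gamma$ ``the other way'' by shortcutting through the self-intersection.

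The main obstacle I foresee is the meeting-of-loops condition in cases (ii) and (iii). Intuitively, if an essential simple loop $C\subset\widehat U$ were disjoint from $\widehat f(C)$, then $\widehat f$ would push $C$ strictly to one side of itself, which is incompatible with the coexistence of admissible paths traversing $\widehat U$ in both rotational senses. Making this rigorous requires combining the fundamental proposition with a careful analysis of how $\widehat{\mathcal F}$-transverse intersections interact with $C$; once this is established, the argument will close uniformly across the three cases. A secondary delicate point, in all three cases, is avoiding loss of mass under the weak-$*$ limits in the open annulus, which is where Lemma \ref{le: neighborhood of infinity} is indispensable.
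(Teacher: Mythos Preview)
Your overall architecture is right and matches the paper: reduce to the annular cover $\widehat{\mathrm{dom}}(I)$, use linear admissibility to manufacture an invariant measure of rotation $\ge 1/q$ via Birkhoff averages, find something with rotation $0$, and invoke Lemma~\ref{le: intersection}. The paper also notes, as you essentially do, that (iii) $\Rightarrow$ (ii) $\Rightarrow$ (i) is immediate (a self-intersection gives both signs; paths crossing $\widehat\Gamma$ both ways force every essential loop in $\widehat U$ to meet its image), so only (i) needs real work. Your separate treatment of (ii) and (iii) is therefore unnecessary.

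There are, however, two genuine gaps in your sketch of case (i). First, your source for $\mu_-$ is wrong: the ends of $\widehat U$ (or of $\widehat{\mathrm{dom}}(I)$) do \emph{not} correspond to points of $\mathrm{sing}(I)$ or to fixed components of $f$; $\widehat{\mathrm{dom}}(I)=\widetilde{\mathrm{dom}}(I)/T$ is an abstract open annulus whose ends carry no $f$-dynamics. The paper resolves both this and the mass-escape issue simultaneously by passing to the B\'eguin--Crovisier--Le~Roux compactification $\widehat{\mathrm{dom}}(I)_{\mathrm{ann}}$, which blows up each end to a circle on which $\widehat f$ extends with rotation number $0$ for the lift $\widetilde f$; these boundary circles supply the rotation-$0$ measure, and the compactness of $\widehat{\mathrm{dom}}(I)_{\mathrm{ann}}$ makes the weak-$*$ limit automatic (Lemma~\ref{le: neighborhood of infinity} is not used here). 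Second, the hypothesis in (i) only forbids free essential loops \emph{inside $\widehat U$}, whereas Lemma~\ref{le: intersection} needs this on the whole working annulus. Extending it is nontrivial: the paper first proves (Lemma~\ref{le: two cases}) that for leaves $\widehat\phi\subset\widehat U$ the set $\omega(\widehat\phi)$ reduces to the end $S$---ruling out both the closed-leaf case and a delicate case where $\omega(\widehat\phi)$ consists of leaves homoclinic to $S$, by in each case building a free essential loop in $\widehat U$---and then (Lemma~\ref{le: free loop}) uses this to push any hypothetical free loop in $\widehat{\mathrm{dom}}(I)$ into $\widehat U$ via a Ker\'ekj\'art\'o-type intersection with the homoclinic leaves bounding $\widehat U$. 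You have not accounted for either of these steps.
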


\bigskip

\noindent{\it Proof}.  The condition {\bf iii)} is stronger than {\bf ii)} because $\Gamma$ intersects itself $\mathcal{F}$-transversally positively and negatively. The condition {\bf ii)} tells us that there is an admissible transverse path that intersects $\widehat\Gamma$ $\widehat{\mathcal{F}}$-transversally and positively, and an admissible transverse path that intersects $\widehat\Gamma$ $\widehat{\mathcal{F}}$-transversally and negatively. But this implies that {\bf i)} is satisfied because there exists orbits that cross $\widehat U$ in both ways. It remains to prove the result under the assumption   {\bf i)}.

We do not lose any generality by supposing that {$\mathrm{dom}(I)$} is connected, which means that $\widetilde{\mathrm{dom}}(I)$ is a plane and $\widehat{\mathrm{dom}}(I)$ an annulus. By assumption, we know that there exists a leaf on the left of $\widetilde\gamma$ and a leaf on its right. One can compactify $\widehat{\mathrm{dom}}(I)$ with a point $S$ at the end on the right of $\widehat \Gamma$ and a point $N$ at the end on the left of $\widehat \Gamma$. We will denote by $\widehat{\mathrm{dom}}(I)_{\mathrm {sph}}$ this compactification and still write $\widehat f$ for the extension that fixes the added points. The
$\omega$-limit set $\omega(\widehat \phi)$ in $\widehat{\mathrm{dom}}(I)_{\mathrm {sph}}$ of a leaf $\widehat \phi\subset \widehat U$ does not depend on $\widehat \phi$.

 \begin{figure}[t!]
\hfill
\includegraphics[height=45mm]{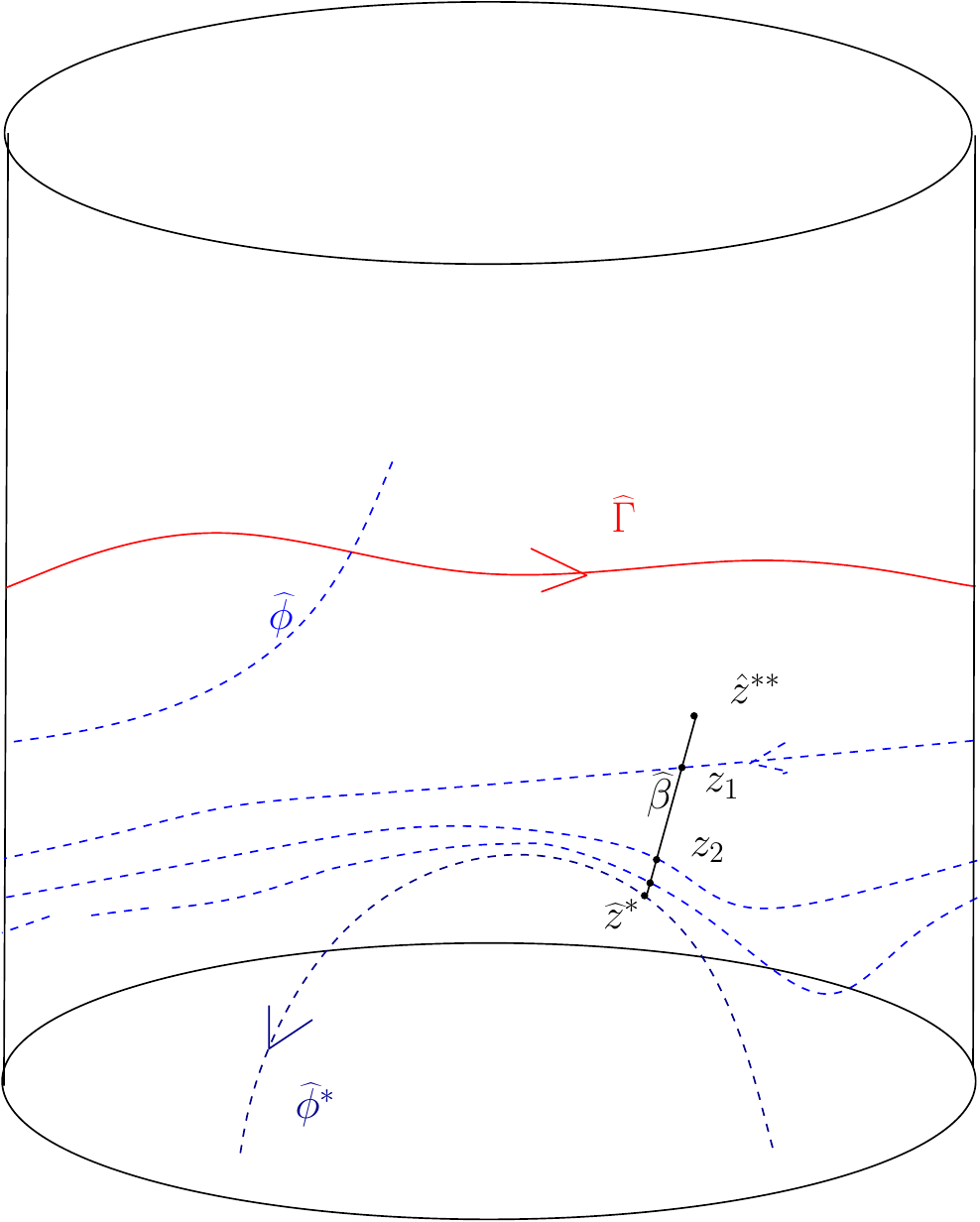}
\hfill
\includegraphics[height=55mm]{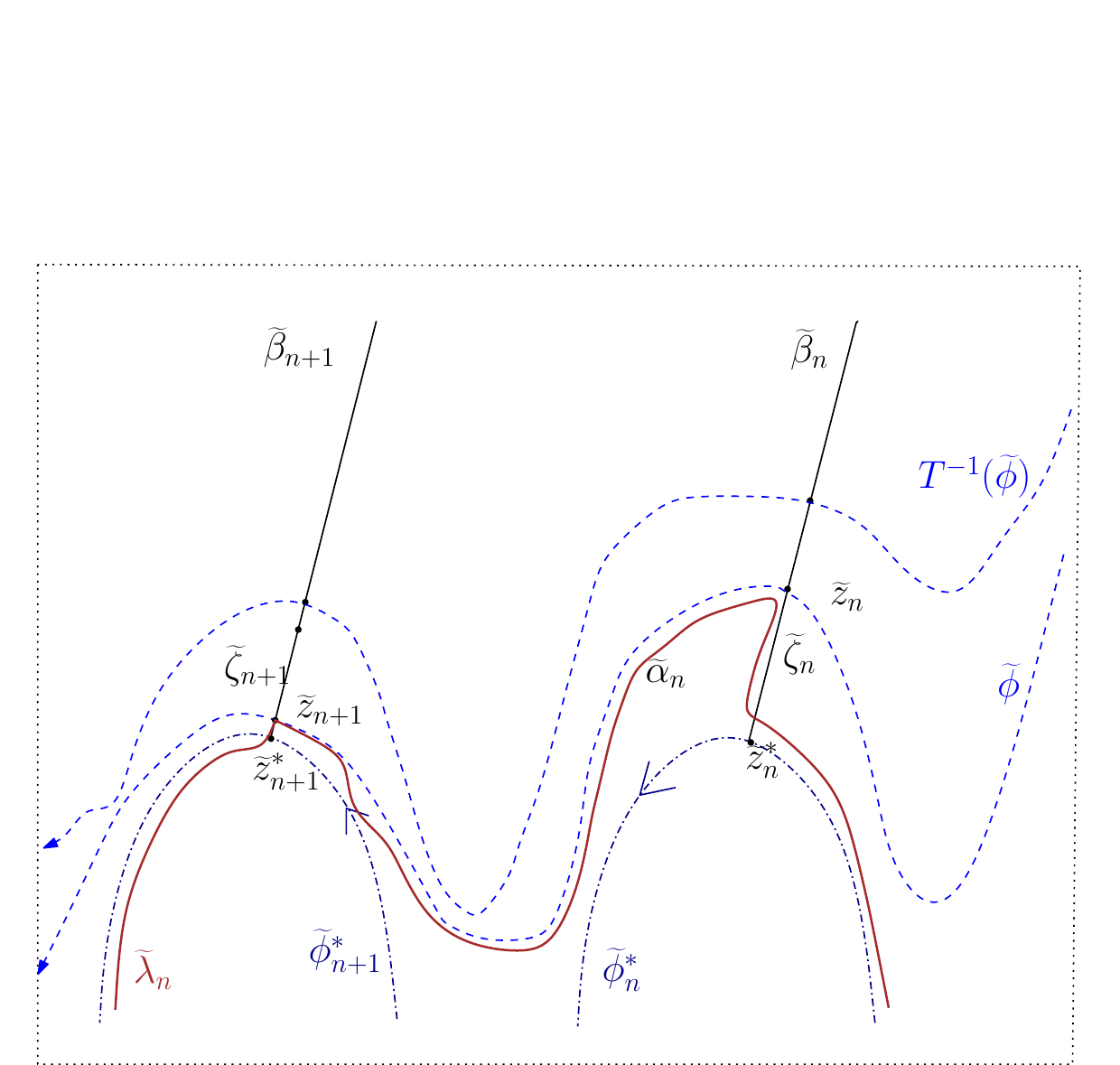}
\hfill{}
\caption{\small Construction of $\widetilde \gamma_n$ \textcolor{black}{ in Lemma \ref{le: two cases}.}}
\label{figure4}
\end{figure}

\bigskip

\begin{lemma} \label{le: two cases} The set $\omega(\widehat \phi)$ is reduced to $S$.
\end{lemma}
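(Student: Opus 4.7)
The plan is to argue by contradiction: assume $\omega(\widehat \phi)\ne\{S\}$; I will construct a simple loop in $\widehat U$ freely homotopic to $\widehat \Gamma$ and disjoint from its $\widehat f$-image, contradicting hypothesis {\bf i)}.

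First I would establish that every leaf $\widehat \phi\subset\widehat U$ is a line (not closed) and meets $\widehat \Gamma$ in exactly one point. A closed essential leaf in $\widehat U$ is itself a simple essential loop disjoint from its $\widehat f$-image (by the Brouwer property $\widetilde f(\widetilde \phi)\subset L(\widetilde \phi)$ on the cover), directly contradicting {\bf i)}; a closed non-essential leaf would bound a disk and lift to a circle in the plane $\widetilde U$, contradicting that leaves of $\widetilde{\mathcal F}$ are lines by Haefliger--Reeb. Uniqueness of the crossing with $\widehat \Gamma$ comes from lifting: $\widetilde \gamma$ and $\widetilde \phi$ are lines in the plane all of whose mutual intersections are positive. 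A local chart at $z_0=\widehat \phi\cap\widehat \Gamma$ then forces $\widehat \phi^+_{z_0}$ to enter the right sub-annulus $\widehat V_R=\pi(R(\widetilde \gamma))$ of $\widehat{\mathrm{dom}}(I)\setminus\widehat \Gamma$ and never return, so $\omega(\widehat \phi)\subset\overline{\widehat V_R}\subset\widehat{\mathrm{dom}}(I)_{\mathrm{sph}}\setminus\{N\}$.

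Assuming $\omega(\widehat \phi)\ne\{S\}$, pick $p\in\omega(\widehat \phi)\cap\widehat{\mathrm{dom}}(I)$, on a leaf $\widehat \phi_p\subset\widehat V_R$. Choose a trivialization neighborhood $W$ of $p$ and a short transverse arc $\sigma\subset W$ through $p$; then $\widehat \phi^+_{z_0}$ meets $\sigma$ at a sequence $q_n=\widehat \phi(t_n)\to p$. Writing $\widetilde \phi_0(t_n)$ for the corresponding points along $\widetilde \phi_0^+_{\widetilde z_0}$ and $\widetilde p$ for a fixed lift of $p$, there are integers $k_n$ with $T^{k_n}(\widetilde \phi_0(t_n))\to\widetilde p$; since $\widetilde \phi_0$ is a proper line in the plane, $|k_n|\to\infty$. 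Form the closed curve $C_n$ by concatenating the arc of $\widehat \phi$ from $q_n$ to $q_{n+1}$ with the sub-arc of $\sigma$ from $q_{n+1}$ back to $q_n$. The wandering property of $\widetilde \phi_0$ (it crosses each single lift of $\sigma$ at most once) forces $k_n\ne k_{n+1}$, and then $C_n$ is simple and contained in $\widehat U$. Tracking its lift shows that $C_n$ represents $(k_n-k_{n+1})[\widehat \Gamma]\in\pi_1(\widehat{\mathrm{dom}}(I))$; since a simple loop in an annulus has class $0$ or $\pm[\widehat \Gamma]$, we get $|k_{n+1}-k_n|=1$ and $C_n$ is freely homotopic to $\widehat \Gamma^{\pm 1}$.

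The final step is to show that $C_n$, or a small perturbation of it, is disjoint from its $\widehat f$-image. The leaf portion of $C_n$ lies on $\widehat \phi$, along which $\widehat f$ pushes strictly into the $L$-side of each crossed leaf by the Brouwer property; the transverse portion on $\sigma$ can be made arbitrarily short by enlarging $n$. I would then perturb $C_n$ along the foliation into a positively transverse simple loop $C_n'\subset\widehat U$ still homotopic to $\widehat \Gamma^{\pm 1}$, and argue that every transverse trajectory $\widehat I_{\widehat{\mathcal F}}(\widehat z)$ for $\widehat z\in C_n'$ ends in $L(\phi_{\widehat z})\setminus C_n'$, yielding $\widehat f(C_n')\cap C_n'=\emptyset$, in contradiction with {\bf i)}. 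The main obstacle is precisely this last disjointness: a generic positively transverse simple loop need not be disjoint from its $\widehat f$-image, so the construction of $C_n'$ must carefully exploit closeness to the recurrent leaf $\widehat \phi_p\subset\omega(\widehat \phi)$ together with the nested Brouwer-line structure of the translates $\widetilde \phi_k=T^k(\widetilde \phi_0)$ in the plane.
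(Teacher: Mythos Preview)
Your overall strategy matches the paper's: argue by contradiction and build an essential simple loop in $\widehat U$ of the form (leaf arc of $\widehat\phi$) $\cup$ (short transverse arc near the limit set) that is disjoint from its $\widehat f$-image. Your construction of $C_n$ is essentially the paper's loop $\widehat\Gamma_n=\widehat\alpha_n\widehat\xi_n$.

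The genuine gap is exactly where you flag it: your plan to perturb $C_n$ into a \emph{positively transverse} simple loop $C_n'$ and then deduce $\widehat f(C_n')\cap C_n'=\emptyset$ from ``$\widehat f(\widehat z)\in L(\phi_{\widehat z})$'' does not work. For a transverse loop, $C_n'$ itself enters $L(\phi_{\widehat z})$ for every $\widehat z\in C_n'$, so there is no reason $\widehat f(\widehat z)\notin C_n'$; indeed $\widehat\Gamma$ is such a loop and is typically \emph{not} free. The paper does \emph{not} perturb $C_n$ into a transverse loop. It keeps $\widehat\Gamma_n$ as is and proves freeness directly in the cover by assembling a Brouwer line
\[
\widetilde\lambda_n=\widetilde\phi^*_n\vert_{(-\infty,0]}\;\widetilde\zeta_n^{-1}\;\widetilde\alpha_n\;\widetilde\zeta_{n+1}\;\widetilde\phi^*_{n+1}\vert_{[0,+\infty)},
\]
where $\widetilde\phi^*_n$ are lifts of the \emph{limit} leaf $\widehat\phi^*\ni p$, and $\widetilde\zeta_n$ are the tail segments of the lifts of the transverse arc $\widehat\beta$ running from $\widetilde z_n$ to the endpoint on $\widetilde\phi^*_n$. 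Two ingredients you are missing make this Brouwer: (a) the transverse arc is chosen short enough that $\widehat f(\widehat\beta)$ lies entirely on the left of $\widehat\phi^*$, so $\widetilde f(\widetilde\zeta_n)\subset L(\widetilde\phi^*_n)\subset L(\widetilde\lambda_n)$; and (b) for $n$ large, $\widehat\alpha_n$ lies in a small neighborhood $W$ of $\omega(\widehat\phi)$ with the property that $\widehat f(W\setminus V)\cap\overline{\widehat U}=\emptyset$ for a prescribed neighborhood $V$ of $S$, which forces $\widetilde f(\widetilde\alpha_n)$ to avoid the remaining pieces of $\widetilde\lambda_n$. Once $\widetilde\lambda_n$ is a Brouwer line, every $T$-translate of the lift of $\widehat\Gamma_n$ sits on one side of $\widetilde\lambda_n$ while its $\widetilde f$-image sits on the other, giving $\widehat f(\widehat\Gamma_n)\cap\widehat\Gamma_n=\emptyset$.

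A secondary point: the paper first invokes a Poincar\'e--Bendixson dichotomy, distinguishing the easy case where $\omega(\widehat\phi)$ is a closed leaf bounding $\widehat U$ (then that leaf, pushed slightly into $\widehat U$, is already a free essential loop) from the case where $\omega(\widehat\phi)$ consists of $S$ together with leaves homoclinic to $S$. Your write-up skips this, which is harmless for the second case but leaves the first case implicit.
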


\begin{proof} If not,  $\omega(\widehat \phi)$ is either a closed leaf that bounds $\widehat U$ or the union of $S$ and of leaves homoclinic to $S$ (which means that the two limit sets are reduced to $S$). In the first case, the closed leaf that bounds $\widehat U$ is homotopic to  $\widehat \Gamma$ and disjoint from its image by $\widehat f$. A simple loop included in $\widehat U$ sufficiently close will satisfy the same properties. This contradicts the assumptions of the proposition.
 
Let us study now the second case. Choose a point $\widehat z^*\in \omega(\widehat \phi)\setminus\{S\}$ and denote by $\widehat \phi^*$ the leaf that contains $\widehat z^*$. One can suppose that $\widehat \Gamma$ is on the right of $\widehat \phi^*$. This is independent of the choice of $\widehat z^*$ and in that case, the leaf  $\widehat \phi$ is on the right of $\widehat \phi^*$. \textcolor{black}{ Let us present two arguments to deal with this situation. The first argument is the following: As $\widehat{\mathrm{dom}}(I)$ has only finitely many ends and $\widehat{\mathcal{F}}$ is transversal to $\widehat I$, one could adapt the proof of Lemma 3.3 of \cite{Ler} to show that, given any point $\widehat y$ in $\widehat{\mathrm{dom}}(I)$, there exists a neighborhood $V_{\widehat y}$ of $\widehat y$ such that, if $\widehat{\mathcal{F}}'$ is an oriented foliation of $\widehat{\mathrm{dom}}(I)$ that is equal to $\widehat{\mathcal{F}}$ in the complement of $V_{\widehat y}$, then $\widehat{\mathcal{F}}'$ is also transversal to $\widehat I$. Let $V'$ be a neighborhood of $\widehat z^*$ as given by this result, and we further assume that there exists a homeomorphism $h:V'\to [-1,1]\times[-1,1]$ sending the leafs of $\widehat{\mathcal{F}}\cap V'$ onto horizontal line segments oriented from right to left, and such that $h(\widehat z^*)=(0,0)$. Note that $h^{-1}([-1, 1]\times (0,1])\subset \widehat U$, and since $\widehat z^*\in \omega(\widehat \phi)$, there exist $t_1<t_2$ such that $\widehat \phi(t_1), \widehat \phi(t_2)$ both belong to $V'$, and such that $\widehat \phi(s)$ is disjoint from $V'$ if $t_1< s< t_2$. This implies that there exists $x_1, x_2$ in $(0,1)$ such that $h( \widehat \phi(t_1))=(-1, x_1)$ and $h( \widehat \phi(t_2))=(1, x_2)$.  One can find then an oriented foliation $\widehat{\mathcal{F}}'$ of  $\widehat{\mathrm{dom}(I)}$ that agrees with $\widehat{\mathcal{F}}$ in the complement of $h^{-1}([-1, 1]\times (0,1])$, and such that, if $\sigma$ is the line segment in $h(V')$ connecting $(-1, x_1)$ to $(1, x_2)$, then $h^{-1}(\sigma)$ is contained in a single leaf of the foliation. In particular, the leaf $\widehat \phi'$ of $\widehat{\mathcal{F}}'$ that contains $h^{-1}(\sigma)$ is a closed leaf contained in $\widehat U$. Since $\widehat{\mathcal{F}}'$ is also transversal to the isotopy $\widehat I$,  one obtains again a contradiction as in the first case.}

\textcolor{black}{ Since Lemma 3.3 of \cite{Ler} is not stated in the form we used above, let us present a complete argument for the second case:  We}
 have the following result; for every neighborhood $V$ of $S$, there exists a neighborhood $W$ of  $\omega(\widehat \phi)$ in $\widehat{\mathrm{dom}}(I)_{\mathrm {sph}}$ such that $\widehat f(W\setminus V)\cap \overline{\widehat U}=\emptyset$.  Let us consider a simple path $\widehat\beta$ joining a point $\widehat z^{**}\in \widehat U$ to $\widehat z^*$ positively transverse to $\widehat {\mathcal F}$, included (but the end $\widehat z^*$) in $\widehat U$ and sufficiently small that its image by $\widehat f$ will be included in the connected component of $\widehat{\mathrm{dom}}(I)\setminus \widehat \phi^*$ that is on the left of $\widehat \phi^*$.  The leaf $\widehat\phi$ meets $\widehat\beta$ in a``monotone'' sequence $(\widehat z_n)_{n\geq 0}$, where $\lim_{n\to+\infty} \widehat z_n=\widehat z^{*}.$ More precisely, for every real parameterization of $\widehat \phi$, one has $\widehat z_n=\widehat \phi (t_n)$, where $t_{n+1}>t_n$, and $\lim_{n\to+\infty} t_n=+\infty$. Moreover, $\widehat z_{n+1}$ is ``closer'' to $\widehat z^{*}$ than $\widehat z_{n}$ on $\widehat\beta$.  We will prove that if $n$ is large enough, the simple loop $\widehat \Gamma_n$ obtained  by concatenating the segment $\widehat\alpha_n\subset\widehat\phi$ joining $\widehat z_{n}$ to $\widehat z_{n+1}$ and the subpath $\widehat\xi_n$ of $\widehat \beta^{-1}$ joining $\widehat z_{n+1}$ to $\widehat z_{n}$ is disjoint from its image by $\widehat f$\textcolor{black}{, see Figure \ref{figure4} for the following construction.} 
 We will begin by extending $\widehat\beta$ in a simple proper path (with the same name) contained in $\widehat{\mathrm{dom}}(I)\setminus \omega(\widehat \phi)$ ``joining'' the end $N$ to $\widehat z^*$. One can find a neighborhood $W'$ of  $\omega(\widehat \phi)$ in $\widehat{\mathrm{dom}}(I)_{\mathrm {sph}}$ that intersects $\widehat\beta$ only between $\widehat z_0$ and $\widehat z^*$. If $n$ is large enough, $\widehat\alpha_n$ will be contained in $W'$ and so will intersect $\widehat \beta$ only at the points $\widehat z_{n}$ and $\widehat z_{n+1}$. We will suppose $n$ large enough to satisfy this property. Fix a lift $\widetilde z_0$ of $\widehat z_0$, write $\widetilde\beta_0$ for the lift of $\widehat\beta$ that contains $\widetilde z_0$, write $\widetilde z^*_0$ for its end and $\widetilde\phi^*_0$ for the lift of $\widehat\phi^*$ that contains $\widetilde z^*_0$. For every $n\geq 0$ define
$$\widetilde z^*_n=T^{-n}(\widetilde z^*_0), \enskip \widetilde\beta_n=T^{-n}(\widetilde\beta_0), \enskip \widetilde\phi^*_n=T^{-n}(\widetilde\phi^*_0).$$
Write $\widetilde z_n$ for the lift of $\widehat z_n$ that lies on $\widetilde \beta_n$, write $\widetilde\zeta_n$ for the segment of $\widetilde \beta_n$ that joins $\widetilde z_n$ to $\widetilde z^*_n$ and $\widetilde \alpha_n$ for the lift of $\widehat \alpha_n$ that joins $\widetilde z_n$ to  $\widetilde z_{n+1}$. Choose a parameterization $\widehat\phi^*:\R\to\widehat {\mathrm{dom}}(I)$ of $\widehat \phi^*$ sending $0$ onto $\widetilde z^*$ and lift it to parameterize the leaves $ \widetilde\phi^*_n$. We will prove that the line
$$\widetilde\lambda_n=\widetilde \phi^*_n\vert_{(-\infty,0]}\,\widetilde\zeta_n^{-1}\,\widetilde \alpha_n\,\widetilde \zeta_{n+1}\,\widetilde \phi^*_{n+1}\vert_{[0,+\infty]}$$ is a Brouwer line if $n$ is large enough.  Observe first that one has
$$L(\widetilde \phi^*_n)\cup L(\widetilde \phi^*_{n+1})\subset L(\widetilde\lambda_n)\subset L\left(\widetilde \phi^*_n\vert_{(-\infty,0]}\,\widetilde\beta_n^{-1}\right)\cap L\left(\widetilde\beta_{n+1}\, \widetilde\phi^*_{n+1}\vert_{[0,+\infty)}\right), $$ then note that if $K$ is large enough one has
$$\widetilde f^{-1}\left(\widetilde \phi^*_n\vert_{(-\infty,-K]}\right)\subset  R\left(\widetilde \phi^*_n\vert_{(-\infty,0]}\,\widetilde\beta_n^{-1}\right), \enskip \widetilde f^{-1}\left(\widetilde\phi^*_{n+1}\vert_{[K,+\infty]}\right)\subset R\left(\widetilde\beta_{n+1}\, \widetilde\phi^*_{n+1}\vert_{[0,+\infty)}\right).$$
Let $V$ be a neighborhood of $S$ such that $\widehat f(V)\cap \left(\widehat \phi^*([-K,K])\cup\widehat \beta\right)=\emptyset$ and $W$ a neighborhood of $\omega (\widehat \phi)$ such that $f(W\setminus V) \cap \overline {\widehat U}=\emptyset$. If $n$ is large enough, then $\widehat \Gamma_n$ is included in $W$. Let us prove that $\widetilde \lambda_n$ is a Brouwer line of $\widetilde f$ and then that $\widehat\Gamma_n$ is disjoint from its image by $\widehat f$. The leaves $\widetilde \phi^*_n$ and $\widetilde \phi^*_{n+1}$ being Brouwer lines of $\widetilde f$, one has
$$\widetilde f(\widetilde \phi^*_n\vert_{(-\infty,0]})\subset L(\widetilde \phi^*_n) \subset L(\widetilde\lambda_n),\enskip
\widetilde f(\widetilde \phi^*_{n+1}\vert_{[0,+\infty)})\subset L(\widetilde \phi^*_{n+1}) \subset L(\widetilde\lambda_n).$$
By hypothesis on $\widehat \beta$, one knows that
$$\widetilde f(\widetilde \zeta_n)\subset L(\widetilde \phi^*_n) \subset L(\widetilde\lambda_n), \enskip
\widetilde f(\widetilde \zeta_{n+1})\subset L(\widetilde \phi^*_{n+1}) \subset L(\widetilde\lambda_n).$$

The path $\widetilde \alpha_n$ being included in a leaf of $\widetilde {\mathcal F}$ and each leaf being a Brouwer line of $\widetilde f$, one knows that 
$$\widetilde f(\widetilde \alpha_n)\cap \widetilde \alpha_n=\emptyset.$$ To prove that $\widetilde \lambda_n$ is a Brouwer line, it remains to prove that $\widetilde f(\widetilde \alpha_n)$ does not meet any of the paths
 $$\widetilde \phi^*_n\vert_{(-\infty,0]}, \enskip \widetilde \phi^*_{n+1}\vert_{[0, +\infty)},\enskip \widetilde \zeta_n,\enskip\widetilde \zeta_{n+1}.$$ By hypothesis, one knows that $\widehat f(\widehat\alpha_n)$ does not meet neither $\widehat \phi^*([-K,K])$, nor $\widehat\zeta_0$. Moreover, one knows that $\widetilde \alpha_n$ does not meet neither $\widetilde f^{-1}(\widetilde \phi^*_n\vert_{(-\infty,K]})$, nor $\widetilde f^{-1}(\widetilde \phi^*_{n+1}\vert_{[K, +\infty)})$. So, we are done.
 
To prove that $\widehat\Gamma_n$ is disjoint from its image by $\widehat f$, one must prove that  $\widehat\Gamma_n$ is lifted to a path that is disjoint from its image by $\widetilde f$. This path will be included in the union of the images by the iterates of $T$ of the path $\widetilde\zeta_n^{-1}\,\widetilde \alpha_n\,\widetilde \zeta_{n+1}$.  So it is sufficient to prove that the union of these translates is disjoint from its image by $\widetilde f$. Observe now that every path $T^k( \widetilde\zeta_n^{-1}\,\widetilde \alpha_n\,\widetilde \zeta_{n+1})$, $k\in\Z$, is disjoint from $L(\widetilde\lambda_n)$, which implies that it is disjoint from $\widetilde f( \widetilde\zeta_n^{-1}\,\widetilde \alpha_n\,\widetilde \zeta_{n+1})$.
\end{proof}

\begin{lemma}\label{le: free loop}
There is no simple loop included in $\widehat{\mathrm{dom}}(I)$ homotopic to  $\widehat \Gamma$ that is disjoint from its image by $\widehat f$.
\end{lemma}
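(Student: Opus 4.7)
The plan is to argue by contradiction: suppose there exists a simple loop $\widehat{C} \subset \widehat{\mathrm{dom}}(I)$, homotopic to $\widehat{\Gamma}$, with $\widehat{C} \cap \widehat{f}(\widehat{C}) = \emptyset$. My goal will be to produce a simple loop $\widehat{C}'$ in $\widehat{U}$ with the same properties, directly contradicting hypothesis \textbf{i)} of Proposition \ref{pr: realization}.

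First, I would work in the compactified sphere $\widehat{\mathrm{dom}}(I)_{\mathrm{sph}}$: the loop $\widehat{C}$ separates it into two open disks $D_N \ni N$ and $D_S \ni S$. Since $\widehat{f}$ fixes both ends and $\widehat{f}(\widehat{C}) \cap \widehat{C} = \emptyset$, the image $\widehat{f}(\widehat{C})$ lies entirely in one of these disks. After possibly relabelling, assume $\widehat{f}(\widehat{C}) \subset D_S$, so $\widehat{f}(D_S) \subsetneq D_S$, and the iterates $\widehat{f}^n(\widehat{C})$ for $n \ge 0$ are pairwise disjoint simple loops nesting towards the $\widehat{f}$-invariant compactum $D_S^\infty = \bigcap_{n \ge 0} \widehat{f}^n(\overline{D_S})$.

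Second, I would invoke Lemma \ref{le: two cases} to note that every leaf $\widehat{\phi} \subset \widehat{U}$ satisfies $\omega(\widehat{\phi}) = \{S\}$. Combined with the trapping $\widehat{f}(D_S) \subsetneq D_S$, this says that every leaf of $\widehat{U}$ enters $D_S$ and remains there for all sufficiently large parameter values, and in particular that points of $\widehat{U}$ accumulate on $S$. I would then mimic the construction from the proof of Lemma \ref{le: two cases}: pick a leaf $\widehat{\phi} \subset \widehat{U}$ passing deeply into $\widehat{f}^N(D_S)$ for $N$ large, and choose a short arc $\widehat{\beta}$ positively transverse to $\widehat{\mathcal{F}}$, starting at some $\widehat{z}^{**} \in \widehat{U}$ deep inside $\widehat{f}^N(D_S)$, with $\widehat{f}(\widehat{\beta})$ contained in $\widehat{f}^{N-1}(D_S) \setminus \overline{\widehat{f}^{N+1}(D_S)}$, hence disjoint from $\widehat{\beta}$ and separated from it by the nested iterates of $\widehat{C}$. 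Since $\omega(\widehat{\phi}) = \{S\}$, the leaf $\widehat{\phi}$ meets $\widehat{\beta}$ in a monotone sequence $(\widehat{z}_n)_{n \ge 0}$ converging to $\widehat{z}^{**}$. For $n$ large enough, the simple loop $\widehat{\Gamma}_n$ obtained by concatenating the arc $\widehat{\alpha}_n \subset \widehat{\phi}$ from $\widehat{z}_n$ to $\widehat{z}_{n+1}$ with the subarc of $\widehat{\beta}^{-1}$ from $\widehat{z}_{n+1}$ to $\widehat{z}_n$ lies in $\widehat{U}$ and is homotopic to $\widehat{\Gamma}$. A Brouwer-line verification on a lift, identical in structure to that at the end of the proof of Lemma \ref{le: two cases}, will show that $\widehat{\Gamma}_n$ is disjoint from its image by $\widehat{f}$, giving the contradiction.

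The main obstacle I expect is the construction of the transverse arc $\widehat{\beta}$ with the correct trapping properties. In Lemma \ref{le: two cases}, the arc $\widehat{\beta}$ was built using a non-singular leaf $\widehat{\phi}^* \in \omega(\widehat{\phi}) \setminus \{S\}$, which provided a natural Brouwer line separating $\widehat{\beta}$ from $\widehat{f}(\widehat{\beta})$. In the present setting no such leaf is available, and the role of $\widehat{\phi}^*$ must instead be played by the nested iterates $\widehat{f}^n(\widehat{C})$, which bound thin annular collars around $S$ mapped strictly into themselves by $\widehat{f}$. Showing that a $\widehat{\beta}$ placed inside such a collar satisfies all the disjointness requirements needed to repeat verbatim the Brouwer-line argument of Lemma \ref{le: two cases} \emph{while staying within $\widehat{U}$} is the delicate technical point, and is where the hypothesis that $\widehat{\Gamma}$ has both a leaf on its right and a leaf on its left from \textbf{i)} is needed, via Lemma \ref{le: two cases}, to ensure leaves of $\widehat{U}$ actually reach into every $\widehat{f}^n(D_S)$.
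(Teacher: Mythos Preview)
Your plan has a genuine gap at the heart of the construction, not merely a ``delicate technical point''. In Lemma~\ref{le: two cases} the monotone sequence $(\widehat z_n)$ exists because the leaf $\widehat\phi$ spirals onto the boundary leaf $\widehat\phi^*$: this is exactly what $\widehat z^*\in\omega(\widehat\phi)\setminus\{S\}$ says. In the present situation you have already established $\omega(\widehat\phi)=\{S\}$, so the leaf accumulates only at the end $S$ and never at any finite point $\widehat z^{**}\in\widehat U$. Since the foliation restricted to $\widehat U$ is a product (each leaf of $\widehat U$ is met by $\widehat\Gamma$ exactly once), a short transverse arc $\widehat\beta\subset\widehat U$ is crossed by $\widehat\phi$ at most once, so no infinite monotone sequence can arise and the loop $\widehat\Gamma_n$ is never produced. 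Your displacement condition is also inconsistent: if $\widehat\beta$ is short and lies deep in $\widehat f^N(D_S)$ with $\widehat f(D_S)\subsetneq D_S$, then $\widehat f(\widehat\beta)\subset\widehat f^{N+1}(D_S)$, so it cannot lie in the complement of $\overline{\widehat f^{N+1}(D_S)}$ as you require.

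The paper's argument does not revisit the construction of Lemma~\ref{le: two cases} at all; it trims the given free loop using the frontier leaves of $\widehat U$. Assuming $\widehat f(\widehat\Gamma_0)$ lies on the $N$-side, only finitely many leaves $\widehat\phi_1,\dots,\widehat\phi_p$ homoclinic to $S$ on $\partial\widehat U$ meet $\widehat\Gamma_0$. A short Brouwer argument on the universal cover (the fixed-point-free lift $\widetilde f$ admits no nonempty compact forward-invariant set) forces $\widehat\Gamma$ to lie on the left of each $\widehat\phi_i$, so each Jordan curve $\widehat\phi_i\cup\{S\}$ bounds a disk $\widehat L_i\ni N$. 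By Ker\'ekj\'art\'o's theorem, the component of $L(\widehat\Gamma_0)\cap\bigcap_i\widehat L_i$ containing $N$ is a Jordan domain whose boundary $\widehat\Gamma_1$ is a simple essential loop made of arcs of $\widehat\Gamma_0$ and arcs of the $\widehat\phi_i$; both kinds of arcs are displaced to the correct side by $\widehat f$, so $\widehat\Gamma_1$ is still free, and by construction it no longer enters $r(\widehat\Gamma)$. Repeating the procedure with the leaves homoclinic to $N$ yields a free essential loop $\widehat\Gamma_2\subset\overline{\widehat U}$, and a small perturbation places it inside $\widehat U$, contradicting hypothesis~\textbf{i)}.
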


\begin{proof}
 \enskip Suppose that there exists a simple loop $\widehat \Gamma_0$ included in $\widehat{\mathrm{dom}}(I)$ that is homotopic to  $\widehat \Gamma$ and disjoint from its image by $\widehat f$. One can suppose for instance that $\widehat f(\widehat \Gamma_0)$ is included in the component of $\widehat{\mathrm{dom}}(I)_{\mathrm {sph}}\setminus\widehat \Gamma_0$  that contains $N$, and orient $\widehat \Gamma_0$ in such a way that this component, denoted by $L(\widehat \Gamma_0)$, is on the left of $\widehat \Gamma_0$. The loop $\widehat\Gamma_0$ meets finitely many leaves  of $\widehat {\mathcal F}$ homoclinic to $S$ that are on the frontier of $\widehat U$. We denote them $\widehat\phi_i$, $1\leq i\leq p$. Let us prove first that $\widehat\Gamma$ is on the left side of each $\widehat\phi_i$. Indeed, if $\widehat\Gamma$ is on the right side of $\widehat\phi_i$, writing $\widetilde \Gamma_0$ for the lift of  $\widehat \Gamma_0$ and $\widetilde\phi_i$ for a lift of $\widehat\phi_i$, one finds  a non empty compact subset $\overline {L(\widetilde\Gamma_0))}\cap \overline {L(\widetilde\phi_i)}$ of $\widetilde{\mathrm{dom}}(I)$ that is forward invariant by $\widetilde f$. But such a set does not exist because $\widetilde f$ is fixed point free.

Each loop $\widehat\phi_i\cup \{S\}$ bounds a Jordan domain $\widehat L_i$ of $\widehat{\mathrm{dom}}(I)_{\mathrm {sph}}$ that contains $N$. By a classical result of Ker\'ekj\'art\'o \cite{Ke}, one knows that the connected component of $L(\widehat\Gamma_0)\cap (\bigcap_{1\leq i\leq p} \widehat L_i)$ that contains $N$ is a Jordan domain whose boundary $\widehat \Gamma_1$ is a simple loop homotopic to $\widehat \Gamma$ in $\widehat{\mathrm{dom}}(I)$, disjoint from its image by $\widehat f$,  and included in $\overline{\widehat U}\cup L(\Gamma)$ (see Figure \ref{figure_construction_widehatgamma1}). By intersecting $\widehat\Gamma_1$ with the leaves  of $\widehat {\mathcal F}$ homoclinic to $N$ that are on the frontier of $\widehat U$, one constructs similarly  a simple loop $\widehat \Gamma_2$ included in $\widehat{\mathrm{dom}}(I)\cap \overline{\widehat U}$ that is homotopic to  $\widehat \Gamma$ in $\widehat M$ and disjoint from its image by $\widehat f$. It remains to approximate $\widehat\Gamma_2$ by a simple loop included in $\widehat U$ \textcolor{black}{and we get a contradiction since we are assuming condition {\bf i)} in Proposition \ref{pr: realization}}.
\end{proof}

\begin{figure}[t!]

\includegraphics[height=45mm]{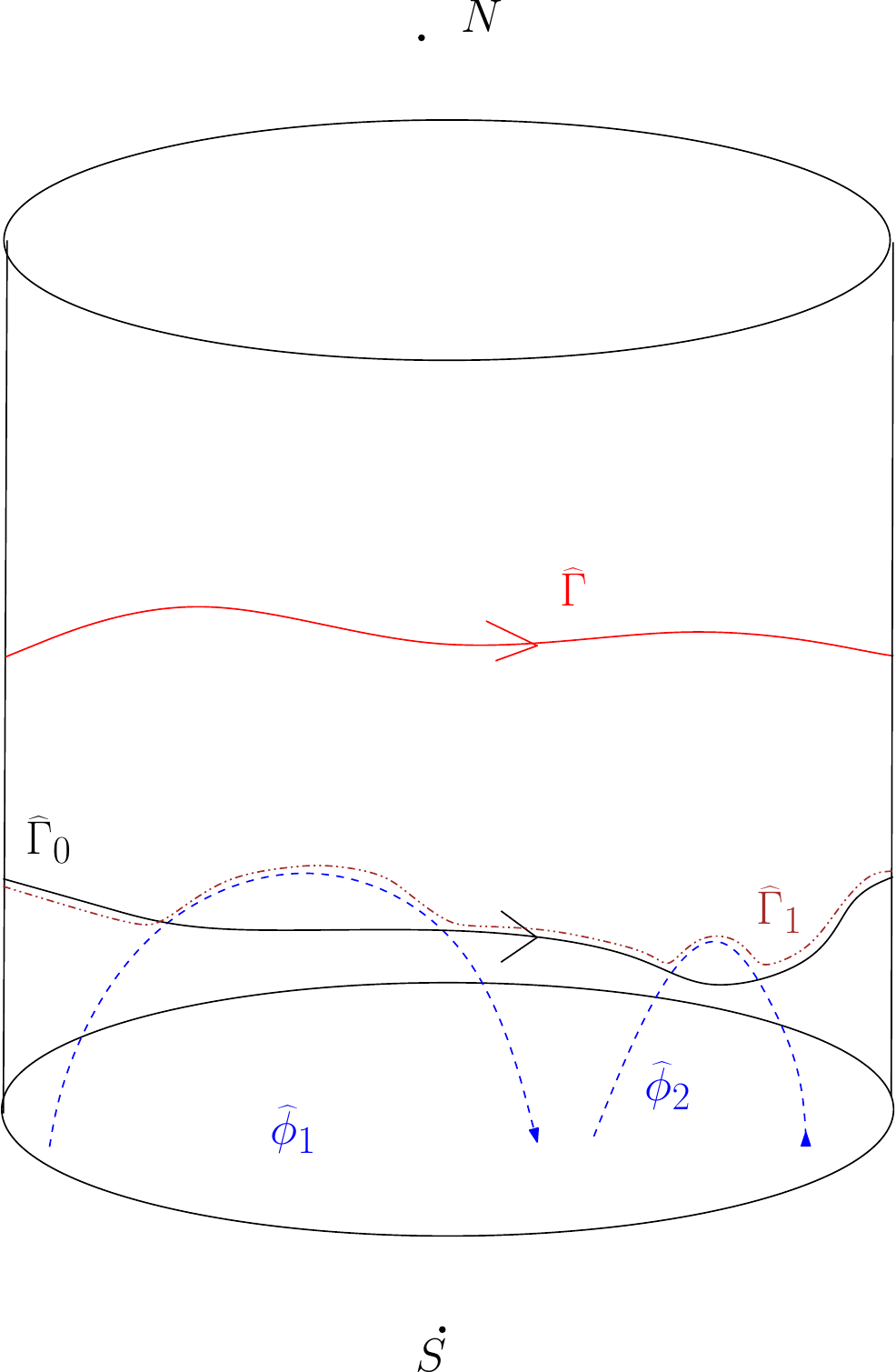}

\caption{\small Construction of $\widehat{\Gamma}_1$ \textcolor{black}{in Lemma \ref{le: free loop}}.}
\label{figure_construction_widehatgamma1}
\end{figure}

\bigskip

\noindent{\it End of the proof of Proposition \ref{pr: realization}}. \enskip One must prove that  for every rational number $r/s\in(0,1/q]$ written in an irreducible way, there exists a point $\widetilde z\in\widetilde{\mathrm{dom}}(I)$ such that $\widetilde f^s(\widetilde z)=T^r(\widetilde z)$. Indeed, the orbit of $\widetilde z$ should be contained in $\widetilde U$, the point $\widetilde z$ will project in ${\mathrm{dom}}(I)$ onto a periodic point $z$ of $f$ of period $s$, finally the loop $\Gamma^r$ will be  associated to $z$.

Write $\widetilde\phi_0$ for the leaf containing $\widetilde \gamma(0)$ and $\widehat\phi_0$ for its projection in $\widehat{\mathrm{dom}}(I)$. Using the analogous of Lemma \ref{le: two cases} for $\alpha$-limit sets, one can suppose that $\widehat\phi_0$ is a line. Let us fix a leaf $\widehat\phi_N$ homoclinic to $N$ and a leaf $\widehat\phi_S$ homoclinic to $S$\textcolor{black}{, which exists since we are assuming that the loop $\Gamma$ has a leaf on its right and a leaf on its left}. Each of them is disjoint from all its images by the (non trivial) iterates of $\widehat f$. By a result of B\'eguin, Crovisier, Le Roux (see \cite{Ler}, Proposition 2.3.3) one knows that there exists a compactification $\widehat{\mathrm{dom}}(I)_{\mathrm {ann}}$ obtained by blowing up the two ends $N$ and $S$ replaced by circles $\widehat\Sigma_N$ and $\widehat\Sigma_S$ such that $\widehat f$ extends to a homeomorphism $\widehat f_{\mathrm {ann}}$ that admits fixed points on each added circle  with a rotation number equal to zero for the lift that extends $\widetilde f$. Moreover, one can suppose that each set $\omega(\widehat\phi_N)$ and $\omega(\widehat\phi_S)$ is reduced to a unique point on $\widehat\Sigma_N$ and $\widehat\Sigma_S$ respectively. One can join a point of $\widehat\phi_N$ to a point of $\widehat\phi_S$ by a segment disjoint from $\widehat \phi_0$. Consequently, one can construct a line $\widehat\lambda$ in $\widehat{\mathrm{dom}}(I)$, disjoint from $\widehat\phi_0$, that admits a limit on each added circle.
Write $\widetilde{\mathrm{dom}}(I)_{\mathrm {ann}}=\widetilde{\mathrm{dom}}(I)\sqcup\widetilde\Sigma_N\sqcup\widetilde\Sigma_S$ for the universal covering space of $\widehat{\mathrm{dom}}(I)_{\mathrm {ann}}$ and keep the notation $T$ for the natural covering automorphism. Write $\widetilde \lambda$ for the  lift of $\widehat\lambda$ located between $\widetilde \phi_0$ and $T(\widetilde \phi_0)$. One can construct a continuous real function $\widetilde g$ on  $\widetilde{\mathrm{dom}}(I)_{\mathrm {ann}}$ that satisfies $\widetilde g(T(\widetilde z)) =\widetilde g(z)+1$ and vanishes on $\widetilde \lambda$. The function $\widetilde h=\widetilde g\circ \widetilde f- \widetilde g$ is invariant by $T$ and lifts a continuous function $\widehat h:\widehat{\mathrm{dom}}(I)_{\mathrm {ann}}\to\R$. If $\mu$ is a Borel probability measure invariant by $\widehat f$, the quantity $ \int_{\widehat{\mathrm{dom}}(I)_{\mathrm {ann}}} h\, d\mu$ is the rotation number of the measure $\mu$ for the lift $\widetilde f_{\mathrm {ann}}$. Let us consider now the real function $\widetilde g_0$ on $\widetilde{\mathrm{dom}}(I)_{\mathrm {ann}}$, that coincides with $\widetilde g$  on $\widetilde\Sigma_N\cup\widetilde\Sigma_S$, that satisfies $\widetilde g_0(T(\widetilde z)) =\widetilde g_0(\widetilde z)+1$ and that vanishes on  $\widetilde \phi_0$ and at every point located between $\widetilde \phi_0$ and $T(\widetilde \phi_0)$. Note that $\widetilde g-\widetilde g_0$ is uniformly bounded by a certain number $K$ and invariant by $T$.  
The property $(Q_q)$ satisfied by $\Gamma$ tells us that for every $k\geq 0$, one can find a point $\widetilde z_k\in R(\widetilde \phi_0)$ such that $\widetilde f^{s_k}(\widetilde z_k)\in L(T^{r_k}(\widetilde \phi_0))$. Write $\widehat z_k$ for its projection in $\widehat{\mathrm{dom}}(I)$. Observe that $$\widetilde g_0(f^{s_k}(\widetilde z_k))-\widetilde g_0(\widetilde z_k)\geq r_k.$$ By taking a subsequence, one can suppose that
$$\lim_{k\to +\infty} {1\over s_k}\left(\widetilde g_0(f^{s_k}(\widetilde z_k))-\widetilde g_0(\widetilde z_k)\right)=\rho\in[{1\over q}, +\infty]$$ and so that
$$\eqalign{\left\vert{1\over s_k} \sum_{i=0}^{s_k-1}\widehat h(\widehat f^{i}(\widehat z_{k}))-\rho\right\vert&= \left\vert{1\over s_k} (\widetilde g(f^{s_k}(\widetilde z_k))-\widetilde g(\widetilde z_k))-\rho\right \vert\cr
&\leq \left\vert{1\over s_k} (\widetilde g_0(f^{s_k}(\widetilde z_k))-\widetilde g_0(\widetilde z_k))-\rho\right \vert+{2K\over s_k}.\cr}$$
Write $\delta_{\widehat z}$ for the Dirac measure at a point $ \widehat z\in\widehat{\mathrm{dom}}(I)_{\mathrm {ann}}$ and choose a measure $\mu$ that is the limit of a subsequence of
$\left({1\over s_k} \sum_{i=0}^{s_k-1}\delta_{\widehat f^{i}_{\mathrm {ann}}(\widehat z_{k})}\right)_{k\geq 0}$ for the weak$^*$ topology. One knows that $\mu$ is an invariant measure of rotation number $\rho$.
As the rotation number induced on the boundary circles are equal to $0$, one deduces that the rotation set $\mathrm{rot}(\widetilde f_{\mathrm {ann}})$ contains $[0,\rho]$. The intersection property
 supposed in {\bf i)} implies by Lemma \ref{le: intersection} that for every rational number $r/s\in(0,1/q]$ written in an irreducible way, there exists a point $\widetilde z\in\widetilde{\mathrm{dom}}(I)_{\mathrm {ann}}$ such that $\widetilde f_{\mathrm {ann}}^s(\widetilde z)=T^r(\widetilde z)$. But this point does not belong to the boundary circles because the induced rotation numbers are equal to $0$. So its belongs to $\widetilde{\mathrm{dom}}({I})$. \hfill$\Box$

\section{\textcolor{black}{ Exponential growth of periodic points and entropy}}

In this section we give a sufficient condition for the exponential growth of periodic points of a surface homeomorphism. This condition will imply that the topological entropy is positive in the compact case. We will make use of these criteria later.

We assume here, as in the previous section, that $f$ is a homeomorphism isotopic to the identity on an oriented surface $M$ and that $I=(f_t)_{t\in[0,1]}$ is a maximal hereditary singular isotopy, which implies that $f_1=f\vert_{\mathrm{dom}(I)}$.  We write  $\widetilde I=(\widetilde f_t)_{t\in[0,1]}$ for the lifted identity defined on the universal covering space $\widetilde{\mathrm{dom}}(I)$ of $\mathrm{dom}(I)$ and set $\widetilde f=\widetilde f_1$ for the lift of $f\vert_{\mathrm{dom}(I)}$ induced by the isotopy. We suppose that $\mathcal F$ is a foliation transverse to $I$ and write $\widetilde{\mathcal F}$ for the lifted foliation on $\widetilde{\mathrm{dom}}(I)$. 

\subsection{Exponential growth of periodic points}

The main result of this section is

\begin{theorem}\label{th:transverse_imply_periodic points}
 Let $\gamma_1, \gamma_2 :\R\to M$ be two admissible positively recurrent transverse paths (possibly equal) with an $\mathcal{F}$-transverse intersection. Then  the number of periodic points of period $n$ for some iterate of $f$ grows exponentially in $n$.\end{theorem}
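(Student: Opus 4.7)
\emph{Plan.} The strategy is to combine the $\mathcal{F}$-transverse intersection hypothesis with positive recurrence in order to produce, for every $n\geq 1$, a family of $2^n$ admissible transverse loops based at a common leaf $\phi$, each inheriting a $\mathcal{F}$-transverse self-intersection and being admissible of order linear in $n$. Each such loop will then be realised as a periodic orbit of controlled period by Proposition~\ref{pr: realization}(iii), and a distinctness argument based on Proposition~\ref{pr: finite_homotopy-classes} will turn the $2^n$ codings into exponentially many genuinely distinct periodic points for a fixed iterate of $f$.

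\textbf{Step 1 (concentrate self-intersections on one path).} I would first produce an admissible transverse path $\gamma:[a,b]\to M$ carrying a $\mathcal{F}$-transverse self-intersection at $\gamma(s)=\gamma(t)$ ($s<t$) on some leaf $\phi$. If $\gamma_1=\gamma_2$ there is nothing to do. If $\gamma_1\neq\gamma_2$, take an admissible segment of $\gamma_1$ containing the intersection with $\gamma_2$, then use positive recurrence of $\gamma_2$ (applied to a neighbourhood of the intersection configuration) to obtain a later segment of $\gamma_2$ whose transverse intersection with $\gamma_1$ persists; splicing them via Proposition~\ref{pr: fundamental} yields the desired self-intersecting admissible path.

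\textbf{Step 2 (repeat the crossing via recurrence, then code).} Positive recurrence of $\gamma$ provides an increasing sequence $s=s_0<t_0<s_1<t_1<\cdots$ with each $\gamma|_{[s_k,t_k]}$ $\mathcal F$-equivalent to $\gamma|_{[s,t]}$. Applying Corollary~\ref{co: induction transverse} to the consecutive copies (which intersect $\mathcal F$-transversally at translates of $\phi$) gives that $\gamma|_{[s_0,t_n]}$ is admissible of order $O(n)$. At each of the $n$ self-intersection sites I may either keep the "direct" route or "detour" via Proposition~\ref{pr: self-intersection}; each binary word $w\in\{0,1\}^n$ thus determines an admissible path $\delta_w$ with the same endpoints and admissible of order $\leq Cn$. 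Closing $\delta_w$ up by a short recurrence-based return to $\phi$ yields an admissible transverse loop $\Gamma_w$ based at $\phi$, of order $\leq Cn$, inheriting a $\mathcal{F}$-transverse self-intersection from $\gamma$. By Proposition~\ref{pr: realization}(iii), each $\Gamma_w$ is therefore associated to a periodic point $z_w$ of $f$ of period $s_w\leq Cn$.

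\textbf{Main obstacle and conclusion.} The delicate point is to show that the assignment $w\mapsto z_w$ has at most polynomial multiplicity, so that the $2^n$ codings produce $\gtrsim 2^n/\mathrm{poly}(n)$ distinct periodic points of period $\leq Cn$, giving exponential growth of $\#\mathrm{Fix}(f^{Cn})$. The coincidences $z_w=z_{w'}$ correspond to transverse loops $\Gamma_w,\Gamma_{w'}$ that are $\mathcal F$-equivalent; these in turn give rise to homotopy classes of paths (with fixed endpoints on $\phi$) represented simultaneously by an arc disjoint from the master loop $\Gamma$ arising from $\gamma$ and by a transverse subpath of the natural lift of $\Gamma$. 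Proposition~\ref{pr: finite_homotopy-classes} bounds the number of such classes uniformly by $2M(\Gamma)$, and a bookkeeping argument converts this bound into the required polynomial control on how many words can collapse to the same periodic orbit, completing the proof.
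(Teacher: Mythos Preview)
Your overall architecture—encode $2^n$ admissible loops, realize them via Proposition~\ref{pr: realization}(iii), then bound collapses—matches the paper's, but the distinctness step in your final paragraph has a genuine gap.

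Proposition~\ref{pr: finite_homotopy-classes} concerns paths between two \emph{fixed} points $z,z'$ lying \emph{off} a given loop $\Gamma$, with one representative disjoint from $\Gamma$; its output is a uniform bound $2M(\Gamma)$ on homotopy classes. Your loops $\Gamma_w$ are based at a leaf on the master loop, and a coincidence $\Gamma_w\sim\Gamma_{w'}$ does not visibly produce the disjoint-arc hypothesis of that proposition. (In the paper, Proposition~\ref{pr: finite_homotopy-classes} is used only in the entropy proof, where the disjointness comes from trajectories staying near infinity.) More fundamentally, for \emph{arbitrary} binary words nothing prevents massive collapse: cyclic permutations of a word already give $\mathcal F$-equivalent loops, and there is no transversality between $\Gamma_w$ and $\Gamma_{w'}$ to exploit.

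The paper resolves this with a device your sketch is missing: after building the self-intersecting loop it extracts two letters $\gamma'_1=\gamma_1\gamma_2$, $\gamma'_2=\gamma_2\gamma_1$ and restricts to \emph{palindromic} words of length $2n$. The palindrome constraint is exactly what forces, for $\mathbf e\neq\mathbf e'$, the lifts $\widetilde\gamma'_{\mathbf e}$ and $\widetilde\gamma'_{\mathbf e'}$ to intersect $\widetilde{\mathcal F}$-transversally at the common base point (Lemma~\ref{lm:intersect-transversally}): the first place the words differ determines opposite relative positions of the ending leaves, and by palindromic symmetry the same happens backwards for the starting leaves. This transversality, together with proper discontinuity of the deck group, gives the polynomial bound $Ln^2$ on words yielding equivalent loops (Lemma~\ref{lm:bounded}).

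A secondary point: the admissibility of arbitrary concatenations $\prod\gamma_{\varepsilon_i}$ (your Step~2) is not a single application of Corollary~\ref{co: induction transverse}. The paper's Lemma~\ref{lm:sequence_is_admissible} needs a careful induction alternating between Proposition~\ref{pr: self-intersection} (for the ``shortcut'' letter $\gamma_2$) and Corollary~\ref{co: induction transverse} (for runs of $\gamma_1$), because these two moves have different admissibility mechanisms.
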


Theorem \ref{th:transverse_imply_periodic points} is a direct consequence of Lemma \ref{lm:recurrent_to_loop} and Proposition \ref{pr:transverse_loop_exponentialgrowth}.

\begin{lemma}\label{lm:recurrent_to_loop}
Let $\gamma_1,\, \gamma_2$ be two admissible $\mathcal{F}$-positively recurrent transverse paths  (possibly equal) with an $\mathcal{F}$-transverse intersection, and let $I_1$ and $I_2$ be two real segments. Then there exists a linearly admissible transverse loop $\Gamma$ with an $\mathcal{F}$-transverse self-intersection, such that $\gamma_1\vert_{I_1}$ and $\gamma_2\vert_{I_2}$ are equivalent to subpaths of the natural lift of $\Gamma$.
\end{lemma}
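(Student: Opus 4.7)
My plan is to construct $\Gamma$ in two stages: first, for each $j \in \{1,2\}$, I would build an auxiliary transverse loop $\Gamma_j$ from $\gamma_j$ alone whose natural lift already contains $\gamma_j\vert_{I_j}$; second, I would splice these two auxiliary loops into a single loop $\Gamma$ at the leaf of the given $\mathcal{F}$-transverse intersection.

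For the first stage, enlarge $I_1$ and $I_2$ if necessary (this only strengthens the conclusion) so that each $I_j = [a_j, b_j]$ contains in its interior the parameter $t_j$ at which $\gamma_1, \gamma_2$ intersect $\mathcal{F}$-transversally on a common leaf $\phi^{*}$. Fix $N \geq \max_j(t_j - a_j, b_j - t_j)$ so that $[t_j - N, t_j + N] \supset I_j$. By $\mathcal{F}$-positive recurrence applied to the segment $[t_j - N, t_j + N]$, I can find $t_j^{*}$ arbitrarily large with $\gamma_j\vert_{[t_j^{*} - N, t_j^{*} + N]}$ $\mathcal{F}$-equivalent to $\gamma_j\vert_{[t_j - N, t_j + N]}$; in particular $\phi_{\gamma_j(t_j^{*})} = \phi^{*}$. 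Lifting $\gamma_j$ to the universal cover of $\mathrm{dom}(I)$, the two endpoints of $\gamma_j\vert_{[t_j, t_j^{*}]}$ sit on two lifts of $\phi^{*}$ related by some covering automorphism $T_j$, and sliding endpoints along leaves in the equivalence class closes this admissible path into a transverse loop $\Gamma_j$. Crucially, because $\mathcal{F}$-transverse intersection is preserved under $\mathcal{F}$-equivalence, at each period-boundary of $\Gamma_j$'s natural lift $\gamma_j^{\mathrm{loop}}$, the concatenation of the tail $\gamma_j\vert_{[t_j^{*} - N, t_j^{*}]}$ with the head of the next period $\gamma_j\vert_{[t_j, t_j + N]}$ is $\mathcal{F}$-equivalent to $\gamma_j\vert_{[t_j - N, t_j + N]} \supset \gamma_j\vert_{I_j}$; thus $\gamma_j\vert_{I_j}$ is equivalent to a subpath of $\gamma_j^{\mathrm{loop}}$.

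For the second stage, lift $\Gamma_j$ to the universal cover as $\widetilde{\gamma}_j^{\mathrm{loop}}$, invariant under $T_j$, with compatible choices so that $\widetilde{\gamma}_1^{\mathrm{loop}}$ and $\widetilde{\gamma}_2^{\mathrm{loop}}$ meet a common lift $\widetilde{\phi}^{*}$ of $\phi^{*}$ and cross $\widetilde{\mathcal{F}}$-transversally there (this being inherited from the $\mathcal{F}$-transverse intersection of $\gamma_1\vert_{I_1}$ and $\gamma_2\vert_{I_2}$). I now splice using the fundamental proposition (Proposition~\ref{pr: fundamental}): take two consecutive periods of $\widetilde{\gamma}_1^{\mathrm{loop}}$ and two consecutive periods of $\widetilde{\gamma}_2^{\mathrm{loop}}$, both crossing $\widetilde{\mathcal{F}}$-transversally at $\widetilde{\phi}^{*}$, and concatenate the first segment (up to the crossing) with the second segment (from the crossing onwards). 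The resulting admissible path has endpoints on two lifts of $\phi^{*}$ related by a covering automorphism $T$ built from $T_1$ and $T_2$, which closes it into the desired loop $\Gamma$. Each period of the natural lift of $\Gamma$ then contains two full periods of $\gamma_1^{\mathrm{loop}}$ followed by two full periods of $\gamma_2^{\mathrm{loop}}$; by the first stage this guarantees that both $\gamma_1\vert_{I_1}$ and $\gamma_2\vert_{I_2}$ are equivalent to subpaths of the natural lift of $\Gamma$, and the co-existence of the two within one period yields a $\mathcal{F}$-transverse self-intersection of $\Gamma$.

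Linear admissibility follows by applying the fundamental proposition (or Corollary~\ref{co: first induction transverse}) at each of the $k$ interior splice points obtained by going around $\Gamma$ exactly $k$ times, giving admissibility of order at most $kn$ with $n$ the order of one period, so condition $(Q_n)$ holds. The main obstacle I anticipate is the correct bookkeeping in the universal cover for the splicing step: naively splicing only one period of each $\Gamma_j$ would leave the $\gamma_1$-portion and $\gamma_2$-portion interleaved so that each is sandwiched between copies of the other, which would break the period-boundary concatenation trick needed to exhibit $\gamma_j\vert_{I_j}$ as a subpath of the natural lift of $\Gamma$. Taking two full periods of each $\Gamma_j$ inside the spliced fundamental domain is precisely what ensures that a complete $\Gamma_j$-period-boundary lies in the interior of the $\Gamma_j$-block of $\Gamma$, restoring the argument from the first stage; verifying this carefully, along with the determination of the deck transformation $T$ and the compatibility of the choices of lifts, is where the bulk of the technical work lies.
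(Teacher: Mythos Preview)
Your overall strategy---use $\mathcal{F}$-positive recurrence to locate a later copy of the intersection neighbourhood inside each $\gamma_j$, close up, and then splice the two pieces at the given transverse crossing via Proposition~\ref{pr: fundamental} and Corollary~\ref{co: first induction transverse}---is exactly what the paper does. The gap is in how you handle admissibility in Stage~2. You want the fundamental domain of $\Gamma$ to contain $(\Gamma_1)^2(\Gamma_2)^2$ so that a full $\Gamma_j$-period boundary sits in its interior; for this you must feed the block ``two consecutive periods of $\Gamma_j$'' into Proposition~\ref{pr: fundamental}, which requires it to be admissible. But you never establish that. One period $\gamma_j\vert_{[t_j,t_j^{*}]}$ is admissible because it is a subpath of the admissible path $\gamma_j$; the concatenation of two periods is \emph{not} a subpath of $\gamma_j$, and you cannot manufacture it with the fundamental proposition either, since at the internal $\Gamma_j$-period boundary the incoming and outgoing pieces are both $\mathcal{F}$-\emph{equivalent} to $\gamma_j\vert_{[t_j-N,t_j+N]}$ and hence do not intersect $\mathcal{F}$-transversally. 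The same obstruction breaks your linear-admissibility argument for $\Gamma^k$ wherever two consecutive pieces come from the same $\Gamma_j$. (Incidentally, splicing $(\Gamma_1)^2$ and $(\Gamma_2)^2$ at their midpoints via Proposition~\ref{pr: fundamental} actually outputs only one period of each, so even granting the missing admissibility your $\Gamma$ would not have the structure you claim.)

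The paper sidesteps all of this by never forming the intermediate loops $\Gamma_j$. It applies recurrence \emph{twice} along each $\gamma_j$, obtaining three equivalent copies of the intersection neighbourhood at parameters $t_j<t'_j<t''_j$, and uses the single long segment $\gamma_j\vert_{[a_j,b''_j]}$---automatically admissible as a subpath of $\gamma_j$---as the building block. The loop is $\Gamma=\gamma_1\vert_{[t_1,t''_1]}\gamma_2\vert_{[t_2,t''_2]}$. The \emph{middle} copy at $t'_j$ lies strictly inside the $\gamma_j$-block of $\Gamma$ and supplies both the subpath equivalent to $\gamma_j\vert_{I_j}$ and the $\mathcal{F}$-transverse self-intersection; the \emph{outer} copies at $t_j$ and $t''_j$ supply the $\gamma_1$--$\gamma_2$ transverse crossings, now at interior parameters of $[a_j,b''_j]$, exactly as Corollary~\ref{co: first induction transverse} requires. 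Every piece stays inside $\gamma_1$ or $\gamma_2$, every splice is of $\gamma_1$--$\gamma_2$ type, and no unproven admissibility is invoked. Your construction becomes this one as soon as you take a second recurrence time $t_j^{**}>t_j^{*}$ and replace the role of the ``internal $\Gamma_j$-period boundary'' by the genuine interior point $t_j^{*}$.
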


\begin{proof}
As explained at the end of subsection 3.3, we can find $a_1, b_1, t_1, a_2, b_2, t_2$ such that $I_1\subset [a_1, b_1]$, $I_2\subset [a_2,b_2]$ and such that $\gamma_1\vert_{[a_1,b_1]}$ intersects $\mathcal{F}$-transversally $\gamma_2\vert_{[a_2, b_2]}$ at $\gamma_1(t_1)=\gamma_2(t_2)$. Since $\gamma_1$ is $\mathcal{F}$-positively recurrent, we can find 
$$b_1<a'_1<b'_1<a''_1<b''_1 $$
such that $\gamma_1\vert_{[a_1,b_1]}$, $ \gamma_1\vert_{[a'_1,b'_1]}$ and $\gamma_1\vert_{[a''_1,b''_1]}$ are equivalent. In particular, there exists
$$a'_1<t'_1<b'_1<a''_1<t''_1<b''_1 $$
 such that

\smallskip
\noindent -\enskip\enskip $\gamma_1\vert_{[a_1,t_1]}$, $ \gamma_1\vert_{[a'_1,t'_1]}$ and $\gamma_1\vert_{[a''_1,t''_1]}$ are equivalent;

\smallskip
\noindent -\enskip\enskip 
$\gamma_1\vert_{[t_1,b_1]}$, $\gamma_1\vert_{[t'_1,b'_1]}$ and $\gamma_1\vert_{[t''_1,b''_1]}$ are equivalent.

Moreover, replacing $\gamma_1$ by an equivalent path, one can suppose that $\gamma_1(t_1)=\gamma_1(t'_1)=\gamma_1(t''_1)$. Since $\gamma_2$ is $\mathcal{F}$-positively recurrent, we can also find
$$b_2<a'_2<t'_2<b'_2<a''_2<t''_2<b''_2 $$
 and replace $\gamma_2$ by an equivalent path such that a similar statement holds with the necessary changes. Note that this implies that $\gamma_1$ is $\mathcal{F}$-transverse to $\gamma_2$ at both $\gamma_1(t''_1)=\gamma_2(t_2)$ and $\gamma_1(t_1)=\gamma_2(t''_2)$. Suppose that $\gamma_1\vert _{[a_1,b''_1]}$ and $\gamma_2\vert _{[a_2,b''_2]}$ are admissible of order $\leq q$ and apply Corollary  \ref{co: first induction transverse} to the families $(\gamma_i)_{1\leq i\leq 2n}$,  $(s_i)_{1\leq i\leq 2n}$, $(t_i)_{1\leq i\leq 2n}$ where
$$\gamma_{2j+1}=  \gamma_1\vert _{[a_1,b''_1]}, \,\gamma_{2j}=  \gamma_2\vert _{[a_2,b''_2]} $$
and
$$s_{2j+1}= t_1\enskip\mathrm{if}\enskip j>0 \,,\,s_{2j}= t_2\,, \,t_{2j+1}= t''_1 \,,\,t_{2j}= t''_2 \enskip\mathrm{if}\enskip j<n.$$
 One deduces that for every $n\ge 1$, 
$$\gamma_1\vert_{[a_1, t_1]}\left( \gamma_1\vert_{[t_1, t''_1]}\gamma_2\vert_{[t_2, t''_2]}\right)^n\gamma_2\vert_{[t''_2, b_2]}$$
is admissible of order $2nq$ and consequently that $\left( \gamma_1\vert_{[t_1, t''_1]}\gamma_2\vert_{[t_2, t''_2]}\right)^n$ is admissible of order $\leq 2nq$. So, the closed path $\gamma'=\gamma_1\vert_{[t_1, t''_1]}\gamma_2\vert_{[t_2, t''_2]}$ defines a loop that is linearly admissible: it satisfies the condition $(Q_{2q})$ stated in the previous section. Furthermore, since both $\gamma_1\vert_{[a'_1,b'_1]}$ and $\gamma_2\vert_{[a'_2, b'_2]}$ are subpaths of $\gamma'$, the induced loop has an $\mathcal{F}$-transverse self-intersection.\end{proof}


\begin{proposition}\label{pr:transverse_loop_exponentialgrowth}
If there exists a linearly admissible transverse loop $\Gamma$ with an $\mathcal{F}$-transverse self-intersection, then the number of periodic points of period n for some iterate of $f$ grows exponentially in $n$. \end{proposition}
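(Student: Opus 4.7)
The plan is to leverage the $\mathcal{F}$-transverse self-intersection of $\Gamma$ to produce an exponentially large family of linearly admissible transverse loops, each carrying its own $\mathcal{F}$-transverse self-intersection and with admissibility order linear in a construction parameter $N$. Applying Proposition \ref{pr: realization}(iii) to each such loop will then furnish exponentially many distinct periodic orbits of $f$ of period $O(N)$.

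Let $\gamma:\R\to M$ denote the natural lift of $\Gamma$. The hypothesis furnishes parameters $s<t$ with $\gamma(s)=\gamma(t)=z_0$ at which the two branches of $\gamma$ meet $\mathcal{F}$-transversally. After a cyclic reparameterization I would assume $0\le s<t\le 1$ and set
\[
\alpha=\gamma|_{[s,t]},\qquad \beta=\gamma|_{[t,s+1]},
\]
closed transverse paths at $z_0$ whose concatenation $\alpha\beta=\gamma|_{[s,s+1]}$ is one full period of $\Gamma$. Fix $N\ge 1$ and start from the admissible path $\gamma|_{[s,s+N]}=(\alpha\beta)^N$, which is admissible of order $\le CN$ for a constant $C$ depending only on $\Gamma$ (using the linear admissibility of $\Gamma$). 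For each binary word $\epsilon=(\epsilon_1,\ldots,\epsilon_N)\in\{0,1\}^N\setminus\{\mathbf{0}\}$, I would iteratively apply the order-preserving skip operation of Proposition \ref{pr: self-intersection} at each $\mathcal{F}$-transverse self-intersection pair $(s+i,t+i)$ with $\epsilon_i=0$, removing the corresponding $\alpha$-block. This yields the closed transverse path
\[
\Gamma_\epsilon=\prod_{i=1}^N\alpha^{\epsilon_i}\beta,
\]
still admissible of order at most $CN$.

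Since $\Gamma_\epsilon^r$ is admissible of order $\le rCN$ for every $r\ge 1$, the loop $\Gamma_\epsilon$ is linearly admissible of order $q_\epsilon\le CN$. The condition $\epsilon\neq\mathbf{0}$ guarantees that both branches of $\gamma$ through $z_0$ appear inside $\Gamma_\epsilon$, so the $\mathcal{F}$-transverse self-intersection at $z_0$ is inherited; Proposition \ref{pr: realization}(iii) applied with $r=1$ and $s=q_\epsilon$ then produces a periodic orbit $z_\epsilon$ of $f$ of period $q_\epsilon\le CN$. Two orbits $z_\epsilon,z_{\epsilon'}$ coincide only when $\Gamma_\epsilon$ and $\Gamma_{\epsilon'}$ are $\mathcal{F}$-equivalent as transverse loops, which forces $\epsilon$ and $\epsilon'$ to agree up to cyclic shift (and possibly up to a further polynomial factor of identifications, to be controlled by Proposition \ref{pr: finite_homotopy-classes}). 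Thus the number of distinct periodic orbits produced is at least $(2^N-1)/\mathrm{poly}(N)$, which is exponential in $N$; setting $n=CN$ yields exponential growth in $n$ of the number of periodic points of $f$ of period at most $n$, so in particular for the iterate $k=1$.

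The hard part will be the distinctness step: without quantitative control on $\mathcal{F}$-equivalences among the $\Gamma_\epsilon$, the exponential family of admissible loops could in principle collapse to sub-exponentially many orbits. Proposition \ref{pr: finite_homotopy-classes} is the natural tool, bounding via $M(\Gamma)$ the number of homotopy classes of paths between two points that can be represented both disjoint from $\Gamma$ and as transverse subpaths of $\gamma$. Making this bound effective requires a careful leaf-by-leaf analysis of how the alternation of $\alpha$- and $\beta$-blocks in $\Gamma_\epsilon$ encodes $\epsilon$ in the sequence of leaves of $\mathcal{F}$ crossed by $\Gamma_\epsilon$, and this is the most delicate combinatorial part of the argument.
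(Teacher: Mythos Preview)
Your overall architecture matches the paper's: build an exponential family of linearly admissible transverse loops carrying self-intersections, invoke Proposition~\ref{pr: realization}(iii), then separate the resulting periodic orbits by separating the loops up to $\mathcal{F}$-equivalence. The construction of admissible words by iterated skipping via Proposition~\ref{pr: self-intersection} is a nice shortcut compared to the paper's inductive Lemma (the paper builds arbitrary words in $\gamma_1,\gamma_2$ by induction rather than by deletion). Two minor repairs are needed there: you must first apply Proposition~\ref{pr: realization} to $\Gamma$ itself so that $\gamma|_{[0,r]}$ is genuinely admissible of order $rq_0$ for \emph{every} $r$ (linear admissibility alone only gives this along a subsequence), and you need buffers at both ends so that the skipping hypothesis $a<s<t<b$ is met at the extremal blocks.

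The genuine gap is the distinctness step, and your candidate tool does not do the job. Proposition~\ref{pr: finite_homotopy-classes} bounds homotopy classes of paths between two fixed endpoints that are simultaneously realizable \emph{disjoint from $\Gamma$} and as subpaths of $\gamma$; your loops $\Gamma_\epsilon$ are neither paths between prescribed endpoints nor disjoint from $\Gamma$, so that proposition gives no control over how many $\epsilon$ can share an equivalence class. The assertion that $\mathcal{F}$-equivalence of $\Gamma_\epsilon$ and $\Gamma_{\epsilon'}$ forces $\epsilon,\epsilon'$ to agree up to cyclic shift is exactly the heart of the matter and is not obvious: $\mathcal{F}$-equivalence is a statement in the leaf space of the universal cover, and nothing you have said rules out nontrivial coincidences there.

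The paper resolves this by a different coding. It sets $\gamma_1'=\gamma_1\gamma_2$, $\gamma_2'=\gamma_2\gamma_1$ and restricts to \emph{palindromic} words $\mathbf e$ of length $2n$; each such word is lifted with its center pinned at a fixed basepoint $\widetilde z^*$. The palindromic symmetry, together with the relative position of the endpoint leaves of $\widetilde\gamma_1'$ and $\widetilde\gamma_2'$, forces any two distinct palindromes to produce lifts that intersect $\widetilde{\mathcal F}$-transversally at $\widetilde z^*$ (Lemma~\ref{lm:intersect-transversally}). Transverse intersection is then incompatible with equivalence, and a counting argument using the free and proper action of the deck group (Lemma~\ref{lm:bounded}) shows at most $Ln^2$ palindromes can yield equivalent loops. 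This transversality-at-the-center mechanism is the idea your outline is missing; without an analogue of it, the exponential family could in principle collapse.
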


{\it Proof.}
The proof of Proposition \ref{pr:transverse_loop_exponentialgrowth} will last until the end of this subsection.  Suppose that $\Gamma$ satisfies the condition $(Q_{q_0})$ and denote $\gamma$ its natural lift. By assumption,  there exist  $s<t$ such that $\gamma$ has an $\mathcal{F}$-transverse self-intersection at $\gamma(s)=\gamma(t)$. So, one can apply the realization result (Proposition  \ref{pr: realization}) and deduce that $\Gamma$ is associated to a fixed point of $f^{q_0}$. Modifying $\Gamma$ in its equivalence class if necessary, one can suppose that for every $r\geq 1$, the path  $\gamma\vert_{[0,r]}$ is admissible of order $rq_0$. Adding the same positive integer to both $s$ and $t$, one can find  a positive integer  $K$ such that $\gamma\vert_{[0, K]}$ has an $\mathcal{F}$-transverse self-intersection at $\gamma(s)=\gamma(t)$ and one knows  that $\gamma\vert_{[0, mK]}$ is admissible of order \textcolor{black}{$mKq_0$} for every $m\geq 1$. \textcolor{black}{To get our proposition}, one needs a preliminary result. Set $$\gamma_1=\gamma\vert_{[s,t]},\,\gamma_2=\gamma_{[t, K+s]}.$$

\begin{lemma}\label{lm:sequence_is_admissible}
For every sequence $(\varepsilon_i)_{i\in\N}\in\{1,2\}^{\N}$, every $n\geq 1$, and every $m\geq 1$ the path
$$\gamma\vert_{[0, s]}\prod_{0\leq i<n} \gamma_{\varepsilon _i}\, \gamma_{[t,  mK]}$$ 
is admissible of order \textcolor{black}{$(n+m)Kq_0$}.
\end{lemma}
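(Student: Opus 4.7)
The plan is to prove Lemma \ref{lm:sequence_is_admissible} by induction on $n$, using the fundamental proposition (Proposition \ref{pr: fundamental}) together with its iterated form (Corollary \ref{co: first induction transverse}), exploiting the $\mathcal{F}$-transverse self-intersection of $\gamma$ at $\gamma(s)=\gamma(t)$ and, via the $1$-periodicity of $\gamma$, the $\mathcal{F}$-transverse self-intersections at all its translates $\gamma(s+jK)=\gamma(t+j'K)$ present inside $\gamma\vert_{[0,(n+m)K]}$.

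The trivial case ``$n=0$'' already reveals the mechanism: the path $\gamma\vert_{[0,s]}\gamma\vert_{[t,mK]}$ is admissible of order $mKq_0$ by the first assertion of Proposition \ref{pr: self-intersection}, applied to $\gamma\vert_{[0,mK]}$ (admissible of order $mKq_0$) at its $\mathcal{F}$-transverse self-intersection $\gamma(s)=\gamma(t)$. For the inductive step, I would apply Corollary \ref{co: first induction transverse} to a carefully chosen family of $n+1$ admissible paths: the ``backbone'' $\gamma\vert_{[0,mK]}$ (of order $mKq_0$) providing the initial sub-arc $\gamma\vert_{[0,s]}$ and the final sub-arc $\gamma\vert_{[t,mK]}$, together with $n$ admissible paths each of order $Kq_0$ from which we extract one $\gamma_{\varepsilon_i}$. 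Consecutive members of the family are glued at the leaf $\phi_{\gamma(s)}=\phi_{\gamma(t)}$, and the required $\mathcal{F}$-transverse intersection between the outgoing strand of one member and the incoming strand of the next is supplied directly by the $\mathcal{F}$-transverse self-intersection of $\gamma$ at this leaf (viewing this self-intersection in the universal cover, the two strands are nothing but two distinct lifts of $\gamma$ related by a covering automorphism, and they cross $\widetilde{\mathcal{F}}$-transversally). Summing orders yields $nKq_0+mKq_0=(n+m)Kq_0$, as claimed.

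The hard part is that $\gamma_2=\gamma\vert_{[t,K+s]}$ is not literally a subpath of $\gamma\vert_{[0,K]}$, so for insertions with $\varepsilon_i=2$ the most natural short admissible path of order $Kq_0$ does not immediately contain $\gamma_2$. My approach here is to work in the universal cover of $\mathrm{dom}(I)$: if $\widetilde\gamma$ is a lift of $\gamma$ and $S$ is the covering automorphism given by the $\mathcal F$-transverse self-intersection (so that $\widetilde\gamma(s)=S\widetilde\gamma(t)$), then $S\widetilde\gamma\vert_{[0,K]}$ is a lift-translate of $\gamma\vert_{[0,K]}$ whose sub-arc projecting to $\gamma_2$ is precisely $S\widetilde\gamma\vert_{[s,t+(K+s-t)]}$ after an appropriate identification, and $\widetilde\gamma\vert_{[0,K]}$ intersects $S\widetilde\gamma\vert_{[0,K]}$ $\widetilde{\mathcal F}$-transversally at the common leaf, so the insertion may be realized as a translate contribution of order $Kq_0$. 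A secondary but important issue is tracking the signs of the successive $\mathcal F$-transverse intersections so that Corollary \ref{co: first induction transverse} (or the signed variant Corollary \ref{co: induction transverse}) delivers exactly the prescribed word $(\varepsilon_i)_{0\leq i<n}$ rather than some other cross-over realization; this should be handled by a direct case analysis using Proposition \ref{pr: fundamental} at each insertion, taking advantage of the fact that the self-intersection of $\gamma$ at $\gamma(s)=\gamma(t)$ provides both a leaf on the right and a leaf on the left (hence Proposition \ref{pr: order plane} converts any order-$\leq N$ bound obtained into an equality of order).
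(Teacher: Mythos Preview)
Your overall plan (induction on $n$, using Proposition~\ref{pr: fundamental}, its corollaries, and Proposition~\ref{pr: self-intersection}) is the right toolkit, and you correctly isolate the insertion of $\gamma_2=\gamma\vert_{[t,K+s]}$ as the delicate point. However, your proposed fix via the covering automorphism $S$ does not work: $S\widetilde\gamma\vert_{[0,K]}$ is simply another lift of the \emph{same} path $\gamma\vert_{[0,K]}$, so its projection to $M$ is still $\gamma\vert_{[0,K]}$ and contains no sub-arc equal to $\gamma_2$ (which overruns the interval $[0,K]$). Any translate of $\gamma\vert_{[0,K]}$ can only contribute sub-arcs of $\gamma\vert_{[0,K]}$, i.e.\ $\gamma_1$-type pieces; hence a single application of Corollary~\ref{co: first induction transverse} to copies of $\gamma\vert_{[0,K]}$ plus a tail cannot produce a word containing a letter $\gamma_2$ with the correct order $(n+m)Kq_0$. (There is also a structural issue: in Corollary~\ref{co: first induction transverse} each path contributes a single sub-arc, so a single ``backbone'' $\gamma\vert_{[0,mK]}$ cannot supply both the initial piece $\gamma\vert_{[0,s]}$ and the final piece $\gamma\vert_{[t,mK]}$.)

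The paper's argument handles $\gamma_2$ by a different device. One inducts on $n$ with a case split on the last letter. When $\varepsilon_n=2$, use the $K$-periodicity of $\gamma$ (since $K\in\Z$) to rewrite the tail $\gamma\vert_{[t,mK]}$ as $\gamma\vert_{[t+K,(m+1)K]}$; by the inductive hypothesis applied with $m+1$ in place of $m$, the path $\gamma\vert_{[0,s]}\prod_{i<n}\gamma_{\varepsilon_i}\,\gamma\vert_{[t,(m+1)K]}$ is admissible of the required order, and the \emph{shortcut} direction of Proposition~\ref{pr: self-intersection}, applied at the translated self-intersection $\gamma(s+K)=\gamma(t+K)$ inside the tail, excises the segment $\gamma\vert_{[s+K,t+K]}$ and leaves precisely $\gamma\vert_{[t,s+K]}\gamma\vert_{[t+K,(m+1)K]}=\gamma_2\,\gamma\vert_{[t,mK]}$. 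The remaining cases (all letters equal to $1$, or $\varepsilon_n=1$ with a last occurrence of $2$ at some $n'<n$) are dealt with by Corollary~\ref{co: induction transverse}, in the second case packaging everything up to and including the last $\gamma_2$ as a single admissible block via the inductive hypothesis. Your ``$n=0$'' observation is essentially the base case of this scheme.
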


\begin{proof}
We will give a proof by induction on $n$. 

Let us begin with the case where $n=1$. If $\varepsilon_0=1$, we must prove that $\gamma\vert_{[0,s]}\gamma\vert_{[s,t]}\gamma\vert_{[t,mK]}=\gamma\vert_{[0,mK]}$ is admissible of order  $ (m+1)\textcolor{black}{Kq_0}$, which is true by \textcolor{black}{Proposition \ref{pr: order plane} as it} is admissible of order $ m\textcolor{black}{Kq_0}$). If $\varepsilon_0=2$, we must prove that $$\gamma\vert_{[0,s]}\gamma\vert_{[t,s+K]}\gamma\vert_{[t,mK]}=\gamma\vert_{[0,s]}\gamma\vert_{[t,s+K]}\gamma\vert_{[t+K,(m+1)K]}$$ is admissible of order  $(m+1)\textcolor{black}{Kq_0}$. The path $\gamma\vert_{[0,(m+1)K]}$ having an $\mathcal{F}$-transverse self-intersection at $\gamma(t)=\gamma(s)$ and being admissible of order  $\leq (m+1)\textcolor{black}{Kq_0}$, one deduces by Proposition  \ref{pr: self-intersection} that $\gamma\vert_{[0,s]}\gamma\vert_{[t,(m+1)K]}$ is admissible of order  $(m+1)\textcolor{black}{Kq_0}$. This last path has an $\mathcal{F}$-transverse self-intersection at $\gamma(t+K)=\gamma(s+K)$. Applying Proposition  \ref{pr: self-intersection} again, one deduces that $\gamma\vert_{[0,s]}\gamma\vert_{[t,s+K]}\gamma\vert_{[t+K,(m+1)K]}$ is admissible of order  $(m+1)\textcolor{black}{Kq_0}$.

\medskip
Suppose now the result proved for $n$.   There are three cases to consider.

\medskip

\textcolor{black}{The first case  is if} the sequence $(\varepsilon_i)_{0\leq i\leq n}$ is constant equal to $1$. We apply  Corollary \ref {co: induction transverse} to the families $(\gamma_i)_{1\leq i\leq n+1}$,  $(s_i)_{1\leq i\leq n+1}$, $(t_i)_{1\leq i \leq n+1}$, where
$$\gamma_{i}= \gamma\vert _{[0,K]}\enskip\mathrm{if }\enskip i\leq n, \enskip
\gamma_{n+1}=\gamma\vert _{[0,mK]}
 $$
and
$$s_{i}= s \enskip\mathrm{if}\enskip i>0 \,,\,t_{i}= t \enskip\mathrm{if}\enskip i\leq n \,$$
 One deduces that
 $$\gamma\vert_{[0,t]} \left(\gamma\vert_{[s,t]}\right)^{n-1}\gamma_{[s,mK]}=\gamma\vert_{[0,s]} \left(\gamma\vert_{[s,t]}\right)^{\textcolor{black}{n}}\gamma\vert_{[t,mK]}$$
is admissible of order $(m+n)\textcolor{black}{Kq_0}$ and so is admissible of order  $(m+n+1)\textcolor{black}{Kq_0}$.

\medskip

\textcolor{black}{ The second case to consider is if} there exists $n'<n$ such that $\varepsilon_{n'}=2$ and $\varepsilon _i=1$ if $i>n'$. We apply  Corollary \ref {co: induction transverse} to the families $(\gamma_i)_{1\leq i\leq n-n'+1}$,  $(s_i)_{1\leq i\leq n-n'+1}$, $(t_i)_{1\leq i \leq n-n'+1}$ where
$$\gamma_{0}= \gamma\vert_{[0,s]}\prod_{0\leq i<n'} \gamma_{\varepsilon_i} \gamma\vert_{[t,2K]}\,,\, \gamma_{i}= \gamma\vert _{[K,2K]}\enskip\mathrm{if }\enskip 1<i\leq n-n', \enskip
\gamma_{n-n'+1}=\gamma\vert _{[K,(m+1)K]}
 $$
and
$$s_{i}= s+K \enskip\mathrm{if}\enskip i>0 \,,\,t_{i}= t +K\enskip\mathrm{if}\enskip i \leq n-n' \,$$
The induction hypothesis tells us that $\gamma_{0}= \gamma\vert_{[0,s]}\prod_{0\leq i<n'} \gamma_{\varepsilon_i} \gamma_{[t,2K]}$ is  admissible of order $(n'+2)\textcolor{black}{Kq_0}$, so 
 $$\gamma\vert_{[0,s]}\prod_{0\leq i<n'} \gamma_{\varepsilon_i} \gamma\vert_{[t,s+K]}\left(\gamma\vert_{[s+K,t+K]}\right)^{n-n'+1}\gamma\vert_{[t+K,(m+1)K]}=\gamma\vert_{[0,s]}\prod_{0\leq i\leq n} \gamma_{\varepsilon_i} \gamma\vert_{[t,mK]}$$
is admissible of order $(n'+2)\textcolor{black}{Kq_0}+(n-n'-1)\textcolor{black}{Kq_0}
+m\textcolor{black}{Kq_0}=(m+n+1)\textcolor{black}{Kq_0} $.

\textcolor{black}{ The final case to consider is if}  $\varepsilon_n=2$. We must prove that $$\gamma\vert_{[0,s]}\prod_{0\leq i<n} \gamma_{\varepsilon_i} \gamma\vert_{[t,s+K]}\gamma\vert_{[t,mK]}=\gamma\vert_{[0,s]}\prod_{0\leq i<n} \gamma_{\varepsilon_i} \gamma\vert_{[t,s+K]}\gamma\vert_{[t+K,(m+1)K]}$$ is admissible of order $(m+n+1)\textcolor{black}{Kq_0}$. The path $\gamma\vert_{[K,(m+1)K]}$ having an $\mathcal{F}$-transverse self-intersection at $\gamma(t+K)=\gamma(s+K)$, the same is true when we extend this path on the left by adding $\gamma\vert_{[0,s]}\prod_{0\leq i<n} \gamma_{\varepsilon_i}\gamma\vert_{[t,K]}$. Moreover by induction hypothesis, one knows that
$$\gamma\vert_{[0,s]}\prod_{0\leq i<n} \gamma_{\varepsilon_i}\gamma\vert_{[t,K]}\gamma\vert_{[K,(m+1)K]}$$  is admissible of order $(m+n+1)\textcolor{black}{Kq_0}$.
Applying Proposition  \ref{pr: self-intersection}, one deduces that $$\gamma\vert_{[0,s]}\prod_{0\leq i<n} \gamma_{\varepsilon_i} \gamma\vert_{[t,s+K]}\gamma\vert_{[t+K,(m+1)K]}$$  is admissible of order $(m+n+1)\textcolor{black}{Kq_0}$. \end{proof}

\begin{figure}[ht!]
\hfill
\includegraphics [height=48mm]{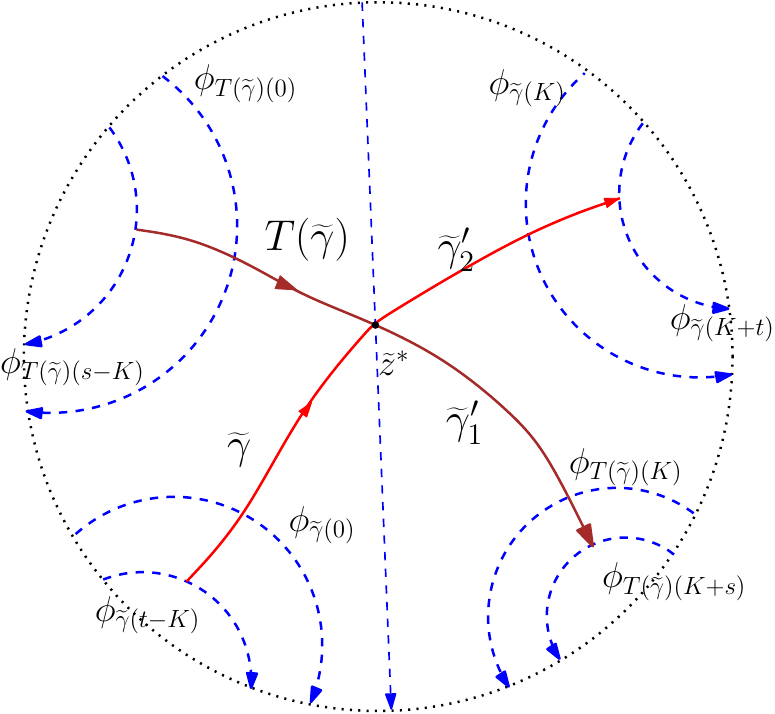}
\hfill{}
\caption{\small Relative position of the leafs of the endpoints of $\widetilde \gamma_1$ and $\widetilde \gamma_2$, when both start at $\widetilde z^* $\textcolor{black}{ in Lemma \ref{lm:associated_periodic_point}}.}
\label{Figure_exponentialgrowth}
\end{figure}

\begin{lemma}\label{lm:associated_periodic_point}
Let $\mathbf{e}= (\varepsilon_i)_{i\in\N}\in\{1,2\}^{\N}$ be a periodic word of period $q$ which is not periodic of period 1. Then the loop $\Gamma_{\mathbf e}$, defined by the closed path $\prod_{0\leq i<q} \gamma_{\varepsilon_i}$, is equivalent to a transverse loop associated to a fixed point of \textcolor{black}{$f^{qKq_0}$}.
\end{lemma}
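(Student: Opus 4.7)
The approach is to combine Lemma \ref{lm:sequence_is_admissible} with Proposition \ref{pr: realization} by verifying that $\Gamma_{\mathbf e}$ is a linearly admissible transverse loop possessing a $\mathcal F$-transverse self-intersection. For linear admissibility, parameterize the natural lift $\gamma_{\mathbf e}$ of $\Gamma_{\mathbf e}$ so that $\gamma_{\mathbf e}\vert_{[0,n]}$ is (equivalent to) $\prod_{0\leq i<nq}\gamma_{\varepsilon_i}$ for every $n\geq 1$. Applying Lemma \ref{lm:sequence_is_admissible} with $nq$ in place of $n$ and $m=1$, the path $\gamma\vert_{[0,s]}\,\gamma_{\mathbf e}\vert_{[0,n]}\,\gamma\vert_{[t,K]}$ is admissible of order $(nq+1)rq_0$, so $\gamma_{\mathbf e}\vert_{[0,n]}$ itself is admissible of order at most $(nq+1)rq_0$. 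Setting $r_k=n$ and $s_k=(nq+1)rq_0$ gives $r_k/s_k\to 1/(qrq_0)$, hence $\Gamma_{\mathbf e}$ satisfies $(Q_{qrq_0})$.

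For the $\mathcal F$-transverse self-intersection, since $\mathbf e$ has period $q$ but not period $1$, each period contains both letters, and by cyclicity one finds indices $i_1,i_2\in\{0,\dots,q-1\}$ with $(\varepsilon_{i_1},\varepsilon_{i_1+1})=(1,2)$ and $(\varepsilon_{i_2},\varepsilon_{i_2+1})=(2,1)$. Near the first transition the concatenation $\gamma_1\gamma_2=\gamma\vert_{[s,t]}\gamma\vert_{[t,K+s]}$ is literally $\gamma\vert_{[t-\eta,t+\eta]}$ around parameter $t$; near the second, using $\gamma(K+s)=\gamma(s)$, the concatenation $\gamma_2\gamma_1$ coincides with $\gamma\vert_{[s-\eta,s+\eta]}$ around parameter $s$. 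Thus two well-chosen subpaths of $\gamma_{\mathbf e}$ are $\mathcal F$-equivalent to subpaths of $\gamma$ centred at $t$ and at $s$ respectively, and the assumed $\mathcal F$-transverse self-intersection of $\gamma$ at $\gamma(s)=\gamma(t)$ between these two branches of $\gamma$ transports to a $\mathcal F$-transverse self-intersection of $\gamma_{\mathbf e}$, hence of $\Gamma_{\mathbf e}$.

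With both properties in hand, Proposition \ref{pr: realization}(iii) applies to $\Gamma_{\mathbf e}$: taking the rational $1/(qrq_0)\in(0,1/(qrq_0)]$ yields a periodic point of period $qrq_0$ whose associated transverse loop over one period is equivalent to $\Gamma_{\mathbf e}$, which in the notation of the statement is the desired fixed point of $f^{rq}$. The main obstacle is the second step: the local $\mathcal F$-transverse crossing of $\gamma$ at $\gamma(s)=\gamma(t)$ must be upgraded to the universal-cover definition, which requires exhibiting a non-trivial covering automorphism $T$ of $\widetilde{\mathrm{dom}}(\mathcal F)$ such that $\widetilde\gamma_{\mathbf e}$ and $T(\widetilde\gamma_{\mathbf e})$ have a $\widetilde{\mathcal F}$-transverse intersection --- the natural candidate being the conjugate of the covering automorphism witnessing the self-intersection of $\gamma$, transported via the lifts of the two different transitions $i_1$ and $i_2$.
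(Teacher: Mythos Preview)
Your approach is exactly the paper's: deduce linear admissibility from Lemma \ref{lm:sequence_is_admissible}, check a $\mathcal F$-transverse self-intersection from non-constancy of $\mathbf e$, and invoke Proposition \ref{pr: realization}(iii). The paper's proof is equally brief on the self-intersection step (``since $\Gamma_{\mathbf e}$ is not a constant sequence, it has a self-intersection'').

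The obstacle you flag is not a real gap. In $\widetilde{\mathrm{dom}}(\mathcal F)$ the foliation $\widetilde{\mathcal F}$ is non-singular and each connected component is a plane, so every leaf is a line; a positively transverse path can only cross a given leaf from $R(\widetilde\phi)$ to $L(\widetilde\phi)$, hence crosses each leaf at most once. Now your two subpaths of $\gamma_{\mathbf e}$ around the $(1,2)$ and $(2,1)$ transitions are literally $\gamma\vert_{[t-\eta,t+\eta]}$ and $\gamma\vert_{[s-\eta,s+\eta]}$, which intersect $\mathcal F$-transversally at $\phi_{\gamma(s)}=\phi_{\gamma(t)}$. Lifting the whole line $\widetilde\gamma_{\mathbf e}$, the induced lifts $\widetilde\alpha_1,\widetilde\alpha_2$ of these two subpaths both meet a lift of $\phi_{\gamma(s)}$; if the deck transformation relating the two relevant lifts of $\gamma_{\mathbf e}$ were trivial, both $\widetilde\alpha_1$ and $\widetilde\alpha_2$ would be subpaths of the \emph{same} line $\widetilde\gamma_{\mathbf e}$ crossing a common leaf $\widetilde\phi$, contradicting the ``at most once'' property. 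Hence the required covering automorphism is automatically non-trivial, and $\Gamma_{\mathbf e}$ has a genuine $\mathcal F$-transverse self-intersection.
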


\begin{proof}
Lemma \ref{lm:sequence_is_admissible} tells us that, for every $n$, the path
$$\gamma\vert_{[0, s]}\left(\prod_{0\leq i< q} \gamma_{\varepsilon _i}\right)^n\, \gamma_{[t, K]}$$ 
is admissible of order \textcolor{black}{$(1+qn)Kq_0$} and consequently that  $\left(\prod_{0\leq i<q} \gamma_{\varepsilon _i}\right)^n$ is admissible of order \textcolor{black}{$(1+qn)Kq_0$}. So $\Gamma_{\mathbf e}$ is linearly admissible: it satisfies the condition $\textcolor{black}{(Q_{qKq_0})}$. Note that, since $\Gamma_{\mathbf{e}}$ is not a constant sequence, it has a self-intersection. The \textcolor{black}{lemma} follows by applying the realization result (Proposition  \ref{pr: realization}).
\end{proof}

Consider now the paths 
$$\gamma_1'=\gamma_1\gamma_2=\gamma_{[s, K+s]}, \,\,\, \gamma_2'=\gamma_2\gamma_1=\gamma\vert_{[t, K+t]}.$$ 

Since $\gamma\vert_{[0,K]}$ has a $\mathcal F$-transverse self-intersection at $z^*=\gamma(s)=\gamma(t)$, then for every lift $\widetilde\gamma$ of $\gamma$, there exists a covering automorphism $T$ such that $\widetilde\gamma\vert_{[0,K]}$ and $T(\widetilde\gamma)\vert_{[0,K]}$ have a $\widetilde{\mathcal F}$-transverse intersection at \textcolor{black}{$\widetilde\gamma(t)=T(\widetilde\gamma)(s)$}. Consequently, $\widetilde\gamma\vert_{[0,K+t]}$ and $T(\widetilde\gamma)\vert_{[0,K+s]}$ have a $\widetilde{\mathcal F}$-transverse intersection at \textcolor{black}{$\widetilde \gamma(t)=T(\widetilde\gamma)(s)$}. This implies that among the leaves $\phi_{\widetilde\gamma(K+t)}$ and $\phi_{T(\widetilde\gamma)(K+s)}$, one is above the other one relative to $\phi_{\widetilde\gamma(t)}=\phi_{T(\widetilde\gamma)(s)}$. This means that if $\widetilde\gamma'_1$ and $\widetilde\gamma'_2$ lift $\gamma'_1$ and $\gamma'_2$ respectively and start from the same point $\widetilde z^*$, then the leaf containing the ending point of $\widetilde\gamma'_1$ is either above or below the leaf containing the ending point of $\widetilde\gamma'_2$  (relative to $\phi_{\widetilde z}$), \textcolor{black}{see Figure \ref{Figure_exponentialgrowth}}. We do not loose any generality by supposing it is the former, which means that $\widetilde\gamma\vert_{[0,K]}$ and $T(\widetilde\gamma)\vert_{[0,K]}$ have a positive $\widetilde{\mathcal F}$-transverse intersection at $\gamma(t)=T(\widetilde\gamma)(s)$. In this situation $\phi_{\widetilde\gamma(0)}$ is above $\phi_{T(\widetilde\gamma)(0)}$ relative to $\phi_{\widetilde\gamma(t)}$, so $\phi_{\widetilde\gamma(t-K)}$ is above $\phi_{T(\widetilde\gamma)(s-K)}$ relative to $\phi_{\widetilde\gamma(t)}$. 

This means that, if $\widetilde\gamma'_1$ and $\widetilde\gamma'_2$ lift $\widetilde\gamma_1$ and $\widetilde\gamma_2$ respectively and end at the same point $\widetilde z^*$, then the leaf containing the starting point of $\widetilde\gamma'_1$ is below the leaf containing the ending point of $\widetilde\gamma'_2$ (relatively to $\phi_{\widetilde z}$).

We say that a finite word $\mathbf e= (\varepsilon_i)_{0\le i < 2n}\in\{1,2\}^{2n}$ is a {\it palindromic word} of length $2n$ if it satisfies $\varepsilon_{n+j}=\varepsilon_{n-j-1},\, 0\le j < n$. Let us fix a base point $\widetilde z^*$ projecting on $z^*$.  To each palindromic word $\mathbf e$ of length $2n$, we associate the loop $\Gamma'_{\mathbf e}$ naturally defined by the closed path $\prod_{0\leq i< 2n} \gamma'_{\varepsilon _i}$ and the lift $\widetilde \gamma'_{\mathbf e}=\widetilde \gamma'_{\mathbf e}{}^{-}\widetilde \gamma'_{\mathbf e}{}^{+}$ of $\prod_{0\leq i< 2n} \gamma'_{\varepsilon _i}$, where $\widetilde \gamma'_{\mathbf e}{}^{-}$ is the lift of 
$\prod_{0\leq i< n} \gamma'_{\varepsilon _i}$ ending at $\widetilde z^*$ and $\widetilde \gamma'_{\mathbf e}{}^{+}$ the lift of 
$\prod_{n\leq i< 2n-1} \gamma'_{\varepsilon _i}$ starting at $\widetilde z^*$. The ending point of $\widetilde\gamma'_{\mathbf e}$ is the image of its starting point by a covering automorphism that we denote $T_{\mathbf e}$. We define the path $\widetilde\gamma'{}^2_{\mathbf e}=
\widetilde\gamma'_{\mathbf e}T_{\mathbf e}(\widetilde\gamma'_{\mathbf e})$ and the line $\widetilde\gamma'{}^{\infty}_{\mathbf e}=\prod_{k\in\Z}T^k_{\mathbf e}(\widetilde\gamma'_{\mathbf e})$, which is a lift of $\Gamma'_{\mathbf e
}$.

\begin{lemma}\label{lm:intersect-transversally}
If $\mathbf e\not=\mathbf{e'}$ are two palindromic words of the same length, then the paths $\widetilde \gamma'_{\mathbf e}$ and $\widetilde \gamma'_{\mathbf{ e'}}$ intersect $\widetilde{\mathcal F}$-transversally at $\widetilde z$.
\end{lemma}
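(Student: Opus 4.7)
Since $\mathbf{e} \neq \mathbf{e'}$ are palindromic words of length $2n$, let $j$ be the smallest non-negative integer with $\varepsilon_{n+j} \neq \varepsilon'_{n+j}$. The palindromic identity $\varepsilon_{n+k}=\varepsilon_{n-1-k}$ (and the same for $\mathbf{e'}$) forces the two words to coincide on the central block of positions $\{n-j,\dots,n+j-1\}$ and to differ also at position $n-1-j$. I may assume $\varepsilon_{n+j}=\varepsilon_{n-1-j}=1$ and $\varepsilon'_{n+j}=\varepsilon'_{n-1-j}=2$.

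The shared central letters force $\widetilde{\gamma}'_{\mathbf{e}}$ and $\widetilde{\gamma}'_{\mathbf{e'}}$ to coincide as lifts on a common transverse sub-path from some lift $\widetilde{z}_{-1}$ of $z^*$ through $\widetilde{z}^*$ to some lift $\widetilde{z}_1$ of $z^*$, and to diverge immediately before $\widetilde{z}_{-1}$ and immediately after $\widetilde{z}_1$. Precisely, after $\widetilde{z}_1$ the path $\widetilde{\gamma}'_{\mathbf{e}}$ continues with the lift of $\gamma'_1$ and $\widetilde{\gamma}'_{\mathbf{e'}}$ with the lift of $\gamma'_2$, both starting at $\widetilde{z}_1$; symmetrically, before $\widetilde{z}_{-1}$ the two paths arrive via lifts of $\gamma'_1$ and $\gamma'_2$ respectively, both ending at $\widetilde{z}_{-1}$. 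Setting $b_1=b_2$ equal to the parameter of the endpoint of the first diverging lift after $\widetilde{z}_1$, and $a_1=a_2$ equal to the parameter of the starting point of the last diverging lift before $\widetilde{z}_{-1}$, the two observations established just before the lemma give $\phi_{\widetilde{\gamma}'_{\mathbf{e}}(b_1)}$ above $\phi_{\widetilde{\gamma}'_{\mathbf{e'}}(b_2)}$ relative to $\phi_{\widetilde{z}_1}$, and $\phi_{\widetilde{\gamma}'_{\mathbf{e}}(a_1)}$ below $\phi_{\widetilde{\gamma}'_{\mathbf{e'}}(a_2)}$ relative to $\phi_{\widetilde{z}_{-1}}$.

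The crux of the proof is to transfer these ``above'' and ``below'' relations from the reference leaves $\phi_{\widetilde{z}_{\pm 1}}$ to the leaf $\phi_{\widetilde{z}^*}$. The three leaves $\phi_{\widetilde{z}_{-1}}, \phi_{\widetilde{z}^*}, \phi_{\widetilde{z}_1}$ are pairwise disjoint and linearly ordered along the shared transverse sub-path: one has $R(\phi_{\widetilde{z}_{-1}})\subset R(\phi_{\widetilde{z}^*})\subset R(\phi_{\widetilde{z}_1})$. Since the two diverging leaves at $b_i$ lie in $L(\phi_{\widetilde{z}_1})\subset L(\phi_{\widetilde{z}^*})$ (and symmetrically at $a_i$ in $R(\phi_{\widetilde{z}_{-1}})\subset R(\phi_{\widetilde{z}^*})$), and since the shared transverse sub-path from $\widetilde{z}^*$ to $\widetilde{z}_1$ does not meet these diverging leaves, I first check that the non-separation conditions required by the ``above'' relation persist for the triple $(\phi_{\widetilde{z}^*},\phi_{\widetilde{\gamma}'_{\mathbf{e}}(b_1)},\phi_{\widetilde{\gamma}'_{\mathbf{e'}}(b_2)})$: the leaf $\phi_{\widetilde{z}^*}$ does not separate the other two because they both lie in $L(\phi_{\widetilde{z}^*})$, and neither diverging leaf separates $\phi_{\widetilde{z}^*}$ from the other because the non-separation relative to $\phi_{\widetilde{z}_1}$ combined with the fact that $\phi_{\widetilde{z}^*}$ and $\phi_{\widetilde{z}_1}$ lie on the same side of each diverging leaf forces $\phi_{\widetilde{z}^*}$ and the other diverging leaf onto the same side. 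The disjoint connecting arcs witnessing the ordering relative to $\phi_{\widetilde{z}_1}$ then extend backwards along the shared sub-path to arcs from points of $\phi_{\widetilde{z}^*}$, without creating new intersections, preserving the relative order of their endpoints; the symmetric argument on the $a$-side yields the opposite ordering relative to $\phi_{\widetilde{z}^*}$.

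The main obstacle is precisely this transfer step: the notion of ``above relative to'' is sensitive to both the pairwise non-separation of the three leaves and the order of the connecting arcs on the reference leaf, and both must be verified when the reference is shifted back along the shared sub-path. Once established, the two inequalities relative to $\phi_{\widetilde{z}^*}$ have opposite signs, which is exactly the data required by the definition of $\widetilde{\mathcal{F}}$-transverse intersection at $\widetilde{z}^*$, completing the proof.
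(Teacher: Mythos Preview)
Your proposal is correct and follows the same strategy as the paper: locate the first index of divergence, use the palindromic symmetry to get opposite above/below relations on the forward and backward sides, and transfer these to the central leaf $\phi_{\widetilde z^*}$. The paper handles the transfer step in one parenthetical remark by invoking the general persistence fact established in Section~3.2 (the above/below relation relative to a leaf met by both paths is unchanged when that reference leaf is replaced by any other common leaf), whereas your direct verification is sound on the non-separation part but the phrase ``extend backwards along the shared sub-path'' should be read as routing the two witnessing arcs along two disjoint parallel copies of that sub-path, since extending both along the same path would make them intersect.
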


\begin{proof}
If ${\bf e}\not={\bf e'}$, there exists $k\in\{0,\dots, n-1\}$ such that $\varepsilon_{n+j}=\varepsilon'_{n+j}$ if $0\leq j<k$ and $\varepsilon_{n+k}\not=\varepsilon'_{n+k}$. Let us suppose for example that $\varepsilon_{n+k}=1$ and $\varepsilon'_{n+k}=2$. The paths $\prod_{0\leq j<k}\gamma'_{\varepsilon_{n+j}}$ and $\prod_{0\leq j<k}\gamma'_{\varepsilon'_{n+j}}$ are equal, as are the lifts starting from $\widetilde z^*$. Let us write $\widetilde z$ for the ending point of the common lift. The leaf containing the ending point of the lift of $\prod_{0\leq j\leq k}\gamma'_{\varepsilon_{n+j}}$ starting from $\widetilde z^*$ is above (relative to $\phi_{\widetilde z}$ but also relative to $\phi_{\widetilde z^*}$) the leaf containing the ending point of the lift of $\prod_{0\leq j\leq k}\gamma'_{\varepsilon'_{n+j}}$ starting from $\widetilde z^*$. So, we have a similar result replacing $\prod_{0\leq j\leq k}\gamma'_{\varepsilon_{n+j}}$  with the extension $\widetilde \gamma_{\mathbf e}^{+}$ and $\prod_{0\leq j\leq k}\gamma'_{\varepsilon'_{n+j}}$  with the extension $\widetilde \gamma_{\mathbf e'}^{+}$. One proves similarly that the leaf containing the starting point of $\widetilde \gamma_{\mathbf e}^{-}$ is below \textcolor{black}{(relative to} $\phi_{\widetilde z^*}$) the leaf containing the starting point of $\widetilde \gamma_{\mathbf e'}^{-}$.

\end{proof}

By \textcolor{black}{ Lemma \ref{lm:associated_periodic_point}}, for every palindromic word $\mathbf e$ of length $2n$, there exists a fixed point $z_{\mathbf e}$ of \textcolor{black}{$f^{4nKq_0}$} such that $\Gamma'_{\mathbf e}$ is associated to  $z_{\mathbf e}$. 

\begin{lemma}\label{lm:bounded}
There exists a constant $L>0$ such that, given a palindromic word $\mathbf e$ of length $2n$, there are at most $L n^2$ different palindromic words  $\mathbf{e'}$ of length $2n$ such that $\Gamma'_{\mathbf e}$ and $\Gamma'_{\mathbf e'}$ are equivalent. 
\end{lemma}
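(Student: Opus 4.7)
The strategy is to lift the equivalence of loops to the universal cover $\widetilde{\mathrm{dom}}(\mathcal F)$ and combine Proposition~\ref{pr: finite_homotopy-classes} applied to the original loop $\Gamma$ with a combinatorial count on the palindromic structure. Because each palindrome $\mathbf{e'}$ giving an equivalent loop is encoded by two pieces of data -- an alignment of the base lift $\widetilde z^*$ and a palindromic center -- each contributing a factor of $n$, we will obtain the quadratic bound $Ln^2$.

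First, suppose $\Gamma'_{\mathbf{e'}} \sim \Gamma'_{\mathbf e}$. By the definition of equivalence of transverse loops, there exists a covering automorphism $V$ with $T_{\mathbf e} = V T_{\mathbf{e'}} V^{-1}$ such that $V\widetilde\gamma'_{\mathbf{e'}}{}^{\infty}$ is $\widetilde{\mathcal F}$-equivalent to $\widetilde\gamma'_{\mathbf e}{}^{\infty}$ as $T_{\mathbf e}$-invariant lines. Under this $\widetilde{\mathcal F}$-equivalence the base lift $\widetilde z^*$ of $z^*$ lying on $\widetilde\gamma'_{\mathbf e}{}^{\infty}$ corresponds to a lift $V\widetilde z^*$ of $z^*$ which also lies on (a leaf met by) $\widetilde\gamma'_{\mathbf e}{}^{\infty}$. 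Thus the palindrome $\mathbf{e'}$ is determined by the position of $V\widetilde z^*$ modulo $T_{\mathbf e}$ and by the palindromic center of the reading.

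For the position count, each of the $2n$ pieces $\gamma'_{\varepsilon_i}$ composing $\widetilde\gamma'_{\mathbf e}{}^{\infty}$ visits only a bounded number of lifts of $z^*$ (the bound depending on $K$ and $\Gamma$ only), so one fundamental domain of $T_{\mathbf e}$ contains $O(n)$ such lifts. Proposition~\ref{pr: finite_homotopy-classes}, applied to $\Gamma$ after slightly perturbing $z^*$ away from $\Gamma$, then bounds the number of distinct covering automorphisms $V$ compatible with each candidate position by $2M(\Gamma)$, a constant depending only on $\Gamma$. This gives $O(n)$ alignments. For each alignment, the sequence of crossings of lifts of $z^*$ is dictated by the $\widetilde{\mathcal F}$-equivalence, but the palindromic constraint still leaves $O(n)$ possible positions for the symmetry center in the worst case (namely when the underlying loop admits many palindromic decompositions). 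Multiplying the two $O(n)$ counts yields $Ln^2$ with $L$ depending only on $\Gamma$ through $M(\Gamma)$, the width of $\Gamma$, $K$, and $q_0$.

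The main technical obstacle will be the second factor: justifying that the number of admissible palindromic centers for a given alignment is genuinely bounded by $O(n)$, without overcounting. This requires a careful description of how the palindromic reflection symmetry interacts with the $\widetilde{\mathcal F}$-equivalence in the universal cover, in particular identifying which of the $2n$ lifts of $z^*$ per period can serve as a reflection fixed point compatible with the identification of $\widetilde z^*$ with $V\widetilde z^*$.
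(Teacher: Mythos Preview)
Your proposal has genuine gaps and takes a more complicated route than the paper.

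First, your invocation of Proposition~\ref{pr: finite_homotopy-classes} is misplaced. That proposition bounds homotopy classes of paths between two points \emph{disjoint from} $\Gamma$, under the constraint that the path is simultaneously representable by a path disjoint from $\Gamma$ and by a transverse subpath of $\gamma$. None of these hypotheses are present here: $z^*$ lies on $\Gamma$, and nothing forces any relevant path to avoid $\Gamma$. Moreover, once the lift $V\widetilde z^*$ is fixed, the freeness of the deck action already determines $V$ uniquely, so there is no extra factor of $2M(\Gamma)$ to explain. Second, you flag the palindromic-center count as the main obstacle and do not resolve it; the difficulty is real, because an $\widetilde{\mathcal F}$-equivalence of lines does not a priori respect the block decomposition into pieces $\gamma'_1,\gamma'_2$, so it is not clear how to read off $\mathbf{e'}$ from alignment data alone.

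The paper avoids both issues by never attempting to reconstruct $\mathbf{e'}$. Instead it exhibits an injection $\mathbf{e'}\mapsto S_{\mathbf{e'}}$ into the set of covering automorphisms $S$ for which $\widetilde\gamma'_{\mathbf e}\cap S(\widetilde\gamma'{}^2_{\mathbf e})\neq\emptyset$, and bounds the latter set by $8L'n^2$ directly from proper discontinuity: there is a constant $L'$ with at most $L'$ automorphisms $S$ satisfying $\widetilde\gamma'_i\cap S(\widetilde\gamma'_j)\neq\emptyset$ for each pair $i,j\in\{1,2\}$, and $\widetilde\gamma'_{\mathbf e}$, $\widetilde\gamma'{}^2_{\mathbf e}$ are concatenations of $2n$ and $4n$ such pieces. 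The key input you are missing is Lemma~\ref{lm:intersect-transversally}: distinct palindromes $\mathbf{e'}\neq\mathbf{e''}$ give paths $\widetilde\gamma'_{\mathbf{e'}},\widetilde\gamma'_{\mathbf{e''}}$ that intersect $\widetilde{\mathcal F}$-transversally at $\widetilde z^*$, hence cannot be $\widetilde{\mathcal F}$-equivalent. This is exactly what forces $S_{\mathbf{e'}}\neq S_{\mathbf{e''}}$ and makes the injection work; it also shows, after normalization, that $\widetilde\gamma'_{\mathbf e}$ and $S_{\mathbf{e'}}(\widetilde\gamma'{}^2_{\mathbf e})$ genuinely intersect, placing $S_{\mathbf{e'}}$ in the counted set. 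Once you use this lemma, the separate ``palindromic center'' bookkeeping becomes unnecessary.
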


\begin{proof}

Let $\widetilde\gamma'_1$ and $\widetilde\gamma'_2$ be two respective lifts of  $\gamma'_1$ and $\gamma'_2$  to $\widetilde {\mathrm{dom}}(I)$. The group of covering automorphisms acts freely and properly. So there exists a constant $L'$ such that there are at most $L'$ automorphisms $S$ such that $\widetilde\gamma'_1\cap S(\widetilde\gamma'_1)\not=\emptyset$, at most $L'$ automorphisms $S$ such that $\widetilde\gamma'_2\cap S(\widetilde\gamma'_2)\not=\emptyset$ and at most $L'$ automorphisms $S$ such that $\widetilde\gamma'_1\cap S(\widetilde\gamma'_2)\not=\emptyset$. Of course, $L'$ is independent of the choices of $\widetilde\gamma'_1$ and $\widetilde\gamma'_2$. We deduce that for every palindromic word ${\bf e}$ of length $2n$, there are at most $8L'n^2$ automorphisms $S$ such that $\widetilde\gamma'_{\bf e}\cap S(\widetilde\gamma'{}^2_{\bf e})\not=\emptyset$. This implies that there are at most $8L'n^2$ automorphisms $S$ such that $\widetilde\gamma'_{\bf e}$ and $S(\widetilde\gamma'{}^2_{\bf e})$ have a $\widetilde{\mathcal{F}}$-transverse intersection. 

Suppose that $\mathbf e$ and $\mathbf e'$ are two palindromic words of length $2n$ such that $\Gamma'_{\mathbf e}$ and $\Gamma'_{\mathbf e'}$ are equivalent. There exists a covering automorphism $S_{\mathbf e'}$ such that $\widetilde\gamma'{}^{\infty}_{\mathbf e'}$ is equivalent to $S_{\mathbf e'}(\widetilde\gamma'{}^{\infty}_{\mathbf e})$ and such that $S_{\mathbf e'}\circ T_{\mathbf e}\circ S_{\mathbf e'}^{-1}=  T_{\mathbf e'}$.  Composing $S_{\mathbf e'}$ on the left by a power of $T_{\mathbf e'}$ if necessary, one can suppose that $\widetilde\gamma'_{\mathbf e'}$ is equivalent to a subpath of $S_{\mathbf e'}(\widetilde\gamma'{}^2_{\mathbf e})$. By Lemma \ref{lm:intersect-transversally}, one deduces that $\widetilde\gamma'_{\mathbf e}$ and $S_{\mathbf e'}(\widetilde\gamma'{}^2_{\mathbf e})$ intersect $\widetilde{\mathcal F}$-transversally. It remains to prove that  $S_{\mathbf e'}\not=S_{\mathbf e''}$ if $\mathbf e'\not =\mathbf e''$. But if this the case, then $\widetilde\Gamma'_{\mathbf e'}$ and $\widetilde\Gamma'_{\mathbf e''}$ are equivalent, which is impossible because this two paths intersect $\widetilde{\mathcal{F}}$-transversally at $\widetilde z$.
\end{proof}

\textcolor{black}{Since there exists $2^{n}$ different palindromic words of length $2n$, one concludes by Lemmas \ref{lm:associated_periodic_point} and \ref{lm:bounded} that $f^{4nKq_0}$ has at least $\frac{2^{n}}{L n^2}$ distinct fixed points, proving Proposition \ref{pr:transverse_loop_exponentialgrowth}.}


\subsection{Topological entropy}

\bigskip

By the previous result, it is  natural to believe that the topological entropy is positive, in case $M$ is compact. The next result asserts that this is the case: 

\begin{theorem}\label{th:transverse_imply_entropy}
 Let $M$ be a compact surface, $\gamma_1, \gamma_2 :\R\to M$ be two admissible $\mathcal{F}$-positively recurrent transverse paths (possibly equal) with an $\mathcal{F}$-transverse intersection. Then the topological entropy of $f$ is positive.
\end{theorem}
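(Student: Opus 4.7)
The strategy is to upgrade the exponential growth of periodic points from Theorem~\ref{th:transverse_imply_periodic points} into a quantitative $(p_n,\varepsilon_0)$-separation estimate, from which positive topological entropy will follow by the Bowen-Dinaburg formula.

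The plan is as follows. I would first reproduce the construction behind Theorem~\ref{th:transverse_imply_periodic points}: Lemma~\ref{lm:recurrent_to_loop} extracts a linearly admissible transverse loop $\Gamma$ with a $\mathcal{F}$-transverse self-intersection, and for each palindromic word $\mathbf{e}\in\{1,2\}^{2n}$ one obtains a periodic point $z_{\mathbf{e}}$ of period dividing $p_n=Cn$ (where $C$ depends only on $\Gamma$), with whole transverse trajectory equivalent to the bi-infinite periodic concatenation $\cdots\Gamma'_{\mathbf{e}}\Gamma'_{\mathbf{e}}\Gamma'_{\mathbf{e}}\cdots$. By Lemma~\ref{lm:bounded} at least $2^n/(Ln^2)$ of these orbits are pairwise distinct. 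If one can show that they form a $(p_n,\varepsilon_0)$-separated set for some $\varepsilon_0>0$ independent of $n$, then
\[
h(f)\ge\limsup_{n\to\infty}\frac{1}{p_n}\log\frac{2^n}{Ln^2}=\frac{\log 2}{C}>0.
\]

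To prove the separation statement, I would argue by contradiction. Suppose $\mathbf{e},\mathbf{e}'$ yield non-equivalent loops $\Gamma'_{\mathbf{e}},\Gamma'_{\mathbf{e}'}$ and yet $d(f^k(z_{\mathbf{e}}),f^k(z_{\mathbf{e}'}))<\varepsilon_0$ for every $k\in\{0,\ldots,p_n-1\}$; by periodicity this persists for every $k\in\mathbb{Z}$. Using compactness of $M$, cover it by finitely many open sets $V_j$, each contained in a trivialization chart of $\mathcal{F}$ whose image by $f$ also lies in a trivialization chart, and take $\varepsilon_0$ smaller than the Lebesgue number of $\{V_j\}$. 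Any $z,z'$ with $d(z,z')<\varepsilon_0$ then lie in a common $V_j$, and the local product structure of $\mathcal{F}$ forces their transverse paths $I_{\mathcal{F}}(z),I_{\mathcal{F}}(z')$ to cross the same leaves in the same order, up to a uniformly bounded trim at the endpoints --- this is a uniform version of Lemma~\ref{le: continuity}. Iterating along the whole orbit and using that bi-infinite paths admit no endpoint trimming, the whole transverse trajectories $I^{\mathbb{Z}}_{\mathcal{F}}(z_{\mathbf{e}})$ and $I^{\mathbb{Z}}_{\mathcal{F}}(z_{\mathbf{e}'})$ must be $\mathcal{F}$-equivalent; extracting one period, the primitive loops $\Gamma'_{\mathbf{e}}$ and $\Gamma'_{\mathbf{e}'}$ are themselves equivalent, a contradiction.

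The hard part will be the uniform continuity step: arranging that the endpoint trimming is truly bounded and does not accumulate as one iterates along $p_n\sim n$ steps. The periodicity of the orbits $z_{\mathbf{e}}$ is what makes this work, since the transverse trajectories under comparison are bi-infinite and have no endpoints whose trimming could accumulate; the required $\mathcal{F}$-equivalence is certified by an increasing homeomorphism $\mathbb{R}\to\mathbb{R}$ constructed in one stroke from the local compatibility of the leaf crossings inside each chart $V_j$, rather than iteratively. A secondary but essential point is that descending the $\mathcal{F}$-equivalence of bi-infinite periodic transverse paths to $\mathcal{F}$-equivalence of the underlying primitive loops uses in an essential way the definition of equivalence of transverse loops via an equivariant reparametrization as in subsection 3.1.
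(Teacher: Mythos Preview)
There is a genuine gap in the separation argument. You propose to cover $M$ by finitely many open sets $V_j$, each contained in a trivialization chart of $\mathcal F$, and to take $\varepsilon_0$ below the Lebesgue number of this cover. But $\mathcal F$ is a \emph{singular} foliation: the singular set $\mathrm{sing}(\mathcal F)=\mathrm{sing}(I)\subset M$ is in general nonempty, and no trivialization chart exists around a singular point. Hence $M$ cannot be covered by such charts, and no uniform $\varepsilon_0$ is available. One might hope instead that the periodic orbits $\{f^k(z_{\mathbf e})\}$ all lie in a fixed compact subset of $\mathrm{dom}(I)$, but nothing in the construction guarantees this: the orbit of $z_{\mathbf e}$ is only constrained to lie on the leaves crossed by $\gamma'_1,\gamma'_2$, and these leaves may accumulate on $\mathrm{sing}(I)$. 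Near the singular set both the size of trivialization boxes and the injectivity radius of the cover $\widetilde{\mathrm{dom}}(I)\to\mathrm{dom}(I)$ go to zero, so two $\varepsilon_0$-close points may sit on leaves that behave very differently, and the ``local product structure'' heuristic collapses. The same issue blocks the inference from $\varepsilon_0$-closeness of orbits in $M$ to $\mathcal F$-equivalence of their whole transverse trajectories, since the latter notion is defined on $\widetilde{\mathrm{dom}}(I)$.

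The paper's proof is engineered precisely to get around this obstruction. It passes to the Alexandrov one-point compactification $\mathrm{dom}(I)_{\mathrm{alex}}$ and computes entropy relative to open covers rather than via $(n,\varepsilon)$-separated sets. The cover $\mathcal V_p$ consists of the carefully built disks $V_z\subset\mathrm{dom}(I)$ of Lemma~\ref{le:good_covering} (whose diameters necessarily shrink near the singular set) together with a single neighbourhood $V_{\infty,p}$ of the point at infinity; Lemma~\ref{le: neighborhood of infinity} ensures the orbits $z_{\mathbf e}$ cannot stay in $V_{\infty}$ for too long. The decisive extra ingredient, with no counterpart in your proposal, is Proposition~\ref{pr: finite_homotopy-classes}: it bounds by a fixed constant $M(\Gamma')$ the number of homotopy classes of paths between two given points that are simultaneously realizable by a transverse subpath of $\gamma'$ and by a path disjoint from $\Gamma'$. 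This is exactly what controls the combinatorial ambiguity created each time an orbit makes an excursion toward the singular set between two returns to the compact core, and yields the lower bound $h(f_{\mathrm{alex}})\ge \log 2/(4r)-\log M(\Gamma')/(2p)$ of Lemma~\ref{le: entropy covering}, hence $h(f)\ge h(f_{\mathrm{alex}})\ge\log 2/(4r)$ after letting $p\to\infty$.
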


\begin{remark} As we will see in the proof, Theorem  \ref{th:transverse_imply_entropy} will be stated even in case where $M$ is not compact by proving that its Alexandrov extension has positive entropy.  More precisely, write $\mathrm{dom}(I)_{\mathrm{alex}}$ for the Alexandrov compactification of $\mathrm{dom}(I)$ if it is not compact, and $f_{\mathrm{alex}}$ for the extension of $f\vert_{\mathrm{dom}(I)}$ that fixes the point at infinity (otherwise set $\mathrm{dom}(I)_{\mathrm{alex}}= \mathrm{dom}(I)$ and $f_{\mathrm{alex}}=f\vert_{\mathrm{dom}(I)}$ in what follows).  Of course, $f_{\mathrm{alex}}$  is a factor of $f$ and so $h(f)\ge h(f_{\mathrm{alex}})$ if $M$ is compact. 
\end{remark}

Theorem \ref{th:transverse_imply_entropy} will be the direct consequence of Lemma \ref{lm:recurrent_to_loop} and the following result:

\begin{proposition}\label{pr:transversecycle_imply_entropy}
 Let $\Gamma$ be a transverse loop with an $\mathcal{F}$-transverse self-intersection, and $\gamma$ its natural lift. Assume that there exists integers $K, r$ such that $\gamma\vert_{[0, K]}$ has an $\mathcal{F}$-transverse self-intersection, and such that $\gamma\vert_{[0, mK]}$ is admissible of order $mr$ for every $m\geq 1$. Then the topological entropy of $f_{\mathrm{alex}}$ is at least equal to $\log 2/(4r)$.
\end{proposition}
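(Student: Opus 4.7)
The plan is to adapt the combinatorial machinery from Proposition~\ref{pr:transverse_loop_exponentialgrowth} to produce an exponentially large $(N, \varepsilon_0)$-separated set for $N$ of order $4nr$, which will give the desired entropy lower bound. With $\gamma_1 = \gamma\vert_{[s,t]}$, $\gamma_2 = \gamma\vert_{[t, K+s]}$, $\gamma'_1 = \gamma_1\gamma_2$, $\gamma'_2 = \gamma_2\gamma_1$ as in that proof, the hypothesis ``$\gamma\vert_{[0, mK]}$ is admissible of order $mr$ for every $m\geq 1$'' is precisely the hypothesis of Lemma~\ref{lm:sequence_is_admissible} with $q_0 = 1$. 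Hence for every palindromic word $\mathbf{e}\in\{1,2\}^{2n}$---of which there are $2^n$---the associated length-$4n$ word $(\delta_i)$ in $\{\gamma_1,\gamma_2\}$ produces a path $\gamma\vert_{[0,s]}\prod_{0\leq i < 4n}\gamma_{\delta_i}\gamma\vert_{[t,K]}$ admissible of order $(4n+1)r$, so there is a point $z_{\mathbf{e}} \in \mathrm{dom}(I)$ whose transverse trajectory $I^{(4n+1)r}_{\mathcal{F}}(z_{\mathbf{e}})$ is equivalent to this path.

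The core step is to show that the family $\{z_{\mathbf{e}}\}_{\mathbf{e}}$, up to polynomially many identifications, is a $(4nr, \varepsilon_0)$-separated set in $\mathrm{dom}(I)_{\mathrm{alex}}$ for some $\varepsilon_0 > 0$ independent of $n$. I would fix a small trivialization neighborhood $V$ of $z^* = \gamma(s) = \gamma(t)$ and choose $\varepsilon_0$ smaller than the diameter of any leafwise component of $V$. Given $\mathbf{e}\ne\mathbf{e}'$, let $i$ be the first index at which the underlying length-$4n$ words disagree. The $\mathcal{F}$-transverse self-intersection of $\gamma\vert_{[0,K]}$ at $z^*$ is exactly the statement that, in the universal cover of $\mathrm{dom}(I)$, the local branches following $\gamma_1$ and $\gamma_2$ leave $V$ on opposite sides of the lifted leaf through $z^*$. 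Combining Lemma~\ref{le: continuity} (to bridge from the abstract transverse trajectory to the actual orbit) with Lemma~\ref{le: neighborhood of infinity} (to confine all relevant orbits to a single compact subset of $\mathrm{dom}(I)$, which makes the Alexandrov compactification harmless), I would conclude that at some time $k \leq 4nr$ the points $f^k(z_{\mathbf{e}})$ and $f^k(z_{\mathbf{e}'})$ lie in distinct components of $V$ and are therefore at distance at least $\varepsilon_0$ apart. An overcounting estimate in the spirit of Lemma~\ref{lm:bounded} will account for the at most $\mathrm{poly}(n)$ palindromic words that might give rise to the same point $z_{\mathbf{e}}$.

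Counting $2^n/\mathrm{poly}(n)$ distinct $(4nr, \varepsilon_0)$-separated points, Bowen's definition of entropy then gives $h(f_{\mathrm{alex}}) \geq \log 2/(4r)$. The main obstacle is the separation step: abstract admissibility only prescribes the equivalence class of the transverse trajectory, not the precise locations of the iterates $f^k(z_{\mathbf{e}})$ in $M$, so one has to transfer a purely combinatorial distinction (choice of $\gamma_1$ versus $\gamma_2$) into a uniform metric gap. The key will be that on the universal cover two orbits that have made different choices at the $i$th visit to $z^*$ are separated by a leaf of $\widetilde{\mathcal{F}}$, which is a Brouwer line for $\widetilde f$; this topological obstruction prevents them from re-approaching within a bounded number of iterates and thereby descends to $M$ as a uniform $\varepsilon_0$ independent of $n$.
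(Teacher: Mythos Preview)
Your proposal identifies the right combinatorial skeleton (palindromic words, the paths $\gamma'_1,\gamma'_2$, Lemma~\ref{lm:sequence_is_admissible}), but the separation step is a genuine gap, and it is precisely the reason the paper does \emph{not} argue via $(N,\varepsilon)$-separated sets.

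The issue is timing. Admissibility of order $(4n+1)r$ tells you only that the \emph{equivalence class} of $I^{(4n+1)r}_{\mathcal{F}}(z_{\mathbf e})$ is the prescribed word; it gives no control over \emph{which} iterate $f^k(z_{\mathbf e})$ sits near the $i$-th copy of $z^*$. For two different words $\mathbf e,\mathbf e'$ the first discrepancy occurs at some combinatorial index $i$, but the actual times $k,k'$ at which the two orbits reach that branching leaf can be completely desynchronized, and there is nothing preventing $f^k(z_{\mathbf e})$ and $f^k(z_{\mathbf e'})$ from being close for every $k\le 4nr$ while the orbits realize different words at different speeds. Your Brouwer-line remark is topological: it says an orbit cannot recross a leaf, but it does not produce a uniform metric gap at a \emph{common} time $k$, which is what Bowen separation requires. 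Lemmas~\ref{le: continuity} and~\ref{le: neighborhood of infinity} do not help here; the first is a local continuity statement and the second only confines orbits to a compact set.

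The paper avoids this obstacle entirely by switching to the open-cover definition of entropy. It first uses the realization theorem to make each $z_{\mathbf e}$ a \emph{periodic} point of period $4nr$ (you skip this, taking an arbitrary admissible point), then builds a tailored cover $\mathcal V_p$ via Lemma~\ref{le:good_covering} together with a neighborhood of infinity. The key idea you are missing is Proposition~\ref{pr: finite_homotopy-classes}: if two periodic points $z_{\mathbf e},z_{\mathbf e'}$ lie in the same element of $\bigvee_{0\le k<4nr} f^{-k}(\mathcal V_p)$, one can reconstruct, up to at most $M(\Gamma')^{2nr/p}$ homotopy ambiguities coming from excursions near infinity, the free homotopy class of $\Gamma'_{\mathbf e}$ from the itinerary through the cover. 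Combined with Lemma~\ref{lm:bounded} this bounds the multiplicity in each cover element, giving $h(f_{\mathrm{alex}},\mathcal V_p)\ge \log 2/(4r)-\log M(\Gamma')/(2p)$, and one lets $p\to\infty$. The argument is homotopy-theoretic rather than metric, and that is exactly what makes it work.
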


Before proving the proposition, we will need the following lemma:
\bigskip

\begin{lemma}\label{le:good_covering}
 There exists a covering $(V_z)_{z\in \mathrm{dom}(I)}$ of $\mathrm{dom}(I)$ satisfying the following properties:
 
 \smallskip 
 \noindent{\bf i)} \enskip\enskip $V_z$ is an open disk that contains $z$;

 \smallskip 
 \noindent{\bf ii)} \enskip\enskip for every $z_1$, $z_2$ in $\mathrm{dom}(I)$, for every integer $p\geq 1$ and for every $z\in V_{z_1}\cap f^{-p}(V_{z_2})$ there exists a transverse path joining $z_1$ to $z_2$ equivalent to a subpath of $I_{\mathcal F}^{p+2}(f^{-1}(z))$,  that is homotopic, with endpoints fixed, to the path $\alpha_1 I^p(z)\alpha_2^{-1}$, where $\alpha_1$ is a path in $V_{z_1}$ that joins $z_1$ to $z$ and $\alpha_2$ is a path in $V_{z_2}$ that joins $z_2$ to $f^p(z)$;
 
 \smallskip 
 \noindent{\bf iii)} \enskip\enskip in the previous assertion, if $p=1$, the homotopy class of the path that joins $z_1$ to $z_2$ does not depend on $z\in V_{z_1}\cap f^{-1}(V_{z_2})$.
 
 \end{lemma}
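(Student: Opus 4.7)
The plan is to construct $V_z$ as the projection of a small open disk $\widetilde V_z$ in $\widetilde{\mathrm{dom}}(I)$ containing a chosen lift $\widetilde z$ of $z$, trapped between two nearby leaves of $\widetilde{\mathcal F}$. This refines the construction of $W$ in Lemma \ref{le: continuity}. For each $z$ I first pick leaves $\phi_z^0,\phi_z^1$ of $\widetilde{\mathcal F}$ satisfying
$$\overline{R(\phi_{\widetilde f^{-1}(\widetilde z)})}\subset R(\phi_z^0)\subset \overline{R(\phi_z^0)}\subset R(\phi_{\widetilde z})\subset \overline{R(\phi_{\widetilde z})}\subset R(\phi_z^1)\subset \overline{R(\phi_z^1)}\subset R(\phi_{\widetilde f(\widetilde z)}),$$
which exist because $I_{\widetilde{\mathcal F}}(\widetilde f^{-1}(\widetilde z))$ and $I_{\widetilde{\mathcal F}}(\widetilde z)$ cross whole families of intermediate leaves. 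By continuity of $\widetilde f^{\pm 1}$ I take $\widetilde V_z$ small enough that $\widetilde f^{-1}(\overline{\widetilde V_z})\subset R(\phi_z^0)$, $\overline{\widetilde V_z}\subset L(\phi_z^0)\cap R(\phi_z^1)$ and $\widetilde f(\overline{\widetilde V_z})\subset L(\phi_z^1)$, and so that $\pi\vert_{\widetilde V_z}$ is a homeomorphism onto an open disk $V_z\ni z$. To prepare for (iii), I additionally shrink so that $V_z\cup f(V_z)$ is contained in a simply-connected, evenly-covered open set $D_z$, arranging simultaneously that for every other such neighborhood $V_{z'}$ with $V_{z'}\cap D_z\ne\emptyset$ the intersection $V_{z'}\cap D_z$ is connected; this can be done by working in local Euclidean charts and taking all $V_z$ to be sufficiently small convex disks.

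For property (ii), given $z\in V_{z_1}\cap f^{-p}(V_{z_2})$ with $p\geq 1$, let $\widetilde z\in\widetilde V_{z_1}$ be the unique lift of $z$ in this sheet, and let $T$ be the covering automorphism with $\widetilde f^p(\widetilde z)\in T\widetilde V_{z_2}$. The trapping conditions applied to $\widetilde z_1$, applied (via the translation $T$) to $\widetilde z_2$, and combined with the elementary fact that $R(\phi_{\widetilde f^i(\widetilde z)})\subset R(\phi_{\widetilde f^{i+1}(\widetilde z)})$ (each leaf being a Brouwer line of $\widetilde f$), produce the single increasing chain
$$R(\phi_{\widetilde f^{-1}(\widetilde z)})\subset R(\phi_{z_1}^0)\subset R(\phi_{\widetilde z_1})\subset R(\phi_{z_1}^1)\subset R(\phi_{\widetilde f(\widetilde z)})\subset\cdots\subset R(\phi_{\widetilde f^{p-1}(\widetilde z)})\subset R(T\phi_{z_2}^0)\subset R(\phi_{T\widetilde z_2})\subset R(T\phi_{z_2}^1)\subset R(\phi_{\widetilde f^{p+1}(\widetilde z)}).$$
This shows that both $\phi_{\widetilde z_1}$ and $\phi_{T\widetilde z_2}$ are leaves crossed by $I_{\widetilde{\mathcal F}}^{p+2}(\widetilde f^{-1}(\widetilde z))$, in that order, so I can pick the representative $\widetilde\gamma$ of the corresponding equivalence class whose endpoints are exactly $\widetilde z_1$ and $T\widetilde z_2$ (by sliding along leaves) and project to obtain a transverse path $\gamma$ from $z_1$ to $z_2$ equivalent to a subpath of $I_{\mathcal F}^{p+2}(f^{-1}(z))$. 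The homotopy assertion is then automatic: $\widetilde{\mathrm{dom}}(I)$ is simply connected, so the concatenation $\widetilde\alpha_1\widetilde I^p(\widetilde z)\widetilde\alpha_2^{-1}$ (with $\widetilde\alpha_1\subset\widetilde V_{z_1}$ and $\widetilde\alpha_2\subset T\widetilde V_{z_2}$ the lifts of $\alpha_1$, $\alpha_2$) is another path from $\widetilde z_1$ to $T\widetilde z_2$, hence homotopic rel endpoints to $\widetilde\gamma$ in the plane, and this homotopy projects to the required one in $\mathrm{dom}(I)$.

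For property (iii), the homotopy class of the path constructed in (ii) is determined, once $z_1,z_2$ and the chosen lift $\widetilde V_{z_1}$ are fixed, by the covering automorphism $T=T_z$, so I need only show that $T_z$ does not depend on $z\in V_{z_1}\cap f^{-1}(V_{z_2})$. Since $f(V_{z_1})\subset D_{z_1}$, the set $\widetilde f(\widetilde V_{z_1})$ is contained in the single sheet $\widetilde D_{z_1}$ of $\pi^{-1}(D_{z_1})$ containing $\widetilde V_{z_1}$. The restriction $\pi\vert_{\widetilde D_{z_1}}$ is a homeomorphism onto $D_{z_1}$, and it maps $\widetilde D_{z_1}\cap\pi^{-1}(V_{z_2})$ homeomorphically onto $V_{z_2}\cap D_{z_1}$, which by our choice of $D_{z_1}$ is connected. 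Since $\pi^{-1}(V_{z_2})=\bigsqcup_S S\widetilde V_{z_2}$ is a disjoint union, this forces $\widetilde D_{z_1}\cap\pi^{-1}(V_{z_2})$ to lie in a single translate $T_\ast\widetilde V_{z_2}$, whence $\widetilde f(\widetilde z)\in T_\ast\widetilde V_{z_2}$ for every admissible $z$ and $T_z=T_\ast$.

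The main obstacle is the last step: one must arrange the joint choice of the neighborhoods $V_z$ and their envelopes $D_z$ so that the connectedness of $V_{z'}\cap D_z$ holds across all relevant pairs. The foliation-based trapping alone (as in Lemma \ref{le: continuity}) gives (i) and (ii) easily, but (iii) requires a uniform local topological control that is most cleanly arranged by using local Euclidean charts and shrinking the whole family $(V_z)_{z\in\mathrm{dom}(I)}$ in a coordinated way.
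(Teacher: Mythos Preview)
Your construction and the argument for (i) and for (ii) with $p\geq 2$ are essentially the same as the paper's, and correct. But there is a genuine gap in the case $p=1$ of (ii), and it is precisely this gap that the paper's more elaborate construction is designed to close --- not only (iii).

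For $p=1$ your displayed chain requires the step $R(\phi_{\widetilde f(\widetilde z)})\subset R(\phi_{\widetilde f^{p-1}(\widetilde z)})=R(\phi_{\widetilde z})$, which goes the wrong way. Stripping out the bad step, what you actually get for $p=1$ is two \emph{separate} chains
\[
R(\phi_{\widetilde f^{-1}(\widetilde z)})\subset R(\phi_{\widetilde z_1})\subset R(\phi_{z_1}^1)\subset R(\phi_{\widetilde f(\widetilde z)})
\quad\text{and}\quad
R(\phi_{\widetilde z})\subset R(T\phi_{z_2}^0)\subset R(\phi_{T\widetilde z_2})\subset R(\phi_{\widetilde f^{2}(\widetilde z)}),
\]
and nothing in your hypotheses forces $R(\phi_{\widetilde z_1})\subset R(\phi_{T\widetilde z_2})$. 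Both $\phi_{\widetilde z_1}$ and $\phi_{T\widetilde z_2}$ are crossed by $I^3_{\widetilde{\mathcal F}}(\widetilde f^{-1}(\widetilde z))$, hence comparable, but they could perfectly well appear in the wrong order: $T\widetilde z_2$ might sit on a leaf between $\phi_{z_1}^0$ and $\phi_{\widetilde z_1}$. Your envelope $D_{z_1}$, being a purely topological gadget, does not prevent this.

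The paper fixes this with a genuinely two-scale construction. One first builds an equivariant family of larger disks $W_{\widetilde z}$ satisfying $\widetilde f^{-1}(W_{\widetilde z})\subset R(\phi^*_{\widetilde f^{-1}(\widetilde z)})$ and $\widetilde f(W_{\widetilde z})\subset L(\phi^*_{\widetilde z})$, and then, via a compact exhaustion $(K_i)$ of $\mathrm{dom}(I)$ together with decreasing radius sequences $\eta_i,\eta'_i$, builds the $V_{\widetilde z}$ so small that whenever $\widetilde f(\widetilde z)\in V_{\widetilde z_2}$ for some $\widetilde z\in V_{\widetilde z_1}$ one has $V_{\widetilde z_2}\subset W_{\widetilde f(\widetilde z_1)}$. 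This single inclusion does double duty: it gives $T\widetilde z_2\in L(\phi^*_{\widetilde z_1})$, hence the missing ordering $R(\phi_{\widetilde z_1})\subset R(\phi_{T\widetilde z_2})$ for $p=1$, and, because $W_{\widetilde f(\widetilde z_1)}$ projects injectively, it also pins down the lift of $z_2$ uniquely, which is exactly (iii). The ``coordinated shrinking'' you flag as the main obstacle is carried out precisely through this compact-exhaustion-and-radii argument; your proposed substitute (convexity of $V_{z'}\cap D_z$ in local charts) would need the same machinery to be made rigorous, and even then would not address the $p=1$ ordering problem without an additional foliation-aware condition on $D_z$.
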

\begin{proof} One can construct
 an increasing sequence $(K_i)_{i\geq 1}$ of compact sets of $\mathrm{dom}(I)$ that cover $\mathrm{dom}(I)$ and such $K_{i+1}$ is a neighborhood of $K_i\cup f(K_{i})\cup f^{-1}(K_{i})$,  a distance on $\widetilde{\mathrm{dom}}(I)$\textcolor{black}{, denoted by $d$, } that is invariant under the action of the group of covering transformations,
and an equivariant family of leaves $(\phi^*_{\widetilde z})_{\widetilde z \in \widetilde{\mathrm{dom}}(I)}$, where $\phi^*_{\widetilde z}$ separates $\widetilde z$ and $\widetilde f(\widetilde z)$ and consequently is met by $\widetilde I_{\widetilde{ \mathcal F}}(\widetilde z)$ (equivariant means that $\phi^*_{T(\widetilde z)}=T(\phi^*_{\widetilde z})$ for every covering transformation $T$). Then one can construct an equivariant family of relatively compact open sets $(W_{\widetilde z})_{\widetilde z \in \widetilde{\mathrm{dom}}(I)}$, where  $W_{\widetilde z}$ contains $\widetilde z$,  projects onto an open disk of ${\mathrm{dom}(I)}$ and 
satisfies
$$\widetilde f^{-1}(W_{\widetilde z}) \subset R(\phi^*_{\widetilde f^{-1}(z)
}), \enskip \widetilde f(W_{\widetilde z}) \subset L(\phi^*_{\widetilde z}).$$ 
\medskip

\textcolor{black}{ Note that, given $\widetilde z, \phi^{*}_{\widetilde z}$ and $W_{\widetilde z}$ as above, if $\widetilde z'$ is sufficiently close to $\widetilde z$, then one could choose $\phi^{*}_{\widetilde z'}$ to be equal to $\phi^{*}_{\widetilde z}$ and also $W_{\widetilde z'}=W_{\widetilde z}$. Therefore we may assume that the family $(W_{\widetilde z})_{\widetilde z \in \widetilde{\mathrm{dom}}(I)}$ is locally finite.}

Set $K_i=\emptyset$ if $i\leq 0$. One knows that $f(\overline{K_i\setminus K_{i-1}})\subset \mathrm{int}(K_{i+1}\setminus K_{i-2})$. By a compactness argument, there exists a positive and decreasing sequence $(\eta_{i})_{i\geq 1}$  such that for every $\widetilde z\in\pi^{-1}(\overline{K_i\setminus K_{i-1}})$, the open ball $B(\widetilde z, \eta_{i})$ is included in $W_{\widetilde z}\cap\pi^{-1}(\mathrm{int}(K_{i+1}\setminus K_{i-2}))$ and its image $\widetilde f(B(\widetilde z, \eta_{i}))$ included in $\pi^{-1}(\mathrm{int}(K_{i+1}\setminus K_{i-2}))$, the ball $B(\widetilde z, \eta_{i})$ being defined with respect to $d$. Then, one considers a decreasing and positive sequence $(\eta'_{i})_{i\geq 1}$ satisfying $\eta'_i<\eta_{i+4}/4$ and such that for every $\widetilde z\in\pi^{-1}(\overline{K_i\setminus K_{i-1}})$, 
one has 
$$\widetilde f(B(\widetilde z, \eta'_i))\subset  B(\widetilde f(\widetilde z), \eta_{i+1}/2).$$
Finally, one constructs an equivariant family of open sets $(V_{\widetilde z})_{\widetilde z \in \widetilde{\mathrm{dom}}(I)}$, where  $V_{\widetilde z}$ contains $\widetilde z$,  is included in $B(\widetilde z, \eta'_i)$ if $ \widetilde z\in \pi^{-1}(K_i\setminus K_{i-1})$ and projects onto an open disk of ${\mathrm{dom}(I)}$. \textcolor{black}{ Note that $V_{\widetilde z}\subset W_{\widetilde z}$.}

By projection on $\mathrm{dom}(I)$, one gets a family $(V_{z})_{z \in {\mathrm{dom}(I)}}$ satisfying {\bf i)}. To prove that it satisfies {\bf ii)}, one must prove that if there exists a point $\widetilde z \in V_{\widetilde z_1}$ such that $\widetilde f^p(\widetilde z) \in V_{\widetilde z_2}$, then there exists a transverse path from $\widetilde z_1$ to $\widetilde z_2$ that is equivalent to a subpath of $\widetilde I^{p+2}_{\widetilde{\mathcal F}}(\widetilde f^{-1}(\widetilde z))$. \textcolor{black}{As $V_{\widetilde{z}_i}\subset W_{\widetilde{z}_i}, i\in \{1, 2\}$,  by the properties of the chosen family $(W_{\widetilde y})_{\widetilde y \in \widetilde{\mathrm{dom}}(I)}$,}
$$R(\phi_{\widetilde f^{-1}(\widetilde z )})\subset R(\phi^*_{\widetilde f^{-1}(\widetilde z_1)})\subset R(\phi_{\widetilde z_1})\subset R(\phi^*_{\widetilde z_1})\subset R(\phi_{\widetilde f(\widetilde z)})$$
and 
$$R(\phi_{\widetilde f^{p-1}(\widetilde z )})\subset R(\phi^*_{\widetilde f^{-1}(\widetilde z_2)})\subset R(\phi_{\widetilde z_2})\subset R(\phi^*_{\widetilde z_2})\subset R(\phi_{\widetilde f^{p+1}(\widetilde z)}).$$
One deduces that $\widetilde I^{p+2}_{\widetilde {\mathcal F}} (\widetilde f^{-1}(\widetilde z))$ meets $\phi_{\widetilde z_1}$ and $\phi_{\widetilde z_2}$. It remains to prove that $R(\phi_{\widetilde z_1})\subset R(\phi_{\widetilde z_2})$ to ensure that $\phi_{\widetilde z_1}$ is met before $\phi_{\widetilde z_2}$ and to prove the existence of a transverse path from $\widetilde z_1$ to $\widetilde z_2$. The case where $p\geq 2$ is easy because
$$ R(\phi_{\widetilde z_1})\subset R(\phi_{\widetilde f(\widetilde z))})\subset R(\phi_{\widetilde f^{p-1}(\widetilde z )})\subset R(\phi_{\widetilde z_2}).$$
To prove the result in the case where $p=1$ it is sufficient to prove that $V_{\widetilde z_2}$ is included in \textcolor{black}{$W_{\widetilde  f(\widetilde z_1)}$} because every point in \textcolor{black}{$W_{\widetilde f(\widetilde z_1)}$} belongs to $L(\phi^*_{\widetilde z_1})$. Moreover one will get {\bf iii)} because \textcolor{black}{$W_{\widetilde f(\widetilde z_1)}$} is disjoint from its images by the non trivial covering transformations. \textcolor{black}{Set $\eta_j=\eta_1$ and $\eta'_j=\eta'_1$ if $j\leq 0$, and let $i$ be such that $z_1\in K_i\setminus K_{i-1}$}. One knows that $\widetilde z\in\pi^{-1}(\mathrm{int}(K_{i+1}\setminus K_{i-2}))$, so $\widetilde f(\widetilde z)\in\pi^{-1}(\mathrm{int}(K_{i+2}\setminus K_{i-3}))$ and $\widetilde z_2 \in\pi^{-1}(\mathrm{int}(K_{i+3}\setminus K_{i-4}))$. One also knows that \textcolor{black}{$\widetilde f(\widetilde z_1)\in\pi^{-1}(\mathrm{int}(K_{i+1}\setminus K_{i-2}))$}. Using the fact that $\eta'_{i-3}<\eta_{i+1}/4$,  that $d(\widetilde f(\widetilde z), \widetilde f(\widetilde z_1))<\eta_{i+1}/2$, and that $d(\widetilde f(\widetilde z), \widetilde z_2)<\eta'_{i-3}$, one deduces that
$$V_{\widetilde z_2}\subset B(\widetilde z_2,\eta'_{i-3})\subset  \textcolor{black}{B(\widetilde f(\widetilde z_1),\eta_{i+1})\subset W_{\widetilde f(\widetilde z_1)}.}$$
\end{proof}

{\it Proof of Proposition \ref{pr:transversecycle_imply_entropy}.}  \enskip\enskip We keep the notations of Proposition \ref{pr:transverse_loop_exponentialgrowth}. Since, by Lemma \ref{lm:intersect-transversally} applied to $n=1$, the paths $(\gamma_1')^2$ and $(\gamma_2')^2$ intersect $\mathcal F$-transversally, one deduces that $\gamma'{}_1^2$ has a leaf on its right and a leaf on its left, which implies that $\gamma'_1$  satisfies the same property. One proves similarly that $\gamma'_2$ has a leaf on its right and a leaf on its left.  By Lemma \ref{le: neighborhood of infinity}, there exists a compact set $K$ such that for every $n\geq 1$ and every \textcolor{black}{$z\in\mathrm{dom}(I)\setminus\bigcup _{0\leq k<n}f^{-k}(K)$}, none of the paths $\gamma'_1$ and $\gamma'_2$ is equivalent to a subpath of $I_{\mathcal F}^n(z)$. We can find a compact set $K'$ larger than $K$ such that for all $z\in \mathrm{dom}(I)\setminus K'$, the trajectory $I(z)$ is disjoint from $\gamma_1'$ and $\gamma_2'$, as is every open set $V_{z'}$ given by Lemma \ref{le:good_covering} that contains $z$. Adding $\infty$ to $\mathrm{dom}(I)\setminus K'$, one gets a neighborhood $V_{\infty}$ of $\infty$ in $\mathrm{dom}(I)_{\mathrm{alex}}$.   
Define $V_{\infty,p}=\bigcap_{\vert k\vert\le p} f^{-k}(V_{\infty})$ and consider the covering ${\mathcal V}_p$ of $\mathrm{dom}(I)_{\mathrm{alex}}$ that consists of $V_{\infty,p}$ added to the covering $(V_z)_{z\in\mathrm{dom}(I)}$ of $\mathrm{dom}(I)$ given by Lemma \ref{le:good_covering}. Write $\Gamma'$ for the loop naturally defined by the closed path $(\gamma_1')^2(\gamma_2')^2$ and $\gamma'$ for its natural lift. Recall that $M(\Gamma')$ has been  defined in Proposition \ref{pr: finite_homotopy-classes} . Proposition  \ref{pr:transversecycle_imply_entropy} is an immediate consequence of the following:

\begin{lemma}\label{le: entropy covering}
The entropy of $f_{\mathrm{alex}}$ relative to the covering ${\mathcal V}_p$ is at least equal to $\log 2/(4r)-\log M(\Gamma')/2p$.
\end{lemma}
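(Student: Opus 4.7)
The plan is to prove that $N(\mathcal{V}_p^{(N)}) \geq 2^n \, M(\Gamma')^{-2nr/p}$ for $N = 4nr$, where $\mathcal{V}_p^{(N)} := \bigvee_{k=0}^{N-1} f^{-k}\mathcal{V}_p$; taking logarithms, dividing by $N$, and letting $n \to \infty$ yields the bound $h(f_{\mathrm{alex}}, \mathcal{V}_p) \geq \log 2/(4r) - \log M(\Gamma')/(2p)$ as claimed. Since each of the $2^n$ periodic points $z_\mathbf{e}$ (indexed by palindromic words $\mathbf{e}$ of length $2n$ and of period dividing $N$) lies in some cell of $\mathcal{V}_p^{(N)}$, and since $N(\mathcal{V}_p^{(N)}) \geq 2^n / C$ where $C$ denotes the maximum number of $z_\mathbf{e}$'s contained in any single cell, it suffices to show $C \leq M(\Gamma')^{2nr/p}$.

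Fix a cell $U = V_{w_0} \cap f^{-1}V_{w_1} \cap \cdots \cap f^{-(N-1)}V_{w_{N-1}}$. I partition $[0,N)$ into $\ell := 2nr/p$ consecutive windows $B_j = [2jp, 2(j+1)p)$ of length $2p$. For each $\mathbf{e}$ with $z_\mathbf{e} \in U$ and each $0 \le k < N$, pick a path $\alpha_k^\mathbf{e} \subset V_{w_k}$ joining $w_k$ to $f^k(z_\mathbf{e})$. By property (iii) of Lemma~\ref{le:good_covering} the one-step homotopy class (rel endpoints $w_k$, $w_{k+1}$) of $\alpha_k^\mathbf{e} \, I(f^k(z_\mathbf{e})) \, (\alpha_{k+1}^\mathbf{e})^{-1}$ depends only on the coding $(w_k, w_{k+1})$, not on $\mathbf{e}$; concatenating across a full window, the homotopy class $h_j^U$ of $\alpha_{2jp}^\mathbf{e} \, I^{2p}(f^{2jp}(z_\mathbf{e})) \, (\alpha_{2(j+1)p}^\mathbf{e})^{-1}$ depends only on $U$. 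Moreover, by property (ii) of Lemma~\ref{le:good_covering} applied with time parameter $2p$, the class $h_j^U$ is simultaneously realized by a transverse path equivalent to a subpath of $I_{\mathcal F}^{2p+2}(f^{2jp-1}(z_\mathbf{e}))$, hence a subpath of $\gamma'_\mathbf{e}$. After a small generic perturbation of the covering $\{V_z\}$ that places the centers $w_k$ off $\Gamma'$ and lets each $V_{w_k}$ admit $\Gamma'$-avoiding paths, Proposition~\ref{pr: finite_homotopy-classes} (applied with $\Gamma = \Gamma'$ and endpoints $w_{2jp}, w_{2(j+1)p}$) bounds by $M(\Gamma')$ the number of homotopy classes simultaneously representable by a $\Gamma'$-disjoint path and by a transverse path equivalent to a subpath of $\gamma'$. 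Hence each window admits at most $M(\Gamma')$ possible values of $h_j^U$, and multiplying over the $\ell$ windows gives $C \leq M(\Gamma')^{\ell} = M(\Gamma')^{2nr/p}$.

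The main obstacle is in invoking Proposition~\ref{pr: finite_homotopy-classes}: the transverse witness furnished by property (ii) is a priori a subpath of $\gamma'_\mathbf{e}$ (which depends on $\mathbf{e}$), whereas the proposition requires a subpath of the fixed $\gamma'$. I would handle this by exploiting that $\gamma'_1$ and $\gamma'_2$ share the $\mathcal{F}$-transverse self-intersection point $z^* = \gamma(s) = \gamma(t)$: at each passage of a $\gamma'_\mathbf{e}$-subpath through $z^*$, the fundamental transverse-intersection procedure of Proposition~\ref{pr: fundamental} allows rerouting the subpath, within its homotopy class, onto a subpath of $\gamma' = (\gamma'_1)^2(\gamma'_2)^2$; the combinatorial cost of these reroutings is exactly measured by the quantities $\mathrm{self}(\Gamma')$ and $\mathrm{width}(\Gamma')$ entering the definition of $M(\Gamma')$, which is precisely what makes the universal constant $M(\Gamma')$ (rather than an $\mathbf{e}$-dependent quantity) appear in the final entropy bound.
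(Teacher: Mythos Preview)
Your argument has a genuine gap: you have misread the covering $\mathcal{V}_p$. It is not just $(V_z)_{z\in\mathrm{dom}(I)}$; it also contains the set $V_{\infty,p}$, the neighborhood of the point at infinity in $\mathrm{dom}(I)_{\mathrm{alex}}$. So a cell of $\bigvee_{0\le k<N} f^{-k}\mathcal{V}_p$ need not have the form $V_{w_0}\cap f^{-1}V_{w_1}\cap\cdots$ with all $w_k\in\mathrm{dom}(I)$: some factors can be $V_{\infty,p}$, and then there is no center $w_k$ to anchor to, so neither assertion (ii) nor (iii) of Lemma~\ref{le:good_covering} applies at those times. Your windowing scheme and the claim that each $h_j^U$ is determined by $U$ via repeated use of (iii) therefore break down exactly at these ``infinity'' times. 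In fact your argument is internally inconsistent on this point: if every one-step class really were determined by $U$, the whole free homotopy class of the associated loop would be determined by $U$, and there would be no room for the factor $M(\Gamma')$ at all.

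The paper's proof turns precisely on this dichotomy. One writes $\{0,\dots,N-1\}=J_{<\infty}\sqcup J_\infty$ according to whether the corresponding factor is some $V_z$ or is (within distance $p$ of) $V_{\infty,p}$. Between consecutive elements of $J_{<\infty}$ that differ by $1$, assertion (iii) pins down the homotopy class. Across a gap in $J_\infty$ (necessarily of length $\ge 2p$), two things happen by construction of $V_\infty$: the concatenated path $\delta_l I^{j_{l+1}-j_l}(f^{j_l}(z_{\mathbf e}))\delta_{l+1}^{-1}$ is disjoint from $\Gamma'$, and (by Lemma~\ref{le: neighborhood of infinity}, via the compact set $K$) the transverse witness $I_{\mathcal F}^{j_{l+1}-j_l+2}(f^{-1}(z_{j_l}))$ cannot contain a full $\gamma'_1$ or $\gamma'_2$, hence is equivalent to one of the six words $\gamma'_1,\gamma'_2,\gamma'_1\gamma'_1,\gamma'_1\gamma'_2,\gamma'_2\gamma'_1,\gamma'_2\gamma'_2$, each a subpath of the \emph{fixed} loop $\gamma'$. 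Only then does Proposition~\ref{pr: finite_homotopy-classes} apply, giving at most $M(\Gamma')$ classes per gap; since there are at most $4nr/2p=2nr/p$ gaps, one gets $M(\Gamma')^{2nr/p}$ equivalence classes in a given cell, and the extra factor $Ln^2$ from Lemma~\ref{lm:bounded} handles the passage from equivalence classes to actual points $z_{\mathbf e}$. Your ``generic perturbation'' cannot substitute for this: the trajectory $I^{2p}(f^{2jp}(z_{\mathbf e}))$ may cross $\Gamma'$ many times, and your proposed rerouting via Proposition~\ref{pr: fundamental} changes the homotopy class rather than preserving it, so it does not produce a witness for the \emph{same} class $h_j^U$ inside $\gamma'$.
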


\begin{proof} 

As seen in the proof of Theorem \ref{th:transverse_imply_periodic points}, to every palindromic word $\mathbf{e}=(\varepsilon_i)_{0\leq i<2n}$ of length $2n$ is associated a fixed point $z_{\mathbf e}$ of $f^{4nr}$ and an associate $\mathcal F$-transverse loop defined by $\prod_{0\leq j<2n} \gamma'_{\varepsilon_{i}}$.  
Moreover, by Lemma \ref{lm:bounded}, there exists $L>0$, independent of $n$, such that there are at least  $2^n/Ln^2$ different equivalent classes among the associated loops.  
We will prove that every open set of the covering $\bigvee_{0\leq k<  4nr} f^{-k}({\mathcal V}^p)$ contains at most $Ln^2M(\Gamma')^{  2nr/p}$ points $z_{\mathbf e}$. We deduce that every finite sub-covering of $\bigvee_{0\leq k<  4nr} f^{-k}({\mathcal V}^p)$ has at least $2^{n}/Ln^2M(\Gamma')^{2nr/p}$ open sets and so
$$h(f_{\mathrm{alex}}, {\mathcal V}_p)\geq   \lim_{n\to+\infty} {1\over 4nr} \log (2^{n}/Ln^2M(\Gamma')^{2nr/p})=\log 2/(4r)-\log M(\Gamma')/2p.$$

Let us consider an element $W=\bigcap_{0\leq j< 4nr}f^{-j}(V^j)$ of ${\mathcal V}_p$. We suppose that it contains at least one point $z_{\mathbf e}$. Denote $J_{\infty}$ the set of $j\in\{0, \dots ,4nr-1\}$ such that there exists $j'\in\{0, \dots ,4nr\}$ satisfying $\vert j-j'\vert<p$ and $V^{j'}=V_{\infty,p}$ and denote $J_{<\infty}$ the complement of $J_{\infty}$. Note that $f^j(z_{\mathbf e})\in V_{\infty}$ if $j\in J_{\infty}$. By Lemma \ref{le: neighborhood of infinity}, one knows that the orbit of $z_{\mathbf e}$ cannot be contained in $V_{\infty}$. So $J_{<\infty}\not=\emptyset$.

\medskip
Let us begin with the case where $0\in J_{<\infty}$ and write $J_{<\infty}=\{j_0,\dots, j_{l^*}\}$, where $j_0=0<j_1\dots< j_{l^*}$, and add $j_{l^*+1}=4nr$. Every open set $V^{j_l}$ can be written $V^{j_l}=V_{z_{j_l}}$. For every $l$, choose a path $\delta_l$ in $V_{z_{j_l}}$ from $z_{j_l}$ to $f^{j_l}(z_{\mathbf e})$. By Lemma \ref{le:good_covering}, there exists a $\mathcal F$-transverse path $\beta_l$ from $z_{j_l}$ to $z_{j_{l+1}}$ equivalent to a subpath of \textcolor{black}{$I^{j_{l+1}-j_l+2}_{\mathcal F}(f^{j_l-1}(z_{\mathbf e}))$} and homotopic to $\delta_l I^{j_{l+1}-j_j} (f^{j_l}(z_{\mathbf e}))\delta^{-1}_{l+1}$. In the case where $j_{l+1}-j_l=1$, the homotopy class of $\beta_i$ (with endpoints $z_{i_l}$ and $z_{i_{l+1}}$ fixed) is uniquely determined. Let us explain now why there are at most $M(\Gamma')$ possible homotopy classes if $j_{l+1}-j_l\geq 2$. Note first that $j_{l+1}-j_l\geq 2p$ in that case, and that all points $f^j(z_{\mathbf e})$, $j_{l}-1\leq j\leq j_{l+1}+1$ belong to $V_{\infty}$. This implies that neither $\gamma_1'$ nor $\gamma_2'$ are equivalent to subpaths of \textcolor{black}{$I^{j_{l+1}-j_l+2}_{\mathcal F}(f^{j_l-1}(z_{\mathbf e}))$}. \textcolor{black}{Note that the \textcolor{black}{latter} is equivalent to a subpath of \textcolor{black}{$I^{4nr}_{\mathcal F}(z_{\mathbf e})$, which is equivalent to }$\prod_{0\leq j<2n} \gamma'_{\varepsilon_{i}}$. We remark that, if $\sigma$ is any transverse path that is equivalent to a subpath of $\prod_{0\leq j<2n} \gamma'_{\varepsilon_{i}}$, but that does not contain a subpath that is equivalent to either $\gamma_1'$ or $\gamma_2'$, then $\sigma$ must be equivalent to  a subpath of one the six possible following paths:}
$$\gamma'_1, \,\gamma'_2, \,\gamma'_1\gamma'_1, \,\gamma'_1\gamma'_2,\,\gamma'_2\gamma'_1,\,\gamma'_2\gamma'_2$$ and therefore \textcolor{black}{$\sigma$ (and thus $I^{j_{l+1}-j_l+2}_{\mathcal F}(f^{j_l-1}(z_{\mathbf e}))$)} must equivalent to a subpath of $\gamma'$. Furthermore \textcolor{black}{$\beta_i=\delta_l I^{j_{l+1}-j_j} (f^{j_l}(z_{\mathbf e}))\delta^{-1}_{l+1}$} is disjoint from \textcolor{black}{$\Gamma'$} by definition of $V_{\infty}$ because it is the case for $I^{j_{l+1}-j_j} (f^{j_l}(z_{\mathbf e}))$ and for the disks $V_{z_{j_l}}$ and $V_{z_{j_{l+1}}}$. One can apply Proposition  \ref{pr: finite_homotopy-classes} \textcolor{black}{ and obtain that $\beta_i$ must belong to one of the at most $M(\Gamma')$ different homotopy classes of paths connecting $z_{i_l}$ and $z_{i_{l+1}}$, as claimed before.}

The path $\prod_{0\leq l\leq l^*} \beta_l$ is a closed path based at $z_0$.  Noting that there exist at most $4nr/2p=2nr/p$ integers $l$ such that $j_{l+1}-j_l\not=1$, we deduce that there exist at most $M(\Gamma')^{2nr/p}$ homotopy classes (with fixed base point) possible. The loop defined by $\prod_{0\leq l\leq l^*} \beta_l$ is freely homotopic to $\Gamma'_{\mathbf e}$. So there exist at most $M(\Gamma')^{2nr/p}$ free homotopy classes defined by the loops $\Gamma'_{\mathbf e}$ such that $z_{\bf e}\in W$. \textcolor{black}{By Lemma \ref{lm:bounded}, to} prove that there is at most  $Ln^2M(\Gamma')^{2nr/p}$ points $z_{\bf e}$ in $W$, it is sufficient to prove the following stronger result: there exist at most $M(\Gamma')^{2nr/p}$ classes defined by the loops $\Gamma'_{\mathbf e}$ \textcolor{black}{that are equivalent as transverse paths and such} that $z_{\bf e}\in W$. Suppose that $z_{\bf e}$ and $z_{\bf e'}$ belong to $W$ and that the paths $\beta_l$, $0\leq l\leq l^*$, constructed with $z_{\bf e}$ and $z_{\bf e'}$ are all homotopic. Fix a lift $V_{\widetilde z_0}$ of $V_{z_0}$ and note $\widetilde z_{\bf e}$ and $\widetilde z_{\bf e'}$ the respective lifts of $z_{\bf e}$ and $z_{\bf e'}$ that belongs to $V_{\widetilde z_0}$. The whole $\widetilde{\mathcal F}$-transverse trajectories of $\widetilde z_{\bf e}$ and $\widetilde z_{\bf e'}$
are invariant by the same non trivial covering automorphism $T$, which is naturally defined by the lift $\prod_{0\leq l\leq l^*} \widetilde\beta_l$ of $\prod_{0\leq l\leq l^*} \beta_l$. Moreover, $\widetilde\beta_0$ is $\widetilde{\mathcal F}$-equivalent to a subpath of $\widetilde I_{\widetilde{\mathcal F}}(\widetilde z_{\bf e})$ and to a subpath of $\widetilde I_{\widetilde{\mathcal F}}(\widetilde z_{\bf e'})$. So these two lines meet a common leaf $\widetilde \phi$. One deduces that they meet $T^k(\widetilde\phi)$, for every $k\in\Z$ and consequently that they meet the same leaves. So there are equivalent.

\medskip
The case where $0\in J_{\infty}$ can be reduced to the case where $0\in J_{<\infty}$ after a cyclic permutation on $\{0,\dots, 4nr-1\}$ because the points $z_{\bf e}$ are all fixed by $f^{4nr}$. 
  \end{proof}

\bigskip

As a direct application of Proposition \ref{pr:transversecycle_imply_entropy} we can obtain the following result, which is connected to the study of the minimal entropy of pure braids in $\S^2$. There are sharper results with a larger lower bound for the entropy (see \cite{So}), but they use very different techniques. 

\begin{theorem}\label{Thm:entropy_lower_bounds}
 Let $f$ be an orientation preserving homeomorphism on $\S^2$ and $I$  a maximal hereditary singular isotopy. Assume that there exists $ z\in \mathrm{dom}(I)\cap\mathrm{fix}(f)$ such that the loop naturally defined by the trajectory $I(z)$ is not homotopic in $\mathrm{dom}(I)$ to a multiple of a simple loop. Then the entropy of $f$ is at least equal to $\log(2)/(4)$. 
\end{theorem}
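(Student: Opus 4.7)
The plan is to associate to the fixed point $z$ a transverse loop $\Gamma$, use the hypothesis together with Proposition~\ref{pr:transverse_on_sphere} to force $\Gamma$ to admit an $\mathcal{F}$-transverse self-intersection, and then conclude with Propositions~\ref{pr:shorttransversalitypforloops} and~\ref{pr:transversecycle_imply_entropy}.

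First I would fix a foliation $\mathcal{F}$ transverse to $I$ and let $\Gamma$ be the transverse loop associated to $z$, so that its natural lift $\gamma:\R\to\mathrm{dom}(I)$ is $1$-periodic and $\gamma\vert_{[0,1]}$ is homotopic to $I(z)$ rel endpoints in $\mathrm{dom}(I)$. Because $z$ is fixed by $f$, the path $\gamma\vert_{[0,n]}$ is admissible of order $n$ for every $n\ge 1$, and $\gamma$ is $\mathcal{F}$-bi-recurrent (being periodic).

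The heart of the argument is to show that $\Gamma$ has an $\mathcal{F}$-transverse self-intersection; I would argue by contradiction. Since $M=\S^2$ and $\gamma$ is bi-recurrent, Proposition~\ref{pr:transverse_on_sphere} would produce a simple transverse loop $\Gamma'$ whose natural lift $\gamma'$ is $\mathcal{F}$-equivalent to $\gamma$; in particular $U=\bigcup_t\phi_{\gamma'(t)}$ is an annulus and each leaf of $\mathcal{F}$ is met by $\gamma'$ at most once per period. The equivalence is implemented by an increasing homeomorphism $h:\R\to\R$ with $\phi_{\gamma(t)}=\phi_{\gamma'(h(t))}$, and the $1$-periodicity of $\gamma$ combined with the above injectivity statement forces $h(t+1)-h(t)$ to be a constant positive integer $n$; thus $\Gamma$ is equivalent, as a transverse loop, to $(\Gamma')^n$. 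Equivalent transverse loops are related by a holonomic homotopy in $\mathrm{dom}(\mathcal{F})=\mathrm{dom}(I)$, so combining with $I(z)\simeq\Gamma$ in $\mathrm{dom}(I)$ I would conclude that $I(z)$ is freely homotopic in $\mathrm{dom}(I)$ to $(\Gamma')^n$, contradicting the hypothesis.

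Since every loop on $\S^2$ is homologous to zero, Proposition~\ref{pr:shorttransversalitypforloops} then provides that $\gamma\vert_{[0,2]}$ already carries an $\mathcal{F}$-transverse self-intersection. Together with the admissibility of $\gamma\vert_{[0,2m]}$ of order $2m$ for all $m\ge 1$, Proposition~\ref{pr:transversecycle_imply_entropy} applies with $K=2$ and yields the announced lower bound on the entropy of $f_{\mathrm{alex}}=f$ (note $\mathrm{dom}(I)_{\mathrm{alex}}=\S^2=M$). The main obstacle is the key step above: converting the purely topological hypothesis on $I(z)$ into an $\mathcal{F}$-transverse self-intersection of the associated transverse loop, which relies crucially on the sphere-specific Poincar\'e--Bendixson statement of Proposition~\ref{pr:transverse_on_sphere}; once this is in hand, the entropy estimate is a direct invocation of the structural results developed earlier.
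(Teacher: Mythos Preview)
Your proposal is correct and follows essentially the same route as the paper: associate to $z$ its transverse loop $\Gamma$, use Proposition~\ref{pr:transverse_on_sphere} to deduce an $\mathcal{F}$-transverse self-intersection, invoke Proposition~\ref{pr:shorttransversalitypforloops} to localize it in $\gamma\vert_{[0,2]}$, and conclude via Proposition~\ref{pr:transversecycle_imply_entropy}. You actually spell out more carefully than the paper the passage from ``no self-intersection'' to ``$\Gamma$ is equivalent to $(\Gamma')^n$ and hence $I(z)$ is freely homotopic to a multiple of a simple loop''; the paper compresses this into a single sentence.

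One small slip: your parenthetical ``$\mathrm{dom}(I)_{\mathrm{alex}}=\S^2=M$'' is not correct, since $\mathrm{dom}(I)$ is the complement of the singular set and its Alexandrov compactification collapses that set to a single point. The conclusion is unaffected because, as noted in the remark following Theorem~\ref{th:transverse_imply_entropy}, $f_{\mathrm{alex}}$ is a factor of $f$ on the compact surface $\S^2$, so $h(f)\ge h(f_{\mathrm{alex}})$.
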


\begin{proof}
Let $\mathcal{F}$ be a foliation transverse to the isotopy. By hypothesis, the transverse loop $\Gamma$ associated to ${z}$ is not a multiple of a simple loop, so by Proposition \ref{pr:transverse_on_sphere}, it has an $\mathcal{F}$-transverse self-intersection. If $\gamma$ is the natural lift of $\Gamma$, then for all integers $K, \,\gamma\vert_{[0, K]}$ is admissible of order $K$. Furthermore, by Proposition \ref{pr:shorttransversalitypforloops}, $\gamma\vert_{[0,2]}$ has an $\mathcal{F}$-transverse self-intersection. The theorem then follows directly from Proposition \ref{pr:transversecycle_imply_entropy}.
\end{proof}

\subsection{Associated subshifts}
  
Let us give a natural application of  Corollary \ref{co: first induction transverse}, Corollary \ref{co: induction transverse} and the results of this section. We keep the assumptions and notations given at the beginning of the section.  Consider a transverse path $\gamma:[a,b]\to\mathrm{dom}(\mathcal F)$ with finitely many double points, none of them corresponding to an end of the path and no triple points (by a slight modification of the argument given in the proof of Corollary \ref{co: no intersection}  one can show that
every transverse path is equivalent to such a path). There exists real numbers $a<t_1<\dots <t_{2r}<b$ and a fixed point free involution $\sigma$ on $\{1,\dots, 2r\}$ such that $\gamma(t_i)=\gamma(t_{\sigma (i)})$, for every $i\in \{1,\dots, 2r\}$ and such that $\gamma$ is injective on  the complement of $\{t_1,\dots, t_{2r}\}$.
Set $t_0=a$ and $t_{2r+1}=b$ and define for every $i\in \{0,\dots, 2r\}$ the path $\gamma_i=\gamma\vert_{[t_i,t_{i+1}]}$. Consider the incidence matrix $P\in M_{2r+1}(\Z)$ (indexed by $\{0,\dots, 2r\}$) such that $P_{i,j}=1$  if $j=i+1$ or $j=\sigma(i+1)$ and $0$ otherwise (in particular if $i=2r$). Note that the first column and the last row only contain $0$. For every $P$-admissible word $(i_s)_{1\leq s\leq s_0}$, which means that $P_{i_s,i_{s+1}}=1$ if $s<s_0$, the path $\prod_{1\leq s\leq s_0}\gamma_{i_s}$ is transverse to $\mathcal F$. Note that every transverse path $\gamma:[a',b']\to\mathrm{dom}(\mathcal F)$ whose image is contained in the image of $\gamma$ is a subpath of such a path $\prod_{1\leq s\leq s_0}\gamma_{i_s}$.

Suppose now that $\gamma$ is admissible of order $n$. Can we decide when a path $\prod_{1\leq s\leq s_0}\gamma_{i_s}$ is admissible and what is its order? More precisely, do there exist other incidence matrices $P'$ smaller than $P$ (which means that $P'_{i,j}=0$ if $P_{i,j}=0$) such that $\prod_{1\leq s\leq s_0}\gamma_{i_s}$ is admissible if $(i_s)_{1\leq s\leq s_0}$ is $P'$-admissible?

Corollaries \ref{co: first induction transverse} and \ref{co: induction transverse} imply that the following three matrices satisfy this property:

\smallskip
\noindent- \enskip the matrix $P^{\mathrm{strong}}$, where $P^{\mathrm{strong}}_{i,j}=1$ if and only if $j=i+1$, or if $j=\sigma(i+1)$ and $\gamma_{i}\gamma_{i+1}$ and  $\gamma_{j-1}\gamma_j$ have an $\mathcal{F}$-transverse intersection at $\gamma(t_{i+1})=\gamma(t_j)$;  

\smallskip
\noindent- \enskip  the matrix $P^{\mathrm{left}}$, where $P^{\mathrm{left}}_{i,j}=1$  if and only if $j=i+1$, or if $j=\sigma(i+1)$ and $\gamma$ has an $\mathcal{F}$-transverse positive self-intersection at  $\gamma(t_{i+1})=\gamma(t_j)$; 

\smallskip
\noindent- \enskip  the matrix $P^{\mathrm{right}}$,  where $P^{\mathrm{right}}_{i,j}=1$ if and only if $j=i+1$, or if $j=\sigma(i+1)$ and $\gamma$ has an $\mathcal{F}$-transverse negative self-intersection at  $\gamma(t_{i+1})=\gamma(t_j)$.

More precisely, if $P'$ is one of the three previous matrices, then for every $P'$-admissible word $(i_s)_{1\leq s\leq s_0}$, the path  $\prod_{1\leq s\leq s_0}\gamma_{i_s}$ is admissible of order $kn$, where $k$ is the number of $s<s_0$ such that $i_{s+1}=\sigma(i_s+1)$. As explained in Proposition \ref{pr: self-intersection}, its order can be less. One can adapt the proof of  Theorem \ref{th:transverse_imply_entropy} to give a lower bound to the topological entropy of $f_{\mathrm{alex}}$. For example it is at least equal to $1/n$ times the logarithm of the spectral radius of $P'$  if the paths $\gamma_{i}$ have a leaf on their right and a leaf on their left, otherwise one has to replace these paths by finite admissible words.  One can adapt the proof of Theorem \ref{th:transverse_imply_periodic points} to show that for every $P'$-admissible word $(i_s)_{1\leq s\leq s_0}$ such that $i_1=i_{s_0}$, the loop naturally defined by $\prod_{1\leq s<s_0}\gamma_{i_s}$ is associated to a periodic orbit (except for
 some exceptional cases).

 Let us illustrate this procedure with four examples, where we start with an admissible path of order $1$:

\begin{figure}[ht!]
\includegraphics[height=45mm]{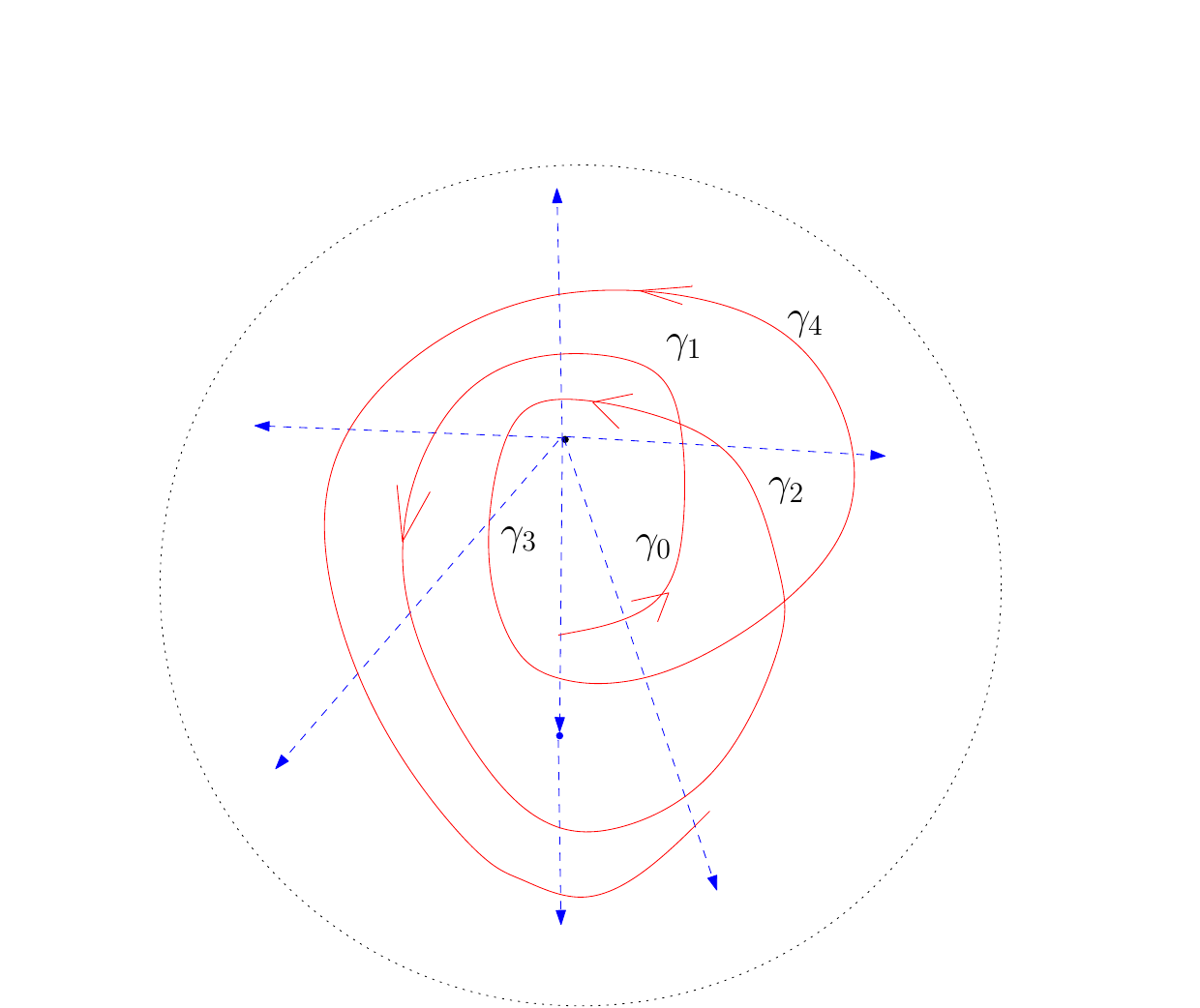}
\caption{\small Example 1 - Leafs of the foliation are represented as dashed lines, while transverse paths are solid.}
\label{figura_exemplo1}
\end{figure}

For the first example, see Figure \ref{figura_exemplo1}, the admissibility matrices are 

$$ P^{\mathrm{strong}}_1 = \left( \begin{array}{ccccc}
0 & 1 & 0 & 0 & 0  \\
0 & 0 & 1 & 0 & 0 \\
0 & 0 & 0 & 1 & 0 \\
0 & 0 & 0 & 0 & 1 \\
0 & 0 & 0 & 0 & 0  \end{array} \right ),$$ 

$$
 P^{\mathrm{left}}_1= \left ( \begin{array}{ccccc}
0 & 1 & 0 & 1 & 0  \\
0 & 0 & 1 & 0 & 0 \\
0 & 0 & 0 & 1 & 0 \\
0 & 0 & 1 & 0 & 1 \\
0 & 0 & 0 & 0 & 0  \end{array} \right ), \, 
P^{\mathrm{right}}_1 = \left( \begin{array}{ccccc}
0 & 1 & 0 & 0 & 0  \\
0 & 0 & 1 & 0 & 1 \\
0 & 1 & 0 & 1 & 0 \\
0 & 0 & 0 & 0 & 1 \\
0 & 0 & 0 & 0 & 0  \end{array} \right ).
$$

The matrix $ P^{\mathrm{strong}}_1$ does not tell us anything, the only admissible paths are subpaths of $\gamma=\gamma_0\dots\gamma_4$. The only interesting informations got from $ P^{\mathrm{left}}_1$ and $ P^{\mathrm{right}}_1$ respectively are the facts that the loops naturally defined by $\gamma_2\gamma_3$ and $\gamma_1\gamma_2$ are linearly admissible of order $2$. Nevertheless the first loop has no leaf on its left while the second one has no leaf on its right. So, one cannot apply Proposition \ref{pr: realization} and deduce that the loops are equivalent to transverse loops associated to periodic points of period $2$. Note also that the spectral radius of $ P^{\mathrm{left}}_1$ and $ P^{\mathrm{right}}_1$ are equal to $1$. In this example, one cannot deduce neither the positivity of entropy, nor the existence of  periodic orbits.


\begin{figure}[ht!]
\includegraphics[height=45mm]{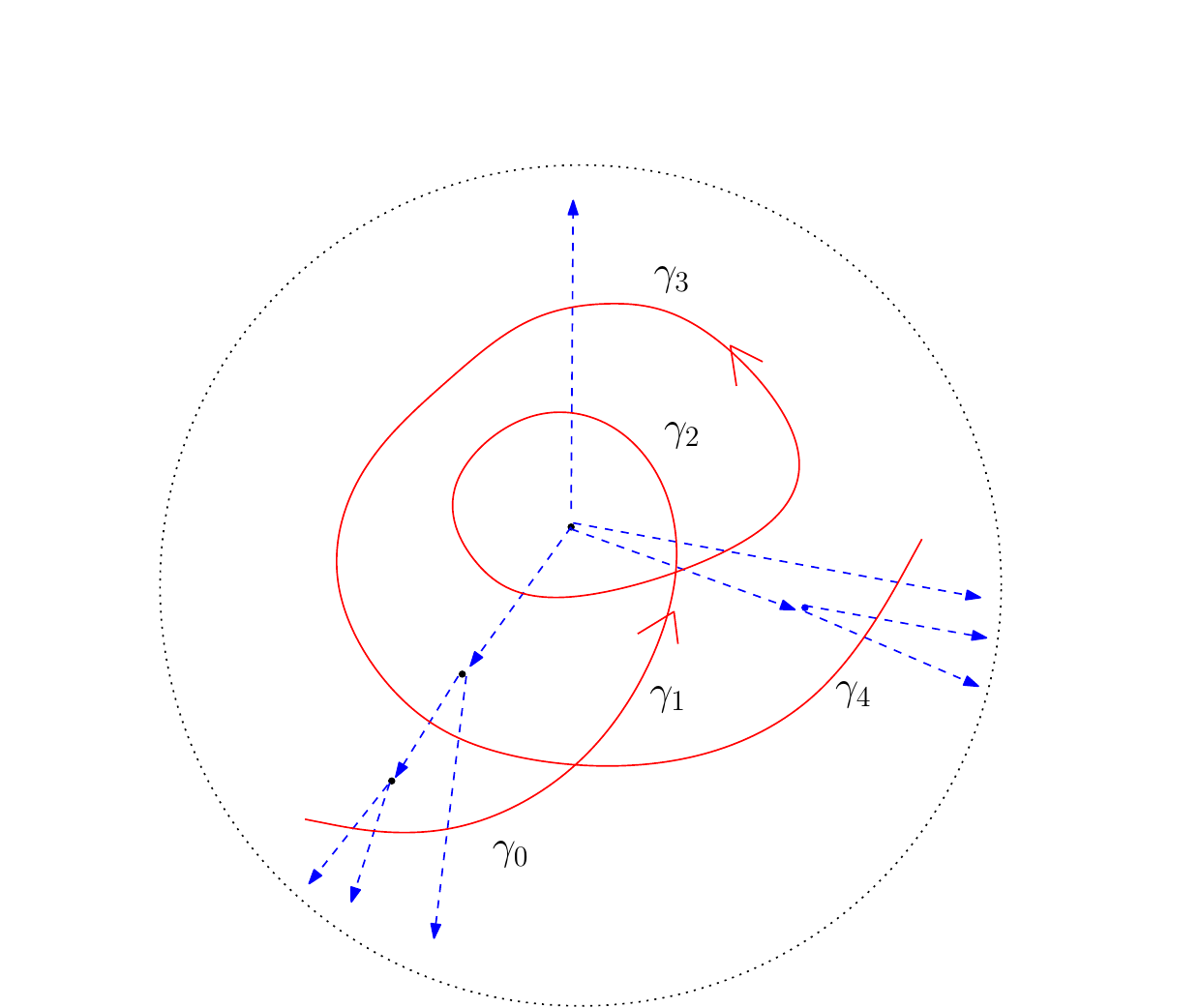}
\caption{\small Example 2 - Leafs of the foliation are represented as dashed lines, while transverse paths are solid.}
\label{figura_exemplo2}
\end{figure}

For the second example, see Figure \ref{figura_exemplo2}, the admissibility matrices are 

$$ P^{\mathrm{strong}}_2 = \left( \begin{array}{ccccc}
0 & 1 & 0 & 0 & 0  \\
0 & 0 & 1 & 0 & 0 \\
0 & 0 & 0 & 1 & 0 \\
0 & 0 & 0 & 0 & 1 \\
0 & 0 & 0 & 0 & 0  \end{array} \right ),$$ 

$$
 P^{\mathrm{left}}_2= \left ( \begin{array}{ccccc}
0 & 1 & 0 & 0 & 0  \\
0 & 0 & 1 & 0 & 0 \\
0 & 0 & 1 & 1 & 0 \\
0 & 1 &  0& 0 & 1 \\
0 & 0 & 0 & 0 & 0  \end{array} \right ), \, 
P^{\mathrm{right}}_2 = \left( \begin{array}{ccccc}
0 & 1 & 0 & 0 & 1  \\
0 & 0 & 1 & 1& 0 \\
0 & 0 & 0 & 1 & 0 \\
0 & 0 & 0 & 0 & 1 \\
0 & 0 & 0 & 0 & 0  \end{array} \right ).
$$

The matrix $ P^{\mathrm{strong}}_2$ does not tell us anything. The matrix $ P^{\mathrm{right}}_2$ is nilpotent and the only admissible paths are $\gamma_0\gamma_4,\, \gamma_0\gamma_1\gamma_3\gamma_4$ and $\gamma$, all of them admissible of order $1$ by Proposition \ref{pr: self-intersection}. The matrix $ P^{\mathrm{left}}_2$ is much more interesting: its spectral radius, the real root of the polynomial $X^3-X^2-1$, is larger than $1$. The loop naturally defined by $\gamma_2$  is linearly admissible of order $1$ but has no leaf on its left: one cannot deduce that it is equivalent to a transverse loop associated to a fixed point. If $p\geq 1$, the loop naturally defined by $\gamma_1\gamma_2^p\gamma_3$ is linearly admissible of order $p$ and has leaf on its right and a leaf on its left. More precisely, it has a transverse self-intersection, so one can apply Proposition \ref{pr: realization} and deduce that it is equivalent to a transverse loop associated to a periodic point of period $p$. In particular, the loop defined by $\gamma_1\gamma_2\gamma_3$ is equivalent to a transverse loop associated to a fixed point: one can apply Theorem \ref{Thm:entropy_lower_bounds} and deduce that the entropy of $f$ is at least $\log 2/4$.


\begin{figure}[ht!]
\includegraphics[height=45mm]{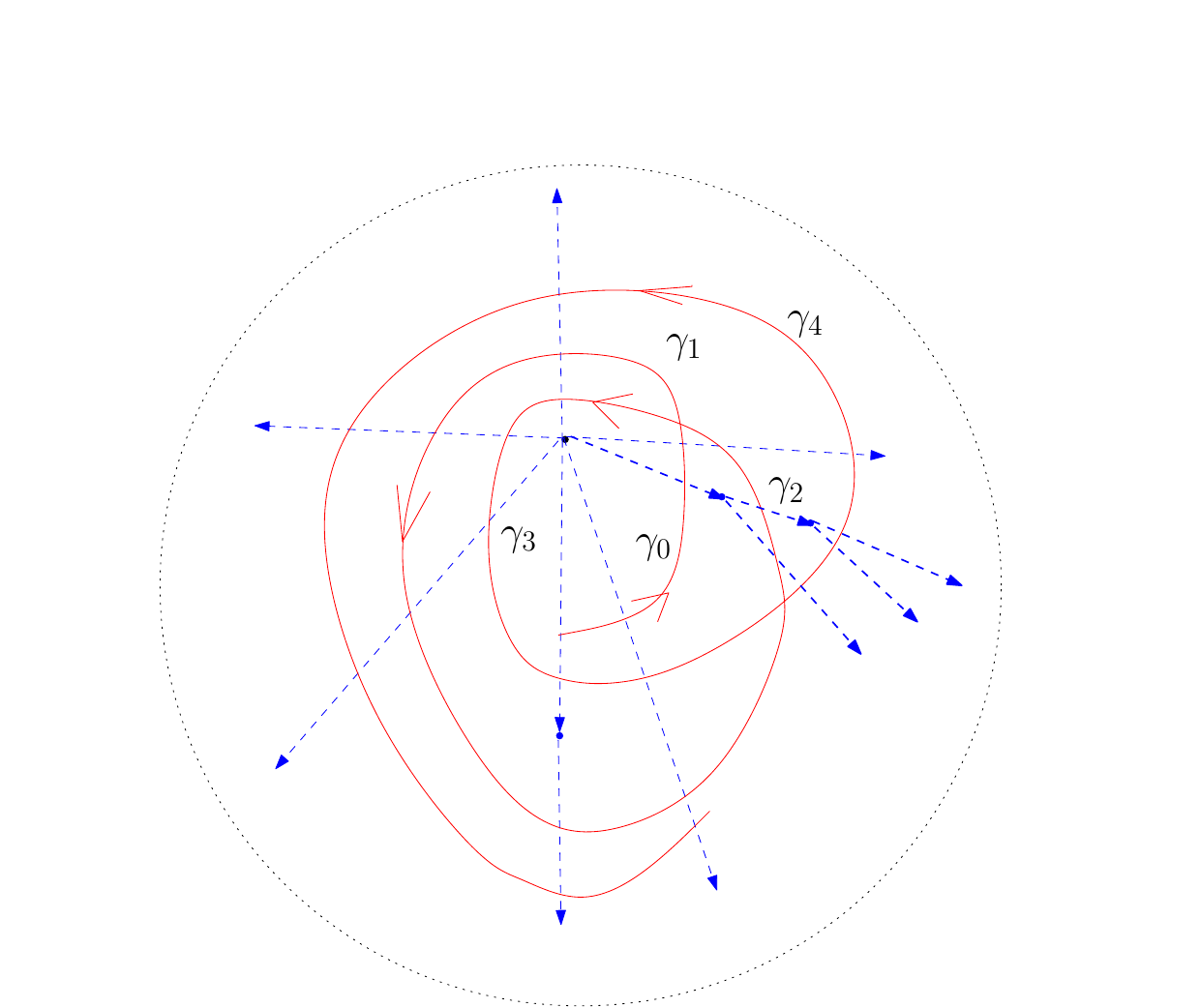}
\caption{\small Example 3 - Leafs of the foliation are represented as dashed lines, while transverse paths are solid.}
\label{figura_exemplo3}
\end{figure}
In third example, see figure \ref{figura_exemplo3}, the trajectory is the same as in the first example but the foliation is different. The admissibility matrices are 

$$ P^{\mathrm{strong}}_3 = \left( \begin{array}{ccccc}
0 & 1 & 0 & 1 & 0  \\
0 & 0 & 1 & 0 & 1 \\
0 & 1 & 0 & 1 & 0 \\
0 & 0 & 1 & 0 & 1 \\
0 & 0 & 0 & 0 & 0  \end{array} \right ),$$

$$
 P^{\mathrm{left}}_3= \left ( \begin{array}{ccccc}
0 & 1 & 0 & 1 & 0  \\
0 & 0 & 1 & 0 & 0 \\
0 & 0 & 0 & 1 & 0 \\
0 & 0 & 1 & 0 & 1 \\
0 & 0 & 0 & 0 & 0  \end{array} \right ), \, 
P^{\mathrm{right}}_3 = \left( \begin{array}{ccccc}
0 & 1 & 0 & 0 & 0  \\
0 & 0 & 1 & 0 & 1 \\
0 & 1 & 0 & 1 & 0 \\
0 & 0 & 0 & 0 & 1 \\
0 & 0 & 0 & 0 & 0  \end{array} \right ).
$$

The matrices $ P^{\mathrm{right}}_3$ and $ P^{\mathrm{left}}_3$ are the same as in the first example. Nevertheless, one can say more. Indeed the loops defined by $\gamma_2\gamma_3$ and $\gamma_1\gamma_2$, which  are linearly admissible of order $2$, now have a leaf on their left and a leaf on their right. They intersect $\mathcal{F}$-transversally and negatively the paths $\gamma_0\gamma_1$ and $\gamma_3\gamma_4$ respectively but they do not interest $\mathcal{F}$-transversally and positively a path drawn on $\gamma$.  So, one cannot apply \textcolor{black}{the second item of Proposition \ref{pr: realization}. However, by the first item of the same proposition,} if they are not equivalent to transverse loops associated to periodic points of period $2$, they are homotopic in the domain to a simple loop that does not meet its image by $f$. In particular, if $\Omega(f)=\S^2$, they must be equivalent to transverse loops associated to periodic points of period $2$. The matrix $ P^{\mathrm{strong}}_3$ is much more interesting: its spectral radius is equal to $\sqrt{2}$. Every path defined by a word of length $n$ in the alphabet $\{\gamma_1\gamma_2, \gamma_3\gamma_2\}$ is admissible of order $2n$ and intersect $\gamma$ transversally. The proofs of Theorem \ref{th:transverse_imply_entropy} and Theorem \ref{th:transverse_imply_periodic points} tell us that the topological entropy of $f$ is at least equal to $\log2/2$, and that the number of fixed point of $f^{2n}$ in the domain is at least equal to $e^n$ if $\Omega(f)=\S^2$.

\begin{figure}[ht!]
\includegraphics[height=45mm]{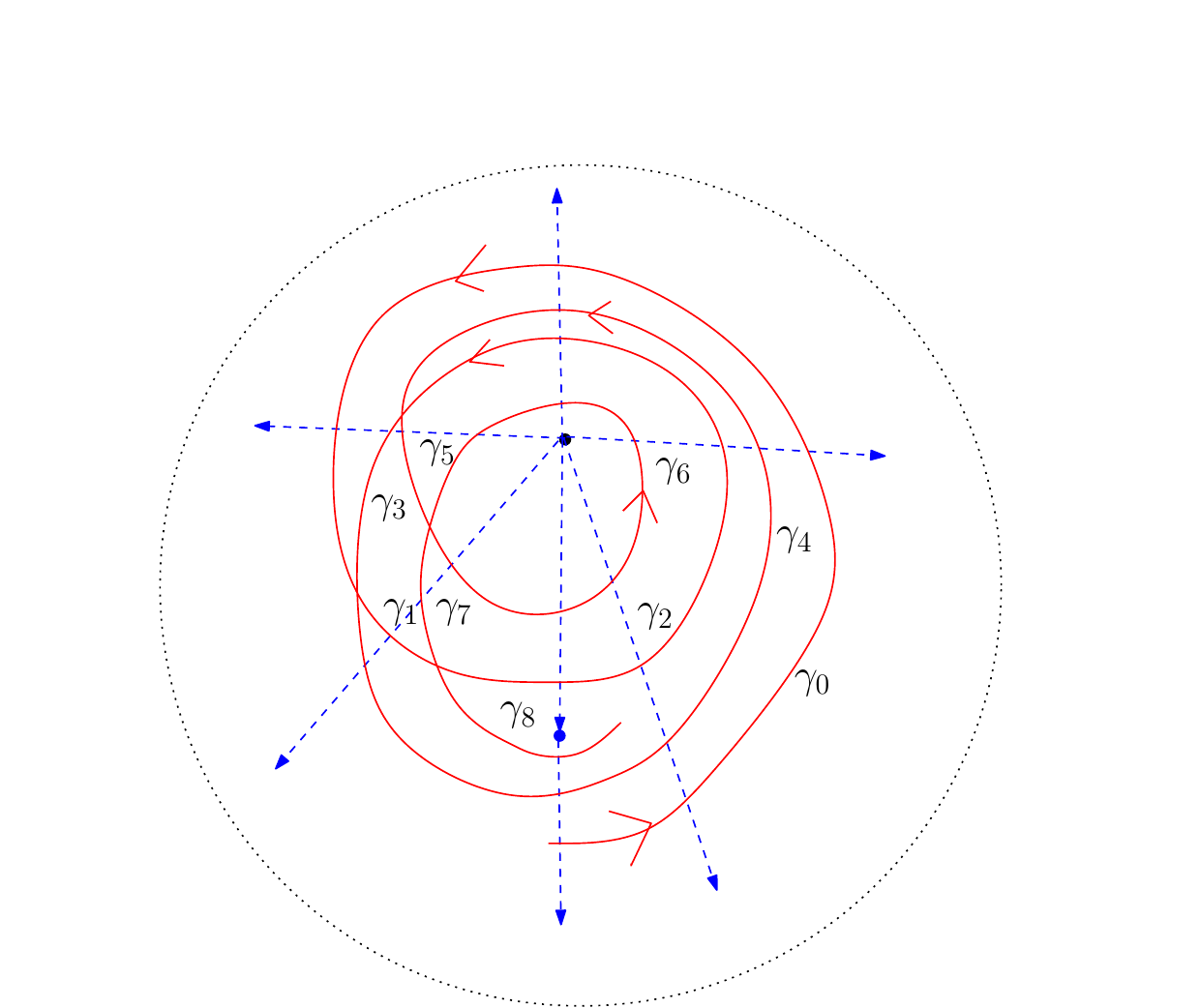}

\caption{\small Example 4 - Leafs of the foliation are represented as dashed lines, while transverse paths are solid.}
\label{figura_exemplo4}
\end{figure}

In the fourth example, see Figure \ref{figura_exemplo4}, the foliation is the same as in the first example but the trajectory is different. In particular, there are three points of  self-intersection of $\gamma$, and all are $\mathcal{F}$-transverse. The admissibility matrices are:

$$ P^{\mathrm{strong}}_1 = \left( \begin{array}{ccccccc}
0 & 1 & 0 & 0 & 0 & 0 & 0 \\
0 & 0 & 1 & 0 & 0 & 0 & 0 \\
0 & 0 & 0 & 1 & 0 & 0 & 0 \\
0 & 0 & 0 & 0 & 1 & 0 & 0 \\
0 & 0 & 0 & 0 & 0 & 1 & 0 \\
0 & 0 & 0 & 0 & 0 & 0 & 1 \\
0 & 0 & 0 & 0 & 0 & 0 & 0 \end{array} \right ),$$ 
$$
 P^{\mathrm{left}}_1 = \left (\begin{array}{ccccccc}
0 & 1 & 0 & 0 & 0 & 1 & 0 \\
0 & 0 & 1 & 1 & 0 & 0 & 0 \\
0 & 0 & 0 & 1 & 0 & 0 & 0 \\
0 & 0 & 0 & 0 & 1 & 0 & 0 \\
0 & 0 & 0 & 0 & 0 & 1 & 0 \\
0 & 0 & 0 & 0 & 1 & 0 & 1 \\
0 & 0 & 0 & 0 & 0 & 0 & 0 \end{array} \right), \, 
 P^{\mathrm{right}}_1 = \left( \begin{array}{ccccccc}
0 & 1 & 0 & 0 & 0 & 0 & 0 \\
0 & 0 & 1 & 0 & 0 & 0 & 0 \\
0 & 0 & 1 & 1 & 0 & 0 & 0 \\
0 & 0 & 0 & 0 & 1 & 0 & 1 \\
0 & 1 & 0 & 0 & 0 & 1 & 0 \\
0 & 0 & 0 & 0 & 0 & 0 & 1 \\
0 & 0 & 0 & 0 & 0 & 0 & 0 \end{array} \right).
$$

By inspection of $ P^{\mathrm{left}}_1$ one assures the existence of a single admissible loop $\gamma_4\gamma_5$, while inspection of $ P^{\mathrm{right}}_1$ we see that both loops $\gamma_2$ and $\gamma_2\gamma_3\gamma_4\gamma_1$ are admissible and that the entropy of $f$ must be positive.

\section{First applications}

In this section we give two applications for homeomorphisms of compact oriented surfaces. The first one is a new proof of Handel's result on transitive homeomorphisms of the sphere. The second application provides sufficient conditions for the existence of non-contractible periodic orbits, and has as a consequence a positive answer to a problem posed by P. Boyland for the annulus.
  
   \subsection{ Transitive maps of surfaces of genus $0$}
  In \cite{H1}, Handel prove that a transitive orientation preserving homeomorphism  $f$ of $\S^2$ with  at least three fixed points, but finitely many, 
 has infinitely many periodic orbits: more precisely the number of periodic points of period $n$ for some iterate of $f$ grows exponentially in $n$.  We will improve this result as follows\textcolor{black}{, with Theorem \ref{th: H_intro} of the introduction}:

\begin{theorem} Let $f:\S^2\to\S^2$ be an orientation preserving homeomorphism such that the complement of the fixed point set is not an annulus. If $f$ is topologically transitive then the number of periodic points of period $n$ for some iterate of $f$ grows exponentially in $n$. Moreover, the entropy of $f$ is positive.\end{theorem}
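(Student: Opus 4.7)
The strategy is to apply Theorems~\ref{th:transverse_imply_periodic points} and~\ref{th:transverse_imply_entropy} to the whole transverse trajectory of a point with dense orbit. First I would fix a maximal hereditary singular isotopy $I$ from $\mathrm{id}$ to $f$ (Theorem~\ref{th: maximal}) and a transverse foliation $\mathcal F$ (Corollary~\ref{co: BrEquiv}). Transitivity of $f$ forces $\mathrm{fix}(f)$, and hence $\mathrm{fix}(I)\subset\mathrm{fix}(f)$, to have empty interior, so the residual set of points with dense orbit meets $\mathrm{dom}(I)=\S^2\setminus\mathrm{fix}(I)$. Choose such a bi-recurrent point $z\in\mathrm{dom}(I)$. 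Its whole transverse trajectory $\gamma=I_{\mathcal F}^{\Z}(z)$ is admissible (each segment $I_{\mathcal F}^n(f^k(z))$ is admissible of order $n$) and $\mathcal F$-bi-recurrent (by the remark following Lemma~\ref{le: continuity}).

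The argument splits according to whether $\gamma$ admits a $\mathcal F$-transverse self-intersection. If it does, Theorem~\ref{th:transverse_imply_periodic points} applied with $\gamma_1=\gamma_2=\gamma$ yields the exponential growth of periodic points, and Theorem~\ref{th:transverse_imply_entropy} (using that $\S^2$ is compact) yields positive topological entropy, finishing the proof.

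Otherwise, by Proposition~\ref{pr:transverse_on_sphere} there exists a transverse simple loop $\Gamma$ whose natural lift is equivalent to $\gamma$, and the union $U=\bigcup_{t\in\R}\phi_{\gamma(t)}\subset\mathrm{dom}(I)$ is an open annulus. Every iterate $f^k(z)$ lies on a leaf met by $\gamma$, so the orbit of $z$ is contained in $U$; density of the orbit then gives $\overline U=\S^2$, and the complement decomposes as $\S^2\setminus U=X_1\sqcup X_2$ into the two ``ends'' of the annulus. The goal in this case is to show $\mathrm{fix}(f)=X_1\cup X_2$, so that $\S^2\setminus\mathrm{fix}(f)=U$ is an annulus, contradicting the hypothesis.

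The main obstacle is precisely this final step. The inclusion $\mathrm{fix}(I)\subset X_1\cup X_2$ is immediate from $U\subset\mathrm{dom}(I)$. To see that $\mathrm{fix}(f)\cap U=\emptyset$, suppose $w\in U$ were fixed: by maximality of $I$ its trajectory loop $I(w)$ is not null-homotopic in $\mathrm{dom}(I)$, yielding a non-trivial admissible transverse loop at $\phi_w\subset U$; transporting this loop via Lemma~\ref{le: continuity} to bi-recurrent neighbors $z'$ of $w$ with dense orbit should produce a $\mathcal F$-transverse self-intersection in $I_{\mathcal F}^{\Z}(z')$, reducing to the first case. For the inclusion $X_1\cup X_2\subset\mathrm{fix}(f)$, one argues that if some $w\in X_1$ satisfied $f(w)\neq w$, then choosing a dense-orbit bi-recurrent point $z'$ whose trajectory visits arbitrarily small disjoint neighborhoods of $w$ and $f(w)$ lets one construct admissible transverse paths forcing a second, topologically independent way of crossing $\Gamma$, again producing a $\mathcal F$-transverse self-intersection in $I_{\mathcal F}^{\Z}(z')$. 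Making both arguments precise---in particular controlling how the transverse foliation extends to and interacts with the dynamics on the ends $X_1, X_2$---is the technical core of the proof; the hypothesis that $\S^2\setminus\mathrm{fix}(f)$ is not an annulus is exactly what rules out the degenerate rotation-like configuration where this analysis would be genuinely consistent.
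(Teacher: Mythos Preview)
Your overall strategy is right---use Proposition~\ref{pr:transverse_on_sphere} to force a $\mathcal F$-transverse self-intersection in the whole transverse trajectory of a dense-orbit point, then invoke Theorems~\ref{th:transverse_imply_periodic points} and~\ref{th:transverse_imply_entropy}. The gap is in how you set up the isotopy. You take an \emph{arbitrary} maximal isotopy $I$ and then, in the case where $\gamma$ has no self-intersection, try to argue that $\mathrm{fix}(f)=X_1\cup X_2$. The inclusion $X_1\cup X_2\subset\mathrm{fix}(f)$ is fine once you observe (which you do not) that $\gamma$ crosses \emph{every} leaf, so that $U=\mathrm{dom}(I)$ and hence $X_1\cup X_2=\mathrm{sing}(I)$. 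But the reverse inclusion fails as stated: if $w\in U$ is a fixed point of $f$, its transverse loop is simply a power of $\Gamma$, and transporting it to a nearby dense-orbit point $z'$ gives a trajectory that is again equivalent to the natural lift of $\Gamma$---no transverse self-intersection is produced. Your ``technical core'' is not just incomplete but does not work as outlined.

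The paper sidesteps this entirely by choosing the isotopy more carefully. Since $\S^2\setminus\mathrm{fix}(f)$ is connected (Brown--Kister) and not a disk (Brouwer), the hypothesis forces $\mathrm{fix}(f)$ to have at least three connected components. One picks three fixed points $z_0,z_1,z_2$ in distinct components and an identity isotopy $I'$ fixing them; then the maximal isotopy $I$ larger than $I'$ has $\mathrm{sing}(I)\supset\{z_0,z_1,z_2\}$ lying in three different components of $\mathrm{sing}(I)$, so $\mathrm{dom}(I)$ is \emph{not} an annulus. Now the transverse trajectory of a dense-orbit point crosses every leaf, so by the equivalence in Proposition~\ref{pr:transverse_on_sphere} it \emph{must} have a $\mathcal F$-transverse self-intersection---your second case never arises, and no analysis of $X_1,X_2$ is needed.
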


\begin{proof}
Recall that,  in our case, the transitivity implies the existence of a point $z$ whose $\omega$-limit and $\alpha$-limit sets are the whole sphere. One knows that every connected component of $\S^2\setminus\mathrm{fix}(f)$ is invariant(see Brown-Kister \cite{BK}). Since $f$ has a dense orbit, this complement must be connected. Moreover it cannot be a disk because $f$ has a dense orbit. Indeed the Brouwer Plane Translation Theorem implies that every fixed point free orientation preserving homeomorphism of the plane has only wandering points. One deduces that the fixed point set has at least three connected components. Choose three fixed points in different connected components and an isotopy $I'$ from identity to $f$ that fixes these three fixed points (this is always possible). The restriction of $I'$ to the complement of these three points is a hereditary singular isotopy. Using Theorem \ref{th: maximal} one can find a maximal hereditary singular isotopy $I$ larger than $I'$. Let $\mathcal{F}$ be a  foliation transverse to this isotopy. It has the same domain as $I$, and this domain is not an annulus because $I$ is larger than $I'$. The fact that $\omega(z)= \alpha(z)=\S^2$ implies that $I_{\mathcal{F}}^{\Z}
 (z)$ is  an admissible $\mathcal{F}$-bi-recurrent transverse path that contains as a subpath (up to equivalence) every admissible segment and consequently that  crosses all leaves of $\mathcal{F}$. Since $\mathrm{dom}(I)$ is not a topological annulus, this implies that $I_{\mathcal{F}}^{\Z}
 (z)$ has an $\mathcal{F}$-transverse self-intersection by Proposition \ref{pr:transverse_on_sphere}. The result follows from Theorems \ref{th:transverse_imply_periodic points} and \ref{th:transverse_imply_entropy}.
\end{proof}

\subsection{ Existence of non-contractible periodic orbits}

Let $f$ be a homeomorphism isotopic to identity on an oriented connected surface $M$ and $I'$ an identity isotopy of $f$. A periodic point $z\in M$ of period $q$ is said to have a {\it contractible} orbit if $I'^q(z)$ naturally defines a homotopically trivial loop, otherwise it is said to be {\it non-contractible}. In this subsection we examine some conditions that ensure the existence of non-contractible periodic orbits of arbitrarily high period. Through this subsection we assume that $\check M$ is the universal covering space of $M$ and write $\check \pi: \check M\to M$ for the covering projection.  Write $\check I'$ for the lifted identity isotopy and $\check f$ for the associated lift of $f$.   One can find  a maximal hereditary singular isotopy $I$ larger than $I'$. It can be lifted to an identity isotopy $\check I$ on $\check{\mathrm{dom}}(I)=\check\pi^{-1}(\mathrm{dom}(I))$. This isotopy is a maximal singular isotopy  of $\check f$ larger than $\check I'$. Let $\mathcal{F}$ be a  foliation transverse to $I$, its lift to $\check{\mathrm{dom}}(I)$, denoted by $\check{\mathcal{F}}$ is transverse to $\check I$.  

The main technical result is the following proposition:
\begin{proposition}\label{pr:technical_non-contractible}
Suppose that there exist an admissible \textcolor{black}{$\check{\mathcal{F}}$-bi-recurrent} path $\check\gamma$ for $\check f$, a leaf $\check \phi$ of $\check{\mathcal{F}}$ and three distinct covering automorphisms $T_i$, $1\leq i\leq 3$, such that $\check\gamma$ crosses each $T_i(\check \phi)$.  Then there exists $q>0$ and a non trivial covering automorphism $T=T_i\circ T_j^{-1}$ such that for all $r/s\in (0,1/q]$, the maps $\check f^{s}\circ T^{-r}$ and $\check f^{s}\circ T^{r}$ have fixed points. In particular, $f$ has non-contractible periodic points of arbitrarily large prime period.
\end{proposition}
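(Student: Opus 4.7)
The plan is to apply the forcing theory from Section 4---specifically the realization result (Proposition \ref{pr: realization})---to produce periodic orbits with all prescribed fractional rotation numbers in an annular quotient of $\check{\mathrm{dom}}(I)$. After relabelling the $T_i$ if necessary, let $t_1<t_2<t_3$ be parameters at which $\check\gamma$ crosses $T_1(\check\phi)$, $T_2(\check\phi)$, $T_3(\check\phi)$ respectively. Set $T=T_2T_1^{-1}$, a non-trivial covering automorphism, and let $\widehat M=\check{\mathrm{dom}}(I)/\langle T\rangle$ be the resulting annular quotient; the isotopy $\check I$ and the foliation $\check{\mathcal F}$ descend to an isotopy $\widehat I$ and a transverse foliation $\widehat{\mathcal F}$ on $\widehat M$.

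First I would construct a transverse loop $\widehat\Gamma$ in $\widehat M$. The subpath $\check\gamma\vert_{[t_1,t_2]}$ joins a point of $T_1(\check\phi)$ to a point of $T(T_1(\check\phi))$, so its projection $\widehat\gamma\vert_{[t_1,t_2]}$ is a transverse arc in $\widehat M$ whose endpoints lie on a common leaf $\widehat\phi$; closing it up along $\widehat\phi$ produces a transverse loop $\widehat\Gamma$. Using the $\check{\mathcal F}$-recurrence of $\check\gamma$, I find translates of $\check\gamma\vert_{[t_1,t_2]}$ appearing arbitrarily far along $\check\gamma$; concatenating these translates via Corollary \ref{co: first induction transverse} shows that the natural lift $\widehat\gamma$ of $\widehat\Gamma$ contains arbitrarily many $\widehat\Gamma$-periods as admissible subpaths, with admissibility order linear in the number of periods. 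Hence $\widehat\Gamma$ is linearly admissible of some order $q$ for $\widehat f$.

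The key step is to verify one of the three alternatives of Proposition \ref{pr: realization} for $\widehat\Gamma$, which is where the third translate $T_3(\check\phi)$ enters. The admissible transverse arc $\check\gamma\vert_{[t_2,t_3]}$ goes from $T_2(\check\phi)$ to $T_3(\check\phi)$. If $T_3T_1^{-1}=T^{k}$ for some integer $k\notin\{0,1\}$, the projection of $\check\gamma\vert_{[t_1,t_3]}$ in $\widehat M$ crosses $\widehat\phi$ three times with a winding pattern incompatible with $\widehat\gamma$ being equivalent to a simple loop; by comparing $\check\gamma$ with an appropriate translate $T^{j}(\check\gamma)$ (one of $j=1$ or $j=k$ will work, depending on the relative order of the leaves) one obtains a $\check{\mathcal F}$-transverse intersection, hence a $\widehat{\mathcal F}$-transverse self-intersection of $\widehat\gamma$. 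Feeding this self-intersection and the $\widehat{\mathcal F}$-recurrence of $\widehat\gamma$ into Lemma \ref{lm:recurrent_to_loop} yields a linearly admissible transverse loop with a $\widehat{\mathcal F}$-transverse self-intersection, which is easily seen to be equivalent to a power of $\widehat\Gamma$---this is condition (iii). If instead $T_3T_1^{-1}$ is not a power of $T$, then the projection of $\check\gamma\vert_{[t_2,t_3]}$ in $\widehat M$ is an admissible arc whose lifts to the natural covering associated to $\widehat\Gamma$ provide admissible transverse paths intersecting $\widehat\Gamma$ $\widehat{\mathcal F}$-transversally with both signs, yielding condition (ii).

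With Proposition \ref{pr: realization} applicable, for every $r/s\in(0,1/q]$ written in irreducible form, $\widehat\Gamma^{r}$ is associated to a periodic orbit of $\widehat f$ of period $s$. Lifting to $\check{\mathrm{dom}}(I)$, this orbit comes from a point $\check z$ with $\check f^{s}(\check z)=T^{r}(\check z)$, which is a fixed point of $\check f^{s}\circ T^{-r}$. The symmetric statement for $\check f^{s}\circ T^{r}$ follows by repeating the construction with the reversed path $\check\gamma^{-1}$ (also admissible and $\check{\mathcal F}$-recurrent, and crossing the same three translates), which produces the opposite sign of rotation. For the final claim, take $r=1$ and any prime $s>q$; the resulting periodic orbit in $M$ has period dividing $s$, so period $1$ or $s$. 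The period-$1$ case would force $T$ to be an $s$-th power in the covering group, which fails for all sufficiently large primes $s$ because surface groups are torsion-free with small centralisers (cyclic in the hyperbolic case, a rank-$2$ lattice in the toral case), so $T\neq e$ admits an $s$-th root for only finitely many primes $s$. Thus for all sufficiently large primes $s$ the orbit has exact period $s$, and it is non-contractible since it is translated by the non-trivial $T$.

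The main obstacle will be the case analysis in the third paragraph, in particular verifying condition (iii) when the $T_i$ all commute and are powers of a single element: one has to exploit the precise cyclic ordering of the three crossings of $\widehat\phi$ by $\widehat\gamma$ (together with the recurrence) to produce the required $\widehat{\mathcal F}$-transverse self-intersection, rather than a merely topological self-intersection that could be resolved without forcing new orbits.
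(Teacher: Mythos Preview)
Your approach diverges from the paper's and has a genuine gap precisely where you flag the ``main obstacle.'' You fix $T=T_2T_1^{-1}$ in advance and then try to verify a hypothesis of Proposition~\ref{pr: realization} via a case analysis on the algebraic relation between $T_3T_1^{-1}$ and $T$. Neither case is justified. When $T_3T_1^{-1}=T^k$, you assert that the projected path ``crosses $\widehat\phi$ three times with a winding pattern incompatible with $\widehat\gamma$ being equivalent to a simple loop,'' but multiple crossings of a leaf do \emph{not} by themselves produce a $\widehat{\mathcal F}$-transverse self-intersection; you would need the kind of argument in Proposition~\ref{pr:transverse_on_sphere}, and that needs bi-recurrence and a careful analysis you have not supplied. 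In the other case you claim to obtain admissible paths intersecting $\widehat\Gamma$ $\widehat{\mathcal F}$-transversally with \emph{both} signs, but you give no reason why both signs occur. Your linear-admissibility argument in the second paragraph also leans on Corollary~\ref{co: first induction transverse}, which requires $\mathcal F$-transverse intersections between consecutive pieces that you never establish; $\check{\mathcal F}$-recurrence only gives equivalent subpaths, not transverse crossings.

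The paper avoids all of this with a short topological argument that you are missing. It first produces an admissible transverse \emph{loop} $\check\Gamma$ in $\check M$ crossing each $T_i(\check\phi)$ (using Lemma~\ref{lm:recurrent_to_loop} if $\check\gamma$ has a self-intersection, and Proposition~\ref{pr:transverse_on_sphere} otherwise). The key step is then: any transverse loop in the plane $\check M$ must enclose singular points of $\check{\mathcal F}$ (a winding-number argument), and this set $\Sigma_{\check\Gamma}$ is compact and depends only on the equivalence class. If no two of the loops $T_i^{-1}(\check\Gamma)$ intersected $\check{\mathcal F}$-transversally, Proposition~\ref{pr:transverse_on_surface} would make them pairwise disjoint; the three dual functions are all decreasing along $\check\phi$, so two of them are nonzero at a common point, forcing one loop inside a bounded complementary region of another and hence $T(\Sigma_{\check\Gamma})\subset\Sigma_{\check\Gamma}$ for a nontrivial $T$, contradicting compactness. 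This singles out the correct pair $(i,j)$ \emph{a posteriori}, gives a $\check{\mathcal F}$-transverse intersection of $\check\Gamma$ with $T(\check\Gamma)$, and from there Corollary~\ref{co: induction transverse} plus Proposition~\ref{pr: realization} (applied on $M$, not on an intermediate quotient) finish the job for both signs of $r$ simultaneously.
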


\begin{proof} By assumptions, there are non trivial covering automorphisms. So $M$ is not simply connected and $\check M$ is a topological plane. For every loop $\check\Gamma$ in $\check M$, we will denote $\delta_{\check\Gamma}$ the dual function that vanishes on the unbounded connected component of $\check M\setminus\check\Gamma$. It is usually called the {\it winding number} of $\check\Gamma$.

\begin{sub-lemma}\label{sle:existence singular points}
If $\check\Gamma$ is a loop positively transverse to $\check{\mathcal F}$, the set of singular points $\check z$ of $\check{\mathcal{F}}$ such that $\delta_{\check\Gamma}(\check z)\not=0$ is a non empty compact subset $\Sigma_{\check\Gamma}$ of $\check M$. Furthermore $\Sigma_{\check\Gamma}=\Sigma_{\check\Gamma'}$ if $\check\Gamma$ and $\check\Gamma'$ are equivalent transverse loops.
\end{sub-lemma}

\begin{proof} The fact that $\Sigma_{\check\Gamma}=\Sigma_{\check\Gamma'}$ if $\check\Gamma$ and $\check\Gamma'$ are equivalent transverse loops is obvious as is the fact that $\Sigma_{\check\Gamma}$ compact. To prove that this set is not empty, let us consider a leaf $\check\phi$ that meets $\check\Gamma$. As recalled in the first section, at least one the two following assertions is true:

\smallskip
\noindent-\enskip\enskip the set $\alpha(\check \phi)$  is a non empty compact set and $\delta_{\check\Gamma}$ takes a constant positive value on it;

\smallskip
\noindent-\enskip\enskip the set $\omega(\check \phi)$  is a non empty compact set and $\delta_{\check\Gamma}$ takes a constant negative value on it.

\smallskip

Suppose for instance that we are in the first situation. If $\alpha(\check \phi)$ contains a singular point, we are done. If not, it is a closed leaf disjoint from $\check \Gamma$. More precisely, $\alpha(\check \phi)$ is contained in a bounded connected component of $\check M\setminus\check \Gamma$ where $\delta_{\check\Gamma}$ takes a constant positive value. This component contains the bounded component of the complement of $\alpha(\check \phi)$ and so contains a singular point. \end{proof}

Let us prove first that there exists an admissible loop $\check\Gamma$ that crosses each $T_i(\check \phi)$. Suppose first that $\check \gamma$ has a $\check{\mathcal{F}}$-transverse self-intersection and choose $a_1, b_1, t_1, a_2, b_2, t_2$ be such that $\check\gamma_1\vert_{[a_1,b_1]}$ intersects $\check{\mathcal{F}}$-transversally $\check\gamma_2\vert_{[a_2, b_2]}$ at $\check\gamma_1(t_1)=\check\gamma_2(t_2)$ and such that $\check\gamma_1\vert_{[a_1,b_1]}$ crosses each $T_i(\check \phi)$. The construction done in the proof of Lemma  \ref{lm:recurrent_to_loop} gives us such a loop $\check\Gamma$. Suppose now that  $\check \gamma$ has no $\check{\mathcal{F}}$-transverse self-intersection. By Proposition \ref{pr:transverse_on_sphere}, one knows that $\check\gamma$ is equivalent to the natural lift of a simple loop $\check\Gamma$ and this loop satisfies the desired property.

Let us prove now that one can find at least two distinct loops among the  $T_i^{-1}(\check\Gamma)$ that have a $\check{\mathcal{F}}$-transverse intersection. If not, by Proposition \ref{pr:transverse_on_surface}, one can find for every $i\in\{1,2,3\}$ a transverse loop $\check\Gamma'_i$ equivalent to $T_i^{-1}(\check\Gamma)$ such that the $\check\Gamma'_i$ are pairwise disjoint. The three functions  $ \delta_{\check\Gamma'_i}$ are decreasing on the leaf $\check\phi$. \textcolor{black}{For each $i$, either $ \delta_{\check\Gamma'_i}$ is not null in $\alpha(\check\phi)$ or $ \delta_{\check\Gamma'_i}$ is not null in $\omega(\check\phi)$, and therefore either there exists two different indices $i$ and $j$ such that for all points in $\alpha(\check\phi),\, \delta_{\check\Gamma'_i}\not=0$ and $\delta_{\check\Gamma'_j}\not=0$, or there exists two different indices $i$ and $j$ such that for all points in $\omega(\check\phi),\, \delta_{\check\Gamma'_i}\not=0$ and $\delta_{\check\Gamma'_j}\not=0$. In any case,  there exists} a point $\check z\in \check\phi$ and two different indices $i$ and $j$ such that $\delta_{\check\Gamma'_i}(\check z)\not=0$ and $\delta_{\check\Gamma'_j}(\check z)\not=0$. The fact that there exists a point where the two dual functions do not vanish tells us that one of the loops, let us say $\check\Gamma'_i$, is included in a bounded connected component of the complement of the other one $\check\Gamma'_j$, and that $\check\Gamma'_j$ is included in the unbounded connected component of the complement of  $\check\Gamma'_i$. One deduces that $\Sigma_{\check\Gamma'_i}\subset \Sigma_{\check\Gamma'_j}$. Setting $T=T_j\circ (T_{i})^{-1}$, one gets the inclusion $T(\Sigma_{\check\Gamma})\subset \Sigma_{\check\Gamma}$, where $\Sigma_{\check\Gamma}$ is a non empty compact set. We have found a contradiction because $T$ is a non trivial covering automorphism.

We have proved that there exist $i\not=j$ such that  $T_i^{-1}(\check\Gamma)$ and $T_j^{-1}(\check\Gamma)$ intersect $\check{\mathcal{F}}$-transversally. This implies that $\check\Gamma$ and $T(\check\Gamma)$ intersect $\check{\mathcal{F}}$-transversally, where $T=T_j\circ (T_{i})^{-1}$. Write $\check\gamma$ for the natural lift of $\check\Gamma$ and choose an integer $L$ sufficiently large, \textcolor{black}{so} that $\check\gamma\vert_{[0, L]}$ has a $\check{\mathcal{F}}$-transverse intersection with $T(\check\gamma)\vert_{[0,L]}$ at $\check\gamma(t)=T(\check\gamma)(s)$, with $s<t$. 
The loop $\check\Gamma$ being admissible, there exists $q>0$ such that $\check\gamma\vert_{[-L,2L]}$ is admissible of order $q$. It follows from Corollaries \ref{co: induction transverse} and Proposition\ref{pr: order plane} that, for any $n>1$, the paths
$$\prod_{i=0}^{n-1}T^{i}\left( \check\gamma\vert_{[s-L,t+L]}\right), \,   \prod_{i=0}^{n-1}T^{-i}\left( \check\gamma\vert_{[t-L,s+l]}\right)$$
are admissible of order $nq$, and both have $\check{\mathcal{F}}$-transverse self-intersections. Therefore the paths $\check\gamma\vert_{[s-L,t+L]}$ and $\check \gamma\vert_{[t-L,s+l]}$ project onto closed paths of $M$ and the two loops naturally defined have $\mathcal{F}$-transverse self-intersection and are linearly admissible. So, one can deduce Proposition \ref{pr:technical_non-contractible}
from Proposition \ref{pr: realization}.
\end{proof}

Let us state a first application of Proposition \ref{pr:technical_non-contractible}. In \cite{T}  
conditions are given for a homeomorphism $f$, isotopic to the identity, of a compact surface $M$ to have only contractible periodic points. There it is shown, using Nielsen-Thurston theory, that for such $f$, under a suitable condition on the size of its fixed point  set,  there exists \textcolor{black}{an} uniform bound on the diameter of  the orbits of periodic points. The next theorem improves the main result of that note, by extending the uniform bound on the diameter of orbits from $\check f$ periodic points to $\check f$ recurrent points.  Note that the hypothesis that the fixed point set of $\check f$ project in a disk cannot be removed. There  exists an example of a $\mathcal{C}^{\infty}$ diffeomorphism $f$ of $\T^2$ preserving the Lebesgue measure and ergodic such that every periodic orbit of $f$ is contractible, and such that almost all points in the lift have orbits unbounded in every direction (see \cite{KT1}). \textcolor{black}{ The following is Theorem \ref{th:recurrent_on_the_lift_intro} of the introduction}:

\begin{theorem}\label{th:recurrent_on_the_lift} We suppose that $M$ is compact and furnished with a Riemannian structure. We endow the universal covering space $\check M$ with the lifted structure and denote by $d$ the induced distance. Let $f$ be a homeomorphism of $M$ isotopic to the identity and $\check f$ a lift to $\check M$ naturally defined by the isotopy. Assume that there exists an open topological disk $U\subset M$ such that the fixed point set of $\check f$ projects into $U$. Then;

\smallskip
\noindent-\enskip either there exists $K>0$ such that $d(\check f^n(\check z), \check z)\leq  K$, for all $n\geq 0$ and all \textcolor{black}{bi-recurrent} point $\check z$ of $\check f$;

\smallskip
\noindent-\enskip or there exists a nontrivial covering automorphism $T$ and $q>0$ such that, for all $r/s\in (-1/q,1/q)$, the map $\check f^{s}\circ T^{-r}$ has a fixed point. In particular, $f$ has non-contractible periodic points of arbitrarily large prime period.
\end{theorem}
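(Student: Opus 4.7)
The plan is to derive the second alternative of the theorem from Proposition \ref{pr:technical_non-contractible} whenever the first fails: I aim to produce an admissible $\check{\mathcal F}$-positively recurrent transverse path for $\check f$, a leaf $\check\phi$ of $\check{\mathcal F}$, and three distinct covering automorphisms $T_1,T_2,T_3$ such that the path crosses each $T_i(\check\phi)$. For the setup, I would use Theorem \ref{th: maximal} to extend the isotopy defining $\check f$ to a maximal hereditary singular isotopy $I$ of $f$, and choose a foliation $\mathcal F$ transverse to $I$. Since a maximal isotopy is homotopic to the original one relative to its initial fixed-point set, every $z\in\mathrm{fix}(I)$ has $f$-trajectory contractible in $M$, hence lifts to a fixed point of $\check f$; the hypothesis then forces $\mathrm{fix}(I)\subset U$, and lifting gives $\mathrm{sing}(\check{\mathcal F})\subset\check\pi^{-1}(U)$, a disjoint union of translates of a single lift of $U$.

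Assuming the first alternative fails, pick a recurrent point $\check z$ of $\check f$ with $\sup_{n\ge 0}d(\check f^n(\check z),\check z)=+\infty$ and set $\check\gamma=\check I^{\mathbb Z}_{\check{\mathcal F}}(\check z)$. Lemma \ref{le: continuity} applied to $\check f$ shows that $\check\gamma$ is an admissible $\check{\mathcal F}$-positively recurrent transverse path. Because $M$ is compact, the projected orbit $\{f^n(z)\}_{n\ge 0}$ has non-empty $\omega$-limit set $\omega_f(z)\subset M$; for every $z'\in\omega_f(z)$ one may fix a lift $\check z'$ of $z'$ and, using Lemma \ref{le: continuity} combined with the equivariance of $\check{\mathcal F}$ under the covering action, a neighbourhood $V_{z'}$ of $\check z'$ and a leaf $\check\phi_{z'}$ near $\check z'$ so that whenever $\check f^n(\check z)\in T(V_{z'})$ for some covering automorphism $T$, the subpath of $\check\gamma$ around time $n$ crosses $T(\check\phi_{z'})$.

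The final step is to show that some accumulation point $z'$ admits infinitely many distinct covering automorphisms $T$ with $\check f^n(\check z)\in T(V_{z'})$ for some $n$. Since $\omega_f(z)$ is compact it is covered by finitely many such neighbourhoods projecting to small balls around $z'_1,\dots,z'_p$, and for all sufficiently large $n$ the point $f^n(z)$ lies in this cover; if for each $i$ only finitely many translates $T(V_{z'_i})$ were visited by the lifted orbit, then that orbit would eventually lie in a bounded union of such translates, contradicting unboundedness. Choosing three distinct such $T_i$'s for a fixed $z'$ and setting $\check\phi=\check\phi_{z'}$, Proposition \ref{pr:technical_non-contractible} yields a nontrivial covering automorphism $T$ and an integer $q>0$ such that $\check f^s\circ T^{\pm r}$ has a fixed point for every irreducible $r/s\in(0,1/q]$. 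This is precisely the second alternative (the $r=0$ case is trivial since $\check f$ has fixed points in $\check\pi^{-1}(U)$ by hypothesis), and the corresponding periodic points are non-contractible with prime periods tending to infinity along $s$. The main obstacle is this pigeonhole-compactness step: one must rule out pathological scenarios in which the unbounded excursions of the orbit visit infinitely many fundamental domains only in transient ways that never land close to any lift of an accumulation point of the projected orbit, a scenario precluded by combining recurrence of $\check z$ in $\check M$ with the covering action being discrete on a cover whose fundamental domains have bounded diameter.
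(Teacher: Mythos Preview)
Your overall strategy matches the paper's: reduce to Proposition~\ref{pr:technical_non-contractible} by exhibiting an admissible $\check{\mathcal F}$-recurrent path crossing three distinct translates $T_i(\check\phi)$ of a single leaf. The setup (maximal isotopy, transverse foliation, $\mathrm{sing}(\mathcal F)\subset U$, recurrence of $\check I^{\Z}_{\check{\mathcal F}}(\check z)$ via Lemma~\ref{le: continuity}) is correct.

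The gap is in your ``three translates'' step. You propose to cover $\omega_f(z)$ by neighbourhoods $V_{z'}$ equipped with a leaf-crossing property coming from Lemma~\ref{le: continuity}. But Lemma~\ref{le: continuity} applies only to points of $\mathrm{dom}(I)$, and nothing prevents $\omega_f(z)$ from meeting $\mathrm{sing}(I)$. Near a singular point there is no single leaf that every nearby transverse trajectory must cross, so those neighbourhoods cannot be built. Consequently your claim that ``for all sufficiently large $n$ the point $f^n(z)$ lies in this cover'' fails, and the pigeonhole argument collapses: the unbounded excursions of the lifted orbit could take place entirely in regions projecting near $\mathrm{sing}(I)$, with the returns guaranteed by recurrence of $\check z$ all landing in the \emph{same} translate (the identity one). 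Your final paragraph acknowledges a difficulty but misidentifies it; the problem is not transience of excursions through generic fundamental domains, it is the singular set.

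The paper's Lemma~\ref{lm:crosses_three} fixes exactly this. It covers all of $M$, not just $\omega_f(z)$: first a neighbourhood $V\subset U$ of $\mathrm{sing}(I)$ chosen so small that for $\check z\in\check\pi^{-1}(V)$ the points $\check z$ and $\check f(\check z)$ lie in the same component of $\check\pi^{-1}(U)$ (this is where the hypothesis that the singular set sits in a disk is genuinely used), then a finite cover of $M\setminus V$ by disks $O_{z_i}$ with the leaf-crossing property. A jump-sequence argument then shows that any finite orbit segment with displacement exceeding a uniform $K$ must visit at least $3r$ pieces outside $V$, hence by pigeonhole three of them project to the same $X_{z_i}$, yielding three distinct translates of $\phi_{z_i}$. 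Note that this statement is uniform in $\check z$ and concerns a finite transverse trajectory $\check I^n_{\check{\mathcal F}}(\check z)$; recurrence is invoked only afterwards, to make the full trajectory $\check{\mathcal F}$-recurrent for the application of Proposition~\ref{pr:technical_non-contractible}. Your argument can be repaired along these lines, but as written it does not use the disk hypothesis on $U$ at the decisive moment, which is a reliable sign that something essential is missing.
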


{\it Proof.}
Let $I$ be a maximal hereditary singular isotopy larger than the given isotopy and $\mathcal F$ a foliation transverse to $I$. Denote $\check M$  the universal covering space of $M$ and $\check \pi: \check M\to M$ the covering projection. Write $\check I'$ for the lifted identity isotopy on $\check{\mathrm{dom}}(I)=\check\pi^{-1}(\mathrm{dom}(I))$ and $\check{\mathcal{F}}$ for the lifted foliation. The theorem follows directly from the next lemma and Proposition \ref{pr:technical_non-contractible}.

\begin{lemma}\label{lm:crosses_three} There exists $K>0$ such that, for all $\check z$ in $\check M$ and all $n\geq 0$, if $d(\check f^n(\check z),\check z)\ge K$, then there exists a leaf $\check \phi$ and three distinct covering automorphisms $T_i$, $1\leq i\leq 3$, such that $I_{\check{\mathcal F}}^n(\check z)$ crosses each $T_i(\check \phi)$.
\end{lemma}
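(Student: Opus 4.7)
I will prove the lemma by contrapositive: assuming that for every leaf $\check\phi$ of $\check{\mathcal F}$, the transverse trajectory $\check\gamma := I_{\mathcal F}^n(\check z)$ meets at most two distinct translates $T(\check\phi)$, I will derive a uniform bound $K$ on $d(\check f^n(\check z), \check z)$ depending only on $f$, $I$, $\mathcal F$, $M$ and $U$. Since each leaf of $\check{\mathcal F}$ is a Brouwer line for $\check f$ on $\check{\mathrm{dom}}(\mathcal F)$ by Theorem~\ref{th: BrEquiv}, and any positively transverse path crosses a Brouwer line at most once, the contrapositive hypothesis is equivalent to saying that the $M$-projection $\gamma := \check\pi\circ\check\gamma$ crosses each leaf of $\mathcal F$ at most twice.

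Compactness of $M$ together with the uniform continuity of the isotopy $I$ first yields a constant $K_0$ with $d(\check z, \check f(\check z))\leq K_0$ for every $\check z \in \check M$, and hence $d(\check f^n(\check z),\check z)\leq nK_0$; moreover, each elementary trajectory $I_{\mathcal F}(\check f^k(\check z))$ can be taken with uniformly bounded diameter in $\check M$. The next step is to fix a finite family of oriented simple closed curves $\alpha_1,\dots,\alpha_p$ in $\mathrm{dom}(\mathcal F)$ representing a generating set of $\pi_1(M)$ and in general position with $\mathcal F$, so that each $\alpha_j$ meets only finitely many leaves $\phi_{j,1},\dots,\phi_{j,m_j}$ of $\mathcal F$, in finitely many points.

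Closing $\gamma$ in $M$ with a short arc $\beta$ from $f^n(z)$ back to $z$ produces a loop whose homotopy class corresponds to a covering transformation $T$ satisfying $d(\check z, T(\check z))\geq d(\check f^n(\check z),\check z)-K_0$. Under the contrapositive hypothesis, every transverse intersection of $\gamma$ with $\alpha_j$ takes place on one of the leaves $\phi_{j,k}$, each of which is crossed by $\gamma$ at most twice; this uniformly bounds the algebraic intersection numbers $\#((\gamma\cdot\beta)\cdot\alpha_j)$ by $2m_j+c_j$, where $c_j$ comes from $\beta$. In particular, the homology class of $T$ in $H_1(M;\Z)$ is bounded, which already suffices when $M$ is a torus.

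The main obstacle is that, for $M$ of genus at least two, the homotopy class of $T$ (and hence $d(\check z,T(\check z))$) is not determined by its homology class, so the intersection bound with the $\alpha_j$'s does not immediately translate into a word-length bound in $\pi_1(M)$. I expect the resolution to come by choosing the $\alpha_j$'s to form a \emph{cut system}: a family whose complement $M\setminus\bigcup\alpha_j$ is a disjoint union of (topological) disks, one of which lifts to a fundamental domain $D\subset\check M$. Then a transverse path in $\check M$ crossing only boundedly many translates of each lifted curve $\check\alpha_j$ can enter only boundedly many translates $T(D)$, giving a bound on $d(\check z,\check f^n(\check z))$ that depends only on $K_0$, on the $\alpha_j$'s, and on the crossing bound~$2m_j$; this yields the desired~$K$.
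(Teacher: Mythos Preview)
Your contrapositive reformulation is fine, but the central mechanism you propose for bounding crossings does not work. The assertion that you can choose closed curves $\alpha_j$ in $\mathrm{dom}(\mathcal F)$ ``in general position with $\mathcal F$, so that each $\alpha_j$ meets only finitely many leaves $\phi_{j,1},\dots,\phi_{j,m_j}$'' is simply false: any arc in the domain of a foliation meets a continuum of leaves. Even under the most charitable reading (finitely many tangency leaves, or $\alpha_j$ built from finitely many leaf segments joined by transverse arcs), the follow-up claim---that every intersection of the transverse trajectory $\gamma$ with $\alpha_j$ must occur on one of the $\phi_{j,k}$---does not follow: $\gamma$ and $\alpha_j$ are just two curves in $M$ and can meet anywhere, irrespective of the foliation. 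Without this, you have no bound on the \emph{geometric} number of crossings of $\check\gamma$ with each lift $\check\alpha_j$, which is precisely what your cut-system argument requires; as you yourself observe, bounding the algebraic intersection numbers (equivalently, the homology class) is insufficient in genus~$\geq 2$.

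The paper's proof bypasses curve-intersection entirely with a direct pigeonhole on a finite family of \emph{leaves}. Using the hypothesis $\mathrm{sing}(I)\subset U$, it fixes a neighborhood $V\subset U$ of the singular set such that $\check z\in\check\pi^{-1}(V)$ forces $\check z$ and $\check f(\check z)$ to lie in the same component of $\check\pi^{-1}(U)$. It then covers the compact set $M\setminus V$ by finitely many small disks $O_{z_1},\dots,O_{z_r}$, each chosen (via Lemma~\ref{le: continuity}) so that any orbit point landing in $O_{z_i}$ forces the transverse trajectory to cross $\phi_{z_i}$. Refining to a partition and lifting gives a partition of $\check M$ into cells of uniformly bounded diameter. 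One records the successive \emph{distinct} cells visited by $\check z,\check f(\check z),\dots,\check f^n(\check z)$; large displacement forces many cell changes, and the property of $V$ forbids two consecutive new cells from both projecting to $V$, so at least half of them project to some $X_{z_i}$. Pigeonhole over the $r$ indices then produces three lifted cells of the same type $z_i$, hence three distinct translates $T_1(\phi_{z_i}),T_2(\phi_{z_i}),T_3(\phi_{z_i})$ crossed by $I_{\mathcal F}^n(\check z)$. This is the finite pigeonhole your intuition was reaching for, but carried out on a fixed finite set of leaves rather than on auxiliary closed curves.
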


\begin{proof}  One can find a neighborhood $V\subset U$ of $\mathrm{sing}(I)$ such that for every point $\check z\in \pi^{-1}(V)$, the points $\check z$ and $\check f(\check z)$ belong to the same connected component of $\pi^{-1}(U)$.  For reasons explained in the proof of Lemma \ref{le: neighborhood of infinity}, one knows that for  every $z\in M\setminus V$, there exists a small open disk $O_z\subset \mathrm{dom}(\mathcal{F})$ containing $z$ such that $I_{\mathcal F}^2(f^{-1}(z'))$ crosses $\phi_{z}$ if $z'\in O_z$. By compactness of $M\setminus V$, one can cover this set by a finite family $(O_{z_i})_{1\leq i\leq r}$. One constructs easily a partition $(X_{z_i})_{1\leq i\leq r}$ of $M\setminus V$ such that $X_{z_i}\subset O_{z_i}$. We have a unique partition  $(\check X_{\alpha})_{\alpha\in \mathcal{A}}$ of $\check M$ such that, either $\check X_{\alpha}$ is contained in a connected component of $\pi^{-1}(U)$ and projects onto $V$, or  there exists $i\in\{1,\dots,r\}$ such that $\check X_{\alpha}$ is contained in a connected component of $\pi^{-1}(O_{z_i})$ and projects onto $X_{z_i}$.  Write $\alpha(\check z)=\alpha$ if $\check z\in \check X_{\alpha}$. Let us define
$$K_0=\max_{\check z\in \check M} d( \check f( \check z), \check z)),\enskip K_1=\max_{\alpha\in  \mathcal{A}}\mathrm{diam}(\check X_{\alpha}) .$$
Fix $\check z\in\check M$, $n\geq 1$ and define a sequence $n_0<n_1<\dots<n_s$ in the following inductive way: 
$$n_0=0, \enskip n_{j+1}= 1+ \sup\{k\in\{n_j, \dots , n-1\}\,\vert\,\alpha(\check f^k(\check z))=\alpha(\check f^{n_j}(\check z))\}, \enskip n_s=n.$$
Note that  $d(\check f^{n_j}(\check z),\check f^{n_{j+1}}(\check z)) \leq K_0+K_1$, if $j<s$. \textcolor{black}{Note also that, if $\check X_{\alpha(\check f^{n_j}(\check z))}$ projects on $V$, then $f^{n_{j+1}-1}(z)$ also belongs to $V$ and by the choice of $V$ both $\check f^{n_{j+1}-1}(\check z)$ and $\check f^{n_{j+1}}(\check z)$ belong to the same connected component of $\pi^{-1}(U)$. As $\alpha(\check f^{n_j}(\check z))\not=\alpha(\check f^{n_{j+1}}(\check z))$, one gets that $\check X_{\alpha(\check f^{n_j+1}(\check z))}$ do not project on $V$.}
Fix $K>(6r+1)(K_0+K_1
)$. If $d(\check f^n(\check z),\check z)\ge K$, then $s\geq 6r+1$ and there exist at least $3r$ sets $\check X_{\alpha(\check f^{n_j}(\check z))}$, $0<j<s$, that do not project on $V$. This implies that there exist three points $f^{n_{j_l}}(\pi(z))$ that belong to the same $X_{z_i}$\textcolor{black}{, and therefore one finds that there exist a point $\check z_i\in \pi^{-1}(z_i)$ and two distinct nontrivial covering automorphisms $T_1, T_2$ such that the orbit of $\check z$ intersects the three distinct connected components of $\pi^{-1}(O_{z_i})$ that contain $\check z_i,T_1(\check z_i)$ and $T_2(\check z_i)$, respectively. By the choice of $O_{z_i}$, this implies that $I_{\check{\mathcal F}}^n(\check z)$ intersect $\phi_{\check z_i}, T_1(\phi_{\check z_i})$ and $T_2(\phi_{\check z_i})$}.
\end{proof}

Proposition \ref{pr:technical_non-contractible} is also fundamental in solving the following conjecture posed by P. Boyland (see, for instance, \cite{AT} where the conjecture is shown to be true generically for sufficiently smooth diffeomorphisms): Let $f$ be a homeomorphism of the closed annulus preserving a probability measure $\mu$ with full support, and let $\check f$ be a lift of $f$ to the universal covering space of the annulus. If the rotation set of $f$ is a non trivial segment and the rotation number of $\mu$ is null, is it true that $\mathrm{rot}(\mu)$ belongs to the interior of the rotation set?

We recall first Atkinson's Lemma on ergodic theory, that will be very useful in this paper \textcolor{black}{(see \cite{A}).}

\begin{proposition} \label{pr:atkinson}Let $(X,\mathcal{B},\mu)$ be a probability space, and let $T:X\to X$ be an ergodic automorphism. If $\varphi: X\to \R$ is an integrable map such that $\int\varphi\, d\mu=0$, then for every $B\in\mathcal{B}$ and every $\varepsilon >0$, one has
$$ \mu\left( \left\{x\in B, \exists n\geq 0, \enskip T^n(x)\in B\enskip\mathit{and}\enskip\left\vert \sum_{k=0}^{n-1}\varphi(T^k(x))\right\vert< \varepsilon\right\}\right)= \mu (B).$$
\end{proposition}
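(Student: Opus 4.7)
The plan is to pass to the skew product $\widehat T\colon X\times\R\to X\times\R$ defined by $\widehat T(x,s)=(Tx,\,s+\varphi(x))$, which preserves the $\sigma$-finite product measure $\widehat\mu=\mu\otimes\lambda$ (with $\lambda$ Lebesgue measure), and for which $\widehat T^n(x,s)=(T^n x,\,s+S_n\varphi(x))$, where $S_n\varphi=\sum_{k=0}^{n-1}\varphi\circ T^k$. I will first prove that $\widehat T$ is conservative, and then apply Halmos's recurrence theorem to the set $C=B\times(-\varepsilon/2,\varepsilon/2)$, which has finite positive $\widehat\mu$-measure when $\mu(B)>0$ (the lemma being trivial otherwise). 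Conservativity will yield that for $\widehat\mu$-a.e.\ $(x,s)\in C$ there is $n\ge 1$ with $\widehat T^n(x,s)\in C$, i.e.\ $T^n x\in B$ and $|s+S_n\varphi(x)|<\varepsilon/2$, whence $|S_n\varphi(x)|<\varepsilon$. Fubini then gives that for $\mu$-a.e.\ $x\in B$ the slice of admissible $s\in(-\varepsilon/2,\varepsilon/2)$ has full Lebesgue measure, hence is non-empty, producing the required $n$ (the case $n=0$ is in any event trivially included in the lemma's set).

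The heart of the matter is the conservativity of $\widehat T$, and it is here that the hypothesis $\int\varphi\,d\mu=0$ enters through Birkhoff. Suppose for contradiction a wandering set of positive $\widehat\mu$-measure exists; intersecting with a horizontal slab, take a wandering $W\subset X\times[-M,M]$ with $\widehat\mu(W)>0$. Fix $\varepsilon_0>0$. Ergodicity of $T$ combined with $\int\varphi\,d\mu=0$ yields $S_n\varphi(x)/n\to 0$ for $\mu$-a.e.\ $x$, so the sets $A_N=\{x:|S_n\varphi(x)|\le\varepsilon_0 n\text{ for all }n\ge N\}$ satisfy $\mu(A_N)\to 1$. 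Choose $N_0$ so that $W'=W\cap(A_{N_0}\times[-M,M])$ satisfies $\widehat\mu(W')\ge \widehat\mu(W)/2$. For $n\ge N_0$ one has, by construction, $\widehat T^n(W')\subset X\times[-M-\varepsilon_0 n,\,M+\varepsilon_0 n]$. Using that the iterates $\widehat T^n W'$ are pairwise disjoint, for any $K\ge N_0$,
\[
(K-N_0+1)\,\widehat\mu(W')=\sum_{n=N_0}^{K}\widehat\mu\bigl(\widehat T^n W'\bigr)\le \widehat\mu\bigl(X\times[-M-\varepsilon_0 K,\,M+\varepsilon_0 K]\bigr)=2M+2\varepsilon_0 K.
\]
Dividing by $K$ and letting $K\to\infty$ gives $\widehat\mu(W')\le 2\varepsilon_0$, hence $\widehat\mu(W)\le 4\varepsilon_0$. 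Since $\varepsilon_0>0$ is arbitrary, $\widehat\mu(W)=0$, the desired contradiction.

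The only substantive difficulty is the conservativity step above; the reduction to it via the skew product and Fubini, and the extraction of the lemma from conservativity via Halmos's recurrence theorem (which in fact gives recurrence infinitely often, recovering the stronger standard form of Atkinson's lemma), are routine. The mechanism making conservativity work is precisely the hypothesis $\int\varphi\,d\mu=0$: via ergodicity and Birkhoff it forces the vertical displacement $S_n\varphi$ to be $o(n)$, so orbits in $X\times\R$ cannot escape to infinity rapidly enough to sustain a wandering set in a bounded horizontal slab.
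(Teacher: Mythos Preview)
The paper does not prove this proposition; it simply recalls Atkinson's lemma as a known result with a reference to \cite{A}. Hence there is no argument in the paper to compare with.

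Your skew-product argument is correct and is one of the standard routes to Atkinson's lemma. The conservativity step is the crux and you handle it cleanly: the Birkhoff averages $S_n\varphi/n\to 0$ force the vertical coordinate of $\widehat T^n$ to be $o(n)$ on a large set, and the disjoint-iterates bound in the slab then forces any putative wandering set to have measure at most $4\varepsilon_0$ for every $\varepsilon_0>0$. Halmos recurrence on $C=B\times(-\varepsilon/2,\varepsilon/2)$ and Fubini finish the job, yielding for $\mu$-a.e.\ $x\in B$ an $n\ge 1$ with $T^n x\in B$ and $|S_n\varphi(x)|<\varepsilon$. As you note, the statement as literally written (with $n\ge 0$) is trivially satisfied by $n=0$, so the nontrivial content is precisely the $n\ge 1$ version you establish; this is indeed how the lemma is used later in the paper (in the proof of Proposition~\ref{pr: rotation number set} and in the proof of the improved version Proposition~\ref{pr: better_atkinson}).
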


We have the following, \textcolor{black}{ Theorem \ref{th : annulus_intro} of the introduction}:

\begin{theorem}\label{th : annulus}
Let $f$ be a homeomorphism of $\A=\T^1\times[0,1]$ that is isotopic to the identity and  $\check f$ a lift to $\R\times[0,1]$. Suppose that $\mathrm{rot}(f)$ is a non trivial segment and that one of its endpoint $\rho$ is rational. Define
$${\mathcal M}_{\rho}=\left\{ \mu\in {\mathcal M}(f)\,,\, \mathrm{rot}(\mu)=\rho\right\}, \enskip {X}_{\rho}= 
\overline{\bigcup_{\mu\in {\mathcal M}_{\rho}} \mathrm{supp}(\mu)}.$$
Then every invariant measure supported on ${X}_{\rho}$ belongs to ${\mathcal M}_{\rho}$.
\end{theorem}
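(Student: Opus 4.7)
The plan is to argue by contradiction, reducing to the case $\rho=0$ and producing periodic orbits with negative rotation number, contradicting the fact that $\rho=0$ is an endpoint of $\mathrm{rot}(\check f)$. First I would replace $f$ by $f^{q}$ composed with an appropriate integer translation (where $q$ is the denominator of $\rho$) to reduce to $\rho = 0$ and $\mathrm{rot}(\check f) = [0, \beta]$ with $\beta > 0$; this does not affect $\mathcal M_\rho$ or $X_\rho$. Using Theorem \ref{th: maximal}, I would fix a maximal hereditary singular isotopy $I$ larger than the given isotopy and a foliation $\mathcal F$ transverse to $I$, and lift everything to $\check M = \R \times [0,1]$. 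Assume for contradiction that some invariant probability measure $\mu$ supported on $X_0$ satisfies $\mathrm{rot}(\mu) \neq 0$; by ergodic decomposition $\mu$ may be assumed ergodic, and since $0$ is the minimum of $\mathrm{rot}(\check f)$ we must have $\mathrm{rot}(\mu) = \alpha > 0$.

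By Birkhoff, for $\mu$-a.e.\ bi-recurrent point $z_0$ (and any lift $\check z_0$) one has $\pi_1(\check f^n(\check z_0)) - \pi_1(\check z_0) \sim n\alpha$, so $\widetilde\gamma_z := \widetilde I^{\Z}_{\check{\mathcal F}}(\check z_0)$ is a proper path in $\check M$ going from $-\infty$ to $+\infty$ in the first coordinate. In particular, for some leaf $\check \phi$ of $\check{\mathcal F}$ crossed by $\widetilde\gamma_z$ and some $N > 0$, the finite segment $\widetilde\gamma_z|_{[0,N]}$ crosses at least three distinct translates of $\check \phi$ by iterates of the generator $T$ of the covering group. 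Since $z_0 \in X_0$, there is an ergodic $\nu \in \mathcal M_0$ with $z_0$ in the closure of $\mathrm{supp}(\nu)$; applying Atkinson's lemma (Proposition \ref{pr:atkinson}) to $\varphi = \pi_1 \circ \check f - \pi_1$ together with Poincar\'e recurrence, we find that $\nu$-a.e.\ bi-recurrent $w$ admits a lift $\check w$ and sequences $n_k \to +\infty$, $m_k \to -\infty$ with $\check f^{n_k}(\check w) \to \check w$ and $\check f^{m_k}(\check w) \to \check w$ in $\check M$. Since the set of such $w$ has full $\nu$-measure, I can choose $w_0$ satisfying these properties arbitrarily close to $z_0$.

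Next, I would compare the lifted trajectories of $z_0$ and $w_0$. Using the second part of Lemma \ref{le: continuity}, if $w_0$ is sufficiently close to $z_0$ then, for appropriately chosen lifts, the initial segment $\widetilde I^{N}_{\check{\mathcal F}}(\check z_0)$ is equivalent to a subpath of $\widetilde I^{N+2}_{\check{\mathcal F}}(\check f^{-1}(\check w_0))$, so the lifted path $\widetilde\gamma_w := \widetilde I^{\Z}_{\check{\mathcal F}}(\check w_0)$ contains a subpath crossing the same three $T$-translates of $\check \phi$. On the other hand, the convergences $\check f^{n_k}(\check w_0) \to \check w_0$ and $\check f^{m_k}(\check w_0) \to \check w_0$, together with Lemma \ref{le: continuity}, directly give that $\widetilde\gamma_w$ is $\check{\mathcal F}$-bi-recurrent in $\check M$: any finite subsegment admits $\check{\mathcal F}$-equivalent copies arbitrarily far forward and backward in time. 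Thus $\widetilde\gamma_w$ fulfils the hypotheses of Proposition \ref{pr:technical_non-contractible}, yielding a non-trivial covering automorphism $T' = T^{k}$ (with $k \neq 0$) and $q > 0$ such that $\check f^{s} \circ T'^{\mp r}$ has a fixed point for every $r/s \in (0, 1/q]$. These fixed points project to $f$-periodic orbits with rotation numbers $\pm kr/s$, and in particular produce orbits with negative rotation number, contradicting the fact that $0$ is the minimum of $\mathrm{rot}(\check f)$.

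The main obstacle is the comparison step in the third paragraph: one must ensure that the initial segment of $\widetilde\gamma_w$ truly crosses three distinct $T$-translates of a single leaf of $\check{\mathcal F}$ in the universal cover (not merely three leaves downstairs which happen to coincide after projection), and that this crossing property coexists stably with the $\check{\mathcal F}$-bi-recurrence supplied by Atkinson. This requires a careful choice of the lift $\check w_0$ (so that the Atkinson returns of $w_0$ are witnessed by the same lift used in the crossing argument), together with a quantitative use of Lemma \ref{le: continuity} that controls which translates of $\check \phi$ are crossed when passing from $\widetilde\gamma_z$ to $\widetilde\gamma_w$.
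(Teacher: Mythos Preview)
Your argument is correct and matches the paper's proof almost step for step: reduce to $\rho=0$, use Atkinson's Lemma on the ergodic components of measures in $\mathcal M_0$ to produce $\check f$-recurrent points dense in $\pi^{-1}(X_0)$, pick a point with positive rotation number whose lifted transverse trajectory crosses many $T$-translates of a leaf, approximate it by a $\check f$-recurrent point inheriting three such crossings, and invoke Proposition~\ref{pr:technical_non-contractible} to manufacture periodic orbits with negative rotation number. Your stated ``obstacle'' is not one: $\check f$-recurrence is lift-independent (since $\check f$ commutes with $T$), so you may simply take $\check w_0$ to be the lift close to $\check z_0$, and the three crossings transfer directly by Lemma~\ref{le: continuity}; the only genuine imprecision is the claim that $z_0$ lies in $\overline{\mathrm{supp}(\nu)}$ for a \emph{single} $\nu\in\mathcal M_0$, but you only need $\check f$-recurrent points arbitrarily close to $z_0$, which follows since such points are dense in each $\mathrm{supp}(\nu)$ and hence in $X_0$.
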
    

\begin{proof}
Replacing $f$ by a power $f^q$ and $\check f$ by a lift $\check f^q\circ T^p$, one can assume that $\rho=0$ and $\mathrm{rot}(f)=[0,a]$, where $a>0$. The fact that $0$ is extremal implies that for every $\mu\in {\mathcal M}_{0}$, each ergodic measure $\mu'$ that appears in the ergodic decomposition of $\mu$ also belongs to ${\mathcal M}_{0}$. Atkinson's Lemma, with $T=f$ and $\varphi$  the map lifted by  $\check\varphi: z\mapsto \pi_1(\check f(\check z)-\check z)$, tells us that  $\mu'$-almost every point of $\A$ is lifted to  a recurrent point of $\check f$. The union of the supports of such ergodic measures being dense in $\mathrm{supp}(\mu)$, one deduces that the recurrent set of $\check f$ is dense in $\pi^{-1}(X_0)$. Writing $f=(f_1,f_2)$, one can extend $f$ to a homeomorphism of $\T^1\times\R$ such that $f(x,y)=(f_1(x,1), y)$ if $y\geq 1$ and $f(x,y)=(f_1(x,0), y)$ if $y\leq 0$ and still 
denote by $\check f$ the lift that extends the initial lift.  Let $I'$ be an identity isotopy of $f$ that is lifted to an identity isotopy \textcolor{black}{$\check I'$} of $\check f$. Let $I$ be a maximal hereditary singular isotopy larger than $I'$ and $\mathcal F$ a foliation transverse to $I$. Consider the lift  \textcolor{black}{$\check I$} of $I$ and the lifted foliation $\check{\mathcal{F}}$. If  there exists an invariant measure supported on ${X}_{0}$ whose rotation number is positive, there exists a recurrent point $z$ of rotation number strictly larger than $0$. Let us fix a lift $\check z$. As $\check{z}$ is not fixed by $\check f$, it belongs to the domain of $\check I$ and the path ${\mathcal I}_{\check{\mathcal F}}^{\Z}(\check z)$ meets infinitely many translates of $\phi_{\check z}$. But $\check z$ can be approximated by a recurrent point $\check z' $ \textcolor{black}{of $\check f$ because we have seen that the recurrent set of $\check f$ was dense in $\pi^{-1}(X_0)$. So we can suppose that ${\mathcal I}_{\check{\mathcal F}}^{\Z}(\check z')$ meets at least three translates of $\phi_{\check z}$}. The result now follows from Proposition \ref{pr:technical_non-contractible}. Indeed\textcolor{black}{, one finds some power $n$ of $T$ such that for any pair of integers $r,s$ with $s>0$ and such that $\abs{r/s}$ is sufficiently small, there exists a fixed point $\check z_{r,s}$ of $\check f^{r}\circ T^{-ns}$. The points $\check z_{r,s}$ project to periodic points $z_{r,s}$ in $\A$ such that the rotation number of $z_{r,s}$ is $r/ns$. In particular both $1/sn$ and $-1/sn$ belong to the rotation set of $\check f$ if $s$ is sufficiently large,} in contradiction with the fact that $0$ is an end of $\mathrm{rot}(f)$.
\end{proof}

 We deduce immediately the positive answer to  Boyland's question, \textcolor{black}{ Corollary \ref{cr:boylandannulus_intro} of the introduction}:

\begin{corollary}
Let $f$ be a homeomorphism of $\A$ that is isotopic to the identity and preserves a probability measure $\mu$ with full support. Let us fix a lift $\check f$. Suppose that $\mathrm{rot}(f)$ is a non trivial segment.  The rotation number $\mathrm{rot}(\mu)$ cannot be an endpoint of $\mathrm{rot}(f)$ if this endpoint  is rational.
\end{corollary}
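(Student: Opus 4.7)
The plan is to derive the corollary as a direct application of the preceding theorem, by using the full support hypothesis to promote a single measure to an obstruction on all invariant measures.

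First I would argue by contradiction: suppose $\rho = \mathrm{rot}(\mu)$ is a rational endpoint of $\mathrm{rot}(\check f)$. By definition, $\mu \in \mathcal{M}_\rho$, so $\mathrm{supp}(\mu) \subset X_\rho = \overline{\bigcup_{\nu \in \mathcal{M}_\rho} \mathrm{supp}(\nu)}$. Since $\mu$ has full support, $\mathrm{supp}(\mu) = \A$, and therefore $X_\rho = \A$.

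Next I would invoke the theorem: every invariant Borel probability measure supported on $X_\rho$ belongs to $\mathcal{M}_\rho$. Since $X_\rho = \A$, this means $\mathrm{rot}(\nu) = \rho$ for \emph{every} $\nu \in \mathcal{M}(f)$. But $\mathrm{rot}(\check f)$ is a non-trivial segment, and as recalled in Section 2.2 of the paper, every endpoint of $\mathrm{rot}(\check f)$ is realized as the rotation number of an ergodic invariant measure. So there exists $\nu \in \mathcal{M}(f)$ whose rotation number is the endpoint of $\mathrm{rot}(\check f)$ different from $\rho$. This contradicts the conclusion that $\mathrm{rot}(\nu) = \rho$, finishing the proof.

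There is no real obstacle here—everything is a one-line bookkeeping argument on top of the previous theorem. The only point to double-check is that $\mu$ having full support really forces $X_\rho = \A$, which is immediate from the definition of $X_\rho$ as a closure of a union containing $\mathrm{supp}(\mu)$, together with the fact that the full support hypothesis yields $\mathrm{supp}(\mu) = \A$.
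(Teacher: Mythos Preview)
Your proof is correct and matches the paper's approach exactly: the paper simply states that the corollary is deduced immediately from the preceding theorem, and your argument spells out precisely this immediate deduction.
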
  

\begin{proof}
\textcolor{black}{ If $\mathrm{rot}(f)$ is an endpoint of $\mathrm{rot}(f)$ and this endpoint is a rational $\rho$, by Theorem \ref{th : annulus} we get that, if $X_{\rho}= \mathrm{supp}(\mu)$ is the whole annulus $\A$, then every invariant measure supported on $\A$ has  rotation vector $\rho$, which implies that $\mathrm{rot}(f)=\{\rho\}$.}   
\end{proof}

\section{ Entropy zero conservative  homeomorphisms of the sphere}

We will prove in this section the improvement of Franks-Handel's result about  area preserving diffeomorphisms of $\S^2$ with entropy zero, stated in the introduction \textcolor{black}{ as Theorem \ref{th: FH_intro}}. Let us begin by introducing an important notion due to Franks and Handel: let $f:\S^2\to\S^2$ be an orientation preserving homeomorphism, a point $z$ is {\it free disk recurrent }  if there exist an integer $n>1$ and a topological open disk $D$ containing $z$ and $f^n(z)$ such that $f(D)\cap D=\emptyset$. We will also need the notion of {\it heteroclinic point}, which means that its $\alpha$-limit and $\omega$-limit sets are included in connected subsets of $\mathrm{fix}(f)$.

\medskip

Let us state first some easy but useful facts. By definition if $f$ is a homeomorphism of a topological space $X$, a subset $Y$ is {\it free} if $f(Y)\cap Y=\emptyset$.

\begin{proposition} \label{pr: free disk} One has the following results:

\smallskip
\noindent{\bf i)}\enskip the set of free disk recurrent points is an invariant open set $\mathrm{fdrec}(f)$;

\smallskip
\noindent{\bf ii)}\enskip it contains every positively or negatively recurrent point outside $\mathrm{fix}(f)$;

\smallskip
\noindent{\bf iii)}\enskip every point in  $\S^2\setminus \mathrm{fdrec}(f)$ is heteroclinic;

\smallskip
\noindent{\bf iv)}\enskip every periodic connected component of $\mathrm{fdrec}(f)$ is fixed.

\end{proposition}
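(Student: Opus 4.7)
The plan is to dispatch the four claims in order, the first two being almost formal and the third the main topological point.

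For \textbf{i)}, given a witness $(D,n)$ for $z$, continuity of $f^n$ makes $(D,n)$ also a witness for every $z'$ in a small subdisk $D'\subset D$ with $f^n(D')\subset D$, yielding openness. For invariance I take $(f(D),n)$ as a witness for $f(z)$ and $(f^{-1}(D),n)$ as a witness for $f^{-1}(z)$; in both cases $f(f^{\pm 1}(D))\cap f^{\pm 1}(D)=f^{\pm 1}(f(D)\cap D)=\emptyset$. For \textbf{ii)}, if $z\in\omega(z)\setminus\mathrm{fix}(f)$, pick a small open disk $U\ni z$ with $f(U)\cap U=\emptyset$ (possible since $f(z)\neq z$); recurrence gives $n_k\to+\infty$ with $f^{n_k}(z)\in U$, and the case $n_k=1$ is ruled out because $f(z)\in f(U)$ is disjoint from $U$, so $(U,n_k)$ is a witness. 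The negatively recurrent case reduces to this by looking at $w=f^{-n_k}(z)\in U$, which satisfies $f^{n_k}(w)=z\in U$; then $w$ is fdrec and invariance from~\textbf{i)} gives $z\in\mathrm{fdrec}(f)$.

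For \textbf{iii)}, I first show $\omega(z)\subset\mathrm{fix}(f)$. If $w\in\omega(z)\setminus\mathrm{fix}(f)$, a small free disk $D\ni w$ is entered by the orbit infinitely often, so one finds $n<m$ with $f^n(z),f^m(z)\in D$; the freeness $f(D)\cap D=\emptyset$ forces $m-n\geq 2$, so $(D,m-n)$ witnesses that $f^n(z)$ is fdrec and invariance gives $z\in\mathrm{fdrec}(f)$, a contradiction. The analogous argument with $f^{-1}$ handles $\alpha(z)$.

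The main step of \textbf{iii)} is showing $\omega(z)$ lies in a single component $C$ of $\mathrm{fix}(f)$. Suppose it met two distinct components $C_1,C_2$. Since $C_1$ is a connected component of the compact set $\mathrm{fix}(f)\subset\S^2$, it is the intersection of the clopen (in $\mathrm{fix}(f)$) sets containing it, so I can choose an open neighborhood $V\supset C_1$ with $V\cap\mathrm{fix}(f)=C_1$ and $\partial V\cap\mathrm{fix}(f)=\emptyset$. The orbit visits $V$ and its complement arbitrarily late, so there exist $l_k\to+\infty$ with $f^{l_k}(z)\in V$ and $f^{l_k+1}(z)\notin V$. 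Extracting a subsequence, $f^{l_k}(z)\to r\in\overline V\cap\omega(z)\subset\overline V\cap\mathrm{fix}(f)=V\cap\mathrm{fix}(f)=C_1$, so in particular $r\in V$. But $r$ is fixed, so $f^{l_k+1}(z)\to f(r)=r$, and since each $f^{l_k+1}(z)$ lies in the closed set $\S^2\setminus V$, we also get $r\notin V$, a contradiction. The argument for $\alpha(z)$ is identical using $f^{-1}$.

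For \textbf{iv)}, suppose $U$ has period $n\geq 2$, so $U,f(U),\ldots,f^{n-1}(U)$ are pairwise disjoint components of $\mathrm{fdrec}(f)$. Pick $z\in U$ with witness $(D,m)$: the points $z$ and $f^m(z)$ lie in $D$, but they belong to the distinct components $U$ and $f^{m\bmod n}(U)$ unless $n\mid m$. The idea is to extract from this a return structure for $g=f^n|_U$ and combine it with the freeness of $D$ to produce, inside the open connected set $U\subset\S^2\setminus\mathrm{fix}(f)$, a configuration that contradicts Brouwer's plane translation theorem applied to $g$. The main obstacle, and the step I expect to be the most delicate, is precisely this argument in \textbf{iv)}: the witness disk $D$ need not lie in $U$, so one has to work with the intersection $D\cap U$ and carefully track how the cyclic shift on the components $U,f(U),\ldots,f^{n-1}(U)$ interacts with the free disk $D$, using \textbf{iii)} to control the boundary points of $U$ as heteroclinic to a single component of $\mathrm{fix}(f)$.
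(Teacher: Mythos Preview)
Your arguments for \textbf{i)}, \textbf{ii)}, and the inclusion $\omega(z)\subset\mathrm{fix}(f)$ in \textbf{iii)} match the paper's. For the connectedness of $\omega(z)$ in \textbf{iii)} your approach is different from the paper's but essentially works, with one slip: the claim that you can choose $V$ with $V\cap\mathrm{fix}(f)=C_1$ is false in general (take $\mathrm{fix}(f)$ a convergent sequence together with its limit; the limit point is a component but not clopen in $\mathrm{fix}(f)$). What your argument actually needs, and what the clopen--component structure of a compact set does give, is an open $V\subset\S^2$ with $C_1\subset V$, $C_2\cap\overline V=\emptyset$, and $\partial V\cap\mathrm{fix}(f)=\emptyset$; the rest of your transition argument then goes through verbatim. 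The paper instead shows $\omega(z)$ lies in a single component of every open neighborhood $O$ of $\mathrm{fix}(f)$: pick $O'\subset O$ so small that $z,f(z)\in O'$ forces them into the same component of $O$; since eventually the whole forward orbit is in $O'$, all late iterates lie in one component of $O$.

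The real gap is \textbf{iv)}: you give only a sketch and explicitly flag the obstacle, and the Brouwer idea does not obviously close. The paper's argument is short and quite different from yours. By Brown--Kister, every connected component of $\S^2\setminus\mathrm{fix}(f)$ is $f$-invariant, so a component $W$ of $\mathrm{fdrec}(f)$ with period $q>1$ cannot be a full component of $\S^2\setminus\mathrm{fix}(f)$; hence there is a point $z'\in\partial W\setminus\mathrm{fix}(f)$. By \textbf{ii)}, $z'$ is not periodic, so $z',f(z'),f^q(z'),f^{q+1}(z')$ are four distinct points. Take a path $\alpha\subset W\cup\{z'\}$ from some $z\in W$ to $z'$, a path $\beta\subset W$ from $z$ to $f^q(z)$, and extract a simple arc $\gamma$ from $z'$ to $f^q(z')$ inside the image of $\alpha^{-1}\beta f^q(\alpha)$. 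Then $\gamma$ and $f(\gamma)$ have interiors in the disjoint sets $W$ and $f(W)$ and distinct endpoints, so $\gamma$ is free; thickening it to a free disk containing $z'$ and $f^q(z')$ contradicts $z'\notin\mathrm{fdrec}(f)$. The key move you are missing is to work at a non-fixed boundary point of $W$ rather than inside $W$.
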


\begin{proof} If $D$ is a free disk  that contains $z$ and $f^n(z)$, it contains $z'$ and $f^n(z')$ if $z'$ is close to $z$. Moreover $f^k(D)$ is a free disk that contains $f^k(z)$ and $f^{k+n}(z)$, for every $k\in\Z$. So {\bf i)} is true.

For every $z\in\S^2\setminus \mathrm{fix}(f)$, one can choose a free disk $D$ that contains $z$. If $z$ is positively recurrent, there exists $n>1$ such that $f^n(z)\in D$.  If $z$ is negatively recurrent, there exist $n>1$ such that $f^{-n}(z)\in D$, which implies that $f^n(D)$ is a free disk that contains $z$ and $f^n(z)$. In both cases,  $z$ belongs to $\mathrm{fdrec}(f)$, which means that {\bf ii)} is true.

It is sufficient to prove {\bf iii)} for the $\omega$-limit set, the proof for the $\alpha$-limit set being similar. Let us prove first that $\omega(z)\subset \mathrm{fix}(f)$ if  $z\not\in \mathrm{fdrec}(f)$. Indeed, if $z'\in\omega(z)\setminus \mathrm{fix}(f)$, one can choose a free disk $D$ containing $z'$ and two integers $n'>n$ such that $f^n(z)$ and $f^{n'}(z)$ belong to $D$. It implies that $f^{-n}(D)$ is a free disk that contains $z$ and $f^{n'-n}(z)$. This contradicts the fact that $z\not\in\mathrm{fdrec}(f)$. To prove that $\omega(z)$ is included in a connected component of $ \mathrm{fix}(f)$, it is sufficient to prove that it is contained in a connected component of $O$, for every neighborhood $O$ of $ \mathrm{fix}(f)$. If $O$ is such a neighborhood,there exists a neighborhood $O'\subset O$ of $ \mathrm{fix}(f)$ such that for every $z\in O'\cap f^{-1}(O')$, the points $z$ and $f(z)$ belong to the same connected component of $O$. There exists $N$ such that $f^n(z)\in O'$ for every $n\geq N$. This implies that the $f^n(z)$, $n\geq N$, belong to the same connected component of $O$.

It remains to prove {\bf iv)}. If $W$ is a connected component of $\mathrm{fdrec}(f)$ of period $q>1$, it is not a connected component of $\S^2\setminus \mathrm{fix}(f)$ (see Brown-Kister \cite{BK}) and so one can find a  path $\alpha$ in $\S^2\setminus \mathrm{fix}(f)$ joining a point $z\in W$ to a point $z'\not\in W$.  Taking a subpath if necessary, one can suppose that $\gamma$ is included in $W$ but the endpoint $z'$ (which is in the frontier of $W$ and not fixed). Let us choose a path $\beta$ in $W$ joining $z$ to $f^q(z)$. It is a classical fact that there exists a simple path $\gamma$ joining $z'$ to $f^q(z')$ whose image is included  in the image of $\alpha^{-1}\beta f^q(\alpha)$. The point $z'$ is not periodic because it is neither in $\mathrm{fix}(f)$ nor in $\mathrm{fdrec}(f)$ and so the points $z'$, $f(z')$, $f^{q}(z')$ and $f^{q+1}(z')$ are distinct (recall that $q>1$). More precisely, \textcolor{black}{ since $\gamma\subset W$ and $W$ is free,} the path $\gamma$ is free and so one can find a free disk that contains it, which contradicts the fact that $z'$ is not in $\mathrm{fdrec}(f)$.
\end{proof}

Suppose now that the set of positively recurrent points is dense.  It is equivalent to say that $\Omega(f)=\S^2$ and in that case the set  of positively recurrent points is a dense $G_{\delta}$ set, as is the set of  bi-recurrent points (these conditions are satisfied in the particular case of an area preserving homeomorphism).  Note that, in this case, every connected component of $\mathrm{fdrec}(f)$ is periodic and so is fixed. Write $(W_{\alpha})_{\alpha\in \mathcal{A}_f}$ for the family of connected components of $\mathrm{fdrec}(f)$ and define $A_{\alpha}$ to be the interior in $\S^2\setminus \mathrm{fix}(f)$ of the closure of $W_{\alpha}$. 
Note that  $$A_{\alpha}= \S^2\setminus \overline{\bigcup_{\alpha'\in \mathcal{A}_f\setminus\{\alpha\}} A_{\alpha'}}\cup\mathrm{fix}(f)$$ 
because the recurrent points are dense in $\S^2$ and contained in $\mathrm{fdrec}(f)$ if not fixed.

We will prove the following result,  which implies \textcolor{black}{Theorem \ref{th: FH_intro} of the introduction, and } that extends Theorem 1.2 of \cite{FH}.

\begin{theorem}\label{th: FH} Let $f:\S^2\to\S^2$ be an orientation preserving homeomorphism such that $\Omega(f)=\S^2$ and $h(f)=0$. Then one has the following results:

\smallskip
\noindent{\bf i)}\enskip each $A_{\alpha}$ is an open annulus;

\smallskip
\noindent{\bf ii)}\enskip the sets $A_{\alpha}$ are the maximal fixed point free invariant open annuli;

\smallskip
\noindent{\bf iii)}\enskip every point that is not in  a $A_{\alpha}$  is heteroclinic;

\smallskip
\noindent{\bf iii)}\enskip let $C$ be a connected component of the frontier of $A_{\alpha}$ in $\S^2\setminus\mathrm{fix}(f)$, then the connected components of $\mathrm{fix}(f)$ that contain $\alpha(z)$ and $\omega(z)$ are independent of $z\in C$.

\end{theorem}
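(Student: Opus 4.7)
Fix an identity isotopy $I_0$ of $f$ and apply Theorem \ref{th: maximal} to obtain a maximal hereditary singular isotopy $I$ enlarging it; let $\mathcal F$ be a transverse foliation (Corollary \ref{co: BrEquiv}). Since $\Omega(f)=\S^2$, the set $R$ of bi-recurrent points of $f$ is a dense $G_\delta$, and every $z\in R\setminus\mathrm{fix}(f)$ lies in $\mathrm{dom}(I)=\S^2\setminus\mathrm{sing}(I)$ because $\mathrm{sing}(I)\subset\mathrm{fix}(f)$. The consequence of Lemma \ref{le: continuity} recorded in the text then shows that the whole transverse trajectory $I_{\mathcal F}^{\Z}(z)$ is $\mathcal F$-bi-recurrent.

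\textbf{Annular regions from entropy zero.} The crucial step is to apply Theorem \ref{th:transverse_imply_entropy} with $\gamma_1=\gamma_2=I_{\mathcal F}^{\Z}(z)$: since $h(f)=0$, this whole transverse trajectory has no $\mathcal F$-transverse self-intersection. Proposition \ref{pr:transverse_on_sphere} then supplies a simple transverse loop $\Gamma_z$ such that $I_{\mathcal F}^{\Z}(z)$ is equivalent to its natural lift, and $U_z:=\bigcup_{t\in\R}\phi_{I_{\mathcal F}^{\Z}(z)(t)}$ is an open annulus containing the orbit of $z$. Applying Theorem \ref{th:transverse_imply_entropy} once more, two such trajectories $I_{\mathcal F}^{\Z}(z_1)$ and $I_{\mathcal F}^{\Z}(z_2)$ cannot intersect $\mathcal F$-transversally, so by Proposition \ref{pr:transverse_on_surface} the annuli $U_{z_1}$, $U_{z_2}$ are either equal or disjoint.

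\textbf{Identification with the $A_\alpha$, and properties (i)--(iii).} Each $U_z$ is $f$-invariant: its two complementary components in $\S^2$ are closed sets containing the accumulation points of orbits escaping the two ends of $U_z$, and those accumulation points must lie in $\mathrm{fix}(f)\cup\mathrm{sing}(\mathcal F)$ by the simple-loop structure together with Proposition \ref{pr: free disk}(iii) applied to non-recurrent orbits in $U_z$. Every point of $U_z$ is free-disk recurrent: around any point one has a small free disk contained in a trivialization chart of $\mathcal F$, and Poincar\'e-type recurrence (available thanks to $\Omega(f)=\S^2$) forces such a disk to return to itself. Therefore $U_z\subset W_\alpha$, where $W_\alpha$ is the component of $\mathrm{fdrec}(f)$ containing $z$; the reverse inclusion follows because any bi-recurrent $z'\in W_\alpha$ must satisfy $U_{z'}=U_z$ (disjointness would disconnect $W_\alpha$, contradicting the dichotomy above). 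Thus $W_\alpha$ is an open annulus, and so is $A_\alpha$, proving (i). Statement (iii) is immediate from Proposition \ref{pr: free disk}(iii), since a point outside $\bigcup_\alpha A_\alpha$ is either fixed or lies in $\S^2\setminus\mathrm{fdrec}(f)$. For (ii), any fixed-point-free invariant open annulus $A$ contains bi-recurrent points $z$ with $U_z\subset A$, and the same free-disk argument applied inside $A$ gives $A\subset W_\beta\subset U_z\subset A_\beta$ for some $\beta$, whence $A=A_\beta$; this yields the maximality assertion.

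\textbf{Property (iv) and the main obstacle.} For a connected component $C$ of the frontier of $A_\alpha$ in $\S^2\setminus\mathrm{fix}(f)$, every $z\in C$ lies outside $\mathrm{fdrec}(f)$ (otherwise a nearby component $W_\beta$ of $\mathrm{fdrec}(f)$ would meet $W_\alpha$), so Proposition \ref{pr: free disk}(iii) gives $\alpha(z)$ and $\omega(z)$ each inside a single connected component of $\mathrm{fix}(f)$. To prove independence of these components on $z\in C$, blow up the two ends of the annulus $A_\alpha$ to circles (as in the proof of Proposition \ref{pr: realization}) and observe that $C$ sits on one of the two sides of $A_\alpha$; the dynamics near that end traps the forward orbit of any $z\in C$ into a definite end-fixed-set. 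A locally constant argument (in the spirit of the proof of Proposition \ref{pr: free disk}(iii), using neighborhoods of $\mathrm{fix}(f)$ that have small connected components) along the connected set $C$ then forces these components to be constant. The principal obstacle in the whole scheme is the identification $W_\alpha=U_z$ of the third paragraph: it requires the forcing theory (Theorem \ref{th:transverse_imply_entropy} together with Propositions \ref{pr:transverse_on_sphere} and \ref{pr:transverse_on_surface}) to exclude inequivalent annular trajectories coexisting inside the same component of $\mathrm{fdrec}(f)$, and this rigidity is precisely what the hypothesis $h(f)=0$ purchases.
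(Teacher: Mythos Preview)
Your scheme has the right ingredients (entropy zero $\Rightarrow$ no $\mathcal F$-transverse intersections $\Rightarrow$ Proposition~\ref{pr:transverse_on_sphere} gives simple transverse loops), but the identification $W_\alpha=U_z$ is not correct and cannot be repaired by working with a single maximal isotopy.

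First, the leaf-saturated annulus $U_z=\bigcup_t\phi_{\gamma(t)}$ is \emph{not} $f$-invariant. Only the orbit of $z$ is known to stay in $U_z$; a general point $w\in U_z$ may have its transverse trajectory leave $U_z$ (exactly the phenomenon captured by the sets $W_N$, $W_S$ in the proof of Proposition~\ref{pr:FH_leafedversion}). The invariant object attached to $\Gamma$ is not $U_\Gamma$ but the dynamical annulus $A_\Gamma=\mathrm{int}(\overline{W_\Gamma})$, and establishing that $A_\Gamma$ is an annulus is essentially the content of Proposition~\ref{pr:FH_leafedversion}. Second, your ``Poincar\'e-type recurrence'' step is false as stated: $\Omega(f)=\S^2$ only gives that every non-fixed point lies in the \emph{closure} of $\mathrm{fdrec}(f)$, not in $\mathrm{fdrec}(f)$ itself; heteroclinic points inside $U_z$ (or inside $A_\Gamma$) are non-wandering but not free-disk recurrent. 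Your dichotomy ``$U_{z_1}$ and $U_{z_2}$ are equal or disjoint'' is also not what Proposition~\ref{pr:transverse_on_surface} delivers (it gives disjoint loops, not disjoint leaf-saturated annuli), and your claim $U_z\subset A$ in the proof of (ii) has no justification.

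The structural gap is this: even after replacing $U_z$ by the correct invariant annuli $A_\Gamma$ (equivalently the $A_\beta$ of Theorem~\ref{th: FH local}), these annuli live in $\mathrm{dom}(I)$ and may contain fixed points of $f$ that are not singular for $I$. Hence in general $A_\alpha\subsetneq A_\beta$, and a single isotopy cannot see the fixed-point-free decomposition $(A_\alpha)_{\alpha\in\mathcal A_f}$. The paper bridges this by an inductive argument: starting from the local theorem it builds a nested sequence $A_{\beta_0}\supset A_{\beta_1}\supset\cdots$ using successive maximal isotopies on the previous annulus, each time adding a new fixed point to the singular set, and shows that either the process terminates at a fixed-point-free annulus or the intersection is an annulus. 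This descent is what actually yields (i) and (ii); your argument omits it entirely. Similarly, assertion (iv) in the paper is obtained by choosing an isotopy whose singular set is adapted to the specific $A_{\alpha_0}$ under consideration (so that $A_{\alpha_0}=A_{\beta_0}$), and then invoking Proposition~\ref{pr:FH_leafedversion}(iii); the blow-up and ``locally constant along $C$'' sketch you give does not produce this.
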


\bigskip
We will begin by stating a local version of this result, which means a version relative to a given maximal hereditary singular isotopy $I$. We denote $\widetilde{I}$ the lifted identity isotopy to the universal covering space $\widetilde{\mathrm {dom}}(I)$ of  ${\mathrm {dom}}(I)$ and $\widetilde{f}$ the induced lift of $f\vert_{ {\mathrm {dom}}(I)}$.
Say that a point $z\in {\mathrm {dom}}(I)$ is {\it free disk recurrent relative to $I$} or {\it $I$ free disk recurrent} if there exists an integer $n>0$ and a topological open disk $D\subset \mathrm{dom}(I)$ containing $z$ and $f^n(z)$, such that each lift to $\widetilde{\mathrm {dom}}(I)$ is disjoint from its image by $\widetilde{f}$ (we will say that $D$ is $I$-free). Let us state the local version of Proposition \ref{pr: free disk}.

 \bigskip
\begin{proposition}\label{pr: free disk local }  One has the following results:

\smallskip
\noindent{\bf i)}\enskip the set of $I$-free disk recurrent points is an invariant open set $\mathrm{fdrec}(I)$;

\smallskip
\noindent{\bf ii)}\enskip it contains every positively or negatively recurrent point in $\mathrm{dom}(I)$;

\smallskip
\noindent{\bf iii)}\enskip every point in  $\S^2\setminus \mathrm{fdrec}(I)$ is heteroclinic and its $\alpha$-limit and $\omega$-limit sets are included in connected subsets of $\mathrm{sing}(I)$;

\smallskip
\noindent{\bf iv)}\enskip every periodic connected component of $\mathrm{fdrec}(I)$ is fixed and lifted to fixed subsets of $\check f$.
\end{proposition}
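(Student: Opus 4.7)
The proof plan is to mirror the proof of Proposition \ref{pr: free disk} essentially line for line, substituting the notion of ``free disk'' by that of ``$I$-free disk'' throughout, and exploiting systematically the fundamental property of maximal hereditary singular isotopies given by Theorem \ref{th: maximal}: the lift $\widetilde f$ has no fixed point in $\widetilde{\mathrm{dom}}(I)$. The main new ingredient, compared with the global proof, is this fixed point free character of $\widetilde f$, which is what allows us to produce small $I$-free disks anywhere inside $\mathrm{dom}(I)$.

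For item {\bf i)}, the openness of $\mathrm{fdrec}(I)$ is immediate: if $D$ is an $I$-free disk containing $z$ and $f^n(z)$, it also contains $z'$ and $f^n(z')$ for $z'$ close enough. For the invariance, given such a $D$, each $f^k(D)$ is again an $I$-free disk, since any lift of $f^k(D)$ is of the form $\widetilde f^k(\widetilde D)$ with $\widetilde D$ a lift of $D$, and $\widetilde f^k(\widetilde D)\cap \widetilde f^{k+1}(\widetilde D)=\widetilde f^k\left(\widetilde D\cap \widetilde f(\widetilde D)\right)=\emptyset$; and of course $f^k(D)$ contains $f^k(z)$ and $f^{k+n}(z)$. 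Item {\bf ii)} is the step where the hereditary maximality is used: given $z\in\mathrm{dom}(I)$ positively or negatively recurrent, choose any lift $\widetilde z\in\widetilde{\mathrm{dom}}(I)$; since $\widetilde f(\widetilde z)\neq\widetilde z$, there exists a small open disk $\widetilde D\ni\widetilde z$, projecting injectively to a disk $D\subset\mathrm{dom}(I)$, with $\widetilde D\cap \widetilde f(\widetilde D)=\emptyset$. Then $D$ is $I$-free, and by recurrence some $n>1$ satisfies $f^n(z)\in D$, so $z\in \mathrm{fdrec}(I)$.

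Item {\bf iii)} splits in two sub-claims. If $z\in\mathrm{sing}(I)$, then $z$ is fixed and the limit sets are trivially $\{z\}$. Otherwise $z\in\mathrm{dom}(I)\setminus\mathrm{fdrec}(I)$, and I would show $\omega(z)\subset\mathrm{sing}(I)$ as follows: for $z'\in\omega(z)\cap\mathrm{dom}(I)$, produce (as in {\bf ii)}) an $I$-free disk $D\ni z'$, and find integers $n<n'$ with $f^n(z),f^{n'}(z)\in D$; then by {\bf i)} the disk $f^{-n}(D)$ is $I$-free and contains $z$ and $f^{n'-n}(z)$, contradicting $z\notin\mathrm{fdrec}(I)$. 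To see that $\omega(z)$ lies in one connected component of $\mathrm{sing}(I)$, I copy the standard argument from Proposition \ref{pr: free disk}: for every neighborhood $O$ of $\mathrm{sing}(I)$, continuity of $f$ (and the fact that $\mathrm{sing}(I)\subset\mathrm{fix}(f)$) yields a smaller neighborhood $O'\subset O$ such that for $z''\in O'\cap f^{-1}(O')$ the points $z''$ and $f(z'')$ lie in the same connected component of $O$; since the forward orbit of $z$ is eventually trapped in $O'$, it stays in a single component of $O$; letting $O$ shrink gives the result. The argument for $\alpha(z)$ is symmetric.

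For item {\bf iv)}, suppose $W$ is a connected component of $\mathrm{fdrec}(I)$ that is periodic of period $q>1$. Since connected components of $\mathrm{dom}(I)$ are $f$-invariant, $W$ cannot be such a component, hence a path in $\mathrm{dom}(I)$ leaves $W$ and we may reproduce verbatim the construction of Proposition \ref{pr: free disk}{\bf iv)}: pick a path $\alpha$ in $\mathrm{dom}(I)$ from $z\in W$ to a point $z'$ on the frontier of $W$ in $\mathrm{dom}(I)$, with $\alpha\setminus\{z'\}\subset W$, and a path $\beta\subset W$ joining $z$ to $f^q(z)$; extract a simple path $\gamma$ from $z'$ to $f^q(z')$ inside the image of $\alpha^{-1}\beta f^q(\alpha)$. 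Since $z'$ is not periodic (it is neither in $\mathrm{sing}(I)$ nor in $\mathrm{fdrec}(I)$) the four points $z',f(z'),f^q(z'),f^{q+1}(z')$ are distinct, and since $q>1$ the path $\gamma$ admits a thin tubular neighborhood which is $I$-free (here the fact that $\widetilde f$ is fixed point free is what guarantees that a sufficiently thin tube lifts to a set disjoint from its $\widetilde f$-image). This produces an $I$-free disk containing $z'$ and $f^q(z')$, contradicting $z'\notin\mathrm{fdrec}(I)$ and forcing $q=1$. To upgrade this to the second assertion, that each lift $\widetilde W$ is fixed by $\widetilde f$, write $\widetilde f(\widetilde W)=T(\widetilde W)$ for some covering automorphism $T$, pick $z\in W$ with an $I$-free disk $D\ni z,f^n(z)$, lift to $\widetilde z\in\widetilde W\cap\widetilde D$; then $\widetilde f^n(\widetilde z)\in T^n(\widetilde W)$ must lie in some lift $S(\widetilde D)$ of $D$, and the $I$-freeness of $D$ together with the argument in the proof of Proposition \ref{pr:technical_non-contractible} (forbidding non-trivial covering translates to arise from recurrent orbits in $I$-free disks) forces $T=\mathrm{id}$. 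The main obstacle in the whole proof is this last step: producing the $I$-free tubular neighborhood of $\gamma$ in {\bf iv)} and ruling out non-trivial $T$, which is where the specific maximality of $I$ plays a delicate role beyond the mere non-existence of fixed points for $\widetilde f$.
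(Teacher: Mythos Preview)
Parts {\bf i)}--{\bf iii)} of your proof are fine and match the paper's approach. The first half of {\bf iv)} (showing $q=1$) also reaches the right conclusion, but your stated reason for the $I$-freeness of the tubular neighborhood is wrong: it has nothing to do with $\widetilde f$ being fixed point free. The correct reason is precisely the global one: since $W$ has period $q>1$ one has $W\cap f(W)=\emptyset$, so the simple path $\gamma$ (whose interior lies in $W$) and $f(\gamma)$ (whose interior lies in $f(W)$) can meet only at endpoints, and these are the four distinct points $z',f(z'),f^q(z'),f^{q+1}(z')$. Thus $\gamma$ is free in the ordinary sense, which is strictly stronger than $I$-free.

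The genuine gap is in the second half of {\bf iv)}, where you want to show that each lift $\widetilde W$ is $\widetilde f$-invariant. Your argument---picking an $I$-free disk $D\ni z,f^n(z)$, lifting, and then invoking Proposition~\ref{pr:technical_non-contractible}---does not work: that proposition concerns recurrent transverse paths crossing three translates of a leaf and produces non-contractible periodic orbits; it says nothing that forces the deck transformation $T$ with $\widetilde f(\widetilde W)=T(\widetilde W)$ to be trivial. There is no mechanism in your sketch that excludes $T\neq\mathrm{id}$.

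The paper's fix is to rerun the \emph{same} path trick one level up. Assume $\widetilde f(\widetilde W)\neq\widetilde W$; then $\widetilde W\cap\widetilde f(\widetilde W)=\emptyset$ because both are components of $\pi^{-1}(W)$. In particular $W$ is not a whole component of $\mathrm{dom}(I)$, so one can pick $z'\in\partial W\cap\mathrm{dom}(I)$ and build, exactly as before, a simple path $\gamma$ from $z'$ to $f^{2}(z')$ with interior in $W$. Any lift $\widetilde\gamma$ then has interior in some $\widetilde W$, while $\widetilde f(\widetilde\gamma)$ has interior in $\widetilde f(\widetilde W)$, and the four endpoints are lifts of the distinct points $z',f(z'),f^{2}(z'),f^{3}(z')$. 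Hence $\widetilde\gamma\cap\widetilde f(\widetilde\gamma)=\emptyset$, so $\gamma$ is $I$-free, and a thin tube around it is an $I$-free disk containing $z'$ and $f^{2}(z')$, contradicting $z'\notin\mathrm{fdrec}(I)$. This is the missing idea: disjointness of $\widetilde W$ and $\widetilde f(\widetilde W)$ plays, on the cover, exactly the role that $W\cap f(W)=\emptyset$ played downstairs.
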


\begin{proof} Replacing free disks by $I$-free disks, one proves the three first assertions exactly like in the global situation. Similarly, one can prove that every periodic connected component of $\mathrm{fdrec}(I)$ is fixed. Writing $\pi:\widetilde{\mathrm {dom}}(I)\to{\mathrm {dom}}(I)$ for the universal covering projection, it remains to prove that the connected components of $\pi^{-1}(W)$ are fixed by $\widetilde{f}$, if $W$ is a fixed connected component of $\mathrm{fdrec}(I)$. If they are not fixed, they are not connected components of  $ \widetilde{\mathrm {dom}}(I)$, which means that $W$ is not a connected component of $\mathrm {dom}(I)$. So one can find a simple path $\alpha$ joining a point $z\in W$ to a point $z'\in \partial W\cap \mathrm {dom}(I)$ and included in $W$ but the endpoint $z'$, and then construct a simple path $\gamma$ joining $z'$ to $f^2(z')$ included in $W$ but the two endpoints. It will lift to a $\widetilde{f}$- free simple path and so one can find a $I$-free disk that contains $\gamma$. This contradicts the fact that $z'$ is not in $\mathrm{fdrec}(I)$.
\end{proof}

Suppose now that $\Omega(f)=\S^2$.  Write $(W_{\beta})_{\beta\in \mathcal{B}_I}$ for the family of connected components of $\mathrm{fdrec}(I)$ and define $A_{\beta}$ to be the interior in $\mathrm{dom}(I)$ of the closure of $W_{\beta}$. One knows that the sets $W_{\beta}$, $A_{\beta}$ are fixed and lifted to fixed subsets of $\widetilde{f}$. Here again, one has  $$A_{\beta}=\mathrm{dom}(I)\setminus \overline{\bigcup_{\beta'\in \mathcal{B}_I\setminus\{\beta\}} A_{\beta'}}.$$ 

The local version of Theorem \ref{th: FH} is the following:

 \begin{theorem} \label{th: FH local} Let $f:\S^2\to\S^2$ be an orientation preserving homeomorphism such that $\Omega(f)=\S^2$ and $h(f)=0$, and $I$ a hereditary singular maximal isotopy. Then one has the following results:

\smallskip
\noindent{\bf i)}\enskip each $A_{\beta}$ is an open annulus;

\smallskip
\noindent{\bf ii)}\enskip the sets $A_{\beta}$ are the maximal invariant open annuli contained in $\mathrm{dom}(I)$;

\smallskip
\noindent{\bf iii)}\enskip every point that is not in  a $A_{\beta}$  is heteroclinic and its $\alpha$-limit and $\omega$-limit sets are included in connected subsets of $\mathrm{sing}(I)$;

\smallskip
\noindent{\bf iv)}\enskip let $C$ be connected component of the frontier of $A_{\beta}$ in $\mathrm{dom}(I)$, then the connected components of $\mathrm{sing}(I)$ that contain $\alpha(z)$ and $\omega(z)$ are independent of $z\in C$.
\end{theorem}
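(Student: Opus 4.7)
The plan is to fix a foliation $\mathcal{F}$ transverse to $I$ and exploit the zero-entropy hypothesis. Since $h(f)=0$, Theorem \ref{th:transverse_imply_entropy} forbids any two admissible $\mathcal{F}$-positively recurrent transverse paths (including $\gamma$ and itself) from having a $\mathcal{F}$-transverse intersection. Because $\Omega(f)=\S^2$, the bi-recurrent set is a dense $G_\delta$, and for each bi-recurrent $z\in\mathrm{dom}(I)$, Lemma \ref{le: continuity} guarantees that the whole transverse trajectory $I_\mathcal{F}^{\Z}(z)$ is admissible and $\mathcal{F}$-bi-recurrent; by the previous observation it has no $\mathcal{F}$-transverse self-intersection. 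Proposition \ref{pr:transverse_on_sphere} then produces a simple transverse loop $\Gamma_z$ equivalent (via its natural lift) to $I_\mathcal{F}^{\Z}(z)$, together with its open annulus $U_z=\bigcup_t\phi_{\gamma_z(t)}\subset\mathrm{dom}(I)$. Since $I_\mathcal{F}^{\Z}(f(z))$ is a shift of $I_\mathcal{F}^{\Z}(z)$ in the same $\mathcal{F}$-equivalence class, $U_z$ is $f$-invariant.

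Next I would establish a \emph{partition} property: for bi-recurrent $z,z'\in\mathrm{dom}(I)$ with $U_z\cap U_{z'}\neq\emptyset$, one has $U_z=U_{z'}$. Indeed, otherwise $\Gamma_z$ and $\Gamma_{z'}$ would either cross a common leaf while differing on adjacent leaves, forcing a $\mathcal{F}$-transverse intersection, or one would be homotopic inside $U_z\cap U_{z'}$ to a strict sub- or super-path of the other; in either case, concatenating equivalent segments produced by the recurrence of both $z$ and $z'$ creates two admissible $\mathcal{F}$-positively recurrent paths with $\mathcal{F}$-transverse intersection, contradicting the zero-entropy hypothesis via Theorem \ref{th:transverse_imply_entropy}.

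The core step is to show $U_z\subseteq\mathrm{fdrec}(I)$. Working inside the $f$-invariant annulus $U_z$, I would apply Proposition \ref{pr: realization}. If no simple loop in $U_z$ homotopic to $\Gamma_z$ is disjoint from its image, the realization result yields periodic orbits realizing every rational rotation number in a full sub-interval of positive length. Using Lemma \ref{le: intersection} and Theorem \ref{th:transverse_imply_periodic points}, any two periodic orbits of different rotation types would give rise to two admissible $\mathcal{F}$-positively recurrent transverse paths with $\mathcal{F}$-transverse intersection, contradicting Theorem \ref{th:transverse_imply_entropy}. Hence the rotation set of $f|_{U_z}$ (relative to $\Gamma_z$) reduces to a single rational number $p/q$, and $f^q$ acts on $U_z$ as an annulus homeomorphism with zero rotation number; a direct analysis then shows that every point of $U_z$ sits in an $I$-free disk containing one of its $f$-iterates. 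The alternative---existence of a simple loop homotopic to $\Gamma_z$ in $U_z$ disjoint from its $f$-image---immediately places $U_z$ in $\mathrm{fdrec}(I)$ by inflating a tubular neighborhood of that loop. The main obstacle of the whole proof lies here: reducing the dynamics of $f|_{U_z}$, under only the topological recurrence and zero-entropy assumptions, to a setting where every orbit is $I$-free disk recurrent, and carrying this out without any smoothness hypothesis.

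With $U_z\subseteq\mathrm{fdrec}(I)$ proved, the partition property yields $A_\beta=W_\beta=U_z$ for the unique connected component $W_\beta$ of $\mathrm{fdrec}(I)$ containing $z$, establishing (i). For (ii), any $f$-invariant open annulus $A\subset\mathrm{dom}(I)$ contains a bi-recurrent point $z$ by $\Omega(f)=\S^2$, and $A\cap U_z$ is open, closed, and nonempty in the annulus $A$, forcing $A\subseteq U_z=A_\beta$, so the $A_\beta$ are exactly the maximal invariant open annuli. Assertion (iii) is then immediate from Proposition \ref{pr: free disk local }(iii) since $\mathrm{dom}(I)\setminus\bigcup_\beta A_\beta\subseteq\mathrm{dom}(I)\setminus\mathrm{fdrec}(I)$. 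Finally, for (iv), each connected component $C$ of the frontier of $A_\beta=U_z$ in $\mathrm{dom}(I)$ consists of non-singular heteroclinic points whose orbits lie on leaves of $\mathcal{F}$ accumulating to the same boundary structure of $U_z$; the $\omega$-limits (resp.\ $\alpha$-limits) of such leaves lie in a common connected component of $\mathrm{sing}(I)$, and this independence from $z\in C$ follows from the constancy of the asymptotic leaf-behavior along $C$.
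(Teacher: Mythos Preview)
Your approach has a genuine structural gap: you conflate the foliation-defined annulus $U_z=\bigcup_t\phi_{\gamma_z(t)}$ with the dynamically-defined objects $W_\beta$ and $A_\beta$. First, $U_z$ is \emph{not} $f$-invariant in general: the leaves of $\mathcal F$ are not permuted by $f$, and a point $w$ lying on a leaf that meets $\Gamma_z$ can perfectly well satisfy $\phi_{f(w)}\not\subset U_z$. The paper never claims $U_\Gamma$ is invariant; what it proves (Proposition \ref{pr:FH_leafedversion}) is that $A_\Gamma:=\mathrm{int}(\overline{W_\Gamma})$ is an \emph{essential sub-annulus} of $U_\Gamma$, via a careful argument in the annular cover $\widehat{\mathrm{dom}}(I)$ that partitions trajectories into $W_{\widehat\Gamma}$, $W_N$, $W_S$ and shows $\overline{W_N}\cap\overline{W_S}=\emptyset$. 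Your identification $A_\beta=W_\beta=U_z$ is therefore unfounded.

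More seriously, your core step ---``$U_z\subseteq\mathrm{fdrec}(I)$''--- does not go through. A simple loop in $U_z$ disjoint from its image gives only a thin free tube, not free-disk recurrence for every point of $U_z$. In the other branch, two periodic orbits with distinct rational rotation numbers in $U_z$ do \emph{not} automatically force a $\mathcal F$-transverse intersection of their transverse trajectories; both can be equivalent to multiples of the same simple loop $\Gamma_z$, and Theorem \ref{th:transverse_imply_entropy} gives nothing. So the rotation-set collapse you want cannot be extracted from zero entropy this way, and even if the rotation number were a single rational, ``a direct analysis then shows every point sits in an $I$-free disk containing an iterate'' is precisely the hard statement you are trying to prove.

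The paper circumvents all of this by reversing the inclusion. It first proves, for a \emph{fixed} $\mathcal F$, that each $A_\Gamma$ is an invariant annulus (Proposition \ref{pr:FH_leafedversion}); hence each $A_\Gamma$ lies inside some $A_\beta$. It then shows these annuli are independent of $\mathcal F$ up to reindexing by free-homotopy classes $\kappa\in\mathcal K_I$. The decisive ingredient you are missing is Proposition \ref{pr:freedisktoleaf}: given any finite set contained in an $I$-free disk, one can \emph{choose} a transverse foliation so that this set lies on a single leaf. Applied to $\{z,f^n(z)\}$ for an $I$-free disk recurrent point $z$, this produces a foliation for which the whole transverse trajectory of $z$ meets a leaf at least twice, so $z\in W_\Gamma\subset A_\Gamma$ for that foliation. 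Hence every $I$-free disk recurrent point lies in some $A_\kappa$, each $A_\beta$ is a connected union of $A_\kappa$'s and therefore equals a single $A_\kappa$, which is an annulus. Assertions (iii) and (iv) then come from Proposition \ref{pr:FH_leafedversion}, where (iv) requires a genuine argument (the analysis of the interval set $\Xi$ and of $J_C$), not just ``constancy of asymptotic leaf-behavior''.
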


\bigskip
Let us explain first why the local theorem implies the global one. If $A$ is a topological annulus, an open set will be called {\it essential} if it contains an essential loop and {\it inessential} otherwise. A  closed set will be called {\it inessential} if there exists a connected component of its complement that is a neighborhood of the two ends in the case where $A$ is open, that meets the two boundary circle in the case where $A$ is closed, and that is a neighborhood of the unique end and meets the boundary circle in the remaining case. Otherwise, we will say that this set is {\it essential}.

\medskip

{\it Proof of  Theorem \ref{th: FH}, first part, proof of assertions (i), (ii), and (iii)}. \enskip Let us explain first why every fixed point free invariant \textcolor{black}{open} annulus $A$ is contained in an $A_{\alpha}$, $\alpha\in\mathcal{A}_f$. It is sufficient to prove that $\mathrm{fdrec}(f)\cap A$ is connected. Indeed $\mathrm{fdrec}(f)\cap A$ will be contained in a $W_{\alpha}$, $\alpha\in\mathcal{A}_f$, and consequently $A$ will be contained in $A_{\alpha}$. Let $W$ be a connected component of $ \mathrm{fdrec}(f)\cap A$. Applying Proposition \ref{pr: free disk}  to the end compactification of $A$, one knows that $W$ is fixed. If it is inessential, one gets an invariant open disk $D\subset A$ by filling $W$, which means adding the inessential components of its complement. \textcolor{black}{By Brouwer's plane translation Theorem, since the restriction of $f$ to $D$ has non wandering points, there must exist} a fixed point in this disk, which is impossible. So, every connected component of $ \mathrm{fdrec}(f)\cap A$ is essential. Suppose now that $ \mathrm{fdrec}(f)\cap A$ has at least two connected components. The complement in $A$ of the union of two such components has a unique compact connected component. It is located ``between" these components. This last set is invariant (by uniqueness) and contains points that are not free disk recurrent. But one knows that the $\alpha$-limit and $\omega$-limit sets of such points \textcolor{black}{contain fixed points and $A$ is fixed point free.} We have a contradiction.

Let us prove now that each $A_{\alpha}$, $\alpha\in{\mathcal A}_f$, is an annulus. It is sufficient to prove that it is contained in a fixed point free invariant annulus.
Let us consider a sequence $(z_i)_{i\geq 0}$  dense in $\mathrm{fix}(f)$, sequence which will be finite if there are finitely many fixed points. Let us fix $A_{\alpha}$. Let $I_0$ be a maximal hereditary singular isotopy whose singular set contains $z_0$, $z_1$, $z_2$. The set $W_{\alpha}$ is connected and included in $\mathrm{fdrec}(I_0)$ so it is contained in a connected component $W_{\beta_0}$, $\beta_0\in \mathcal B_{I_0}$. 
One deduces that $A_{\alpha}\subset A_{\beta_0}$.  If $ A_{\beta_0}$ is fixed point free, we stop the process.  If not, we consider the first $z_{k_1}$ that belongs to   $A_{\beta_0}$ and consider a maximal hereditary singular isotopy $I_1$ of $f\vert_{A_{\beta_0}}$ whose singular set contains $z_{k_1}$. Similarly, there exists $\beta_1\in \mathcal B_{I_1}$ such that $A_{\alpha}\subset A_{\beta_1}$. If $ A_{\beta_1}$ is fixed point free, we stop the process.  If not, we consider the first $z_{k_2}$ that belongs to  $ A_{\beta_1}$ and we continue. If the process stops,  the last annulus will be fixed point free. If the process does not stop, $A_{\alpha}$ is contained in the interior of $\bigcap_{i\geq 0} A_{\beta_i}$. The connected component $W$ of the interior of $\bigcap_{i\geq 0} A_{\beta_i}$ that contains $A_{\alpha}$ is invariant. Moreover, it is fixed point free because it is open and because the sequence $(z_i)_{i\geq 0}$  is dense in $\mathrm{fix}(f)$ and away from $W$. Let us prove that for $i$ large enough, $A_{\beta_{i+1}}$ is essential in  $A_{\beta_{i}}$ and that $W$ is an annulus. Let us suppose that $A_{\beta_{i+1}}$ is  inessential in  $A_{\beta_{i}}$ for infinitely many $i$.
Consider a simple loop $\Gamma$ in $W$. It bounds a disk (uniquely determined) included in \textcolor{black}{$A_{\beta_{i}}$}, every time $A_{\beta_{i+1}}$ is inessential in  $A_{\beta_{i}}$, which implies that it bounds a disk included in $\bigcap_{i\geq 0} A_{\beta_i}$, and so included in $W$. This means that $W$ is a disk, which contradicts the fact it is fixed point free. Suppose now that $A_{\beta_{i+1}}$ is essential in  $A_{\beta_{i}}$ for every $i\geq i_0$. \textcolor{black}{By the same reasoning, if $\Gamma\subset W$ is a simple loop such that $\Gamma$ is inessential in the $A_{\beta_i}$, $i\geq i_0$, then $\Gamma$ bounds} a disk in $W$. This implies that $W$ is an open annulus, that is essential in the $A_{\beta_i}$, $i\geq i_0$.

The assertion {\bf iii)} is obvious because every free disk recurrent point is contained in a $W_{\alpha}$ and so in \textcolor{black}{an} $A_{\alpha}$. We will postpone the proof of {\bf iv)} to the end of this section because we need a little bit more than what is stated in the local theorem.
 \hfill$\Box$

\bigskip

Before proving Theorem \ref{th: FH local}, we will state a result relative to a couple $(I, \mathcal F)$, where $\mathcal F$ is a foliation transverse to $I$. By Theorem \ref{th:transverse_imply_entropy} and the density of the set of recurrent points, one knows that two transverse trajectories never intersect $\mathcal{F}$-transversally. In particular, there is no transverse trajectory with $\mathcal{F}$-transverse self-intersection and by Proposition \ref{pr:transverse_on_sphere} every whole transverse trajectory of an $\mathcal{F}$-bi-recurrent point is equivalent to the natural lift of a transverse simple loop $\Gamma$. We denote by ${\mathcal G}_{I,\mathcal F}$ the set of such loops (well defined up to equivalence) and 
$\mathrm{rec}(f)_{\Gamma}$ the set of bi-recurrent points whose whole transverse trajectory is equivalent to the natural lift of $\Gamma$. \textcolor{black}{Consider a point $z\in\mathrm{dom}(I)$. For any given segment of $I_{\mathcal F}^{\Z}(z)$ there exists a neighborhood of $z$ such that this segment is equivalent to a subpath of $I_{\mathcal F}^{\Z}(z')$ if $z'$ belongs to this neighborhood. Suppose now that this segment meets a leaf more than once. The transverse simple loop $\Gamma$ associate to $z'$ does not depend on $z'$, if $z'$ is chosen bi-recurrent (remind that the set of bi-recurrent points is dense). Summarizing, we have stated that any segment of $I_{\mathcal F}^{\Z}(z)$ is equivalent to a subpath of the natural lift of a transverse simple loop, but this loop is uniquely defined (up to equivalence) if this segment meets a leaf more than once. Consequently, if $I_{\mathcal F}^{\Z}(z)$ meets a leaf more than once, it is equivalent to a subpath of the natural lift of a uniquely defined transverse simple loop}. One deduces that the set of points whose whole transverse trajectory meets a leaf more than once, admits a partition
$\bigsqcup_{\Gamma\in\mathcal{G}_{I,\mathcal F}} W_{\Gamma} $ in disjoint invariant open sets, where $z\in W_{\Gamma} $ if $I_{\mathcal F}^{\Z}(z)$ meets a leaf  at least twice and is a subpath of the natural lift of $\Gamma$. Define $A_{\Gamma}=\mathrm{int}(\overline {W_{\Gamma}})$. Note that  $$A_{\Gamma}=\mathrm{int}(\overline {\mathrm{rec}(f)_{\Gamma}})= \mathrm{dom}(I)\setminus \overline{\bigcup_{\Gamma'\in\mathcal{G}_{I,\mathcal F}\setminus\{\Gamma\}} A_{\Gamma'}}.$$ 
Recall that $U_{\Gamma}$ is the union of leaves that meet $\Gamma$.

\begin{proposition}\label{pr:FH_leafedversion} One has the following results: 
 
 \smallskip
 \noindent{\bf i)} the set $A_{\Gamma}$ is an essential open annulus of $U_{\Gamma}$;

 \smallskip
 \noindent{\bf ii)}  every point in $\mathrm{dom}(I)\setminus \bigcup_{\Gamma\in \mathcal {G}_{I,F}} A_{\Gamma}$ is heteroclinic and its $\alpha$-limit and $\omega$-limit sets are included in connected subsets of $\mathrm{sing}(I)$;
 
  \smallskip
 \noindent{\bf iii)}  let $C$ be a connected component of the frontier of $A_{\Gamma}$ in $\mathrm{dom}(I)$, then the connected components of $\mathrm{sing}(I)$ that contain $\alpha(z)$ and $\omega(z)$ are independent of $z \in C$.
 \end{proposition}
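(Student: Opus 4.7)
My strategy will use the consequences of Theorem~\ref{th:transverse_imply_entropy} and Proposition~\ref{pr:transverse_on_sphere} repeatedly. Since $\Omega(f)=\S^2$, the bi-recurrent points of $f$ in $\mathrm{dom}(I)$ form a dense subset, and for such a $z$ the whole transverse trajectory $I_{\mathcal F}^{\Z}(z)$ is $\mathcal F$-bi-recurrent; by $h(f)=0$ and Theorem~\ref{th:transverse_imply_entropy} this trajectory has no $\mathcal F$-transverse self-intersection, so by Proposition~\ref{pr:transverse_on_sphere} it is equivalent to the natural lift of some $\Gamma\in\mathcal G_{I,\mathcal F}$, placing $z$ in $\mathrm{rec}(f)_\Gamma\subset W_\Gamma\subset A_\Gamma$. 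This gives $\bigcup_\Gamma A_\Gamma$ open and dense in $\mathrm{dom}(I)$, and the $A_\Gamma$ are pairwise disjoint by the displayed identity in the statement.

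For (i), the inclusion $A_\Gamma\subset U_\Gamma$ will follow from $W_\Gamma\subset U_\Gamma$ (by definition) together with the fact that the frontier of $U_\Gamma$ in $\mathrm{dom}(I)$ is a union of leaves, hence nowhere dense. Given $z_0\in\mathrm{rec}(f)_\Gamma$, bi-recurrence and the fact that $I_{\mathcal F}^N(z_0)$ wraps $\Gamma$ for large $N$ let me close up an orbit segment by a short path inside a $W_\Gamma$-neighborhood of $z_0$ to produce an essential loop of $U_\Gamma$ contained in $A_\Gamma$, establishing essentiality. For connectedness, two disjoint essential open subsets of the annulus $U_\Gamma$ would contain disjoint essential loops, which is impossible; inessential components of $A_\Gamma$ would be $f$-periodic open disks in $\mathrm{dom}(I)$, and a Brouwer fixed-point argument combined with Proposition~\ref{pr: free disk local}(iv) would yield a periodic point whose $\Gamma$-essential transverse trajectory is incompatible with the orbit being trapped in an inessential disk. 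A parallel argument applied to compact components of $U_\Gamma\setminus A_\Gamma$ should give that $A_\Gamma$ is a topological open annulus essential in $U_\Gamma$.

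For (ii), I first argue that every forward or backward recurrent point of $\mathrm{dom}(I)$ lies in $\bigcup_\Gamma A_\Gamma$: positive recurrence makes $I_{\mathcal F}^{\Z}(z)$ $\mathcal F$-positively recurrent (Lemma~\ref{le: continuity}), hence by the remark following Proposition~\ref{pr:transverse_on_sphere} equivalent to a subpath of the natural lift of some $\Gamma'$ meeting a leaf at least twice, so $z\in W_{\Gamma'}\subset A_{\Gamma'}$. A more delicate propagation using Lemma~\ref{le: continuity} together with Proposition~\ref{pr: free disk local}(iv) (each component $W_\beta$ of $\mathrm{fdrec}(I)$ is fixed and carries a dense set of bi-recurrent points) extends this to $\mathrm{fdrec}(I)\subset\bigcup_\Gamma A_\Gamma$. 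Hence $\mathrm{dom}(I)\setminus\bigcup_\Gamma A_\Gamma\subset\mathrm{dom}(I)\setminus\mathrm{fdrec}(I)$, and Proposition~\ref{pr: free disk local}(iii) yields the desired heteroclinicity with $\alpha(z),\omega(z)$ in connected subsets of $\mathrm{sing}(I)$. For (iii), points of a connected component $C$ of the frontier of $A_\Gamma$ lie in $\mathrm{dom}(I)\setminus\bigcup_\Gamma A_\Gamma$ by pairwise disjointness of the $A_{\Gamma'}$, so (ii) applies on $C$. The assignment $z\mapsto$ (component of $\mathrm{sing}(I)$ containing $\omega(z)$) will be locally constant on $C$ by Lemma~\ref{le: continuity} combined with a neighborhood-separation of components of $\mathrm{sing}(I)$, and connectedness of $C$ then forces a constant value; the same holds for $\alpha$.

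The main technical obstacle will be the topological part of (i) — showing $A_\Gamma$ is a genuine topological open annulus rather than an essential connected subset of $U_\Gamma$ with more complicated topology — which requires ruling out compact complementary components of $A_\Gamma$ in $U_\Gamma$ by combining Brown-Kister invariance of connected components with a Brouwer fixed-point argument, and possibly further input from the foliation structure of $U_\Gamma$ to control how $\partial A_\Gamma$ meets the leaves.
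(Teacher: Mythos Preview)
Your sketch for (i) contains a genuine error and misses the key mechanism. The connectedness step is wrong as stated: an open annulus certainly contains pairs of disjoint essential simple loops, so ``two disjoint essential open subsets of $U_\Gamma$ would contain disjoint essential loops, which is impossible'' is false. More importantly, you have no tool to rule out that $A_\Gamma$ has several essential components separated by heteroclinic points, or that its complement in the end-compactification has more than two components. The paper's argument is of a different nature. One lifts to the annular covering $\widehat{\mathrm{dom}}(I)=\widetilde{\mathrm{dom}}(I)/T$ associated to $\Gamma$, compactifies by ends $N,S$, and partitions lifts of bi-recurrent points into three classes according to whether their whole transverse trajectory follows $\widehat\Gamma$, meets $l(\widehat\Gamma)$, or meets $r(\widehat\Gamma)$; no two of these occur simultaneously precisely because there are no $\widehat{\mathcal F}$-transverse intersections. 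The crucial step is to find a leaf $\phi_S\subset U_{\widehat\Gamma}$ not met by $\overline{W_N}$: the subintervals of $\T^1$ recording which leaves of $U_{\widehat\Gamma}$ are crossed by trajectories of points in $\mathrm{rec}(f)_N$ do not overlap (again by absence of transverse intersections), hence cannot cover $\T^1$. The connected essential set $O_S=\bigcup_k \widehat f^{-k}(\overline{L(\widetilde\phi_S)\cap R(\widetilde f(\widetilde\phi_S))})/T$ of points whose trajectory meets $\phi_S$ then separates $\overline{W_N}$ from $S$, forcing $\overline{W_N}$ to be connected and to miss $S$; symmetrically for $\overline{W_S}$. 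Since $\overline{W_N}$ and $\overline{W_S}$ are disjoint connected sets whose complement is $A_{\widehat\Gamma}$, the latter is an essential annulus.

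Your route to (ii) is both roundabout and incomplete: the inclusion $\mathrm{fdrec}(I)\subset\bigcup_\Gamma A_\Gamma$ is exactly what the paper proves \emph{later}, in Theorem~\ref{th: FH local}, using Proposition~\ref{pr:FH_leafedversion} together with the foliation-switching trick of Proposition~\ref{pr:freedisktoleaf}; you cannot invoke it here. The paper's proof of (ii) is one line: if $z'\in\mathrm{dom}(I)$ lies in $\omega(z)$ then $I_{\mathcal F}^{\Z}(z)$ meets $\phi_{z'}$ infinitely often, so $z\in\bigcup_\Gamma W_\Gamma\subset\bigcup_\Gamma A_\Gamma$; by contraposition, points outside $\bigcup_\Gamma A_\Gamma$ have $\omega$-limit in $\mathrm{sing}(I)$. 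Finally, for (iii), Lemma~\ref{le: continuity} only controls finite segments of transverse trajectories and does not yield local constancy of $z\mapsto(\text{component of }\mathrm{sing}(I)\text{ containing }\omega(z))$. The paper instead exploits the interval structure $\Xi\subset\T^1$ from the proof above: for $z\in C$ the parameters $t_k$ with $f^k(z)\in\phi_{\Gamma(t_k)}$ form an increasing sequence in a maximal component $(t_-,t_+)$ of $\Xi$, and a comparison with neighboring annuli $A_{\Gamma'}$ shows $\lim_{k\to+\infty}t_k=t_+$ and identifies the component containing $\omega(z)$ with that containing $\omega(\phi_{\Gamma(t_+)})$, which depends only on $C$.
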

 
\subsection{Proof of Proposition \ref{pr:FH_leafedversion}}

This subsection is devoted entirely to the proof of Proposition \ref{pr:FH_leafedversion}.

The assertion {\bf ii)} is an immediate consequence of the following: if $z' \in\mathrm{dom}(I)$ belongs to the $\alpha$-limit or $\omega$-limit set of $z\in\mathrm{dom}(I)$, then the whole transverse trajectory of $z$ meets infinitely often the leaf $\phi_{z'}$ and so $z$ belongs to $\bigcup_{\Gamma\in \mathcal {G}_{I,\mathcal F}} W_{\Gamma}$.

\medskip
Let us prove {\bf i)}. One can always suppose that $\mathrm{dom}(I)$ is connected, otherwise one must replace $\mathrm{dom}(I)$ by its connected component that contains $\Gamma$ in what follows. Fix a lift $\widetilde\gamma$ of $\Gamma$ in $\widetilde{\mathrm{dom}}(I)$, write $T$ for the covering automorphism such that $\widetilde\gamma(t+1)=T(\widetilde\gamma(t))$, write $\widehat{\mathrm{dom}}(I)=\widetilde{\mathrm{dom}}(I)/T$ for the annular
covering space associated to $\Gamma$. Denote by $\widehat\pi: \widehat{\mathrm{dom}}(I)\to\mathrm{dom}(I)$ the covering projection, by $\widehat I$ the induced identity isotopy, by $\widehat f$ the induced lift of $f$, by $\widehat{\mathcal{F}}$ the induced foliation. The line $\widetilde\gamma$  projects onto the natural lift of a loop $\widehat\Gamma$. The union of leaves
 that meet $\widehat\Gamma$, denoted by $U_{\widehat\Gamma}$, is the annular component of $\widehat\pi^{-1}(U_{\Gamma})$. \textcolor{black}{We note that there cannot be an essential simple closed curve $\widehat \Gamma'$ contained in $U_{\widehat\Gamma}$ whose image by $\widehat f$ is disjoint from itself, otherwise the region bounded by $\widehat \pi(\widehat \Gamma')$ and $\widehat \pi(\widehat f(\widehat \Gamma'))$ would be wandering for any $f$. In particular, by the same reasoning as in Lemma \ref{le: two cases}, one gets that the $\alpha$ and $\omega$ limit of any given leaf of $\widehat{\mathcal{F}}$ that is contained in $U_{\widehat\Gamma}$ must be different ends of $\widehat{\mathrm{dom}}(I)$, and every leaf of $\widehat{\mathcal{F}}$ that does not intersect $U_{\widehat\Gamma}$ disconnects $\widehat{\mathrm{dom}}(I)$.} 
One gets a sphere $\widehat{\mathrm{dom}}(I)_{\mathrm {sph}}$
 by adding the end $N$ of $\widehat{\mathrm{dom}}(I)$ at the left of $\Gamma$ and the end $S$ at the right. The complement of $U_{\widehat \Gamma}$ has two connected components $l(\widehat\Gamma)\cup\{N\}$ and $r(\widehat\Gamma)\cup\{S\}$. Note that $\widehat\Gamma$ is the unique simple loop (up to equivalence) that is transverse to $\widehat{\mathcal{F}}$. Like in $\mathrm{dom}(I)$, transverse trajectories do not intersect $\widehat{\mathcal{F}}$-transversally. The set of points that lift a bi-recurrent point of $
 f$ is dense. If the trajectory of such a point $z$ meets a leaf at least twice, then $\widehat I^{\Z}_{ \widehat{\mathcal{F}}}(z)$ is the natural lift of $\widehat \Gamma$. Denote $\mathrm{rec}(f)_{\widehat \Gamma}$ the set of such points. Otherwise $\widehat I^{\Z}_{ \widehat{\mathcal{F}}}(z)$ meets either $l(\widehat\Gamma)$ or $r(\widehat\Gamma)$, the two situations being excluded, because $\widehat I^{\Z}_{ \widehat{\mathcal{F}}}(z)$ does not intersect $\widehat\Gamma$ $\widehat{\mathcal{F}}$-transversally. Denote $\mathrm{rec}(f)_N$ and $\mathrm{rec}(f)_S$ the set of points $z$ that lift a bi-recurrent point of $f$ and such that $\widehat I^{\Z}_{ \widehat{\mathcal{F}}}(z)$ meets $l(\widehat\Gamma)$ and $r(\widehat\Gamma)$ respectively. Note that the intersection of the complete transverse trajectory of $z\in\mathrm{rec}(f)_N$ and $U_{\widehat \Gamma}$, when not empty is equivalent to $\widehat\Gamma\vert_J$ where $J$ is an open interval of $\T^1$, and a  similar statement holds if $z\in\mathrm{rec}(f)_S$. In particular there exists $n\geq 0$ such that  $\widehat I^{\N}_{ \widehat{\mathcal{F}}}(\widehat f^n(z))\subset l(\widehat\Gamma)$ and $\widehat I^{-\N}_{ \widehat{\mathcal{F}}}(\widehat f^{-n}(z))\subset l(\widehat\Gamma)$.  Write $W_{\widehat\Gamma}$ for the set of points such that  $\widehat I^{\Z}_{ \widehat{\mathcal{F}}}(z)$ meets a leaf at least twice, write $W_N$ for the set of points $z\in
 \widehat{\mathrm{dom}}(I)$ such that  $\widehat I^{\Z}_{ \widehat{\mathcal{F}}}(z)$ meets $l(\widehat\Gamma)$, write $W_S$ for the set of points such that  $\widehat I^{\Z}_{ \widehat{\mathcal{F}}}(z)$ meets $r(\widehat\Gamma)$. We get three disjoint invariant open sets, that contain $\mathrm{rec}(f)_{\widehat \Gamma}$, $\mathrm{rec}(f)_N$, $\mathrm{rec}(f)_S$ respectively and whose union is dense. Note that the $\alpha$-limit and $\omega$-limit sets of a point $z\not\in W_{\widehat\Gamma}$ are reduced to one of the ends. These ends are both equal to $N$ if  $z\in\mathrm{rec}(f)_N$ and both equal to $S$ if  $z\in\mathrm{rec}(f)_S$. We will see later that they 
are both equal to $N$ if  $z\in \overline{W_N}$ and both equal to $S$ if  $z\in \overline{W_S}$. Note also that 
$$W_N=\bigcup_{k\in\Z} f^{-k}\left(l(\Gamma)\right), \enskip W_S=\bigcup_{k\in\Z} f^{-k}\left(r(\Gamma)\right).$$
Indeed, every leaf $\phi$ that is not in $U_{\widehat \Gamma}$ bounds a disk disjoint from $U_{\widehat \Gamma}$. So, if $\widehat I_{ \widehat{\mathcal{F}}}(z)$ meets $\phi$ and $\phi\subset l(\Gamma)$, then one of the point $z$ or $\widehat f(z)$ is in $l(\Gamma)$, and if $\phi\subset r(\Gamma)$, then one of the point $z$ or $\widehat f(z)$ is in $r(\Gamma)$.

 Observe that $W_{\widehat\Gamma}$ projects homeomorphically on $W_{\Gamma}$ and that $A_{\widehat\Gamma}=\mathrm{int}(\overline{W_{\widehat\Gamma}})=\widehat{\mathrm{dom}}(I)_{\mathrm {sph}}\setminus \overline{W_N}\cup \overline{W_S}$ projects homeomorphically on $A_{\Gamma}$. We want to prove that  $A_{\widehat\Gamma}$ is an annulus.

\begin{lemma}\label{lm:phi_S}There exists a leaf $\phi_S$ in $U_{\widehat \Gamma}$ that does not meet $\overline{W_N}$.
\end{lemma}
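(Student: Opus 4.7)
The plan is to argue by contradiction: I suppose that every leaf $\phi$ of $\widehat{\mathcal F}$ contained in $U_{\widehat\Gamma}$ meets $\overline{W_N}$, and try to derive a contradiction with $h(f)=0$ via Theorem \ref{th:transverse_imply_entropy}. First I would fix the geometric picture: the leaf space of $\widehat{\mathcal F}|_{U_{\widehat\Gamma}}$ is parameterised by $\widehat\Gamma$ and so identifies with $\T^1$; each leaf $\phi\subset U_{\widehat\Gamma}$ is a wandering line with $\alpha(\phi)\subset l(\widehat\Gamma)\cup\{N\}$ and $\omega(\phi)\subset r(\widehat\Gamma)\cup\{S\}$, because $\widehat\Gamma$ is homologous to zero and every leaf meeting it does so at most once. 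The identity $W_N=\bigcup_{k\in\Z}\widehat f^{-k}(l(\widehat\Gamma))$ exhibits $W_N$ as an open invariant set containing $l(\widehat\Gamma)$, and $W_S$ is the analogous open invariant set, disjoint from $W_N$ and containing $r(\widehat\Gamma)$.

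Next I would extract a closed invariant ``frontier'' transversal to $\widehat{\mathcal F}$. Under the contradiction hypothesis, each leaf $\phi\subset U_{\widehat\Gamma}$, parameterised from its $N$-end to its $S$-end, admits a first transit point $z_\phi$ at which $\phi$ leaves $\overline{W_N}$ (this is well defined because $W_S$ is open, disjoint from $W_N$, and meets $\phi$ near its $S$-end). The collection $(z_\phi)$ forms a closed section $\Sigma$ of $\widehat{\mathcal F}|_{U_{\widehat\Gamma}}$, homeomorphic to $\T^1$ and contained in $\partial W_N$. No $\widehat f$-orbit issuing from $\Sigma$ can stay in $U_{\widehat\Gamma}\setminus(W_N\cup W_S)$ forever, for otherwise it would accumulate on a bi-recurrent point whose whole transverse trajectory would be equivalent to the natural lift of $\widehat\Gamma$, placing it in the open set $W_{\widehat\Gamma}$ and hence outside $\overline{W_N}$, contradicting the definition of $\Sigma$.

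The main obstacle, and the heart of the argument, is to turn the existence of $\Sigma$ into a genuine $\widehat{\mathcal F}$-transverse self-intersection along a bi-recurrent admissible path. My strategy is to approximate $z_\phi\in\partial W_N$ simultaneously by bi-recurrent points $z_n^N\in W_N$ (escaping asymptotically into $l(\widehat\Gamma)$, so of type $N$) and $z_n^S\in W_S$ (escaping asymptotically into $r(\widehat\Gamma)$, so of type $S$). Their whole transverse trajectories $\widehat I_{\widehat{\mathcal F}}^{\Z}(z_n^N)$ and $\widehat I_{\widehat{\mathcal F}}^{\Z}(z_n^S)$ share common leaves near $\phi$ but ultimately separate to opposite sides of $\widehat\Gamma$; splicing them along a short arc through a common leaf close to $z_\phi$ should, by Proposition \ref{pr: fundamental} applied in the annular cover, produce an admissible $\widehat{\mathcal F}$-bi-recurrent transverse path that intersects $\widehat\Gamma$ $\widehat{\mathcal F}$-transversally. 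Lifting and projecting, this gives an $\mathcal F$-bi-recurrent admissible path with an $\mathcal F$-transverse self-intersection, contradicting Theorem \ref{th:transverse_imply_entropy} together with $h(f)=0$. The delicate step I expect to have to work out carefully is verifying the positivity of the sign of the intersection produced by the splicing: this requires checking that the relative position of $z_n^N$ and $z_n^S$ with respect to $\Sigma$ forces the two transverse trajectories to approach $\phi$ from genuinely opposite ``above/below'' sides in the sense of Section 3, so that the concatenation is in fact $\widehat{\mathcal F}$-transverse rather than $\widehat{\mathcal F}$-tangent.
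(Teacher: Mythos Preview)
Your approach has a genuine gap at exactly the point you flag as ``delicate.'' Suppose $z_n^N\in W_N$ and $z_n^S\in W_S$ are bi-recurrent points whose whole transverse trajectories share a common leaf $\phi'\subset U_{\widehat\Gamma}$. By definition of $W_N$ and $W_S$, the trajectory of $z_n^N$ leaves $U_{\widehat\Gamma}$ into $l(\widehat\Gamma)$ \emph{both} forward and backward, while the trajectory of $z_n^S$ leaves into $r(\widehat\Gamma)$ both forward and backward. Lifting to the universal cover, this means that relative to the lift of $\phi'$, the lift of $\widehat I_{\widehat{\mathcal F}}^{\Z}(z_n^S)$ lies on the \emph{same} side of the lift of $\widehat I_{\widehat{\mathcal F}}^{\Z}(z_n^N)$ at both ends. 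This is precisely the non-transverse configuration: for a $\widehat{\mathcal F}$-transverse intersection you need one path above the other at one end and below at the other end. So no splicing via Proposition~\ref{pr: fundamental} is available, and no contradiction with $h(f)=0$ arises. The ``sign check'' you postpone is not a technicality but the whole obstruction, and it fails.

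There is also a softer issue earlier: the section $\Sigma=(z_\phi)$ need not be continuous in $\phi$, and a point $z_\phi\in\partial W_N$ need not lie in $\overline{W_S}$, so approximation by $S$-type bi-recurrent points is not guaranteed.

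The paper's argument is far simpler and bypasses all of this. For each $z\in\mathrm{rec}(f)_N$, the intersection of $\widehat I_{\widehat{\mathcal F}}^{\Z}(z)$ with $U_{\widehat\Gamma}$ (when nonempty) is equivalent to $\widehat\Gamma|_J$ for an open interval $J\subset\T^1$. The absence of $\widehat{\mathcal F}$-transverse intersections forces these intervals to be \emph{nested}: whenever two of them meet, one contains the other. A nested family of proper open arcs cannot cover $\T^1$ (a minimal finite subcover would have two overlapping arcs, and nesting lets you drop one). Any $t\in\T^1$ not covered gives $\phi_S=\phi_{\widehat\Gamma(t)}$; density of $\mathrm{rec}(f)_N$ in $\overline{W_N}$ then yields $\phi_S\cap\overline{W_N}=\emptyset$. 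This is a direct one-paragraph covering argument, with no need to manufacture a transverse intersection.
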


\begin{proof} 
Recall that the intersection of the whole transverse trajectory of $z\in\mathrm{rec}(f)_N$ and $U_{\widehat \Gamma}$, when not empty is equivalent to $\widehat\Gamma\vert_J$ where $J$ is an open interval of $\T^1$. Consider the set $\mathcal J$ of such intervals. The fact that there are no transverse intersection tells us that these intervals do not overlap: if two intervals intersect, one of them contains the other one. One deduces that there exists $t\in\T^1$ that does not belong to any $J$. Indeed, by a compactness argument, if $\T^1$ can be covered by the intervals of $\mathcal J$, there exists $r\geq 2$ such that it can be covered by $r$ such intervals but not less. By connectedness, at least two of the intervals intersect and one can lower the number $r$. Set $\phi_{S}=\phi_{\widehat\Gamma(t)}$. The set $\mathrm{rec}(f)_N$ being dense in $\overline{W_N}$, the leaf $\phi_S$ does not meet the whole transverse trajectories of points in $\overline{W_N}$. In particular, it does not meet $\overline{W_N}$.
\end{proof}

\begin{lemma} The set $O_S$ of points whose whole transverse trajectory meets $\phi_S$ is a connected essential open set.
\end{lemma}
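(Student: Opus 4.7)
For $z_0\in O_S$, fix $k\in\Z$ so that $\widehat I_{\widehat{\mathcal F}}(\widehat f^k(z_0))$ crosses $\phi_S$, and choose $0<a<b<1$ bracketing the crossing parameter. The first assertion of Lemma \ref{le: continuity} applied at $\widehat f^k(z_0)$ produces a neighborhood whose points $z'$ satisfy that $\widehat I_{\widehat{\mathcal F}}(\widehat f^k(z_0))\vert_{[a,b]}$ is equivalent to a subpath of $\widehat I_{\widehat{\mathcal F}}(z')$, so the latter still meets $\phi_S$; the $\widehat f^{-k}$-preimage of this neighborhood lies in $O_S$ and contains $z_0$. The degenerate case in which the crossing falls on an orbit point of $z_0$ lying on $\phi_S$ is handled by the second assertion of Lemma \ref{le: continuity}, which allows us to enlarge the transverse segment to $\widehat I_{\widehat{\mathcal F}}^3(\widehat f^{k-1}(z))$ and push the crossing strictly into the interior.

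\textbf{Connectedness.} Write $O_S=\bigcup_{k\in\Z}V_k$ where $V_k=\widehat f^{-k}(V_0)$ and $V_0=\{z:\widehat I_{\widehat{\mathcal F}}(z)\text{ meets }\phi_S\}$. Fix a lift $\widetilde\phi_S$ of $\phi_S$ in the universal cover of $\widehat{\mathrm{dom}}(I)$. Since every leaf of $\widetilde{\mathcal F}$ is a Brouwer line for $\widetilde f$, one has $\widetilde f^{-1}(\widetilde\phi_S)\subset R(\widetilde\phi_S)$, and the lifted set
\[
\widetilde V_0=\{\widetilde z:\widetilde I_{\widetilde{\mathcal F}}(\widetilde z)\text{ meets }\widetilde\phi_S\}=(R(\widetilde\phi_S)\cup\widetilde\phi_S)\cap\widetilde f^{-1}(L(\widetilde\phi_S)\cup\widetilde\phi_S)
\]
is precisely the closed band between $\widetilde f^{-1}(\widetilde\phi_S)$ and $\widetilde\phi_S$, a connected set. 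Any $z\in V_0$ admits a lift whose transverse arc meets some translate $T^m(\widetilde\phi_S)$; applying $T^{-m}$ places this lift inside $\widetilde V_0$, so $V_0=\pi(\widetilde V_0)$ is connected, and hence so is each $V_k$. Finally $\widehat f^{-k}(\phi_S)\subset V_k\cap V_{k-1}$ (its points have the transverse arc at time $k$ starting on $\phi_S$ and the one at time $k-1$ ending on it), so the $V_k$'s form a chain of overlapping connected sets, and $O_S$ is connected.

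\textbf{Essentiality.} I would first establish the inclusion $W_{\widehat\Gamma}\subset O_S$. Every leaf $\phi$ of $U_{\widehat\Gamma}$ meets $\widehat\Gamma$ at exactly one point: a given lift of $\phi$ meets the line $\widetilde\gamma$ at most once, and all $T$-translates of that lift intersect $\widetilde\gamma$ in the $T$-orbit of the same point, which projects to a single point of $\widehat\Gamma$. Consequently the natural lift $\widehat\gamma$ of $\widehat\Gamma$ encounters each such leaf periodically with period $1$, so a subpath of $\widehat\gamma$ meeting some leaf twice must span a full period and therefore cross every leaf of $U_{\widehat\Gamma}$, including $\phi_S$; applying this to $z\in W_{\widehat\Gamma}$ yields $z\in O_S$. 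To produce an essential loop in $W_{\widehat\Gamma}$ (and thus in $O_S$), I would choose $z\in\mathrm{rec}(f)_{\widehat\Gamma}$: a lift $\widetilde z$ has whole transverse trajectory $\widetilde I^{\Z}_{\widetilde{\mathcal F}}(\widetilde z)$ whose image is invariant under the covering automorphism $T$ associated to $\widehat\Gamma$, and bi-recurrence of $z$ produces $n\geq 1$ and $m\neq 0$ such that $\widetilde f^n(\widetilde z)$ lies arbitrarily close to $T^m(\widetilde z)$. Closing up the isotopy path $\widehat I^n(z)$ with a short arc from $\widehat f^n(z)$ back to $z$ yields a loop whose lift runs from $\widetilde z$ to near $T^m(\widetilde z)$, representing $m$ times the generator of $\pi_1(\widehat{\mathrm{dom}}(I))$; it is therefore essential. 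The main obstacle is realising this loop inside the open set $W_{\widehat\Gamma}\subset O_S$, which I expect to handle by replacing each isotopy arc $\widehat I(\widehat f^j(z))$ by a small path in $W_{\widehat\Gamma}$ joining $\widehat f^j(z)$ to $\widehat f^{j+1}(z)$ (available by openness of $W_{\widehat\Gamma}$ around the orbit) without altering the homotopy class, using Lemma \ref{le: continuity} to guarantee these small replacements remain in $W_{\widehat\Gamma}$.
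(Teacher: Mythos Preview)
Your openness and connectedness arguments are correct and parallel the paper's approach; the paper is terser, working directly with $\widetilde O=\bigcup_{k\in\Z}\widetilde f^{-k}\bigl(\overline{L(\widetilde\phi_S)\cap R(\widetilde f(\widetilde\phi_S))}\bigr)$ as a chain of overlapping bands, but the content is the same as your decomposition $O_S=\bigcup_k V_k$.

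The essentiality argument, however, has a genuine gap. Your proposed fix---replacing each isotopy arc $\widehat I(\widehat f^j(z))$ by a ``small path in $W_{\widehat\Gamma}$'' joining $\widehat f^j(z)$ to $\widehat f^{j+1}(z)$---does not work: these two orbit points may be far apart, and at this stage of the proof nothing tells you $W_{\widehat\Gamma}$ is connected, so there is no reason such a path exists inside $W_{\widehat\Gamma}$, let alone one in the required homotopy class. Lemma~\ref{le: continuity} only controls transverse trajectories of \emph{nearby} points; it gives no global connectivity information about $W_{\widehat\Gamma}$.

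The paper's argument is a one-line observation that uses exactly the ingredients you already assembled. You have shown that $\widetilde O=\bigcup_k\widetilde f^{-k}(\widetilde V_0)$ is connected. Now take any lift $\widetilde z$ of a point $z\in\mathrm{rec}(f)_{\widehat\Gamma}$: its whole transverse trajectory $\widetilde I^{\Z}_{\widetilde{\mathcal F}}(\widetilde z)$ is equivalent to the $T$-invariant line $\widetilde\gamma$, hence crosses every translate $T^k(\widetilde\phi_S)$, so $\widetilde z\in T^k(\widetilde O)$ for all $k\in\Z$. All the translates $T^k(\widetilde O)$ therefore share a common point, and their union---which is the full preimage of $O_S$ in $\widetilde{\mathrm{dom}}(I)$---is connected. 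A connected open subset of the annulus whose preimage in the infinite cyclic cover is connected is automatically essential (were it inessential, its preimage would split into disjoint $T$-translates of a single component). Equivalently, since $\widetilde z$ and $T^{-1}(\widetilde z)$ both lie in the connected set $\widetilde O$, any path in $\widetilde O$ joining them projects to an essential loop in $O_S$.
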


\begin{proof}  Fix a lift $\widetilde\phi$ of $\phi_S$ in $\widetilde{\mathrm{dom}}(I)$. The set $\widetilde O$ of points whose trajectory meets $\widetilde\phi$ is equal to $\bigcup_{k\in \Z} \widetilde f^{-k}\left(\overline{ L(\widetilde\phi)\cap R(\widetilde f(\widetilde\phi))}\right)$, it its connected and simply connected.  So its projection $O_S$ is connected. Every lift of a point in $\mathrm{rec}(f)_{\widehat\Gamma}$ belongs to all the translates $T^k(\widetilde O)$, $k\in\Z$. So the union of the translates is connected, which means that $O_S$ is essential.\end{proof}

\begin{lemma}  The set $\overline{W_{N}}$ does not contains $S$ and for every $z\in \overline{W_{N}}$, one has $\alpha(z)=\omega(z)=\{N\}$.\end{lemma}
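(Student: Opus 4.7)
The plan is to use the leaf $\phi_S$ from Lemma \ref{lm:phi_S} together with the open set $O_S$ to separate $\overline{W_N}$ from the end $S$, and then exploit the Brouwer-line property of leaves in the universal cover to force all orbits in $\overline{W_N}$ to converge to $N$.

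First I would observe that $O_S \cap \overline{W_N} = \emptyset$: any $z \in O_S$ has $\widehat I^{\Z}_{\widehat{\mathcal F}}(z) \cap \phi_S \neq \emptyset$ by definition of $O_S$, while Lemma \ref{lm:phi_S} guarantees that no whole transverse trajectory of a point in $\overline{W_N}$ meets $\phi_S$. To upgrade this to $S \notin \overline{W_N}$, I claim that a whole neighborhood $V$ of $S$ in $\widehat{\mathrm{dom}}(I)_{\mathrm{sph}}$ is contained in $O_S$. This claim splits into two substeps: (a) the negative half-leaf $\phi_S^-$, which lies in $R(\widehat\Gamma)$ (the side adjacent to $S$), accumulates at $S$; (b) leaves of $\widehat{\mathcal F}$ near $\phi_S^-$ close to $S$ also accumulate at $S$, and every whole transverse trajectory through such leaves must cross $\phi_S$. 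Substep (a) is a dichotomy argument as in Lemma \ref{le: two cases}: the connected compact set $\alpha(\phi_S^-)$ in $\widehat{\mathrm{dom}}(I)_{\mathrm{sph}}$ is either $\{S\}$, contains a closed leaf, or equals $\{S\}$ together with leaves homoclinic to $S$; in the last two alternatives the free-loop construction from the proof of Lemma \ref{le: two cases} produces a free essential simple loop in $\widehat{\mathrm{dom}}(I)$ disjoint from its image under $\widehat f$, and its projection to $\S^2$ encloses wandering points, contradicting $\Omega(f)=\S^2$. Substep (b) is then a routine consequence of the local triviality of $\widehat{\mathcal F}$ along $\phi_S^-$ together with the Brouwer-line structure of leaves near $S$. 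Combined with $O_S \cap \overline{W_N} = \emptyset$, this gives $V \cap \overline{W_N} = \emptyset$, so $S \notin \overline{W_N}$.

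For the limit-set assertion, fix $z \in \overline{W_N}$. Invariance of $\overline{W_N}$ under $\widehat f$ places both $\alpha(z)$ and $\omega(z)$ inside $\overline{W_N}\cup\{N,S\}$, and the first part excludes $S$. To rule out accumulation points in $\widehat{\mathrm{dom}}(I)$ itself, I lift to $\widetilde{\mathrm{dom}}(I)$: by Lemma \ref{lm:phi_S}, the orbit $(\widetilde f^n(\widetilde z))_{n \in \Z}$ does not meet any translate $T^k(\widetilde\phi_S)$, and each $T^k(\widetilde\phi_S)$ being a Brouwer line, the orbit lies on a fixed side of every such line. Since lifts of points of $\mathrm{rec}(f)_N$ arbitrarily close to $\widetilde z$ have transverse trajectories crossing leaves of $l(\widehat\Gamma)$ arbitrarily far in the direction of $N$, the orbit of $\widetilde z$ is forced to drift monotonically toward the preimage of $N$; projecting back gives $\widehat f^n(z) \to N$, hence $\omega(z) = \{N\}$. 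The symmetric argument applied to $\widehat f^{-1}$ (together with the analogous accumulation statement for the positive half-leaf $\phi_S^+$ near $N$) gives $\alpha(z) = \{N\}$.

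The main obstacle is the dichotomy in substep (a) above and the promotion to a neighborhood $V \subset O_S$ in substep (b), both of which rely crucially on $\Omega(f)=\S^2$ to exclude the existence of free essential loops in the annular cover $\widehat{\mathrm{dom}}(I)$.
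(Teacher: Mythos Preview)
Your argument has a genuine gap in the first part. The claim that a whole neighborhood $V$ of $S$ lies in $O_S$ is false in general: take any point $z\in\mathrm{rec}(f)_S$ whose transverse trajectory meets $U_{\widehat\Gamma}$ only along an arc $\widehat\Gamma|_J$ with $J$ a proper interval of $\T^1$ not containing the parameter of $\phi_S$. Such points exist (this is exactly the symmetric picture to the one producing $\phi_S$ in Lemma~\ref{lm:phi_S}), they lie in $W_S$ and hence accumulate on $S$, yet their whole transverse trajectories never touch $\phi_S$, so they are not in $O_S$. Your substeps (a) and (b), even if carried out, only describe the behaviour of leaves near $\phi_S$ and say nothing about orbits that approach $S$ along other leaves of $r(\widehat\Gamma)$; the appeal to ``local triviality together with the Brouwer-line structure'' does not bridge this.

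The paper avoids this entirely by a connectedness argument you are missing: one observes that
\[
\overline{W_N}=\overline{\bigcup_{k\in\Z}\widehat f^{-k}\bigl(l(\widehat\Gamma)\cup\{N\}\bigr)},
\]
an increasing union of connected sets all containing $N$, hence $\overline{W_N}$ is connected and contains $N$. Since $O_S$ is essential (previous lemma) and $\overline{W_N}\cap O_S=\emptyset$ (which you correctly note), any essential loop in $O_S$ separates $N$ from $S$, and the connected set $\overline{W_N}$, lying on the $N$-side, cannot reach $S$. This is a two-line argument once you have essentiality of $O_S$.

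For the second assertion your Brouwer-line ``drift'' argument is also unnecessary and not clearly justified as written. The paper simply invokes a fact already recorded before Lemma~\ref{lm:phi_S}: any $z\notin W_{\widehat\Gamma}$ has $\alpha(z)$ and $\omega(z)$ reduced to one of the two ends (because an interior accumulation point $z'$ would force the trajectory of $z$ to cross $\phi_{z'}$ infinitely often, placing $z$ in $W_{\widehat\Gamma}$). Since $\overline{W_N}$ is $\widehat f$-invariant, disjoint from the open set $W_{\widehat\Gamma}$, and does not contain $S$, both limit sets must equal $\{N\}$.
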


\begin{proof}   The set $\overline{W_{N}}$ is connected because it can be written
$$\overline{W_N}=\overline{\bigcup_{k\in\Z} f^{-k}\left(l(\Gamma)\cup\{N\}\right)}. $$
It does not contain $S$ because it is connected and does not intersect the essential open set $O_S$. Moreover, one knows that the $\alpha$-limit and $\omega$-limit sets of points in $\overline{W_{N}}$ are reduced to one of the ends. They both are equal to $N$, because $S\not\in\overline{W_{N}}$. \end{proof}

Similarly, there exists a leaf $\phi_N$ in $U_{\widehat \Gamma}$ that does not meet $\overline{W_S}$ and the set $O_N$ of points whose whole transverse trajectory meets $ \phi_N$ is a connected essential open set. Moreover, $N\not\in\overline{W_{S}}$ and for every $z\in \overline{W_{S}}$, one has $\alpha(z)=\omega(z)=\{S\}$. Consequently $\overline{W_{N}}$ and $\overline{W_{S}}$ do not intersect. Two points in $O_S\cap O_N$ are not separated neither by $\overline{W_{N}}$ nor by $\overline{W_{S}}$, because $O_S$ and $O_N$ are connected and disjoint from $\overline{W_{N}}$ and $\overline{W_{S}}$ respectively. So they are not separated by  $\overline{W_{S}}\cup\overline{W_{N}}$ because $\overline{W_{S}}\cap\overline{W_{N}}=\emptyset$. One 
deduces that $O_S\cap O_N$ is contained in a connected component $O$ of the complement of $\overline{W_S}\cup \overline{W_N}$, which is nothing but $A_{\widehat\Gamma}$. So we have $$W_{\widehat\Gamma}\subset O_S\cap O_N\subset  O\subset A_{\widehat\Gamma}\subset\overline{W_{\widehat\Gamma}}.$$We deduce that the sets appearing in the inclusions have the same closure and that $A_{\widehat\Gamma}$ is connected because  $O\subset A_{\widehat\Gamma}\subset\overline{O}$.  To conclude that $A_{\widehat\Gamma}$ is an essential annulus, it is sufficient to use the connectedness  of $\overline{W_N}$ and $\overline{W_S}$, they are the two connected components of the complement of $A_{\widehat\Gamma}$.

\medskip
It remains to prove {\bf iii)}. Note first that every leaf of $\mathcal F$ is met by a transverse simple loop and so is wandering. It implies that the $\alpha$-limit and $\omega$-limit sets of a leaf are included in two different connected components of $\mathrm{sing}(I)$. Let us fix $\Gamma\in\mathcal{G}_{I,{\mathcal F}}$. The complement of $A_{\Gamma}$ has two connected components. One of them contains all singularities at the left of $\Gamma$ and all leaves in $l(\Gamma)$, \textcolor{black}{ denote it by $L(A_{\Gamma})$}. One defines similarly $R(A_{\Gamma})$. Write $\Xi$ for the union of intervals $J\in\mathcal J$ defined in the proof of Lemma \ref{lm:phi_S}. A point $t\in\T^1$ belongs to $\Xi$ if and only if there exists $z\in\mathrm{rec}(f)\cap L(A_{\Gamma})$ whose whole transverse trajectory meets $\phi_{\Gamma(t)}$ or equivalently, if there exists $z\in L(A_{\Gamma})$ whose whole transverse trajectory meets $\phi_{\Gamma(t)}$. Note that if $C$ is a connected component of $\left(\partial A_{\Gamma}\setminus \mathrm{sing}(I)\right)\cap L(A_{\Gamma})$, then the set 
$$J_C=\{t\in \T^1, \enskip C\cap \phi_{\Gamma(t)}\not=\emptyset\}$$ is an interval contained in $\Xi$. Denote by $(t_-,t_+)$ the connected component of $\Xi$ that contains this interval. The assertion {\bf iii)} is an immediate consequence of the following:

\begin{lemma}The interval $J_C$ is equal to $(t_-,t_+)$. Moreover, for every $z\in C$, the connected components of $\mathrm{sing}(I)$ that contain $\alpha(z)$ and $\omega(z)$ coincide with the connected components of $\mathrm{sing}(I)$ that contain $\omega(\phi_{\Gamma(t_-)})$ and $\omega(\phi_{\Gamma(t_+)})$ respectively. 

\end{lemma}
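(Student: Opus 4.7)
The plan is to work inside the sphere $\widehat{\mathrm{dom}}(I)_{\mathrm{sph}}$ introduced in the proof of (i), where $A_{\widehat\Gamma}$ is an essential annulus with complement $(\overline{W_N}\cup\{N\})\sqcup(\overline{W_S}\cup\{S\})$, and where $A_{\widehat\Gamma}\to A_\Gamma$ is a homeomorphism. Via this homeomorphism I would identify $C$ with a connected subset $\widehat C$ of $\partial A_{\widehat\Gamma}\cap\overline{W_N}$ made of non-singular points. Each $z\in\widehat C$ lies on a leaf $\phi_z$ that must cross $\widehat\Gamma$: otherwise $\phi_z$ would be contained in $\overline{W_N}$, contradicting $z\in\partial A_{\widehat\Gamma}$. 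Hence $\phi_z=\phi_{\widehat\Gamma(t(z))}$ and $z\mapsto t(z)$ is continuous on $\widehat C$, so $J_C=t(\widehat C)$ is a connected subinterval of $\Xi$. Since two intervals in $\mathcal J$ are either nested or disjoint, the connected components of $\Xi$ coincide with the maximal elements of $\mathcal J$; thus $J_C\subseteq(t_-,t_+)$ and $(t_-,t_+)\in\mathcal J$, attested by some bi-recurrent $\widehat z_0\in W_N$.

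For the reverse inclusion, I would introduce the exit-point map $e$: for $t^*\in(t_-,t_+)$, the positive half-leaf $\phi^+=\phi^+_{\widehat\Gamma(t^*)}$ starts inside $A_{\widehat\Gamma}$ and, since $\omega(\phi^+)\subset\mathrm{sing}(\widehat I)\cup\{N\}$ is disjoint from $A_{\widehat\Gamma}$, it exits at a first point $e(t^*)\in\partial A_{\widehat\Gamma}\cap(\overline{W_N}\cup\{N\})$. Using Lemma~\ref{le: continuity} together with the trivialization of $\widehat{\mathcal F}$ on $U_{\widehat\Gamma}$, the map $e$ is continuous at every $t^*$ where $e(t^*)$ is non-singular, and there it is locally constant into a single connected component of $\partial A_{\widehat\Gamma}\setminus\mathrm{sing}(\widehat I)$ on the north side. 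Since $e(t)\in\widehat C$ for every $t\in J_C$ (which is non-empty), connectedness of $(t_-,t_+)$ forces $e^{-1}(\widehat C)$ to exhaust the open set of $t^*$ with non-singular exit. A density argument rules out that the complementary ``singular exit'' set fills any subinterval, because $(t_-,t_+)$ is a single element of $\mathcal J$ and no finer interval of $\mathcal J$ splits it; hence $J_C=(t_-,t_+)$.

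For the limit-set statement I would invoke Proposition~\ref{pr: free disk local }(iii) applied in the annular cover: for every $z\in\widehat C$, both $\alpha(z)$ and $\omega(z)$ are connected subsets of $\mathrm{sing}(\widehat I)\cup\{N\}$. As $t\nearrow t_+$ the exit points $e(t)$ accumulate on a single connected component of $\mathrm{sing}(\widehat I)\cup\{N\}$, which by construction coincides with the component containing $\omega(\phi_{\widehat\Gamma(t_+)})$. Since the forward orbit $(\widehat f^k(z))_{k\ge 0}$ is dragged along the end of $\widehat C$ corresponding, under $e$, to $t_+$, it accumulates on the same component; symmetrically the backward orbit corresponds to $t_-$ and yields the statement for $\alpha(z)$. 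The hardest step is the exit-point analysis in the middle paragraph: one must exclude that $e$ jumps between distinct components of $\partial A_{\widehat\Gamma}\setminus\mathrm{sing}(\widehat I)$ when $t^*$ crosses a parameter at which $\phi^+_{\widehat\Gamma(t^*)}$ first touches $\partial A_{\widehat\Gamma}$ at a singular point, and the decisive input there is the absence of $\mathcal F$-transverse self-intersections granted by $h(f)=0$ via Theorem~\ref{th:transverse_imply_entropy}, together with the maximality of $(t_-,t_+)$ in $\mathcal J$.
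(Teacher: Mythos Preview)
Your plan has a genuine gap at the point you yourself flag as hardest, and the gap is not just technical. The exit-point map $e$ you introduce is a statement about leaves of $\widehat{\mathcal F}$, whereas the lemma is a statement about the \emph{orbit} $(f^k(z))_{k\in\Z}$ of a point $z\in C$. You never build the bridge between these two. The assertion that ``the forward orbit $(\widehat f^k(z))_{k\ge 0}$ is dragged along the end of $\widehat C$ corresponding, under $e$, to $t_+$'' is exactly the content of the lemma, and you give no mechanism for it. There is also a secondary mismatch: $J_C$ is the set of $t$ with $C\cap\phi_{\Gamma(t)}\neq\emptyset$, which need not coincide with $e^{-1}(\widehat C)$, since $C$ could meet the leaf $\phi_{\Gamma(t)}$ at a point other than the first exit from $A_{\widehat\Gamma}$. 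Finally, in the cover $\widehat{\mathrm{dom}}(I)$ the lifted isotopy has no singularities, so $\mathrm{sing}(\widehat I)=\emptyset$; and with the paper's orientation conventions the positive half-leaf of $\phi_{\widehat\Gamma(t^*)}$ tends to $S$, not $N$, so your map $e$ as written points the wrong way.

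The paper's proof bypasses all of this by working directly with the orbit. It fixes $z\in C$, observes that each $f^k(z)$ lies on a leaf $\phi_{\Gamma(t_k)}$ with $(t_k)_{k\in\Z}$ increasing in $(t_-,t_+)$, sets $t'_+=\lim_{k\to+\infty}t_k$, and then argues by contradiction twice. First, if the leaf-segments from points near $F'_+$ (the component of $\mathrm{sing}(I)$ containing $\omega(\phi_{\Gamma(t'_+)})$) to $f^k(z)$ did not shrink toward $F'_+$, they would accumulate on a point of $l(\Gamma)$ lying on some other loop $\Gamma'\in\mathcal G_{I,\mathcal F}$; then $f^k(z)$ would eventually enter the disk $A_{\Gamma'}\cup L(A_{\Gamma'})\subset\mathrm{int}\,L(A_\Gamma)$, contradicting $z\in\partial A_\Gamma$. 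Second, if $t'_+<t_+$ then $\phi_{\Gamma(t'_+)}$ is itself met by such a $\Gamma'$, and the same capture argument applies. The decisive input is thus not a continuity/density property of an exit map, but the fact that any leaf in $l(\Gamma)$ or any leaf $\phi_{\Gamma(t)}$ with $t\in\Xi$ is crossed by some $\Gamma'\in\mathcal G_{I,\mathcal F}$, whose associated annulus $A_{\Gamma'}$ would absorb the orbit and pull $z$ into the interior of $L(A_\Gamma)$. Your sketch never invokes these other loops $\Gamma'$, and without them I do not see how to close the argument.
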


\begin{proof} Fix $z\in C$.  Every point $f^k(z)$ belongs to a leaf $\phi_{\Gamma(t_k)}$, where $t_k\in\T^1$. By definition of $\Xi$, one knows that the whole transverse trajectory of $z$ never meets a leaf $\phi_{\Gamma(t)}$, $t\not\in \Xi$, and so $t\in  (t_-,t_+)$ if $\phi_{\Gamma(t)}$ meets this trajectory. In particular, the sequence $(t_k)_{k\in\Z}$ is an increasing sequence in $(t_-,t_+)$. We set 
$t'_-=\lim_{k\to -\infty} t_k$ and $t'_+=\lim_{k\to +\infty} t_k$. We write $F'_+$ for the connected component of $\mathrm{sing}(I)$ that contains $\omega(\phi_{\Gamma(t'_+)})$. We will prove first that the connected component of $\mathrm{sing}(I)$ that contains $\omega(z)$ is $F'_+$ and then that $t'_+=t_+$. We can do the same for the $\alpha$-limit set. One knows that  $\omega(z)$ is contained in $L(A_{\Gamma})\cap \mathrm{sing}(I)$. So, there exists a sequence $(z'_k)_{k\geq 0}$ such that $z'_k\in\phi^-(z_k)$ for every $k\geq 0$,  that ``converges to $F'_+$''  in the following sense: every neighborhood of $F'_+$ contains $z'_k$ for $k$ sufficiently large. Let us prove now that every neighborhood of $F'_+$ contains the segment $\gamma_k$ of $\phi_{\Gamma(t_k)}$ between $z'_k$ and $z_k$, for $k$ sufficiently large. If not, there exists a subsequence of $(\gamma_k)_{k\geq 0}$ that converges for the Hausdorff topology to a set that contains a point $z\not\in\mathrm{sing}(I)$. This point belongs to $l(\Gamma)$ and the leaf $\phi_{z}$ is met by a loop $\Gamma'\in\mathcal{G}_{I,{\mathcal F}}$. For convenience choose the loop passing through $z$, so that we know that $z_k$ belongs to $L(\Gamma')$, for infinitely many $k$. One deduces that  the connected component of $\mathrm{sing}(I)$ that contains $\omega(z)$ belongs to $L(\Gamma')$. But this implies that it also belongs to $L(A_{\Gamma'})$. This connected component being included in  the open disk $A_{\Gamma'}\cup L(A_{\Gamma'})$, every point $z_k$ belongs to this disk for $k$ large enough. This contradicts the fact that $z\in \partial A_{\Gamma}$, because $A_{\Gamma'}\cup L(A_{\Gamma'})$ is in the interior of $L(A_{\Gamma})$. It remains to prove that $t'_+=t_+$. If $t'_+<t_+$, then $\phi_{\Gamma(t'_+)}$ is met by a loop $\Gamma'\in\mathcal{G}_{I,{\mathcal F}}$ such that $A_{\Gamma'}\subset L(A_{\Gamma})$ and we prove similarly that for $k$ large enough $z_k$ belongs to the open disk $A_{\Gamma'}\cup L(A_{\Gamma'})$ getting the same contradiction. 
\end{proof}

\subsection{Proofs of Theorems \ref{th: FH local} and \ref{th: FH}}

\

\smallskip

{\it Proof of Theorem \ref{th: FH local}.} \enskip Note that if $\Gamma$ and $\Gamma'$ are two distinct elements of ${\mathcal G}_{I,\mathcal F}$, then $\Gamma$ is not freely homotopic to $\Gamma'$ \textcolor{black}{in $\mathrm{dom}(I)$}. Indeed,
 there exists a leaf $\phi\in U_{\Gamma}\setminus U_{\Gamma'}$. The two sets $\alpha(\phi)$ and $\omega(\phi)$ are separated by $\Gamma$ but not by $\Gamma'$ which implies that these two loops are not freely homotopic. Let us explain now why the families $(\mathrm{rec}(f)_{\Gamma})_{\Gamma\in\mathcal{G}_{I,\mathcal F}}$ and $(A_{\Gamma})_{\Gamma\in\mathcal{G}_{I,\mathcal F}}$ are independent of $\mathcal F$ (up to reindexation), they depend only on $I$. In particular, if ${\mathcal F}'$ is another foliation transverse to $I$, then every $\Gamma\in{\mathcal G}_{I,\mathcal F}$ is freely homotopic to a unique 
$\Gamma'\in{\mathcal G}_{I,{\mathcal F}'}$ and one has $\mathrm{rec}(f)_{\Gamma}=\mathrm{rec}(f)_{\Gamma'}$. Let $z$ be a recurrent point and $D\subset \mathrm{dom}(I)$ an open disk containing $z$. For every couple of points $(z',z'')$ in $D$, choose a path $\gamma_{z',z''}$ in $D$ joining  $z'$ to $z''$. Let $(n_k)_{k\geq 0}$ be an increasing sequence of integers such that $\lim_{k\to+\infty} f^{n_k}(z)=z$. For $k$ large enough, the path $I^{n_k}(z)\gamma_{f^{n_k}(z),z}$ defines a loop whose homotopy class is independent of the choices of $D$ and $\gamma_{f^{n_k}(z),z}$. If $z$ belongs to $\mathrm{rec}(f)_{\Gamma}$, this class is a multiple of the class of $\Gamma$. This means that the family of classes of loops $\Gamma \in{\mathcal G}_{I,\mathcal F}$ does not depend on $\mathcal F$. It implies that the family of sets  $\mathrm{rec}(f)_{\Gamma}$ does not depend on $\mathcal F$ either. We will denote $(A_{\kappa})_{\kappa\in \mathcal {K}_I}$ and $(\mathrm{rec}(f)_{\kappa})_{\kappa\in \mathcal {K}_I}$  our families indexed by homotopy classes. 

\medskip

The fact that every invariant annulus contained in $\mathrm{dom}(I)$ is contained in an $A_{\beta}$, $\beta\in\mathcal B_I$, can be proven exactly like in the global case. So, to prove Theorem \ref{th: FH local}, particularly the fact that every $A_{\beta}$ is an annulus, it is sufficient to prove that it is equal to an $A_{\kappa}$, $\kappa\in \mathcal {K}_I$. Note that an $A_{\kappa}$ is an invariant annulus contained in $\mathrm{dom}(I)$ and so is contained in an $A_{\beta}$. If we prove that every $I$-free disk recurrent point is contained in an $A_{\kappa}$, we will deduce that each $A_{\beta}$ is a union of $A_{\kappa}$, which implies that it is equal to one $A_{\kappa}$ because it is connected. We will prove in fact that for every $I$-free disk recurrent point $z$, there exists a transverse foliation $\mathcal F$ such that $z$ belongs to a $W_{\Gamma}$, $\Gamma\in\mathcal{G}_{I,\mathcal F}$. Let us give the reason. In the construction of transverse foliations we have the following: if $X$ is a finite set included in an $I$-free disk $D$, one can construct a transverse foliation such that $X$ is included in a leaf (see Proposition \ref{pr:freedisktoleaf} at the next subsection).
Consequently, if $D$ contains two points $z$ and $f^n(z)$, $n>0$, one can construct a transverse foliation such that $z$ and $f^n(z)$ are on the same leaf, which implies that $z$ belongs to a $W_{\Gamma}$. \hfill{\qedsymbol}

\bigskip

{\it Proof of Theorem \ref{th: FH}, second part, proof of assertion {\bf iv)}.} \enskip Fix $\alpha_0\in{\mathcal A}_f$. The assertion {\bf iv)} is obviously true if the complement of $A_{\alpha_0}$ is the union of two fixed points. Let us prove it in case exactly one the connected components of the complement of $A_{\alpha_0}$ is a fixed point $z_0$. By assertion {\bf iii)} there exists at least one connected component $X_1\not=\{z_0\}$ of $\mathrm{fix}(f)$ that meets the frontier of $A_{\alpha_0}$. If $\{z_0\}$ and $X_1$ are the only connected components of $\mathrm{fix}(f)$, the result is also obviously true. If not, choose a third component $X_2$, then choose $z_1\in X_1\cap \partial (A_{\alpha_0})$ and $z_2\in X_2$ and finally a maximal hereditary singular isotopy $I$ whose singular set contains $z_0$, $z_1$ and $z_2$. We will prove that the connected component $A_{\beta_0}$, $\beta_0\in {\mathcal B}_I$, that contains $A_{\alpha_0}$ is reduced to $A_{\alpha_0}$. This will imply {\bf iv)}. Suppose that $A_{\beta_0}$ is not reduced to $A_{\alpha_0}$. In that case it contains other $A_{\alpha}$, $\alpha\in {\mathcal A}_f$, and the union of such sets is dense in $A_{\beta_0}$ and contain all the recurrent points. The two ends of $A_{\beta_0}$ are adjacent to $A_{\alpha_0}$ because $z_1\in\mathrm{sing}(I)$. It implies that $A_{\alpha_0}$ is the unique $A_{\alpha}$ that is essential in $A_{\beta_0}$. So, if $A_{\alpha}$ is included in $A_{\beta_0}$ and $\alpha\not=\alpha_0$, the union of $A_{\alpha}$ and of the connected component of its complement that are included in $A_{\beta_0}$  is an invariant open disk $D_{\alpha}\subset A_{\beta_0} $ disjoint from $A_{\alpha_0}$.
Let us consider a foliation $\mathcal F$ transverse to $I$ and the loop $\Gamma\in\mathcal{G}_{I,\mathcal F}$ such that $A_{\Gamma}=A_{\beta}$. We will work in the annular covering space, where $A_{\widehat\Gamma}$ is homeomorphic to  $A_{\Gamma}$ and will write $D_{\widehat\alpha}\subset A_{\widehat\Gamma}$ for the disk corresponding to $D_{\alpha}$ and $A_{\widehat \alpha_0}$ for the annulus corresponding to $A_{\alpha_0}$.

The fact that $\{z_0\}$ is a connected component of $\S^2\setminus A_{\alpha_0}$ and that the singular set of $I$ contains $z_0$ and two other points in different components of the fixed point set of $f$, implies that one of the sets $r(\widehat \Gamma)$ or $l(\widehat \Gamma)$ is empty and the other one is not. We will assume for instance that $r(\widehat \Gamma)=\emptyset$ and $l(\widehat \Gamma)\not=\emptyset$. We have seen in the proof of Proposition \ref{pr: realization} that there exists a compactification $\widehat{\mathrm{dom}}({I})_{\mathrm {ann}}$ obtained by blowing up the end $N$ at the left of $\widehat \Gamma$ by a circle $\widehat\Sigma_N$ such that $\widehat f$ extends to a homeomorphism $\widehat f_{\mathrm {ann}}$ that admits fixed points on the added circle  with a rotation number equal to zero for the lift $\widetilde f_{\mathrm {ann}}$ that extends $\widetilde f$. Note now that every recurrent point of $\widehat f$ that belongs to a $D_{\widehat\alpha}$ has a rotation number (for the lift $\widetilde f$) and that this number is a positive integer because $D_{\widehat\alpha}$ is fixed and included in $A_{\widehat\Gamma}$. So, every periodic orbit whose rotation number is not an integer belongs to $A_{\widehat\alpha_0}$.

There are different ways to get a contradiction. Let us begin by the following one. The closure of $A_{\widehat\beta_0}$ in $\widehat{\mathrm{dom}}({I})_{\mathrm {ann}}$ is an invariant essential closed set that contains $A_{\widehat\alpha_0}$ and meets $\Sigma_N$. In particular it contains fixed points of rotation number $0$ on $\Sigma_N$. Denote by $K$ the complement of $A_{\widehat\alpha_0}$ in the closure of $A_{\widehat\beta_0}$ in $\widehat{\mathrm{dom}}({I})_{\mathrm {ann}}$. It contains the fixed points located on $\Sigma_N$ and all the $D_{\widehat\alpha}$, which means that it contains fixed points of positive rotation number. It is an essential compact set, because $A_{\widehat\alpha_0}$ is an essential annulus which is a neighborhood of the end of $\widehat{\mathrm{dom}}({I})_{\mathrm {ann}}$. All points in $K$ being non wandering, one can  apply a result of S. Matsumoto \cite{Mm} saying that $K$ contains a periodic orbit of period $q$ and rotation number $p/q$ for every $p/q\in(0,1)$. But one knows that all such periodic points must belong to $A_{\widehat\alpha_0}$. We have a contradiction.

Let us give another explanation. We will need the following intersection property: every essential simple loop in $\widehat{\mathrm{dom}}(I)_{\mathrm {ann}}$ meets its image by $\widehat f_{\mathrm {ann}}$. The reason is very simple. Perturbing our loop, it is sufficient to prove that every essential simple loop in $\widehat{\mathrm{dom}}({I})$ meets its image by $\widehat f$.  Such a loop meets $A_{\widehat\beta_0}$ because the two ends of $\widehat{\mathrm{dom}}({I})$  are adjacent to $A_{\widehat\beta_0}$ and so contains a non wandering point (every point of $A_{\widehat\beta_0}$ is non wandering). This implies that the loop meets its image by $\widehat f$.

Using the fact that the entropy of $f^2$ is zero, one can consider the family of annuli $(A_{\alpha'})_{\alpha'\in {\mathcal A}(f^2)}$, and denote by $A_{\widehat\alpha'}$ the annulus of $A_{\widehat\beta_0}$ that corresponds to an annulus $A_{\alpha'}$ contained in $A_{\beta_0}$. Every periodic point $z$ of period $3$ and rotation number $1/3$ or $2/3$ belongs to an annulus $A_{\widehat\alpha'}$  and this annulus is $\widehat f^2$-invariant. It must be essential in $A_{\widehat\beta_0}$, otherwise the rotation number of $z$ should be a multiple of $1/2$. But if it is essential, it must be $\widehat f$-invariant, its $\widehat f$-period cannot be $2$. It is included in $A_{\widehat\alpha_0}$, otherwise it would be included in a non essential $A_{\widehat\alpha}$. Being given such an essential annulus, note that the set of periodic points of period $3$ and rotation $1/3$ or $2/3$ strictly above (which means on the same side as $\widehat\Sigma_N$) is compact. Indeed, the rotation number induced on the added circle is $0$. One deduces that there are finitely many annuli $A_{\widehat\alpha'}$ that contains periodic points of period $3$ and rotation number $1/3$ or $2/3$ above a given one and so there exists an highest essential annulus $A_{\widehat\alpha'_0}$ that contains periodic points of period $3$ and rotation number $1/3$ or $2/3$. If one adds the connected component of $\widehat{\mathrm{dom}}({I})_{\mathrm {ann}}\setminus  A_{\widehat\alpha'_0}$ containing $\widehat\Sigma_N$ to $A_{\widehat\alpha'_0}$, one gets an invariant semi-open annulus $A$ that contains $\widehat\Sigma_N$ and all disks $D_{\widehat\alpha}$. The restriction $\widehat f_{\mathrm {ann}}\vert_A$ satisfies the intersection property stated in Lemma \ref{le: intersection} because $A$ is essential in $\widehat{\mathrm{dom}}({I})_{\mathrm {ann}}$. The annulus $A$ contains a fixed point of rotation number $0$ and a fixed point of positive rotation number, so, by  Lemma \ref{le: intersection}, it contains at least one periodic orbit of period $3$ and rotation number $1/3$ and one periodic orbit of period $3$ and rotation number $2/3$. These two orbits must be included in $A_{\widehat\alpha'_0}$ by definition of this set. But $\widehat f\vert_{A_{\widehat\alpha'_0}}$ satisfies the intersection property because $A_{\widehat\alpha'_0}$ is essential in $\widehat{\mathrm{dom}}({I})_{\mathrm {ann}}$. So $A_{\widehat\alpha'_0}$ contains a periodic point of period $2$ and rotation number $1/2$, which is impossible.

\medskip
In the case where none of the connected components of the complement of $A_{\alpha_0}$ is a fixed point, one can crush one of these components to a point and used what has been done in the new sphere.\hfill{\qedsymbol}

Let us add some comments on the boundary of the annuli $A_{\alpha}$.

\medskip 

Let $f:\S^2\to\S^2$ be an orientation preserving homeomorphism such that $\Omega(f)=\S^2$ and $h(f)=0$. Suppose moreover than the fixed point set is totally disconnected. Every annulus $A_{\alpha}$, $\alpha\in\mathcal A_{f}$, admits accessible fixed points on its boundary. More precisely, if $X$ is a connected component of $\S^2\setminus A_{\alpha}$, there exists a simple path $\gamma$ joining a point $z\in A_{\alpha}$ to a point $z'\in\mathrm{sing}(f)\cap X$ and contained in $A_{\alpha}$ but the end $z'$. Indeed, one can always suppose that the other connected component of $\S^2\setminus A_{\alpha}$ is reduced to a point $z_0$ and that $f$ has least three fixed points (otherwise the result is obvious). What has been done in the previous proof tells us that there exists a maximal hereditary singular isotopy $I$, a transverse foliation $\mathcal F$ and $\Gamma\in\mathcal G_{I, \mathcal F}$ such that $A_{\alpha}=A_{\Gamma}$. There exists a leaf $\phi\subset U_{\Gamma}$ that is not met by any transverse trajectory that intersects $X$. This leaf (or the inverse of the leaf) joins $z_0$ to a fixed point $z\in X$ and is contained in $A_{\Gamma}$.

\medskip

Let $f:\S^2\to\S^2$ be an orientation preserving homeomorphism such that $\Omega(f)=\S^2$ and $h(f)=0$. Let $I$ be a maximal hereditary singular isotopy and $\mathcal F$ a transverse foliation. Every annulus $A_{\Gamma}$, $\Gamma\in \mathcal G_{I, \mathcal F}$, that meets $\phi$ is such that the connected components of $\mathrm{Fix}(I)$ that contains $\alpha(\phi)$ and $\omega(\phi)$ are separated by $A_{\Gamma}$. One deduces immediately that a point $z\in\mathrm{dom}(I)$ belongs to the frontier of at most two annuli 
$A_{\Gamma}$, $\Gamma\in \mathcal G_{I, \mathcal F}$. Of course this means that a point $z\in\mathrm{dom}(I)$ belongs to the frontier of at most two annuli 
$A_{\beta}$, $\beta\in \mathcal B_{I}$, but it also implies that a point $z\not\in \mathrm{fix}(f)$ belongs to the frontier of at most two annuli 
$A_{\alpha}$, $\alpha\in \mathcal A_{f}$. Indeed, suppose that $z\not\in \mathrm{fix}(f)$ belongs to the frontier of $A_{\alpha_i}$, $0\leq i\leq 2$. If $X_i$ is the connected component of $\S^2\setminus A_{\alpha_i}$ that does not contain $z$, then the three sets $X_i$ are disjoint. Choose a fixed point $z_i$ in each $X_i$ (such a fixed point exists because $X_i\cup A_{\alpha_i}$ is an invariant disk and $A_{\alpha_i}$ has no fixed points). Choose a maximal hereditary singular isotopy $I$ that fixes the $z_i$ and denote $A_{\beta_i}$, $\beta_i\in \mathcal B_I$, the annulus that contains $A_{\alpha_i}$. Note that the three annuli $A_{\beta_i}$ are distinct and that $z$ belongs to their frontier. We have a contradiction.  \hfill{\qedsymbol}

\medskip
\subsection{Transverse foliation and free disks}
We conclude this section by justifying a point used above in the proof of Theorem \ref{th: FH local}.

\begin{proposition}\label{pr:freedisktoleaf}  Let $f:M\to M$ be a homeomorphism isotopic to the identity on a surface $M$ and $I$ a maximal singular  isotopy. Let $X$ be a finite set contained in an $I$-free disk. Then, there exists a transverse foliation $\mathcal F$ such that $X$ is contained in a leaf of $\mathcal F$.

\end{proposition}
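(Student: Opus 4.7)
The argument is naturally carried out upstairs. Write $\widetilde{\mathrm{dom}}(I)$ for the universal covering space and $\widetilde f$ for the lift of $f|_{\mathrm{dom}(I)}$ associated to $I$, which is fixed point free. Fix a connected lift $\widetilde D\subset\widetilde{\mathrm{dom}}(I)$ of the $I$-free disk $D$: by hypothesis $\widetilde f(\widetilde D)\cap\widetilde D=\emptyset$ and $T(\widetilde D)\cap\widetilde D=\emptyset$ for every non trivial covering automorphism $T$. The finite set $X$ lifts inside $\widetilde D$ to a finite set $\widetilde X$, and since $\widetilde D$ is a topological disk we can pick a simple arc $\widetilde\alpha\subset\widetilde D$ containing $\widetilde X$ in its interior. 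The plan is to upgrade Theorem \ref{th: BrEquiv} to a version where the constructed equivariant foliation admits $\widetilde\alpha$ as a subarc of one of its leaves; projecting down, the leaf of $\mathcal F$ containing (the projection of) $\widetilde\alpha$ will contain all of $X$ and we are done.

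First I would extend $\widetilde\alpha$ to a proper simple line $\widetilde\lambda\subset\widetilde{\mathrm{dom}}(I)$ which is still a Brouwer line for $\widetilde f$ and is disjoint from all its translates $T(\widetilde\lambda)$ under non trivial covering automorphisms. The extension on each side of $\widetilde\alpha$ can be performed by the standard Brouwer translation theory: starting from the endpoints of $\widetilde\alpha$ on $\partial\widetilde D$, one prolongs by small transverse jumps which can be taken to avoid the (closed, discrete) collection of translates $T(\widetilde D)$, $\widetilde f(\widetilde D)$, $\widetilde f^{-1}(\widetilde D)$, and, inductively, the previously constructed portion and its $\widetilde f^{\pm 1}$-images. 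Freeness of $\widetilde D$ is what permits the initial jump; after that, the procedure is exactly the one used in the classical proof that any free arc of a fixed point free plane homeomorphism extends to a Brouwer line.

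Next I would invoke the equivariant foliated construction of a transverse foliation (the brick-decomposition proof of Theorem \ref{th: BrEquiv}, following [Lec1]) with $\widetilde\lambda$ prescribed as a leaf. Concretely: one chooses a $\widetilde f$-free equivariant brick decomposition of $\widetilde{\mathrm{dom}}(I)$ sufficiently fine that the bricks meeting $\widetilde\alpha$ are all contained in $\widetilde D$, then modifies the decomposition so that the edges cut by $\widetilde\lambda$ lie along it; the resulting maximal free subset yields a foliation by Brouwer lines, constructed so that $\widetilde\lambda$ appears as one of the leaves. Equivariance is obtained by simultaneously prescribing all translates $T(\widetilde\lambda)$, which are pairwise disjoint and Brouwer by construction. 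The projected foliation $\mathcal F$ on $\mathrm{dom}(I)$ is then transverse to $I$ by Corollary \ref{co: BrEquiv}, and the leaf through any point of $X$ lifts, via the choice of lift in $\widetilde D$, to the leaf through $\widetilde\lambda$; hence all points of $X$ share the same leaf.

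The main obstacle is the modification of the brick decomposition so as to realize $\widetilde\lambda$ as a leaf in an equivariant way, without losing the freeness property that underlies the construction of Theorem \ref{th: BrEquiv}. This is a purely technical matter: because $\widetilde\lambda$ and its translates are pairwise disjoint and each avoids its $\widetilde f$-image, one can refine any given equivariant free brick decomposition near these lines, subdividing the bricks they cross so that $\widetilde\lambda$ becomes a concatenation of brick edges, all the while preserving the property that no brick meets its image under $\widetilde f$. Once this is achieved the rest of Le Calvez's construction runs unchanged and produces the desired $\mathcal F$.
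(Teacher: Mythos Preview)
Your proposal correctly identifies the brick-decomposition construction of \cite{Lec2} as the right tool, but you are working much harder than necessary and both of your key steps have real gaps.

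\textbf{The gaps.} First, extending the arc $\widetilde\alpha$ to a Brouwer line $\widetilde\lambda$ that is disjoint from every non-trivial translate $T(\widetilde\lambda)$ is not ``standard Brouwer translation theory.'' The classical theory lets you extend a free arc to a Brouwer line, but forcing that line to project injectively to $\mathrm{dom}(I)$ (equivalently, to be disjoint from all its deck translates) is a genuine additional constraint; a line in $\widetilde{\mathrm{dom}}(I)$ satisfying this must accumulate only on singularities or ends of $\mathrm{dom}(I)$, and you give no mechanism to arrange that. (Note also that leaves of a transverse foliation can be closed, in which case some translates coincide with $\widetilde\lambda$ rather than being disjoint, so even the target statement needs adjusting.) Second, your plan to refine the brick decomposition so that $\widetilde\lambda$ becomes a concatenation of edges does not obviously make $\widetilde\lambda$ a leaf of the resulting foliation: in Le Calvez's construction the skeleton is covered by Brouwer lines and then desingularized at vertices, and the actual leaves are obtained by foliating the interiors of the bricks and gluing; there is no direct mechanism to force a prescribed skeleton line to survive as a single leaf after this process.

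\textbf{The paper's simpler route.} The paper avoids both obstacles by observing that the freedom in the construction lies not in the skeleton but in the \emph{interior foliation of each brick}, which is completely arbitrary subject only to going from source to sink. Since $X$ lies in an $I$-free disk $D$, one may choose the initial $I$-free brick decomposition so that $X$ sits in the interior of a single brick $b$ (for instance, take $b$ to contain a closed sub-disk of $D$ around $X$). Then simply foliate $b$ from its source to its sink in such a way that the finitely many points of $X$ lie on one leaf; after the global gluing and vertex desingularization this leaf becomes part of a leaf of the transverse foliation $\mathcal F$. No extension to a global Brouwer line and no control over the skeleton are needed.
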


\begin{proof} The proof can be deduced immediately from the construction of transverse foliations, that we recall now (see \cite{Lec2}).
A brick decomposition ${\mathcal D}=(V,E,B)$ on a surface is 
given by a one dimensional
stratified set, the {\it skeleton}
$\Sigma({\mathcal D})$,
with a zero-dimensional submanifold $V$ such that any vertex $v\in V$ 
is locally the extremity of
exactly three edges $e\in E$. A {\it brick } $b\in B$ is the closure of a connected component  of 
the complement of $\Sigma({\mathcal {D}})$. Say that a brick decomposition ${\mathcal {D}}=(V,E,B)$ on $\mathrm{dom}(I)$ is $I$-free, if every brick is $I$-free, or equivalently, if its lifts to a brick decomposition $\widetilde{\mathcal {D}}=(\widetilde V,\widetilde E, \widetilde B)$ on the universal covering $\widetilde{\mathrm{dom}}(I)$, whose bricks are $\widetilde f$-free, where $\widetilde f$ is the lift associated to  $\widetilde I$. Say that ${\mathcal {D}}$ is minimal if there is no $I$-free brick decomposition whose skeleton is strictly included in the skeleton of ${\mathcal {D}}$. Such a decomposition always exists.

Write $G$ for the group of automorphisms of the universal covering space. Using the classical Franks' lemma on free disk chains \cite{F1}, one constructs a natural order $\leq$ on $\widetilde B$ that satisfies the following: 

\smallskip
\noindent - \enskip it is $G$-invariant;

\smallskip
\noindent - \enskip if $\widetilde  f(\widetilde \beta)$ meets $\beta'$, then $\beta'\leq \beta$;

\smallskip
\noindent - \enskip two adjacent bricks are comparable.

One can define an orientation on $\Sigma(\widetilde{\mathcal D})$ (inducing an orientation on $\Sigma({\mathcal D})$) such that the brick on the left of an edge $\widetilde e\in \widetilde E$ is smaller than the brick on the right. Moreover, every vertex $\widetilde v\in \widetilde V$ is the ending point of at least one oriented edge and the starting point of at least one oriented edge. In other words there is no sink and no source on the oriented skeleton. We have three possibilities for the bricks of $\widetilde B$:

\smallskip
\noindent - \enskip it can be a closed disk with a sink and a source on the boundary (seen from inside);

\smallskip
\noindent - \enskip it can be homeomorphic to $[0,+\infty[\times \R$ with a sink on the boundary and a source at infinity;

\smallskip
\noindent - \enskip it can be homeomorphic to $[0,+\infty[\times \R$ with a source on the boundary and a sink at infinity;

\smallskip
\noindent - \enskip it can be homeomorphic to $[0,1]\times \R$ with a sink and a source at infinity (in this case it can project onto a closed annulus).

Let us state now the fundamental result, easy to prove in the case where $G$ is abelian and much more difficult in the case it is not (see Proposition 3.2 of \cite{Lec2}): one can cover $\Sigma(\widetilde{\mathcal D})$ with a $G$-invariant family of Brouwer lines of $\widetilde f$, such that two  lines never intersect transversally in the following sense: if $\lambda$ and $\lambda'$ are two lines in this family, either they do not intersect, or one of the sets $R(\lambda)$, $R(\lambda')$ contains the other one.

Such family of lines inherits a natural order $\leq$, where
$$\lambda\leq \lambda' \Leftrightarrow R(\lambda)\subset R(\lambda').$$
 One can ``complete" this family to get a larger family, with the same properties, that possesses the topological properties of a lamination (in particular every line admits a compact and totally ordered neighborhood). Then one can arbitrarily foliate each brick $b\in B$ such that, when lifted to a foliation on a brick $\widetilde b\in \widetilde B$, every leaves goes from the source  to the sink. We obtain then, in a natural way, a decomposition of $\widetilde{\mathrm{dom}}(I)$ by a $G$-invariant family of Brouwer lines that do not intersect transversally, and that possesses  the topological structure of a plane foliation (it is a non Hausdorff one dimensional manifold).

It remains to blow up each vertex, by a desingularization process (see \cite{Lec1}) to obtained a $G$-invariant foliation by Brouwer lines.

 \end{proof}

\subsection{Zero entropy annulus homeomorphisms}

In this subsection we prove some results for a general open annulus homeomorphism whose extension to the  ends compactification has zero topological entropy.  A stronger version of the first result for diffeomorphisms was already proved in an unpublished paper of Handel \cite{H2}.

As noted in the introduction, given a homeomorphism of an open annulus $\T^1\times\R$ and a lift $\check f$ to $\R^2$, denote by $\pi: \R^2\to \T^1\times\R$ the covering projection, and by $\pi_1:\R^2 \to \R$ the projection in the first coordinate. For any point $z\in\T^1\times\R$ such that its $\omega$-limit set is not empty, we say that $z$ {\it has a rotation number $\mathrm{rot}(z)$}  if, for any compact set $K\subset\T^1\times\R$, any increasing sequence of integers $n_k$ such that $f^{n_k}(z)\in K$ and any $\check z\in\pi^{-1}(z)$, 
$$\lim_{k\to\infty}\frac{1}{n_k}\left( \pi_1(\check f^{n_k}(\check z)- \pi_1(\check z)\right)=\mathrm{rot}(z).$$ 
In general it is not expected that every point will have a rotation number, but if we assume that $f$ has zero entropy this must be the case, at least for recurrent points, as shown by the following theorem, which is a restatement of Theorem \ref{th:continuous_rotation_intro}

\begin{theorem}\label{th:rotation_number_exists}
Let $f$ be a homeomorphism of $\T^1\times \R$ isotopic to the identity, $\check f$ a lift of $f$ to the universal covering space, and let $f_{\mathrm{sphere}}$ be the natural extension of $f$ to the sphere obtained by compactifying each end with a point. If the topological entropy of $f_{\mathrm{sphere}}$ is zero, then each bi-recurrent point has a rotation number $\mathrm{rot}(z)$. Moreover, the function $z\mapsto \mathrm{rot}(z)$ is continuous on the set of bi-recurrent points.
\end{theorem}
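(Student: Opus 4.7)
The plan is to work on $\S^2=(\T^1\times\R)\cup\{N,S\}$, where $f_{\mathrm{sphere}}$ fixes the two added points. Choose a maximal hereditary singular isotopy $I$ whose singular set contains $\{N,S\}$ and a transverse oriented singular foliation $\mathcal F$. The hypothesis $h(f_{\mathrm{sphere}})=0$, via Theorem \ref{th:transverse_imply_entropy}, forbids any two admissible $\mathcal{F}$-positively recurrent transverse paths (possibly equal) from having a $\mathcal F$-transverse intersection; in particular no such path has a $\mathcal F$-transverse self-intersection. For a bi-recurrent $z\in\mathrm{dom}(I)$, the whole trajectory $I_{\mathcal{F}}^{\Z}(z)$ is $\mathcal F$-bi-recurrent, so by Proposition \ref{pr:transverse_on_sphere} it is equivalent to the natural lift of a simple transverse loop $\Gamma_z\subset\S^2$. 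If $\Gamma_z$ does not separate $N$ from $S$ it is contractible in $\T^1\times\R$, so $U_{\Gamma_z}$ lifts to a disjoint union of bounded disks in $\R^2$ and $\mathrm{rot}(z)=0$. Otherwise $\Gamma_z$ is essential with class $+1\in H_1(\T^1\times\R;\Z)$ after orientation. Fixed points of $f$ in $\mathrm{sing}(I)$ have integer rotation numbers and are handled directly.

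In the essential case, lift $\Gamma_z$ to a $T$-invariant line $\widetilde\gamma_z\subset\R^2$ with $T(x,y)=(x+1,y)$, and parameterize the leaves of $\widetilde{\mathcal F}$ that it meets as $(\widetilde\phi_t)_{t\in\R}$ with $T(\widetilde\phi_t)=\widetilde\phi_{t+1}$. Define $\Phi:\widetilde U_{\Gamma_z}\to\R$ by $\Phi(\check w)=t$ when $\check w\in\widetilde\phi_t$; it is continuous, satisfies $\Phi\circ T=\Phi+1$, and $h(n):=\Phi(\check f^n(\check z))$ is strictly increasing because the transverse trajectory crosses leaves in order. The function $\pi_1-\Phi$ is $T$-invariant on $\widetilde U_{\Gamma_z}$ and descends to a continuous function $\bar\psi$ on $U_{\Gamma_z}$, whence
\[
\pi_1(\check f^n(\check z))-\pi_1(\check z)=(h(n)-h(0))+(\bar\psi(f^n(z))-\bar\psi(z)).
\]
Choosing $K$ a compact neighborhood of $z$ inside $U_{\Gamma_z}$, for any return sequence $n_k$ with $f^{n_k}(z)\in K$ the $\bar\psi$-terms are bounded, which reduces the existence and value of $\mathrm{rot}(z)$ to the convergence of $h(n_k)/n_k$.

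The main step is showing that $h(n)/n$ converges. Assume $\rho_-=\liminf h(n)/n<\rho_+=\limsup h(n)/n$. Bi-recurrence forces, on return times, $h(n_k)=p_k+\Phi(\check z)+o(1)$ for integers $p_k$ with $\check f^{n_k}(\check z)=T^{p_k}(\check z)+o(1)$. Oscillation of $p_k/n_k$ makes $\Gamma_z$ linearly admissible with non-trivial oscillation: for any rational $r/s\in(\rho_-,\rho_+)$ a suitable admissible subpath of $I_{\mathcal F}^{\Z}(z)$ realizes $r$ revolutions of $\Gamma_z$ in $s$ iterations. Proposition \ref{pr: realization} (whose hypothesis (i) is satisfied since $\Gamma_z$ has leaves on each side because $N$ and $S$ are separated by it, and no invariant essential simple loop in the associated essential annulus is disjoint from its image under the zero-entropy map) then produces periodic orbits of distinct rotation numbers in the same essential annulus $A_{\Gamma_z}$. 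The transverse trajectories associated to two such orbits cannot be $\mathcal F$-equivalent (their winding rates along $\Gamma_z$ differ), so a leaf-comparison argument yields a $\mathcal F$-transverse intersection, contradicting Theorem \ref{th:transverse_imply_entropy}. Hence $\rho_-=\rho_+=:\mathrm{rot}(z)$.

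The same argument shows that every bi-recurrent point sharing the loop $\Gamma_z$ has rotation number $\mathrm{rot}(z)$. For continuity: given bi-recurrent $z_m\to z$, Lemma \ref{le: continuity} guarantees that for every segment $J$ the path $I_{\mathcal F}^{\Z}(z)|_J$ is equivalent to a subpath of $I_{\mathcal F}^{\Z}(z_m)$ once $m$ is large; taking $J$ longer than one period of $\Gamma_z$ forces $\Gamma_{z_m}$ to be equivalent to $\Gamma_z$, so $\mathrm{rot}(z_m)=\mathrm{rot}(z)$ for all $m$ sufficiently large, and continuity at fixed points follows from a direct estimate on $\pi_1(\check f(\check z_m))-\pi_1(\check z_m)$. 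The hard part is the non-oscillation argument above, which uses the zero-entropy hypothesis in an essential way---through Proposition \ref{pr: realization} and Theorem \ref{th:transverse_imply_entropy}---to collapse the rotation set of the essential annulus associated to $\Gamma_z$ to a single value.
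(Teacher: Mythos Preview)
Your central ``non-oscillation'' step does not go through. Suppose Proposition \ref{pr: realization} did produce periodic orbits $z_1,z_2$ with distinct rotation numbers $r_1/s_1\neq r_2/s_2$, both associated to powers of $\Gamma_z$. Their whole transverse trajectories are then \emph{both} equivalent to the natural lift of the same simple loop $\Gamma_z$; the ``winding rate'' is a feature of the time parameterization, not of the $\mathcal F$-equivalence class. Hence no leaf-comparison can yield a $\mathcal F$-transverse intersection between them, and Theorem \ref{th:transverse_imply_entropy} is not violated. More generally, coexistence of periodic orbits with many rotation numbers in an annulus does not force positive entropy: an integrable twist $f(x,y)=(x+\alpha(y),y)$ with $\alpha>0$ and $\alpha(y)\to c_\pm$ as $y\to\pm\infty$ has $h(f_{\mathrm{sphere}})=0$ and periodic orbits of every rational rotation number in the range of $\alpha$. (In this example hypothesis (i) of Proposition \ref{pr: realization} also fails, since every leaf is a vertical line crossing $\Gamma_z$; your justification of that hypothesis from $h=0$ alone is not valid.)

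The same twist example shows your continuity argument is wrong, not merely incomplete. With the vertical foliation every bi-recurrent point has the \emph{same} essential loop $\Gamma_z$, yet $\mathrm{rot}(z)=\alpha(y)$ depends on the height. Thus $\Gamma_{z_m}=\Gamma_z$ does not imply $\mathrm{rot}(z_m)=\mathrm{rot}(z)$, and a single transverse foliation cannot determine the rotation number. This is the conceptual gap: one foliation measures rotation only relative to one reference value. The paper's proof fixes this by using, for every coprime pair $(p,q)$, a maximal isotopy $I_{p,q}$ of $f^q$ lifting to $\check f^q\circ T^{-p}$ and a foliation $\mathcal F_{p,q}$; the simple loop $\Gamma_{p,q}(z)$ then records (via essentiality and which side the upper end lies on) whether $\mathrm{rot}(z)$ is ${<}\,p/q$, ${=}\,p/q$, or ${>}\,p/q$. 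Varying $p/q$ pins $\mathrm{rot}(z)$ down, local constancy of each $z\mapsto\Gamma_{p,q}(z)$ gives continuity, and finiteness is handled separately via Proposition \ref{pr:freedisktoleaf}.
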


\begin{proof}
For every compact set $K$ of $\T^1\times\R$ define the set $\mathrm{rot}_{\check f, K}(z)\subset \R\cup\{-\infty, \infty\}$ as following: $\rho$ belongs to $\mathrm{rot}_{\widetilde f, K}(z)$ if there exists an increasing sequence of integers $n_k$ such that $f^{n_k}(z)\in K$ and such that for any $\check z\in\pi^{-1}(z)$, one has
$$\lim_{k\to\infty}\frac{1}{n_k}\left( \pi_1(\check f^{n_k}(\check z)- \pi_1(\check z)\right)=\rho.$$ 
Writing $T:(x,y)\mapsto(x+1,y)$ for the fundamental covering automorphism, one immediately gets $\mathrm{rot}_{\check f\circ T^p, K}(z)=\mathrm{rot}_{\check f, K}(z)+p$ for every $p\in\Z$.  One can prove quite easily that $\mathrm{rot}_{\check f^q, K_q}(z)=q\mathrm{rot}_{\check f, K}(z)$, for every $q\geq 1$, where $K_q=\bigcup_{0\leq k<q} f^{-k}(K)$. Finally, note that $\mathrm{rot}_{\check f, K}(z)=\mathrm{rot}_{\check f, \overline O(f,z)\cap K}(z)$, where $O(f,z)$ is the $f$-orbit of $z$.

Now, recall why every positively recurrent point $z$ of $f$ is a positively recurrent point of $f^q$, for every $q\geq 1$. The set $R$ of integers $r$ such that there exists a subsequence of $(f^{nq}(z))_{n\geq 0}$ that converges to $f^r(z)$ or equivalently a subsequence of $(f^{nq-r}(z))_{n\geq 0}$ that converges to $z$, is non empty because $z$ is positively recurrent. Note that $R$ is stable by addition. Indeed if $r$ and $r'$ belong to $R$, one can approximate $f^{r+r'}(z)=f^r(f^{r'}(z))$ by a point $f^r(f^{n'q}(z))=f^{r+n'q}(z)$ as close as we want, and then approximate $f^{r+n'q}(z)= f^{n'q}(f^r(z))$ by a point $f^{n'q}(f^{nq}(z))=f^{(n+n')q}(z)$ as close as we want. One deduces that $qr$ belongs to $R$ if it is the case for $r$, which implies that $z$ is a positively recurrent point of $f^q$. Similarly, every bi-recurrent point $z$ of $f$ is a bi-recurrent point of $f^q$, for every $q\geq 1$.

For every couple $(p,q)$ of integers relatively prime ($q\geq 1$), one can choose an identity isotopy $I^*_{p,q}$ of $f^q$ that is lifted to an identity isotopy of $\check f^q\circ T^{-p}$, then a maximal hereditary singular isotopy $I_{p,q}$ such that $I'_{p,q}\preceq I_{p,q}$, and finally a singular foliation ${\mathcal F}_{p,q}$ transverse to $I_{p,q}$. The singular points of $I_{p,q}$ are periodic points of period $q$ and rotation number $p/q$. Let $z$ be a bi-recurrent point of $f$ that is not a singular point of $I_{p,q}$. By Theorem \ref{th:transverse_imply_entropy} and Proposition \ref{pr:transverse_on_sphere}, the whole trajectory $(I_{p,q})^{\Z}_{{\mathcal F}_{p,q}}(z)$ is the natural lift of a simple loop $\Gamma_{p,q}(z)$ (uniquely defined up to equivalence). In particular, one has $\Gamma_{p,q}(z)=\Gamma_{p,q}(f^q(z))$. Write $U_{\Gamma_{p,q}(z)}$ for the open annulus, union of leaves met by $\Gamma_{p,q}(z)$. Every leaf containing a point of $\overline{O(f^q,z)}$ is met by $(I_{p,q})^{\Z}_{{\mathcal F}_{p,q}}(z)$. It implies that $\overline{O(f^q,z)}\subset U_{\Gamma_{p,q}(z)}\cup\mathrm{sing}(I_{p,q})$. Note also that the function $z\mapsto\Gamma_{p,q}(z)$ is locally constant on the set of bi-recurrent points. Indeed if $z$ is a bi-recurrent point and $\gamma:[0,1]\to \mathrm{dom}(I_{p,q})$ a non simple transverse subpath of the natural lift of $\Gamma_{p,q}(z)$, then $\gamma$ is a subpath of the whole trajectory of $z'$, if $z'$ is a bi-recurrent point sufficiently close to $z$, which implies that $\Gamma_{p,q}(z')=\Gamma_{p,q}(z)$.  

One can lift the isotopy $I_{p,q}$ to a singular maximal isotopy $\check I_{p,q}$ of $\check f^q\circ T^{-p}$ and the foliation ${\mathcal F}_{p,q}$ to a foliation $\check{\mathcal F}_{p,q}$ transverse to $\check I_{p,q}$. Fix a lift $\check z\in\R^2$ of $z$. In the case where $\Gamma_{p,q}(z)$ is not essential, then $(\check I_{p,q})^{\Z}_{\check{\mathcal F}_{p,q}}(\check z)$ is the natural lift of a transverse simple loop $ \Gamma_{p,q}(\check z)$ that lifts $\Gamma_{p,q}(z)$. The $\widetilde f^q\circ T^{-p}$-orbit of $\check z$ stays in the annulus $U_{\Gamma_{p,q}(\check z)}$, union of leaves met by $\Gamma_{p,q}(\check z)$.   In the case where $\Gamma_{p,q}(z)$ is essential and the upper end of $\T^1\times\R$ is on the left of $\Gamma_{p,q}(z)$, then $(\check I_{p,q})^{\Z}_{\check{\mathcal F}_{p,q}}(\check z)$ is the natural lift of a transverse line
 $ \gamma_{p,q}(\check z)$ that lifts $\Gamma_{p,q}(z)$. The $\widetilde f^q\circ T^{-p}$-orbit of $\check z$ stays in the strip $U_{\gamma_{p,q}(\check z)}$, union of leaves met by $\gamma_{p,q}(\check z)$.  Fix a parameterization $\gamma_{p,q}(\check z):\R\to \mathrm{dom}(\check I_{p,q})$ such that $\gamma_{p,q}(\check z)(t+1)=T(\gamma_{p,q}(\check z)(t))$. For every $\check z'\in U_{\gamma_{p,q}(\check z)}$ there exist a unique real number,
 denoted by $\pi_{ \gamma_{p,q}(\check z)}(\check z')$ such that $\check z'$ and $ \gamma_{p,q}(\check z)(t)$ are on the same leaf. One gets a map $\pi_{ \gamma_{p,q}(\check z)}: U_{\gamma_{p,q}(\check z)}\to\R$, such that  the sequence $(\pi_{ \gamma_{p,q}(\check z)}(\check f^{q}\circ T^{-p})^k(\check z))_{k\in\Z}$ is increasing.  In the case where $\Gamma_{p,q}(z)$ is essential and the upper end of $\T^1\times\R$ is on the right of $\Gamma_{p,q}(z)$, one proves by the same argument that the sequence $(\pi_{ \gamma_{p,q}(\check z)}(\check f^{q}\circ T^{-p})^k(\check z))_{k\in\Z}$ is decreasing.  

Let $z$ be a periodic point that is not a singular point of $I_{p,q}$.  If $\Gamma_{p,q}(z)$ is not essential, then the rotation number of $z$ (defined for the lift $\widetilde f^q\circ T^{-p}$ of $f^q$) is equal to zero, which implies that $\mathrm{rot}(z)=p/q$. If $\Gamma_{p,q}(z)$ is essential and if the upper end of $\T^1\times\R$ is on the left of $\Gamma_{p,q}(z)$, the rotation number of $z$ (defined for the lift $\widetilde f^q\circ T^{-p}$ of $f^q$) is positive, which implies that $\mathrm{rot}(z)>p/q$.  If $\Gamma_{p,q}(z)$ is essential and if the upper end of $\T^1\times\R$ is on the right of $\Gamma_{p,q}(z)$, then $\mathrm{rot}(z)<p/q$.

Let us prove now that if $z$ is bi-recurrent, the periodic points that belong to the closure of $O(f,z)$ have the same rotation number. Otherwise, one can find a couple $(p,q)$ of integers relatively prime ($q\geq 1$), and two periodic points $z_1$, $z_2$ in the closure of $O(f^q,z)$ such that $\mathrm{rot}(z_1)<p/q<\mathrm{rot}(z_2)$. One deduces that $\Gamma_{p,q}(z_1)=\Gamma_{p,q}(z_2)=\Gamma_{p,q}(z)$, which is impossible because the upper end of $\T^1\times\R$ is on the right of $\Gamma_{p,q}(z_1)$ and on the left of $\Gamma_{p,q}(z_2)$.  

Let $K$ be a compact subset of $\T^1\times\R$  and $z$ a bi-recurrent point. Let us suppose first that the closure of $O(f,z)$ has no periodic points. For every couple $(p,q)$ of integers relatively prime ($q\geq 1$), the set $\overline{O(f,z)}\cap K_q$ is a compact subset of the annulus $U_{\Gamma_{p,q}(z)}$. In the case where $\Gamma_{p,q}(z)$ is not essential, one deduces that $\mathrm{rot}_{\widetilde f^q\circ T^{-p},K_q}(z)$ is reduced to $\{0\}$ and so $\mathrm{rot}_{\widetilde f,K}(z)$ is reduced to $\{p/q\}$. In the case where $\Gamma_{p,q}(z)$ is essential and the upper end of $\T^1\times\R$ is on the left of $\Gamma_{p,q}(z)$, there exits a real number $M$ such that for every point $\widetilde z'$ that lifts a point of $(\overline{O(f,z)}\cap K_q)\cup\{z\}$, one has $\vert \pi_1(\check z')-\pi_{ \gamma_{p,q}(\check z)}(\check z')\vert\leq M$. Using this property and the fact that the sequence $(\pi_{ \gamma_{p,q}(\check z)}(\check f^{q}\circ T^{-p})^k(\check z))_{k\in\Z}$ is increasing, one deduces that $\mathrm{rot}_{\widetilde f\circ T^{-p},K_q}(z)\subset[0,+\infty]$ and consequently that $\mathrm{rot}_{\widetilde f,K}(z)\subset [p/q,+\infty]$. Similarly, in the case where $\Gamma_{p,q}(z)$ is essential and the upper end of $\T^1\times\R$ is on the right of $\Gamma_{p,q}(z)$, one gets $\mathrm{rot}_{\widetilde f,K}(z)\subset [-\infty,p/q]$.

One immediately concludes that $\mathrm{rot}_{\widetilde f,K}(z)$ is reduced to a number in $\R\cup\{-\infty,\infty\}$ if not empty. Of course, this number is independent of $K$, we denote it $\mathrm{rot}(z)$. 
Suppose now that the closure of $O(f,z)$ contains periodic points. As said before, they have the same rotation number $p_0/q_0$. The argument above is still valid if $p/q\not=p_0/q_0$ and permit us to  concluded that $\mathrm{rot}_{\widetilde f,K}(z)$ is reduced to a number in $\R\cup\{-\infty,\infty\}$ independent of $K$. Of course the number is nothing but $p_0/q_0$. Note that in both situations $\mathrm{rot}(z)$ is uniquely defined by the following property:  

\smallskip
\noindent -\enskip if $p/q<\mathrm{rot}(z)$, then $\Gamma_{p,q}(z)$ is essential and the upper end of $\T^1\times\R$ is on the right of $\Gamma(z)$,

\smallskip
\noindent -\enskip if $p/q>\mathrm{rot}(z)$, then $\Gamma_{p,q}(z)$ is essential and the upper end of $\T^1\times\R$ is on the left of $\Gamma(z)$.

Using the fact that each function $z\mapsto\Gamma_{p,q}(z)$ is locally constant on the set of bi-recurrent points, one deduces immediately that the function $z\mapsto 
\mathrm{rot}(z)$ is continuous on the set of bi-recurrent points.

It remains to prove that $\mathrm{rot}(z)$ is finite. Of course one can suppose that the closure of $O(f,z)$ does not contain a periodic orbit (otherwise as said before $\mathrm{rot}(z)$ is rational). By assumption, $z$ is not periodic, so let us choose a free disk $D$ containing $z$. There exists an integer $s>0$ such that $f^s(z)\in D$ and an integer $r\in\Z$ such that $\check f^s(\check z)\in T^r(\check D)$, if $\check D\subset\R^2$ is a lift of $D$ and $\check z$ is the lift of $z$ contained in $\check D$.  Let us consider the singular isotopy $I_{k,1}$, where  $k>r/s$. As explained above the two points $z$ and $f^s(z)$ belong to the free disk $D$. So, by Proposition \ref{pr:freedisktoleaf} one can choose the foliation ${\mathcal F}_{k,1}$ such that $z$ and $f^s(z)$ belong to the same leaf $\phi$. This implies that $\Gamma_{k,1}(z)$ is essential and the upper end of $\T^1\times\R$ is on the right of $\Gamma_{k,1}(z)$. Consequently, one deduces that $\mathrm{rot}(z)\leq k$. One proves similarly that $\mathrm{rot}(z)\geq k'$ if $k'$ is an integer smaller than $r/s$.\end{proof}


An interesting consequence is:

\begin{proposition}
Let $f$ be an orientation preserving homeomorphism of $S^2$. Suppose that there exists a bi-recurrent point $z$ such that the closure of its orbit contains periodic points of minimal period $q_1<q_2$, where $q_1$ does not divide $q_2$. Then the entropy of $f$ is positive.
\end{proposition}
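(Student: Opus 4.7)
The plan is to argue by contradiction; suppose $h(f) = 0$.

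Since $q_1 \nmid q_2$ forces $q_1 \ge 2$, the orbit closure of $z$ contains non-fixed periodic points, so $z \notin \mathrm{fix}(f)$. By Theorem \ref{th: FH}, $z$ lies in some maximal fixed-point-free invariant open annulus $A \subset \S^2$. For $i=1,2$, the point $z_i \in \overline{O(f,z)} \subset \overline A$ has period $q_i > 1$, hence is not fixed; by Theorem \ref{th: FH}(iv), a non-fixed point in $\partial A \setminus \mathrm{fix}(f)$ would have $\alpha(z_i), \omega(z_i) \subset \mathrm{fix}(f)$, impossible for a periodic orbit. Hence $z_1, z_2 \in A$.

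Collapsing each of the two compact connected components of $\S^2 \setminus A$ to a point produces a topological sphere on which $f$ descends to a continuous factor $\bar f$ -- the sphere extension of $f|_A$, fixing the two collapsed points. Since a factor cannot increase entropy, $h(\bar f) \le h(f) = 0$. Theorem \ref{th:rotation_number_exists} then applies: for any lift $\check f$ to the universal cover $\R \times J$ of $A$, every bi-recurrent point of $f|_A$ has a rotation number $\mathrm{rot}(\cdot) \in \R$, and this function is continuous on the bi-recurrent set.

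The three points $z, z_1, z_2$ are all bi-recurrent for $f|_A$. Taking subsequences $f^{n_k}(z) \to z_i$, the iterates remain bi-recurrent (being orbit points of the bi-recurrent point $z$) and satisfy $\mathrm{rot}(f^{n_k}(z)) = \mathrm{rot}(z)$ by dynamical invariance of the rotation number, so continuity forces $\mathrm{rot}(z) = \mathrm{rot}(z_1) = \mathrm{rot}(z_2) =: \rho$. Since $z_1, z_2$ are periodic, $\rho = p/q$ is rational in lowest terms, and the identities $p_i/q_i = \rho$ coming from $\check f^{q_i}(\check z_i) = \check z_i + (p_i,0)$ give $q \mid q_i$ and $p_i = p(q_i/q)$.

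To close the argument one must show that $q_i = q$ for both $i$, contradicting $q_1 < q_2$. This amounts to the coprimality $\gcd(p_i,q_i) = 1$, which one establishes using the common simple transverse loop $\Gamma$ associated to $z$ via Proposition \ref{pr:transverse_on_sphere} under $h(f) = 0$ (and shared by $z_1, z_2$ through local constancy of the associated simple loop on the bi-recurrent set, giving $I_{\mathcal F}^{q_i}(z_i) \sim \Gamma^{p_i}$): a nontrivial common divisor $d$ of $p_i$ and $q_i$ would make $\check z_i$ a period-$d$ point of the lift $g = \check f^{q_i/d} \circ T^{-p_i/d}$ of $f^{q_i/d}$ on the strip-cover of $A$, and a Brouwer-type fixed-point argument (orientation-preserving strip homeomorphisms with periodic points of period greater than $1$ have fixed points) combined with the zero-entropy structure would produce an $f$-periodic orbit in $A$ of strictly smaller period with the same rotation $\rho$, in conflict with the minimality of $q_i$. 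The main obstacle is executing this coprimality argument rigorously within the zero-entropy framework; the strict hypothesis $q_1 \nmid q_2$, rather than merely $q_1 \ne q_2$, serves as the divisibility safeguard that makes the final contradiction robust against any residual ambiguity in the coprimality conclusion.
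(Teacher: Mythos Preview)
Your approach has a genuine gap at the coprimality step, and this gap cannot be repaired along the lines you sketch. Equal rotation numbers simply do not force equal minimal periods: nothing prevents $z_1$ and $z_2$ from both having rotation number $0$ in $A$, say with $q_1=2$, $q_2=3$, $p_1=p_2=0$. In your Brouwer argument, even if you produce a fixed point of $g=\check f^{q_i/d}\circ T^{-p_i/d}$, this gives an $f$-periodic point of period $q_i/d$ with rotation number $\rho$ --- but that is a \emph{different} point, and its existence does not contradict the fact that $z_i$ itself has minimal period $q_i$. The phrase ``in conflict with the minimality of $q_i$'' is where the argument breaks. The hypothesis $q_1\nmid q_2$ plays no role in salvaging this step. (There is also a secondary issue: Theorem~\ref{th: FH} requires $\Omega(f)=\S^2$, which is not assumed here.)

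The paper's proof avoids this trap by engineering the rotation numbers to be \emph{different} rather than trying to extract period information from equal ones. It works with $f^{q_2}$ and chooses a maximal isotopy $I$ whose singular set contains $z_2$, $f^r(z_2)$ (where $r=q_2\bmod q_1\neq 0$), and a third fixed point of $f^{q_2}$. Since $q_1\nmid q_2$, the point $z_1$ is \emph{not} fixed by $f^{q_2}$; by zero entropy its whole transverse trajectory is the natural lift of a simple loop $\Gamma$, which therefore separates two singular points $z'_2,z''_2$ with $\{z'_2,z''_2\}\neq\{z_2,f^r(z_2)\}$. In the annulus $\S^2\setminus\{z'_2,z''_2\}$, the isotopy-defined lift gives $z_1$ a nonzero rotation number (the loop $\Gamma$ is essential) while whichever of $z_2$, $f^r(z_2)$ lies outside $\{z'_2,z''_2\}$ is singular for $I$ and hence has rotation number zero. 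Both lie in the closure of the $f^{q_2}$-orbit of $z$ (or of $f^r(z)$), contradicting the constancy of rotation numbers on the orbit closure established in Theorem~\ref{th:rotation_number_exists}. The key idea you are missing is to place one of the periodic orbits into the singular set of the isotopy, which pins its rotation number to zero for free.
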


\begin{proof}
We suppose that the closure of the $f$-orbit $O(f,z)$ of $z$ contains a periodic point $z_1$ of period $q_1$ and a periodic point $z_2$ of period $q_2$. One can choose $z_1$ and $z_2$ in $\overline{O(f^{q_2},z)}$. Writing $r$ for the remainder of the Euclidean division of $q_2$ by $q_1$, one knows that $f^{q_2}(z_1)=f^r(z_1)$.  Since $q_2$ is not a multiple of $q_1$, it is larger than $2$ and $f^{q_2}$ must have at least three distinct fixed points. Choose a maximal hereditary identity isotopy $I$ of $f^{q_2}$ whose singular set contains $z_2$, $f^{r}(z_2)$ and at least a third fixed point of $f^{q_2}$, then consider a singular foliation $\mathcal F$ transverse to $I$.  Since $f$ has zero topological entropy, $f^{q_2}$ has zero topological entropy, and the path $I_{\mathcal F}^{\Z}(z_1)$ is equivalent to the natural lift of a simple transverse loop $\Gamma$. Using the fact that there exist at least there singular points, one can find two singular points $z'_2$ and $z''_2$ of $I$
 that are separated by $\Gamma$ and such that $\{z'_2,z''_2\}\not=\{z_2,f^r(z_2)\}$. The isotopy $I$ defines a natural lift of $f^q\vert_{S^2\setminus\{z'_2,z''_2\}}$ and for this lift, the rotation number of every point of the $f^{q_2}$-orbit of $z_1$ is a non vanishing number, while the rotation number of every singular point different from $z'_2$ and $z''_2$ is zero. As seen in the previous proposition, the points $z$ and $f^r(z)$ are bi-recurrent points of $f^{q_2}$. In the case where $z_2\not\in\{z'_2,z''_2\}$, the closure of $O(f^{q_2},z)$ contains two periodic points $z_1$ and $z_2$  with different rotation numbers. In the case where $f^r(z_2)\not\in\{z'_2,z''_2\}$, the closure of $O(f^{q_2},f^r(z))$ contains two periodic points $f^r(z_1)=f^{q_2}(z_1)$ and $f^r(z_2)$  with different rotation numbers. We have seen in the proof of the previous proposition that in both cases, the entropy of $f$ is positive.. 

\end{proof}

\section{ Applications to torus homeomorphims}

\bigskip

 In this section an element of $\Z^2$ will be called an {\it  integer} and an element of $\Q^2$ a {\it rational.} If $K$ is a convex compact subset of $\R^2$, a {\it supporting line} is an affine line that meets $K$ but does not separate two  points of $K$, a {\it vertex} is a point that belongs to infinitely many supporting lines.

Let us begin by stating the main results of this section, that are nothing but  \textcolor{black}{Theorems \ref{th:impossible_rotation_set_intro}, \ref{th:bounded_deviation_intro} and \ref{th:Llibre_MacKay_intro}} from the introduction.

\begin{theorem}\label{th:impossible_rotation_set}
 Let $f$ be a homeomorphism of $\T^2$ that is isotopic to the identity and $\check f$ a lift of $f$ to  $\R^2$. The frontier of $\mathrm{rot}(\check f)$ does not contain a segment with irrational slope that contains a rational point in its interior.
\end{theorem}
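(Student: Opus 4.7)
The plan is by contradiction. Suppose $L\subset\partial\mathrm{rot}(\check f)$ is a segment of irrational slope containing a rational point in its interior. After replacing $(f,\check f)$ by $(f^q,\check f^q\circ T^{-p})$ for an appropriate covering automorphism $T$, we may assume that this rational interior point is the origin. Choose a non-trivial linear form $\psi$ on $\R^2$ with $\psi\leq 0$ on $\mathrm{rot}(\check f)$ and $\psi|_L=0$. Because the slope of $L$ is irrational, the line $\{\psi=0\}$ meets the lattice $\Z^2$ only at the origin; in particular, $\psi(T)\neq 0$ for every non-trivial integer translation $T\in\Z^2\setminus\{0\}$.

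Fix a maximal hereditary singular isotopy $I$ of $f$ (Theorem \ref{th: maximal}) and a transverse foliation $\mathcal F$ (Corollary \ref{co: BrEquiv}); let $\check I,\check{\mathcal F}$ denote their lifts to $\R^2$. Since $0$ is interior to $L$ and both endpoints of $L$ lie in $\mathrm{rot}(\check f)$, ergodic decomposition produces an ergodic measure $\mu$ with rotation vector $\rho\in L\setminus\{0\}$ not supported on a single fixed point. Applying Atkinson's Lemma (Proposition \ref{pr:atkinson}) to the zero-mean cocycle $\check\varphi=\check f-\mathrm{Id}$, for $\mu$-a.e.\ $z$ there is a sequence $n_k\to\infty$ with $f^{n_k}(z)\to z$, so $z$ is bi-recurrent in $\T^2$ and the transverse trajectory $\gamma:=I^\Z_{\mathcal F}(z)$ is an admissible $\mathcal F$-bi-recurrent path in $\mathrm{dom}(I)$. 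Its lift $\check\gamma:=\check I^\Z_{\check{\mathcal F}}(\check z)\subset\R^2$ is a proper admissible transverse path going to infinity in the direction of $\rho$.

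The crux is to show that $\gamma$ has a $\mathcal F$-transverse self-intersection in $\mathrm{dom}(I)$. If not, then the lift $\check\gamma$ has no $\check{\mathcal F}$-transverse intersection with any non-trivial $\Z^2$-translate of itself, and by an adapted plane argument (cf.\ Proposition \ref{pr:transverse_on_plane}, after using Proposition \ref{pr: THS} or an auxiliary annular quotient to reduce to a bounded-diameter-leaf situation), $\check\gamma$ is $\check{\mathcal F}$-equivalent to a line whose asymptotic direction is that of a primitive integer vector; this contradicts the fact that $\rho\in L\setminus\{0\}$ lies on the irrationally-sloped line through the origin. Given this self-intersection, Lemma \ref{lm:recurrent_to_loop} (with $\gamma_1=\gamma_2=\gamma$) produces a linearly admissible transverse loop $\Gamma\subset\mathrm{dom}(I)$ with a $\mathcal F$-transverse self-intersection whose natural lift $\check\Gamma$ therefore satisfies that $\check\Gamma$ and $T_\Gamma(\check\Gamma)$ intersect $\check{\mathcal F}$-transversally, where $T_\Gamma=[\Gamma]\in\Z^2\setminus\{0\}$ is the homology class of $\Gamma$ in $H_1(\T^2,\Z)$.

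To conclude, iterate $\check\Gamma$ in both positive and negative directions by Corollary \ref{co: induction transverse} (mimicking the end of the proof of Proposition \ref{pr:technical_non-contractible}), and apply the realization result (Proposition \ref{pr: realization}) to obtain, for every rational $r/s\in(-1/q_0,1/q_0)\setminus\{0\}$ (with $q_0$ the order of linear admissibility), a periodic orbit of period $s$ with rotation vector $r[\Gamma]/s\in\mathrm{rot}(\check f)$. Since $[\Gamma]\in\Z^2\setminus\{0\}$ and $L$ has irrational slope, $\psi([\Gamma])\neq 0$; choosing $r$ to have the sign of $\psi([\Gamma])$ produces a point of $\mathrm{rot}(\check f)$ on which $\psi$ is strictly positive, contradicting $\mathrm{rot}(\check f)\subset\{\psi\leq 0\}$. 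The main obstacle I expect is the rigidity step showing that the absence of a $\mathcal F$-transverse self-intersection of $\gamma$ forces the asymptotic direction of $\check\gamma$ to be rational: this requires a torus-adapted analog of the plane rigidity result (Proposition \ref{pr:transverse_on_plane}), carefully handling the leaves of $\check{\mathcal F}$ whose diameters need not be uniformly bounded a priori, likely via lifting to an intermediate annular cover.
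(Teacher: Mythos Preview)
Your proposal has a genuine gap at the ``rigidity step,'' and it is not merely a technical obstacle: the claim itself is false. You assert that if the transverse trajectory $\gamma=I^{\Z}_{\mathcal F}(z)$ of a bi-recurrent point with rotation vector $\rho\in L\setminus\{0\}$ has no $\mathcal F$-transverse self-intersection, then its lift $\check\gamma$ is $\check{\mathcal F}$-equivalent to a line whose asymptotic direction is a primitive integer vector. But the paper's own argument produces exactly such a trajectory and shows it is a line directed by $\rho$, which has \emph{irrational} direction. Concretely, after establishing that the leaves of $\check{\mathcal F}$ are uniformly bounded (Proposition~\ref{pr:Bounded_Leaves}, which you never invoke but which is essential for applying Propositions~\ref{pr:transverse_on_plane} and~\ref{pr: rational rotation number}), the paper proves in Lemmas~\ref{le: no_intersection} and the subsequent lemma that $\check I^{\Z}_{\check{\mathcal F}}(z_0)$ is a line, directed by $\rho_0$, with no $\check{\mathcal F}$-transverse intersection with any nonzero integer translate of itself. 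So the ``no self-intersection'' case is not vacuous, and your proposed conclusion in that case is simply wrong. The analogy with Proposition~\ref{pr:transverse_on_sphere} breaks down precisely because $\T^2$ carries irrational foliations: a bi-recurrent transverse path need not be equivalent to the lift of a loop.

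The paper's actual proof is structurally different from yours: it uses \emph{two} ergodic trajectories, one for each extremal endpoint $\rho_0,\rho_1$ of the maximal segment. Each gives a transverse line in $\R^2$ (directed by $\rho_0$ and $\rho_1$ respectively), and a separate argument (Lemma~67 in the paper) shows that these two lines must intersect $\check{\mathcal F}$-transversally. The contradiction then comes from concatenating them into a single admissible path $\gamma=\gamma_0|_{(-\infty,t_0]}\gamma_1|_{[t_1,+\infty)}$, and using a refined Atkinson-type recurrence (Lemma~\ref{le: precise_atkinson}, a sign-controlled version which you do not have) to produce integer translates $\gamma+p_0$ and $\gamma+p_1$ that intersect $\check{\mathcal F}$-transversally with \emph{prescribed} signs of $\langle\rho_0^{\perp},p_i\rangle$. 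Proposition~\ref{pr: rational rotation number} then forces a sign constraint that contradicts this freedom. Your single-trajectory approach cannot reach this, and your alternative plan via Lemma~\ref{lm:recurrent_to_loop} and Proposition~\ref{pr: realization} conflates the covering automorphism witnessing the self-intersection with the homology class $[\Gamma]$ of the loop (there is no reason these coincide), so that route is also broken. A minor point: the cocycle $\check f-\mathrm{Id}$ does not have zero mean for $\mu$ (its mean is $\rho\neq 0$); only $\psi\circ(\check f-\mathrm{Id})$ does, though Poincar\'e recurrence already gives bi-recurrence in $\T^2$.
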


\begin{theorem}\label{th:bounded_deviation}
 Let $f$ be a homeomorphism of $\T^2$ that is isotopic to the identity and $\check f$ a lift of $f$ to  $\R^2$. If $\mathrm{rot}(\check f)$ has a non empty interior, then there exists $L\geq 0$ such that for every $z\in\R^2$ and every $n\geq 1$, one has $d(\check f^n(z)-z, n\mathrm{rot}(\check f))\leq L$.
\end{theorem}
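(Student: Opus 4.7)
The estimate is lift-independent: replacing $\check f$ by $\check f\circ T^{-p}$ shifts $\check f^n(z)-z$ by $-np$ and $\mathrm{rot}(\check f)$ by $-p$, so $d(\check f^n(z)-z,\,n\,\mathrm{rot}(\check f))$ is unchanged. I therefore reduce to the case $0\in\mathrm{int}(\mathrm{rot}(\check f))$ and fix a maximal hereditary singular isotopy $I$ of $f$ (existing by Theorem~\ref{th: maximal}) together with a transverse singular foliation $\mathcal F$, with lifts $\widetilde I,\widetilde{\mathcal F}$ to $\R^2$.

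The first step is a uniform bound on the diameters of leaves of $\widetilde{\mathcal F}$, via Proposition~\ref{pr: THS}. By Franks' realization theorem, each rational $\rho=(p_1/q,p_2/q)\in\mathrm{int}(\mathrm{rot}(\check f))$ is the rotation vector of an $f$-periodic orbit of period $q$, to which is associated a transverse loop $\Gamma_\rho$ of homology class $q\rho=(p_1,p_2)\in\Z^2\cong H_1(\T^2,\Z)$. Pick rational $\rho_1,\dots,\rho_k\in\mathrm{int}(\mathrm{rot}(\check f))$ with $0$ in the interior of their convex hull and positive integers $a_i$ such that $\sum_i a_iq_i\rho_i=0$ and the integer vectors $\{a_iq_i\rho_i\}$ generate $\Z^2$; both are achievable since $0\in\mathrm{int}(\mathrm{conv}(\rho_i))$ and the $\rho_i$ may be chosen to include arbitrarily small rational vectors in generic directions. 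Setting $\kappa_i=[\Gamma_{\rho_i}^{a_i}]=a_iq_i\rho_i\in\mathrm{THS}(\mathcal F)$, Proposition~\ref{pr: THS} yields a constant $D>0$ such that every leaf of $\widetilde{\mathcal F}$ has diameter at most $D$.

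The second step converts bounded leaves into bounded deviation by contradiction. Suppose there are $\check z_n\in\R^2$ and $m_n\geq 1$ with $d(\check f^{m_n}(\check z_n)-\check z_n,\,m_n\,\mathrm{rot}(\check f))\to+\infty$. By continuity of the support function $\alpha$ of $\mathrm{rot}(\check f)$ and extraction, I may assume there is a primitive pair $(p,q)\in\Z^2$ with $\langle(p,q),\,\check f^{m_n}(\check z_n)-\check z_n\rangle - m_n\alpha((p,q))\to+\infty$. In the annular cover $\bar A=\R^2/\Z\cdot(-q,p)$ of $\T^2$, the induced map $\bar f$ has maximum rotation number $\alpha((p,q))$ in the $(p,q)$-direction, and the bounded leaf diameter ensures that the transverse trajectories $\widetilde I^{m_n}_{\widetilde{\mathcal F}}(\check z_n)$ (homotopic with fixed endpoints to $\widetilde I^{m_n}(\check z_n)$) project to admissible transverse paths in $\bar A$ with vertical displacement $>m_n\alpha((p,q))+O(D)$. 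Using the forcing theory (Proposition~\ref{pr: fundamental} and its corollaries) together with a pigeonhole argument, one extracts a linearly admissible transverse loop $\bar\Gamma$ in $\bar A$ realizing a rotation number strictly greater than $\alpha((p,q))$; one verifies the hypotheses of Proposition~\ref{pr: realization} for $\bar\Gamma$ (typically an $\mathcal F$-transverse self-intersection, automatically produced by the forcing construction). This proposition then produces a periodic point of $\bar f$ with rotation vector outside $\mathrm{rot}(\check f)$, a contradiction.

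The principal obstacle is the extraction of $\bar\Gamma$ in the last step—converting ``large displacement in direction $(p,q)$'' into ``linearly admissible transverse loop realizing rotation $>\alpha((p,q))$''—which requires delicate interplay between the combinatorial forcing constructions and the geometric estimates from the leaf diameter bound. The case when the direction dual to $(p,q)$ corresponds to a boundary segment of irrational slope of $\mathrm{rot}(\check f)$ is especially subtle and likely needs Theorem~\ref{th:impossible_rotation_set} to restrict the possible configurations. The overall argument generalizes Dávalos's treatment~\cite{D2} for rotation sets that are rational polygons, which is essentially the case where the loops $\Gamma_{\rho_i}$ can be chosen on the boundary of the rotation set rather than strictly in its interior.
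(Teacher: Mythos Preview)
Your reduction to $0\in\mathrm{int}(\mathrm{rot}(\check f))$ and your argument for uniformly bounded leaves are both correct; the paper establishes bounded leaves as Proposition~\ref{pr:Bounded_Leaves} using recurrent points realizing extremal rotation vectors rather than Franks' periodic orbits, but either route works here.

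The genuine gap is exactly the one you flag: you do not actually extract the linearly admissible loop $\bar\Gamma$, and the hypotheses of Proposition~\ref{pr: realization} (a $\mathcal F$-transverse self-intersection, or a leaf on each side together with the intersection property) are not verified. The paper avoids this entire difficulty by a direct, non-contradiction argument that never passes through Proposition~\ref{pr: realization} and never needs Theorem~\ref{th:impossible_rotation_set}. The key device you are missing is Proposition~\ref{pr: rational rotation number}: if an admissible transverse path $\check\gamma$ of order $q$ and its translate $\check\gamma+p$ intersect $\check{\mathcal F}$-transversally in the right order, then $p/q\in\mathrm{rot}(\check f)$. This only requires producing an invariant measure (via an averaging/limit argument), not a periodic orbit.

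With that tool in hand, the paper's proof is constructive. Using two periodic points with rotation vectors $(1/N,0)$ and $(0,1/N)$, it builds a single ``universal'' admissible path $\gamma^*:[0,3]\to\R^2$ with two properties: every transverse path of diameter larger than a fixed $K^*$ intersects $\check{\mathcal F}$-transversally some integer translate of $\gamma^*\vert_{[1,2]}$, and $\gamma^*\vert_{[2,3]}$ intersects $\check{\mathcal F}$-transversally $\gamma^*\vert_{[0,1]}+p^*$ for a fixed $p^*$. Given any admissible path $\gamma$ of order $n$, one splices translates of $\gamma^*$ onto both ends (via Corollary~\ref{co: first induction transverse}) to obtain an admissible path $\gamma'$ of order $n+O(1)$ that intersects $\check{\mathcal F}$-transversally its own translate $\gamma'+p''$, with $p''$ at bounded distance from $D(\gamma)$. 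Proposition~\ref{pr: rational rotation number} then forces $p''\in(n+O(1))\,\mathrm{rot}(\check f)$, which gives a uniform bound on $d(D(\gamma),\,n\,\mathrm{rot}(\check f))$ directly---no direction-by-direction analysis, no annular covers, and no boundary-slope subtleties.
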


\begin{theorem}\label{th:Llibre_MacKay}
Let $f$ be a homeomorphism of $\T^2$ that is isotopic to the identity and $\check f$ a lift of $f$ to  $\R^2$. If $\mathrm{rot}(\check f)$ has a non empty interior, then the topological entropy of
$f$ is positive.
\end{theorem}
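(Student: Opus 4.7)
The plan is to apply Theorem~\ref{th:transverse_imply_entropy} to a pair of admissible $\mathcal{F}$-bi-recurrent transverse paths having an $\mathcal{F}$-transverse intersection, where $\mathcal{F}$ is a foliation transverse to a maximal hereditary singular identity isotopy $I$ of $f$. The preparatory step is to arrange that the leaves of the lift $\widetilde{\mathcal{F}}$ on $\R^2$ are uniformly bounded in diameter. Composing $\check f$ with a deck translation (which changes neither $h(f)$ nor the non-emptiness of the interior of the rotation set) we may assume $0\in\mathrm{int}(\mathrm{rot}(\check f))$. By the classical theorem of Franks on realization of rational rotation vectors, every $p/q\in\mathrm{int}(\mathrm{rot}(\check f))$ in lowest terms is the rotation vector of a periodic orbit of period $q$, whose associated transverse loop has homology class $p\in H_1(\T^2,\Z)=\Z^2$. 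I would select three such periodic orbits with homology classes $p_1,p_2,p_3$ satisfying $p_1+p_2+p_3=0$ and $\{p_1,p_2\}$ generating $\Z^2$ (for example $p_1=(N,0)$, $p_2=(0,N)$, $p_3=(-N,-N)$ with $N$ chosen so that the rotation vectors $p_i/q_i$ all lie in $\mathrm{int}(\mathrm{rot}(\check f))$). Their classes lie in $\mathrm{THS}(\mathcal{F})$, span $H_1(\T^2,\R)$ and sum to zero, so Proposition~\ref{pr: THS} applies and the leaves of $\widetilde{\mathcal{F}}$ are uniformly bounded.

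Next, since $\mathrm{rot}(\check f)$ is compact convex with non-empty interior, its boundary contains at least three non-collinear extreme points, each realized as the rotation vector of an ergodic measure (as noted in Section~2.2). Among any three non-collinear vectors of $\R^2$ one finds two that are non-zero and linearly independent, yielding two ergodic invariant measures $\mu_1,\mu_2$ with non-zero, linearly independent rotation vectors $\rho_1,\rho_2$. By Birkhoff's theorem together with Poincar\'e recurrence, for $\mu_i$-almost every $z_i$ the point is bi-recurrent in $\mathrm{dom}(I)$ and every lift $\widetilde z_i\in\R^2$ satisfies $\check f^n(\widetilde z_i)-\widetilde z_i=n\rho_i+o(n)$ as $n\to\pm\infty$. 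Choose such $z_1,z_2$. By Lemma~\ref{le: continuity}, each whole transverse trajectory $I_{\mathcal{F}}^{\Z}(z_i)$ is $\mathcal{F}$-bi-recurrent, and it is admissible by construction. If either has an $\mathcal{F}$-transverse self-intersection, Theorem~\ref{th:transverse_imply_entropy} applies at once (with $\gamma_1=\gamma_2=I_{\mathcal{F}}^{\Z}(z_i)$) and we are done; otherwise the plan is to produce an $\mathcal{F}$-transverse intersection between $I_{\mathcal{F}}^{\Z}(z_1)$ and $I_{\mathcal{F}}^{\Z}(z_2)$.

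To this end, let $\widetilde\lambda_i$ be the lift of $I_{\mathcal{F}}^{\Z}(z_i)$ to $\R^2$ passing through $\widetilde z_i$. The path $\widetilde\lambda_i$ threads the orbit $(\check f^n(\widetilde z_i))_{n\in\Z}$, and since each segment between consecutive iterates is contained in the union of leaves met by the corresponding isotopy arc, the leaf-boundedness obtained in the first paragraph forces $\widetilde\lambda_i$ to stay in a bounded neighborhood of that arc. It follows that $\widetilde\lambda_i$ is a proper transverse path asymptotically directed by the unit vector $\rho_i/\Vert\rho_i\Vert$ in the sense of Section~2.1. Under the assumption of no $\widetilde{\mathcal{F}}$-transverse self-intersection, Proposition~\ref{pr:transverse_on_plane} combined with Corollary~\ref{co: no intersection} allows us to take $\widetilde\lambda_i$ to be a line directed by $\rho_i/\Vert\rho_i\Vert$. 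Because $\rho_1$ and $\rho_2$ are linearly independent, for a suitable integer translate the two ends of $\widetilde\lambda_2+p$ escape to infinity in directions lying on opposite sides of the directed line $\widetilde\lambda_1$, so $\widetilde\lambda_2+p$ meets both $r(\widetilde\lambda_1)$ and $l(\widetilde\lambda_1)$. Corollary~\ref{co: no_intersection_two_sets} then forces $\widetilde\lambda_1$ and $\widetilde\lambda_2+p$ to intersect $\widetilde{\mathcal{F}}$-transversally, which descends to an $\mathcal{F}$-transverse intersection of $I_{\mathcal{F}}^{\Z}(z_1)$ and $I_{\mathcal{F}}^{\Z}(z_2)$ on $\T^2$, and Theorem~\ref{th:transverse_imply_entropy} delivers $h(f)>0$. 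The hard part will be the rigorous justification that $\widetilde\lambda_i$ is a proper line directed by $\rho_i$; this rests crucially on the uniform boundedness of leaves from Paragraph~1, which is itself obtained via Franks' theorem and Proposition~\ref{pr: THS}.
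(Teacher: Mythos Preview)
Your strategy is correct and is essentially one of the two routes the paper offers: reduce to $0\in\mathrm{int}(\mathrm{rot}(\check f))$, get uniformly bounded leaves for $\check{\mathcal F}$ via Proposition~\ref{pr: THS}, produce two admissible $\mathcal F$-recurrent transverse paths with an $\mathcal F$-transverse intersection, and invoke Theorem~\ref{th:transverse_imply_entropy}. Two small corrections: the reduction to $0\in\mathrm{int}(\mathrm{rot}(\check f))$ generally requires replacing $f$ by $f^q$ and $\check f$ by $\check f^q+p$, not merely a deck translation; and the bounded-leaves step is already packaged in the paper as Proposition~\ref{pr:Bounded_Leaves}, which you could cite directly.

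The main difference with the paper is your choice of paths. You use generic bi-recurrent points $z_1,z_2$ for ergodic measures at extreme points, so the lifts $\widetilde\lambda_i$ are only \emph{asymptotically} directed by $\rho_i$, and you must appeal to Proposition~\ref{pr:transverse_on_plane} (under a no-self-intersection hypothesis) to upgrade them to lines before arguing via Corollary~\ref{co: no_intersection_two_sets}. The paper instead uses the periodic points supplied by Franks: points $z_0,z_1$ with $\check f^N(z_0)=z_0+(1,0)$ and $\check f^N(z_1)=z_1+(0,1)$. Their whole transverse trajectories are translation-invariant, hence lie in strips $\R\times(-K_0',K_0')$ and $(-K_0',K_0')\times\R$; from this and the uniform leaf bound one reads off directly (Sub-lemma after Lemma~\ref{bounded_distance} in the proof of Theorem~\ref{th:bounded_deviation}) that any transverse path of large vertical (resp.\ horizontal) extent must cross one of them $\check{\mathcal F}$-transversally---in particular they cross each other. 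This avoids the properness/directedness analysis and the case split on self-intersection. Since you already invoke Franks for the bounded-leaves step, you could reuse those periodic points and shorten your argument considerably.

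Finally, note that the paper also supplies a second, self-contained proof that bypasses Theorem~\ref{th:transverse_imply_entropy}: from the $\check{\mathcal F}$-transverse intersection of the two periodic transverse trajectories it builds, for each binary word $\mathbf e\in\{0,1\}^n$, an admissible path $\gamma'_{\mathbf e}$ of order $nmrN$ and shows the projected initial points form an $(nmrN,\varepsilon)$-separated set of cardinality $2^n$, giving $h(f)\ge \log 2/(mrN)$.
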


Recall that Theorem \ref{th:Llibre_MacKay} has been known for a long time and is due to Llibre and MacKay, see \cite{LlM} and that Theorem \ref{th:bounded_deviation} was known for homeomorphisms in the special case of a polygon with rational vertices, see Davalos  \cite{D2}, and for $\mathcal{C}^{1+\epsilon}$ diffeomorphisms, see Addas-Zanata \cite{AZ}.

\bigskip

Let us state first some consequences of these results.
Let $f$ be a homeomorphism of $\T^2$ that is isotopic to the identity and $\check f$ a lift of $f$ to  $\R^2$. We suppose that $\mathrm{rot}(\check f)$ has non empty interior. For every non trivial linear form $\psi $ on $\R^2$, define 
$$\alpha(\psi)=\max\{ \psi(\mathrm{rot}(\mu))\,,\, \mu\in {\mathcal M}(f)\}.$$ 
The affine line of equation $\psi(z)=\alpha(\psi)$ is a supporting line of $\mathrm{rot}(\check f)$. Set 
$$ {\mathcal M}_{\psi}=\left\{ \mu\in {\mathcal M}(f)\,,\, \psi(\mathrm{rot}(\mu))=\alpha(\psi)\right\}, \enskip {X}_{\psi}= 
\overline{\bigcup_{\mu\in {\mathcal M}_{\psi}} \mathrm{supp}(\mu)}.$$

\bigskip
As already noted in \cite{AZ}, we can deduce from Theorem \ref{th:bounded_deviation} and Proposition \ref {pr:atkinson} (Atkinson's Lemma) the following result, \textcolor{black}{ Proposition \ref{pr: rotation number set_intro} of the introduction}.

\begin{proposition} \label{pr: rotation number set} Every measure $\mu$ supported on ${X}_{\psi}$ belongs to  ${\mathcal M}_{\psi}$. Moreover,  if $z$ lifts a point of $X_{\psi}$, then for every  $n\geq 1$, one has $\vert\psi(\check f^n(z))-\psi(z)-n\alpha(\psi)\vert \leq L\Vert \psi\Vert $, where 
$L$ is the constant given by  Theorem \ref{th:bounded_deviation}.

\end{proposition}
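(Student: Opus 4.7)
The plan is to work with the continuous function $h=\psi\circ\varphi-\alpha(\psi):\T^2\to\R$, whose Birkhoff sums $g_n=\sum_{i=0}^{n-1}h\circ f^i$ satisfy $g_n(w)=\psi(\check f^n(\check w))-\psi(\check w)-n\alpha(\psi)$ for any lift $\check w$ of $w$, and to prove simultaneously the sharp two-sided inequality $|g_n|\leq L\|\psi\|$ on $X_\psi$. Once this is established, the first assertion follows immediately: if $\mu$ is supported on $X_\psi$, then for every ergodic component $\mu'$ and $\mu'$-a.e.\ $w$ one has $\frac1n g_n(w)\to\int h\,d\mu'=\psi(\mathrm{rot}(\mu'))-\alpha(\psi)$ by Birkhoff's theorem, while the pointwise bound forces this limit to be $0$. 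Hence every ergodic component, and therefore $\mu$ itself, lies in $\mathcal M_\psi$.

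The upper bound $g_n(w)\leq L\|\psi\|$ holds for every $w\in\T^2$ and needs only Theorem \ref{th:bounded_deviation}: given any lift $\check w$, choose $\rho_n\in\mathrm{rot}(\check f)$ with $\|\check f^n(\check w)-\check w-n\rho_n\|\leq L$, and then
\[
\psi(\check f^n(\check w))-\psi(\check w)-n\alpha(\psi)
\leq n\bigl(\psi(\rho_n)-\alpha(\psi)\bigr)+L\|\psi\|\leq L\|\psi\|,
\]
since $\psi(\rho_n)\leq\alpha(\psi)$ by definition of $\alpha(\psi)$.

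The lower bound is the main obstacle, and this is where Atkinson's Lemma (Proposition \ref{pr:atkinson}) enters. Since $X_\psi$ is closed and $g_n$ continuous, it is enough to prove $g_n\geq -L\|\psi\|$ on $\mathrm{supp}(\mu)$ for every ergodic $\mu\in\mathcal M_\psi$ (any $\mu\in\mathcal M_\psi$ decomposes into ergodic components, each still in $\mathcal M_\psi$ because $\mu'\mapsto\psi(\mathrm{rot}(\mu'))$ is affine and attains its maximum $\alpha(\psi)$ on $\mathcal M(f)$). For such a $\mu$ the function $h$ has zero mean, so Atkinson's Lemma applied to a small neighborhood $B$ of a fixed $\mu$-generic point $w$ yields, for any $\varepsilon>0$, an integer $n_k\geq n$ with $f^{n_k}(w)\in B$ and $|g_{n_k}(w)|<\varepsilon$. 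The cocycle identity $g_{n_k}(w)=g_n(w)+g_{n_k-n}(f^n(w))$ combined with the upper bound applied to $f^n(w)$ gives
\[
g_n(w)=g_{n_k}(w)-g_{n_k-n}(f^n(w))\geq -\varepsilon-L\|\psi\|.
\]
Letting $\varepsilon\to0$ produces $g_n(w)\geq-L\|\psi\|$ on a dense subset of $\mathrm{supp}(\mu)$, and continuity of $g_n$ propagates the bound to all of $X_\psi$, completing the argument.
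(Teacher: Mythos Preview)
Your proof is correct and follows essentially the same approach as the paper's: both establish the universal upper bound $g_n\leq L\|\psi\|$ directly from Theorem~\ref{th:bounded_deviation}, then use Atkinson's Lemma on an ergodic $\mu\in\mathcal M_\psi$ to find arbitrarily large $n_k$ with $|g_{n_k}(w)|$ small, and combine this with the cocycle identity $g_{n_k}=g_n+g_{n_k-n}\circ f^n$ and the upper bound to deduce the lower bound on a dense set, extending by continuity. The only cosmetic difference is that you phrase everything in terms of the Birkhoff sums $g_n$ on $\T^2$ while the paper works with the lifted expressions on $\R^2$.
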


\begin{proof}  We will prove the second statement, it obviously implies the first one.  \textcolor{black}{Note first that the ergodic components of a measure $\mu\in{\mathcal M}_{\psi}$ also belong to ${\mathcal M}_{\psi}$. Furthermore, the set of points $A'$ having a lift $z$ satisfying that $\vert\psi(\check f^n(z))-\psi(z)-n\alpha(\psi)\vert\leq L\Vert \psi\Vert $ for every $n\geq 1$ is a closed set. It is then sufficient to prove that for every ergodic measure $\mu\in{\mathcal M}_{\psi}$ there exists a set $A\subset A'$ of full measure.} As seen before, \textcolor{black}{since $\mu\in{\mathcal M}_{\psi}$, the function lifted by $\psi\circ \check f-\psi-\alpha(\psi)$ has null mean, and we can apply Atkinson's lemma to obtain that} there exists a set $A$ of full measure  such that, for every point $z$ lifting a point of $A$, there exists a subsequence $(n_l)_{l\in\N}$ such that  $$\lim_{l\to+\infty} \psi(\check f^{n_l}(z))-\psi(z)-n_l\alpha(\psi)=0.$$By Theorem \ref{th:bounded_deviation}, one knows that for every $z\in \R^2$ and every $n\geq 1$, one has $\psi(\check f^n(z))-\psi(z)-n\alpha(\psi)\leq L\Vert \psi\Vert .$ It remains to prove that
$\psi(\check f^n(z))-\psi(z)-n\alpha(\psi)\geq -L\Vert \psi\Vert $ if $z$ lifts a point of $A$.
If $n_l$ is greater than $n$ one can write
$$\eqalign{&\psi(\check f^n(z))-\psi(z)-n\alpha(\psi)\cr= &\left(\psi(\check f^{n_l}(z))-\psi(z)-n_l\alpha(\psi)\right) - \left(\psi(\check f^{n_l}(z))-\psi(\check f^n(z))-(n_l-n)\alpha(\psi) \right)\cr
\geq &\psi(\check f^{n_l}(z))-\psi(z)-n_l\alpha(\psi)  - L\Vert \psi\Vert \cr}.$$
Letting $l$ tend to $+\infty$, one gets our inequality. \end{proof}

Let us state two corollaries. The first one, \textcolor{black}{ Corollary \ref{co: Boyland_intro} of the introduction, as} already noted in \cite{AZ}, follows immediately from the previous proposition.

\bigskip
\begin{corollary}\label{co: Boyland}
Let $f$ be a homeomorphism of $\T^2$ that is isotopic to the identity, preserving a measure $\mu$ of full support, and $\check f$ a lift of $f$ to  $\R^2$. Assume that $\mathrm{rot}(\check f)$ has a non empty interior. Then $\mathrm{rot}(\mu)$ belongs to the interior of $\mathrm{rot}(\check f)$.
\end{corollary}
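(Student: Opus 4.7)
The plan is to argue by contradiction. Assume that $\mathrm{rot}(\mu)$ lies on the frontier of $\mathrm{rot}(\check f)$. Since $\mathrm{rot}(\check f)$ is a compact convex subset of $\R^2$ with non-empty interior, at every boundary point there exists a supporting line, so one can find a non-trivial linear form $\psi$ on $\R^2$ with
\[
\psi(\mathrm{rot}(\mu)) = \max_{\nu\in\mathcal{M}(f)}\psi(\mathrm{rot}(\nu)) = \alpha(\psi).
\]
By definition, this means $\mu \in \mathcal{M}_\psi$, hence $\mathrm{supp}(\mu) \subset X_\psi$.

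The next step uses the assumption that $\mu$ has full support: $\mathrm{supp}(\mu) = \T^2$, and therefore $X_\psi = \T^2$. Now I invoke Proposition \ref{pr: rotation number set}, which asserts that every invariant measure supported on $X_\psi$ belongs to $\mathcal{M}_\psi$. Applied here, this says that every $\nu \in \mathcal{M}(f)$ satisfies $\psi(\mathrm{rot}(\nu)) = \alpha(\psi)$.

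This forces $\mathrm{rot}(\check f) = \{\mathrm{rot}(\nu) : \nu \in \mathcal{M}(f)\}$ to be contained in the affine line $\{v \in \R^2 : \psi(v) = \alpha(\psi)\}$, contradicting the hypothesis that $\mathrm{rot}(\check f)$ has non-empty interior. Hence $\mathrm{rot}(\mu)$ must lie in the interior of $\mathrm{rot}(\check f)$. The argument is essentially a one-line deduction from Proposition \ref{pr: rotation number set}, with no real obstacle; the only subtle point is noting that the full-support hypothesis upgrades the inclusion $\mathrm{supp}(\mu) \subset X_\psi$ to the equality $X_\psi = \T^2$, which is what makes Proposition \ref{pr: rotation number set} applicable to all invariant measures simultaneously.
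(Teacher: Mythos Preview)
Your proof is correct and is precisely the argument the paper has in mind: the authors state only that the corollary ``follows immediately from the previous proposition'' (Proposition~\ref{pr: rotation number set}), and you have spelled out exactly that immediate deduction. The key observation that full support forces $X_\psi=\T^2$, so that Proposition~\ref{pr: rotation number set} constrains \emph{all} invariant measures, is the intended mechanism.
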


P. Boyland had conjectured that, for a given $f$ and $\check{f}$ in the hypotheses of Corollary \ref{co: Boyland}, if  $\mathrm{rot}(\mu)$ was an integer then it belonged to the interior of $\mathrm{rot}(\check f)$. The previous result shows that the conjecture is true, and that the hypothesis on the rationality of the rotation vector of $\mu$ is superfluous.

The second corollary shows that, for points in the lift of the support of measures with rotation vector in a vertex, the displacement from the corresponding rigid rotation is uniformly bounded.

\begin{corollary}\label{co:bounded_displacement}  Let $\rho$ be a vertex of $\mathrm{rot}(\check f)$,  and set 
$$ {\mathcal M}_{\rho}=\left\{ \mu\in {\mathcal M}(f)\,,\, \mathrm{rot}(\mu)=\rho\right\}, \enskip {X}_{\rho}= 
\overline{\bigcup_{\mu\in {\mathcal M}_{\rho}} \mathrm{supp}(\mu)}.$$
There exists a constant $L_{\rho}$ such that if $z$ lifts a point of $X_{\rho}$, then for every  $n\geq 1$, one has $d(\check f^n(z))-z-n\rho) \leq L_{\rho}$.

\end{corollary}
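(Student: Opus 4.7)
The plan is to reduce the corollary to Proposition \ref{pr: rotation number set} by exploiting the fact that a vertex of $\mathrm{rot}(\check f)$ lies on at least two linearly independent supporting lines. Since $\rho$ belongs to infinitely many supporting lines of $\mathrm{rot}(\check f)$, the cone of outward normal directions at $\rho$ has positive angular measure, so I can choose two non-trivial linear forms $\psi_1,\psi_2$ on $\R^2$ which are linearly independent and satisfy $\psi_i(\rho)=\alpha(\psi_i)$ for $i=1,2$.

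The next step is to observe that $X_\rho\subset X_{\psi_1}\cap X_{\psi_2}$. Indeed, if $\mu\in\mathcal{M}_\rho$, then $\mathrm{rot}(\mu)=\rho$, so $\psi_i(\mathrm{rot}(\mu))=\psi_i(\rho)=\alpha(\psi_i)$; hence $\mu\in\mathcal{M}_{\psi_1}\cap\mathcal{M}_{\psi_2}$, and consequently $\mathrm{supp}(\mu)\subset X_{\psi_1}\cap X_{\psi_2}$. Taking closures gives the inclusion for $X_\rho$.

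Now apply Proposition \ref{pr: rotation number set} to each $\psi_i$: if $z$ lifts a point of $X_\rho$, then for every $n\geq 1$,
\begin{equation*}
|\psi_i(\check f^n(z))-\psi_i(z)-n\alpha(\psi_i)|\leq L\,\Vert\psi_i\Vert,
\end{equation*}
where $L$ is the constant from Theorem \ref{th:bounded_deviation}. Using $\alpha(\psi_i)=\psi_i(\rho)$ this rewrites as $|\psi_i(\check f^n(z)-z-n\rho)|\leq L\,\Vert\psi_i\Vert$ for $i=1,2$.

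Since $\psi_1,\psi_2$ are linearly independent, the map $\Psi\colon v\mapsto(\psi_1(v),\psi_2(v))$ is a linear isomorphism of $\R^2$; setting $C=\Vert\Psi^{-1}\Vert$, we obtain
\begin{equation*}
\Vert\check f^n(z)-z-n\rho\Vert\leq C\bigl(|\psi_1(\check f^n(z)-z-n\rho)|^2+|\psi_2(\check f^n(z)-z-n\rho)|^2\bigr)^{1/2}\leq L_\rho,
\end{equation*}
with $L_\rho=CL\,(\Vert\psi_1\Vert^2+\Vert\psi_2\Vert^2)^{1/2}$, giving the desired uniform bound. There is no real obstacle here: the corollary is essentially a bookkeeping consequence of Proposition \ref{pr: rotation number set}, the only content being the choice of two independent supporting forms at the vertex, which is automatic from the definition of a vertex.
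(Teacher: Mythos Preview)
Your proof is correct and follows essentially the same approach as the paper: choose two linearly independent supporting forms at the vertex $\rho$, observe that $X_\rho\subset X_{\psi_1}\cap X_{\psi_2}$, and apply Proposition \ref{pr: rotation number set} to each form. The paper actually asserts the equality $X_\rho=X_\psi\cap X_{\psi'}$, but only the inclusion you prove is needed for the bound.
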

 
 \begin{proof}  One can find two forms $\psi$ and $\psi'$, linearly independent such that $\rho$ belongs to the supporting lines defined by these forms. Note that $X_{\rho}=X_{\psi}\cap X_{\psi'}$ and apply Proposition \ref
{pr: rotation number set} .
\end{proof}

We remark that the conclusion from Corollary \ref{co:bounded_displacement} does not hold if instead of requiring that $\rho$ is a vertex of  $\mathrm{rot}(\check f)$ we assume that $\rho$ is an extremal point of $\mathrm{rot}(\check f)$, see Boyland-de Carvalho-Hall \cite{BCH}.  

\bigskip
Write $\partial \left(   \mathrm{rot}(\check f)\right) $ for the frontier of  $\mathrm{rot}(\check f)$. Let us define now
$$ {\mathcal M}_{\partial}=\left\{ \mu\in {\mathcal M}(f)\,,\, \mathrm{rot}(\mu)\in \partial \left(   \mathrm{rot}(\check f) \right)\right\},\enskip {X}_{\partial}= 
\overline{\bigcup_{\mu\in {\mathcal M}_{\partial}} \mathrm{supp}(\mu)} =\overline{\bigcup_{\psi\not=0} X_{\psi}}.$$
Similarly, we have:

\begin{proposition} Every ergodic measure $\mu$ supported on ${X}_{\partial}$ belongs to  ${\mathcal M}_{\partial}$. Moreover, if $z$ lifts a point of $X_{\partial}$, then for every  $n\geq 1$, one has  $d\left(\check f^n(z)-z, n\,\partial \left(   \mathrm{rot}(\check f)\right) \right)\leq L$, where 
$L$ is the constant given by Theorem \ref{th:bounded_deviation}.

\end{proposition}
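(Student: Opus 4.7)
The plan is to establish the displacement bound first and then deduce the statement about ergodic measures from it via Birkhoff's ergodic theorem. For the displacement estimate I will first handle the case in which $z$ lifts a point of $\bigcup_{\psi\neq 0}X_\psi$. Such a $z$ lies above a point of some $X_\psi$, and after rescaling we may take $\|\psi\|=1$. Proposition \ref{pr: rotation number set} then yields $|\psi(\check f^n(z)-z)-n\alpha(\psi)|\leq L$, while Theorem \ref{th:bounded_deviation} gives $d(\check f^n(z)-z,\, n\,\mathrm{rot}(\check f))\leq L$. Setting $w=\check f^n(z)-z$, I would split into two cases. If $w\notin n\,\mathrm{rot}(\check f)$, the nearest point $v$ of the convex set $n\,\mathrm{rot}(\check f)$ to $w$ automatically lies on $n\,\partial(\mathrm{rot}(\check f))$, so $d(w,n\,\partial(\mathrm{rot}(\check f)))\leq \|w-v\|\leq L$. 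If $w\in n\,\mathrm{rot}(\check f)$, I would consider the ray $t\mapsto w+t\psi$: since $n\,\mathrm{rot}(\check f)$ is bounded, this ray exits at some $t_0\geq 0$, at which $\psi(w)+t_0=\psi(w+t_0\psi)\leq n\alpha(\psi)$, so $t_0\leq n\alpha(\psi)-\psi(w)\leq L$, and the exit point belongs to $n\,\partial(\mathrm{rot}(\check f))$.

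To pass from $\bigcup_\psi X_\psi$ to its closure $X_\partial$ I invoke continuity: for each fixed $n$, the map $z\mapsto d(\check f^n(z)-z,\, n\,\partial(\mathrm{rot}(\check f)))$ is continuous on $\R^2$ and $\Z^2$-invariant, hence descends to a continuous function on $\T^2$. Since the bound $\leq L$ already holds on the dense subset lifted from $\bigcup_\psi X_\psi$, it extends to the preimage of $X_\partial$, giving the second assertion.

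For the first assertion I would fix an ergodic $\mu$ with $\mathrm{supp}(\mu)\subset X_\partial$. Birkhoff's theorem says that for $\mu$-almost every $z$ and any lift $\check z$, one has $n^{-1}(\check f^n(\check z)-\check z)\to \mathrm{rot}(\mu)$. Combining with the displacement bound just obtained, $d(n^{-1}(\check f^n(\check z)-\check z),\,\partial(\mathrm{rot}(\check f)))\leq L/n$ for every $n\geq 1$; letting $n\to\infty$ and using that $\partial(\mathrm{rot}(\check f))$ is closed, we conclude $\mathrm{rot}(\mu)\in\partial(\mathrm{rot}(\check f))$, i.e.\ $\mu\in\mathcal M_\partial$. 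The main obstacle is the second case of the displacement argument: one must verify the clean convex-geometric fact that from an interior point of $n\,\mathrm{rot}(\check f)$ a small $\psi$-defect forces the exit from the convex set along the supporting face, controlled by the same constant $L$. Once that dichotomy is in place, the remaining steps are routine applications of continuity and the ergodic theorem.
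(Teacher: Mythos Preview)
Your proof is correct and follows essentially the same approach as the paper. The paper's Case~2 is phrased slightly differently---it observes directly that $d(w,\,n\,\partial(\mathrm{rot}(\check f)))\leq d(w,\Delta)\leq L$, where $\Delta$ is the supporting line $\psi=n\alpha(\psi)$---but this is the same geometric fact you prove with the ray argument, since moving from $w$ in the direction $\psi$ is exactly the perpendicular to $\Delta$. Your explicit continuity extension to $X_\partial$ and the Birkhoff deduction of the first assertion fill in steps the paper leaves implicit (it just says ``it is sufficient to prove the second statement''), so your write-up is in fact more complete.
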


\begin{proof}Here again, it is sufficient to prove the second statement. To do so, let us choose a non trivial linear form $\psi$ and let us prove that  for every $n\geq 1$, and for every point $z$ lifting a point of $X_{\psi}$, one has $$d\left(\check f^n(z)-z, n\,\partial \left(   \mathrm{rot}(\check f)\right) \right)\leq L.$$  The fact that, \textcolor{black}{by Proposition \ref{pr: rotation number set},} $$\vert\psi(\check f^n(z))-\psi(z)-n\beta(\psi)\vert \leq L\Vert\psi\Vert $$
 implies that $d(\check f^n(z)-z, \Delta)\leq L$ where $\Delta$ is the affine line of equation $\psi(z)=n\alpha(z)$. So, if $f^n(z)-z$ does not belong to $ n \,\mathrm{rot}(\check f)$, one has $$d\left(\check f^n(z)-z, n\,\partial \left(   \mathrm{rot}(\check f)\right) \right)=d(\check f^n(z)-z, n \,\mathrm{rot}(\check f))\leq L,$$ and if $\check f^n(z)-z$  belongs to $ n\, \mathrm{rot}(\check f)$, one has $$d\left(\check f^n(z)-z, n\,\partial \left(   \mathrm{rot}(\check f)\right) \right)\leq d(\check f^n(z)-z, \Delta)\leq L.$$
\end{proof}

Another application is a classification result about {\it Hamiltonian homeomorphisms}.  In our setting, a Hamiltonian homeomorphism is a torus homeomorphism preserving a probability measure $\mu$ which has a lift $\check f$ (the {\it Hamiltonian lift}) such that that $ \mathrm{rot}(\mu)=(0,0).$ An illustrative example is given by the time one map of a time dependent Hamiltonian flow, $1$ periodic in time, and its natural lift.

We will need the following result, which can be found in \cite{KT2}:

\begin{proposition}\label{pr:rotation_recurrent}
Let $f$ be a homeomorphism of $\T^2$ isotopic to the identity and $\check f$ a lift of $f$. If $(0,0)$ is a vertex of $ \mathrm{rot}(\check f)$ then, for any measure $\mu\in \mathcal{M}(f)$ such that $ \mathrm{rot}(\mu)=(0,0)$, almost every point lifts to a recurrent point of $\check f$ .
\end{proposition}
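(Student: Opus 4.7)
The plan is to proceed by contradiction, reducing recurrence of the lift to a vector-valued Birkhoff recurrence and then invoking the vertex hypothesis to supply the required rigidity. By the ergodic decomposition it suffices to treat $\mu$ ergodic. Write $\varphi:\T^2\to\R^2$ for the displacement map lifted by $\check f-\mathrm{Id}$, and set $S_n\varphi(z):=\sum_{j=0}^{n-1}\varphi(f^jz)=\check f^n(\check z)-\check z$ (independent of the choice of lift). Since $\mathrm{rot}(\mu)=(0,0)$ we have $\int\varphi\,d\mu=0$. A lift $\check z$ of $z$ is $\check f$-recurrent if and only if there is a sequence $n_k\to\infty$ with $f^{n_k}(z)\to z$ in $\T^2$ \emph{and} $S_{n_k}\varphi(z)\to 0$ in $\R^2$. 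The first condition forces $S_{n_k}\varphi(z)\bmod\Z^2\to 0$ automatically, so every accumulation point of $S_{n_k}\varphi(z)$ along such sequences lies in $\Z^2$. Thus the task is to show $0$ belongs to the accumulation set
\[
A(z):=\{v\in\Z^2:\exists n_k\to\infty,\,f^{n_k}(z)\to z,\,S_{n_k}\varphi(z)\to v\}.
\]

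The vertex hypothesis yields two linearly independent linear forms $\psi_1,\psi_2:\R^2\to\R$ with $\psi_i(\rho)\le 0$ for every $\rho\in\mathrm{rot}(\check f)$ and $\psi_i(0,0)=0$. Each scalar cocycle $\psi_i\circ\varphi$ has zero $\mu$-integral, so Atkinson's Lemma (Proposition \ref{pr:atkinson}) applied to $\psi_1\circ\varphi$ and $\psi_2\circ\varphi$ yields, for $\mu$-a.e.~$z$, infinitely many $n$ with $f^n(z)$ arbitrarily close to $z$ and $|\psi_i(S_n\varphi(z))|$ arbitrarily small --- separately for $i=1,2$. A diagonal-concatenation argument based on the cocycle identity $S_{n+m}\varphi(z)=S_n\varphi(z)+S_m\varphi(f^nz)$, Poincar\'e recurrence, and the continuity of $\varphi$, shows that $A(z)$ is closed under addition; by ergodicity it is $\mu$-a.e. equal to a fixed subsemigroup $A\subset\Z^2$. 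Since a bounded subsemigroup of $\Z^2$ is $\{0\}$, it is enough to prove that $A$ is bounded.

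The main obstacle is exactly this boundedness. When $\mathrm{rot}(\check f)$ has non-empty interior it is immediate, from Corollary \ref{co:bounded_displacement} applied at the vertex $\rho=(0,0)$: the measure $\mu$ is supported in $X_{(0,0)}$, hence $|\check f^n(\check z)-\check z|\le L_0$ uniformly in $n$ for $\mu$-a.e.~$z$, and $A$ is bounded tautologically. The subtle case is the degenerate one, where $\mathrm{rot}(\check f)$ is a single point or a segment through $(0,0)$ and the bounded-displacement machinery of \S 7 is unavailable. Here the argument is by contradiction: a non-zero $v\in A$ gives $\N v\subset A$ by the semigroup property, and hence the orbit segments from $z$ to (near) $z+v,\,z+2v,\dots$ can be used in a Krylov--Bogolyubov construction. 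The crux is to control the lengths of these segments so that they grow at most linearly in $k$; this is achieved using Kac's lemma applied to the first-return map to a small ball around $z$, together with the fact that $\mu$ is ergodic and the two scalar Atkinson recurrences give sub-linear growth of $\psi_i(S_n\varphi(z))$. One then extracts an $f$-invariant measure $\nu^*$ with rotation vector $\rho^*\in\mathrm{rot}(\check f)$ parallel to $v$ and non-zero, which contradicts either the scalar Atkinson recurrences or (in the segment case) the rationality restriction coming from $v\in\Z^2$. The hard part of the proof is precisely this control of return times; once it is in place, boundedness of $A$, and hence $A=\{0\}$, follows, yielding $\check f$-recurrence of lifts for $\mu$-a.e.~$z$.
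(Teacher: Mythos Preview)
The paper does not supply its own proof of this proposition; it is quoted verbatim from \cite{KT2}. So there is nothing to compare against except the original reference, and your task is really to give a self-contained argument.

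Your treatment of the case where $\mathrm{rot}(\check f)$ has non-empty interior is fine: Corollary~\ref{co:bounded_displacement} gives a uniform bound $\Vert\check f^n(\check z)-\check z\Vert\le L_0$ on $\mathrm{supp}(\mu)$, whence the accumulation set $A$ is a bounded sub-semigroup of $\Z^2$, hence $\{0\}$. Note however that in the paper the proposition is invoked only when $\mathrm{rot}(\check f)$ is a point or a segment (see the theorem immediately following it), so this easy case is never the one that matters.

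The genuine gap is precisely in the degenerate cases, and your sketch there does not work. You propose to take $0\ne v\in A$, concatenate orbit segments from $\check z$ to near $\check z+v$, $\check z+2v,\dots$, and extract via Krylov--Bogolyubov an invariant measure with rotation vector a non-zero multiple of $v$. But the defining sequences have $n_k\to\infty$ while $S_{n_k}\varphi(z)\to v$ stays bounded, so the empirical measures along any such concatenation have rotation vector tending to $0$, not to $\lambda v$ with $\lambda>0$. Shadowing a single segment of length $n_k$ for $j$ iterates gives displacement $\approx jv$ in time $jn_k$, hence rotation vector $\approx v/n_k$, which again vanishes as $k\to\infty$; and for fixed $k$ one can only shadow for boundedly many $j$. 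The appeal to Kac's lemma does not rescue this: Kac controls the \emph{average} return time to a set of positive measure, not the particular return times along which $S_n\varphi$ is close to $v$. Likewise, Atkinson's lemma gives recurrence of each scalar cocycle $\psi_i(S_n\varphi)$ to $0$, not sub-linear growth, so it does not bound the other coordinate along the recurrence times of the first. In short, the mechanism you describe cannot manufacture a measure with non-zero rotation vector out of $v\in A$, and the contradiction you announce does not materialize. To complete the proof you need a genuinely different input in the degenerate case---either the argument in \cite{KT2} or an independent way to obtain simultaneous recurrence of both scalar cocycles.
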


We have:

\begin{theorem}\label{th:hamiltonian_torus}
Let $f$ be a Hamiltonian homeomorphism of $\T^2$ such that its fixed point set is contained in a topological disk, and let $\check f$ be its Hamiltonian lift. Then one of the following three conditions holds:

\smallskip
\noindent - \enskip  The set $ \mathrm{rot}(\check f)$ \textcolor{black}{does not have} empty interior: in that case the origin lies in its interior.

\smallskip
\noindent - \enskip  The set $ \mathrm{rot}(\check f)$ is a non trivial segment: in that case $ \mathrm{rot}(\check f) $ generates a line with rational slope, the origin is not an end of $ \mathrm{rot}(\check f)$, furthermore, there exists an invariant essential open annulus in $\T^2$.

\smallskip
\noindent - \enskip The set $ \mathrm{rot}(\check f)$ is reduced to the origin: in that case, there exists  $K>0$ such that, for every $z\in \R^2$ and every $k\in\Z$,  one has \textcolor{black}{$\Vert \check{f}^k(z)-z\Vert \leq K.$}

\end{theorem}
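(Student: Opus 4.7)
The three cases split by the dimension of $\mathrm{rot}(\check f)$, and since $f$ is Hamiltonian the full-support measure $\mu$ satisfies $\mathrm{rot}(\mu)=(0,0)\in\mathrm{rot}(\check f)$. Case 1 is immediate: Corollary \ref{co: Boyland_intro} applied to $\mu$ puts $(0,0)$ in the interior of $\mathrm{rot}(\check f)$ whenever that interior is non-empty. For Case 3, where $\mathrm{rot}(\check f)=\{(0,0)\}$, my plan is to apply Theorem \ref{th:recurrent_on_the_lift_intro} (whose hypothesis holds by assumption on $\mathrm{fix}(f)$): its second alternative produces non-contractible periodic points with non-zero rational rotation vectors, contradicting $\mathrm{rot}(\check f)=\{(0,0)\}$, so its first alternative must apply and yield some $K>0$ with $d(\check f^n(\check z),\check z)\leq K$ for every $\check f$-recurrent $\check z$ and every $n\geq 0$. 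Since $(0,0)$ is trivially a vertex of $\mathrm{rot}(\check f)$ here, Proposition \ref{pr:rotation_recurrent} applied to $\mu$ ensures that $\check f$-recurrent lifts form a set of full $\mu$-measure, hence dense in $\R^2$; since $\{\check z:\|\check f^n(\check z)-\check z\|\leq K\text{ for all }n\geq 0\}$ is closed, the bound propagates to every $\check z$, and the same argument applied to $\check f^{-1}$ extends it to all $k\in\Z$.

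Case 2 is where the work lies. Suppose $\mathrm{rot}(\check f)$ is a non-trivial segment $L$, necessarily containing the origin. I first rule out irrational slope: were it so, Theorem \ref{th:impossible_rotation_set_intro} would force the rational point $(0,0)$ to be an endpoint of $L$, hence a vertex of $\mathrm{rot}(\check f)$. Applying Theorem \ref{th:recurrent_on_the_lift_intro}, its second alternative produces a non-trivial covering automorphism $T$ of translation vector $v_T\neq 0$ and $q>0$ such that $\mathrm{rot}(\check f)$ contains $(r/s)v_T$ for all $r/s\in(-1/q,1/q)$; by convexity this embeds an open sub-segment through $(0,0)$ in direction $v_T$ into $L$, and either $v_T$ is not parallel to $L$ (then $\mathrm{rot}(\check f)$ has non-empty interior) or $v_T$ is parallel to $L$ (then $(0,0)$ lies in the relative interior of $L$)---both contradict our assumptions. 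The first alternative instead bounds recurrent $\check f$-orbits; combined with Proposition \ref{pr:rotation_recurrent} (since $(0,0)$ is a vertex) and the full support of $\mu$, the bound propagates as in Case 3 to all points, forcing $\mathrm{rot}(\check f)=\{(0,0)\}$, again a contradiction. Hence $L$ has rational slope. The same dichotomy, applied under the hypothesis that $(0,0)$ is an endpoint of the rational-slope segment $L$, again yields contradictions from either alternative of Theorem \ref{th:recurrent_on_the_lift_intro}, so $(0,0)$ lies in the relative interior of $L$.

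It remains to produce the invariant essential open annulus. After an $SL(2,\Z)$ change of coordinates I may assume the primitive integer direction of $L$ is $(1,0)$, so that every invariant measure of $f$ has zero second rotation coordinate. My plan is then to lift $f$ through the intermediate $\Z$-cover corresponding to the first factor to an annulus homeomorphism $\bar f$ on $\T^1\times\R$, to establish uniform boundedness of the vertical displacement $|\pi_2(\bar f^n(z))-\pi_2(z)|$, and to extract from this confinement of orbits to horizontal strips an $f$-invariant essential open annulus in $\T^2$. The main obstacle is the transverse-to-$L$ boundedness itself: Theorem \ref{th:bounded_deviation_intro} requires non-empty interior and does not apply directly, so I would argue by contradiction through the forcing theory of Sections 3--4, showing that an unbounded vertical excursion of a transverse trajectory for a maximal hereditary singular isotopy (Theorem \ref{th: maximal}) and its transverse foliation $\mathcal F$ must eventually give rise to an $\mathcal F$-transverse intersection handled by Proposition \ref{pr: realization}, yielding periodic points with rotation vectors outside $L$ and contradicting $\mathrm{rot}(\check f)\subset L$.
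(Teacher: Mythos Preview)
Your treatment of the non-empty-interior case and the single-point case matches the paper's proof exactly: Corollary~\ref{co: Boyland} for the first, and for the second the combination of Theorem~\ref{th:recurrent_on_the_lift} (ruling out its second alternative via the rotation set) with Proposition~\ref{pr:rotation_recurrent} to pass from $\check f$-recurrent points to all points by density. Your extension to negative $k$ via $\check f^{-1}$ is the natural one-line completion.

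In the segment case, your argument for rational slope and for the origin not being an endpoint is correct, though organized slightly differently from the paper. The paper first shows that the origin cannot be an endpoint (if it were, it would be a vertex, and then Proposition~\ref{pr:rotation_recurrent} plus Theorem~\ref{th:recurrent_on_the_lift} yields a contradiction exactly as you describe), and then invokes Theorem~\ref{th:impossible_rotation_set} to force rational slope. You instead first assume irrational slope, use Theorem~\ref{th:impossible_rotation_set} to force the origin to be an endpoint, and derive the same contradiction; then you repeat the dichotomy to rule out the origin being an endpoint in the rational-slope case. Both orderings are valid and use the same ingredients.

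The gap is in the existence of the invariant essential open annulus. The paper does not prove this from the machinery of the article; it simply cites the main result of Guelman--Koropecki--Tal~\cite{GKT}, which establishes precisely that a non-trivial rational-slope segment rotation set for an area-preserving torus homeomorphism forces such an annulus. Your proposed route---passing to the $\Z$-cover $\T^1\times\R$, establishing uniform vertical boundedness by a contradiction argument through transverse intersections and Proposition~\ref{pr: realization}---is in the right spirit, but what you have written is a plan, not a proof. The bounded-deviation statement perpendicular to a one-dimensional rotation set is itself a substantial theorem (it is essentially the content of \cite{D2} and \cite{GKT} in this regime), and the forcing argument you allude to would require real work to make precise: one would need to control where the hypothetical unbounded vertical excursion produces a transverse self-intersection and why the resulting realized periodic orbit has rotation vector off the segment. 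Unless you intend to reprove \cite{GKT} here, you should cite it directly as the paper does.
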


\begin{proof}
Suppose first that $ \mathrm{rot}(\check f)$ is reduced to the origin. The origin being a vertex, one knows by Proposition \ref{pr:rotation_recurrent} that the recurrent set of $\check f$ is dense in $\R^2$. So the assertion comes from Theorem \ref{th:recurrent_on_the_lift}.

Suppose now that $ \mathrm{rot}(\check f)$ is a non trivial segment. If the origin was an end of $ \mathrm{rot}(\check f)$ its would be a vertex and we would have a contradiction, still from from Proposition \ref{pr:rotation_recurrent} and Theorem \ref{th:recurrent_on_the_lift}. The fact that $ \mathrm{rot}(\check f) $ generates a line with rational slope is a consequence of Theorem \ref{th:impossible_rotation_set}. The existence of an essential open annulus which is left invariant by the dynamics whenever $ \mathrm{rot}(\check f)$ is a non trivial segment that generates a line with rational slope  is the main result of \cite{GKT}.

The case where $ \mathrm{rot}(\check f)$ has non empty interior is nothing but Corollary \ref{co: Boyland}.\end{proof}

Here again, as in Theorem \ref{th:recurrent_on_the_lift}, the requirement that the fixed point set is contained in a topological disk cannot be removed. As a consequence, we obtain the following boundedness result for area preserving homeomorphisms of the torus with restriction on its rotational behaviour\textcolor{black}{, Corollary \ref{co:hamiltonian_bounded_intro} of the introduction}:

\begin{corollary}\label{co:hamiltonian_bounded}
Let $f$ be a Hamiltonian homeomorphism of $\T^2$ such that all its periodic points are contractible, and such that its fixed point set is contained in a  topological disk. Then there exists $K>0$ such that if $\check f$ is the Hamiltonian lift of $f$, then for every $z\in\R^2$ and every $k\in\Z$, one has $\Vert \check f^k(z)-z\Vert\leq K$.
\end{corollary}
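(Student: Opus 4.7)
The plan is to invoke the trichotomy for Hamiltonian homeomorphisms proved in the theorem immediately preceding and to rule out its first two alternatives under the standing hypothesis that every periodic point of $f$ is contractible; the third alternative then yields the corollary directly.

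Suppose first that $\mathrm{rot}(\check f)$ has non-empty interior. By the classification the origin lies in that interior, which therefore contains rational points different from $(0,0)$. Franks' realization theorem for torus homeomorphisms isotopic to the identity produces, for each such rational $(p/q,p'/q)$, a point $z\in\R^2$ with $\check f^q(z)=z+(p,p')$; any choice with $(p,p')\neq(0,0)$ yields a non-contractible periodic orbit of $f$, contradicting the hypothesis. Suppose instead that $\mathrm{rot}(\check f)$ is a non-trivial segment. The classification tells us that this segment has rational slope, contains $(0,0)$ in its relative interior, and that $f$ preserves an essential open annulus $A\subset\T^2$. Passing to the cyclic cover of $A$ adapted to the rational direction spanned by the segment, the restriction $f|_A$ lifts to an annulus homeomorphism whose rotation interval is the linear projection of $\mathrm{rot}(\check f)$; in particular this interval is non-degenerate and contains $0$ strictly inside. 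The full-support measure $\mu$ forces every essential simple loop in $A$ to meet its $f$-image (otherwise measure preservation on one of the two sub-annuli bounded by the loop would be violated), so Lemma \ref{le: intersection} applies and yields periodic points realizing every rational rotation number in the interior of that interval. Taking a non-zero rational produces a non-contractible periodic orbit of $f$, a second contradiction.

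Hence only the third alternative of the trichotomy can hold, namely $\mathrm{rot}(\check f)=\{(0,0)\}$, and the conclusion of that alternative is exactly the bound $\Vert\check f^n(z)-z\Vert\le K$ claimed in the corollary. I expect the main technical obstacle to be the second case: one must verify that the annular rotation interval really contains $0$ strictly inside (so that a non-zero rational is available in its interior), and one must supply the two invariant measures with distinct rotation numbers required by Lemma \ref{le: intersection}. The former follows from the fact that the origin is interior to the segment in $\mathrm{rot}(\check f)$ (case 2 of the classification), and the latter from the rotation vectors of ergodic measures realizing the two endpoints of the segment, which are necessarily non-zero since $(0,0)$ is not an endpoint.
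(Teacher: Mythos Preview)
Your overall strategy—invoke the trichotomy just proved and eliminate its first two alternatives by exhibiting a non-contractible periodic orbit—is precisely what the paper intends (the corollary is stated without proof as an immediate consequence). Your treatment of the first case via Franks' realization theorem is correct.

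In the second case there is a genuine gap. To apply Lemma~\ref{le: intersection} to $f|_A$ you need two invariant probability measures \emph{with compact support in $A$} having distinct rotation numbers. You propose the ergodic measures realizing the endpoints of the segment, but those live on $\T^2$ and nothing guarantees their supports lie inside the particular invariant annulus $A$ supplied by \cite{GKT}; correspondingly, your claim that the rotation interval of $f|_A$ equals the linear projection of $\mathrm{rot}(\check f)$ is not justified. (The phrase ``cyclic cover of $A$'' is also unclear: $A$ is already an annulus, so its only connected covers are itself and its universal cover.)

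One clean repair avoids $A$ altogether. After an integral change of coordinates making the segment horizontal, every ergodic invariant measure has zero vertical rotation; Atkinson's lemma applied to the vertical displacement then shows that, for each such measure, almost every point is recurrent for the lift $\hat f$ of $f$ to the cylinder $\T^1\times\R$. Since $\mu$ has full support, this forces $\Omega(\hat f)=\T^1\times\R$. Generic points for the two endpoint measures have horizontal rotation numbers of opposite sign, so Franks' Poincar\'e--Birkhoff theorem for chain-recurrent open-annulus homeomorphisms \cite{F1} yields a periodic orbit of every rational rotation number strictly between them—in particular a non-zero one, which is non-contractible in $\T^2$. Alternatively, one may quote the stronger conclusions of \cite{GKT} directly, which already produce the required periodic orbits.
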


\begin{proof}
\textcolor{black}{By Theorem \ref{th:hamiltonian_torus}, $f$ must belong to one of the three described  possibilities. If $f$ is a homeomorphism of $\T^2$ such that all its periodic points are contractible, then by the main result of \cite{F2} the rotation set of any lift of $f$ must have empty interior (see also Remark \ref{rm:Franks} later in the paper), and so the first possibility in Theorem \ref{th:hamiltonian_torus} is excluded. Furthermore, it was shown in \cite{F3} that, if $g$ is an area preserving homeomorphism  with lift $\check g$ and the rotation set of $\check g$ is a line segment, then for every point in $\mathrm{rot}(\check g)$ with bi-rational coordinates there exists a periodic point for $f$ with the same rotation vector. Since $f$ has no periodic points that are not contractible, the second possibility is also excluded.}
\end{proof}

As a consequence we obtain the Proposition \ref{pr:Ginzburg_intro}:

\begin{proposition}\label{pr: generic}
 Let $\mathrm{Ham}_{\infty}(\T^2)$ be the set of Hamiltonian $C^{\infty}$ diffeomorphisms of $\T^2$ endowed with the Whitney $C^{\infty}$- topology. There exists a residual subset $\mathcal A$ of $\mathrm{Ham}_{\infty}(\T^2)$ such that $f$ has non-contractible periodic points if $f\in\mathcal A$.
\end{proposition}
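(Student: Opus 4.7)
The plan is to combine Corollary~\ref{co:hamiltonian_bounded_intro} with Baire category and the standard $C^\infty$-genericity of non-degenerate periodic orbits among Hamiltonian diffeomorphisms (a version of Kupka--Smale in the symplectic setting, due to Robinson). For each $n\geq 1$, denote by $\mathcal{A}^\star_n\subset\mathrm{Ham}_\infty(\T^2)$ the set of $f$ admitting a non-degenerate non-contractible periodic orbit of period at most $n$. By the implicit function theorem, each such orbit persists under $C^1$-small perturbations with the same homology class, hence each $\mathcal{A}^\star_n$ is open and so is $\mathcal{A}^\star:=\bigcup_{n\geq 1}\mathcal{A}^\star_n$. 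Since $\mathcal{A}^\star$ is contained in the set of $f$ with a non-contractible periodic point, it suffices to establish that $\mathcal{A}^\star$ is dense in $\mathrm{Ham}_\infty(\T^2)$.

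Assume for a contradiction that $\mathcal{V}\subset\mathrm{Ham}_\infty(\T^2)$ is a non-empty open set disjoint from $\mathcal{A}^\star$. The symplectic Kupka--Smale theorem provides a residual subset of $\mathcal{V}$ of diffeomorphisms whose periodic orbits are all non-degenerate; since $\mathcal{V}\cap\mathcal{A}^\star=\emptyset$, every periodic orbit of any such $f$ is contractible, and moreover $\mathrm{fix}(f)$ is finite and hence contained in an open topological disk of $\T^2$. Corollary~\ref{co:hamiltonian_bounded_intro} therefore applies to $f$ and provides a constant $K>0$ such that $\Vert\check f^n(z)-z\Vert\leq K$ for all $z\in\R^2$ and all $n\geq 1$, where $\check f$ is the Hamiltonian lift of $f$.

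To reach a contradiction, I would exhibit a $C^\infty$-small Hamiltonian perturbation $g\in\mathcal{V}$ of $f$ with some orbit of displacement exceeding $K$ in the universal cover; the contrapositive of Corollary~\ref{co:hamiltonian_bounded_intro} (applied to $g$, whose fixed-point set remains finite after a small perturbation and hence stays in a disk) would then yield a non-contractible periodic orbit of $g$. A further small Kupka--Smale perturbation would put $g\in\mathcal{A}^\star\cap\mathcal{V}$, contradicting the hypothesis. The perturbation itself is obtained by writing $g=f\circ\phi_H^1$ with a $C^\infty$-small Hamiltonian $H:\T^2\to\R$ supported near a recurrent (non-fixed) point $z$ of $f$, chosen so that its time-1 flow repeatedly displaces the $g$-orbit of $z$ in a coherent direction in the lift; the existence of such recurrent points is ensured by Poincar\'e recurrence applied to the $f$-invariant Lebesgue measure.

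The main obstacle lies in the construction of this perturbation: because any Hamiltonian time-1 flow has zero mean displacement in the lift, the local translations introduced by $\phi_H^1$ must be balanced by opposite translations elsewhere, so one must arrange the $g$-orbit of $z$ to visit primarily the ``pushing'' region and avoid the ``compensating'' ones. Achieving this delicate tuning will require exploiting the rigidity of the orbit structure of $f$ imposed by the uniform bound $K$ provided by the Corollary, and is the technical heart of the argument.
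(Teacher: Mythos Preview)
Your proposal has a genuine gap that you yourself flag: the perturbation step is not carried out, and the obstacle you identify (zero mean displacement of any Hamiltonian time-one map) is serious. There is also a logical slip in how you invoke the contrapositive of Corollary~\ref{co:hamiltonian_bounded_intro}: the constant $K$ you obtain is the bound for $f$, not for $g$. To conclude that $g$ has a non-contractible periodic point you would need the $\check g$-orbits to be \emph{unbounded}, not merely to exceed the specific number $K$ coming from $f$; nothing prevents $g$ from satisfying the hypotheses of the Corollary with its own, larger constant $K_g$. So the strategy as written does not close, and the ``technical heart'' you postpone is really the whole difficulty.

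The paper takes a completely different route that avoids perturbation entirely. It fixes the residual set $\mathcal A$ to be those $f$ satisfying three generic conditions: $1$ is never an eigenvalue of $Df^q$ at a $q$-periodic point, elliptic periodic points are Moser stable, and stable/unstable manifolds of hyperbolic periodic points intersect transversally. For such an $f$, the fixed point set is finite; assuming all periodic points contractible, Corollary~\ref{co:hamiltonian_bounded} gives a uniform bound $K$. From this bound one constructs a bounded $\check f$-invariant open disk $V\supset[0,1]^2$. The generic conditions, together with results of Mather and Pixton (or alternatively the prime-end theory of Koropecki--Le Calvez--Nassiri), force $\partial V$ to contain no periodic points. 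Projecting $\bigcup_{p\in\Z^2}(\partial V+p)$ to $\T^2$ yields a connected, totally essential, $f$-invariant compact set with empty interior and no periodic points, contradicting Koropecki's theorem on aperiodic invariant continua. Thus every $f\in\mathcal A$ already has a non-contractible periodic point, and no density or perturbation argument is needed.
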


\begin{proof}
We will prove that $f$ has non contractible periodic points if  the following properties are satisfied:
\begin{itemize}
\item{if $f^q(z)=z$, then $1$ is not an eigenvalue of \textcolor{black}{$Df^q(z)$};}
\item{if $z$ is an elliptic periodic point of period $q$ (which means that the eigenvalues of $Df^q(z)$ are on the unit circle), then $z$ is Moser stable (which means that $z$ is surrounded by $f^q$-invariant curves arbitrarily close to $z$);}
\item{\textcolor{black}{if $z$, $z'$ are hyperbolic periodic points of period $q$, $q'$ respectively (which means that the eigenvalues of $Df^q(z)$ and $Df^{q'}(z')$ are real), then the stable and unstable manifolds of $z$ and $z'$ \textcolor{black}{are either disjoint or they} intersect transversally. }}
\end{itemize}

The first property implies that the fixed point set of $f$ is finite and so included in a topological disk. By Corollary \ref{co:hamiltonian_bounded}, to get our result it remains to prove that there is no $K>0$ such that if $\check f$ is the Hamiltonian lift of $f$, then for every $z\in\R^2$ and every $k\in\Z$, one has $\Vert \check f^k(z)-z\Vert\leq K$. If such $K$ exists, choose a bounded open set $W$ containing the fundamental domain $[0,1]^2$. The set  $\bigcup_{k\in\Z} \check f^k(W)$ is an invariant bounded open set.  One finds an invariant bounded open disk $V$ containing $[0,1]^2$ by looking at the complement of the unbounded component of the complement of $W$. Let us show first that $\partial V$ has no periodic points. Since $V$ is bounded, we may take a sufficiently large integer $L$ such that, if $\hat \T=\R^2/(L\Z)^2$ is the torus that finitely covers $\T^2$, $\hat f$ is the induced homeomorphism and $\hat \pi:\R^2\to\hat\T^2$ is the projection, then $\hat \pi(\overline{V})$ is contained in a topological disk.  \textcolor{black}{The diffeomorphism $\hat f$ satisfies the following properties: }

\begin{itemize}
\item \textcolor{black}{if $\hat f^q(z)=z$, then $1$ is not an eigenvalue of $D\hat f^q(z)$;}
\item\textcolor{black}{every elliptic periodic point of $\hat f$ is Moser stable;}
\item\textcolor{black}{ the stable and unstable manifolds of hyperbolic periodic points of $\hat f$ \textcolor{black}{are either disjoint or they} intersect transversally. }
\end{itemize}

\textcolor{black}{By a theorem of J. Mather (see \cite{Mt}), one knows that the prime-end rotation number of $\hat\pi(V)$ is irrational. The main result from \cite{KLN} shows that the  frontier $\partial \hat\pi(V)$ has no periodic point because the prime-end rotation number of $\hat\pi(V)$ is irrational.} This implies that $\partial V$ has no periodic points. 

In fact it is not necessary to use \cite{KLN}. Indeed, working directly with $V$ and $\check f$, the boundedness condition implies that the stable and unstable manifolds of every hyperbolic periodic point $z$ are bounded. Mather's arguments implies that, under our generic conditions, the branches of 
$z$ have all the same closure. By a result of Pixton (\cite{Pi}), every stable branch of $z$ intersect every unstable branch and one can find surrounding curves arbitrarily close to $z$ contained in the union of the stable and unstable manifolds. By Mather's argument again, one knows that such a point $z$ cannot be contained in $\partial V$. Moreover there is no elliptic periodic point on $\partial V$. 

The fact that $V$ contains $[0,1]^2$ implies that $\bigcup_{p\in\Z^2} (\partial V+p)$ is connected.  Moreover, the interior of $\bigcup_{p\in\Z^2} (\partial V+p)$ is empty.  This set projects onto a compact subset of $\T^2$ whose interior is empty, which is totally essential (the connected components of its complement are open disks) and which does not contain periodic points of $f$. This contradicts a result of A. Koropecki (see \cite {Ko}) \textcolor{black}{that states the following: if $K$ is an invariant closed connected subset of a homeomorphism defined on a closed orientable surface and having no wandering points, and if $K$ has no periodic point, then either $M$ is a torus and $K$ coincides with $M$, or $K$ is a decreasing sequence of compact annuli.}\end{proof}

Before proving our three theorems, let us state some introductory results. In what follows (Proposition \ref{pr:Bounded_Leaves} and  Proposition \ref{pr: rational rotation number}) $f$ is a  homeomorphism of $\T^2$ that is isotopic to the identity and $\check f$ a lift of $f$ to  $\R^2$. We consider an identity isotopy $I'$ of $f$ that is lifted to an identity isotopy $\check I'$ of $\check f$. We consider a maximal hereditary singular isotopy $I$ larger than $I'$ and its lift $\check I$ to $\R^2$. We consider a foliation $\mathcal F$ transverse to $I$ an its lift $\check{\mathcal F}$ to $\R^2$.

\begin{proposition}\label{pr:Bounded_Leaves}
 If $(0,0)$  belongs to the interior of $ \mathrm{rot}(\check f)$ or to the interior of a segment with irrational slope included in $ \partial\left(\mathrm{rot}(\check f)\right)$, then the leaves of $\check{\mathcal F}$ are uniformly bounded.
\end{proposition}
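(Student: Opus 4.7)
The natural approach is via Proposition \ref{pr: THS}: it suffices to produce finitely many classes $\kappa_i \in \mathrm{THS}(\mathcal F)$ which linearly span $H_1(\T^2,\R) \cong \R^2$ and satisfy $\sum_i \kappa_i = 0$. Since $\mathrm{THS}(\mathcal F)$ is the additive subsemigroup of $H_1(\T^2,\Z)$ generated by the homology classes of loops positively transverse to $\mathcal F$, it is enough to exhibit transverse loops whose classes positively span $\R^2$ (i.e.\ whose convex conic hull equals $\R^2$); an integer combination summing to zero can then be arranged by elementary linear algebra.

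\textbf{From rotation vectors to transverse loops.} To any non-zero $\rho \in \mathrm{rot}(\check f)$ I shall associate transverse loops in $\T^2$ whose homology classes approximate positive integer multiples of $\rho$. When $\rho$ is rational and lies in the interior of $\mathrm{rot}(\check f)$, Franks's periodic-point theorem (or equivalently Proposition \ref{pr: realization} applied to a linearly admissible loop coming from an ergodic measure of rotation vector $\rho$) provides a periodic orbit whose associated transverse loop has the corresponding integer homology class. When $\rho$ is irrational, I fix an ergodic measure $\mu$ of rotation vector $\rho$ and apply Atkinson's lemma (Proposition \ref{pr:atkinson}) componentwise to the function lifted by $\check f - \mathrm{id} - \rho$: for $\mu$-a.e.\ $z \in \mathrm{dom}(I)$ with lift $\widetilde z \in \R^2$, there exist arbitrarily large return times $n$ with $f^n(z)$ in any prescribed set of positive $\mu$-measure and $\Vert \check f^n(\widetilde z) - \widetilde z - n\rho\Vert$ arbitrarily small. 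Taking such a set inside a trivialization chart $W$ of $\mathcal F$ about $z$, on the side of the leaf $\phi_z$ that places the return leaf before $\phi_z$ in the local transverse ordering of $W$, I extend the transverse trajectory $I_{\mathcal F}^n(z)$ by a short transverse arc inside $W$ to a point equivalent to a lift of $z$ on $\phi_z$; this produces a transverse loop on $\T^2$ whose homology class $p_n \in \Z^2$ satisfies $\Vert p_n - n\rho\Vert$ as small as we wish.

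\textbf{Case analysis and conclusion.} If $(0,0)$ lies in the interior of $\mathrm{rot}(\check f)$, pick three rational rotation vectors $\rho_1,\rho_2,\rho_3$ in the interior whose convex hull contains $(0,0)$ in its relative interior; the associated transverse loops have homology classes that linearly span $\R^2$ and admit a positive integer combination summing to zero, so Proposition \ref{pr: THS} applies. If $(0,0)$ lies in the interior of an irrational-slope segment $L\cap\mathrm{rot}(\check f)\subset\partial\,\mathrm{rot}(\check f)$, choose rotation vectors $\rho_{\pm}$ on opposite sides of $(0,0)$ on that segment, and apply the Atkinson construction to obtain transverse loops with integer homology classes $p^{\pm}_n$ approximating large multiples of $\rho_{\pm}$. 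Since $L$ has irrational slope, the integer lattice meets $L$ only at $(0,0)$, so these classes lie close to but off $L$; varying the return time $n$ exploits the equidistribution of $n\rho_{\pm} \bmod \Z^2$ to realize the $L^{\perp}$-offset of $p^{\pm}_n$ with both signs, yielding transverse loop classes in each of the four open sectors of $\R^2$ cut out by $L$ and $L^{\perp}$. These positively span $\R^2$ and admit a positive integer combination summing to zero, so Proposition \ref{pr: THS} applies in this case as well, giving the desired uniform bound on the diameters of the leaves of $\check{\mathcal F}$.

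\textbf{Main obstacle.} The hard part is the Atkinson-based construction of a genuinely transverse closure: closing the trajectory $I_{\mathcal F}^n(z)$ into a transverse loop inside $W$ requires the return point $f^n(z)$ to lie on a leaf situated before $\phi_z$ in the local transverse ordering, which is arranged by applying Atkinson to a carefully chosen positive-$\mu$-measure half of $W$. A secondary technical point, in the irrational-segment case, is controlling the sign of the $L^{\perp}$-component of $p^{\pm}_n$ along the sequence of return times so that all four sectors around $L$ are populated, ensuring the resulting family of classes positively spans $\R^2$ rather than a narrow sector about $L$.
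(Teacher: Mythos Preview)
Your overall strategy --- reduce to Proposition \ref{pr: THS} by exhibiting transverse loops whose homology classes positively span $\R^2$ --- is exactly the paper's, and your treatment of the interior case is sound (the paper uses Birkhoff recurrence at extremal rotation vectors rather than Franks's theorem, but either route produces the needed classes). The gap is in the irrational-segment case. Atkinson's lemma controls a single real-valued Birkhoff sum; applying it ``componentwise'' does not give simultaneous return times with $\Vert \check f^n(\check z) - \check z - n\rho\Vert$ small, since the return sequences for the two coordinates need not coincide. More seriously, your ``equidistribution of $n\rho_\pm\bmod\Z^2$'' justification for realising both signs of $\psi(p_n)$ conflates the nearest lattice point to $n\rho_\pm$ with the dynamically produced displacement $p_n$. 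Even granting the 2D bound, the argument fails: for $\rho=(1,\sqrt 2)$ on the irrational-slope line $L=\R\rho$ with $\psi(a,b)=\sqrt 2\,a-b$, the nearest lattice point to $n\rho$ is $(n,[n\sqrt 2])$ and $\psi$ of it equals $\{n\sqrt 2\}\ge 0$ for every $n$; equidistribution alone does not force both signs.

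The paper avoids this entirely by working with the first-return map to a trivialization box $W_1$ for an ergodic measure $\mu_1$ with rotation vector $\rho_1$ on the segment, and its integer displacement $\xi_1:W_1\to\Z^2$. Kac and Birkhoff give $\int \xi_1\,d\mu_1$ equal to a positive multiple of $\rho_1$, hence $\int \psi\circ\xi_1\,d\mu_1 = 0$. The decisive observation is that the irrational slope forces $\psi\circ\xi_1\neq 0$ wherever $\xi_1\neq 0$; a mean-zero integrable function that is nowhere zero must take both signs on positive-measure sets. This immediately yields single first-return displacements $\xi_1(z_1),\xi_1(z'_1)$ on opposite sides of $L$, each already the homology class of a transverse loop. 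Together with long Birkhoff sums of $\xi_1,\xi_2$ approximating multiples of $\rho_1,\rho_2$, one obtains four classes in $\mathrm{THS}(\mathcal F)$ whose convex hull contains the origin in its interior, and Proposition \ref{pr: THS} applies --- no Atkinson is needed, and the sign difficulty you flag dissolves.
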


\begin{proof}
\enskip Suppose first that $(0,0)$ belongs to the interior of $ \mathrm{rot}(\check f)$. One can find finitely many extremal points $\rho_i$  of $ \mathrm{rot}(\check f)$, $1\leq i\leq r$, that linearly generate the plane and positive numbers $t_i$, $1\leq i\leq r$, such that:
$$\sum_{1\leq i\leq r} t_i=1,\enskip \sum_{1\leq i\leq r} t_i\rho_i=(0,0).$$
Each $\rho_i$ is the rotation number of an ergodic measure $\mu_i\in{\mathcal M}(f)$. Applying Poincar\'e Recurrence Theorem and Birkhoff Ergodic Theorem, one can find a  positively recurrent point $z_i$ of $f$ having $\rho_i$ as a rotation number. Fix a lift $\check z_i$ of $z_i$ and a  small neighborhood $\check W_i$ of $\check z_i$ that trivializes $\check{\mathcal  F}$. One can find a subsequence $(\check f^{n_l}(z_i))_{l\geq 0}$ of $\check f^{n}(z_i)_{n\geq 1}$ and a sequence $(p_{i,l})_{l\geq 0}$ of integers such that $\check f^{n_l}(\check z_i)\in \check W_i+p_{i,l}$ and such that $\lim_{l\to +\infty} p_{i,l}/n_l=\rho_i$. One deduces that the transverse homological space $\mathrm{THS}({\mathcal F})$ contains $p_{i,l}$. If $l$ is large enough, the $p_{i,l}$ generate the plane and $(0,0)$ is contained in the interior of the polygonal defined by these points. By Proposition \ref{pr: THS}, we deduce that the leaves of $\check {\mathcal F}$ are uniformly bounded.

Suppose now that $(0,0)$ belongs to the interior of a segment with irrational slope included in $ \partial\left(\mathrm{rot}(\check f)\right)$. If this segment $[\rho_1,\rho_2]$ is chosen maximal, then $\rho_1$ and $\rho_2$ are extremal points of $ \mathrm{rot}(\check f)$ and respectively equal to the rotation number of ergodic measures $\mu_1$ and $\mu_2$ in ${\mathcal M}(f)$.  Let $W_i\subset\T^2$ be a trivializing box of $\mathcal F$ such that $\mu_i(W_i)\not=0$  and $\check W_i\subset \R^2$ a lift of $W_i$.  The first return map $\Phi_i:W_i\to W_i, z\mapsto f^{\tau_i}(z)$ (where $\tau_i: W_i\to\N$) is defined $\mu_i$-almost everywhere on $W_i$ as the displacement function $\xi_i:  W_i\to\Z^2$, where $\check f^{\tau_i(z)}(\check z)\in \check W_i+\xi_i(z)$, if $\check z$ is the lift of $z$ that belongs to $\check W_i$. Let $\psi:\R^2\to\R$ be a non trivial linear form that vanishes on our segment. Using  Birkhoff Ergodic Theorem, one knows that $\mu_i$-almost every point $z$ has a rotation number $\rho_i$, and so
 $$\lim_{n\to+\infty} {\sum_{k=0} ^{n-1} \xi_i (\Phi_i ^k(z))\over \sum_{k=0} ^{n-1} \tau_i(\Phi_i^k(z))} =\rho_i.$$ By Kac's theorem, one knows that $\tau_i$ is $\mu_i$-integrable and satisfies $\int_{W_i} \tau_i \,d\mu_i =\mu_i(\bigcup_{k\in\Z} f^k(W_i))\in\textcolor{black}{(0,1]}$. One can note that $\xi_i/\tau_i$ is bounded, which implies that $\xi_i$ is $\mu_i$-integrable. Consequently, one has 
$$\lim_{n\to+\infty} {\sum_{k=0} ^{n-1}\xi_i (\Phi_i ^k(z))\over \sum_{k=0} ^{n-1} \tau_i(\Phi_i^k(z))} = {\int_{W_i} \xi_i \,d\mu_i\over\int_{W_i} \tau_i \,d\mu_i},$$
which implies that $$\int_{W_i} \xi_i \,d\mu_i=\left(\int_{W_i} \tau_i \,d\mu_i\right)\rho_i\not=0$$
and $$\int_{W_i} \psi\circ\xi_i \,d\mu_i=\psi\left(\int_{W_i} \xi_i \,d\mu_i\right)=0.$$ Note that   $ \psi\circ\xi_i (z)\not=0$ if $ \xi_i ( z)\not=0$, because  $\xi_i (z)$ is an integer and the kernel of $\psi$ is generated by a segment with irrational slope.  We deduce that there exists $ z_1$, $z'_1$ in $ W_1$ such that  $$\psi\circ\xi_1 ( z_1)<0<\psi\circ\xi_1( z'_1).$$ Consequently, one can find $z''_1\in W_1$,  $ z''_2\in  W_2$ and integers $n_1$, $n_2$ such that $(0,0)$ is in the interior of the quadrilateral determined by 
$$\xi_1 ( z_1),\enskip \xi_1( z'_1),\enskip {\sum_{k=0} ^{n_1-1} \xi_1(\Phi_1^k(z''_1))\over \sum_{k=0} ^{n_1-1} \tau_1(\Phi_1^k( z''_1))}, \enskip {\sum_{k=0} ^{n_2-1} \xi_2(\Phi_2 ^k( z_2))\over \sum_{k=0} ^{n_2-1} \tau_2(\Phi_2^k(z''_2))},$$
because the last two points may be chosen arbitrarily close to $\rho_1$ and $\rho_2$.
The set $\mathrm{THS}({\mathcal F})$ containing  the integers
$$\xi_1 ( z_1),\enskip \xi_1( z'_1),\enskip \sum_{k=0} ^{n_1-1} \xi_1(\Phi_1 ^k( z''_1)), \sum_{k=0} ^{n_2-1} \xi_2(\Phi_2 ^k( z_2)),$$
one can apply Proposition \ref{pr: THS} to conclude that the leaves of $\check{\mathcal F}$ are uniformly bounded.
\end{proof}

\begin{remark}\label{rm:Franks}
 As a corollary, one deduces that $\mathcal F$ is singular and that $\check f$ is not fixed point free. Applying this to $\check f^q-p$, for every rational $p/q\in\mathrm{int}( \mathrm{rot}(\check f))$, one deduces that there exists a point $z\in\R^2$ such that $\check f^q(z)=z+q$. This result was already well known, due to Franks \cite{F2}.
\end{remark}

\bigskip
\begin{proposition} \label{pr: rational rotation number} We suppose that the leaves of $\check{\mathcal F}$ are uniformly bounded. If there exists an admissible transverse path $\check\gamma:[a,b]\to\mathrm{dom}(\check{\mathcal F})$ of order $q$ and an integer $p\in\Z^2$ such that $\check\gamma$ and $\check\gamma+p$ intersect $\check{\mathcal{F}}$-transversally at $\phi_{\check\gamma(t)}=\phi_{(\check\gamma+p)(s)}$, where $s<t$, then $p/q$ belongs to $\mathrm{rot}(\check f)$.
\end{proposition}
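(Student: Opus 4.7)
The plan is to exploit the translation invariance of both $\check f$ and $\check{\mathcal F}$ to manufacture, for each $r\geq 1$, a long admissible path of order $rq$ whose endpoints are separated by roughly $(r-1)p$, and then convert this into orbit displacement via the uniform boundedness of leaves.

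First I would note that since $\check f$ and $\check{\mathcal F}$ are $\Z^2$-equivariant, the translate $\check\gamma+kp$ is still admissible of order $q$ for every $k\in\Z$, and the hypothesis that $\check\gamma$ and $\check\gamma+p$ intersect $\check{\mathcal F}$-transversally at $\phi_{\check\gamma(t)}=\phi_{(\check\gamma+p)(s)}$ translates, by applying $+kp$ to everything, into the statement that $\check\gamma+kp$ and $\check\gamma+(k+1)p$ intersect $\check{\mathcal F}$-transversally at $\phi_{(\check\gamma+kp)(t)}=\phi_{(\check\gamma+(k+1)p)(s)}$. One may assume $a<s<t<b$ (shrinking $[a,b]$ slightly if necessary preserves the admissibility order). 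For any $r\geq 2$ I would then apply Corollary \ref{co: first induction transverse} to the family $\gamma_i=\check\gamma+(i-1)p$, $1\leq i\leq r$, with parameters $s_1=a$, $t_r=b$, and $s_i=s$, $t_i=t$ for the intermediate indices. The hypotheses of the corollary are fulfilled: the $\mathcal F$-transverse intersection at $\gamma_i(t_i)=\gamma_{i+1}(s_{i+1})$ is exactly the one obtained by translating the original intersection, and each $\gamma_i$ is admissible of order $q$. The conclusion is that the concatenation
\[
\check\gamma\vert_{[a,t]}\,(\check\gamma+p)\vert_{[s,t]}\,(\check\gamma+2p)\vert_{[s,t]}\cdots(\check\gamma+(r-2)p)\vert_{[s,t]}\,(\check\gamma+(r-1)p)\vert_{[s,b]}
\]
is admissible of order $rq$; its starting leaf is $\phi_{\check\gamma(a)}$ and its ending leaf is $\phi_{\check\gamma(b)+(r-1)p}$.

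By definition of admissibility, there exists $\check z_r\in\R^2$ with $\check z_r\in\phi_{\check\gamma(a)}$ and $\check f^{rq}(\check z_r)\in\phi_{\check\gamma(b)+(r-1)p}$. Since the leaves of $\check{\mathcal F}$ are uniformly bounded by some constant $K$, one gets $\|\check z_r-\check\gamma(a)\|\leq K$ and $\|\check f^{rq}(\check z_r)-\check\gamma(b)-(r-1)p\|\leq K$, whence
\[
\bigl\|\check f^{rq}(\check z_r)-\check z_r-(r-1)p\bigr\|\leq 2K+\|\check\gamma(b)-\check\gamma(a)\|=:C.
\]
Thus $\bigl(\check f^{rq}(\check z_r)-\check z_r\bigr)/(rq)$ converges to $p/q$ as $r\to +\infty$.

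It remains to upgrade this orbitwise convergence to membership in the rotation set. Let $z_r\in\T^2$ be the projection of $\check z_r$ and consider the empirical measures $\mu_r=\frac{1}{rq}\sum_{k=0}^{rq-1}\delta_{f^k(z_r)}$. By compactness of $\mathcal M(\T^2)$ in the weak-$*$ topology, pass to a subsequence converging to some $\mu$, which is $f$-invariant by the standard Krylov--Bogolyubov argument. Using that $\varphi:\T^2\to\R^2$ is lifted by $\check f-\mathrm{Id}$, the sum telescopes:
\[
\int_{\T^2}\varphi\,d\mu_r=\frac{1}{rq}\sum_{k=0}^{rq-1}\varphi(f^k(z_r))=\frac{\check f^{rq}(\check z_r)-\check z_r}{rq},
\]
so continuity of $\varphi$ and the previous estimate give $\mathrm{rot}(\mu)=\int\varphi\,d\mu=\lim_r\int\varphi\,d\mu_r=p/q$, i.e.\ $p/q\in\mathrm{rot}(\check f)$. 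The main technical point is simply verifying that the parameter choices in Corollary \ref{co: first induction transverse} are consistent through the iteration, which is automatic from translation invariance; everything else is bookkeeping.
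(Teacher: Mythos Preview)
Your proof follows the paper's argument essentially line by line: concatenate integer translates of $\check\gamma$ into a path admissible of order $rq$ whose endpoints lie on $\phi_{\check\gamma(a)}$ and $\phi_{\check\gamma(b)}+(r-1)p$, use the uniform bound on leaf diameters to control $\check f^{rq}(\check z_r)-\check z_r$, and pass to a weak-$*$ limit of empirical measures to produce an invariant measure of rotation vector $p/q$.

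One technical correction is needed. Hypothesis~(i) of Corollary~\ref{co: first induction transverse} asks that the \emph{restricted} subpaths $\gamma_i|_{[s_i,b_i]}$ and $\gamma_{i+1}|_{[a_{i+1},t_{i+1}]}$ intersect $\check{\mathcal F}$-transversally; what translation invariance gives you is only transversality of the \emph{full} paths $\gamma_i$ and $\gamma_{i+1}$, and this need not restrict (the witnesses $a_1,b_2$ in the definition of $\mathcal F$-transverse intersection may lie outside $[s,b]$ or $[a,t]$, and moving them inward can violate the pairwise-disjointness condition in the definition of ``above/below relative to $\phi$''). The paper invokes Corollary~\ref{co: induction transverse} instead, whose hypothesis is precisely that the full paths intersect transversally with a consistent sign---automatic here since every intersection is a $\Z^2$-translate of the given one---together with Proposition~\ref{pr: order plane}. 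With that substitution your argument is complete and matches the paper. (Your parenthetical about shrinking $[a,b]$ is also unnecessary: the definition of $\mathcal F$-transverse intersection already forces $a<s<t<b$.)
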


\begin{proof}
\enskip By Corollary \ref{co: induction transverse} 
one deduces that for every $k\geq 2$  the path
$$\check \gamma\vert_{[a,t]}\left(\prod_{0<i<k-1} (\check \gamma+ip)\vert_{[s,t]}\right) (\check \gamma+(k-1)p)\vert_{[s,b]}$$ 
is admissible of order $kq$.  This implies that there exists a point $\check z_k\in \phi_{\check \gamma(a)}$ such that $\check f^{kq}(\check z_k)\in \phi_{(\check \gamma+(k-1)p)(b)}= \phi_{ \check \gamma(b)}+(k-1)p$. The fact that the leaves of $\check {\mathcal F}$ are uniformly bounded tells us that there exists $K$ such that for every $k\geq 1$, one has $\Vert \check f^{kq}(\check z_k)-\check z_k-(k-1)p\Vert\leq K$. Denote $z_k$ the projection of $\check z_k$ in $\T^2$. Choose a measure $\mu$ that is the limit of a subsequence of
$\left({1\over kq} \sum_{i=0}^{kq-1}\delta_{f^{i}(z_{k})}\right)_{k\geq 2}$ for the weak$^*$ topology. It is an invariant measure of $f$ of rotation number $p/q$ for $\check f$. \end{proof}

\bigskip

Let us state the following improved version of Atkinson's Lemma:

\begin{proposition} \label{pr: better_atkinson}
 Let $(X,\mathcal{B},\mu)$ be a probability space and $T:X\to X$ an ergodic automorphism. If $\varphi: X\to \R$ is an integrable map such that $\int\varphi\, d\mu=0$, then for every $B\in\mathcal{B}$  and every $\varepsilon >0$, one has
$$ \mu\left( \left\{x\in B, \enskip \exists n\geq 0, \enskip T^n(x)\in B\enskip\mathit{and}\enskip0\leq\sum_{k=0}^{n-1}\varphi(T^k(x))< \varepsilon\right\}\right)= \mu (B).$$
\end{proposition}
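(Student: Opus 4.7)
The plan is to argue by contradiction, exploiting the additive (cocycle) structure of the Birkhoff sums $\Phi_n(x) := \sum_{k=0}^{n-1}\varphi(T^k x)$. Observe first that the case $n=0$ is trivial (the empty sum equals $0\in[0,\varepsilon)$), so the content of the statement lies in finding a return with $n\ge 1$, exactly as in the classical Atkinson Lemma just above.

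Suppose, for a contradiction, that
$$C := \bigl\{x\in B : \text{for every } n\ge 1 \text{ with } T^n(x)\in B,\ \Phi_n(x)\notin[0,\varepsilon)\bigr\}$$
has positive $\mu$-measure. I would first apply Proposition~\ref{pr:atkinson} with $C$ in the role of $B$, successively for each $\delta\in\{\varepsilon\cdot 2^{-j-1}:j\ge 1\}$. Intersecting these countably many full-measure conclusions, we obtain for $\mu$-a.e.\ $x\in C$ a sequence of instants $n_j(x)\ge 1$ with $T^{n_j(x)}(x)\in C$ and $|\Phi_{n_j(x)}(x)|<\varepsilon\cdot 2^{-j-1}$. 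Since $T^{n_j(x)}(x)\in C\subset B$, the defining property of $C$ forces $\Phi_{n_j(x)}(x)\notin[0,\varepsilon)$; combined with $|\Phi_{n_j(x)}(x)|<\varepsilon/2$ this yields $\Phi_{n_j(x)}(x)\in(-\varepsilon/2,0)$.

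The crucial step (and the only nontrivial point) is the following monotonicity claim: for any two instants $n<n'$ with $T^n(x),T^{n'}(x)\in C$ and $\Phi_n(x),\Phi_{n'}(x)\in(-\varepsilon/2,0)$, one has $\Phi_{n'}(x)<\Phi_n(x)$. To prove it, set $y=T^n(x)\in C$ and $M=n'-n$; by the cocycle identity,
$$\Phi_M(y) = \Phi_{n'}(x)-\Phi_n(x)\in(-\varepsilon/2,\varepsilon/2).$$
As $y\in C$ and $T^M(y)=T^{n'}(x)\in C\subset B$, the definition of $C$ applied to $y$ forbids $\Phi_M(y)\in[0,\varepsilon)$, forcing $\Phi_M(y)<0$. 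This step works precisely because Proposition~\ref{pr:atkinson} was invoked for $C$ rather than $B$: the iterate $y$ itself lies in $C$ and so inherits the defining negative-sum property.

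To conclude, let $n_*=\min\bigl\{n\ge 1:T^n(x)\in C,\ \Phi_n(x)\in(-\varepsilon/2,0)\bigr\}$, which is finite since $n_1(x)$ belongs to this set. Choose $j$ with $\varepsilon\cdot 2^{-j-1}<|\Phi_{n_*}(x)|$; then $n_j(x)$ also lies in the set, so by minimality $n_j(x)\ge n_*$. The monotonicity claim implies $|\Phi_m(x)|>|\Phi_{n_*}(x)|$ for every $m>n_*$ in the set, which combined with $|\Phi_{n_j(x)}(x)|<|\Phi_{n_*}(x)|$ forces $n_j(x)=n_*$, hence $|\Phi_{n_*}(x)|<|\Phi_{n_*}(x)|$, a contradiction. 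Therefore $\mu(C)=0$, which is the desired conclusion.
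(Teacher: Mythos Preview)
Your proof is correct and follows essentially the same approach as the paper: argue by contradiction, apply Atkinson's Lemma to the bad set $C$ (the paper calls it $A$), and use the cocycle identity $\Phi_M(T^n x)=\Phi_{n+M}(x)-\Phi_n(x)$ together with $T^n x\in C$ to derive a contradiction. The paper's version is more streamlined—it uses just two instants $n_0<n_1$ with $\Phi_{n_0}(x)=a\in(-\varepsilon,0)$ and $\Phi_{n_1}(x)\in(a,a+\varepsilon)$ to obtain $\Phi_{n_1-n_0}(T^{n_0}x)\in(0,\varepsilon)$ directly—whereas your sequence $(n_j)$ and monotonicity argument reach the same contradiction by a slightly longer route.
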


\begin{proof}
\enskip Let us consider $B\in{\mathcal B}$ and set
$$A=B\setminus\left\{x\in B, \enskip \exists n\geq 0, \enskip T^n(x)\in B\enskip\mathit{and}\enskip0\leq\sum_{k=0}^{n-1}\varphi(T^k(x))< \varepsilon\right\}\}.$$
Atkinson's result directly implies that  there exists a set $A'\subset A$ with $\mu(A')=\mu(A)$ such that, for every $x\in A'$, there exists a subsequence $(n_l)_{l\in\N}$ such that $T^{n_l}(x)\in A$ and $\lim_{l\to\infty} \sum_{k=0} ^{n_l-1} \varphi(T^k(x))=0$. Assume, for a contradiction, that $\mu(A)>0$. There exists some $x\in A'$  and $n_0>0$ such that $y=T^{n_0}(x) \in A$ and $ a=\sum_{k=0}^{n_0-1}\varphi(T^{k}(x)) \in(-\varepsilon, \varepsilon)$, and since $x\in A$ we know that $a<0$. Since $x \in A'$ there exists some $n_{1}>n_0$ such that $ T^{n_{1}}(x)\in A$ and $ a < \sum_{k=0}^{n_1-1}\varphi(T^{k}(x))< \varepsilon +a$. This implies that $T^{n_{1}-n_0}(y) \in A$ and that $0<\sum_{k=0}^{n_l-n_0-1}\varphi(T^{k}(y))<\varepsilon$, which is a contradiction since $y \in A$ proving the claim.
\end{proof}

\noindent {\it Proof of Theorem \ref{th:impossible_rotation_set}.}\enskip We will give a proof by contradiction. Replacing $f$ by $f^q$ and $\check f$ by $\check f^q-p$, where $q\in\N$ and $p\in\Z^2$, we can suppose that the frontier of $\mathrm{rot}(\check f)$ contains a segment $[\rho_0,\rho_1]$ with irrational slope, that $(0,0)$ is in its interior and that $\rho_0$ and $\rho_1$ are extremal points of $\mathrm{rot}(\check f)$. We can suppose moreover than for every $\rho\in\mathrm{rot}(\check f)$, one has
$\langle \rho_0^{\perp},\rho\rangle\leq 0\leq \langle \rho_1^{\perp},\rho\rangle$.
We consider two ergodic measures $\mu_0$ and $\mu_1$ in ${\mathcal M}(f)$ whose rotation vectors are $\rho_0$ and $\rho_1$ respectively. We know that there exists a point $z_0\in\R^2$ such that $\mathrm{rot}(z_0)=\rho_0$ and that projects onto a bi-recurrent point of $f$. 
 By Proposition \ref{pr: better_atkinson}, we have a stronger result:

\begin{lemma}\label{le: precise_atkinson}
There exists a point $z_0$, \textcolor{black}{projecting to a bi-recurrent point and with $\mathrm{rot}(z_0)=\rho_0$}, such that for every $\varepsilon\in \{-1,1\}$ one can find a sequence $(p_l,q_l)_{l\geq 0}$ in $\Z^2\times \N$ satisfying:
$$\lim_{l\to +\infty} q_l=+\infty, \enskip  \lim_{l\to +\infty}\check  f^{q_l}(z_0)-z_0-p_l=0, \enskip \textcolor{black}{\lim_{l\to +\infty} \langle \rho_0^{\perp},p_l\rangle =0,} \enskip \varepsilon\langle \rho_0^{\perp},p_l\rangle>0$$ and a sequence $(p'_l,q'_l)_{l\geq 0}$ in $\Z^2\times \N$ satisfying:
$$\lim_{l\to +\infty} q'_l=+\infty,   \enskip  \lim_{l\to +\infty}\check  f^{-q'_l}(z_0)-z_0-p'_l=0,\enskip \textcolor{black}{\lim_{l\to +\infty} \langle \rho_0^{\perp},p'_l\rangle =0,} \enskip\varepsilon\langle \rho_0^{\perp},p'_l\rangle>0.$$

\end{lemma}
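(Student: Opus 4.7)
The plan is to deduce the lemma from Proposition \ref{pr: better_atkinson} applied to the zero-mean cocycle $\psi\colon\T^2\to\R$ defined by $\psi(z)=\langle\rho_0^{\perp},\varphi(z)\rangle$, where $\varphi$ is the displacement function lifted by $\check f-\mathrm{Id}$. Since $\mathrm{rot}(\mu_0)=\rho_0$ and $\langle\rho_0^{\perp},\rho_0\rangle=0$, we have $\int_{\T^2}\psi\,d\mu_0=0$, and the Birkhoff sums satisfy
\[
S_n(z) := \sum_{k=0}^{n-1}\psi(f^k(z)) = \langle\rho_0^{\perp},\check f^n(\check z)-\check z\rangle
\]
for any lift $\check z\in\R^2$. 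A crucial observation I will use repeatedly is that $\rho_0$ itself has irrational slope: the origin lies in the interior of the segment $[\rho_0,\rho_1]$, so $\rho_0$ and $\rho_1$ are colinear with $0$, and the slope of $\rho_0$ equals the (irrational) slope of the segment. In particular $\R\rho_0\cap\Z^2=\{0\}$, so for $p\in\Z^2$ one has $\langle\rho_0^{\perp},p\rangle=0$ only if $p=0$.

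To construct the sequence for $\varepsilon=1$ forward, I would fix a lift $z_0$ of a $\mu_0$-generic point $z$ and, for each $l\ge 1$, introduce the half-disk $\check B_l=\{v\in\R^2:\|v-z_0\|<1/l,\ \langle\rho_0^{\perp},v-z_0\rangle\le 0\}$, whose projection $B_l\subset\T^2$ is a neighborhood of $z$ lying on the non-positive $\rho_0^{\perp}$-side. A countable-intersection argument over such half-disks (with rational parameters) shows that, for $\mu_0$-a.e.\ $z$, Proposition \ref{pr: better_atkinson} applied to $\psi$, $B_l$, and $1/l$ yields return times $q_l\to+\infty$ with $f^{q_l}(z)\in B_l$ and $0\le S_{q_l}(z)<1/l$. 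Letting $p_l\in\Z^2$ be the unique integer with $\check f^{q_l}(z_0)\in\check B_l+p_l$, we have $e_l:=\check f^{q_l}(z_0)-z_0-p_l\in\check B_l-z_0$, so $\|e_l\|<1/l\to 0$ and $\langle\rho_0^{\perp},e_l\rangle\le 0$. Hence
\[
\langle\rho_0^{\perp},p_l\rangle = S_{q_l}(z) - \langle\rho_0^{\perp},e_l\rangle \ge S_{q_l}(z) \ge 0.
\]

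To upgrade to strict positivity, I would invoke Birkhoff: for $\mu_0$-a.e.\ $z$ one has $(\check f^n(z_0)-z_0)/n\to\rho_0\neq 0$, so $p_l/q_l\to\rho_0$ and $\|p_l\|\to+\infty$. In particular $p_l\neq 0$ for large $l$, and the irrational-slope observation forces $\langle\rho_0^{\perp},p_l\rangle\neq 0$; combined with the nonnegativity this yields $\langle\rho_0^{\perp},p_l\rangle>0$. The case $\varepsilon=-1$ is symmetric, using half-disks on the non-negative $\rho_0^{\perp}$-side and applying Proposition \ref{pr: better_atkinson} to $-\psi$. The backward sequences $(p'_l,q'_l)$ come from the same argument applied to $f^{-1}$, which is also ergodic for $\mu_0$ with displacement cocycle of mean $-\rho_0$, so that $\langle\rho_0^{\perp},\cdot\rangle$ of this displacement still integrates to zero against $\mu_0$. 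A single $\mu_0$-generic $z_0$ can be chosen to satisfy all four applications simultaneously by intersecting the four resulting full-measure sets.

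The main obstacle is converting the weak inequality $\langle\rho_0^{\perp},p_l\rangle\ge 0$ provided by the half-disk construction into the required strict inequality, which is precisely where the irrational slope of $\rho_0$ and the nonvanishing of $p_l$ (coming from Birkhoff applied to the nonzero rotation vector) combine in an essential way. A secondary technical point to verify is that each half-disk $B_l$ has positive $\mu_0$-measure so that Proposition \ref{pr: better_atkinson} applies non-trivially; this holds at points where the support of $\mu_0$ meets both sides of the line $\R\rho_0+z_0$, which is the case for $\mu_0$-a.e.\ $z$ in view of the invariance and ergodicity of $\mu_0$.
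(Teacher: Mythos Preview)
Your overall strategy---apply the improved Atkinson lemma to the zero-mean cocycle $\psi=\langle\rho_0^{\perp},\varphi\rangle$, then upgrade $\ge 0$ to $>0$ via the irrational slope of $\rho_0$ and Birkhoff---is exactly the one in the paper, and your endgame (the passage from a weak to a strict inequality) is correct. The gap is in the half-disk device you use to get the weak inequality in the first place.

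The sets $\check B_l=\{v:\|v-z_0\|<1/l,\ \langle\rho_0^{\perp},v-z_0\rangle\le 0\}$ are centred at the very point $z_0$ you are trying to select, so the ``countable-intersection argument over such half-disks (with rational parameters)'' is circular as stated. If instead you fix a countable family of half-disks with rational centres $c$ and rational radii, then for a generic $z_0$ belonging to such a half-disk you only know $\langle\rho_0^{\perp},z_0-c\rangle\le 0$, and for a return $v$ in the same half-disk you get
\[
\langle\rho_0^{\perp},p_l\rangle = S_{q_l}(z)-\langle\rho_0^{\perp},e_l\rangle
\;\ge\; 0 + \langle\rho_0^{\perp},z_0-c\rangle,
\]
which is only $\ge -\delta$ for some small $\delta>0$ depending on how close $c$ is to $z_0$. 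Since $\{\langle\rho_0^{\perp},p\rangle:p\in\Z^2\setminus\{0\}\}$ is dense in $\R$ (irrational slope), the combination ``$\ge -\delta$ and $\ne 0$'' does not force $>0$. Your secondary point, that $\mu_0(B_l)>0$ for $\mu_0$-a.e.\ $z$, is also not justified: there is no general reason the support of an ergodic measure must accumulate on a prescribed side of an arbitrary line through a generic point.

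The paper avoids both issues by passing to the \emph{first return map} $T$ to a small disk $W_0$ with $\mu_0(W_0)>0$. The induced displacement $\xi_0:W_0\to\Z^2$ is integer-valued, so applying improved Atkinson to $\varphi=\varepsilon\langle\rho_0^{\perp},\xi_0\rangle$ (which has zero mean against the induced measure) and to a countable cover of $W_0$ by small balls $B_{i,j}$ of diameter $<1/i$ produces, for a full-measure set of $z_0$, return times $m_l$ with $T^{m_l}(z_0)\to z_0$ and
\[
0\le \sum_{k=0}^{m_l-1}\varepsilon\langle\rho_0^{\perp},\xi_0(T^k z_0)\rangle
=\varepsilon\langle\rho_0^{\perp},p_l\rangle \to 0,
\qquad p_l:=\sum_{k=0}^{m_l-1}\xi_0(T^k z_0)\in\Z^2.
\]
Here there is no error term $e_l$ at all: the Birkhoff sum \emph{is} $\varepsilon\langle\rho_0^{\perp},p_l\rangle$, so the weak inequality comes for free, and your irrational-slope argument then finishes the job. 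If you want to keep working with $f$ on $\T^2$ rather than the return map, the clean fix is to introduce this return-map reduction; the half-disk trick cannot be made to carry the sign through the rounding to $\Z^2$.
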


\begin{proof} Let $W_0$ be a small disk such that $\mu_0(W_0)\not=0$ and $\check W_0$ a lift of $W_0$. Define the maps $\tau_0$ and $\xi_0$ like in Proposition \ref{pr:Bounded_Leaves}. The measure $\mu_0$ being ergodic, one knows that $\int_{W_0} \tau_0\, d\mu_0=1$ and that $\int_{W_0} \xi_0\, d\mu_0 =\rho_0$. Let us define on $W_0$ the first return map $T: z\mapsto f^{\tau_0(z)}(z)$ and the function $\varphi :z\mapsto \varepsilon
\langle \rho_0^{\perp}, \xi_0(z)\rangle$. 

For each integer $i\geq 1$, let  $(B_{i,j})_{1\leq j\leq  k_i}$ be a covering of $W_0$ by open sets with diameter smaller than $1/ i$, and define
$$C_{i,j}=\left\{x\in B_{i,j}\cap W_0, \enskip\exists n\geq 0, \enskip T^n(x)\in B_{i,j}\cap W_0\enskip\mathrm{and}\enskip0\leq\sum_{k=0}^{n-1}\varphi(T^k(x))< 1/i\right\}$$
Set $C_i= \bigcup_{j=1}^{k_i} C_{i,j}$ and $C= \bigcap _{i\geq 1} C_i$. By Proposition \ref{pr: better_atkinson}, one knows that \textcolor{black}{ $\mu_0(C_i)=\mu_0(C)=\mu_0(W_0)$, and if $C'$ is the subset of the bi-recurrent points of $C$ with rotation vector $\rho_0$, then $\mu_0(C')=\mu_)(C)$.}. If $z_0$ belongs to $C'$, one can find an increasing integer sequence $(m_l)_{l\geq 0}$ such that 
$$ \lim_{l\to +\infty} T^{m_l}(z_0)=z_0, \enskip \lim_{l\to+\infty}\sum_{k=0}^{m_l-1} \varepsilon\langle \rho_0^{\perp},\xi_0(T^k(z_0)\rangle= 0, \enskip \sum_{k=0}^{m_l-1} \varepsilon\langle \rho_0^{\perp},\xi_0(T^k(z_0)\rangle\geq 0.
$$ 
Setting $p_l=\sum_{k=0}^{m_l-1} \xi_0(T^k(z_0))$ and $q_l=\sum_{k=0}^{m_l-1} \tau_0(T^k(z_0))$, one gets the first assertion of the lemma, with a large inequality instead of a strict one. Noting that $\lim_{l\to+\infty} \Vert p_l\Vert =+\infty$ and that the line generated by $\rho_0$ has irrational slope, one deduces that the inequality is strict. The second assertion  can be proved analogously. \end{proof}

\textcolor{black}{We note that, by Proposition \ref{pr:Bounded_Leaves},}  the leaves of $\check{\mathcal F}$ are uniformly bounded. \textcolor{black}{Let us choose $z_0$ as in the previous lemma.} The fact that $\mathrm{rot}(z_0)=\rho_0$ tells us that the whole trajectory $\check I^{\Z}(z_0)$ is a proper path directed by $\rho_0$. The fact that the leaves of $\check{\mathcal F}$ are uniformly bounded and that every leaf met by  $\check I_{\check{\mathcal F}}^{\Z}(z_0)$  is also met by $\check I^{\Z}(z_0)$ implies that $\check I_{\check{\mathcal F}}^{\Z}(z_0)$ is a transverse proper path directed by $\rho_0$. We  parameterize $\check I_{\check{\mathcal F}}^{\Z}(z_0)$ in such a way that $\check I_{\check{\mathcal F}}^{\Z}(z_0) \vert_{[l,l+1]}=\check I_{\check{\mathcal F}}(\check f^l(z_0))$. We consider sequences $(p_l,q_l)_{l\geq 0}$ and $(p'_l,q'_l)_{l\geq 0}$ given by the previous lemma (the sign $\varepsilon$ has no importance at the beginning).

\bigskip
\begin{lemma} \label{le: recurrence}
 For every closed segment $[a,b]\subset\R$ and every positive real numbers $L$, $\varepsilon$, there exists $p\in \Z^2$ and a segment $[a',b']\subset \R$ satisfying $a'-b>L$ such that $\vert\langle \rho_0^{\perp},p\rangle\vert<\varepsilon$ and such that the paths $\check I_{\check{\mathcal F}}^{\Z}(z_0) \vert_{[a,b]}$ and $(\check I_{\check{\mathcal F}}^{\Z}(z_0) +p)\vert_{[a',b']}$ are equivalent. One has a similar result replacing the inequality $a'-b>L$ by $a-b'>L$.
\end{lemma}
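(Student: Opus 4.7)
The plan is to combine the improved Atkinson recurrence provided by Lemma~\ref{le: precise_atkinson} with the continuity of transverse trajectories from Lemma~\ref{le: continuity}. Fix $[a,b]$, $L>0$, $\varepsilon>0$, and choose an integer $N$ large enough that $[a,b]\subset(-N,N)$. Under the chosen parameterization, $\check I^{\Z}_{\check{\mathcal F}}(z_0)\vert_{[a,b]}$ is a subpath of $\check I^{2N}_{\check{\mathcal F}}(\check f^{-N}(z_0))$ whose parameters lie strictly inside the endpoints; Lemma~\ref{le: continuity} then provides a neighborhood $V$ of $\check f^{-N}(z_0)$ in $\R^{2}$ such that for every $z'\in V$, $\check I^{\Z}_{\check{\mathcal F}}(z_0)\vert_{[a,b]}$ is equivalent to a subpath of $\check I^{2N}_{\check{\mathcal F}}(z')$.

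Next I would produce $p\in\Z^{2}$ and an integer $q>b+L+N$ with $|\langle \rho_0^{\perp},p\rangle|<\varepsilon$ such that $\check f^{q-N}(z_0)-p\in V$. Although Lemma~\ref{le: precise_atkinson} is only stated with a sign condition on $\langle \rho_0^{\perp},p_l\rangle$, its proof actually yields $\langle \rho_0^{\perp},p_l\rangle\to 0$ with the prescribed sign, since the quantity in question is a Birkhoff-type sum forced to $0$. Using that $\check f$ commutes with integer translations one writes
\[
\check f^{q_l-N}(z_0)-p_l=\check f^{-N}\bigl(\check f^{q_l}(z_0)-p_l\bigr),
\]
which tends to $\check f^{-N}(z_0)$ by continuity of $\check f^{-N}$. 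Consequently, for $l$ sufficiently large the triple $(p_l,q_l)$ meets all three requirements, and $p:=-p_l$ is the sought-after vector.

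For such $l$ set $z'=\check f^{q_l-N}(z_0)-p_l$. By the equivariance of $\check I$ and $\check{\mathcal F}$ under the $\Z^{2}$-action, together with the parameterization convention,
\[
\check I^{2N}_{\check{\mathcal F}}(z')=\check I^{2N}_{\check{\mathcal F}}(\check f^{q_l-N}(z_0))-p_l=\bigl(\check I^{\Z}_{\check{\mathcal F}}(z_0)+p\bigr)\vert_{[q_l-N,\,q_l+N]}.
\]
Hence $\check I^{\Z}_{\check{\mathcal F}}(z_0)\vert_{[a,b]}$ is equivalent to $(\check I^{\Z}_{\check{\mathcal F}}(z_0)+p)\vert_{[a',b']}$ for some $[a',b']\subset[q_l-N,q_l+N]$, and $a'\geq q_l-N>b+L$ gives $a'-b>L$. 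The second assertion follows from the same argument applied to the backward sequence $(p'_l,q'_l)$ of Lemma~\ref{le: precise_atkinson}: it places the shifted copy at parameters in $[-q'_l-N,-q'_l+N]$, so that $b'\leq -q'_l+N<a-L$ for $l$ large.

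The main obstacle is purely bookkeeping: on one hand, identifying $\check I^{2N}_{\check{\mathcal F}}(\check f^{q-N}(z_0))$ with the segment $\check I^{\Z}_{\check{\mathcal F}}(z_0)\vert_{[q-N,q+N]}$ of the whole trajectory through the chosen parameterization; on the other, extracting from the proof of Lemma~\ref{le: precise_atkinson} the additional quantitative information that $\langle \rho_0^{\perp},p_l\rangle$ can be made arbitrarily small with the prescribed sign, which is crucial in order to bring $p$ into the regime $|\langle \rho_0^{\perp},p\rangle|<\varepsilon$ rather than merely controlling its sign.
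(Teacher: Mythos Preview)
Your proof is correct and follows essentially the same route as the paper's: choose integers bracketing $[a,b]$, use the recurrence sequence $(p_l,q_l)$ from Lemma~\ref{le: precise_atkinson} together with continuity of transverse trajectories (Lemma~\ref{le: continuity}) to land a translated copy of the orbit close enough that the segment $[a,b]$ reappears, shifted by $q_l$, and set $p=-p_l$. You are more explicit than the paper in two places---invoking Lemma~\ref{le: continuity} by name and flagging that the bound $|\langle\rho_0^\perp,p_l\rangle|<\varepsilon$ must be read off the \emph{proof} of Lemma~\ref{le: precise_atkinson} (where the Birkhoff sums are driven to~$0$) rather than its statement---both of which are worthwhile clarifications, since the paper's own proof silently uses this convergence without comment.
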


\begin{proof}
\enskip Let us choose integers $q$ and $q'$ such that $[a,b]\subset (q, q')$. \textcolor{black}{As $z_0$ projects to a bi-recurrent point,  one} can find $l$, \textcolor{black}{using Lemma \ref{le: precise_atkinson},} sufficiently large,  such that $q_l>q'-q+L$ and such that $\check f^{q_l}(\check f^q(z_0))-p_l$ is so close to $\check f^q(z_0)$ that we can affirm that $\check I_{\check{\mathcal F}}^{\Z}(z_0) \vert_{[a,b]}$ is equivalent to a path $(\check I_{\check{\mathcal F}}^{\Z}(z_0) -p_l)\vert_{[a',b']}$, where $[a',b']\subset (q+q_l, q'+q_l)$. \textcolor{black}{ Note that $\vert\langle \rho_0^{\perp},p_l\rangle\vert<\varepsilon$ if $l$ is sufficiently large.} The version with the inequality $a-b'>L$ can be proven similarly by using the sequences $(p'_l)_{l\geq 0}$ and $(q'_l)_{l\geq 0}$.
\end{proof}

\bigskip
\begin{lemma} \label{le: no_intersection}
There is no $p\in\Z^2\setminus\{0\}$ such that $\check I_{\check{\mathcal F}}^{\Z}(z_0) $ and $\check I_{\check{\mathcal F}}^{\Z}(z_0) +p$ intersect $\check{\mathcal{F}}$-transversally.
\end{lemma}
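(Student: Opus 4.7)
I argue by contradiction: suppose such a $p \in \Z^2 \setminus \{0\}$ exists. Extracting a finite subpath $\check\gamma_0 := \check I_{\check{\mathcal F}}^{\Z}(z_0)|_{[a,b]}$ containing both intersection parameters, the hypothesis becomes an $\check{\mathcal F}$-transverse intersection $\phi_{\check\gamma_0(t_0)}=\phi_{(\check\gamma_0+p)(s_0)}$. Replacing $p$ by $-p$ (and swapping $s_0,t_0$) if necessary, I may assume $s_0<t_0$, so Proposition~\ref{pr: rational rotation number} directly yields $p/q_0\in\mathrm{rot}(\check f)$ for $q_0$ the order of $\check\gamma_0$. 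The supporting-line inequality on $\mathrm{rot}(\check f)$ forces $\langle\rho_0^\perp,p\rangle\le 0$, and since $\rho_0$ has irrational slope while $p\neq 0$, in fact $\langle\rho_0^\perp,p\rangle<0$.

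The plan is then to use Lemma~\ref{le: recurrence} to manufacture, at a late time, a \emph{second} $\check{\mathcal F}$-transverse intersection of $\check I_{\check{\mathcal F}}^{\Z}(z_0)$ with a translate $\check I_{\check{\mathcal F}}^{\Z}(z_0)+(p+P)$, but with the roles of $s$ and $t$ exchanged, so that applying Proposition~\ref{pr: rational rotation number} to the translate $-(p+P)$ will constrain $\langle\rho_0^\perp,P\rangle$ with the ``wrong sign.'' Concretely, the sign control in the proof of Lemma~\ref{le: recurrence} (coming from Lemma~\ref{le: precise_atkinson}) lets me choose, for arbitrarily large $L$ and arbitrarily small $\varepsilon$, an integer $P$ with $\langle\rho_0^\perp,P\rangle<0$, $|\langle\rho_0^\perp,P\rangle|<\varepsilon$, and a segment $[a',b']$ with $a'-b>L$ for which
$$\check I_{\check{\mathcal F}}^{\Z}(z_0)|_{[a,b]}\sim(\check I_{\check{\mathcal F}}^{\Z}(z_0)+P)|_{[a',b']}.$$
Translating this equivalence by $p$ gives $(\check I_{\check{\mathcal F}}^{\Z}(z_0)+p)|_{[a,b]}\sim(\check I_{\check{\mathcal F}}^{\Z}(z_0)+p+P)|_{[a',b']}$, and since equivalence is a holonomic homotopy and hence preserves $\check{\mathcal F}$-transverse intersections, the original intersection yields a transverse intersection of $\check I_{\check{\mathcal F}}^{\Z}(z_0)$ at parameter $t_0\in[a,b]$ with $\check I_{\check{\mathcal F}}^{\Z}(z_0)+(p+P)$ at the parameter $s_0'\in[a',b']$ corresponding to $s_0$ under the equivalence.

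Because $t_0<s_0'$, on $\check\gamma:=\check I_{\check{\mathcal F}}^{\Z}(z_0)|_{[a,b']}$ this same data is an $\check{\mathcal F}$-transverse intersection of $\check\gamma$ at $s_0'$ (playing the role of $t$) with $\check\gamma-(p+P)$ at $t_0$ (playing the role of $s$), and the orientation condition $s<t$ is now satisfied. Proposition~\ref{pr: rational rotation number} applied to $\check\gamma$ with translate $-(p+P)$ then gives $-(p+P)/q\in\mathrm{rot}(\check f)$, hence $\langle\rho_0^\perp,p+P\rangle\ge 0$, that is, $\langle\rho_0^\perp,P\rangle\ge|\langle\rho_0^\perp,p\rangle|>0$, contradicting the prescribed $\langle\rho_0^\perp,P\rangle<0$. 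The delicate point I would highlight when writing this out is the orientation bookkeeping: applying the recurrence equivalence to $\check I+p$ rather than to $\check I$ is precisely what swaps the relative order of the intersection parameters, converting a harmless upper bound on $\langle\rho_0^\perp,P\rangle$ into a lower bound of size $|\langle\rho_0^\perp,p\rangle|$, which is incompatible with the sign choice for $P$ afforded by Lemma~\ref{le: precise_atkinson}.
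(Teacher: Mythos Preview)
Your proof is correct and follows essentially the same route as the paper's: restrict to a finite subpath, use Proposition~\ref{pr: rational rotation number} once to get $\langle\rho_0^\perp,p\rangle<0$, invoke Lemma~\ref{le: recurrence} (applied to the $p$-translate, which is exactly your ``translate the equivalence by $p$'' step) to push the intersection parameter past $t_0$, and then apply Proposition~\ref{pr: rational rotation number} a second time to the translate $-(p+P)$ to force the opposite sign. The only cosmetic difference is that the paper derives the contradiction from $\langle\rho_0^\perp,p+p'\rangle<0$ versus $\langle\rho_0^\perp,-(p+p')\rangle<0$ by making $|\langle\rho_0^\perp,p'\rangle|$ small, whereas you invoke the sign freedom in Lemma~\ref{le: precise_atkinson} directly on $P$; both work.
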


\begin{proof}
\enskip Write $\check I_{\check{\mathcal F}}^{\Z}(z_0) =\gamma_0$ for convenience. Suppose that $\gamma_0$ and $\gamma_0+p$ intersect $\check{\mathcal{F}}$-transversally at $\phi= \phi_{\gamma_0(t)}=\phi_{(\gamma_0 +p)(s)}$. The leaves being uniformly bounded, one knows that 
$\phi_{\gamma_0(t)}\not =\phi_{\gamma_0(t)}+p$ and so $t\not=s$. Replacing $p$ with $-p$ if necessary, one can suppose that $s<t$. By Proposition \ref{pr: rational rotation number}, there exists $q\geq 1$ such that $p/q\in \mathrm{rot}(\check f)$. Consequently, one has $ \langle \rho^{\perp}_0,p\rangle\leq 0$.  By assumption, the segment $[0,\rho_0]$ has irrational slope and $p\not=0$, so one deduces that $ \langle \rho_0^{\perp},p\rangle<0$. 

\textcolor{black}{ We know that there exists a sufficiently large $N$ such that $\gamma_0\vert_{[-N, N]}$ and $(\gamma_0+p)\vert_{[-N, N]}$ intersect $\check{\mathcal{F}}$-transversally at $\phi$. By  Lemma \ref{le: recurrence}, we can find some $p'\in\Z^2$ such that $\vert\langle \rho_0^{\perp},p'\rangle\vert$ is sufficiently small as to get $\langle \rho_0^{\perp},p+p'\rangle<0$, and such that there exists some $a',b'$ with $N<a'<b'$, where $(\gamma_0+p')\vert_{[a', b']}$ is equivalent to $\gamma_0\vert_{[-N, N]}$. This implies that $(\gamma_0+p+p')\vert_{[a', b']}$ is equivalent to $(\gamma_0+p)\vert_{[-N, N]}$, and so} $\gamma_0$ and $\gamma_0+p+p'$ intersect $\check{\mathcal{F}}$-transversally at $\phi_{\gamma_0(t)}=\phi_{(\gamma_0 +p+p')(s')}$ where $s'>t$. So, one knows that  $\gamma_0$ and $\gamma_0-p-p'$ intersect $\check{\mathcal{F}}$-transversally at $\phi_{\gamma_0 (s')}=\phi_{(\gamma_0 -p-p')(t)}$. We deduce \textcolor{black}{ as before, by Proposition \ref{pr: rational rotation number}, that for some $q'>1$ the vector $\frac{-p-p'}{q'}\in \mathrm{rot}(\check f)$,  a contradiction since   $\langle \rho_0^{\perp},-p-p'\rangle<0$.} 
\end{proof}

\begin{lemma}
 The path $\check I_{\check{\mathcal F}}^{\Z}(z_0) $ is a line
\end{lemma}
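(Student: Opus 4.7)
The plan is to invoke Proposition \ref{pr:transverse_on_plane}. Since the leaves of $\check{\mathcal F}$ are uniformly bounded by Proposition \ref{pr:Bounded_Leaves} and $\check\gamma_0 := \check I_{\check{\mathcal F}}^{\Z}(z_0)$ is a proper transverse path of $\R^2$, it suffices to rule out the existence of a $\check{\mathcal F}$-transverse self-intersection.

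I would argue by contradiction. Suppose $\check\gamma_0$ has a $\check{\mathcal F}$-transverse self-intersection at a leaf $\phi=\phi_{\check\gamma_0(s)}=\phi_{\check\gamma_0(t)}$ with $s<t$, and fix witness parameters $a_1<s<b_1$, $a_2<t<b_2$ from the definition of $\mathcal F$-transverse intersection. Applying Lemma \ref{le: recurrence} to an interval $[a,b]\supset[a_1,b_2]$ with $L$ taken very large, one obtains a vector $p\in\Z^2$ and an interval $[a',b']$ with $a'-b>L$ such that $\check\gamma_0\vert_{[a,b]}$ is $\check{\mathcal F}$-equivalent to $(\check\gamma_0+p)\vert_{[a',b']}$. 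Because $\check\gamma_0$ is directed by $\rho_0\neq 0$ and the leaves of $\check{\mathcal F}$ have uniformly bounded diameter, the leaves met by $\check\gamma_0\vert_{[a,b]}$ stay in a bounded region while those met by $\check\gamma_0\vert_{[a',b']}$ are driven off to infinity in direction $\rho_0$; equivalence therefore forces $p\neq 0$.

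It then remains to transport the self-intersection data through the equivalence homeomorphism $h:[a,b]\to[a',b']$ (which satisfies $\phi_{(\check\gamma_0+p)(h(\tau))}=\phi_{\check\gamma_0(\tau)}$) to obtain a $\check{\mathcal F}$-transverse intersection between the two distinct lifts $\check\gamma_0$ and $\check\gamma_0+p$ at the leaf $\phi$. Concretely, take $t_1=t$ on $\check\gamma_0$ with witnesses $a_2,b_2$, and $t_2=h(s)$ on $\check\gamma_0+p$ with witnesses $h(a_1),h(b_1)$. The equivalence gives $\phi_{(\check\gamma_0+p)(h(a_1))}=\phi_{\check\gamma_0(a_1)}$ and $\phi_{(\check\gamma_0+p)(h(b_1))}=\phi_{\check\gamma_0(b_1)}$, so the self-intersection relations ``$\phi_{\check\gamma_0(a_2)}$ below $\phi_{\check\gamma_0(a_1)}$'' and ``$\phi_{\check\gamma_0(b_2)}$ above $\phi_{\check\gamma_0(b_1)}$ relative to $\phi$'' translate immediately into the desired transverse crossing between $\check\gamma_0$ and $\check\gamma_0+p$ at $\phi$, of sign opposite to the self-intersection. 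This contradicts Lemma \ref{le: no_intersection}.

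The main obstacle will be the bookkeeping in this last step: the equivalence $h$ relates parameters near $s$ on $\check\gamma_0$ to parameters near $h(s)$ on $\check\gamma_0+p$, whereas the witnesses of the self-intersection mix parameters near $s$ and near $t$ on the same curve, so one has to pair the four witness parameters carefully when reinterpreting a transverse self-intersection of $\check\gamma_0$ as a transverse intersection between $\check\gamma_0$ and its translate.
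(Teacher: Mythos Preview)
Your proof is correct and follows the same route as the paper (invoke Proposition \ref{pr:transverse_on_plane}, then Lemma \ref{le: recurrence}, then contradict Lemma \ref{le: no_intersection}). The paper sidesteps the bookkeeping you flag by applying Lemma \ref{le: recurrence} to only \emph{one} of the two segments of the self-intersection: since $\check{\mathcal F}$-transverse intersection depends only on equivalence classes, replacing $\gamma_0\vert_{[a_1,b_1]}$ by the equivalent $(\gamma_0+p)\vert_{[a'_1,b'_1]}$ immediately gives a transverse intersection between $\gamma_0$ and $\gamma_0+p$ with no witness-pairing needed.
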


\begin{proof}
\enskip Here again, write $\check I_{\check{\mathcal F}}^{\Z}(z_0) =\gamma_0$. If $\gamma_0$ is a not a line, by Proposition \ref{pr:transverse_on_plane} one knows that there exist two segments $[a_0,b_0]$ and $[a_1,b_1]$ such that $\gamma_0\vert_{[a_0,b_0]}$ and $\gamma_0\vert_{[a_1,b_1]}$ intersect $\check{\mathcal{F}}$-transversally. By Lemma  \ref{le: recurrence}, one deduces that there exist $p\in\Z^2\setminus\{0\}$ and a segment $[a'_1,b'_1]$ such that $\gamma_0\vert_{[a_0,b_0]}$ and $(\gamma_0+p)\vert_{[a'_1,b'_1]}$ intersect $\check{\mathcal{F}}$-transversally. This contradicts Lemma  \ref{le: no_intersection}. 
\end{proof}

\bigskip

Similarly, we can find a point $z_1$ of rotation number $\rho_1$ that projects onto a recurrent point of $f$ and such that $\check I_{\check{\mathcal F}}^{\Z}(z_1) $ is a line directed by $\rho_1$ that does not meet transversally its integer translated. 

\bigskip
\begin{lemma}
 The line $\check I_{\check{\mathcal F}}^{\Z}(z_1) $ intersects $\check{\mathcal{F}}$-transversally one of the translates of $\check I_{\check{\mathcal F}}^{\Z}(z_0) $.
\end{lemma}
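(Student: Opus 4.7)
I would argue by contradiction: assume that $\gamma_1 := \check I^{\Z}_{\check{\mathcal F}}(z_1)$ has no $\check{\mathcal F}$-transverse intersection with any integer translate $\gamma_0+p$ of $\gamma_0 := \check I^{\Z}_{\check{\mathcal F}}(z_0)$. Since the previous lemma ensures that each $\gamma_0+p$ is a line of $\R^2$, Corollary \ref{co: no_intersection_two_sets} (applied with $\gamma=\gamma_0+p$ and $\gamma'=\gamma_1$) forces $\gamma_1$ to lie entirely in $\overline{R(\gamma_0+p)}$ or entirely in $\overline{L(\gamma_0+p)}$ for every $p\in\Z^2$, yielding a partition $\Z^2 = A\sqcup B$ with $A=\{p\in\Z^2 : \gamma_1\subset\overline{L(\gamma_0+p)}\}$. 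Moreover, Lemma \ref{le: no_intersection} together with Corollary \ref{co: no_intersection_two_sets} applied to pairs $(\gamma_0+p,\gamma_0+p')$ makes the translates $\{\gamma_0+p : p\in\Z^2\}$ pairwise comparable, giving a total order $\preceq$ on $\Z^2$ (with $p\preceq p'$ iff $\gamma_0+p\subset\overline{R(\gamma_0+p')}$); a direct verification shows that $A\sqcup B$ is a Dedekind cut of $\preceq$.

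The plan is to extract from this rigid structure a specific translate $\gamma_0+p_*$ that must be $\check{\mathcal F}$-transversally crossed by $\gamma_1$. To this end I apply Lemma \ref{le: recurrence} to $z_0$ (in both directions along $\gamma_0$, producing integer shifts $p\in\Z^2$ with $|\langle\rho_0^\perp,p\rangle|$ arbitrarily small and with $\langle\hat\rho_0,p\rangle$ of either sign and arbitrarily large in absolute value, together with the corresponding $\check{\mathcal F}$-equivalences between subpaths of $\gamma_0$ and $\gamma_0+p$), and I apply the analogous statement for $z_1$, obtained by repeating the proof of Lemma \ref{le: recurrence} with $\rho_0$ replaced by $\rho_1=-t\rho_0$ (for some $t>0$, since $[\rho_0,\rho_1]$ has irrational slope and contains the origin in its interior). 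Because $\hat\rho_0$ has irrational slope, the set $\{\langle\rho_0^\perp,p\rangle : p\in\Z^2\}$ is dense in $\R$. Combining this density with the Dedekind cut structure of $A\sqcup B$ relative to $\preceq$ and the nearly-$\hat\rho_0$ recurrences of both $\gamma_0$ and $\gamma_1$, I expect to locate a $p_*\in\Z^2$ for which $\gamma_1$ meets both $r(\gamma_0+p_*)$ and $l(\gamma_0+p_*)$, contradicting the assumption.

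The main difficulty is that at this stage of the argument we do not yet know that $\gamma_0$ has bounded deviation in the $\rho_0^\perp$-direction — that is precisely the content of Theorem \ref{th:bounded_deviation}, not yet proved. Consequently the geometric picture of $\gamma_0$ as a curve close to the rotation-line $\R\hat\rho_0$ cannot be invoked directly; the argument must proceed solely through the combinatorial and order-theoretic structure furnished by the total order $\preceq$, the Dedekind cut $A\sqcup B$, and the $\check{\mathcal F}$-equivalences supplied by Lemma \ref{le: recurrence}. The crux will be to show that no Dedekind cut of $\preceq$ can be compatible simultaneously with the $\Z^2$-action and with the existence of recurrent shifts of $\gamma_1$ in the direction opposite to those of $\gamma_0$.
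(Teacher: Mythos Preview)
Your setup (argue by contradiction, get a partition $\Z^2=A\sqcup B$ that is a Dedekind cut for a translation-invariant total order) matches the paper's, but you skip one point that the paper handles explicitly: before the dichotomy ``$\gamma_1\subset\overline{R(\gamma_0+p)}$ or $\gamma_1\subset\overline{L(\gamma_0+p)}$'' is clean, one must rule out that $\gamma_1$ and $\gamma_0+p$ share a leaf. The paper does this by noting that a shared leaf $\phi$ would have $\alpha(\phi)\subset L(U_0+p)\cap\overline{L(\gamma_1)}$ and $\omega(\phi)\subset R(U_0+p)\cap\overline{R(\gamma_1)}$, while Corollary~\ref{co: no_intersection_two_sets} forces one of these intersections to be empty.

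Where your plan diverges from the paper is in the finishing move, and here you are making things harder than necessary. You propose to combine recurrences of both $z_0$ and $z_1$, worry about the lack of bounded deviation, and search for a specific $p_*$ with $\gamma_1$ meeting both $r(\gamma_0+p_*)$ and $l(\gamma_0+p_*)$. The paper's argument is far shorter: apply the analogue of Lemma~\ref{le: recurrence} \emph{once} to $z_1$, obtaining a single $p_0\neq 0$ such that some leaf $\phi$ is met by both $\gamma_1$ and $\gamma_1+p_0$. Since $\phi$ lies in no $U_0+p$ (by the no-shared-leaf step), both $\gamma_1$ and $\gamma_1+p_0$ lie on the same side of every $\gamma_0+p$; translating, this says the cut is $p_0$-invariant: $A+p_0=A$. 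The paper then observes that the relevant order is the $\langle\rho_0^{\perp},\,\cdot\,\rangle$-order, which is Archimedean because $\rho_0$ has irrational slope, so no proper cut can be invariant under a nonzero translation --- contradiction. Your concern about bounded deviation is thus a red herring: the argument never compares $\gamma_0$ to a straight line, only the cut to a nonzero translate of itself.
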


\begin{proof}
\enskip Let us prove by contradiction that $\gamma_1=\check I_{\check{\mathcal F}}^{\Z}(z_1)$ intersects $\check{\mathcal{F}}$-transversally one \textcolor{black}{of} the translates of $\gamma_0=\check I_{\check{\mathcal F}}^{\Z}(z_0)$. If not,  let us denote by $U_0$ the union of leaves that are met by $\gamma_0$. Its complement can be written $R(U_0)\sqcup L(U_0)$ where $R(U_0)=R(\gamma_0)\setminus U_0$ is the union of $r(\gamma_0)$ and of the set of singularities at the right of $\gamma_0$ and $L(U_0)=L(\gamma_0)\setminus U_0$ is the union of $l(\gamma_0)$ and of the set of singularities at the left of $\gamma_0$. If $\gamma_1$ and $\gamma_0$ do not intersect $\check{\mathcal{F}}$-transversally, then by \textcolor{black}{Corollary} \ref{co: no_intersection_two_sets}, one knows that either $\gamma_1\cap R(U_0)=\emptyset$ or $\gamma_1\cap L(U_0)=\emptyset$. \textcolor{black}{As $\gamma_0$ is directed by $\frac{\rho_0}{\Vert\rho_0\Vert}$ and $\gamma_1$ is directed by the opposite vector $\frac{\rho_1}{\Vert\rho_1\Vert}=-\frac{\rho_0}{\Vert\rho_0\Vert}$,  one knows that if $\gamma_1\cap R(U_0)=\emptyset$, then $R(\gamma_1)\cap R(U_0)=\emptyset$, and if $\gamma_1\cap L(U_0)=\emptyset$, then $L(\gamma_1)\cap L(U_0)=\emptyset$.}  Consequently, $\gamma_0$ and $\gamma_1$ cannot meet a common leaf. Indeed if $\phi$ is such a leaf, one knows by Proposition  \ref{pr:transverse_on_plane} that it is met once by $\gamma_0$ and $\gamma_1$. So, the $\alpha$-limit set of $\phi$ is contained in $L(U_0)\cap \overline {L(\gamma_1)}$ and the $\omega$-limit set is included in $R(U_0)\cap\overline {R(\gamma_1)}$, which is impossible. 

So, if the conclusion of our lemma is not true,  there exists a partition $\Z^2=A^-\sqcup A^+$, where 
$$\eqalign{p\in A^-&\Leftrightarrow (r(\gamma_0)\cup \textcolor{black}{U_0})+p\subset l(\gamma_1),\cr
p\in A^+&\Leftrightarrow (l(\gamma_0)\cup \textcolor{black}{U_0})+p\subset r(\gamma_1).\cr}$$
\textcolor{black}{Also, by Lemma \ref{le: no_intersection} and the fact that $\gamma_0$ is directed by $\rho_0$, one knows that $l(\gamma_0+p)\subset l(\gamma_0)$ if $0\le \langle\rho_0^{\perp},p\rangle$ and one deduces} this partition is a cut of the order on $\Z^2$ defined as follows
$$p\preceq p'\Leftrightarrow \langle\rho_0^{\perp},p\rangle\leq \langle\rho_0^{\perp},p'\rangle.$$
Let us fix a leaf $\phi$ that intersects $\gamma_1$.  By Lemma \ref{le: recurrence}, one knows that there exists $p_0\not=(0,0)$ such that $\phi$ intersects $\gamma_1+p_0$. One deduces that $A^-+p_0=A^-$ and $A^++p_0=A^+$, which of course is impossible.
\end{proof}

\bigskip
\noindent {\it End of the proof of Theorem \ref{th:impossible_rotation_set}.}\enskip Replacing $\gamma_0$ by a translate if necessary, we can always suppose that $\gamma_0$ and $\gamma_1$ intersect $\check{\mathcal{F}}$-transversally at $\gamma_0(t_0)=\gamma_1(t_1)$ and we define $\gamma=\gamma_0\vert_{[-\infty,t_0]}\gamma_1\vert_{[t_1,+\infty]}$ which is an admissible transverse proper path. There exist two segments $[a_0,b_0]$ and $[a_1,b_1]$ containing $t_0$ and $t_1$ respectively in their interior, such that $\gamma_0\vert_{[a_0,b_0]}$ and $\gamma_1\vert_{[a_1,b_1]}$ intersect transversally at $\gamma_0(t_0)=\gamma_1(t_1)$. Using Lemma \ref{le: recurrence}, one can find $p_0$ and $p_1$ in $\Z^2$ distinct, and segments $[a'_0,b'_0]\subset (-\infty, t_0)$ and $[a'_1,b'_1]\subset (t_1,+\infty)$ such that $(\gamma_0+p_0)\vert_{[a'_0,b'_0]}$ is equivalent to $\gamma_0\vert_{[a_0,b_0]}$ and $(\gamma_1+p_1)\vert_{[a'_1,b'_1]}$ is equivalent to $\gamma_1\vert_{[a_1,b_1]}$. We deduce that there exists $t'_0\in (a'_0,b'_0)$ and $t'_1\in (a'_1+t_0-t_1,b'_1+t_0-t_1)$ such that $\gamma+p_0$ and $\gamma+p_1$ intersect $\check{\mathcal{F}}$-transversally at $\phi= \phi_{(\gamma+p_0)(t'_0)}=\phi_{(\gamma+p_1)(t'_1)}$. So $\gamma$ and $\gamma+p_1-p_0$ intersect $\check{\mathcal{F}}$-transversally at $\phi-p_0= \phi_{\gamma(t'_0)}=\phi_{(\gamma+p_1-p_0)(t'_1)}$. Observing that $t'_0<t'_1$, one deduces\textcolor{black}{, by Proposition \ref{pr: rational rotation number},}   that there exists $q\geq 1$ such that $(p_1-p_0)/q\in \mathrm{rot}(\check f)$ and \textcolor{black}{thus} $ \langle \rho_0^{\perp},p_1-p_0\rangle<0$. But Lemma \ref{le: precise_atkinson} tells us that  \textcolor{black}{$p_0$ and $p_1$ can be chosen  so that $ \langle \rho_0^{\perp},p_0\rangle<0$ and $ \langle \rho_0^{\perp},p_1\rangle>0$.}  We have found a contradiction. \hfill$\Box$

\bigskip
\noindent {\it Proof of Theorem \ref{th:bounded_deviation}.}\enskip  In the proof, we will use the sup norm $\Vert \enskip\Vert_{\infty}$ where  $\Vert (x_1,x_2)\Vert_{\infty}=\max(\vert x_1\vert, \vert x_2\vert)$ which will be more convenient that the Euclidean norm and will write $d_{\infty}(z,X)=\inf_{z'\in X} \Vert z-z'\Vert_{\infty}$. Replacing $f$ by $f^q$ and $\check f$ by $\check f^q-p$, where $q\in\N$ and $p\in\Z^2$, we can suppose that $(0,0)$ is in the interior of $\mathrm{rot}(\check f)$. Here again, we consider an identity isotopy $I'$ of $f$ that is lifted to an identity isotopy $\check I'$ of $\check f$. We consider a maximal hereditary singular isotopy $I$ larger than $I'$ and its lift $\check I$ to $\R^2$. We consider a foliation $\mathcal F$ transverse to $I$ an its lift $\check{\mathcal F}$ to $\R^2$.
 One knows by Proposition \ref{pr:Bounded_Leaves} that the leaves of $\check{\mathcal F}$ are uniformly bounded. \textcolor{black}{In the remainder of the proof we will usually work in the universal covering space of $\T^2$, with paths transversal to the lifted foliation $\check{\mathcal{F}}$}. The theorem is an immediate consequence of the following, where the {\it direction} $D(\gamma)$ of a path $\gamma: [a,b]\to \R^2$ is defined as $D(\gamma)=\gamma(b)-\gamma(a)$:

\begin{proposition} \label{pr: bounded_direction}
There exists a constant $L$ such that for every transverse admissible path $\gamma$  of order $n$, one has $d_{\infty}(D(\gamma), n\,\mathrm{rot}(f))\leq L$.
\end{proposition}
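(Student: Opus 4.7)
The plan is to argue by contradiction, playing the admissible path $\gamma$ against a finite family of transverse paths arising from periodic orbits, and using the forcing machinery (Proposition \ref{pr: fundamental} and Proposition \ref{pr: rational rotation number}) to produce a rotation vector outside $\mathrm{rot}(\check f)$.

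First, I would exploit that $(0,0)\in\mathrm{int}(\mathrm{rot}(\check f))$ to set up a good family of reference paths. By Franks' theorem the rational rotation vectors are dense in $\mathrm{rot}(\check f)$ and each is realized by a periodic orbit of the corresponding lift. I would choose finitely many rational vectors $\rho_j=p_j/q_j\in\mathrm{rot}(\check f)\setminus\{0\}$, $1\le j\le k$, whose convex hull $P$ still contains $(0,0)$ in its interior and which approximate $\mathrm{rot}(\check f)$ so closely that for every non-trivial linear form $\psi$ on $\R^2$,
\[
\alpha_P(\psi):=\max_{1\le j\le k}\psi(\rho_j)\ge\alpha(\psi)-\varepsilon,\qquad \alpha(\psi):=\max_{\rho\in\mathrm{rot}(\check f)}\psi(\rho).
\]
For each $j$ fix a periodic point $z_j$ of period $q_j$ and a lift $\check z_j\in\R^2$ with $\check f^{q_j}(\check z_j)=\check z_j+p_j$; the whole transverse trajectory then gives a transverse path $\check\gamma_j:\R\to\R^2$ with $\check\gamma_j(t+q_j)=\check\gamma_j(t)+p_j$, whose restrictions $\check\gamma_j|_{[a,b]}$ are admissible. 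Since the leaves of $\check{\mathcal F}$ are uniformly bounded (Proposition \ref{pr:Bounded_Leaves}), each $\check\gamma_j$ is a proper path directed by $\rho_j/\Vert\rho_j\Vert$, so by Subsection 2.4 it splits $\R^2$ into well-defined sets $r(\check\gamma_j)$ and $l(\check\gamma_j)$.

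Second, assume the proposition fails. Then there is an admissible transverse path $\gamma$ of some order $n$ with $d_{\infty}(D(\gamma),n\,\mathrm{rot}(\check f))$ arbitrarily large. Choosing a supporting functional $\psi$ of $n\,\mathrm{rot}(\check f)$ realizing the failure, we have $\psi(D(\gamma))-n\alpha(\psi)\to\infty$. Pick $j$ with $\psi(\rho_j)\ge\alpha(\psi)-\varepsilon$. The uniform bound on leaves of $\check{\mathcal F}$ implies that the image of $\gamma$ stays within a uniformly bounded tube of the segment $[\gamma(a),\gamma(b)]$, so as $\psi(D(\gamma))-n\alpha(\psi)$ grows, $\gamma$ must ``overtake'' arbitrarily many distinct integer translates $\check\gamma_j+q$ in the $\rho_j^{\perp}$-direction pointing to the $\psi$-positive side. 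By Corollary \ref{co: no_intersection_two_sets}, any admissible subpath of $\gamma$ that meets both $r(\check\gamma_j+q)$ and $l(\check\gamma_j+q)$ must $\check{\mathcal F}$-transversally intersect $\check\gamma_j+q$, with a prescribed sign determined by the direction of overtaking.

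Third, I would assemble these $\check{\mathcal F}$-transverse intersections using Corollary \ref{co: induction transverse}: concatenating subpaths of $\gamma$ with appropriately-oriented subpaths of translates $\check\gamma_j+q$ produces an admissible transverse path $\delta$ of some order $m$ such that $\delta$ and $\delta+p$ have a $\check{\mathcal F}$-transverse intersection at parameters $s<t$, where the integer $p$ is built from the translates used so that, by the overtaking direction and $\psi(\rho_j)\ge\alpha(\psi)-\varepsilon$, one gets $\psi(p/m)>\alpha(\psi)$ (for $\varepsilon$ small and the overtaking number large). Proposition \ref{pr: rational rotation number} then forces $p/m\in\mathrm{rot}(\check f)$, contradicting $\psi(p/m)>\alpha(\psi)=\max_{\rho\in\mathrm{rot}(\check f)}\psi(\rho)$.

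The main obstacle is the second step: turning the quantitative statement ``$\psi(D(\gamma))-n\alpha(\psi)$ is large'' into a genuine $\check{\mathcal F}$-transverse (not merely set-theoretic) intersection pattern between $\gamma$ and translates of the $\check\gamma_j$, with controlled signs allowing the construction in the third step. Implementing this requires carefully using the uniform bound on leaf diameters together with the directedness of the $\check\gamma_j$ to control where $\gamma$ sits relative to each translate $\check\gamma_j+q$, while keeping track of the $\check{\mathcal F}$-equivalence classes so that Corollary \ref{co: induction transverse} actually applies and delivers an admissible $\delta$ with the transverse self-intersection $\delta,\delta+p$ in the correct parameter order $s<t$.
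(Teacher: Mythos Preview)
Your plan has the right endpoint (Proposition \ref{pr: rational rotation number}) but the decisive step, your step 3, is a genuine gap. You propose to ``concatenate subpaths of $\gamma$ with appropriately-oriented subpaths of translates $\check\gamma_j+q$'' to produce an admissible $\delta$ that $\check{\mathcal F}$-transversally intersects a translate $\delta+p$ with $p$ tracking $D(\gamma)$; but you give no mechanism for creating that self-intersection. If you cap $\gamma$ at both ends with pieces of translates of a \emph{single} periodic line $\check\gamma_j$, the tail of $\delta$ and the head of $\delta+p$ are both segments of integer translates of the same directed line; these are $\check{\mathcal F}$-equivalent to each other and do not intersect $\check{\mathcal F}$-transversally, so no closing occurs. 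Corollary \ref{co: induction transverse} lets you glue, but it does not manufacture the needed transverse self-intersection. The obstacle you single out (step 2, obtaining transverse intersections between $\gamma$ and the $\check\gamma_j$) is in fact the easy part, handled by Corollary \ref{co: no_intersection_two_sets} and the uniform bound on leaf diameters; the hard part is the assembly you pass over.

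The paper's proof is direct, not by contradiction, and hinges on a specific device that resolves exactly this issue. One builds, once and for all, a single admissible reference path $\gamma^*:[0,3]\to\R^2$ out of \emph{two} periodic lines $\gamma_0$ (direction $(1,0)$) and $\gamma_1$ (direction $(0,1)$), arranged as a corner: $\gamma^*|_{[0,1]}$ follows $\gamma_0$, $\gamma^*|_{[2,3]}$ follows $\gamma_1$, and $\gamma^*|_{[1,2]}$ is the turn. Two properties are proved: (a) every transverse path of diameter $>K^*$ must $\check{\mathcal F}$-transversally intersect some integer translate of $\gamma^*|_{[1,2]}$; (b) $\gamma^*|_{[2,3]}$ already $\check{\mathcal F}$-transversally intersects $\gamma^*|_{[0,1]}+p^*$ for a fixed $p^*$, because the two pieces have different directions. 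Now, for any admissible $\gamma$ of order $n$, apply (a) near each end of $\gamma$ to find translates $\gamma^*+p$ and $\gamma^*+p'$; gluing via Corollary \ref{co: first induction transverse} yields $\gamma'$ admissible of order $n+O(1)$ with $D(\gamma')$ within a fixed constant of $D(\gamma)$. Property (b) then forces $\gamma'$ to $\check{\mathcal F}$-transversally intersect $\gamma'+(p'-p+p^*)$ with the correct parameter order, and Proposition \ref{pr: rational rotation number} gives $(p'-p+p^*)/(n+O(1))\in\mathrm{rot}(\check f)$. Since $p'-p+p^*$ differs from $D(\gamma)$ by a uniformly bounded amount, the bound follows with an explicit constant and only two periodic orbits. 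Your many $\rho_j$ and the supporting-functional argument are unnecessary once you have the corner path $\gamma^*$.
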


We will begin by proving:

\begin{lemma}
 There exist a transverse admissible path $\gamma^*:[0,3]\to \R^2$,  a real number $K^*$ and an integer $p^*\in\Z^2$ such that:

\smallskip
\noindent-\enskip every transverse path $\gamma$ whose diameter is larger than $K^*$ intersects $\check{\mathcal{F}}$-transversally an integer translate of $\gamma^*_{\vert [1,2]}$;

\smallskip

\noindent-\enskip$\gamma^*_{\vert [2,3]}$ and $\gamma^*_{\vert [0,1]}+p^*$ intersect $\check{\mathcal{F}}$-transversally.
\end{lemma}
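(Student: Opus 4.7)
The plan is to build $\gamma^*$ as a concatenation of transverse subpaths extracted from the whole transverse trajectories of several bi-recurrent points with well-spread rotation vectors, using Proposition \ref{pr: fundamental} to glue pieces together at $\mathcal{F}$-transverse intersections.

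First, I would exploit the fact that $(0,0)$ lies in the interior of $\mathrm{rot}(\check f)$. Pick finitely many ergodic measures $\mu_1,\dots,\mu_k\in\mathcal{M}(f)$ whose rotation vectors $\rho_i$ form a positive spanning set of $\R^2$ (every nonzero vector of $\R^2$ has strictly positive inner product with some $\rho_i$); four vectors suffice. By Birkhoff's theorem and Poincaré recurrence, for each $i$ choose a bi-recurrent point $z_i\in\T^2$ of rotation vector $\rho_i$ and a lift $\check z_i\in\R^2$ with $\mathrm{rot}(\check z_i)=\rho_i$. Because the leaves of $\check{\mathcal F}$ are uniformly bounded by Proposition \ref{pr:Bounded_Leaves}, every leaf met by $\check I_{\check{\mathcal F}}^{\Z}(\check z_i)$ is met by $\check I^{\Z}(\check z_i)$, so $\check\gamma_i:=\check I_{\check{\mathcal F}}^{\Z}(\check z_i)$ is an admissible $\check{\mathcal F}$-bi-recurrent proper transverse path directed by $\rho_i$.

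Second, I would show that for any two indices $i\neq j$ there exist $p_{ij}\in\Z^2$ such that $\check\gamma_i$ and $\check\gamma_j+p_{ij}$ intersect $\check{\mathcal F}$-transversally. Indeed, the set $r(\check\gamma_i)\cup l(\check\gamma_i)$ is a disjoint union, and by Corollary \ref{co: no_intersection_two_sets}, if no translate $\check\gamma_j+p$ intersected $\check\gamma_i$ $\check{\mathcal F}$-transversally, each such translate would be contained in $r(\check\gamma_i)$ or in $l(\check\gamma_i)$; but $\bigcup_p(\check\gamma_j+p)$ accumulates on both sides of $\check\gamma_i$ (since $\rho_i,\rho_j$ are not parallel and leaves are bounded), a contradiction. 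Applying Proposition \ref{pr: fundamental} repeatedly at these intersections, I would concatenate carefully chosen compact subpaths of the $\check\gamma_i+p_{ij}$ into one admissible transverse path $\eta$ that contains, for every member $i$ of the spanning family, a compact subpath equivalent to a piece of some translate of $\check\gamma_i$ long enough to cross many integer translates of a reference leaf in direction $\rho_i$. I then set $\gamma^*\vert_{[1,2]}=\eta$.

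Third, to verify the first property I would argue as follows. Let $W$ be a fundamental domain for $\Z^2$; the projection of $\eta$ and its $\Z^2$-translates covers $\R^2$, and the family $\{\eta+p:p\in\Z^2\}$ forms an equivariant ``net'' whose complementary components are bounded of uniform diameter $K_0$ (this boundedness uses that $\eta$ contains pieces running in a positive spanning set of directions and that leaves are bounded). Set $K^*=K_0+\mathrm{diam}_{\mathrm{leaves}}+1$. A transverse path $\gamma$ of diameter $>K^*$ cannot lie in a single complementary component, so it meets some translate $\eta+p$. By Corollary \ref{co: no_intersection_two_sets} applied to a sub-piece of $\eta+p$ directed by an appropriate $\rho_i$ (chosen so that $\gamma$ leaves both $r(\,\cdot\,)$ and $l(\,\cdot\,)$ of that sub-piece), the crossing must be $\check{\mathcal F}$-transverse.

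Finally, to obtain the second property and $\gamma^*\vert_{[0,1]},\gamma^*\vert_{[2,3]}$, I would pick one of the trajectories $\check\gamma_i$ and extract a short admissible subpath $\gamma^*\vert_{[0,1]}$ of it finishing near the starting point of $\eta$. By the $\mathcal F$-bi-recurrence of $\check\gamma_i$ and $\mathcal{F}$-recurrence of its translates, Lemma \ref{lm:recurrent_to_loop} yields an integer translate (by some $p^*$) of a later subpath of $\check\gamma_i$, lying after $\eta$, that intersects $\check\gamma_i+p^*$ $\check{\mathcal F}$-transversally with $\gamma^*\vert_{[0,1]}+p^*$; declaring this new subpath to be $\gamma^*\vert_{[2,3]}$ yields the required transverse intersection. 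The main obstacle will be the third step: establishing that the complementary components of $\eta+\Z^2$ have uniformly bounded diameter, and, simultaneously, that every resulting crossing of an external path with $\eta+p$ is $\check{\mathcal F}$-transverse (not merely topological), which is why choosing the $\rho_i$ to positively span $\R^2$ and incorporating pieces of every direction into $\eta$ is crucial.
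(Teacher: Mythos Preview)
Your approach has a genuine gap, and the missing idea is the use of \emph{periodic} points rather than bi-recurrent ones. The paper picks $N$ with $(1/N,0),(0,1/N)\in\mathrm{int}(\mathrm{rot}(\check f))$ and, by Franks' theorem, periodic points $z_0,z_1$ with $\check f^N(z_0)=z_0+(1,0)$ and $\check f^N(z_1)=z_1+(0,1)$. The resulting transverse trajectories $\gamma_0,\gamma_1$ are then genuine lines, invariant under translation by $(1,0)$ and $(0,1)$ respectively, and each lies in a bounded horizontal (resp.\ vertical) strip. From this, Corollary~\ref{co: no_intersection_two_sets} immediately gives: any transverse path with large $\pi_2$-diameter intersects some integer translate of $\gamma_0$ $\check{\mathcal F}$-transversally (and symmetrically for $\gamma_1$). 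The crucial point is that \emph{periodicity localizes this intersection}: if $\gamma$ intersects $\gamma_0+p_0$ $\check{\mathcal F}$-transversally along a maximal equivalent segment $\gamma_0\vert_{J_0}+p_0$, then by $(1,0)$-periodicity one may shift by some $(l_0,0)\in\Z^2$ so that $J_0$ sits inside the fixed window defining $\gamma^*\vert_{[1,2]}$ (or, if $J_0$ is long, so that it contains a window where $\gamma_0$ meets $\gamma_1$ $\check{\mathcal F}$-transversally, and one passes to $\gamma_1$). Likewise, the second conclusion of the lemma (with $p^*=(3r+2,3r+2)$) is a one-line consequence of the same periodicity.

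Your construction with merely bi-recurrent points $\check z_i$ loses exactly this localization. Even if you prove that a long transverse path $\gamma$ must $\check{\mathcal F}$-transversally intersect some full trajectory $\check\gamma_i+p$, the intersection can occur anywhere along that non-periodic proper path, and there is no integer translation carrying it onto the specific compact segment you have glued into $\eta=\gamma^*\vert_{[1,2]}$. Your Step~3 tries to bypass this by bounding complementary components of $\bigcup_p(\eta+p)$, but topological crossing of $\eta+p$ does not yield $\check{\mathcal F}$-transverse intersection, and Corollary~\ref{co: no_intersection_two_sets} applies only to lines or directed proper paths, not to compact subpaths of $\eta$; the sentence ``chosen so that $\gamma$ leaves both $r(\cdot)$ and $l(\cdot)$ of that sub-piece'' is therefore not meaningful as stated. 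Your Step~4 is also off target: Lemma~\ref{lm:recurrent_to_loop} manufactures a linearly admissible loop with self-intersection, which is a different object from the required $\check{\mathcal F}$-transverse intersection between $\gamma^*\vert_{[2,3]}$ and a fixed integer translate of $\gamma^*\vert_{[0,1]}$. Switching to periodic $z_0,z_1$ with axis-aligned rotation vectors removes all of these difficulties simultaneously.
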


\bigskip
\noindent{\it Proof.}
 \enskip Let us choose $N$ large enough such $(1/N,0)$ and $(0,1/N)$ belong to the interior of $\mathrm{rot}(\check f)$. As previously noted \textcolor{black}{ in Remark \ref{rm:Franks}}, there exists a point $z_0$ satisfying $\check f^N(z_0)=z_0+(1,0)$ and a point $z_1$ satisfying
$\check f^N(z_1)=z_1+(0,1)$. The transverse trajectories $\check I_{{\check{\mathcal F}}}^{\Z}(z_0) $ and $\check I_{{\check{\mathcal F}}}^{\Z}(z_1) $ are parameterized, such that  $\check I_{{\check{\mathcal F}}}^{\Z}(z_0)(t+1)=\check I_{{\check{\mathcal F}}}^{\Z}(z_0)(t)+(1,0)$ and $\check I_{{\check{\mathcal F}}}^{\Z}(z_1)(t+1)=\check I_{{\check{\mathcal F}}}^{\Z}(z_1)(t)+(0,1)$. 

\bigskip

\begin{sub-lemma}\label{sle: bound}\enskip There exists a real number $K$ such that if $\gamma$ is a transverse path that does not intersect $\check{\mathcal{F}}$-transversally $\check I_{{\check{\mathcal F}}}^{\Z}(z_0) $ , then either $\pi_2(\gamma(t))> -K$ or $\pi_2(\gamma(t))<K$ and if it does not intersect $\check{\mathcal{F}}$-transversally $\check I_{{\check{\mathcal F}}}^{\Z}(z_1) $, then either $\pi_1(\gamma(t))>-K$ or $\pi_1(\gamma(t))<K$. 

\end{sub-lemma}

\begin{proof} There exists $K_0>0$ such that the diameter of each leaf of $\mathcal F$ is bounded by $K_0$ and there exists $K'_0>0$ such that $\check I_{{\check{\mathcal F}}}^{\Z}(z_0)  \subset \R\times(-K'_0,K'_0)$. Setting $K=K_0+K'_0$, note that every leaf that intersects \textcolor{black}{$\R\times(-\infty, -K]$} belongs to $r(\check I_{{\check{\mathcal F}}}^{\Z}(z_0) )$ and every leaf that intersects $\R\times[K,+\infty)$ belongs to $l(\check I_{{\check{\mathcal F}}}^{\Z}(z_0))$. It remains to apply Corollary  \ref{co: no_intersection_two_sets}. We have a similar argument for $\check I_{{\check{\mathcal F}}}^{\Z}(z_1)$.
\end{proof}

\bigskip

Setting $K^*=2K+1$, one deduces immediately:

\begin{corollary}
 If $\gamma$ is a transverse path and if the diameter of $\pi_2\circ\gamma$ is larger than $K^*$, there exists $p_0\in \Z^2$ such that $\gamma$ intersects $\check{\mathcal{F}}$-transversally $\check I_{{\check{\mathcal F}}}^{\Z}(z_0) +p_0$ and if the diameter of $\pi_1\circ\gamma$ is larger than $K^*$, there exists $p_1\in \Z^2$ such that $\gamma$ intersects $\check{\mathcal{F}}$-transversally $\check I_{{\check{\mathcal F}}}^{\Z}(z_1) +p_1$. 
\end{corollary}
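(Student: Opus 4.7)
The plan is to derive the corollary as an immediate pigeonhole-style consequence of Sub-lemma \ref{sle: bound}, exploiting the fact that both $\check{\mathcal F}$ and $\check f$ are invariant under integer translations. The statement is obviously symmetric in the two coordinates, so I will only describe the argument for the $\pi_2$ half using $z_0$; the $\pi_1$ half follows by interchanging the roles of $z_0$ and $z_1$.

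First I would observe that the constant $K$ produced by Sub-lemma \ref{sle: bound} is translation-robust. For any $p_0\in\Z^2$, the path $\check I_{\check{\mathcal F}}^{\Z}(z_0)+p_0$ is the whole transverse trajectory of the point $z_0+p_0$, which still satisfies $\check f^{N}(z_0+p_0)=(z_0+p_0)+(1,0)$; moreover the diameter bound $K_0$ on the leaves of $\check{\mathcal F}$ is insensitive to integer translations. Running the proof of the sub-lemma verbatim with the translated trajectory therefore yields: if $\gamma$ has no $\check{\mathcal F}$-transverse intersection with $\check I_{\check{\mathcal F}}^{\Z}(z_0)+p_0$, then either $\pi_{2}(\gamma(t))>\pi_{2}(p_0)-K$ for every $t$, or $\pi_{2}(\gamma(t))<\pi_{2}(p_0)+K$ for every $t$.

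Next, set $m=\inf_{t}\pi_{2}(\gamma(t))$ and $M=\sup_{t}\pi_{2}(\gamma(t))$, so that the hypothesis on the diameter of $\pi_{2}\circ\gamma$ reads $M-m>K^{*}=2K+1$. The open interval $(m+K,M-K)$ thus has length strictly greater than $1$ and consequently contains an integer $k$. Take $p_0=(0,k)\in\Z^{2}$. Then $m<k-K$ certifies the existence of $t$ with $\pi_{2}(\gamma(t))<k-K$, while $M>k+K$ certifies the existence of $t'$ with $\pi_{2}(\gamma(t'))>k+K$, so both alternatives of the dichotomy above fail. The sub-lemma therefore forces $\gamma$ to intersect $\check I_{\check{\mathcal F}}^{\Z}(z_0)+p_0$ $\check{\mathcal F}$-transversally, as desired.

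I do not anticipate any genuine obstacle: the sub-lemma has already carried out the hard work of localizing $\check I_{\check{\mathcal F}}^{\Z}(z_0)$ inside a horizontal strip of bounded width and identifying the two sides of $\check{\mathcal F}$ outside that strip. The only point requiring attention is that the same constant $K$ can be used for every integer translate of the trajectory, which rests on the translation invariance of both $\check f$ and $\check{\mathcal F}$ and on the fact that $K_0$ depends only on $\check{\mathcal F}$ itself.
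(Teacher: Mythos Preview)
Your argument is correct and is exactly the computation the paper has in mind when it writes ``Setting $K^*=2K+1$, one deduces immediately'' before stating the corollary. You have simply made explicit the two ingredients the paper leaves to the reader: the $\Z^2$-equivariance of $\check f$ and $\check{\mathcal F}$ (so that the same constant $K$ from Sub-lemma~\ref{sle: bound} works for every integer translate of the trajectory), and the observation that a $\pi_2$-range of length larger than $2K+1$ forces some integer $k$ into $(m+K,M-K)$, making both branches of the dichotomy fail for the translate by $(0,k)$.
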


 In particular $\gamma_0=\check I_{{\check{\mathcal F}}}^{\Z}(z_0)$ intersects $\gamma_1=\check I_{{\check{\mathcal F}}}^{\Z}(z_1)$ $\check{\mathcal{F}}$-transversally at a point $\gamma_0(t_0)=\gamma_1(t_1)$. One can find an integer $r>0$ such that $\gamma_0\vert_{[t_0-r,t_0+r]}$ and $\gamma_1\vert_{[t_1-r,t_1+r]}$ intersect $\check{\mathcal{F}}$-transversally at $\gamma_0(t_0)=\gamma_1(t_1)$. Let $\gamma^*:[0,3]\to \R^2$ be a path such that

\smallskip
\noindent-\enskip \enskip  $\gamma^*_{[0,1]}$ is a reparameterization of $\gamma_0\vert_{[t_0-(4r+2),t_0-(2r+2)]}$;

\smallskip
\noindent-\enskip \enskip  $\gamma^*_{[1,2]}$ is a reparameterization of $\gamma_0\vert_{[t_0-(2r+2),t_0]}\gamma_1\vert_{[t_1,t_1+(2r+2)]}$;

\smallskip
\noindent-\enskip \enskip  $\gamma^*_{[2,3]}$ is a reparameterization of $\gamma_1\vert_{[t_1+(2r+2),t_1+(4r+2)]}$.

\medskip
Let us prove that $\gamma^*$ satisfies the proposition. Observe first that $\gamma^*$ is admissible of order $(8r+4)N$ by Corollary \ref{co: first induction transverse} and that the paths $\gamma^*\vert_{[0,2]}$ and $\gamma^*\vert_{[1,3]}$ are admissible of order $(6r+2)N$. Note first that $\gamma^*_{\vert [2,3]}$ and $\gamma^*_{\vert [0,1]}+(3r+2,3r+2)$ intersect $\check{\mathcal{F}}$-transversally. One can set $p^*=(3r+2,3r+2)$. Let $\gamma$ be a transverse path such that the diameter of $\pi_2\circ \gamma$ is larger than $K^*$. By Sub-lemma \ref{sle: bound} one knows that there exists  $p_0\in\R^2$ such that  $\gamma$ intersects $\check{\mathcal{F}}$-transversally $\gamma_0+p_0$. This means that there exist two real segments $J$ and $J_0$ such that

\smallskip
\noindent-\enskip \enskip  $\gamma\vert_{J}$ intersects $\check{\mathcal{F}}$-transversally $\gamma_0\vert_{J_0}+p_0$;

\smallskip
\noindent-\enskip \enskip  $\gamma\vert_{\mathrm{int} J}$ and $\gamma_0\vert_{\mathrm{int }J_0}+p_0$ are equivalent.

\medskip
If the length of $J_0$ is smaller  than $2r+1$, then $J_0$ is included in an interval $[t_0-(2r+2)+l_0, t_0+l_0]$. This implies that $\gamma$ intersects $\check{\mathcal{F}}$-transversally $\gamma^*\vert_{[1,2]}+p_0+(l_0,0)$. If the length is at least equal to $2r+1$, then $J_0$ contains an interval $[t_0-r+l_0, t_0+r+ l_0]$. This implies that $\gamma$ intersects $\check{\mathcal{F}}$-transversally $\gamma_1\vert_{[t_1-r, t_1+r]}+ p_0+(l_0,0)$ and so intersects $\check{\mathcal{F}}$-transversally $\gamma^*\vert_{[1,2]}+p_0+(l_0,-r)$. We get the same conclusion for a  transverse path such that the diameter of $\pi_1\circ \gamma$ is larger than $K^*$.\hfill$\Box$

\bigskip
\noindent{\it Proof of the Proposition  \ref{pr: bounded_direction}.} \enskip 
We denote by $K^{**}$ the diameter of $\gamma^*$ and by $K^{***}$ the diameter of $\mathrm{rot}(f)$.
Let $\gamma: [a,b]\to \R^2$ be a transverse path such that $\Vert D(\gamma)\Vert>2K^*$. One can find $c$, $d$ in $(a,b)$ with $c<d$ such that $\Vert D(\gamma\vert_{[a,c]})\Vert=\Vert D(\gamma\vert_{[d,b]})\Vert=K^*$.  \textcolor{black}{Note that, if $c$ is chosen to be minimal with this property, and $d$ is chosen to be maximal, then the diameter of both $\gamma\vert_{[a,c]}$ and $\gamma\vert_{[d,b]}$ are at most $2K^*$.} There exist $p$ and $p'$ in $\Z^2$ such that

\smallskip
\noindent-\enskip \enskip $\gamma\vert_{[a,c]}$ and $\gamma^*\vert_{[1,2]}+p$ intersect $\check{\mathcal{F}}$-transversally at $\gamma(t)=\gamma^*(s)+p$;

\smallskip
\noindent-\enskip \enskip $\gamma\vert_{[d,b]}$ and $\gamma^*\vert_{[1,2]}+p'$ intersect $\check{\mathcal{F}}$-transversally at $\gamma(t')=\gamma^*(s')+p'$.

\medskip
If $\gamma$ is admissible of order $n$, then the path
$$\gamma'=(\gamma^*\vert_{[0,s]}+p)\gamma\vert_{[t,t']}(\gamma^*\vert_{[s',3]}+p')$$ is admissible of order $n+(12r+4)N$ by Corollary \ref{co: first induction transverse} and one has $$\Vert D(\gamma')-D(\gamma)\Vert\leq \textcolor{black}{4K^*}+2K^{**}.$$Recall that $\gamma^*_{[2,3]}$ intersects $\check{\mathcal{F}}$-transversally  $\gamma^*_{[0,1]}+p^*$. One deduces that $(\gamma^*\vert_{[s',3]}+p')$ intersects $\check{\mathcal{F}}$-transversally $(\gamma^*\vert_{[0,s]}+p)+p''$, where $p''=p'-p+p^*$ and so that $\gamma'$ intersects $\check{\mathcal{F}}$-transversally $\gamma'+p''$. Proposition \ref{pr: rational rotation number} tells us that $p''/(n+(12r+4)N)$ belongs to $\mathrm{rot}(\check f)$, which implies $$d(p'', n\,\mathrm{rot}(\check f))\leq (12r+4)N K^{***}.$$ Observe now that \textcolor{black}{$\Vert p''-D(\gamma')\Vert \leq 2K^{**}$} and so $\Vert p''-D(\gamma)\Vert \leq \textcolor{black}{4K^*+4K^{**}}$. So, one gets 
$$d(D(\gamma), n\,\mathrm{rot}(\check f))\leq \textcolor{black}{4K^*+4K^{**}}+(12r+4)NK^{***}.$$\hfill$\Box$

\bigskip
\noindent {\it Proof of Theorem \ref{th:Llibre_MacKay}
.}\enskip Here again, using the fact that for every $q\geq 1$ and every $p\in\Z^2$, one has $\mathrm{rot}(\check f^q+p)=q\mathrm{rot} (\check f)+p$, it is easy to see that it is sufficient to prove the result in the case where $(0,0)$ belongs to the interior of $\mathrm{rot}(\check f)$.  Here again, we consider an identity isotopy $I'$ of $f$ that is lifted to an identity isotopy $\check I'$ of $\check f$. We consider a maximal hereditary singular isotopy $I$ larger than $I'$ and its lift $\check I$ to $\R^2$. We consider a foliation $\mathcal F$ transverse to $I$ an its lift $\check{\mathcal F}$ to $\R^2$. We know that the leaves of $\check{\mathcal F}$ are uniformly bounded. We can immediately deduce the theorem from what has been done in the previous proof and Theorem \ref{th:transverse_imply_entropy}. Indeed, we know that there are two transverse loops associated to periodic points that have a transverse intersection. We will give a proof that does not use Theorem \ref{th:transverse_imply_entropy} by exhibiting separated sets.
\medskip

Let us begin with the following lemma:

\bigskip
\begin{lemma} \label{bounded_distance}
 There exists a constant $K$ such that for every point \textcolor{black}{$z\in\mathrm{dom}(\check{\mathcal F})$} and any $z'$ for which $\phi_{z'}$ intersects $I_{\check{\mathcal F}}(z)$, one has $d(z, z')\leq K$.
\end{lemma}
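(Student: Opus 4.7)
The plan is short: combine the uniform boundedness of the leaves of $\check{\mathcal F}$ (given by Proposition \ref{pr:Bounded_Leaves}) with the uniform boundedness of the isotopy displacement (a general fact about lifts of torus homeomorphisms isotopic to the identity), and then use the key property of the transverse trajectory that every leaf it meets is also met by the actual trajectory $\check I(\check z)$.

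More concretely, I would proceed as follows. First, by Proposition \ref{pr:Bounded_Leaves}, since $(0,0)$ lies in the interior of $\mathrm{rot}(\check f)$, every leaf of $\check{\mathcal F}$ has diameter at most some constant $K_0<+\infty$. Second, choose an identity isotopy $\check I=(\check f_t)_{t\in[0,1]}$ lifting $I$ to $\R^2$: the map $(\check z,t)\mapsto \check f_t(\check z)-\check z$ is $\Z^2$-invariant in $\check z$ and continuous in $(t,\check z)$, hence by $\Z^2$-periodicity and compactness of $[0,1]\times \T^2$ it is uniformly bounded by a constant $M<+\infty$. In particular the trajectory $\check I(\check z)=\{\check f_t(\check z):t\in[0,1]\}$ has diameter at most $M$.

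Now recall from the discussion following Theorem \ref{th: BrEquiv} (and Corollary \ref{co: BrEquiv}) that every leaf of $\check{\mathcal F}$ met by the transverse trajectory $\check I_{\check{\mathcal F}}(\check z)$ is also met by $\check I(\check z)$. Hence if $\phi_{\check z'}$ intersects $\check I_{\check{\mathcal F}}(\check z)$, there exists a point $\check w\in \phi_{\check z'}\cap \check I(\check z)$. By the first observation $d(\check w,\check z')\le K_0$ (both points lie on $\phi_{\check z'}$), and by the second observation $d(\check z,\check w)\le M$. The triangle inequality then yields
\[
d(\check z,\check z')\le d(\check z,\check w)+d(\check w,\check z')\le M+K_0,
\]
so setting $K=M+K_0$ does the job.

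There is essentially no obstacle here: the statement is a straightforward combination of two uniform bounds, and the only conceptual ingredient is the definition of the transverse trajectory (which guarantees the existence of the intermediate point $\check w$ on $\phi_{\check z'}\cap \check I(\check z)$). Everything else is compactness.
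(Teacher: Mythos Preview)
Your proof is correct and follows essentially the same approach as the paper: both combine the uniform bound on leaf diameter with the uniform bound on the diameter of the actual trajectory $\check I(\check z)$, invoking the fact (from the discussion after Theorem~\ref{th: BrEquiv}) that every leaf crossed by $\check I_{\check{\mathcal F}}(\check z)$ is also crossed by $\check I(\check z)$. The only cosmetic difference is that the paper phrases the second bound as compactness of $\bigcup_{t\in[0,1],\,z\in[0,1]^2}\check I(z)$, while you phrase it as $\Z^2$-periodicity of the displacement $(\check z,t)\mapsto \check f_t(\check z)-\check z$.
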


\begin{proof}
 \enskip There exists $K'>0$ such that the diameter of each leaf of $\check{\mathcal F}$ is bounded by $K'$. Moreover, the set $\bigcup_{t\in[0,1], z\in [0,1]^2}I(z)$, being compact, is included in $[-K'',K''+1]^2$,for some $K''>0$. The leaves that $ \check I_{\check{\mathcal F}}(z)$ intersects, are also intersected by $\check I(z)$ (see the beginning of Section 3). One deduces that $K=K'+K''$ satisfies the conclusion of the lemma.
\end{proof}

\bigskip
 We consider the paths $\gamma_0= \check I_{\check{\mathcal F}}(z_0)$ and $\gamma_1= \check I_{\check{\mathcal F}}^{\Z}(z_1)$  defined in the proof of Theorem \ref{th:bounded_deviation}. We keep the same notations and set $z^*=\gamma_0(t_0)=\gamma_1(t_1)$. Let us define
$$K'''=\max\left(\mathrm{diam}( \gamma_0\vert_{[t_0,t_0+r]}), \,\mathrm{diam}( \gamma_1\vert_{[t_1,t_1+r]})\right)$$ and choose an integer 
$m\geq 1$ such that $mr\geq K'''+2K_0+K+1$. Set 
$$\gamma'_0=\gamma_0\vert_{[t_0,t_0+mr]},\enskip \gamma'_1=\gamma_1\vert_{[t_1,t_1+mr]}.$$ Fix $n$ and for every $\mathbf{e}=(\varepsilon_1,\dots,\varepsilon_n)\in\{0,1\}^{n}$ define
$$\gamma'_{\mathbf{e}}= \prod_{1\leq i\leq n} (\gamma'_{\varepsilon_i}+p_{i-1}),$$
where the sequence $(p_i)_{0\leq i\leq n}$ satisfies $k_0=0$ and is defined inductively by the relation:
$$p_{i+1}= \begin{cases} p_i+(mr,0) & \mbox{if } \varepsilon_i=0, \\
p_i+(0, mr) & \mbox{if }\varepsilon_i=1. \end{cases}$$
The path $\gamma'_{\omega}$ is admissible of order $l=nmrN$. More precisely, there exists a point $z_{\mathbf{e}}\in \phi_{z^*}$ such that $\check f^{l}(z_{\mathbf{e}})\in \phi_{z^*}+k_n$, and such that $\gamma'_{\mathbf{e}}= \check I_{\check{\mathcal F}}^{l}(z_{\mathbf{e}})$.

\begin{lemma} If $\mathbf{e}$ and $\mathbf{e}'$ are two different sequences in $\{0,1\}^{n}$, there exists $j\in\{0, \dots, l-1\}$ such that $\Vert \check f^j (z_{\mathbf{e}})-  \check f^j (z_{\mathbf{e}'})\Vert\geq 1$.  \end{lemma}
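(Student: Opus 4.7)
The plan is to extract, from the first coordinate where $\mathbf{e}$ and $\mathbf{e}'$ disagree, an iterate $j$ at which the two $\check f$-orbits have already been pushed onto translates of $\phi_{z^*}$ that are Euclidean-far apart. Let $i$ be the smallest index with $\varepsilon_i\neq\varepsilon'_i$; without loss of generality suppose $\varepsilon_i=0$ and $\varepsilon'_i=1$. Because $\mathbf{e}$ and $\mathbf{e}'$ agree on the first $i-1$ coordinates, their common $(i-1)$-st partial sum $p_{i-1}=p'_{i-1}$ is a single vector $p\in\Z^2$.

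The first step would be to locate the leaves precisely. Since $\gamma_0,\gamma_1$ satisfy $\gamma_0(t+1)=\gamma_0(t)+(1,0)$ and $\gamma_1(t+1)=\gamma_1(t)+(0,1)$, and since traversing one unit of either path consumes exactly $N$ iterates of $\check f$ (because $\check f^N(z_0)=z_0+(1,0)$ and $\check f^N(z_1)=z_1+(0,1)$), the equivalence $\check I_{\check{\mathcal F}}^l(z_{\mathbf{e}})\sim\gamma'_{\mathbf{e}}$ gives
$$\phi_{\check f^{(i-1)mrN+kN}(z_{\mathbf e})}=\phi_{z^*}+p+(k,0),\qquad \phi_{\check f^{(i-1)mrN+kN}(z_{\mathbf e'})}=\phi_{z^*}+p+(0,k),$$
for every $k\in\{0,1,\dots,mr\}$. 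As each leaf has diameter at most $K_0$, any pair of points selected from these two leaves is at Euclidean distance at least $\|(k,0)-(0,k)\|-2K_0=k\sqrt{2}-2K_0$.

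The choice of $j$ is immediate when $i<n$: take $j=imrN$, so $k=mr$ and $j\leq(n-1)mrN<l$; the preceding bound then yields a separation of at least $mr\sqrt{2}-2K_0\geq (2K_0+1)\sqrt{2}-2K_0\geq \sqrt{2}>1$, using the standing inequality $mr\geq K'''+2K_0+K+1\geq 2K_0+1$. The delicate case is $i=n$, where $j=imrN=l$ is not permitted; here I would instead take $j=l-N$, i.e.\ $k=mr-1$, and combine the leaf-distance lower bound $(mr-1)\sqrt{2}-2K_0$ with Lemma \ref{bounded_distance} applied one more iterate forward to absorb the final $N$ iterates of $\check f$ that we cannot exploit directly. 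The main obstacle is precisely this boundary case, and the defining constant $K'''+2K_0+K+1$ of $mr$ was chosen exactly so that, after accounting for the universal displacement bound $K$, enough separation survives for the inequality $\Vert\check f^j(z_{\mathbf e})-\check f^j(z_{\mathbf e'})\Vert\geq 1$ to hold.
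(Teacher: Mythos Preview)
Your argument has a genuine gap at the displayed claim
\[
\phi_{\check f^{(i-1)mrN+kN}(z_{\mathbf e})}=\phi_{z^*}+p+(k,0),\qquad \phi_{\check f^{(i-1)mrN+kN}(z_{\mathbf e'})}=\phi_{z^*}+p+(0,k).
\]
The equivalence $\check I_{\check{\mathcal F}}^{l}(z_{\mathbf e})\sim\gamma'_{\mathbf e}$ only asserts that the transverse trajectory of $z_{\mathbf e}$ crosses the same ordered set of leaves as $\gamma'_{\mathbf e}$; it says nothing about \emph{which} iterate $\check f^{j}(z_{\mathbf e})$ sits on which leaf. Your justification ``traversing one unit of either path consumes exactly $N$ iterates of $\check f$'' applies to the special points $z_0$ and $z_1$, for which the identities $\check f^{N}(z_0)=z_0+(1,0)$ and $\check f^{N}(z_1)=z_1+(0,1)$ were given; it does not apply to $z_{\mathbf e}$, which is merely some point on $\phi_{z^*}$ produced by admissibility. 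There is no control on how fast or slow the orbit of $z_{\mathbf e}$ progresses across the leaves of $\gamma'_{\mathbf e}$, so you cannot read off the leaf of $\check f^{j}(z_{\mathbf e})$ from $j$ alone. The ``delicate case $i=n$'' paragraph compounds the problem: the proposed use of Lemma~\ref{bounded_distance} there does not make sense without the leaf identification you have already assumed.

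The paper proceeds differently and avoids this obstruction. It singles out the leaf $\phi_{z^*}+p_{i^*}$, which is crossed by $\gamma'_{\mathbf e}$ but lies at distance at least $mr-K'''-K_0$ from the \emph{entire} path $\gamma'_{\mathbf e'}$. Since $\check I_{\check{\mathcal F}}^{l}(z_{\mathbf e})=\prod_{j=0}^{l-1}\check I_{\check{\mathcal F}}(\check f^{j}(z_{\mathbf e}))$ crosses that leaf, some single step $\check I_{\check{\mathcal F}}(\check f^{j}(z_{\mathbf e}))$ with $j\in\{0,\dots,l-1\}$ crosses it, and Lemma~\ref{bounded_distance} then gives $d(\check f^{j}(z_{\mathbf e}),\phi_{z^*}+p_{i^*})\le K$ without ever needing to know the value of $j$. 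On the other side, $\check f^{j}(z_{\mathbf e'})$ lies on a leaf met by $\gamma'_{\mathbf e'}$, hence $d(\check f^{j}(z_{\mathbf e'}),\gamma'_{\mathbf e'})\le K_0$. The triangle inequality and the choice $mr\ge K'''+2K_0+K+1$ finish the bound. To repair your argument you should replace the unjustified leaf identification by this use of Lemma~\ref{bounded_distance}; once you do, the case split on $i<n$ versus $i=n$ disappears as well.
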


\begin{proof} Consider  the integer $i^*$ such that $ \varepsilon_{i^*}\not= \varepsilon'_{i^*}$ and $ \varepsilon_{i}= \varepsilon'_{i}$ if $i<i^*$. The leaf $\phi_{z^*}+p_{i^*}$ is intersected by $\gamma'_{\mathbf{e}}$ but not by $\gamma'_{\mathbf{e}'}$.  More precisely $d(\phi_{z^*}+p_{i^*}, \gamma'_{\mathbf{e}'})\geq mr-K'-K_0$. Using Lemma  \ref{bounded_distance}
, one deduces that there exists $j\in\{0, \dots, l\}$ such that $d(\check f^j(z_{\omega}), \phi_{z^*}+p_{i^*})\leq K$. Moreover, one knows that $d(\check f^j(z_{\mathbf{e}'}), \gamma'_{\mathbf{e}'})\leq K_0$ because $\gamma'_{\mathbf{e}}$ intersects $\phi_{\check f^j(z_{\mathbf{e}'})}$. One deduces that  
$$\Vert \check f^j (z_{\mathbf{e}})-  \check f^j (z_{\mathbf{e}'})\Vert\geq mr-K'-2K_0-K\geq 1.$$\end{proof}

\bigskip
To finish the proof of the proposition, let us define on $\T^2$ the distance
$$d(Z,Z')=\inf_{\pi(z)=Z,\,\pi(z')=Z'}\Vert z-z'\Vert,$$where$$\eqalign{\pi: \R^2&\to\T^2, \cr z&\mapsto z+\Z^2\cr}$$ is the projection. Note that for every $Z\in \T^2$, one has
$$\pi^{-1}(B(Z,1/4))=\bigsqcup_{\pi(z)=Z} B(z, 1/4)$$ and every map $\pi\vert_{B(z,1/4)}$ is an isometry from $B(z,1/4)$ onto $B(Z, 1/4)$.

Fix $\varepsilon\in(0,1/4)$  such that for every $z$, $z'$ in $\R^2$, one has$$\Vert z-z'\Vert<\varepsilon\Rightarrow 
\Vert \check f(z)-\check f(z')\Vert<1/4.$$ One deduces that two points $Z$ and $Z'$ such that $d(f^j(Z), f^j(Z))<\varepsilon$, for every $j\in\{0, \dots l-1\}$ are lifted by points $z$, $z'$ such that $\Vert \check f^j(z)-\check f^j(z)\Vert <\varepsilon$, for every $j\in\{0, \dots l-1\}$.

Consequently, the points $z_{\mathbf{e}}$ project on a $(nmrN, \varepsilon)$-separated set of cardinality $2^n$.  One deduces that  $h(f)>\log 2/mrN$.
\hfill$\Box$

\bibliographystyle{alpha}
\renewcommand{\refname}{\centerline{\Large \bf  Bibliography}}

  \end{document}